\definecolor{cutcolour}{RGB}{0,100,0}
\newtheorem{theorem}{Theorem}
\newtheorem{lemma}[theorem]{Lemma}
\newtheorem{proposition}[theorem]{Proposition}
\newtheorem{corollary}[theorem]{Corollary}
\theoremstyle{definition}
\newtheorem{definition}[theorem]{Definition}
\newtheorem{notation}[theorem]{Notation}
\newtheorem{question}[theorem]{Question}
\newtheorem*{remark*}{Remark}
\newtheorem{remark}[theorem]{Remark}
\newtheorem{example}[theorem]{Example}
\renewcommand{\labelenumi}{\theenumi}
\renewcommand{\theenumi}{\rm{(\roman{enumi})}}
\numberwithin{equation}{section}
\numberwithin{theorem}{section}
\def\namedlabel#1#2{\begingroup
	#2
    \def\@currentlabel{#2}
    \label{#1}\endgroup
}
\newcommand{\Z}{\mathcal Z}
\newcommand{\R}{\mathcal R}
\newcommand{\F}{\mathcal F}
\newcommand{\C}{\mathbb C}
\newcommand{\N}{\mathbb N}
\newcommand{\tr}{\mathrm{tr}}
\newcommand{\id}{\mathrm{id}}
\newcommand{\M}{\mathcal{M}}
\newcommand{\barotimes}{\bar\otimes}
\newcommand{\completion}[2]{\overline{#1}^{#2}}
\let\oldtocsubsection=\tocsubsection
\renewcommand{\tocsubsection}[2]{\hspace{.75cm}\oldtocsubsection{#1}{#2}}
\DeclareRobustCommand{\SkipTocEntry}[5]{}
\newcounter{mparcnt}
\title{Tracially Complete $C^*$-algebras}
\author[J.\ Carri\'on et al.]{Jos\'e R.\ Carri\'on}
\address{Jos\'e R.\ Carri\'on, Department of Mathematics, Texas Christian
  University, Fort Worth, Texas 76129, USA.}
\email{j.carrion@tcu.edu}
\author[]{Jorge Castillejos}
\address{Jorge Castillejos, Instituto de Matem\'aticas, Unidad Cuernavaca, Universidad Nacional Autonoma de M\'exico, Cuernavaca, M\'exico}
\email{jorge.castillejos@im.unam.mx}
\author[]{Samuel Evington}
\address{Mathematical Institute, University of M\"unster, Einsteinstrasse 62, 48149 M\"unster, Germany}
\email{evington@uni-muenster.de}
\author[]{James Gabe}
\address{James Gabe, Department of Mathematics and Computer Science
University of Southern Denmark, Denmark}
  \email{gabe@imada.sdu.dk}
\author[]{Christopher Schafhauser}
\address{Christopher Schafhauser, Department of  Mathematics, University of Ne-\linebreak braska-Lincoln, Lincoln, Nebraska 68508, USA}
\email{cschafhauser2@unl.edu}
\author[]{Aaron Tikuisis}
\address{Aaron Tikuisis, Department of Mathematics and Statistics, University of
  Ottawa, 585 King Edward, Ottawa, ON, K1N 6N5, Canada}
\email{aaron.tikuisis@uottawa.ca}
\author[]{Stuart White}
\address{Stuart White, Mathematical Institute, University of Oxford,
  Oxford, OX2 6GG, United Kingdom}
\email{stuart.white@maths.ox.ac.uk}
\thanks{This work was supported by:
NSF grant DMS-2451675 (Carri\'on);
long term structural funding – a Methusalem grant of the Flemish Government (Castillejos); UNAM-PAPIIT IN108126 (Castillejos);
Deutsche Forschungsgemeinschaft (DFG, German Research Foundation) – Project-ID 427320536 – SFB 1442 (Evington); 
Germany's Excellence Strategy EXC 2044 390685587  Mathematics M{\"u}nster: Dynamics–Geometry–Structure (Evington);  
ERC Advanced Grant 834267 - AMAREC (Evington);
Engineering and Physical Sciences Research Council [Grant Refs: EP/R025061/1, EP/R025061/2, and  EP/X026647/1] (Evington, White); 
Australian Research Council grant DP180100595 (Gabe); 
NSF grant DMS-2000129 (Schafhauser); 
NSERC Discovery Grant (Tikuisis), All Souls College visiting professorship (Tikuisis).
For the purpose of Open Access, the authors have applied a CC BY public copyright licence to any Author Accepted Manuscript (AAM) version arising from this submission.}
\begin{document}

\begin{abstract}
We introduce a new class of operator algebras -- tracially complete $C^*$-algebras -- as a vehicle for transferring ideas and results between $C^*$-algebras and their tracial von Neumann algebra completions. We obtain structure and classification results for amenable tracially complete $C^*$-algebras satisfying an appropriate version of Murray and von Neumann's property $\Gamma$ for II$_1$ factors. In a precise sense, these results fit between Connes' celebrated theorems for injective II$_1$ factors and the unital classification theorem for separable simple nuclear $C^*$-algebras. The theory also underpins arguments for the known parts of the Toms--Winter conjecture.
\end{abstract}

\maketitle

\addtocontents{toc}{\SkipTocEntry}
\section*{Overview of results}

\renewcommand{\thetheorem}{\Alph{theorem}} 

There are two major classes of self-adjoint operator algebras: $C^*$-algebras and von Neumann algebras. Despite significant differences in their theory and their stages of development, techniques for transferring structure between these two classes are important on both sides.
Examples of this are found in the deep connections between nuclearity of a $C^*$-algebra $A$ and injectivity of its enveloping von Neumann algebra $A^{**}$ (\cite{Co76,Choi-Effros76,Choi-Effros77,Effros-Lance77}), which are obtained through Connes' celebrated work on injectivity and hyperfiniteness of von Neumann algebras. Other prominent examples of structural transfer between operator algebras include \cite{Brown11,Haagerup83,MS12,MS14,Ozawa04,OP10}.

The focus of this paper is on operator algebras with a trace, by which we will always mean a tracial state. One of Murray and von Neumann's very early results is that a finite factor has a unique tracial state (\cite{MvN36, MvN37}).
In contrast, a simple unital stably finite $C^*$-algebra can have many tracial states. Indeed, by results of Blackadar (\cite{Bl80}) and Goodearl (\cite{Go77}), any Choquet simplex may arise as the trace simplex $T(A)$ of a simple unital approximately finite dimensional (AF) $C^*$-algebra $A$.  While a finite von Neumann algebra $\mathcal M$ always has sufficiently many traces, in that for any non-zero positive element $x\in\M$, there is a trace $\tau$ with $\tau(x)\neq 0$, the existence of traces is much more subtle in the $C^*$-setting. A deep theorem of Haagerup (\cite{Ha14}), together with \cite{Blackadar-Handelman, Blackadar-Rordam92}, is needed to obtain the corresponding result for a unital stably finite exact $C^*$-algebras, and without exactness, it is open whether traces necessarily exist.

Associated to every trace $\tau$ on a operator algebra $A$ are a 2-seminorm $\|x\|_{2,\tau}=\tau(x^*x)^{1/2}$, a GNS representation $\pi_\tau\colon A \rightarrow \mathcal B(\mathcal H_\tau)$, and a finite von Neumann algebra $\pi_\tau(A)''$.  
These objects are closely related. Indeed, the unit ball of $\pi_\tau(A)''$ is the $\|\cdot\|_{2,\tau}$-completion of the unit ball of $A$. In \cite{Oz13}, motivated by developments on the Toms--Winter conjecture (\cite{KR14,TWW15,Sa12}), Ozawa considers the $C^*$-algebra $\completion{A}{\rm u}$ obtained by completing the unit ball of $A$ with respect to the uniform 2-seminorm
\begin{equation*}
    \|a\|_{2,T(A)}\coloneqq\sup_{\tau \in T(A)} \|a\|_{2,\tau}.
\end{equation*}
When $A$ has a unique trace $\tau$, the uniform tracial completion $\completion{A}{\rm u}$ coincides with the von Neumann algebra $\pi_\tau(A)''$. 

When $T(A)$ has infinitely many extreme points, the uniform tracial completion $\completion{A}{\rm u}$ is no longer a von Neumann algebra. Ozawa's theory of $W^*$-bundles (see Section~\ref{sec:WStarBundles} below) axiomatises the structure of $\completion{A}{\rm u}$ in the special case that the extreme points of $T(A)$ are weak$^*$-closed. The techniques developed in this paper allow us to generalise Ozawa's theory to arbitrary trace simplices.  In Theorem \ref{Main-A}, the condition of tensorially absorbing the Jiang--Su algebra (from \cite{JS99}) is one of the two major hypotheses in the unital classification theorem for separable simple nuclear $C^*$-algebras (\cite{GLN20a,GLN20b,EGLN15,TWW17,CGSTW}; see \cite[Section 1.1]{CGSTW} for a discussion of this property and examples).  In particular the theorem applies to all $C^*$-algebras covered by this classification.

\begin{theorem}\label{Main-A}
Let $A$ and $B$ be unital separable nuclear $C^*$-algebras that absorb the Jiang--Su algebra tensorially.  Then $\completion{A}{\rm u}\cong \completion{B}{\rm u}$ if and only if $A$ and $B$ have affinely homeomorphic spaces of tracial states.\footnote{The isomorphism is to be interpreted as $(\completion{A}{\rm u}, T(A)) \cong (\completion{B}{\rm u},T(B))$ in the category of tracially complete $C^*$-algebras as defined in Section~\ref{sec:tracially-complete-defs}. In this case, this amounts to a uniform $2$-norm continuous $C^*$-isomorphism with a uniform $2$-norm continuous inverse.\label{FootnoteThmA}}
\end{theorem}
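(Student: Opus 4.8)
The plan is to deduce the theorem from two ingredients developed later in the paper: a \emph{structure} theorem showing that $\completion{A}{\rm u}$ is an amenable tracially complete $C^*$-algebra with property $\Gamma$ whenever $A$ is unital, separable, nuclear and $\Z$-stable, and a \emph{classification} theorem showing that on this class the space of tracial states is a complete invariant. I would treat the two implications of the equivalence separately.

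The implication from isomorphism to affine homeomorphism is essentially built into the framework: by the convention recorded in the footnote to the statement, an isomorphism of tracially complete $C^*$-algebras is an isomorphism of the pairs $(\completion{A}{\rm u},T(A))\cong(\completion{B}{\rm u},T(B))$, so it carries the distinguished trace simplex $T(A)$ onto $T(B)$ and its transpose restricts to an affine homeomorphism $T(B)\to T(A)$. (One can also recover $T(A)$ intrinsically from $\completion{A}{\rm u}$ as the simplex of limit traces, i.e.\ the traces continuous for the uniform $2$-seminorm.) If $A$ is traceless, for instance purely infinite, then $\completion{A}{\rm u}=0$ and $T(A)=\emptyset$, and the equivalence holds vacuously.

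For the converse, I would fix an affine homeomorphism $\gamma\colon T(B)\to T(A)$ and construct an isomorphism $\completion{A}{\rm u}\to\completion{B}{\rm u}$ compatible with $\gamma$. The first step is to verify that $\completion{D}{\rm u}$ lies in the scope of the classification theorem for every unital separable nuclear $\Z$-stable $D$: amenability of $\completion{D}{\rm u}$ is transferred from a completely positive approximation of $D$; property $\Gamma$ comes from the McDuff property $\completion{D}{\rm u}\cong\completion{D}{\rm u}\barotimes\R$, obtained by combining the isomorphism $D\cong D\otimes\Z$ with compatibility of the uniform tracial completion with tensor products and the identification of the tracial completion of $\Z$ with the hyperfinite II$_1$ factor $\R$; the absence of a type~I part follows since $\Z$, hence $D$, has no finite-dimensional representation; and the required metrizability is inherited from that of $T(D)$. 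With this in hand, both $\completion{A}{\rm u}$ and $\completion{B}{\rm u}$ are amenable tracially complete $C^*$-algebras with property $\Gamma$, with trace simplices $T(A)$ and $T(B)$. The classification theorem --- the present analogue of Connes' uniqueness theorem for the injective II$_1$ factor --- then provides, functorially in the trace simplex, an isomorphism of each with a standard model over its trace simplex; composing with the model isomorphism induced by $\gamma$ yields $\completion{A}{\rm u}\cong\completion{B}{\rm u}$ intertwining $\gamma$. Within this deduction the work is concentrated in the structure verification for $\completion{D}{\rm u}$ --- above all the passage from $\Z$-stability of $D$ to property $\Gamma$ of $\completion{D}{\rm u}$ via the McDuff property --- while the genuinely hard input, quoted here as a black box, is the classification theorem itself; granted these, Theorem~\ref{Main-A} follows formally.
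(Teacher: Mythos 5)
Your proposal follows essentially the same route as the paper: the forward implication is a tautology given the footnote's interpretation of the isomorphism, and the reverse implication is obtained by verifying that $(\completion{A}{T(A)},T(A))$ and $(\completion{B}{T(B)},T(B))$ satisfy the hypotheses of the classification theorem (Theorem~\ref{thm:classification}) and then applying it; your treatment of the type II$_1$ condition and of the traceless case is correct and, if anything, slightly more explicit than the paper's own summary. Two small caveats. First, the phrase ``amenability of $\completion{D}{\rm u}$ is transferred from a completely positive approximation of $D$'' describes exactly the naive argument the paper warns against: extending the approximations witnessing nuclearity of $D$ will not a priori approximate the identity of the completion in point-$\|\cdot\|_{2,T(D)}$. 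The actual route (Corollary~\ref{cor:semidiscrete-completion}) is fibrewise --- nuclearity gives uniform amenability of every trace, hence semidiscreteness of each $\pi_\tau(D)''$, and the Hahn--Banach local-to-global argument of Theorem~\ref{thm:amenable} then yields amenability of the completion. Second, you never verify factoriality, which is a hypothesis of Theorem~\ref{thm:classification}; it holds here because $T(D)$ is trivially a closed face of itself, so Proposition~\ref{prop:tracial-completion}\ref{item:completion-factorial} applies. Neither point affects the validity of the overall deduction.
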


Note that, by contrast, if $A$ is a separable nuclear $C^*$-algebra with no finite dimensional quotients, then the finite part of the bidual 
$A^{**}_{\mathrm{fin}}$ \emph{only} remembers the number of extreme traces in $T(A)$ by results that go back to Connes and Elliott (\cite{Co76, El76b}).  Indeed, $A^{**}_{\mathrm{fin}}$ is type $\mathrm{II}_1$ as $A$ has no finite dimensional representations, and the minimal direct summands of $A^{**}_{\mathrm{fin}}$ are in bijection with the extremal traces of $A$, so the assertion follows from \cite[Theorem~4.3]{El76b}.
  
Over the last decade, major advances in the Elliott classification programme for simple nuclear $C^*$-algebras with traces\footnote{Purely infinite $C^*$-algebras and type III factors are out of the scope of this paper.}  have culminated in two major themes.
\begin{description}[leftmargin = \parindent, labelindent = 0 cm]
\item[Classification] The finite case of the \emph{unital classification theorem} (\cite[Corollary D]{CETWW}, using \cite{EGLN15,GLN20a,GLN20b, TWW17}; an abstract proof is given in \cite{CGSTW}), which correspond to the uniqueness of the injective II$_1$ factor (combining Connes' equivalence of injectivity and hyperfiniteness from \cite{Co76} with Murray and von Neumann's uniqueness of the hyperfinite II$_1$ factor). 
\item[Structure] Structural theorems (\cite{Wi10,Wi12,Ti14,MS12,MS14,SWW15,BBSTWW, CETWW, CE}), which combine to prove most of the \emph{Toms--Winter conjecture} (\cite[Conjecture~9.3]{WZ10}, cf.\ \cite{TW09,ET08}) for $C^*$-algebras.  These results mirror aspects of Connes' equivalence of injectivity and hyperfiniteness for von Neumann II$_1$ factors (\cite{Co76}).
\end{description}
 With the benefit of hindsight, the advances above have been heavily driven by a subtle interplay between nuclear $C^*$-algebras $A$ and their uniform tracial completions $\completion{A}{\rm u}$. This goes back to Matui and Sato's breakthrough work on the Toms--Winter regularity conjecture (\cite{MS12,MS14}).
 In a nutshell, these `von Neumann techniques' allow for direct access to Connes' work in $C^*$-arguments.  This is true both conceptually and technically.

In this paper, we provide an abstract framework for studying uniform tracial completions:  \emph{tracially complete $C^*$-algebras}, consisting of a unital $C^*$-algebra together with a distinguished set of tracial states such that the operator norm unit ball is complete in the resulting uniform $2$-norm. The motivating example is the pair $\big(\completion{A}{\rm u}, T(A)\big)$, consisting of the uniform tracial completion of a $C^*$-algebra $A$ together with the set of traces arising from traces on $A$. We regard tracially complete $C^*$-algebras as a bridge between tracial von Neumann algebras and tracial $C^*$-algebras, allowing one to solve problems about tracial $C^*$-algebras in three stages.
\begin{enumerate}
\item Apply von Neumann algebra theory in each tracial GNS representation.\label{item:intro:overallplan1}
\item Obtain results for uniform tracial completions  via local-to-global arguments.\label{item:intro:overallplan2}
\item Pull results back from the tracial completion to the $C^*$-level.\label{item:intro:overallplan3}
\end{enumerate}

The main objective of the paper is to obtain structure and classification theorems for tracially complete $C^*$-algebras -- i.e.\ results that fit into step \ref{item:intro:overallplan2} of the programme above. These theorems fit between Connes' Theorem (together with Murray and von Neumann's uniqueness of the hyperfinite II$_1$ factor) and the structure and classification theorems in the setting of simple nuclear $C^*$-algebras.  Our work is organised into the following topics.

\begin{description}[leftmargin=\parindent, labelindent = 0cm]
\item[Amenability] We will show that the two natural definitions of amenability, one in terms of amenability of all tracial von Neumann algebra completions and the other defined by a uniform 2-norm version of the completely positive approximation property, agree.
\item[Factoriality] We identify a subclass of algebras -- the (type II$_1$) \emph{factorial tracially complete $C^*$algebras} -- in which each GNS representation coming from an extreme point of the distinguished traces is a (type II$_1$) factor. Uniform tracial completions of $C^*$-algebras are always factorial, and Ozawa's $W^*$-bundles are examples of type II$_1$ factorial tracially complete $C^*$-algebras precisely when their fibres are II$_1$ factors.
\item[Regularity] We introduce the McDuff property and property $\Gamma$ for tracially complete $C^*$-algebras. This is simultaneously motivated by the following implications due to Connes for a II$_1$ factor (\cite{Co76}):
\begin{equation*}
\begin{tikzcd}[column sep=3ex]
\textrm{injective}\ar[r,Rightarrow, 
line width=.3mm, double distance=.7mm] & \begin{array}{c}\text{injective with } \\ \text{property }\Gamma\end{array} \ar[r,Rightarrow, line width=.3mm, double distance=.7mm] & \begin{array}{c}\text{injective} \\ \text{and McDuff}\end{array}\ar[r,Rightarrow, line width=.3mm, double distance=.7mm] & \text{hyperfinite,}
\end{tikzcd}
\end{equation*}
as well as by work on regularity properties (e.g.\ Jiang--Su stability and uniform property $\Gamma$) for $C^*$-algebras (\cite{CETWW,CETW,Ro04,MS12}).
\item[Hyperfiniteness] We consider inductive limits of finite dimensional tracially complete $C^*$-algebras, in the spirit of Murray and von Neumann's hyperfinite von Neumann algebras and Bratteli's AF $C^*$-algebras. 

\item[Concrete models] For each metrisable Choquet simplex $X$, we construct a separable hyperfinite type II$_1$ factorial tracially complete $C^*$-algebra $(\mathcal R_X,X)$ in the same spirit as Blackadar's and Goodearl's constructions of a simple unital AF $C^*$-algebra with trace simplex affinely homeomorphic to $X$.  Unlike the case of AF $C^*$-algebras, the resulting tracially complete $C^*$-algebra is independent of all choices made in the construction. When $X$ is an $n$-dimensional simplex, $(\mathcal R_X,X)$ is isomorphic to the direct sum of $n+1$ copies of the hyperfinite II$_1$ factor with its entire trace simplex.  When $X$ is a
metrisable Bauer simplex (i.e.\ the extreme boundary $\partial_e X$ is compact), $(\mathcal R_X, X)$ can be identified with the trivial $W^*$-bundle over $\partial_e X$ with fibre the hyperfinite II$_1$ factor $\mathcal R$.  

\end{description}
With this setup, our structure and classification theorems for amenable tracially complete $C^*$-algebras are as follows.

\begin{theorem}[Structure theorem]\label{InformalStructureThm}
For an amenable type {\rm II}$_1$ factorial tracially complete $C^*$-algebra, property $\Gamma$, the McDuff property, and hyperfiniteness are equivalent.
\end{theorem}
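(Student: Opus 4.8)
The plan is to establish the cycle of implications ``hyperfinite'' $\Rightarrow$ ``McDuff'' $\Rightarrow$ ``property $\Gamma$'' $\Rightarrow$ ``hyperfinite''. The first two implications are soft and hold for any type II$_1$ factorial tracially complete $C^*$-algebra; the third is the substantial one, is where amenability is used, and runs through the McDuff property in imitation of Connes' analysis of injective II$_1$ factors.

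\textbf{The soft implications.} If $(\M,X)$ is hyperfinite, i.e.\ an inductive limit of finite dimensional tracially complete $C^*$-algebras, then $(\M,X)\barotimes\R$ is again hyperfinite (tensor the building blocks with the matrix algebras exhausting $\R$), is still type II$_1$ factorial, and has the same simplex $X$ of distinguished traces; by the uniqueness of hyperfinite type II$_1$ factorial tracially complete $C^*$-algebras with a prescribed trace simplex --- the fact underlying the concrete models $(\R_X,X)$, which can also be seen directly by extracting approximately central matrix algebras from the inductive system --- we obtain $(\M,X)\barotimes\R\cong(\M,X)$, which is the McDuff property. For ``McDuff $\Rightarrow$ property $\Gamma$'', a unital embedding of $\R$ into the uniform central sequence algebra of $(\M,X)$ (equivalently, the isomorphism $(\M,X)\cong(\M,X)\barotimes\R$) supplies, for each $n$ and each finite subset $F$ of $\M$, a family of projections summing to $1$, each of constant uniform trace $1/n$, and almost commuting with $F$ in $\|\cdot\|_{2,X}$; this is exactly property $\Gamma$.

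\textbf{The hard implication: property $\Gamma$ and amenability imply hyperfiniteness.} Using the equivalence of the two formulations of amenability, each tracial von Neumann completion $\pi_x(\M)''$ for $x\in\partial_e X$ is injective; type II$_1$ factoriality makes it a factor, so by Connes' theorem and Murray and von Neumann's uniqueness it is the hyperfinite II$_1$ factor $\R$. The work is to promote this fibrewise statement to the McDuff property $(\M,X)\cong(\M,X)\barotimes\R$, with property $\Gamma$ of $(\M,X)$ acting as the device that makes Connes' construction uniform over the trace simplex. I would realise inside the uniform central sequence algebra $\M_\omega\cap\M'$ a unital copy of $M_k$ for every $k$: property $\Gamma$ yields a uniformly central partition of unity by projections of constant uniform trace $1/k$, and then --- feeding injectivity of each fibre into the Powers--St{\o}rmer/Connes norm estimates together with a maximality argument, carried out uniformly in $x$ --- one perturbs this partition to genuine matrix units, obtaining $M_k\hookrightarrow\M_\omega\cap\M'$ and hence the McDuff property. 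Finally, from $(\M,X)\cong(\M,X)\barotimes\R$ and the uniform $2$-norm completely positive approximation property furnished by amenability, a Connes $2\times2$-matrix / approximate intertwining argument replaces the finite-rank approximating maps by honest finite dimensional $C^*$-subalgebras with $\|\cdot\|_{2,X}$-dense union, exhibiting $(\M,X)$ as an inductive limit of finite dimensional tracially complete $C^*$-algebras. (Alternatively, one may invoke a classification of amenable McDuff type II$_1$ factorial tracially complete $C^*$-algebras to identify $(\M,X)$ with the hyperfinite model $(\R_X,X)$.)

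\textbf{Main obstacle.} The crux is the uniform version of Connes' implication ``injective $+$ $\Gamma$ $\Rightarrow$ McDuff'' (and, nested within it, the uniform ``injective $\Rightarrow$ approximately finite dimensional''). Connes' estimates are already delicate for a single II$_1$ factor; here they must be run continuously in the trace, with property $\Gamma$ of the tracially complete algebra furnishing the approximately central projections of prescribed trace that are needed for the argument to glue over $\partial_e X$. A secondary, expected-to-be-routine point is keeping the functional-analytic framework in hand --- uniform $2$-norm completeness, the (ultra)power and central sequence constructions for tracially complete $C^*$-algebras, and the compatibility of $\barotimes$ with these --- but this machinery is developed earlier in the paper.
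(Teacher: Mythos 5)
Your overall architecture — a cycle of implications with the substance concentrated in ``$\Gamma$ plus amenability implies McDuff/hyperfinite'' via a uniformised Connes argument — matches the narrative of the paper, and your ``McDuff $\Rightarrow$ $\Gamma$'' step is indeed soft (Proposition~\ref{prop:McDuff-implies-Gamma}). But there are two genuine gaps. First, ``hyperfinite $\Rightarrow$ McDuff'' is not soft. Your primary justification invokes the uniqueness of hyperfinite type II$_1$ factorial tracially complete $C^*$-algebras with prescribed trace simplex, but that uniqueness \emph{is} the classification theorem (Theorem~\ref{InformalClassification}), whose proof requires knowing that hyperfinite algebras are amenable and satisfy CPoU — so as presented the step is circular. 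The fallback of ``extracting approximately central matrix algebras from the inductive system'' founders on two points the paper treats at length: the finite dimensional building blocks need not be factorial (their traces need not extend to elements of $X$, the Murray--von Neumann ``approximately finite (A) vs.\ (B)'' issue resolved in Section~\ref{sec:hyperfinite} via Theorem~\ref{thm:hyperfinite-limit}), and producing a single element of $\M\cap F'$ that approximately halves the trace \emph{uniformly over all of $X$} is exactly the kind of local-to-global problem that is open for general type II$_1$ factorial tracially complete $C^*$-algebras (see the discussion opening Section~\ref{sec:intro:ltg2}). The paper's actual route is hyperfinite $\Rightarrow$ amenable $+$ CPoU (Theorems~\ref{thm:hyperfinite-implies-semidiscrete} and~\ref{thm:hyperfinite-implies-CPoU}), and only then McDuffness via classification against the model $(\R_X,X)$.

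Second, and more fundamentally, your hard implication treats property $\Gamma$ itself as the device that ``glues over $\partial_e X$.'' The gluing mechanism is not $\Gamma$ but complemented partitions of unity (CPoU), and the passage from $\Gamma$ to CPoU is the paper's main technical theorem (Theorem~\ref{introthmgammaimpliescpou}), proved by a Hahn--Banach local-to-global argument for a weak form of CPoU followed by orthogonalisation, projectionisation, and a maximality argument. A partition-of-unity argument ``over $\partial_e X$'' of the kind you describe only works when $X$ is a Bauer simplex, where $C(\partial_e X)$ sits centrally and $\tau(f_i a)\approx\tau(f_i)\tau(a)$; for a general Choquet simplex (e.g.\ the Poulsen simplex, which does occur) this fails, and the extra tracial control encoded in CPoU is precisely what replaces it. Once CPoU is in hand, the implications you want follow the paper's Section~\ref{sec:Classification}: the fibres are hyperfinite by Connes, CPoU classifies projections and hence morphisms from finite dimensional algebras, and the classification of tracially nuclear morphisms plus intertwining yields both $(\M,X)\cong(\R_X,X)$ (hence McDuff) and the strengthened completely positive approximations with $^*$-homomorphism upward maps (hence hyperfiniteness). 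Without identifying CPoU and its derivation from $\Gamma$, the proposal is missing the central step of the proof.
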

\begin{theorem}[Classification theorem]\label{InformalClassification}
Any amenable factorial tracially complete $C^*$-algebra which is separable (in the uniform 2-norm) and has property $\Gamma$ is isomorphic to the hyperfinite model $(\mathcal R_X,X)$ with the corresponding Choquet simplex $X$ of traces.
\end{theorem}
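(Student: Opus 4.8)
The plan is to combine the Structure Theorem with an Elliott-type two-sided approximate intertwining, using property $\Gamma$ (through the McDuff property) as the engine of uniqueness. Write $(M,X)$ for the given algebra, where $X$ denotes its distinguished simplex of traces. First observe that the hypotheses force the type II$_1$ case: if the GNS completion $\pi_\tau(M)''$ at some extreme $\tau\in\partial_e X$ were a finite-dimensional factor, its central sequences would be asymptotically scalar, and the uniform central sequence algebra of $(M,X)$ could not then contain the trace-dividing projections that property $\Gamma$ demands. So $(M,X)$ is type II$_1$ factorial, and the Structure Theorem (Theorem~\ref{InformalStructureThm}) applies to give that it is hyperfinite. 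It remains to prove the uniqueness statement that any separable amenable hyperfinite type II$_1$ factorial tracially complete $C^*$-algebra with trace simplex $X$ is isomorphic, over $X$, to $(\mathcal{R}_X,X)$ -- which, by its construction, is itself of this form.

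Since $(M,X)$ is separable, $X$ is a metrisable Choquet simplex, so both $(M,X)$ and $(\mathcal{R}_X,X)$ are uniform-$2$-norm inductive limits of finite-dimensional tracially complete $C^*$-algebras, say $E_1\to E_2\to\cdots$ and $F_1\to F_2\to\cdots$ respectively, and in each case $X$ is recovered as the inverse limit of the (finite-dimensional) trace simplices $T(E_n)$, resp.\ $T(F_n)$. The goal is to construct diagonal unital $*$-homomorphisms $F_{n_k}\to E_{m_k}\to F_{n_{k+1}}\to\cdots$ forming a diagram that commutes up to a summable sequence of uniform-$2$-norm errors and is compatible with the induced maps on trace simplices; passing to limits then yields a uniform-$2$-norm continuous unital $*$-isomorphism $\mathcal{R}_X\xrightarrow{\ \sim\ }M$ inducing the identity on $X$, i.e.\ an isomorphism $(\mathcal{R}_X,X)\cong(M,X)$ in the sense of footnote~\ref{FootnoteThmA}.

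Feeding the intertwining requires two ingredients. \emph{Existence of morphisms:} given a finite-dimensional tracially complete $C^*$-algebra $F$ together with a unital affine map $X\to T(F)$, there is a unital $*$-homomorphism $F\to M$ that approximately realises it in the uniform $2$-norm. This comes from hyperfiniteness of $M$ (which supplies enough finite-dimensional subalgebras) together with the McDuff property (which supplies an internal copy of $\mathcal{R}$, hence the room to install systems of matrix units whose traces vary correctly across all of $X$); amenability enters via the uniform completely positive approximation property. \emph{Uniqueness of morphisms:} two unital $*$-homomorphisms $F\to M$ that induce the same map $X\to T(F)$ are approximately unitarily equivalent in the uniform $2$-norm. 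Granting these, the standard two-sided intertwining machine produces the required isomorphism.

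I expect the uniqueness of morphisms to be the main obstacle -- a uniform-$2$-norm \emph{stable uniqueness theorem} for $*$-homomorphisms out of a finite-dimensional algebra. Fibrewise, over each extreme trace, it reduces to the classical fact that two unital embeddings of a finite-dimensional $C^*$-algebra into a II$_1$ factor with the same trace are unitarily conjugate; the difficulty is to globalise this over $X$ when $\partial_e X$ is not closed, so that there is no continuous-field or Bauer-simplex structure over which to run a selection argument. Property $\Gamma$, and the room it provides via the McDuff property, is exactly what lets the fibrewise conjugating unitaries be amalgamated into a single approximate unitary of $M$; this is the analogue here of the uniqueness half of Connes' theorem and of Jiang--Su stable classification. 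A smaller point, settled when $(\mathcal{R}_X,X)$ is constructed, is to check that it does satisfy the hypotheses of the theorem and carries a finite-dimensional inductive system with trace simplex $X$.
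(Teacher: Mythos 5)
Your architecture inverts the paper's logical order in a way that creates a genuine circularity. You deduce hyperfiniteness of $(\mathcal M,X)$ from the Structure Theorem (Theorem~\ref{InformalStructureThm}) and then classify hyperfinite algebras by intertwining inductive limit decompositions. But in this paper the implication ``amenable $+$ property $\Gamma$ $\Rightarrow$ hyperfinite'' is itself \emph{derived from} the Classification Theorem: the proof of Theorem~\ref{thm:hyperfinite} obtains (ii)$\Rightarrow$(i) from Theorem~\ref{thm:decomposable}, which in turn rests on the existence and uniqueness theorems for morphisms (Theorem~\ref{thm:existence} and Corollary~\ref{cor:uniqueness}) -- i.e.\ on the classification machinery you are trying to establish. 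As written, your proof assumes an implication that has no independent justification here. The paper avoids this by never decomposing $\mathcal M$ into finite-dimensional pieces at all: it proves existence and uniqueness of \emph{tracially nuclear} morphisms $A\to\mathcal N$ for separable domains directly (uniqueness via Connes' theorem fibrewise plus a CPoU local-to-global argument, Theorem~\ref{thm:uniqueness}; existence via Lazar--Lindenstrauss approximation of $X$ by finite-dimensional simplices, uniform amenability of the traces to produce approximately multiplicative maps into matrix algebras, the classification of projections from CPoU to embed those matrix algebras with prescribed traces, and reparameterisation intertwining to pass from approximate to exact morphisms), and then runs the two-sided intertwining between $\mathcal M$ and $(\mathcal R_X,X)$ directly in Theorem~\ref{thm:classification}, using amenability only to know the identity maps are tracially nuclear. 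Hyperfiniteness of $\mathcal M$ is a \emph{corollary}, not an input.

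Your remaining ingredients are essentially the right ones -- the existence and uniqueness of unital $^*$-homomorphisms out of finite-dimensional algebras with prescribed tracial data are exactly Lemma~\ref{lem:finite-dim-existence} and the comparison theorem for projections (Theorem~\ref{thm:comparison}), both powered by CPoU (hence by property $\Gamma$ via Theorem~\ref{introthmgammaimpliescpou}), and you correctly identify the globalisation over a non-Bauer $X$ as the crux. Two further cautions if you wish to salvage your route for the hyperfinite case. First, your assertion that $X$ is recovered as the inverse limit of the full simplices $T(E_n)$ of the finite-dimensional building blocks of $\mathcal M$ is false in general: only the restricted trace sets $\{\tau|_{E_n}:\tau\in X\}$ form the correct inverse system, these need not be faces of $T(E_n)$, and this mismatch is precisely the obstruction the paper flags (and which forces it to reduce to factorial building blocks, or to bypass inductive limits entirely). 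Second, a non-circular proof along your lines would require an independent argument that amenability plus property $\Gamma$ yields hyperfiniteness -- plausibly by gluing Connes-theorem finite-dimensional approximations of the fibres with CPoU and the perturbation lemmas of Section~\ref{sec:perturbations} -- but you would have to supply it; citing Theorem~\ref{InformalStructureThm} does not.
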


The equivalence of property $\Gamma$ and the McDuff property in Theorem \ref{InformalStructureThm} was established in \cite{CETW} for the uniform tracial completions of separable nuclear $C^*$-algebras with no finite dimensional representations.  

As with the classification programme for $C^*$-algebras and Murray and von Neumann's uniqueness of the hyperfinite II$_1$ factor before that, our classification theorem is powered by classification results for $^*$-homomorphisms satisfying a suitable notion of amenability together with an Elliott-style intertwining argument.  In the setting of tracially complete $C^*$-algebras amenability is given in terms of the completely positive approximation property in the uniform 2-norm or, equivalently, in terms of  uniformly amenable traces -- we call such maps \emph{tracially nuclear}.\footnote{We state Theorem \ref{IntroThmClassMap} for tracially complete $C^*$-algebras satisfying the regularity hypothesis of property $\Gamma$. In the main body, we will work with a more general hypothesis -- complemented partitions of unity (discussed in Section \ref{sec:intro:ltg2} below) -- so that that the classification of tracially nuclear $^*$-homomorphisms generalises the classification of weakly nuclear $^*$-homomorphisms from $C^*$-algebras into finite von Neumann algebras.}

\begin{theorem}[Classification of tracially nuclear $^*$-homomorphisms]\label{IntroThmClassMap}

Con\-si\-der tracially complete $C^*$-algebras $(\mathcal M, X)$ and $(\mathcal N, Y)$ with $(\mathcal M, X)$ being $\|\cdot\|_{2, X}$-separable and $(\mathcal N, Y)$ being factorial with property $\Gamma$.  Then a morphism $\phi \colon (\mathcal M, X) \rightarrow (\mathcal N, Y)$ is tracially nuclear if and only if the induced map $\phi^* \colon Y \to X$ takes values in the uniformly amenable traces on $\mathcal M$.  Further, any continuous affine $\gamma\colon Y\to X$ taking values in the uniformly amenable traces on $\mathcal M$ is induced by a tracially nuclear map $\phi\colon (\mathcal M,X)\to(\mathcal N,Y)$, which is unique up to approximate unitary equivalence in the uniform 2-norm on $(\mathcal N, Y)$.
\end{theorem}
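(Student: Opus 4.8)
The plan is to follow the three-stage programme described above. First one establishes the analogous statement at the level of the tracial von Neumann algebra completions, i.e.\ for $^*$-homomorphisms from $\mathcal M$ into the II$_1$ factor fibres $\mathcal N_y$ with $y \in \partial_e Y$; then one glues the fibrewise data together over all of $Y$ using complemented partitions of unity, which are available because $(\mathcal N, Y)$ is factorial with property $\Gamma$; and finally one converts the resulting uniform $2$-norm approximate statements into the exact conclusions by Elliott-style intertwining arguments --- a one-sided intertwining to build the genuine tracially nuclear $^*$-homomorphism inducing $\gamma$, and a back-and-forth of unitaries for approximate unitary equivalence. When $(\mathcal N, Y)$ is a single finite von Neumann algebra with its canonical trace only the first stage is needed, and the theorem specialises to the classical classification of weakly nuclear $^*$-homomorphisms into finite von Neumann algebras referred to in the footnote.

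One direction of the first claim is immediate. If $\phi$ is tracially nuclear, then for each $\tau \in Y$ it is approximated in $\|\cdot\|_{2,\tau}$ by completely positive contractions factoring through matrix algebras; composing with the $\|\cdot\|_{2,\tau}$-continuous trace $\tau$ realises $\phi^*(\tau) = \tau \circ \phi$ as a uniformly amenable trace on $\mathcal M$, equivalently shows that each induced map of fibres $\mathcal M \to \mathcal N_y$ is weakly nuclear. Conversely, suppose $\phi^*$ takes values in the uniformly amenable traces. For a fixed $\tau \in Y$, a matricial factorisation of $\mathrm{id}_{\mathcal M}$ that is approximate in $\|\cdot\|_{2,\phi^*(\tau)}$ composes with $\phi$ to a factorisation of $\phi$ that is approximate in $\|\cdot\|_{2,\tau}$, because $\phi$ does not increase the uniform $2$-norm; the remaining point is to upgrade this from a fixed $\tau$ to uniformity over all of $Y$, which is precisely the local-to-global problem handled by complemented partitions of unity.

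At the von Neumann algebra level one needs two facts about each II$_1$ factor fibre $\mathcal N_y$: every uniformly amenable trace on $\mathcal M$ is induced by a trace-preserving weakly nuclear $^*$-homomorphism $\mathcal M \to \mathcal N_y$ (existence), and any two such homomorphisms inducing the same trace are approximately unitarily equivalent in $\|\cdot\|_2$ (uniqueness). These follow from Connes' characterisation of amenability together with the $2$-norm uniqueness theorem for weakly nuclear $^*$-homomorphisms into finite von Neumann algebras: for existence one produces approximate matricial factorisations from amenability of the trace and corrects multiplicativity defects by an intertwining inside $\mathcal N_y$, using that $\mathcal N_y$ is McDuff and so absorbs the relevant matrix amplifications, together with the fact that matrix algebras embed unitally and trace-preservingly into any II$_1$ factor; for uniqueness one uses that two such embeddings of a matrix algebra are unitarily conjugate, again combined with an intertwining.

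The main obstacle is the local-to-global passage. One passes to a suitable reduced product $\mathcal N^\omega$ of $(\mathcal N, Y)$, in which tracial nuclearity is detected by a genuine factorisation through an ultraproduct $\prod_\omega M_{k_n}$ of matrix algebras, and in which complemented partitions of unity let one assemble a finite family of local solutions $T_1, \dots, T_m$ --- with $T_i$ valid in $\|\cdot\|_{2,y}$ for $y$ in a piece $U_i$ of $Y$ --- into a single solution $\sum_i p_i T_i$ for suitable orthogonal central projections $p_i$ summing to $1$, chosen so that the relevant defect is controlled at every trace simultaneously. Carrying this out for the two tasks at hand --- gluing fibrewise-good completely positive maps $\mathcal M \to M_k \to \mathcal N$ into one that is good uniformly over $Y$, and gluing fibrewise unitaries into a single unitary $u \in \mathcal N$ with $\|u\phi(x)u^* - \phi'(x)\|_{2,Y}$ small on a prescribed finite set --- requires checking that the local statements have the form to which complemented partitions of unity apply, with the correct uniformity and compactness behaviour over the possibly non-compact extreme boundary $\partial_e Y$; this verification, together with the supporting theory of reduced products, is where the bulk of the work lies. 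Once the uniform approximate versions are available, a one-sided intertwining produces the desired tracially nuclear $^*$-homomorphism inducing $\gamma$, and a standard back-and-forth of unitaries, using $\|\cdot\|_{2,X}$-separability of $(\mathcal M, X)$, yields uniqueness up to approximate unitary equivalence.
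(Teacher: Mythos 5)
Your three-stage architecture (fibrewise results from Connes' theorem, local-to-global transfer, Elliott-style intertwining) is the right frame, and your treatment of uniqueness --- fibrewise uniqueness of weakly nuclear maps into finite von Neumann algebras, glued by CPoU and fed into an intertwining --- is exactly what the paper does. Two of your steps are, however, routed differently. First, the characterisation of tracial nuclearity by uniform amenability of the induced traces is not proved by CPoU in the paper: it uses a Hahn--Banach convexity argument (the c.p.c.\ maps factoring through finite-dimensional algebras form a convex set, and weak convergence in $\mathrm{Aff}(Y)$ is pointwise convergence), so that a convex combination of finitely many local approximants, chosen by compactness of $Y$, is already a global one. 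This needs neither factoriality nor property $\Gamma$ and so holds for arbitrary tracially complete codomains; your CPoU route is available under the stated hypotheses but is strictly less general and lands in the reduced power rather than in $\mathcal N$. Second, for existence the paper does not patch fibrewise approximate morphisms $\mathcal M \to \pi_\tau(\mathcal N)''$ by CPoU (it explicitly notes this is possible but technically awkward); instead it approximates $\gamma$ through finite-dimensional Choquet simplices via Lazar--Lindenstrauss, uses uniform amenability to realise the resulting trace data by approximately multiplicative maps into finite-dimensional $C^*$-algebras, and then uses the CPoU-powered classification of projections to produce exact unital $^*$-homomorphisms from those finite-dimensional algebras into $\mathcal N$ with prescribed traces, before intertwining via reparameterisation. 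What the paper's route buys is that the only CPoU input to existence is the comparison and existence theory for projections, established once and for all. One correction to your fibrewise step: you should not invoke McDuffness of the fibres $\pi_y(\mathcal N)''$ --- property $\Gamma$ for $(\mathcal N, Y)$ is a statement about the tracially complete algebra and is not known to yield central sequences in individual fibres; the fibrewise existence only needs that the fibres are type II$_1$ (so matrix algebras embed unitally) together with hyperlinearity of the target trace, and the multiplicativity defects are absorbed by the passage to a reduced power rather than by tensorial absorption.
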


An important special case of the previous theorem is given by taking $(\mathcal M, X)\coloneqq \big(\mathbb C^2, T(\mathbb C^2)\big)$, giving a classification of projections. In this case we can do better: unitary equivalence and Murray--von Neumann subequivalence of projections in factorial tracially complete $C^*$-algebras with property $\Gamma$ are determined by the designated set of traces (see Theorem \ref{thm:comparison}).\footnote{This also holds with complemented partitions in place of property $\Gamma$, but we emphasise that we do not know Theorem \ref{IntroThmCtsProj} for general II$_1$ factors $\M$ unless $X$ is totally disconnected. It is open whether Theorem \ref{IntroThmCtsProj} holds when $X= [0,1]$ and $\M$ is a free group factor.}  We highlight the following special case of this in the language of von Neumann algebras, showing that Murray and von Neumann's foundational classification of projections can be performed continuously in II$_1$ factors with property $\Gamma$.  It is obtained by applying our classification of projections to the trivial $W^*$-bundle over $X$ with fibre $\M$.

\begin{theorem}\label{IntroThmCtsProj}
Let $K$ be a compact Hausdorff space and let $\M$ be a {\rm II}$_1$ factor with property $\Gamma$.  Suppose that $p,q\colon K\to \M$ are projection-valued functions which are continuous with respect to $\|\cdot\|_2$.
\begin{enumerate}
    \item There is a $\|\cdot\|_2$-continuous $v\colon K\to\M$ with $v(x)^*v(x)=p(x)$ and $v(x)v(x)^*\leq q(x)$ for all $x\in K$ if and only if $\tau(p(x))\leq\tau(q(x))$ for all $x\in K$.  
    \item There is a $\|\cdot\|_2$-continuous $u\colon K\to\M$ taking values in the unitary group of $\M$ with $u(x)p(x)u(x)^*=q(x)$ for all $x\in K$ if and only if $\tau(p(x))=\tau(q(x))$ for all $x\in K$. 
\end{enumerate}
\end{theorem}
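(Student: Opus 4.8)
The plan is to realise $p$ and $q$ as projections in a single factorial tracially complete $C^*$-algebra with property $\Gamma$ — namely the trivial $W^*$-bundle over $K$ with fibre $\M$ — and then invoke the comparison theorem for projections, Theorem~\ref{thm:comparison}. Let $\mathcal N$ denote this bundle: concretely, $\mathcal N$ is the $C^*$-algebra of all uniformly norm-bounded functions $f\colon K\to\M$ that are continuous for $\|\cdot\|_2$, with the designated set of traces $Y=\{\tau_\mu:\mu\in P(K)\}$, where $\tau$ is the trace of $\M$ and $\tau_\mu(f)=\int_K\tau(f(x))\,d\mu(x)$. Since the image of a continuous section is $\|\cdot\|_2$-compact, the operator-norm unit ball of $\mathcal N$ is complete in $\|\cdot\|_{2,Y}$, so $(\mathcal N,Y)$ is a tracially complete $C^*$-algebra; its trace simplex $Y$ is the Bauer simplex $P(K)$ with extreme boundary the point evaluations $\tau_x:=\tau\circ\mathrm{ev}_x$. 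As each of these GNS representations recovers the II$_1$ factor $\M$, the pair $(\mathcal N,Y)$ is type~II$_1$ factorial; and since $\M$ has property~$\Gamma$, so does $(\mathcal N,Y)$ — the required central partitions of unity by projections with the correct trace values can be taken to be \emph{constant} sections valued in a system witnessing property~$\Gamma$ for $\M$, with $\|\cdot\|_2$-compactness of continuous sections used to upgrade fibrewise near-centrality to near-centrality in $\|\cdot\|_{2,Y}$. (All of this is part of the general theory of factoriality and regularity developed above.) Finally, a projection in $\mathcal N$ is exactly a $\|\cdot\|_2$-continuous projection-valued function $K\to\M$, so $p$ and $q$ from the statement are projections in $\mathcal N$.

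Granting this set-up, the two ``only if'' directions are immediate: if $v\in\mathcal N$ satisfies $v^*v=p$ and $vv^*\le q$, then applying $\tau$ at each $x\in K$ and using traciality gives $\tau(p(x))=\tau(v(x)^*v(x))=\tau(v(x)v(x)^*)\le\tau(q(x))$; and if $u\in\mathcal N$ is unitary with $upu^*=q$, then $\tau(q(x))=\tau(u(x)p(x)u(x)^*)=\tau(p(x))$. For the converse implications, observe that $\tau(p(x))\le\tau(q(x))$ for all $x\in K$ is equivalent to $\sigma(p)\le\sigma(q)$ for all $\sigma\in Y$ — sufficiency by taking $\sigma=\tau_{\delta_x}$, necessity by integrating against $\mu$ — and likewise for equality. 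By Theorem~\ref{thm:comparison} applied to the factorial tracially complete $C^*$-algebra $(\mathcal N,Y)$ with property~$\Gamma$, the inequality $\sigma(p)\le\sigma(q)$ for all $\sigma\in Y$ yields Murray--von Neumann subequivalence $p\precsim q$ in $\mathcal N$, i.e.\ a partial isometry $v\in\mathcal N$ with $v^*v=p$ and $vv^*\le q$; evaluating at $x\in K$ gives the $\|\cdot\|_2$-continuous $v$ of part~(i). Similarly, $\sigma(p)=\sigma(q)$ for all $\sigma\in Y$ yields, by Theorem~\ref{thm:comparison}, a unitary $u\in\mathcal N$ with $upu^*=q$, and evaluation at each $x$ gives the $\|\cdot\|_2$-continuous unitary-valued $u$ of part~(ii).

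The real content of the proof is thus entirely contained in Theorem~\ref{thm:comparison}, and the only points that need care here are the verifications that $(\mathcal N,Y)$ is a bona fide object of the theory — $\|\cdot\|_{2,Y}$-completeness of its unit ball and factoriality — and that property~$\Gamma$ of $\M$ passes to $(\mathcal N,Y)$. I expect the latter to be the one mildly non-formal step: relative to an arbitrary separable subset of the uniform tracial ultrapower $\mathcal N^\omega$, one must produce partitions of unity by projections central in $\mathcal N^\omega$ with the prescribed trace values, and the clean way to do this is via constant sections valued in a $\Gamma$-system for $\M$, the uniformity over $K$ being supplied by $\|\cdot\|_2$-compactness of continuous sections. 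Everything else is a routine unwinding of the meaning of subequivalence and unitary equivalence in $\mathcal N$ into their pointwise content on $K$. It is worth stressing that the hypothesis of property~$\Gamma$ on $\M$ enters precisely to bring Theorem~\ref{thm:comparison} to bear: for a factor without property~$\Gamma$, such as a free group factor, Theorem~\ref{thm:comparison} is unavailable and the statement is genuinely open beyond the totally disconnected case.
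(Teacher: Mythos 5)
Your proposal is correct and follows essentially the same route as the paper: the paper obtains Theorem~\ref{IntroThmCtsProj} precisely by applying the classification of projections (Theorem~\ref{thm:comparison}) to the trivial $W^*$-bundle $C_\sigma(K,\M)$, viewed as a factorial tracially complete $C^*$-algebra over the Bauer simplex $\mathrm{Prob}(K)$, with CPoU supplied via property $\Gamma$ (Theorem~\ref{introthmgammaimpliescpou}, or Proposition~\ref{prop:Gamma-bundle} in the bundle case). The one step you flag as needing care — that property $\Gamma$ of $\M$ passes to the bundle via constant sections and $\|\cdot\|_2$-compactness — is exactly handled in the paper by Proposition~\ref{prop:BauerDivideUnit}, which upgrades approximately central division of the unit to full property $\Gamma$ over a Bauer simplex.
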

The counterpart existence result to Theorem \ref{IntroThmCtsProj} is immediate (and does not require property $\Gamma$). Fix a copy of $L^\infty([0,1])\subseteq \M$ with Lebesgue trace.  Given a continuous $f\colon K\to [0,1]$,  the projection-valued function $p(x)\coloneqq \chi_{[0,f(x)]}$ is $\|\cdot\|_2$-continuous and has $\tau(p(x))=f(x)$ for all $x\in K$.

\addtocontents{toc}{\SkipTocEntry}
\subsection*{Acknowledgements}
We'd like to thank Ilijas Farah, Bradd Hart, Ilan Hirshberg, G{\'a}bor Szab{\'o}, and Andrea Vaccaro for discussions and Grigorios Kopsacheilis, Julian Kranz, Kiefer Mommaerts, Valerie Morris, Mikkel Munkholm, and Pawel Sarkowicz, and Andrea Vaccaro for comments on earlier versions of the paper.  Parts of the research in this paper were undertaken at the American Institute of Mathematics as part of the SQuaRE \emph{von Neumann techniques in the classification of $C^*$-algebras} (2019-2023), the \emph{Workshop on $C^*$-algebras: structure and dynamics} (Sde Boker, 2022), and the Fields Institute as part of the \emph{Thematic program on operator algebras and their applications} (2023).  We thank the organisers and funders of these meetings.

\tableofcontents 
\addtocontents{toc}{\protect\setcounter{tocdepth}{2}} 

\renewcommand{\thetheorem}{\arabic{theorem}}
\numberwithin{theorem}{section}

\section{Introduction}

We will now describe the contents of the paper in more detail, starting with the definition of tracially complete $C^*$-algebras as a generalisation of $W^*$-bundles. We then turn to the key technical challenge for working with tracially complete $C^*$-algebras: local-to-global transfer. That is, how to upgrade properties that approximately hold in each $\|\cdot\|_{2,\tau}$-seminorm for each $\tau$, to the same statement in the uniform trace norm. We end the introduction with an overview of the proof of Theorems~\ref{Main-A}--\ref{IntroThmClassMap} and a description of how tracially complete $C^*$-algebras fit into the structure and classification theory for simple nuclear $C^*$-algebras.

\subsection{Tracially complete \texorpdfstring{$C^*$}{C*}-algebras}

Abstractly, we define a tracially complete $C^*$-algebra to be a pair $(\mathcal M,X)$ where $\mathcal M$ is a $C^*$-algebra and $X$ is a (weak$^*$) compact and convex subset of $T(\mathcal M)$ such that the unit ball of $\mathcal M$ is complete in the uniform 2-norm $\|\cdot\|_{2,X}$ induced by $X$.  This definition is designed to encompass both tracial von Neumann algebras and Ozawa's uniform tracial completions.  While in the overview above we only considered the uniform tracial completion of a $C^*$-algebra $A$ with respect to all of its traces, Ozawa's work considers uniform tracial completions with respect to any compact convex subset $X\subset T(A)$.  We write $\completion{A}{X}$ for the uniform tracial completion with respect to $X$ (and, in particular, use the notation $\completion{A}{T(A)}$ rather than $\completion{A}{\rm u}$) in the rest of the paper. Every trace $\tau\in X$ will extend uniquely to a $\|\cdot\|_{2,X}$-continuous trace on $\completion{A}{X}$; these extensions are the designated set of traces on $\completion{A}{X}$ in the definition of a tracially complete $C^*$-algebra.  With a minor abuse of notation, the extension of $\tau \in X$ to $\completion{A}{X}$ is still denoted $\tau$, and the set of such extensions is still denoted $X$.  

Given a tracial von Neumann algebra $(\M,\tau)$,\footnote{A tracial von Neumann algebra $(\M,\tau)$ is von Neumann algebra $\M$ equipped with a faithful normal trace $\tau$.} it is typically necessary in classification results to consider all (normal) traces on $\mathcal M$ instead of only $\tau$ itself.  For example, the specified trace $\tau$ will classify projections in $\M$ only when it is the unique trace on $\mathcal M$ -- i.e.\ only when $\M$ is a factor.  By analogy, we would like to work with a subclass of tracially complete $C^*$-algebras $(\mathcal M, X)$, where the traces in $X$ have the potential to provide enough information for classification.  In the case when $X$ is a face in $T(\mathcal M)$, the set $X$ is precisely the set of $\|\cdot\|_{2, X}$-continuous traces on $\mathcal M$ (and conversely) -- see Proposition~\ref{prop:face}.  We call such tracially complete $C^*$-algebras \emph{factorial}. In this way, a tracial von Neumann algebra $(\mathcal M,\tau)$ is factorial as a tracially complete C$^*$-algebra precisely when $\M$ is a factor. The name is further justified by the fact that a $W^*$-bundle is factorial as a tracially complete $C^*$-algebra if and only if each fibre is a factor (Proposition~\ref{prop:WStarBundleToTC}) and, more generally, a tracially complete $C^*$-algebra is factorial if and only if every extreme point of $X$ gives rise to a factor representation of $\mathcal M$ (Proposition~\ref{prop:factorial}).

The examples of greatest interest -- uniform tracial completions of $C^*$-algebras -- are automatically factorial.  Further, as well as being necessary to classify projections by the distinguished traces, we have found factoriality to be an important technical condition when applying local-to-global arguments. Accordingly, we have come to regard factorial tracially complete $C^*$-algebras as the fundamental objects in the class of tracially complete $C^*$-algebras.

When working with factorial tracially complete $C^*$-algebras $(\M,X)$ (such as uniform tracial completions of $C^*$-algebras), difficulties increase with the complexity of $X$.
The space $X$ is always always a Choquet simplex as it is a closed face in the Choquet simplex $T(\M)$.  When $X$ is finite dimensional or, more generally, a Bauer simplex, a factorial tracially complete $C^*$-algebra over $X$ has extra structure and is easier to analyse. This is illustrated schematically in Figure \ref{fig1}.
\begin{figure}[hh]
    \centering
\includegraphics[width=7cm]{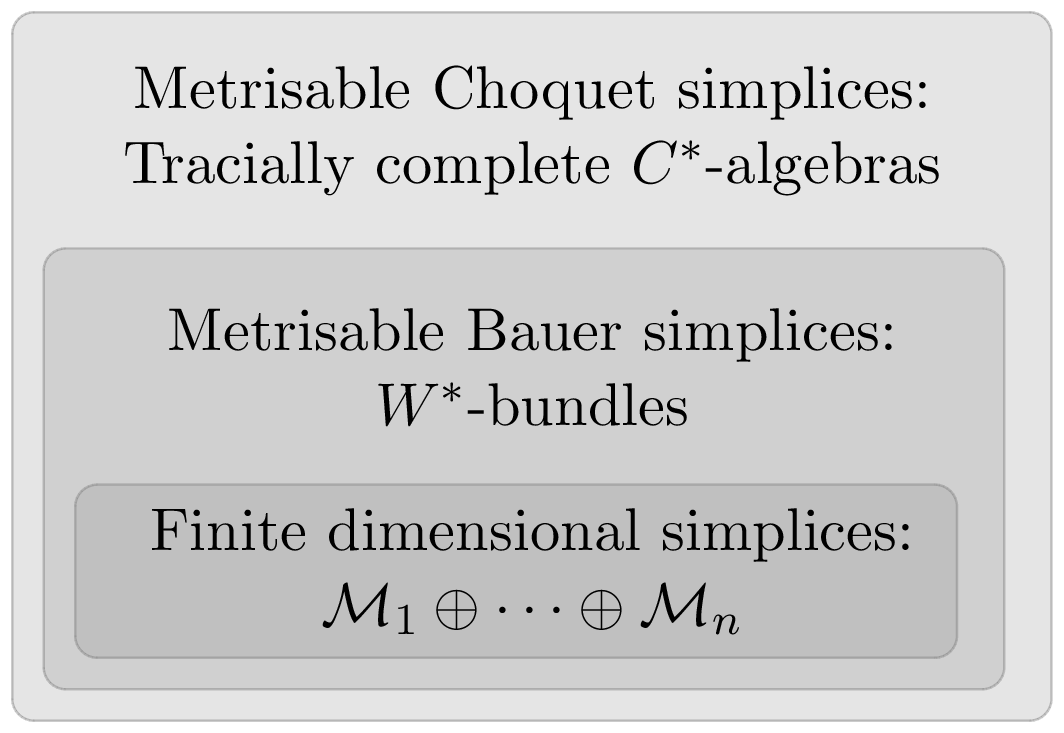}

    \caption{Factorial tracially complete C$^*$-algebras}\label{fig1}
\end{figure}

Firstly, when $X$ is a finite dimensional simplex, $\M$ is just a finite direct sum $\mathcal M_1\oplus\dots\oplus \M_n$ of factors.  In this case, Theorems \ref{InformalStructureThm} and \ref{InformalClassification} are Connes' Theorem\footnote{Theorem~\ref{IntroThmClassMap} also follows directly from Connes' Theorem (see Proposition \ref{prop:uniqueness-to-vNa}), though this is most often stated in the literature when $A$ is nuclear.} (\cite{Co76}), and regularity (i.e.\ property $\Gamma$) is automatic from amenability (again by Connes' work; see \cite[Corollary 2.2]{Co76}).

The next threshold of complexity occurs when $X$ is a \emph{Bauer simplex}: that is, when the extreme boundary $\partial_eX$ is compact. Ozawa paid particular attention to this situation in \cite{Oz13}, introducing the abstract notion of (the section algebra of) a \emph{continuous $W^*$-bundle} over a compact Hausdorff space $K$ (see Section \ref{sec:WStarBundles} for the precise definition).  The most basic examples of $W^*$-bundles are trivial bundles.  The trivial $W^*$-bundle over $K$ with fibre a tracial von Neumann algebra $(\mathcal M,\tau)$ is given by
\begin{equation}
C_\sigma(K,\mathcal M)\!\coloneqq\! \{f\colon\! K\to \mathcal M:f\text{ is }\|\cdot\|\text{-bounded and }\|\cdot\|_{2,\tau}\text{-continuous}\},
\end{equation}
together with the conditional expectation $E \colon C_\sigma(K, \mathcal M) \rightarrow C(K)$ given by composing with $\tau$.

Ozawa showed that if $A$ is a $C^*$-algebra whose tracial state space $T(A)$ is a Bauer simplex, then the uniform tracial completion $\completion{A}{T(A)}$ naturally has the structure of a continuous $W^*$-bundle over $\partial_eT(A)$ whose fibre at $\tau \in \partial_e T(A)$ is the von Neumann factor $\pi_\tau(A)''$. Moreover, he proved a `trivialisation theorem' (\cite[Theorem 15]{Oz13}), characterising when $W^*$-bundles whose fibres are the hyperfinite II$_1$ factor are trivial -- this amounts to asking when Connes' theorem can be established in a continuous fashion over $K$. Theorem \ref{Main-A} follows from Ozawa's trivialisation theorem whenever the $C^*$-algebras involved have Bauer trace simplices. Using Ozawa's methods, we show in Section \ref{sec:WStarBundles} that factorial tracially complete $C^*$-algebras $(\M,X)$ whose traces $X$ form a Bauer simplex have a natural structure of a $W^*$-bundle over $\partial_eX$, whose fibre at $\tau \in \partial_e X$ is the factor $\pi_\tau(\M)$.\footnote{By Ozawa's work, $\pi_\tau(\M)=\pi_\tau(\M)''$ -- see \cite[Theorem~11]{Oz13}.} 
In this way, Theorems~\ref{InformalStructureThm} and \ref{InformalClassification} also follow from Ozawa's trivialisation theorem in the case of factorial tracially complete $C^*$-algebras with a Bauer simplex of traces.

In general, the trace simplex of a (simple) $C^*$-algebra can be very far from Bauer. Indeed, any metrisable Choquet simplex can arise as the trace space of a separable $C^*$-algebra, including, for example, the Poulsen simplex characterised as the unique (non-trivial) metrisable Choquet simplex where the extreme points form a dense subset, and which   
stands at the opposite extreme to Bauer simplices, with a highly complex affine structure (see \cite{LindenstrassOlsenSternfeld}).  

We think of factorial tracially complete $C^*$-algebras $(\M,X)$ as providing a formalism for (the section algebra of) a bundle over a Choquet simplex $X$, with `fibres' coming from the GNS representations of $\M$ at points $\tau\in X$, which takes into account the affine relations between the fibres (see the discussion in Section \ref{sec:tracially-complete}).\footnote{At present, this is just intuition -- we would welcome a formal framework for viewing tracially $C^*$-algebras as affine bundles.} Outside the Bauer simplex setting, the subtle interaction between the affine structure of $X$ and the operator algebraic structure of $\M$ is quite challenging and leads to very different behaviour compared to $W^*$-bundles. To give one example, suppose that $(\M,X)$ is a factorial tracially complete $C^*$-algebra; if $X$ is a Bauer simplex then the centre of $\M$ can be identified with the continuous affine functions on $X$, whereas in the general case, the centre of $\M$ can be trivial.

\subsection{The trace problem} We take a brief interlude to discuss why we keep track of the designated collection of traces in our definition of a tracially complete $C^*$-algebra.   In a nutshell, while traces on a II$_1$ factor are automatically normal (as the unique trace is normal), the issue is whether the analogous statement holds for factorial tracially complete $C^*$-algebras holds. 

When $(\M,X)$ is a tracially complete $C^*$-algebra, all the traces in $X$ are evidently $\|\cdot\|_{2,X}$-continuous -- the appropriate notion of continuity in this setting. Moreover, all $\|\cdot\|_{2,X}$-continuous traces on $\M$ belong to the closed face in $T(\M)$ generated by $X$ (see Proposition \ref{prop:face}). Accordingly, when $(\M,X)$ is factorial, $X$ consists precisely of the $\|\cdot\|_{2,X}$ continuous traces on $\M$.  Are there any other traces? We regard this as a foundational problem in the theory of tracially complete $C^*$-algebras.

\begin{question}[Trace Problem]\label{Q:traces}
Let $(\M,X)$ be a factorial tracially complete $C^*$-algebra.  Are all traces on $\M$ automatically $\|\cdot\|_{2,X}$-continuous? Equivalently, is the inclusion $X\subseteq T(\M)$ an equality?
\end{question}

When $X$ is a finite dimensional simplex, $\mathcal M$ is a finite direct sum of factors, and hence the trace problem has a positive solution since all traces on $\mathcal M$ are normal.   In this paper, we resolve the trace problem for ultrapowers (and reduced products) of tracially complete $C^*$-algebras with property $\Gamma$ (Theorem \ref{thm:no-silly-traces}); property $\Gamma$ is discussed in Section \ref{sec:intro:ltg2} and Section \ref{sec:Gamma}.  This is in the spirit of various `no silly trace' results asserting that, under appropriate regularity conditions, traces on ultrapowers are generated by the limit traces.  Such results include \cite[Theorem 8]{Oz13}, \cite[Theorem~1.2]{NR16}, and \cite[Theorem~3.22]{BBSTWW}; see \cite[Theorem~A]{APRT23} for a very general $C^*$-algebra result.

The trace problem is particularly pertinent when we take tracial completions.  Given a $C^*$-algebra $A$, Ozawa's uniform tracial completion $\completion{A}{T(A)}$ gives rise to a factorial tracially complete $C^*$-algebra $\big(\completion{A}{T(A)},T(A)\big)$, but is $\completion{A}{T(A)}$ uniformly tracially complete with respect to \emph{all} its traces?  If there are additional traces on $\completion{A}{T(A)}$ which are not $\|\cdot\|_{2,T(A)}$-continuous, then there seems to be no reason why this should be the case.  A positive answer to the trace problem would ensure that the uniform tracial completion process stabilises.\footnote{While this paper was in preparation, the trace problem was resolved positively by the third-named author for tracially complete $C^*$-algebras with property $\Gamma$ (\cite{Ev24}). In particular, this means that for a $\Z$-stable $C^*$-algebra $A$, Ozawa's $\completion{A}{T(A)}$ has a complete unit ball in its uniform 2-norm.}

\subsection{Local-to-global: amenability}\label{Sect:IntroAmenability}

Our main approach to understanding the structure of tracially complete $C^*$-algebras is to pass from local properties at each trace, i.e.\ properties of the fibres $\pi_\tau(\mathcal M)''$, to global properties that hold uniformly over all traces.  Identifying the appropriate class of \emph{amenable} tracially complete $C^*$-algebras through completely positive approximations gives a first example of this idea.

Recall that nuclearity of a $C^*$-algebra $A$ is characterised through the completely positive approximation property (\cite{Ki77}): there is a net  $(F_i,\phi_i,\psi_i)_i$ consisting of finite dimensional $C^*$-algebras $F_i$ and completely positive and contractive (c.p.c.) maps
\begin{equation}
\begin{tikzcd}
    A\ar[r,"\psi_i"] & F_i\ar[r,"\phi_i"]&A
\end{tikzcd}
\end{equation}
such that $\|\phi_i(\psi_i(a))-a\|\to 0$ for all $a\in A$.  The directly analogous concept for a von Neumann algebra $\M$ is \emph{semidiscreteness}, which asks for 
c.p.c.\ maps
\begin{equation}\label{Intro:CPAP1}
\begin{tikzcd}
    \M\ar[r,"\psi_i"] & F_i\ar[r,"\phi_i"]&\M
\end{tikzcd}
\end{equation}
which approximate the identity on $\M$ in the point-weak$^*$ topology (rather than the point-norm topology).  When $(\M,\tau)$ is a tracial von Neumann algebra, (i.e.\ $\tau$ is a specified faithful normal trace on $\M$), a standard Hahn--Banach argument (recalled as Proposition~\ref{prop:tracialnuc=weaknuc} below) shows that this is equivalent to the completely positive approximation property in the $\|\cdot\|_{2,\tau}$-norm.  A large body of deep work, including Connes' theorem, gives a plethora of other conditions equivalent to the completely positive approximation property for $C^*$-algebras and to semidiscreteness for a von Neumann algebras. 

If we look for analogous conditions on a tracially complete $C^*$-algebra $(\M,X)$, one option is to work locally and ask for all the von Neumann fibres $\pi_\tau(\M)''$ for $\tau\in X$ to be  semidiscrete. Our local-to-global result for amenability allows us to go from such a local condition to a single system of completely positive approximations which works uniformly over all traces. 

\begin{theorem}\label{thm:introamenable}
Let $(\mathcal M,X)$ be a tracially complete $C^*$-algebra.  The following are equivalent:
\begin{enumerate}
        \item $(\M, X)$ is {}amenable, in the sense that the completely positive approximation property holds in the point-$\|\cdot\|_{2,X}$ topology;
        \item for all $\tau \in X$, $\pi_\tau(\M)''$ is semidiscrete, in the sense that the completely positive approximation property holds for $\pi_\tau(X)''$ in the point-weak$^*$ topology;
        \item every $\tau \in X$ is uniformly amenable in the sense of \cite[Definition~3.2.1]{Bro06}.
\end{enumerate}
\end{theorem}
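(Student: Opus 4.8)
The plan is to prove the cycle of implications $(1)\Rightarrow(2)\Rightarrow(3)\Rightarrow(1)$, with the final implication being the genuinely new local-to-global step.

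\textbf{$(1)\Rightarrow(2)$.} Suppose $(F_i,\phi_i,\psi_i)$ witnesses the completely positive approximation property in the point-$\|\cdot\|_{2,X}$ topology. Fix $\tau \in X$ and let $\pi_\tau\colon \mathcal M \to \mathcal B(\mathcal H_\tau)$ be the GNS representation, with $\mathcal N = \pi_\tau(\mathcal M)''$. Composing with $\pi_\tau$ gives c.p.c.\ maps $\mathcal N \to F_i \to \mathcal N$ after first extending $\psi_i$ normally to $\mathcal N$ (using that $\mathcal N$ is the weak$^*$-closure of $\mathcal M$ and that $F_i$ is finite dimensional, so $\psi_i$ is automatically weak$^*$-continuous on bounded sets and extends to a normal c.p.c.\ map by Kaplansky density) and post-composing $\phi_i$ with $\pi_\tau$. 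Since $\|x\|_{2,\tau} \le \|x\|_{2,X}$ for $x \in \mathcal M$, these maps approximate $\mathrm{id}_{\mathcal M}$ in $\|\cdot\|_{2,\tau}$ on $\mathcal M$, hence on a $\|\cdot\|_{2,\tau}$-dense subset of the unit ball of $\mathcal N$; a standard argument (the $\|\cdot\|_{2,\tau}$-topology and the point-weak$^*$ topology agree on bounded convex sets via Hahn--Banach, exactly as in Proposition~\ref{prop:tracialnuc=weaknuc}) upgrades this to point-weak$^*$ convergence on all of $\mathcal N$, so $\mathcal N$ is semidiscrete.

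\textbf{$(2)\Leftrightarrow(3)$.} This is essentially bookkeeping against the literature. Brown's uniform amenability of a trace $\tau$ on $\mathcal M$ is (by \cite[Definition~3.2.1]{Bro06} and the surrounding equivalences) precisely the condition that $\pi_\tau(\mathcal M)''$ is semidiscrete, or equivalently injective by Connes' theorem; one should cite the relevant equivalence from \cite{Bro06} and, if needed, the identification of semidiscreteness with injectivity for tracial von Neumann algebras.

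\textbf{$(3)\Rightarrow(1)$: the main obstacle.} Here is where the real work lies, and I expect it to be the hard part. We are given that each $\tau \in X$ is uniformly amenable, i.e.\ for every finite $F \subseteq \mathcal M$ and $\varepsilon > 0$ there exist a finite dimensional $C^*$-algebra $E$ and c.p.c.\ maps $\mathcal M \to E \to \mathcal M$ approximating $\mathrm{id}$ on $F$ up to $\varepsilon$ in $\|\cdot\|_{2,\tau}$ — but only for one $\tau$ at a time, with no control over the other traces. The goal is a single system that works uniformly in $\|\cdot\|_{2,X}$. The strategy I would pursue: fix $F = \{a_1,\dots,a_n\}$ in the unit ball of $\mathcal M$ and $\varepsilon>0$, and for each $\tau$ produce a local approximation $(E_\tau, \phi_\tau, \psi_\tau)$ good to within $\varepsilon$ at $\tau$. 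By continuity of $\tau' \mapsto \|\phi_\tau(\psi_\tau(a_j)) - a_j\|_{2,\tau'}$ (each of these is continuous on $X$ since the maps are fixed and the $2$-norms vary continuously in the trace), the approximation remains good within $2\varepsilon$ on a relatively open neighbourhood $U_\tau \subseteq X$. Compactness of $X$ gives a finite subcover $U_{\tau_1},\dots,U_{\tau_m}$; now one must \emph{glue} the finitely many approximations $(E_{\tau_k},\phi_{\tau_k},\psi_{\tau_k})$ into one approximation valid simultaneously on all of $X$. This gluing is the crux: the natural move is to take $E \coloneqq \bigoplus_k E_{\tau_k}$ with $\psi \coloneqq (\psi_{\tau_1},\dots,\psi_{\tau_m})$, but combining the $\phi_{\tau_k}$ requires a partition-of-unity-type device adapted to the uniform $2$-norm — one needs positive elements (or central-ish elements of $\mathcal M$, or elements of the relevant function system on $X$) that are approximately supported on the $U_{\tau_k}$ and sum to $1$, and that interact well with the c.p.\ maps so that $\phi \coloneqq \sum_k \phi_{\tau_k}(\,\cdot\,) \# h_k$ (for suitable $h_k$) is still c.p.c.\ and recovers $a_j$ within $O(\varepsilon)$ in $\|\cdot\|_{2,X}$. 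Getting such a partition of unity compatible with the $2$-norm — and checking the cross terms vanish in $\|\cdot\|_{2,X}$, not merely fibrewise — is exactly the sort of local-to-global obstruction the paper is built to handle, and I would expect the proof to invoke (or develop) the complemented-partition-of-unity / $\sigma$-weakly continuous gluing machinery referenced elsewhere in the introduction rather than doing it by hand. A cleaner alternative worth attempting first: realise $(\mathcal M, X)$ (or a suitable relative of it) inside a structure where the fibres $\pi_\tau(\mathcal M)''$ assemble continuously — e.g.\ reduce to the Bauer case by a Choquet-simplex approximation of $X$, trivialise there, and then take limits — though the non-Bauer case is precisely where this is delicate, so the partition-of-unity approach is likely unavoidable in the end.
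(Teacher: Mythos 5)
Your reduction of the problem to the implication (iii)$\Rightarrow$(i), and the compactness step producing finitely many local approximations $(E_{\tau_k},\phi_{\tau_k},\psi_{\tau_k})$ that are good on an open cover of $X$, match the paper. But the gluing mechanism you propose for the crux is not the one that works, and as stated it leaves a genuine gap. You reach for a partition-of-unity/CPoU-type device to combine the upward maps $\phi_{\tau_k}$, and you acknowledge you cannot control the cross terms. That machinery is unavailable here: CPoU is only defined for \emph{factorial} tracially complete $C^*$-algebras, is not assumed in this theorem, genuinely fails for some type II$_1$ examples (Example~\ref{eg:free-group-bundle}), and in the paper's logical architecture its weak form is itself \emph{derived from} the local-to-global principle underlying this very theorem --- so invoking it would be circular. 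Your fallback (reduce to the Bauer case) is also not viable in the non-Bauer setting, as you suspect.

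The missing idea is much softer: no partition of unity is needed because the property being glued is \emph{affine}. The set $\mathcal C$ of c.p.c.\ maps $\mathcal M\to\mathcal M$ factoring through finite dimensional $C^*$-algebras is convex (take direct sums of the $E_{\tau_k}$ and convex combinations of the compositions), and for fixed $a$ the error function $h_\eta(\tau)\coloneqq\|\eta(a)-a\|_{2,\tau}^2$ is a continuous \emph{affine} function of $\tau\in X$ and an affine function of $\eta\in\mathcal C$. By Kadison duality every state on $\mathrm{Aff}(X)$ is a point evaluation (Proposition~\ref{prop:pointwise-to-uniform}), so pointwise convergence to $0$ of a net in $\mathrm{Aff}(X)$ is weak convergence, and the Hahn--Banach theorem then produces a convex combination $\eta=\sum_k t_k\,\phi_{\tau_k}\circ\psi_{\tau_k}\in\mathcal C$ whose error is small \emph{uniformly} over $X$ (after a Cauchy--Schwarz estimate); this is exactly Lemma~\ref{lem:affine-selection}, applied in the proof of Theorem~\ref{thm:amenable}. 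Two smaller points: your claim in (i)$\Rightarrow$(ii) that a c.p.c.\ map $\psi_i\colon\mathcal M\to F_i$ is ``automatically weak$^*$-continuous on bounded sets'' is false in general --- the paper instead passes through Proposition~\ref{prop:tracialnuc=weaknuc}, where the Hahn--Banach convexity trick again converts point-weak$^*$ approximation into point-$\|\cdot\|_{2,\tau}$ approximation; and in (iii)$\Rightarrow$(i) the local approximations produced from semidiscreteness of $\pi_\tau(\mathcal M)''$ land in the von Neumann algebra, not in $\mathcal M$, so one must lift the upward maps back to c.p.c.\ maps into $\mathcal M$ (Lemma~\ref{lem:finite-rank-cp}) before averaging.
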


In particular, via Connes' theorem, the uniform tracial completion of a nuclear $C^*$-algebra is {}amenable as a tracially complete $C^*$-algebra.\footnote{A naive argument for this fails in the same way that a direct proof that the bidual of a nuclear $C^*$-algebra is semidiscrete as a von Neumann algebra fails -- an extension of the approximations witnessing nuclearity of $A$ will not a priori approximate the identity map on the tracial completion in point-$\|\cdot\|_{2,X}$.}

Theorem \ref{thm:introamenable} generalises to $^*$-homomorphisms between $C^*$-algebras and tracially complete $C^*$-algebras (see Theorem \ref{thm:amenable}, which proves the first statement in Theorem \ref{IntroThmClassMap}) characterising the completely positive approximation property (with respect to the uniform 2-norm) in terms of pointwise amenability conditions.  We call such maps \emph{tracially nuclear}, and this is the appropriate amenability condition on $^*$-homomorphisms  for classification; see the discussion in Section \ref{Intro:StructureClass}.

Theorem \ref{thm:introamenable} and its generalisation to morphisms in Theorem \ref{thm:amenable} are both proved by means of a Hahn--Banach trick which we learnt from \cite{KranzPhd,GGKNV}.  The key point -- used by Ozawa in \cite{Oz13} -- is that the weak topology on the space $\mathrm{Aff}(X)$ of continuous affine functions on a Choquet simplex is given by pointwise convergence. 
Since the set of c.p.c.\ maps which factorise through finite dimensional $C^*$-algebras is closed under convex combinations, it is then possible to take convex combinations of a finite set of local approximations (obtained through compactness) to build a global approximation.

\subsection{Local-to-global: CPoU and property \texorpdfstring{$\Gamma$}{Gamma}}\label{sec:intro:ltg2}

The use of the Hahn--Banach theorem with $\mathrm{Aff}(X)$ described in the previous section provides a general local-to-global tool in the setting of tracially complete $C^*$-algebras for conditions witnessed by a system of affine equations over a convex set (see Lemma \ref{lem:affine-selection}).  However, the Hahn--Banach strategy is unlikely to apply to conditions which are not affine in nature.  To discuss this, let us consider the problem (which is open in general) of whether a type II$_1$ factorial tracially complete $C^*$-algebra $(\M,X)$ admits approximate projections which are approximately of trace $1/2$; i.e.\ given $\epsilon>0$, does there exist a positive contraction $p\in\M$ with $\|p-p^2\|_{2,X}<\epsilon$ and $|\tau(p)-1/2|<\epsilon$ for all $\tau\in X$?

As we think of tracially complete $C^*$-algebras as a kind of affine bundle, it is natural to approach local-to-global problems via partition of unity techniques.  For a $W^*$-bundle $\M$ with II$_1$ factor fibres over a compact Hausdorff space $K$, the algebra $C(K)$ embeds centrally into $\M$ in a way which identifies $X$ with the space $\mathrm{Prob}(K)$ of Radon probability measures on $K$.  It is straightforward to combine elements in $\M$ with a partition of unity in $C(K)$; however, such a direct approach does not preserve algebraic conditions such as (in our sample problem) being a projection.  To see where it fails, identify each point of $K$ with the trace on $\mathcal M$ corresponding to the point-mass measure on $K$, and for each point $\tau\in K$, fix a positive contraction $p_\tau\in\M$ such that $\pi_\tau(p_\tau)$ is a projection of trace $1/2$ in the fibre $\pi_\tau(\M)$ (which is automatically a II$_1$ factor). Fixing $\epsilon>0$, compactness gives an open cover $U_1,\dots,U_n$ of $K$ and positive contractions $p_1,\dots,p_n\in \M$ such that \begin{equation}
    \|p_{i}^2-p_{i}\|_{2,\tau}<\epsilon \quad \text{and} \quad \tau(p_i) \approx_\epsilon 1/2,\qquad \tau \in U_i.
\end{equation}
Taking a partition of unity $f_1,\dots,f_n\in C(K)$ subordinate to $U_1,\dots,U_n$, one can form $p\coloneqq \sum_{i=1}^nf_ip_{i}$. While this will have $\tau(p)\approx_\epsilon 1/2$ for all $\tau\in K$, there is no reason for $p$ to be an approximate projection in uniform 2-norm: partitions of unity from $C(K)$ do not interact well with multiplication due to the lack of orthogonality of the $f_i$.

The solution is to ask that the $f_i$ are approximate projections (giving rise to approximate orthogonality).  However, unless $K$ is zero dimensional, this requires that they come from $\M$ instead of $C(K)$, and we must weaken centrality to approximate centrality. If such $f_i$ can be found, then $p\coloneqq \sum_{i=1}^nf_i p_i$ will give the required approximate
projection in $\M$.\footnote{The reason this works is that $\tau(f_ip_i)\approx\tau(f_i)\tau(p_i)$ uniformly over  $\tau\in K$; see Proposition~\ref{prop:BauerDivideUnit}. For subtle reasons, it is essential for this approximation that $K$ is compact, i.e.\ that as a tracially complete $C^*$-algebra, the underlying simplex is Bauer.\label{fn:TracialFactorisation}}  In general, such partitions of unity need not exist (see Example~\ref{eg:free-group-bundle}), but when they do, they can be used for local-to-global results.  Ozawa's trivialisation theorem is a quintessential example: the passage from the existence of approximately central approximate projections of trace $1/2$ in a $W^*$-bundle with hyperfinite II$_1$ factor fibres to global triviality of the bundle (\cite[Theorem~15(ii)$\Rightarrow$(iii)$\Rightarrow$(i)]{Oz13}) is underpinned by such partition of unity arguments.  Subsequently, this strategy was made explicit and systematically used in \cite{BBSTWW} for local-to-global transfer in trivial $W^*$-bundles whose fibre is a McDuff II$_1$ factor.

Beyond the $W^*$-bundle setting, one needs even more control on the approximately central projections $f_i$ forming a partition of unity.
It is necessary to be able to uniformly control $\tau(f_ia)$ for certain $a$, whereas in the $W^*$-bundle setting, this control comes for free from knowing that $\tau(f_i)$ vanishes outside of $U_i$ (see Footnote~\ref{fn:TracialFactorisation}). A subset of the present authors together with Winter identified a solution in \cite{CETWW} by means of \emph{complemented partitions of unity} (CPoU) for a $C^*$-algebra.  The technical definition is designed to give the required control with respect to a given family of positive elements, which should be thought of as complementary to the tracial support of the $f_i$. See the second part of the introduction to \cite{CETWW} for a further discussion of this and the challenges that must be overcome outside the Bauer simplex setting.

In \cite{CETWW}, the concept of complemented partitions of unity was set out using the uniform tracial ultraproduct of a $C^*$-algebra, but as we describe in Section \ref{sec:CPoU}, this definition naturally lives at the level of tracially complete $C^*$-algebras. When complemented partitions of unity can be found, they give rise to a very general local-to-global transfer process.  We give a number of examples in Section \ref{sec:app-CPoU} (some of which are based on applications of CPoU for uniform tracial closures of nuclear $C^*$-algebras in \cite{CETW,CETW-classification}). 

As with the example described above of approximate projections, the local-to-global transfer process produces approximate properties in factorial tracially complete $C^*$-algebras with CPoU, i.e.\ properties holding up to a small error in uniform 2-norm.  In order to cleanly encode such approximate conditions, we develop the theory of reduced products (including ultraproducts) of tracially complete $C^*$-algebras in Section \ref{sec:InductiveLimitsETC}. The effect of the local-to-global argument is that the ultrapower of factorial tracially complete $C^*$-algebras with CPoU enjoys several of the fundamental properties of a finite von Neumann algebra: real rank zero, stable rank one, all unitaries are exponentials, and Murray--von Neumann comparison of projections is determined by traces.  Our solution to the trace problem for such reduced products (Theorem \ref{thm:no-silly-traces}) is obtained as a consequence of these results, avoiding the use of sums of commutators found in precursor results such as \cite[Section 3.4]{BBSTWW} (see Remark~\ref{rmk:commutators}).

Since CPoU arguments inevitably produce approximate conclusions, there is a more work to be done to achieve the exact classification of projections needed for Theorem \ref{IntroThmCtsProj}.  This is achieved by means of an intertwining argument based on explicit estimates showing that projections of the same trace which are close in 2-norm are approximately conjugate by a unitary close to the unit. This is in the spirit of similar perturbation results for finite von Neumann algebras and leads to the following result (proved as Theorems~\ref{thm:comparison} and~\ref{thm:existence-projections}).

\begin{theorem}\label{introthm-classprojections2}
Let $(\M,X)$ be a factorial type {\rm II}$_1$ tracially complete $C^*$-algebra with CPoU. 
\begin{enumerate}
    \item\label{intro-class-proj-unique} If $p, q \in \mathcal M$ are projections and $\tau(p) = \tau(q)$ for all $\tau \in X$, then $p$ and $q$ are unitarily equivalent.
    \item\label{intro-class-proj-exist} For any continuous affine $f\colon X\to[0,1]$ there is a projection $p\in\M$ with $\tau(p)=f(\tau)$ for all $\tau\in X$.
\end{enumerate}
\end{theorem}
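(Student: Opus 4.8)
The plan is to deduce both parts from the good properties of the reduced power of $\M$. By the results of Section~\ref{sec:InductiveLimitsETC}, since $(\M,X)$ has CPoU its reduced power $\M^\omega$ is, in the relevant respects, indistinguishable from a finite von Neumann algebra: it has real rank zero and stable rank one, all its unitaries are exponentials (and hence lift to sequences of unitaries in $\M$), Murray--von Neumann comparison of projections in $\M^\omega$ is determined by the traces, and by Theorem~\ref{thm:no-silly-traces} every trace on $\M^\omega$ is a limit trace. The strategy is then to use these facts to obtain \emph{approximate} versions of (i) and (ii) inside $\M$, and to upgrade to the exact statements by a one-sided Elliott intertwining, the engine of which is a quantitative perturbation estimate: projections of equal trace which are $\|\cdot\|_{2,X}$-close are conjugate by a unitary $\|\cdot\|_{2,X}$-close to $1$. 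I expect establishing this estimate (and assembling the approximate form of (ii) via CPoU) to be where the real work lies.

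I would set up the estimates first. Given projections $p,q\in\M$ with $\tau(p)=\tau(q)$ for all $\tau\in X$, the same holds for \emph{every} trace on $\M^\omega$ (each is a limit of traces $\tau_n\in X$, and $\tau_n(p)=\tau_n(q)$ for each $n$), so comparison in $\M^\omega$ gives $p\sim q$ and $1-p\sim 1-q$, hence a unitary $V\in\M^\omega$ with $VpV^*=q$; moreover, running the usual finite-von-Neumann-algebra argument inside $\M^\omega$ (using real rank zero and comparison to split off the common subprojection $p\wedge q$) one can take $\|V-1\|_{2,X^\omega}\le C\|p-q\|_{2,X^\omega}=C\|p-q\|_{2,X}$ for an absolute constant $C$. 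Lifting $V$ to a bounded sequence of unitaries in $\M$ then yields: for every $\delta>0$ there is a unitary $w\in\M$ with $\|w-1\|_{2,X}\le C\|p-q\|_{2,X}+\delta$ and $\|wpw^*-q\|_{2,X}<\delta$. The same scheme, with $\sup_{\tau\in X}|\tau(p)-\tau(q)|<\eta$ in place of equality and splitting off subprojections of the common trace, gives a unitary $u\in\M$ with $\|upu^*-q\|_{2,X}\le C\sqrt{\eta}$; and, absorbing the operator-norm error incurred when applying real rank zero of $\M^\omega$ to replace a positive contraction by a nearby genuine projection, the same holds with $p,q$ replaced by positive contractions $a,b$ with $\|a^2-a\|_{2,X},\|b^2-b\|_{2,X}<\kappa$, with the bound $C\sqrt{\eta+\kappa}$.

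For (i), fix $p,q$ with $\tau(p)=\tau(q)$ for all $\tau\in X$ and a sequence $\delta_n\downarrow0$ with $\sum_n\delta_n<\infty$. Using the $\eta=0$ case, choose a unitary $U_1\in\M$ with $\|U_1pU_1^*-q\|_{2,X}<\delta_1$. Given $U_n$ with $\|U_npU_n^*-q\|_{2,X}<\delta_n$, the projections $U_npU_n^*$ and $q$ still agree on every $\tau\in X$, so the perturbation estimate supplies a unitary $w_n\in\M$ with $\|w_n-1\|_{2,X}\le C\delta_n$ and $\|w_n(U_npU_n^*)w_n^*-q\|_{2,X}<\delta_{n+1}$; set $U_{n+1}:=w_nU_n$, so $\|U_{n+1}-U_n\|_{2,X}=\|w_n-1\|_{2,X}\le C\delta_n$. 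Then $(U_n)$ is $\|\cdot\|_{2,X}$-Cauchy in the complete unit ball of $\M$, with limit $U$; since multiplication is $\|\cdot\|_{2,X}$-continuous on bounded sets and $\|\cdot\|_{2,X}$ is a norm on $\M$, $U^*U=UU^*=1$, and $UpU^*=\lim_nU_npU_n^*=q$, proving (i).

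For (ii), approximate existence is the sample CPoU application discussed in Section~\ref{sec:intro:ltg2}: choosing in each fibre $\pi_\tau(\M)''$ (which is type $\mathrm{II}_1$) a projection of trace $f(\tau)$ and patching with a complemented partition of unity yields, for every $\epsilon>0$, a positive contraction $a_\epsilon\in\M$ with $\|a_\epsilon^2-a_\epsilon\|_{2,X}<\epsilon$ and $\sup_{\tau\in X}|\tau(a_\epsilon)-f(\tau)|<\epsilon$. Now intertwine: fix $\epsilon_n\downarrow0$ with $\sum_n\sqrt{\epsilon_n}<\infty$, put $b_1:=a_{\epsilon_1}$, and given a positive contraction $b_n$ with $\|b_n^2-b_n\|_{2,X}<\epsilon_n$ and $\sup_\tau|\tau(b_n)-f(\tau)|<\epsilon_n$, apply the approximate-projection estimate to $a_{\epsilon_{n+1}}$ and $b_n$ (with $\kappa,\eta\le2\epsilon_n$) to get a unitary $u_n\in\M$ with $\|u_na_{\epsilon_{n+1}}u_n^*-b_n\|_{2,X}\le C\sqrt{\epsilon_n}$, and set $b_{n+1}:=u_na_{\epsilon_{n+1}}u_n^*$. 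Then $b_{n+1}$ is a positive contraction with $\|b_{n+1}^2-b_{n+1}\|_{2,X}=\|a_{\epsilon_{n+1}}^2-a_{\epsilon_{n+1}}\|_{2,X}<\epsilon_{n+1}$, $\sup_\tau|\tau(b_{n+1})-f(\tau)|<\epsilon_{n+1}$, and $\|b_{n+1}-b_n\|_{2,X}\le C\sqrt{\epsilon_n}$, so $(b_n)$ is $\|\cdot\|_{2,X}$-Cauchy; its limit $p\in\M$ satisfies $\|p^2-p\|_{2,X}=\lim_n\|b_n^2-b_n\|_{2,X}=0$, hence is a projection, and $\tau(p)=\lim_n\tau(b_n)=f(\tau)$ for all $\tau\in X$ by continuity of $\tau$. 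The main obstacle throughout is the perturbation estimate of the second paragraph — verifying that the classical finite-von-Neumann-algebra perturbation arguments for projections genuinely survive in $\M^\omega$, with constants uniform over $X$.
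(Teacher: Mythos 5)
Your treatment of part (i) is essentially the paper's own argument: the quantitative perturbation estimate you describe is Lemma~\ref{lem:equivalence-close-to-unit} (with constant $2\sqrt2$), proved by producing an exact conjugating unitary in $\M^\omega$ via Proposition~\ref{prop:approximate-comparison} and correcting it to be close to $1_{\M^\omega}$, then lifting unitaries via Corollary~\ref{cor:unitary-stable-relation}; the summable iteration is exactly the proof of Theorem~\ref{thm:comparison}(ii). One caveat: your parenthetical mechanism ``use real rank zero and comparison to split off $p\wedge q$'' is shaky, since $\M^\omega$ is not a von Neumann algebra and $p\wedge q$ need not exist there. The paper instead sets $a\coloneqq pvp+p^\perp vp^\perp$ and takes a unitary polar decomposition $a=w|a|$ in $\M^\omega\cap\{p\}'$ — and the availability of unitary polar decompositions in $\M^\omega$ is itself a nontrivial CPoU consequence (Proposition~\ref{prop:polar-decomposition}), not on your list of established facts.

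Part (ii) has a genuine gap at the approximate existence step. You assert that choosing a projection of trace $f(\tau)$ in each fibre and ``patching with a complemented partition of unity'' yields $a_\epsilon$ with $\sup_{\tau\in X}|\tau(a_\epsilon)-f(\tau)|<\epsilon$. This does not follow: CPoU controls quantities of the form $\tau(a_ie_i)\le\delta\tau(e_i)$ for \emph{positive elements} $a_i$, whereas the condition $|\sigma(p_{\tau_i})-f(\sigma)|<\epsilon$ is not of the form $\sigma(a)<\epsilon$ for any positive $a$ (it is an absolute value of a difference of affine functionals). Concretely, writing $a_\epsilon=\sum_ie_ip_{\tau_i}$, the cross terms $\tau(e_ip_{\tau_i})=\tau(e_i)\sigma_i(p_{\tau_i})$ involve the corner traces $\sigma_i=\tau(e_i)^{-1}\tau(e_i\cdot e_i)$, and nothing forces $\sigma_i(p_{\tau_i})\approx f(\sigma_i)$ — $p_{\tau_i}$ was only engineered to have the right trace near $\tau_i$. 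This is precisely the obstruction flagged in Section~\ref{sec:intro:ltg2} (outside the Bauer setting $\tau(e_ia)\not\approx\tau(e_i)\tau(a)$). The paper's proof of Theorem~\ref{thm:existence-projections} circumvents this by first realising $f$ as $\tau\mapsto\tau(c)$ for a self-adjoint $c\in\M$ (Proposition~\ref{prop:CP}) and then encoding the trace condition as ``$p_\tau-c$ is approximately a finite sum of commutators'' via Fack--de la Harpe; the quantity $\sigma\big(|p_\tau-c-\sum_j[x_{j,\tau},y_{j,\tau}]|^2\big)$ \emph{is} linear in $\sigma$, so CPoU applies, and since traces kill commutators the patched element automatically satisfies $\tau(p)=\tau(c)=f(\tau)$ for every $\tau\in X^\infty$. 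This Cuntz--Pedersen-plus-commutators device is the key idea missing from your proposal; without it the trace control on the patched element is unavailable. (Granting approximate existence, your hands-on intertwining to an exact projection would work and is a legitimate alternative to the paper's reparameterisation argument, but it cannot get started.)
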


The fundamental challenge is to determine when complemented partitions of unity can be found, i.e.\ when a factorial tracially complete $C^*$-algebra has CPoU.  The concept of \emph{uniform property $\Gamma$} for a $C^*$-algebra was introduced in \cite{CETWW} as a uniform 2-norm version of Murray and von Neumann's property $\Gamma$ for II$_1$ factors; just as McDuff II$_1$ factors have property $\Gamma$, so $\Z$-stable $C^*$-algebras have uniform property $\Gamma$.  Moreover, like CPoU, uniform property $\Gamma$ is most naturally a property of tracially complete $C^*$-algebras, and we say that $(\M,X)$ has property $\Gamma$ when there exist uniform 2-norm approximately central approximate projections, which approximately divide the trace of elements of $\M$ in half (Definition~\ref{def:UTCgamma} and Proposition~\ref{prop:Gamma}). The main technical result of \cite{CETWW} is that unital nuclear $C^*$-algebras with uniform property $\Gamma$ have complemented partitions of unity.  We strengthen the main result of \cite{CETWW} by removing the condition of nuclearity, allowing CPoU to be obtained from property $\Gamma$ in general.  As set out in Section \ref{intro:applications}, this forms a major ingredient in the forthcoming general framework for the classification of $^*$-homomorphisms (\cite{CGSTW2}).

\begin{restatable}{theorem}{gammacpou}
\label{introthmgammaimpliescpou}
Let $(\M,X)$ be a factorial tracially complete $C^*$-algebra with property $\Gamma$.  Then $(\M,X)$ has CPoU.  In particular, unital $C^*$-algebras with uniform property $\Gamma$ (e.g.\ unital $\Z$-stable $C^*$-algebras) have CPoU.
\end{restatable}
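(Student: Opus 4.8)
The plan is to reorganise the argument of \cite{CETWW}, where this implication was proved under the extra hypothesis that $A$ is nuclear, so that it runs on property $\Gamma$ alone. I would first dispose of the ``in particular'' clause: by the translation between uniform property $\Gamma$ for a unital $C^*$-algebra $A$ and property $\Gamma$ for its uniform tracial completion (Definition~\ref{def:UTCgamma} and Proposition~\ref{prop:Gamma}), together with the fact that uniform tracial completions are always factorial (cf.\ Proposition~\ref{prop:factorial}), the pair $\big(\completion{A}{T(A)}, T(A)\big)$ is a factorial tracially complete $C^*$-algebra with property $\Gamma$; since CPoU for $A$ in the sense of \cite{CETWW} refers only to the uniform tracial structure, it coincides with CPoU for $\big(\completion{A}{T(A)}, T(A)\big)$ (as explained in Section~\ref{sec:CPoU}), and the $\Z$-stable case follows because $\Z$-stability implies uniform property $\Gamma$. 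So all the content is in the first assertion. By a standard separabilisation argument — passing to a $\|\cdot\|_{2,X}$-separable tracially complete sub-$C^*$-algebra containing representatives of the finitely many elements witnessing a given instance of CPoU, with factoriality and property $\Gamma$ inherited — I may assume $(\M,X)$ is $\|\cdot\|_{2,X}$-separable, and then recast CPoU as a statement inside an ultrapower $\M^\omega$ (in the sense of Section~\ref{sec:InductiveLimitsETC}): given positive contractions $a_1,\dots,a_k\in\M^\omega$ and $\delta>0$ with $\min_{j}\tau(a_j)<\delta$ for every trace $\tau$ in the designated set of $\M^\omega$, produce pairwise orthogonal projections $p_1,\dots,p_k\in\M^\omega\cap\M'$ with $\sum_j p_j=1$ and $p_ja_jp_j\le\delta p_j$.

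The core is the construction of the $p_j$, and the scheme mirrors \cite{CETWW}: reduce the general case to a few-element core, supply fibrewise data, and glue. The fibrewise input is elementary: for each extreme trace $\tau$ the associated GNS von Neumann algebra is a factor (here factoriality is used; it is automatically of type ${\rm II}_1$, since property $\Gamma$ rules out finite-dimensional fibres), so taking spectral projections of $a_j$ in that factor produces, fibrewise, projections $e_j^\tau$ with $e_j^\tau a_j e_j^\tau\le\delta e_j^\tau$, and the covering hypothesis $\min_j\tau(a_j)<\delta$ is what makes these compatible with a partition. A weak$^*$-compactness argument on the Choquet simplex $X$ (a closed face of $T(\M)$, hence weak$^*$-compact) replaces this by finitely many local candidate partitions valid on pieces of an open cover of $X$. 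The hard task is to assemble these into a single partition of unity by orthogonal projections of the relative commutant $\M^\omega\cap\M'$. In the $W^*$-bundle (Bauer) case one glues with a partition of unity from $C(\partial_eX)$, but $C(K)$-partitions do not respect the relation ``being a projection''; the device, going back to \cite{Oz13} and made systematic in \cite{BBSTWW}, is to glue instead with \emph{approximately central approximate projections}, which property $\Gamma$ supplies, while controlling products via the trace-factorisation estimate $\tau(fg)\approx\tau(f)\tau(g)$ for $f$ drawn from the $\Gamma$-family. Iterating the halving of property $\Gamma$ inside the relative commutant (which again has property $\Gamma$) gives, for each $n$, orthogonal projections in $\M^\omega\cap\M'$ dividing traces into $n$ equal parts; combining these with the local spectral data, taking suitable meets and joins, and disjointifying yields the $p_j$. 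This last step needs suitably many projections of prescribed trace in $\M^\omega\cap\M'$ together with a way to compare them, all of which property $\Gamma$ must supply \emph{directly}; it is essential not to short-circuit the argument by invoking the stronger structural consequences (real rank zero, Murray--von Neumann comparison, the trace problem of Theorem~\ref{thm:no-silly-traces}), which are themselves deduced from CPoU.

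The main obstacle is precisely this assembly step, and it is where the argument departs from \cite{CETWW}. When $\M$ is nuclear, Connes' theorem forces every fibre $\pi_\tau(\M)''$ over an extreme trace to be the hyperfinite ${\rm II}_1$ factor $\mathcal R$, so one is effectively working with a bundle of copies of $\mathcal R$ and may import the local-to-global machinery for McDuff fibres from \cite{BBSTWW}; the approximately central approximate projections produced by $\Gamma$ are then upgraded to genuine projections using McDuff-type absorption. Without nuclearity the fibres are merely property $\Gamma$ ${\rm II}_1$ factors — in particular they need not be McDuff — so the upgrade cannot call on a McDuff or injective structure and must instead be bootstrapped from property $\Gamma$ and its self-improvement under reduced products, as established earlier in the paper. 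The delicate point is thus to take the approximately central, approximately orthogonal approximate projections extracted from property $\Gamma$ and promote them, inside $\M^\omega\cap\M'$, to genuinely orthogonal projections summing exactly to $1$ that still satisfy the complementation inequalities $p_ja_jp_j\le\delta p_j$, using only property-$\Gamma$ input. Once this promotion is available, the remainder of the scheme — the reduction of general $k$, the weak$^*$-compactness over $X$, and the final diagonal and reindexing argument in $\M^\omega$ — goes through with only cosmetic changes.
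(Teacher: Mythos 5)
There is a genuine gap, and it sits exactly at the step you identify as ``the hard task'': assembling the fibrewise data into a global approximate partition. You propose to take an open cover of $X$, find local candidate partitions on its pieces, and glue them with approximately central approximate projections supplied by property $\Gamma$, controlling cross-terms via the estimate $\tau(fg)\approx\tau(f)\tau(g)$. This does not work for two reasons. First, that trace-factorisation estimate is only available when $X$ is a Bauer simplex (it is exactly the point of Footnote~\ref{fn:TracialFactorisation} and Proposition~\ref{prop:BauerDivideUnit} that compactness of $\partial_e X$ is essential for it); for a general Choquet simplex there is no way to control $\tau(f_i a)$ from the tracial support of $f_i$ --- that missing control is precisely what the ``complemented'' part of CPoU is designed to supply, so your gluing step presupposes a form of the conclusion. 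Second, your fibrewise input is off: spectral projections of the $a_j$ in a factor $\pi_\tau(\M)''$ are not mutually orthogonal and do not form a partition, and the paper's fibrewise statement is a trace inequality $\sigma(a_ie_i)\le\delta\sigma(e_i)$ witnessed by \emph{central} projections of the (generally non-factor) finite von Neumann algebra $\pi_\tau(\M)''$, obtained from Corollary~\ref{cor:vNa-CPoU} via the centre-valued trace, together with Lemma~\ref{lem:GNSTraceFace} to know that every trace on $\pi_\tau(\M)''$ pulls back into the face $X$.

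The paper's actual route avoids gluing altogether at this stage. The trace inequality is linearised by the Fack--de la Harpe theorem (writing the trace-deficiency as a sum of commutators plus a positive element), so that the fibrewise witnesses live in a \emph{convex} set and the conditions to be satisfied are affine in them; Lemma~\ref{lem:affine-selection} (the Hahn--Banach argument on $\mathrm{Aff}(X)$, using that weak convergence there is pointwise convergence) then produces a single convex combination that works uniformly over $X$. The output is only a ``weak'' CPoU --- non-orthogonal positive contractions $e_i$ summing approximately to $1$ --- and property $\Gamma$ enters afterwards, and only afterwards, for projectionisation (Proposition~\ref{prop:projectionisation}), orthogonalisation by multiplying with the trace-dividing projections $r_i$, and a maximality argument under the complementary projection $q$ (which is why Theorem~\ref{thm:weak-CPoU} is stated relative to a $q$). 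Your sketch gestures at the second stage (``meets and joins, disjointifying'') but omits the convexity mechanism entirely, and without it the local-to-global passage has no engine: neither an open cover of $X$ nor approximately central approximate projections from $\Gamma$ can substitute for it outside the Bauer case.
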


For a factorial tracially complete $C^*$-algebra $(\M,X)$, all of whose fibres $\pi_\tau(\M)''$ for $\tau \in \partial_e X$ have property $\Gamma$ as von Neumann algebras, obtaining property $\Gamma$ for $(\M,X)$ is itself a local-to-global transfer problem.  Theorem \ref{introthmgammaimpliescpou}, together with the local-to-global technology, shows that transferring property~$\Gamma$ from fibres to a global condition is to some extent a universal local-to-global problem.

The strategy to prove Theorem \ref{introthmgammaimpliescpou} follows the overall  framework used in \cite{CETWW}, which, in the language of this paper, proves Theorem \ref{introthmgammaimpliescpou} for the uniform tracial completion of a separable nuclear $C^*$-algebra.  The argument splits into two steps:
\begin{itemize}
    \item Obtain a weak form of CPoU in which all the approximate projections making up the partition of unity are replaced by contractions (Theorem~\ref{thm:weak-CPoU}) and are not orthogonal.  
    \item Use property $\Gamma$ to convert the weak form of CPoU to CPoU by means of orthogonalisation, projectionisation, and a maximality argument (see the discussion in the last section of the introduction to \cite{CETWW} for an outline).
\end{itemize}
The second step works generally as was foreshadowed in \cite[Lemma 3.7]{CETWW}. However, in \cite{CETWW}, nuclearity was instrumental in performing the first step (\cite[Lemma 3.6]{CETWW}) through a refined form of the completely positive approximation property from \cite{BCW16}.  In particular, Connes' theorem on the equivalence of injectivity and hyperfiniteness underpins these approximations.  Our proof of Theorem \ref{introthmgammaimpliescpou} establishes this weak form of CPoU in general (Theorem~\ref{thm:weak-CPoU}), a result which is already of independent interest (see \cite{SW23,WoutersPhd}).
 We do this by means of a Hahn--Banach-driven local-to-global transfer of the form described in Section \ref{Sect:IntroAmenability}. The point is that all finite von Neumann algebras (viewed as tracially complete $C^*$-algebras with respect to all their traces) satisfy CPoU. Taking suitable convex combinations of the elements witnessing CPoU in finitely many fibres gives rise to the required weak form of CPoU, and we do not need to rely on anything as deep as Connes' work.  Specialising to the uniform tracial completion of a nuclear $\mathcal Z$-stable $C^*$-algebra, this approach gives a much simpler overall argument for CPoU as compared with \cite{CETWW}.

\subsection{Structure and classification}\label{Intro:StructureClass}

Our structure and classification results (Theorems \ref{Main-A}--\ref{IntroThmClassMap}) are proved by a combination of local-to-global transfer in the same spirit as \cite{CETWW,CETW,CETW-classification} and Elliott-style intertwining arguments.  

For a simple separable nuclear $C^*$-algebra $A$, a local-to-global argument was given in \cite[Theorem~4.6]{CETW} to pass from uniform property $\Gamma$ to the uniform McDuff property via CPoU. The key point is that as $A$ is nuclear, all tracial von Neumann algebras $\pi_\tau(A)''$ associated to traces on $A$ are McDuff, and this can then be transformed into a global statement via CPoU.  This proves the equivalence of property $\Gamma$ and the McDuff property in Theorem~\ref{InformalStructureThm} for the uniform tracial closures $\big(\completion{A}{T(A)},T(A)\big)$ of such $C^*$-algebras. An identical argument can be used to obtain this equivalence for {}amenable factorial type II$_1$ tracially complete $C^*$-algebras, the only difference being the use of Theorem~\ref{introthmgammaimpliescpou} in place of the main result of \cite{CETWW}.  For the additional equivalence of hyperfiniteness in Theorem \ref{InformalStructureThm}, we go through classification (Theorems~\ref{InformalClassification} and~\ref{IntroThmClassMap}). Our local characterisations of amenability (Theorem~\ref{thm:introamenable}) ensure that hyperfinite tracially complete $C^*$-algebras are {}amenable (Theorem \ref{thm:hyperfinite-implies-semidiscrete}), so once we show hyperfinite factorial tracially complete $C^*$-algebras have CPoU (Theorem \ref{thm:hyperfinite-implies-CPoU}), the rest of the structure theorem (Theorem \ref{InformalStructureThm}) will follow from the classification theorem.  

As factorial finite dimensional tracially complete $C^*$-algebras have CPoU, and CPoU is preserved under inductive limits, it is easy to obtain CPoU for limits of factorial finite dimensional $C^*$-algebras.
It is similarly straightforward to show CPoU holds for a tracially complete $C^*$-algebra that is locally approximated by embedded factorial finite dimensional tracially complete subalgebras.
The problem comes when a factorial tracially complete $C^*$-algebra $(\M,X)$ has approximations by finite dimensional $C^*$-subalgebras in which not all traces extend to elements of $X$ -- and so they do not embed as tracially complete subalgebras. This is reminiscent of Murray and von Neumann's work (\cite{MvN43}) on the uniqueness of hyperfinite $\mathrm{II}_1$ factors, where they have to be concerned with approximating finite dimensional subalgebras which are not factors.  Our solution, found in Section \ref{sec:hyperfinite}, is conceptually the same as Murray and von Neumann's: reduce to the case that the building blocks can always be taken to be factorial.  To do this we show that for separable tracially complete $C^*$-algebras, local and inductive limit definitions of hyperfiniteness agree, from which we deduce that hyperfinite tracially complete $C^*$-algebras are tracial completions of AF $C^*$-algebras.

Turning to classification, the primary objective is the classification of tracially nuclear maps in Theorem \ref{IntroThmClassMap} (proved as Theorem~\ref{thm:classification-morphism}\ref{classif2}). As is standard for classification results for maps, this consists of two components: \emph{existence} of tracially nuclear $^*$-homomorphisms with specified behaviour at the level of traces and \emph{uniqueness} of such maps up to approximate unitary equivalence. The uniqueness aspect of Theorem \ref{IntroThmClassMap} (Theorem \ref{thm:uniqueness}) is a direct application of the corresponding uniqueness result for weakly nuclear $^*$-homomorphisms into finite von Neumann algebras by traces (a folklore consequence of Connes' theorem) and a CPoU-powered local-to-global argument.  This works in essentially the same fashion as the uniqueness result found in \cite[Theorem 2.2]{CETW-classification}, and the only difference is the increased generality of the statement.  

As with the corresponding theorems in the $C^*$-classification programme, the existence aspect of Theorem \ref{IntroThmClassMap} is obtained in two stages.  At the first pass, one only aims for an `approximate' result -- for a $C^*$-algebra $A$ and factorial tracially complete $C^*$-algebra $(\mathcal M,X)$, we want uniform 2-norm approximately multiplicative maps $A\rightarrow \mathcal M$ which approximately implement a given continuous affine map $\gamma\colon X\to T(A)$. For this to hold generally, it will be necessary for the traces $\gamma(X)\subseteq T(A)$ to satisfy suitable approximation properties -- in particular, we require the range of $\gamma$ to consist of \emph{hyperlinear} traces on $A$ (i.e.\ those factoring through the tracial ultrapower $\mathcal R^\omega$ of $\mathcal R$).\footnote{When $(\mathcal M, X)$ is the hyperfinite $\mathrm{II}_1$ factor with its unique trace $\tau_\mathcal R$, this approximate existence result is equivalent to hyperlinearity of the relevant trace on $A$.}  The approximate existence result uses another CPoU-powered local-to-global argument to patch together approximate morphisms into the fibres $\pi_\tau(\mathcal M)''$ for $\tau \in X$, which arise from composing approximate embeddings into $\mathcal R$ with a unital embedding $\mathcal R \rightarrow \pi_\tau(\mathcal M)''$.  Under the stronger hypothesis that the range of $\gamma \colon X \rightarrow T(A)$ consists of uniformly amenable traces, the approximate morphisms $A \rightarrow \mathcal M$ approximately realising $\gamma$ can be further arranged to be tracially nuclear.

While it would be possible (though technically somewhat awkward) to prove Theorem \ref{thm:existence} in a similar fashion to that of \cite[Theorem 2.6]{CETW-classification}, we instead take advantage of our classification of projections (Theorem~\ref{introthm-classprojections2}), and hence maps from finite dimensional algebras, 
to give a different and arguably more conceptual approach to these results.  The point is that as $(\M,X)$ is factorial, $X$ is a Choquet simplex, and so we can approximate $\gamma$ by affine maps $X\to Z_\lambda \to T(A)$ factoring through finite dimensional simplices $Z_\lambda$ using a result of Lazar and Lindenstrauss from the early 1970s (\cite{Lazar-Lindenstrauss71}).  Then one uses hyperlinearity to produce approximately multiplicative maps from $A$ into finite dimensional algebras which approximately realise the maps $Z_\lambda \to T(A)$ followed by the classification of projections to embed these finite dimensional algebras into $\M$ compatibly with the maps $X\to Z_\lambda$. 

At the second pass one aims for exact existence results by means of a one-sided Elliott intertwining argument. This is by now a standard technique, which is readily imported to the setting of tracially complete $C^*$-algebras and uniform 2-norms.\footnote{The one subtle point is that the intertwining by reparameterisation technique for constructing genuine morphisms from approximate morphisms (Theorem~\ref{thm:reparameterisation}) requires stability of unitaries in the uniform 2-norm.  We prove that this follows from CPoU in Corollary~\ref{cor:unitary-stable-relation}.} As ever, it is important that the uniqueness theorem is strong enough to cover approximately multiplicative maps. This gives rise to the exact existence result in Theorem \ref{thm:classification-morphism} and completes the proof of Theorem \ref{IntroThmClassMap}.

Classification results for uniformly tracially complete $C^*$-algebras are then obtained from Theorem \ref{IntroThmClassMap} using a two-sided Elliott intertwining argument. This classifies the family of amenable tracially complete $C^*$-algebras $(\mathcal M,X)$ satisfying both the domain and codomain hypotheses of Theorem \ref{IntroThmClassMap}. The amenability hypothesis is needed so that the relevant identity maps are tracially nuclear (and so fall within the scope of Theorem \ref{IntroThmClassMap}) as the intertwining argument will use uniqueness to compare the identity maps with the compositions of the maps obtained from the existence portion of Theorem \ref{IntroThmClassMap}.  This process (which is an instance of Elliott's abstract classification framework from \cite{El09}) yields Theorem \ref{InformalClassification}, which contains Theorem \ref{Main-A} as a special case.  See \cite[Section 6]{White:ICM} for a general description of the passage from approximate existence and uniqueness, to the classification of operator algebras via one-sided intertwining and then a symmetrisation of hypotheses.

Just as we often prefer to describe the outcomes of the local-to-global transfer procedure at the level of ultraproducts or sequence algebras in order to suppress explicit error tolerances, we do the same for approximately multiplicative maps. Accordingly, in the main body our approximate existence result and the corresponding uniqueness theorem are given in terms of exact classification results into reduced products as was done with the precursor results in \cite{CETW-classification}.

\subsection{Classification and the Toms--Winter conjecture}\label{intro:applications}

We end the introduction by discussing the role tracially complete $C^*$-algebras play in the structure and classification of simple nuclear stably finite $C^*$-algebras. These are some instances of step \ref{item:intro:overallplan3}, `pulling results back from the tracial completion to the $C^*$-level', of the scheme on page \pageref{item:intro:overallplan3}.

Given a simple unital $C^*$-algebra $B$ with $T(B)\neq\emptyset$, write $\mathcal M\coloneqq \completion{B}{T(B)}$, $B_\infty \coloneqq \ell^\infty(B) / c_0(B)$ for the norm approximate sequence algebra of $B$, and $\mathcal M^\infty$ for the uniform 2-norm approximate sequence algebra of $\M$, i.e.\ $\ell^\infty(\M)$ modulo the $\|\cdot\|_{2,T(B)}$-null sequences. By construction, there is a Kaplansky density type theorem -- the unit ball of $B$ is $\|\cdot\|_{2,T(B)}$-dense in the unit ball of $\M$ -- and this ensures that the canonical inclusion $B\to \mathcal M$ gives a surjection $B_\infty\to\mathcal M^\infty$.  The \emph{trace-kernel extension} is the short exact sequence induced by this surjection:\footnote{Ultrapower versions of the trace-kernel extension are also extensively used in the literature. We develop the theory in terms of reduced powers with respect to a free filter, simultaneously covering both cases.}
\begin{equation}
    0\longrightarrow J_B\longrightarrow B_\infty\longrightarrow \M^\infty\longrightarrow 0.
\end{equation}
The ideal $J_B$ is known as the \emph{trace-kernel ideal}, and it inherits regularity properties (such as separable $\Z$-stability and strict comparison) from corresponding properties of $B$.  

The main objective in the abstract approach to the unital classification theorem for simple nuclear $C^*$-algebras in \cite{CGSTW} is a classification of full unital $^*$-homomorphisms $A\to B_\infty$, where $A$ is a unital separable nuclear $C^*$-algebra satisfying the UCT and $B$ is a unital simple separable nuclear $\Z$-stable finite $C^*$-algebra (\cite[Theorem 1.1]{CGSTW}). The classification of $C^*$-algebras is obtained from this using Elliott intertwining arguments.  As set out in \cite[Section 1.3]{CGSTW} at a very high level,\footnote{There, the argument is given using the uniform tracial sequence algebra $B^\infty$; this is canonically isomorphic to $\M^\infty$ by a Kaplansky density argument.} the strategy for the classification of full approximate $^*$-homomorphisms falls into the three step plan from page \pageref{item:intro:overallplan3}.
\begin{enumerate}
\item Classify maps from $A$ into a finite von Neumann algebra by traces; this is a consequence of Connes' theorem.
\item Classify maps from $A$ into $\mathcal M^\infty$ by traces; this is the classification of approximate $^*$-homomorphisms from $A$ into $\M$ described in Section \ref{Intro:StructureClass}, and the instance used here is the case covered in the combination of \cite{CETWW,CETW-classification} (as $\M$ is the uniform tracial completion of a nuclear $\Z$-stable $C^*$-algebra).
\item Classify lifts of a full map $\theta\colon A\to \M^\infty$ back to $B_\infty$ in terms of $K$-theoretic data. This is the main task of \cite{CGSTW}, and a detailed outline of the strategy for this step is given there.
\end{enumerate}
In particular, none of the results in this paper are necessary for the abstract proof of the unital classification paper, though we do contend that, in hindsight, the classification results for maps $A\to\mathcal M$ given here make the three-step plan above more transparent.

To take stably finite classification beyond the setting of nuclear codomains, our Theorem~\ref{introthmgammaimpliescpou} will be crucial to step~\ref{item:intro:overallplan2}.  One objective is a stably finite version of Kirchberg's very general classification results for full morphisms $A\to B_\infty$, where $A$ is a separable exact $C^*$-algebras satisfying the UCT and $B$ is an $\mathcal O_\infty$-stable $C^*$-algebra (see \cite[Theorem 8.3.3]{Rordam-Book}, \cite[Theorems A and B]{Ga21}, and \cite{KirchbergLong}).
 In the forthcoming work \cite{CGSTW2}, a subset of us will extend the stably finite classification of morphisms to a level of generality corresponding to Kirchberg's framework. Sticking to the unital case, this will classify unital full nuclear $^*$-homomorphisms $A\to B_\infty$ for the domains $A$ as in Kirchberg's theorem, and where $B$ is unital, finite, $\Z$-stable, and has comparison of positive elements by bounded traces.\footnote{This will be discussed further in \cite{CGSTW2}, but it forces all quasitraces on $B$ to be traces.} Via intertwining, this entails a classification of unital $^*$-homomorphisms from $A$ to $B$ which map traces on $B$ to faithful traces on $A$.  Outside the setting where $B$ is nuclear, one cannot obtain CPoU for $B$ (or equivalently, for its uniform tracial completion) from \cite{CETWW} and Theorem \ref{introthmgammaimpliescpou} is essential.

Looking to the future, we hope that the breakthrough results in \cite{GS} on the classification of stronger outer actions of countable discrete amenable groups\footnote{The results of \cite{GS} are much more general than this, encompassing amenable isometrically shift absorbing actions of locally compact groups.} on Kirchberg algebras will, over time, have powerful stably finite counterparts.  Here, we expect that developing a suitable classification of group actions on classifiable tracially complete $C^*$-algebras (step \ref{item:intro:overallplan2}) will help break up the overall task into more manageable parts, particularly in the case where the underlying action on the trace space is complex.  Even more generally, one can imagine more general notions of quantum symmetries acting on uniform tracially complete $C^*$-algebras as a bridge towards studying such actions on classifiable $C^*$-algebras whose tracial state space is large.

On the structure side, the remaining open part of the Toms--Winter conjecture is intimately linked with tracially complete $C^*$-algebras. By now, for a simple separable non-elementary nuclear $C^*$-algebra $B$, the conditions of $\Z$-stability and finite nuclear dimension are known to coincide (\cite{Wi12,Wi10,Ti14} and \cite{CETWW,CE}, building on  \cite{BBSTWW,SWW15,MS14}). Back in 2004, R\o{}rdam showed that if $B$ is $\Z$-stable, then $B$ has strict comparison (\cite{Ro04}) and the remaining piece of the Toms--Winter conjecture is the converse.\footnote{A weaker conjecture, of the form that pure simple separable non-elementary nuclear $C^*$-algebras are $\Z$-stable, is discussed by Winter in \cite[Section 5.4]{Wi17}.  Here, pureness is the combination of strict comparison and a (tracial) divisibility condition on the Cuntz semigroup, which can be thought of as a combination of a weak uniqueness theorem and an existence theorem for positive elements in terms of their rank functions.\label{fn:WinterConjecture}}

The trace-kernel extension, as we view it today, has its origins in Matui and Sato's breakthrough work on the implication from strict comparison to $\Z$-stability (\cite{MS12}). A key fact is that it induces a short exact sequence at the level of central sequence algebras (see \cite[Theorem~3.3]{KR14})
\begin{equation}
    0\longrightarrow J_B\cap B'\longrightarrow B_\infty\cap B'\longrightarrow \M^\infty\cap \mathcal M'\longrightarrow 0,
\end{equation}
and tensorial absorption results for $B$ and $\M$ can be written in terms of the central sequence algebras $B_\infty\cap B'$ and $\M^\infty\cap \M'$.  Let us split the problem of whether strict comparison implies $\Z$-stability of $B$ into the three steps. 
\begin{enumerate}
    \item Injective type II$_1$ von Neumann algebras are McDuff as a consequence of Connes' theorem.\footnote{For II$_1$ factors this was a step along the road to Connes' theorem (this follows from 7$\Rightarrow$2 of \cite[Theorem~5.1]{Co76}, defining $\phi$ in 7 as the composition of conditional expectation and the trace), and it is folklore that it holds generally; a proof from hyperfiniteness to the McDuff property can be found as \cite[Proposition 1.6]{CETW}.  See also \cite{SW23b}, which obtains the equivariant McDuff property, extending results from Ocneanu beyond the factor setting.}
    \item Attempt to lift the McDuff property back from von Neumann algebras to $\M$.  This is an immediate consequence of the local-to-global argument, \emph{provided} one has CPoU. For the uniform tracial completion of a nuclear $C^*$-algebra with no finite dimensional representations, CPoU is equivalent to property $\Gamma$.
    \item Lift an embedding $M_n\to \mathcal M^\infty\cap \M'$ to an order zero map $\phi\colon M_n\to B_\infty\cap B'$. Such lifts always exist by projectivity of order zero maps from matrix algebras (\cite[Theorem~4.9]{Loring97}), and in fact, any $^*$-homomorphism from a separable $C^*$-algebra into $\mathcal M^\infty\cap \M'$ has an order zero lift by \cite[Propositions~4.5 and 4.6]{KR14}.  Matui and Sato's notion of property (SI) -- a kind of small-to-large comparison condition in $B_\infty\cap B'$  obtained from strict comparison and nuclearity of $B$ -- which is designed to ensure that $\phi$ gives rise to a copy of $\Z$ in $B_\infty\cap B'$ and hence to $\Z$-stability of $B$.
\end{enumerate}
Steps \ref{item:intro:overallplan1} and \ref{item:intro:overallplan3} work generally; the challenge is at step \ref{item:intro:overallplan2}.  This abstraction of Matui and Sato's strategy led to \cite[Theorem 5.6]{CETW}, showing that for unital $C^*$-algebras as in the Toms--Winter conjecture, $\Z$-stability is equivalent to the combination of strict comparison and uniform property $\Gamma$ (i.e.\ property $\Gamma$ for the tracial completion).\footnote{See \cite[Theorem~A]{CE2} for a non-unital statement.}

Thus the following question is fundamental; by \cite[Theorem 5.6]{CETW} a positive answer would resolve the Toms--Winter conjecture. The analogous result in the von Neumann setting is the first component of Connes' theorem: injective II$_1$ factors have property $\Gamma$ (which goes through the passage from failure of property $\Gamma$ to a spectral gap condition \cite[Corollary 2.2]{Co76}; see also the two new proofs \cite{Ma17,Ma23}).

\begin{question}\label{Q:semidiscreteGamma}
	Does every amenable type II$_1$ factorial tracially complete $C^*$-algebra satisfy property $\Gamma$?
\end{question} 

When the designated set of traces $X$ is reasonably small, Question \ref{Q:semidiscreteGamma} has a positive answer; indeed, in the unique trace case, this is due to Connes as mentioned above. We give a positive answer when $\partial_eX$ is compact and zero dimensional as Proposition~\ref{prop:zerodimgamma}. More generally, it will be shown in forthcoming work by the third- and fifth-named authors (\cite{EvingtonSchafhauser}) that the same holds for $\partial_eX$ compact and finite-dimensional.  Outside this setting, the tracial completions of simple nuclear $C^*$-algebras whose trace spaces have compact, finite-dimensional extreme boundary have property $\Gamma$ (proved in \cite{KR14,TWW15,Sa12}, though property $\Gamma$ was developed later). It remains mysterious whether one should expect a positive answer in general, or whether strict comparison for a $C^*$-algebra $A$ would imply property $\Gamma$ for its uniform tracial completion -- which, if true, would establish the Toms--Winter conjecture.  If neither of these situations hold, then it is reasonable to ask whether amenable tracially complete $C^*$-algebras with a suitable tracial divisibility property satisfy property $\Gamma$. In other words, if a version of Winter's tracial divisibility holds for an amenable tracially complete $C^*$-algebra, must it have property $\Gamma$? A positive answer to this question would resolve the modified Toms--Winter conjecture mentioned in Footnote~\ref{fn:WinterConjecture}.

We end with a comparison of the hypotheses in the classification of amenable algebras that we have discussed.

\

\begin{center}
\begin{tabular}{ll}\toprule
    Type of algebra & Regularity\\ \midrule
    Finite amenable von Neumann algebra & Automatic\\
    Amenable factorial II$_1$ tracially complete $C^*$-algebra & Open\\
    Simple separable non-elementary nuclear $C^*$-algebra & Not automatic\\ \bottomrule
\end{tabular}
\end{center}

\

With the hypotheses listed in the table, `regularity' for von Neumann algebras simply means the McDuff property; for tracially complete algebras it means either McDuff or property $\Gamma$ (equivalent in this setting).  For $C^*$-algebras as in the table, regularity can be interpreted as $\mathcal Z$-stability, and non-$\Z$-stable examples are known to exist.
The other key hypothesis to $C^*$-classification is the universal coefficient theorem, which is inherently topological, and therefore doesn't have von Neumann algebra or tracially complete counterparts; whether the UCT holds automatically for separable nuclear $C^*$-algebras remains a major challenge.

\section{Traces and Choquet simplices}\label{sec:prelim}

Choquet theory will play an important role throughout this paper, and we begin this preliminary section by recalling some definitions and theorems about Choquet simplices in Section~\ref{sec:simplices}.  The main examples of Choquet simplices in this paper are the spaces $T(A)$ of tracial states on a unital $C^*$-algebra $A$ with its weak$^*$ topology, along with the closed faces of $T(A)$.  We will discuss traces in more depth in Section~\ref{sec:traces-prelim} and recall some results related to their GNS representations and induced von Neumann algebras for later use.

\subsection{Choquet theory}\label{sec:simplices}  

We refer the reader to \cite{Alf71} and \cite{Go86} for general references on Choquet theory.  We collect some basic definitions here along with the results needed in the main body of the paper.

Let $X$ be a compact convex set in a locally convex space and write $\mathrm{Aff}(X)$ for the Banach space of all continuous affine functions $X \rightarrow \mathbb R$ equipped with the supremum norm.  Then $\mathrm{Aff}(X)$ is an Archimedean order unit space in the sense of \cite[Section~II.1]{Alf71} with the pointwise order and the order unit $1_{\mathrm{Aff}(X)}$, the constant function $1$.\footnote{
Briefly, an \emph{Archimedean order unit space} is a triple $V = (V, V_+, 1_V)$ where $V$ is a real vector space, $V_+ \subseteq V$ is a spanning cone, and $1_V \in V_+$ is a distinguished element $1_V \in V_+$, called the \emph{order unit}, such that \[\|v\| \coloneqq \inf \{ r > 0 : -r 1_V \leq v \leq r1_V \}, \qquad  v \in V, \] is a complete norm on $V$. (Note: in the literature, completeness is not always assumed as part of the definition, but for us it is.)}
Conversely, if $V$ is a Archimedean order unit space,
then the state space of $V$,
\begin{equation}
	S(V) \coloneqq \{ f \in V^* : \|f\| = f(1_V) = 1 \},
\end{equation} 
is a weak$^*$-compact convex subset of $V^*$.  By a result of Kadison \cite{Kadison51} (see also \cite[Chapter~7]{Go86}), the natural maps
\begin{equation}
	X \overset\cong\longrightarrow S(\mathrm{Aff}(X)) \qquad \text{and} \qquad V \overset\cong\longrightarrow \mathrm{Aff}(S(V))
\end{equation}
are isomorphisms. This gives an anti-equivalence between the categories of compact convex sets and Archimedean order unit spaces.  We refer to this result as \emph{Kadison duality}.  One important consequence is that the weak topology on ${\rm Aff}(X)$ is the topology of pointwise convergence. This is standard and used, for example, in \cite{Oz13} and \cite{GGKNV}.

\begin{proposition}\label{prop:pointwise-to-uniform}
	Let $X$ be a compact convex set.
 \begin{enumerate}
     \item All states on ${\rm Aff}(X)$ are given by point evaluations.\label{prop:pointwise-to-uniform1}
     \item If $(f_\lambda) \subseteq {\rm Aff}(X)$ is a net and $f \in {\rm Aff}(X)$, then $f_\lambda \rightarrow f$ weakly if and only if $f_\lambda(\tau) \rightarrow f(\tau)$ for all $\tau \in X$.\label{prop:pointwise-to-uniform2}
 \end{enumerate}
\end{proposition}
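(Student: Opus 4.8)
The plan is to derive both statements from Kadison duality, which identifies $X$ with the state space $S(\mathrm{Aff}(X))$ and $\mathrm{Aff}(X)$ with $\mathrm{Aff}(S(V))$ for $V = \mathrm{Aff}(X)$. For \ref{prop:pointwise-to-uniform1}, the content is precisely that the natural map $X \to S(\mathrm{Aff}(X))$ is onto (injectivity being the easy separation statement). Concretely, given a state $f$ on $V = \mathrm{Aff}(X)$, I would invoke Kadison duality to obtain $\tau \in X$ with $f = \mathrm{ev}_\tau$; unwinding, this says $f(g) = g(\tau)$ for all $g \in \mathrm{Aff}(X)$, i.e.\ $f$ is the point evaluation at $\tau$. (If one prefers a hands-on argument: a state $f$ on $\mathrm{Aff}(X)$ restricts to a positive unital functional, and since $\mathrm{Aff}(X)$ contains the constants and separates points of $X$, a short Hahn--Banach/Riesz-type argument produces a Radon probability measure $\mu$ on $X$ representing $f$; its barycentre $\tau \in X$ then satisfies $f(g) = \int g \, d\mu = g(\tau)$ for affine $g$. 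But citing Kadison duality is cleaner and is exactly what the preceding paragraph sets up.)

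For \ref{prop:pointwise-to-uniform2}, one direction is trivial: weak convergence $f_\lambda \to f$ means $\varphi(f_\lambda) \to \varphi(f)$ for every $\varphi \in V^*$, and in particular for the point evaluations $\mathrm{ev}_\tau$, $\tau \in X$, which lie in $V^*$; this gives $f_\lambda(\tau) \to f(\tau)$ for all $\tau \in X$. For the converse, suppose $f_\lambda(\tau) \to f(\tau)$ pointwise. Every $\varphi \in V^* = \mathrm{Aff}(X)^*$ is a linear combination of states (decompose $\varphi$ into real and imaginary parts if working over $\mathbb{C}$, then each self-adjoint part into positive and negative parts via the order structure, rescaling to states), and by \ref{prop:pointwise-to-uniform1} each such state is a point evaluation $\mathrm{ev}_\tau$. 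Hence $\varphi$ is a finite linear combination $\sum_i c_i \, \mathrm{ev}_{\tau_i}$, and therefore $\varphi(f_\lambda) = \sum_i c_i f_\lambda(\tau_i) \to \sum_i c_i f(\tau_i) = \varphi(f)$. Since $\varphi \in V^*$ was arbitrary, $f_\lambda \to f$ weakly.

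I do not expect a serious obstacle here — both parts are essentially a restatement of Kadison duality together with the elementary fact that functionals on an order unit space are spanned by states. The only point requiring a moment's care is the decomposition of an arbitrary bounded functional on $\mathrm{Aff}(X)$ as a (complex) linear combination of states: this uses that $\mathrm{Aff}(X)$ is an Archimedean order unit space, so its dual is spanned by the positive functionals (a Jordan-type decomposition), and positive functionals are positive multiples of states. With that in hand, \ref{prop:pointwise-to-uniform2} follows immediately from \ref{prop:pointwise-to-uniform1}, and the whole proposition is short.
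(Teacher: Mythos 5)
Your proof is correct and follows the same overall structure as the paper's: part \ref{prop:pointwise-to-uniform1} is quoted directly from Kadison duality, and part \ref{prop:pointwise-to-uniform2} reduces to \ref{prop:pointwise-to-uniform1} once one knows that $S(\mathrm{Aff}(X))$ spans $\mathrm{Aff}(X)^*$. The only point of divergence is how that spanning fact is obtained. You invoke the Jordan-type decomposition internal to the order unit space $\mathrm{Aff}(X)$ — that its dual is a base-normed space, so every bounded functional is a difference of positive functionals and hence a linear combination of states. This is true (it is part of the standard duality theory for Archimedean order unit spaces, e.g.\ in Alfsen), but it is the one step you would need to justify with a citation rather than treat as obvious. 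The paper sidesteps it by a Hahn--Banach extension: a bounded functional on $\mathrm{Aff}(X)$ extends to one on $C(X)$, the dual of $C(X)$ is spanned by its states (the Jordan decomposition for signed measures), and states on $C(X)$ restrict to states on $\mathrm{Aff}(X)$. The two routes buy the same thing; the paper's version leans only on the universally familiar structure of $C(X)^*$, while yours is marginally more self-contained within the order-unit-space framework the paper has already set up. Either is acceptable.
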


\begin{proof}
\ref{prop:pointwise-to-uniform1}.  This is a consequence of Kadison's duality.

\ref{prop:pointwise-to-uniform2}.  Note that bounded linear functionals on $\mathrm{Aff}(X)$ extend to bounded linear functionals on $C(X)$ by the Hahn--Banach theorem.  Since the dual of $C(X)$ is spanned by states on $C(X)$ and all states on $C(X)$ restrict to states on ${\rm Aff}(X)$, it follows that $S(\mathrm{Aff}(X))$ spans $\mathrm{Aff}(X)^*$.  The result now follows from \ref{prop:pointwise-to-uniform1}.
\end{proof}

For a compact convex set $X$, write $\mathrm{Prob}(X)$ for the set of Radon probability measures on $X$. Given $\mu\in\mathrm{Prob}(X)$, the \emph{barycentre} of $\mu$ is the unique point $x \in X$ such that
\begin{equation}
	\int_X f \, d\mu = f(x), \qquad f \in \mathrm{Aff}(X).
\end{equation}
Let $\partial_e X \subseteq X$ denote the set of extreme points of $X$.  We say a complex Radon measure $\mu$ is \emph{supported on} $\partial_e X$ if for every Baire measurable set\footnote{The Baire $\sigma$-algebra on a compact Hausdorff space $X$ is the smallest $\sigma$-algebra on $X$ such that every continuous function $X \rightarrow \mathbb C$ is measurable.  When $X$ is metrisable, this coincides with the Borel $\sigma$-algebra, but it is smaller in general.} \mbox{$E \subseteq X$} with $\partial_e X \subseteq E$, we have $|\mu|(X\setminus E) = 0$.\footnote{When $X$ is metrisable, the set $\partial_e X$ is a $G_\delta$-set, so $\mu \in \mathrm{Prob}(X)$ is supported on $\partial_e X$ if and only if $\mu(\partial_e X) = 1$.  In general, $\partial_e X$ is not Baire measurable (and hence not Borel measurable) -- see \cite[Section VII]{Bishop-deLeeuw} (and \cite[Lemma 4.1]{Bishop-deLeeuw} to identify $\partial_e X$ with $M(B)$).}  By the Choquet--Bishop--de Leeuw theorem (see \cite[Theorem~I.4.8 and Corollary~1.4.12]{Alf71}), for every $x \in X$, there exists $\mu \in \mathrm{Prob}(X)$ supported on $\partial_e X$ with barycentre $x$.  A compact convex set $X$ is called a \emph{Choquet simplex} if this measure $\mu$ is unique for every $x \in X$.\footnote{The Choquet--Bishop--de Leeuw theorem and the definition of Choquet simplex are often stated in terms of Baire measures in place of Radon measures.  The definitions are equivalent as every complex Baire measure on a compact Hausdorff space admits a unique extension to a Radon measure; the case of (finite) signed measures is \cite[Corollary~7.3.4]{Bogachev07}, and the complex version follows easily.}

In the finite dimensional setting, there is a unique Choquet simplex of every dimension.  Specifically, if $n \geq 0$ is an integer, then the $n$-dimensional Choquet simplex is given as the convex hull of an orthonormal set of $n + 1$ vectors in a Hilbert space.  

A Choquet simplex $X$ is called a \emph{Bauer simplex} if $\partial_e X$ is compact.  In this case, we view $\mathrm{Prob}(\partial_e X)$ as a compact convex set in the locally convex space $C(\partial_e X)^*$.  Note that there is an affine homeomorphism
\begin{equation}
	\mathrm{Prob}(\partial_e X) \overset\cong\longrightarrow X
\end{equation}
given by sending a measure to the barycentre of its canonical extension to $X$ (given by declaring that $X \setminus \partial_e X$ has measure zero).  Indeed, this map is bijective by the definition of a Choquet simplex, and it is easily seen to be a homeomorphism.

When $X$ is a metrisable Choquet simplex, a result of Lazar and Lind\-enstrauss in \cite[Corollary of Theorem~5.2]{Lazar-Lindenstrauss71} (see also \cite[Theorem~11.6]{Go86}) shows $X$ can be written as a projective limit of finite dimensional Choquet simplices.  As noted in \cite[Lemma~2.8]{Elliott-Niu15}, this implies $X$ satisfies the finite dimensional approximation property in Theorem \ref{thm:simplex-is-nuclear} below.  As we set out in Appendix \ref{sec:sep-choquet}, this result holds generally, i.e.\ without a metrisability assumption on $X$. Naturally, for results which only concern the separable situation ($C^*$-algebras which are separable in norm and tracially complete $C^*$-algebras which are separable in their uniform 2-norm), the metrisable version of Theorem \ref{thm:simplex-is-nuclear} will suffice.

\begin{restatable}{theorem}{simplexisnuclear}
	\label{thm:simplex-is-nuclear}
	If $X$ is a Choquet simplex, then there are nets of finite dimensional Choquet simplices $Z_\lambda$ and continuous affine maps
	\begin{equation}
		X \overset{\beta_\lambda}{\longrightarrow} Z_\lambda \overset{\alpha_\lambda}{\longrightarrow} X
	\end{equation}
	such that $\lim_\lambda \|f \circ \alpha_\lambda \circ \beta_\lambda - f \| = 0$ for all $f \in \mathrm{Aff}(X)$.
\end{restatable}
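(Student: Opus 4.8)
The plan is to pass to the dual side via Kadison duality and reduce the statement to a known structural decomposition of $\mathrm{Aff}(X)$. Under the anti-equivalence between compact convex sets and Archimedean order unit spaces, a pair of continuous affine maps $X \xrightarrow{\beta_\lambda} Z_\lambda \xrightarrow{\alpha_\lambda} X$ corresponds to unital positive maps $\mathrm{Aff}(X) \xrightarrow{\alpha_\lambda^*} \mathrm{Aff}(Z_\lambda) \xrightarrow{\beta_\lambda^*} \mathrm{Aff}(X)$ of order unit spaces, with $\mathrm{Aff}(Z_\lambda) \cong \ell^\infty_{n_\lambda+1}$ when $\dim Z_\lambda = n_\lambda$, and the conclusion $\|f \circ \alpha_\lambda \circ \beta_\lambda - f\| \to 0$ is exactly point-norm convergence of $\beta_\lambda^* \alpha_\lambda^*$ to $\mathrm{id}_{\mathrm{Aff}(X)}$. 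So the theorem says that $\mathrm{Aff}(X)$ admits a finite-dimensional approximation property by unital positive maps factoring through finite-dimensional $\ell^\infty$-spaces. The input I would invoke is that a Choquet simplex is a projective limit of finite-dimensional Choquet simplices: in the metrisable case this is the Lazar--Lindenstrauss theorem cited above, and in general it is established in Appendix~\ref{sec:sep-choquet}. Concretely, this provides a net of finite-dimensional simplices $Z_\lambda$ together with continuous affine surjections $\pi_\lambda \colon X \to Z_\lambda$ for which the subspaces $\mathrm{Aff}(Z_\lambda) \circ \pi_\lambda \subseteq \mathrm{Aff}(X)$ are directed with dense union; equivalently, for every $f \in \mathrm{Aff}(X)$ and $\epsilon > 0$ there are $\lambda$ and $g \in \mathrm{Aff}(Z_\lambda)$ with $\|f - g \circ \pi_\lambda\| < \epsilon$.

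Granting this, the only other ingredient needed is cheap, and I would isolate it as a lemma: \emph{every continuous affine surjection $\pi \colon X \to Z$ onto a finite-dimensional simplex $Z$ admits a continuous affine section.} Indeed, writing $v_0, \dots, v_n$ for the (affinely independent) vertices of $Z$, surjectivity lets one pick $x_i \in \pi^{-1}(\{v_i\})$ for each $i$, and since each point of $Z$ has a unique representation $\sum_i t_i v_i$ with $(t_i)$ in the standard simplex, the assignment $\sigma\big(\sum_i t_i v_i\big) \coloneqq \sum_i t_i x_i$ defines a continuous affine $\sigma \colon Z \to X$ with $\pi \circ \sigma = \mathrm{id}_Z$. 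I would then set $\beta_\lambda \coloneqq \pi_\lambda$ and $\alpha_\lambda \coloneqq \sigma_\lambda$ a section of $\pi_\lambda$. For $f \in \mathrm{Aff}(X)$, choosing $\lambda$ and $g$ as above and using that precomposition with an affine self-map of $X$ is norm non-increasing together with $g \circ \pi_\lambda \circ \sigma_\lambda \circ \pi_\lambda = g \circ \pi_\lambda$,
\[
\|f \circ \alpha_\lambda \circ \beta_\lambda - f\| \le \|f \circ \sigma_\lambda \circ \pi_\lambda - g \circ \pi_\lambda\| + \|g \circ \pi_\lambda - f\| \le 2\|f - g \circ \pi_\lambda\|,
\]
which tends to $0$ along the net since the $\mathrm{Aff}(Z_\lambda)\circ\pi_\lambda$ are directed with dense union. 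This is the ``Elliott--Niu observation'' made explicit.

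The main obstacle is the projective-limit decomposition itself in the non-metrisable case, which is why it belongs in an appendix; everything above (Kadison duality, the section lemma, the estimate) is routine. For metrisable $X$ one has Lazar--Lindenstrauss directly. For general $X$ one must produce, cofinally, finite-dimensional \emph{simplicial} subspaces of $\mathrm{Aff}(X)$ — those order-isometric to some $\ell^\infty_n$ — that approximate any prescribed finite subset of $\mathrm{Aff}(X)$. The delicate point is that enlarging an arbitrary finite-dimensional subspace of $\mathrm{Aff}(X)$ to a simplicial one \emph{inside} $\mathrm{Aff}(X)$ uses the simplex hypothesis in an essential way, via the Riesz interpolation property of $\mathrm{Aff}(X)$ (Edwards' separation theorem); Lazar and Lindenstrauss run this inductive enlargement along a dense sequence, and the task in the appendix is to carry out the analogous construction over the directed net of finite subsets of $\mathrm{Aff}(X)$ (equivalently, to realise a general Choquet simplex as a projective limit of metrisable Choquet simplices and then apply the metrisable case fibrewise).
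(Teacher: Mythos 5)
Your metrisable-case mechanics are correct and are exactly the Elliott--Niu observation: the section lemma is fine (barycentric coordinates on a finite-dimensional simplex are continuous and affine, so $\sigma\big(\sum_i t_i v_i\big) = \sum_i t_i x_i$ is a continuous affine section), and the $2\|f - g\circ\pi_\lambda\|$ estimate goes through. The gap is the input you feed it. You assert that the projective-limit decomposition of a general Choquet simplex into finite-dimensional simplices ``is established in Appendix~\ref{sec:sep-choquet}''; it is not. The appendix never produces surjections $X \to Z_\lambda$ onto finite-dimensional simplices, nor finite-dimensional unital subspaces of $\mathrm{Aff}(X)$ order-isomorphic to $\ell^\infty_n$ --- and whether every non-metrisable Choquet simplex admits such a projective-limit representation is a strictly stronger assertion than Theorem~\ref{thm:simplex-is-nuclear}, which the paper neither proves nor needs. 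Your closing paragraph concedes that carrying out the Lazar--Lindenstrauss enlargement over a net ``is the task''; so the one step that is actually at issue in the non-metrisable case is deferred to a result that is neither proved nor correctly cited. As written, the proposal proves the metrisable case and stops.

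The paper closes the gap by a route requiring strictly less structure than yours. After dualising as you do, it shows (Proposition~\ref{prop:sep-choquet}) that Riesz interpolation is a separably inheritable property of Archimedean order unit spaces, so any finite $\mathcal F \subseteq \mathrm{Aff}(X)$ lies in a separable closed unital subspace $V_0$ with Riesz interpolation; by Theorem~\ref{thm:Riesz-interp} and Kadison duality, $V_0 \cong \mathrm{Aff}(S(V_0))$ with $S(V_0)$ a metrisable Choquet simplex, and the metrisable case yields unital positive maps $V_0 \to \mathbb{R}^d \to V_0$ almost fixing $\mathcal F$. The key point is that only the \emph{downward} map needs extending to $\mathrm{Aff}(X)$ --- the upward map already lands in $V_0 \subseteq \mathrm{Aff}(X)$ --- and a coordinatewise Hahn--Banach extension of states does this. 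No surjectivity of $\beta_\lambda$ and no sections are needed, because the theorem only asks for continuous affine maps. Your route could be repaired along the same lines: apply Lazar--Lindenstrauss inside $V_0$, note that a unital $\ell^\infty_n$-subspace of $V_0$ is one of $\mathrm{Aff}(X)$ (the inherited orders agree), and obtain surjectivity of the dual map $X \to Z_\lambda$ from Hahn--Banach extension of states. But that repair is precisely the missing content, and it is cleaner to abandon the quotient-plus-section picture altogether, as the paper does.
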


We end this subsection with some results about closed faces in a Choquet simplex.  A \emph{face} in a compact convex set $X$ is a convex set $F \subseteq X$ with the following property: 
for all $x_1, x_2 \in X$, if a non-trivial convex combination of $x_1$ and $x_2$ is in $F$, then both $x_1 $ and $x_2$ are in $F$.

The next two results can be viewed in analogy with the Hahn--Banach theorem.  The first is an extension result, and the second is a separation result.

\begin{theorem}[{\cite[Theorem~II.5.19]{Alf71}}]\label{thm:extending-affine}
	Let $F$ be a closed face of a Choquet simplex $X$.
	For every $f \in \mathrm{Aff}(F)$, there exists $\hat{f} \in \mathrm{Aff}(X)$ with $\hat{f}|_F = f$ and $\|\hat{f}\| = \|f\|$. 
\end{theorem}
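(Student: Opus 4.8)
The plan is to reduce the statement to the Edwards separation theorem for Choquet simplices. By scaling, one may assume $\|f\|=1$ (the case $f=0$ being trivial, with $\hat f=0$), so $-1\le f\le 1$ on $F$. I would then introduce two auxiliary functions $h,k\colon X\to\mathbb R$ by declaring $h|_F=k|_F=f$ together with $h|_{X\setminus F}\equiv-1$ and $k|_{X\setminus F}\equiv 1$. By construction $h\le k$ on all of $X$, with equality on $F$, so any continuous affine function sandwiched between $h$ and $k$ will restrict to $f$ on $F$ and be bounded by $1$ in absolute value, hence will be the desired $\hat f$.

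The heart of the argument is checking that $h$ is convex and upper semicontinuous while $k$ is concave and lower semicontinuous; this is exactly where the two hypotheses on $F$ enter. Upper semicontinuity of $h$ uses only that $F$ is closed: at a point of the open set $X\setminus F$ the function $h$ is locally constant equal to $-1$, while at a point $x_0\in F$ any convergent net $x_\lambda\to x_0$ splits into a sub-net inside $F$, along which $h(x_\lambda)=f(x_\lambda)\to f(x_0)$ by continuity of $f$ on $F$, and a sub-net outside $F$, along which $h(x_\lambda)=-1\le f(x_0)=h(x_0)$. Convexity of $h$ uses that $F$ is a face: for a proper convex combination $z=\lambda x+(1-\lambda)y$ the face property forces $z\notin F$ whenever $x\notin F$ or $y\notin F$, so in every case the convexity inequality collapses to a trivial one (using $f\ge-1$ on $F$), and when $x,y\in F$ it holds with equality since $f$ is affine on $F$. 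Running the same argument with $-f$ in place of $f$ shows that $-k$ is convex and upper semicontinuous, i.e.\ $k$ is concave and lower semicontinuous.

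Now I would invoke the Edwards separation theorem: since $X$ is a Choquet simplex, $h$ is convex upper semicontinuous, $k$ is concave lower semicontinuous, and $h\le k$, there exists $\hat f\in\mathrm{Aff}(X)$ with $h\le\hat f\le k$ on $X$. On $F$ one has $h=f=k$, so $\hat f|_F=f$; off $F$ one has $-1=h\le\hat f\le k=1$, while on $F$ clearly $|\hat f|=|f|\le 1$. Hence $\|\hat f\|\le 1=\|f\|$, and the reverse inequality is automatic since $\hat f$ extends $f$. Undoing the normalisation gives the general statement.

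The main obstacle is the Edwards separation theorem itself: separating a convex \emph{upper} semicontinuous function lying below a concave \emph{lower} semicontinuous function by a continuous affine function is genuinely a simplex phenomenon — the opposite semicontinuity pattern is an elementary Hahn--Banach argument valid in any compact convex set — so this is the one step where ``Choquet simplex'' is used in an essential way. If one wishes to avoid citing Edwards' theorem, the alternative is to develop the complementary/split-face structure of closed faces in a simplex: $F$ is then a closed split face with complementary face $F'$, and $\hat f$ is built by setting $\hat f|_{F'}\equiv 0$ and extending affinely through the unique decomposition $x=\lambda y+(1-\lambda)z$ with $y\in F$, $z\in F'$. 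But then continuity of $\hat f$ becomes the point requiring work, and one is led back to essentially the same semicontinuity estimates as above, so reducing to Edwards' theorem is the cleaner route.
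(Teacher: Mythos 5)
Your argument is correct: the auxiliary functions $h$ and $k$ are indeed convex upper semicontinuous and concave lower semicontinuous respectively (closedness of $F$ giving the semicontinuity, the face property giving convexity/concavity, and the normalisation $\|f\|\le 1$ making the off-$F$ cases collapse to trivial inequalities), so Edwards' separation theorem for Choquet simplices yields the norm-preserving affine extension. The paper offers no proof of its own, citing only \cite[Theorem~II.5.19]{Alf71}, and your reduction to Edwards' theorem is essentially the standard argument for that result.
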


The following is an easy consequence of \cite[Corollary~II.5.20]{Alf71}.  The first part follows the proof of \cite[Proposition~II.5.16]{Alf71}.

\begin{theorem}\label{thm:exposed}
	Let $F$ be a closed face in a Choquet simplex $X$.
	\begin{enumerate}
		\item\label{item:exposed1} If $S \subseteq X$ is an $F_\sigma$-set with $S \cap F = \emptyset$, then there is a continuous affine function $f \colon X \rightarrow [0, 1]$ such that $f(x) = 0$ for all $x \in F$ and $f(x) > 0$ for all $x \in S$.
		\item\label{item:exposed2} If $x \in X$ and $f(x) = 0$ for all $f \in \mathrm{Aff}(X)_+$ with $f|_F = 0$, then $x \in F$.
	\end{enumerate}
\end{theorem}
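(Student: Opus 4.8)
The plan is to derive both statements from the single fact that closed faces of Choquet simplices can be \emph{separated pointwise} by positive continuous affine functions, which is exactly \cite[Corollary~II.5.20]{Alf71}: if $F$ is a closed face of a Choquet simplex $X$ and $x\in X\setminus F$, then there is $h\in\mathrm{Aff}(X)$ with $h\geq 0$, $h|_F=0$ and $h(x)>0$; after rescaling $h$ by $\|h\|^{-1}$ we may moreover take $0\leq h\leq 1$.

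Granting this, part \ref{item:exposed2} is immediate from the contrapositive: if $x\notin F$, then the function $h$ just produced lies in $\mathrm{Aff}(X)_+$ and vanishes on $F$ while $h(x)>0$, contradicting the hypothesis on $x$; hence $x\in F$.

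For part \ref{item:exposed1} I would run the averaging argument used in the proof of \cite[Proposition~II.5.16]{Alf71}. Write $S=\bigcup_{n\geq 1}S_n$ with each $S_n$ closed, hence compact since $X$ is compact, and observe $S_n\cap F=\emptyset$. Fix an $n$ with $S_n\neq\emptyset$. For each $x\in S_n$ pick $h_x\in\mathrm{Aff}(X)$ with $0\leq h_x\leq 1$, $h_x|_F=0$ and $h_x(x)>0$ as above; the open sets $\{\,y\in X:h_x(y)>0\,\}$ cover $S_n$, so by compactness finitely many $h_{x_1},\dots,h_{x_m}$ already do, and then $g_n\coloneqq\tfrac1m\sum_{j=1}^m h_{x_j}$ lies in $\mathrm{Aff}(X)$, takes values in $[0,1]$, vanishes on $F$, and (since its summands are non-negative) is strictly positive at every point of $S_n$; for the empty $S_n$ set $g_n\coloneqq 0$. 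Finally put $f\coloneqq\sum_{n\geq 1}2^{-n}g_n$. The series converges uniformly because $\|g_n\|\leq 1$, and $\mathrm{Aff}(X)$ is complete in the supremum norm, so $f\in\mathrm{Aff}(X)$ with $0\leq f\leq 1$ and $f|_F=0$; and for $x\in S$ we have $x\in S_n$ for some $n$, whence $f(x)\geq 2^{-n}g_n(x)>0$, as required.

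The only real content is the separation statement \cite[Corollary~II.5.20]{Alf71}; once that is available, the remainder is a routine compactness-and-averaging bookkeeping exercise, the points needing a little care being the compactness of each $S_n$ (which uses that $X$ is compact) and the completeness of $\mathrm{Aff}(X)$ (used to sum the series). In that sense the ``main obstacle'' is simply to extract and apply the Alfsen separation result correctly.
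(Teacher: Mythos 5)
Your proof is correct and follows essentially the same route as the paper: both deduce part (ii) immediately from the relative exposedness of closed faces (\cite[Corollary~II.5.20]{Alf71}), handle closed $S$ by a compactness-and-averaging argument, and pass to general $F_\sigma$-sets via the uniformly convergent series $\sum_n 2^{-n} g_n$. No gaps.
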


\begin{proof}
	By \cite[Corollary~II.5.20]{Alf71}, closed faces in Choquet simplices are relatively exposed, which means that \ref{item:exposed1} holds when $S$ is a point.  From here, \ref{item:exposed2} is immediate.
	
	In \ref{item:exposed1}, suppose first that $S$ is closed (and hence compact as $X$ is compact).  For $s \in S$, let $f_s \colon X \rightarrow [0, 1]$ be a continuous affine function such that $f_s(x) = 0$ for $x \in F$ and $f_s(s) > 0$.  As $S$ is compact and each $f_s$ is continuous, there are $s_1, \ldots, s_n \in S$ such that
	\begin{equation}
		\inf_{s \in S} \max_{1 \leq i \leq n} f_{s_i}(s) > 0.
	\end{equation}
	Set $f \coloneqq \frac1n \sum_{i = 1}^n f_{s_i}$.  To see the general case, consider an $F_\sigma$-set $S$ and write $S$ as the union of closed sets $(S_n)_{n=1}^\infty$.  For each $n \geq 1$, let $f_n \colon X \rightarrow [0, 1]$ be a continuous affine function such that $f_n(x) = 0$ for all $x\in F$ and $f_n(x) > 0$ for all $x \in S_n$.  Then set $f \coloneqq \sum_{n=1}^\infty 2^{-n} f_n$.
\end{proof}

The final result of this subsection concerns detecting closed faces. In general, if $X$ is a Choquet simplex and $S \subseteq \partial_e X$, then the convex hull of $S$, written  $\mathrm{co}(S)$, is a face in $X$.  However, the closed convex hull of $S$, written $\overline{\mathrm{co}}(S)$, need not be a face (see \cite[Theorem~1]{Alf64}).  The following result of Roy gives a replacement.

\begin{theorem}[{cf.\ \cite[Proposition~4.4]{Roy75}}]\label{thm:choquet-faces}
	If $X$ is a Choquet simplex and $F \subseteq X$ is a closed convex set, then $F$ is a face in $X$ if and only if $\partial_e F \subseteq \partial_e X$.
\end{theorem}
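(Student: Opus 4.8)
The plan is to prove both implications, with the forward direction being essentially formal and the reverse direction carrying the content. For the easy direction, suppose $F$ is a closed face in the Choquet simplex $X$. Then $\partial_e F \subseteq \partial_e X$ is a standard fact: if $x \in \partial_e F$ and $x = \tfrac12(x_1 + x_2)$ with $x_1, x_2 \in X$, then the face property of $F$ forces $x_1, x_2 \in F$, and then extremality of $x$ within $F$ gives $x_1 = x_2 = x$. So $x \in \partial_e X$. (No simplex hypothesis is needed here.)

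For the substantive direction, assume $F \subseteq X$ is closed and convex with $\partial_e F \subseteq \partial_e X$; I must show $F$ is a face. The key structural input is the uniqueness of representing measures in a Choquet simplex together with the Choquet--Bishop--de Leeuw theorem as recalled in the excerpt. First I would observe that $F$, being itself a compact convex set, is a Choquet simplex in its own right (a closed convex subset of a simplex is a simplex — one can cite this from the Choquet theory references, or derive it from uniqueness of barycentric decompositions in $X$). Now take $x_1, x_2 \in X$ and a convex combination $y = t x_1 + (1-t) x_2 \in F$ with $0 < t < 1$; the goal is $x_1, x_2 \in F$. Using Choquet--Bishop--de Leeuw inside the simplex $F$, pick $\nu \in \mathrm{Prob}(F)$ supported on $\partial_e F$ with barycentre $y$. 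Also pick $\mu_i \in \mathrm{Prob}(X)$ supported on $\partial_e X$ with barycentre $x_i$ for $i = 1, 2$. Then $t\mu_1 + (1-t)\mu_2$ is a probability measure on $X$, supported on $\partial_e X$, with barycentre $y$. On the other hand, $\nu$ viewed as a measure on $X$ is supported on $\partial_e F \subseteq \partial_e X$, hence supported on $\partial_e X$, and also has barycentre $y$. By uniqueness of the representing measure for $y$ in the Choquet simplex $X$, we get $t\mu_1 + (1-t)\mu_2 = \nu$ as measures on $X$. Since $\nu$ is concentrated on $F$ (it lives on $\partial_e F \subseteq F$ and $F$ is closed, hence Baire measurable relative to the relevant $\sigma$-algebra once one is careful), and $\mu_1, \mu_2 \geq 0$ with $t, 1-t > 0$, each $\mu_i$ is also concentrated on $F$. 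Therefore the barycentre $x_i$ of $\mu_i$ lies in $\overline{\mathrm{co}}(\mathrm{supp}\,\mu_i) \subseteq F$ since $F$ is closed and convex. This gives $x_1, x_2 \in F$, proving $F$ is a face.

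The main obstacle I anticipate is the measure-theoretic bookkeeping around ``supported on $\partial_e X$'' in the non-metrisable case, where $\partial_e X$ need not be Baire measurable (as flagged in the footnotes of the excerpt). One must work with the Radon formulation and the definition via Baire sets $E \supseteq \partial_e X$, and check that the uniqueness clause in the definition of a Choquet simplex — which a priori concerns measures supported on $\partial_e X$ with a given barycentre — genuinely applies to both $\nu$ and $t\mu_1 + (1-t)\mu_2$. The cleanest route is probably to reduce everything to statements about how these measures integrate against $\mathrm{Aff}(X)$ (equivalently, to invoke the separation/extension results Theorem~\ref{thm:extending-affine} and Theorem~\ref{thm:exposed}), so that one never has to integrate a non-measurable indicator of $\partial_e X$ directly; in particular, Theorem~\ref{thm:exposed}\ref{item:exposed2} can be used to show $x_i \in F$ once one knows every $f \in \mathrm{Aff}(X)_+$ vanishing on $F$ also vanishes at $x_i$, and the latter follows from the measure identity by integrating such an $f$ against $\nu$ (where it is $\nu$-a.e.\ zero since $\nu$ lives on $F$). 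Since the excerpt explicitly cites Roy's Proposition~4.4 for this result, the author's proof likely either appeals directly to that reference or runs a streamlined version of the argument above; I would present the $\mathrm{Aff}(X)$-based version to sidestep the measurability subtleties, handling the metrisable case (where $\partial_e X$ is $G_\delta$) as an immediate corollary.
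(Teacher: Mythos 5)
Your main argument is correct and takes a genuinely different route from the paper. The paper's proof is a two-line reduction: it notes that $F_0 \coloneqq \mathrm{co}(\partial_e F)$ is a face of $X$ (being the convex hull of a subset of $\partial_e X$, as recalled just before the statement), observes that $F = \overline{F_0}$ by Krein--Milman, and then cites Roy's Proposition~4.4 for the fact that this closure is again a face. You instead give a self-contained Choquet-theoretic argument: represent $y = tx_1 + (1-t)x_2 \in F$ by a measure $\nu$ on $\partial_e F \subseteq \partial_e X$ via Choquet--Bishop--de Leeuw applied to $F$, compare with $t\mu_1 + (1-t)\mu_2$, invoke uniqueness of representing measures in the simplex $X$ to conclude $\mu_i$ is concentrated on $F$, and deduce $x_i \in F$ since the barycentre of a measure concentrated on a closed convex set lies in that set. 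This is a legitimate proof of exactly the content that the paper delegates to Roy, at the cost of the measure-theoretic bookkeeping in the non-metrisable case, which you correctly identify and which does go through with the Baire-set formulation of ``supported on $\partial_e X$''.

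Two points need repair. First, your claim that a closed convex subset of a Choquet simplex is again a Choquet simplex is false (a square inside a triangle is a counterexample); fortunately you never use uniqueness of representing measures in $F$, only existence, and the Choquet--Bishop--de Leeuw theorem applies to arbitrary compact convex sets, so the step survives with the correct justification. Second, your proposed ``cleaner'' variant via Theorem~\ref{thm:exposed}\ref{item:exposed2} is circular: that result is stated for closed \emph{faces} $F$, which is precisely what you are trying to establish. It is also unnecessary: once $\mu_i(X \setminus F) = 0$, a direct Hahn--Banach separation (if $x_i \notin F$, choose continuous affine $f$ with $f(x_i) > \sup_F f$ and integrate against $\mu_i$) gives $x_i \in F$ without any appeal to the face structure.
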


\begin{proof}
	The forward direction is clear.  Conversely, if $\partial_e F \subseteq \partial_e X$, then $F_0 \coloneqq \mathrm{co}(\partial_e F)$ is a face in $X$, and by the Krein--Milman theorem, $F$ is the closure of $F_0$.  Using again that $\partial_e F \subseteq \partial_e X$, we have that $F$ is a face in $X$ by \cite[Proposition~4.4]{Roy75}.
\end{proof}

\subsection{Traces and the GNS construction}\label{sec:traces-prelim}

By a \emph{trace} on a $C^*$-algebra, we will always mean a tracial state.  For a $C^*$-algebra $A$, let $T(A)$ denote the set of traces on $A$ equipped with the weak$^*$ topology.  Then $T(A)$ is convex.  We will typically be interested in $C^*$-algebras where $T(A)$ is compact -- this is the case for unital $C^*$-algebras, for example.  The following result is folklore.

\begin{theorem}\label{thm:traces-choquet}
	If $A$ is a $C^*$-algebra, then every compact face in $T(A)$ is a Choquet simplex.
\end{theorem}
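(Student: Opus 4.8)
The plan is to reduce the statement about compact faces of $T(A)$ to the classical fact that the tracial state space of a unital $C^*$-algebra is a Choquet simplex, which in turn can be obtained from Kadison duality (Proposition~\ref{prop:pointwise-to-uniform} and the surrounding discussion) together with the standard identification of $\mathrm{Aff}(T(A))$ with a suitable quotient of the self-adjoint part of $A$. More precisely, for a unital $C^*$-algebra $A$, the space of self-adjoint elements $A_{\mathrm{sa}}$ modulo the closure of the commutator subspace, equipped with the image of $A_+$ and the class of the unit, is an Archimedean order unit space whose state space is (affinely homeomorphic to) $T(A)$; this order unit space has Riesz interpolation because it is the (complete order unit) quotient of $A_{\mathrm{sa}}$ by an order ideal generated by commutators, and one checks interpolation directly using functional calculus in $A$ and the fact that self-commutators are dense in the kernel. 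By Kadison duality the state space of an Archimedean order unit space with interpolation is a Choquet simplex, so $T(A)$ is a Choquet simplex whenever it is compact (e.g.\ when $A$ is unital; for the general case one passes to the unitization, noting that $T(A)$ embeds as a face — and in fact as all of $T(\tilde A)$ minus possibly the trace that kills $A$ — so the argument still applies).

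The second ingredient is a general principle from Choquet theory: a closed face of a Choquet simplex is again a Choquet simplex. This is standard, but if a self-contained argument is wanted one notes that if $F$ is a closed face of a Choquet simplex $X$ and $x \in F$, then the unique maximal (boundary) measure representing $x$ in $X$ is automatically supported on $\overline{\partial_e X} \cap F$; since $F$ is a face, $\partial_e F = \partial_e X \cap F$ (by Theorem~\ref{thm:choquet-faces}, or directly), so this measure lives on $\partial_e F$ and is the unique such representing measure for $x$ within $F$ as well — uniqueness within $F$ follows from uniqueness within $X$ since any boundary measure on $F$ is also a boundary measure on $X$. Hence $F$ is a Choquet simplex.

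Putting these together: given a $C^*$-algebra $A$ and a compact face $F \subseteq T(A)$, first deduce that $T(A)$ (or, if $A$ is non-unital, the compact set $T(\tilde A)$) is a Choquet simplex by the order-unit-space argument above; then observe that $F$ is a closed (hence, being compact, automatically closed) face of this Choquet simplex; then conclude that $F$ is a Choquet simplex by the face principle. One small technical point to handle is the non-unital case: $T(A)$ need not be compact, but a compact face $F$ of $T(A)$ is still a compact subset of $T(\tilde A)$ and is still a face there (any trace on $\tilde A$ whose restriction to $A$ is a non-trivial convex combination lying in $F$ has that restriction a state of norm one, hence is a trace on $A$), so the reduction goes through.

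The main obstacle, and the only place real work is needed, is the first step: verifying that the order unit space $A_{\mathrm{sa}} / \overline{[A,A]_{\mathrm{sa}}}$ has the Riesz interpolation property (equivalently, that its state space is a simplex). The key computation is that for positive elements one can lift interpolants: given $\bar a_1, \bar a_2 \leq \bar b_1, \bar b_2$ in the quotient, one must produce $\bar c$ with $\bar a_i \leq \bar c \leq \bar b_j$, and this uses that differences such as $b_j - a_i$ are, modulo commutators and up to arbitrarily small norm perturbation, positive, together with a manipulation of positive elements in $A$ itself. This is exactly the content of the folklore theorem being asserted, so in practice one would either cite it (e.g.\ to Blackadar's book or to the original sources) or record the interpolation argument; I would expect the authors to simply cite it, given the remark that the result is folklore.
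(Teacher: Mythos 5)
Your proposal is correct and follows essentially the same route as the paper: reduce the non-unital case to the unital one by viewing a compact face of $T(A)$ as a closed face of $T(A^\dagger)$, invoke the classical fact that the trace simplex of a unital $C^*$-algebra is a Choquet simplex, and then use that closed faces of Choquet simplices are again Choquet simplices (the paper simply cites Sakai and Goodearl for these two facts, where you sketch proofs). The only quibble is the parenthetical claim that $T(A)$ is ``all of $T(\tilde A)$ minus possibly the trace that kills $A$'' --- in general $T(\tilde A)$ is the closed convex hull of $T(A)$ and that character --- but this aside plays no role in the argument.
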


\begin{proof}
When $A$ is unital, $T(A)$ is a Choquet simplex by  \cite[Theorem~3.1.18]{Sak98}.  By \cite[Proposition~10.9]{Go86}, every closed face in a Choquet simplex is a Choquet simplex.

When $A$ is non-unital with unitisation $A^\dagger$, traces on $A$ can be extended to traces on $A^\dagger$, and $\tau\in T(A^\dagger)$ induces a trace on $A$ if and only if $\|\tau|_A\|=1$.  In this way $T(A)$ can be viewed as a face in $T(A^\dagger)$, and the result follows from the unital case.
\end{proof}

We turn to the structure of continuous affine functions $T(A) \rightarrow \mathbb R$, which will be used repeatedly in this paper. The following result is well-known and is often attributed to Cuntz and Pedersen in \cite{CP79}, but the result does not explicitly appear in their paper.  A proof of the first part of the result can be found in \cite[Lemma~6.2]{KR14} or \cite[Proposition~2.1]{CGSTW}, for example.

\begin{proposition}[cf.\ {\cite[Proposition 2.7]{CP79}}]
	\label{prop:CP}
	Let $A$ be a unital $C^*$-algebra and let $f\colon T(A) \to \mathbb R$ be a continuous affine function.
	Then for any $\epsilon>0$, there is a self-adjoint element $a \in A$ such that
	\begin{equation} \|a\| \leq \|f\|_\infty + \epsilon \quad \text{and} \quad \tau(a) = f(\tau), \quad \tau \in T(A). \end{equation}
	Moreover, if $f$ is strictly positive, we may assume $a \geq 0$.
\end{proposition}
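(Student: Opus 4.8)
\textbf{Proof proposal for Proposition~\ref{prop:CP}.}

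The plan is to reduce the statement to a density (or surjectivity) fact about the canonical map from the self-adjoint part of $A$ to $\mathrm{Aff}(T(A))$ and then to combine it with an approximation argument to gain control on the norm. First I would consider the affine map
\begin{equation}
	\Phi \colon A_{\mathrm{sa}} \longrightarrow \mathrm{Aff}(T(A)), \qquad \Phi(a)(\tau) \coloneqq \tau(a).
\end{equation}
This is well-defined, linear, unital (it sends $1_A$ to the constant function $1$), and norm-contractive. The key point is that $\Phi$ has \emph{dense range}. Indeed, $\mathrm{Aff}(T(A))$ is the closed linear span of the functions $\hat a \colon \tau \mapsto \tau(a)$ for $a \in A_{\mathrm{sa}}$: by Kadison duality (as recalled before Proposition~\ref{prop:pointwise-to-uniform}) an element of $\mathrm{Aff}(T(A))^*$ is a signed measure, and if such a functional annihilates all $\hat a$ then it annihilates all of $A_{\mathrm{sa}}$ via integration, forcing it to be zero; hence the $\hat a$ span a weak$^*$-dense, and therefore norm-dense, subspace of $\mathrm{Aff}(T(A))$. (Alternatively one invokes the quotient description $\mathrm{Aff}(T(A)) \cong A_{\mathrm{sa}}/\overline{[A,A]_{\mathrm{sa}}}$ from Cuntz--Pedersen, but the Hahn--Banach argument above suffices.)

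Next I would upgrade density to an \emph{isometric} statement. Because $A_{\mathrm{sa}}$ is complete and $\Phi$ is contractive with dense range, the induced map $\bar\Phi \colon A_{\mathrm{sa}}/\ker\Phi \to \mathrm{Aff}(T(A))$ is a contractive bijection onto a dense subspace; but in fact $\bar\Phi$ is \emph{isometric}: for $a \in A_{\mathrm{sa}}$ one has $\|\hat a\|_\infty = \sup_{\tau}|\tau(a)| = \inf\{\|a + c\| : c \in \overline{[A,A]_{\mathrm{sa}}}\}$, the last equality being precisely the Cuntz--Pedersen identification of the quotient norm. Granting this, an open-mapping-type argument gives: for every $f \in \mathrm{Aff}(T(A))$ and every $\epsilon > 0$ there is $a \in A_{\mathrm{sa}}$ with $\Phi(a) = f$ exactly (the quotient map onto a Banach space that is isometrically a quotient is in fact \emph{onto}, since $\mathrm{Aff}(T(A))$ is complete and $\bar\Phi$ is an isometry with dense range, hence surjective) and $\|a\| \le \|f\|_\infty + \epsilon$. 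Concretely: pick $a_0$ with $\|\Phi(a_0) - f\|_\infty < \epsilon/4$ and $\|a_0\| \le \|f\|_\infty + \epsilon/4$ using density together with the norm estimate from the isometric quotient norm; then iterate, correcting the error $f - \Phi(a_0)$ by an element of norm at most (a constant times) $\epsilon/4$, and sum the resulting rapidly converging series — the completeness of $A_{\mathrm{sa}}$ delivers an exact solution $a$ with $\|a\| \le \|f\|_\infty + \epsilon$ and $\tau(a) = f(\tau)$ for all $\tau$.

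Finally, for the positivity addendum, suppose $f$ is strictly positive, so $\delta \coloneqq \inf_{\tau \in T(A)} f(\tau) > 0$ by compactness of $T(A)$. Apply the main statement to the function $f - \delta/2$ with tolerance $\min(\epsilon, \delta/2)$, obtaining a self-adjoint $b \in A$ with $\tau(b) = f(\tau) - \delta/2$ for all $\tau$ and $\|b\| \le \|f - \delta/2\|_\infty + \delta/2 \le \|f\|_\infty + \epsilon$ (using $\delta/2 \le \|f\|_\infty$). Then $a \coloneqq b + \tfrac{\delta}{2}1_A$ satisfies $\tau(a) = f(\tau)$; and $a \ge 0$ because $\sigma(b) \subseteq [-\|f\|_\infty + \delta/2, \ldots]$ — more carefully, one should instead choose the shift so that $b + (\delta/2)1_A \ge 0$, which holds once $\|b - (\text{its positive part})\|$ is controlled; the clean way is to note $\tau(a) \ge \delta/2 > 0$ for all traces is \emph{not} enough to conclude $a \ge 0$, so instead I would run the construction directly on the \emph{positive} cone: since $\{\hat a : a \in A_+\}$ is a cone whose closure contains all strictly positive elements of $\mathrm{Aff}(T(A))_+$ (again Cuntz--Pedersen, or: approximate $f$ by $\hat{a_0}$ with $a_0$ self-adjoint, then replace $a_0$ by $(a_0 - \eta)_+ + \eta 1_A$ for small $\eta$, which is $\ge 0$, changes traces by at most $2\eta$, and has norm at most $\|a_0\| + \eta$), the same iterative scheme stays inside $A_+$ and produces $a \ge 0$ with $\tau(a) = f(\tau)$ and $\|a\| \le \|f\|_\infty + \epsilon$.

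\textbf{Main obstacle.} The substantive content is the Cuntz--Pedersen isometric identification $\|\hat a\|_\infty = \mathrm{dist}(a, \overline{[A,A]_{\mathrm{sa}}})$ (equivalently, that every trace-zero self-adjoint element can be approximated in norm by self-adjoint sums of commutators); density of $\Phi$'s range alone is elementary via Hahn--Banach, but converting density into an \emph{exact} solution with the sharp norm bound $\|f\|_\infty + \epsilon$ is where this quotient-norm identity is really needed. The positivity refinement is then a soft perturbation argument, the only delicate point being to perform the correction inside $A_+$ rather than merely arranging positive trace values.
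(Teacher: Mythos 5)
Your first part is fine: the paper does not actually reprove the exact-lifting statement (it cites \cite[Lemma~6.2]{KR14} and \cite[Proposition~2.1]{CGSTW}), and your route — dense range of $a \mapsto \hat a$ via Hahn--Banach, the isometric identification of $\mathrm{Aff}(T(A))$ with the quotient $A_{\mathrm{sa}}/\{a : \tau(a)=0 \ \forall \tau\}$, and completeness to upgrade density to surjectivity with the norm bound $\|f\|_\infty + \epsilon$ — is exactly the standard argument behind those references. You are also right to flag the quotient-norm identity $\|\hat a\|_\infty = \mathrm{dist}(a,\ker)$ as the real content; it needs the Jordan decomposition of self-adjoint tracial functionals (or a bipolar argument), which you assert rather than prove, but that matches the level of detail the paper itself adopts for this part.

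The positivity addendum, however, has a genuine gap. You correctly diagnose that the naive shift $a = b + \tfrac{\delta}{2}1_A$ fails (positive trace values do not give positivity), but your repair does not work either: for a merely self-adjoint $a_0$, the replacement $a_0 \mapsto (a_0-\eta)_+ + \eta 1_A$ changes $a_0$ by exactly $(a_0-\eta)_-$, whose norm is about $\|(a_0)_-\| + \eta$, not $2\eta$ — and the $a_0$ produced by the first part can have a large negative part invisible to traces, so neither the norm nor the traces of the correction are controlled. Moreover, the subsequent iterative corrections of the error $f - \hat{a_0}$ are not positive functions, so the scheme cannot stay inside $A_+$. The fix the paper uses is a different, purely spectral trick: with $\delta \in (0,2\epsilon)$ satisfying $f > \delta$, apply the first part not to $f - \tfrac{\delta}{2}$ but to $f - \tfrac12(\|f\|_\infty + \delta)$, whose sup-norm is at most $\tfrac12(\|f\|_\infty - \delta)$; with tolerance $\tfrac{\delta}{2}$ this yields a self-adjoint $b$ with $\|b\| \le \tfrac12\|f\|_\infty$, and then $a \coloneqq b + \tfrac12(\|f\|_\infty+\delta)1_A$ is positive because the added scalar strictly dominates $\|b\|$, while $\|a\| \le \|f\|_\infty + \tfrac{\delta}{2} < \|f\|_\infty + \epsilon$. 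In short: shift by the midpoint of the range of $f$ so the lifted element has roughly half the norm, rather than shifting by the infimum.
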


\begin{proof}
	We only prove the last sentence.  Assume that $f$ is strictly positive.  Since $T(A)$ is compact, there is a $\delta \in (0, 2\epsilon)$ such that $f(\tau) > \delta$ for all $\tau \in T(A)$.   Apply the first part of the proof to $f - \frac12 (\|f\|_\infty + \delta)$, which has norm at most $\frac12(\|f\|_\infty - \delta)$, to obtain a self-adjoint $b \in A$ such that
	\begin{equation}
		\|b\| \leq \frac12 \|f\|_\infty\qquad \text{and} \qquad  \tau(b) = f(\tau) - \frac12 (\|f\|_\infty + \delta)
	\end{equation}
	for all $\tau \in T(A)$.  Then set $a \coloneqq b + \frac12 (\|f\|_\infty+ \delta) 1_A$.
\end{proof}

The essential starting point for all of our classification results is the classification of projections in finite von Neumann algebras by traces (parts \ref{prop:TracesFiniteVNAS.3} and \ref{prop:TracesFiniteVNAS.4} in the proposition below), which goes back to Murray and von Neumann.  We review these results below; while these are well-known to experts, they are most often stated for factors -- particularly the existence of projections realising arbitrary continuous affine functions -- and in this paper we need to work with arbitrary finite von Neumann algebras.

\begin{proposition}\label{prop:TracesFiniteVNAS}
Let $\mathcal M$ be a finite von Neumann algebra.
\begin{enumerate}
\item Every trace on $\mathcal M$ factors uniquely through the centre-valued trace.\label{prop:TracesFiniteVNAS.1}\footnote{See \cite[Theorem V.2.6]{Tak79} or \cite[Theorem 8.2.8]{KadisonRingrose.2} for properties of the centre-valued trace.}
    \item The normal traces on $\mathcal M$ are dense in the traces on $\mathcal M$.\label{prop:TracesFiniteVNAS.2}
    \item Projections $p,q\in\mathcal M$ are unitarily equivalent if and only if $\tau(p)=\tau(q)$ for all $\tau\in T(\mathcal M)$; $p$ is Murray--von Neumann subequivalent to $q$ if and only if $\tau(p)\leq\tau(q)$ for all $\tau\in\mathcal T(\mathcal M)$.\label{prop:TracesFiniteVNAS.3}
    \item If $\mathcal M$ is type {\rm II}$_1$ and $f \colon T(\M) \rightarrow [0, 1]$ is a continuous affine function, then there is a projection $p \in \M$ such that $\tau(p) = f(\tau)$ for all $\tau \in T(\M)$.\label{prop:TracesFiniteVNAS.4}
\end{enumerate}
\end{proposition}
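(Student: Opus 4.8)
The plan is to derive all four parts from part~(i), which identifies $T(\M)$ with the state space $S(Z(\M))$ of the centre $Z(\M)$. Recall that the centre-valued trace $\Phi\colon\M\to Z(\M)$ is the unique $\|\cdot\|$-continuous conditional expectation onto $Z(\M)$ with $\Phi(xy)=\Phi(yx)$, and that Dixmier's averaging theorem realises $\Phi(x)$ as the unique point of $Z(\M)$ in the operator-norm closed convex hull of the unitary orbit $\{uxu^*:u\in\mathcal U(\M)\}$. Given $\tau\in T(\M)$, traciality makes $\tau$ constant with value $\tau(x)$ on this orbit, hence on its convex hull, hence---as $\tau$ is norm-continuous---on its norm closure; evaluating at $\Phi(x)$ gives $\tau=(\tau|_{Z(\M)})\circ\Phi$, and $\Phi|_{Z(\M)}=\id$ forces uniqueness of the factorisation. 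It follows that $\rho\mapsto\rho\circ\Phi$ and $\tau\mapsto\tau|_{Z(\M)}$ are mutually inverse affine homeomorphisms $S(Z(\M))\leftrightarrow T(\M)$ carrying normal states to normal traces and back (using normality of $\Phi$ one way and that restrictions of normal functionals are normal the other).

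For part~(ii), this homeomorphism reduces the claim to the standard density of the normal states in $S(Z(\M))$: the spectrum of the abelian von Neumann algebra $Z(\M)$ is hyperstonean, so a faithful family of normal states lets one exhibit each character as a weak$^*$-limit of the normal states obtained by normalising a normal measure over a shrinking net of clopen neighbourhoods, and density then follows by the Krein--Milman theorem. For part~(iii), the homeomorphism together with the fact that states separate self-adjoint elements of a $C^*$-algebra turns ``$\tau(p)=\tau(q)$ for all $\tau\in T(\M)$'' into ``$\Phi(p)=\Phi(q)$'' and ``$\tau(p)\le\tau(q)$ for all $\tau$'' into ``$\Phi(p)\le\Phi(q)$''. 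It then remains to quote the Murray--von Neumann comparison theory for finite von Neumann algebras: $\Phi(p)=\Phi(q)$ iff $p$ and $q$ are Murray--von Neumann equivalent and $\Phi(p)\le\Phi(q)$ iff $p\precsim q$ (see \cite[Chapter~V]{Tak79} or \cite[Chapter~8]{KadisonRingrose.2}); and in a finite von Neumann algebra Murray--von Neumann equivalent projections are unitarily equivalent, since finiteness gives $1-p\sim 1-q$ and the two partial isometries patch together to a unitary conjugating $p$ to $q$.

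Finally, for part~(iv), Kadison duality---recalled in the preliminaries---identifies $\mathrm{Aff}(T(\M))=\mathrm{Aff}(S(Z(\M)))$ with the self-adjoint part of $Z(\M)$, so $f$ corresponds to a central element $z$ with $0\le z\le 1$, and---arguing as in part~(iii)---a projection $p$ has $\tau(p)=f(\tau)$ for all $\tau$ precisely when $\Phi(p)=z$. The statement thus reduces to: when $\M$ is type {\rm II}$_1$, the map $\Phi$ sends the projection lattice of $\M$ \emph{onto} the order interval $\{z\in Z(\M):0\le z\le 1\}$. I would prove this by a Zorn's lemma argument on the set of projections $f$ with $\Phi(f)\le z$, ordered by $\le$: a chain has a supremum $f$ with $\Phi(f)=\sup_\alpha\Phi(f_\alpha)\le z$ by normality of $\Phi$, so a maximal element $f_0$ exists; if $\Phi(f_0)\ne z$, set $w=z-\Phi(f_0)\ge 0$, $w\ne 0$, and for $n$ large let $r_n=\chi_{[1/n,\infty)}(w)\in Z(\M)$, which is nonzero and satisfies $\Phi(r_n(1-f_0))=r_n(1-\Phi(f_0))\ge r_nw\ge\tfrac1n r_n\ne 0$, so $r_n(1-f_0)\ne 0$; as $\M$ is type {\rm II}$_1$, repeated application of the halving lemma (every projection in a type {\rm II} algebra is the sum of two Murray--von Neumann equivalent subprojections) inside the corner $r_n(1-f_0)\M r_n(1-f_0)$ yields a nonzero projection $g\le r_n(1-f_0)$ with $\|\Phi(g)\|\le\tfrac1n$, whence $\Phi(g)\le\tfrac1n r_n\le w$ and $f_0+g$ contradicts the maximality of $f_0$. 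The step I expect to be the main obstacle is this last one---manufacturing a nonzero projection in a type {\rm II}$_1$ corner whose centre-valued trace lies \emph{below} $w$ rather than merely being small in norm---and if one prefers to sidestep the corner estimates, part~(iv) can instead be deduced from the factor case (where the trace attains every value in $[0,1]$ on projections) via a direct-integral decomposition of $\M$ over its centre in the separable-predual case, reducing the general case by exhaustion over central projections.
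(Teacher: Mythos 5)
Your proof is correct. Parts (i)--(iii) follow essentially the same route as the paper: (i) via Dixmier averaging, (ii) by reducing through the homeomorphism $T(\M)\cong S(Z(\M))$ to density of normal states on the centre, and (iii) by translating the trace conditions into conditions on the centre-valued trace and quoting the comparison theorem, with unitary equivalence obtained by also comparing the complements. Part (iv) is where you genuinely diverge. The paper runs Zorn's lemma on the set of projections $p$ with $\tau(p)\le f(\tau)$ for all $\tau$, and, given a maximal $p$ failing to realise $f$, uses the Krein--Milman theorem to find a bad extreme trace $\tau_0$, passes to a clopen neighbourhood $U\ni\tau_0$ and its central projection $z$, embeds $M_d$ unitally into $z\M$ to produce a projection of constant trace $1/d<\epsilon$, and moves it under $z(1-p)$ by comparison of projections. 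You instead work entirely with the centre-valued trace: Zorn on $\{e : \Phi(e)\le z\}$, and the increment is manufactured by iterated halving inside the corner $r_n(1-f_0)\M r_n(1-f_0)$, where $r_n$ is a spectral projection of $w=z-\Phi(f_0)$. This buys you a cleaner argument --- you never invoke Krein--Milman or the comparison theorem at this step, since the halved projection already sits under $1-f_0$, and the localisation is done by functional calculus on $w$ rather than by choosing a clopen neighbourhood of an extreme trace; the price is that you lean on the halving lemma for type II algebras where the paper leans on the existence of unital matrix embeddings (results of comparable depth, both going back to Murray and von Neumann). The one compressed step --- passing from $\|\Phi(g)\|\le 1/n$ to $\Phi(g)\le\frac1n r_n$ --- is fine because $g\le r_n$ with $r_n$ central forces $\Phi(g)=r_n\Phi(g)$, but it is worth saying so explicitly. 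Your worry at the end is unfounded: the halving route already produces $\Phi(g)\le w$ exactly, so no direct-integral detour is needed.
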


\begin{proof}
\ref{prop:TracesFiniteVNAS.1}.  This is a consequence of Dixmier's approximation theorem (see \cite[Proposition 8.3.10]{KadisonRingrose.2} or \cite[Theorem~III.2.5.7(iv)]{Bl06}).

\ref{prop:TracesFiniteVNAS.2}.  Since the centre-valued trace on $\M$ is normal (see \cite[Theorem~V.2.34]{Ta79}), the result follows from \ref{prop:TracesFiniteVNAS.1} and the density of the normal states in the states on the centre of $\mathcal M$.

\ref{prop:TracesFiniteVNAS.3}. The second part of the statement follows from Murray and von Neumann's comparison theorem for projections in von Neumann algebras, the properties of the centre-valued trace, and \ref{prop:TracesFiniteVNAS.1}; see \cite[Corollary V.2.8]{Tak79}, for example.  The first part follows from applying the second part both to $p$ and $q$ and to $1_{\mathcal M}-p$ and $1_{\mathcal M}-q$.

\ref{prop:TracesFiniteVNAS.4}. This proceeds in a very similar way to the proof that II$_1$ factors contain projections of arbitrary traces in $[0,1]$ (see \cite[Proposition 4.1.6]{PA}). As we have been unable to find a reference, we give the details for completeness. 

Let $P$ be the set of projections $p \in \mathcal M$ such that $\tau(p) \leq f(\tau)$ for all $\tau \in T(\mathcal M)$.  Then $P \neq \emptyset$ as $0 \in P$.  Also, if $(p_\lambda)$ is an increasing chain of projections in $\M$ with supremum $p \in \M$, then for any normal trace $\tau\in\mathcal M$, we have
	\begin{equation}
		\tau(p) = \lim_\lambda \tau(p_\lambda) \leq f(\tau).
	\end{equation}
 As $f$ is weak$^*$-continuous, part \ref{prop:TracesFiniteVNAS.2} gives $p \in P$.  By Zorn's Lemma, there is a maximal $p \in P$.  We will show $f(\tau) = \tau(p)$ for all $\tau \in T(\M)$.
	
	Suppose this is not the case.  By the Krein--Milman theorem, there exists $\tau_0 \in \partial_e T(\M)$ with $\tau_0(p) < f(\tau_0)$.
	Set $\epsilon \coloneqq \frac{1}{2}(f(\tau_0) - \tau_0(p))$. By \ref{prop:TracesFiniteVNAS.1}, there is an isomorphism \begin{equation}\label{eq:centre-vNa-spectrum}
		Z(\M) \overset\cong\longrightarrow C(\partial_e T(\M)) \colon a \mapsto \big(\tau \mapsto \tau(a)\big).
	\end{equation}
	In particular, $\partial_e T(\M)$ is totally disconnected.  Let $U \subseteq \partial_e T(\M)$ be a clopen neighbourhood of $\tau_0$ so that 
	\begin{equation}
		\tau(p) < f(\tau) - \epsilon, \quad \tau \in U,
	\end{equation}
    and let $z \in Z(\M)$ be the projection corresponding to the characteristic function of $U$ under the isomorphism in \eqref{eq:centre-vNa-spectrum}. 
	Since $\tau(1_\M-p) \geq \epsilon$ for all $\tau \in U$, it follows that 
	\begin{equation}
		\tau(z(1-p)) \geq \epsilon  
	\end{equation}
	for all $\tau \in \partial_e T(z\M)$, and hence for all $\tau \in T(z\M)$.

	Fix $d \geq 1$ with $1/d < \epsilon$. Since $\M$ is type II$_1$, so is the corner $z \M$. By the proof of \cite[Theorem~V.1.35]{Tak79} (which handles the case $d = 2$), there is a unital embedding $M_d \rightarrow z \M$, and in particular, there is a projection $e \in z\M$ so that $\tau(e) = 1/d$ for all $\tau \in T(z\M)$. By comparison of projections in the von Neumann algebra $z\M$, it follows that $e$ is Murray--von Neumann subequivalent to $z(1-p)$. 
	Hence after conjugating $e$ by a unitary in $z\M$, we may assume $e$ is orthogonal to $zp$. Since $e$ is also orthogonal to the central projection $z^\perp$, it follows that $e$ is orthogonal to $p$.
	
	Set $p' \coloneqq p + e$. Since $e$ is orthogonal to $p$, it follows that $p'$ is a projection.  For $\tau \in \partial_e X \setminus U$, we have $\tau(p') = \tau(p) \leq f(\tau)$, and we have 
    \begin{equation}
        \tau(p') = \tau(p) + \tfrac{1}{d} < f(\tau), \quad \tau \in U.
    \end{equation}  By the Krein--Milman theorem, it follows that $\tau(p') \leq f(\tau)$ for all $\tau \in T(\M)$, and so $p' \in P$.  This contradicts the maximality of $p$. 
\end{proof}

Given a $C^*$-algebra $A$ and $\tau \in T(A)$, let $\pi_\tau\colon A \to \mathcal B(\mathcal H_\tau)$ be the GNS representation associated to $\tau$. The induced von Neumann algebra $\pi_\tau(A)''$ has a faithful normal trace induced by $\tau$ and hence is finite. Therefore, $\pi_\tau(A)''$ is the direct sum of a finite von Neumann algebra of type I  and a von Neumann algebra of type II$_1$. Under the assumption that $A$ has no finite dimensional quotients, $\pi_\tau(A)''$ is of type II$_1$ -- this will be a common hypothesis throughout the paper.

The following well-known characterisation of the extreme points of $T(A)$ will be used frequently in the paper.  A version of the result for not necessarily bounded tracial weights is given in \cite[Theorem~6.7.3]{Di77} -- the result below is a special case.\footnote{Note that an extreme point of $T(A)$ is precisely a character of norm 1 in the sense of \cite[Definition~6.7.1]{Di77}.}

\begin{proposition}[{cf.\ \cite[Theorem 6.7.3]{Di77}}] \label{prop:factorial-extreme-point}
	If $A$ is a unital $C^*$-algebra and $\tau \in T(A)$, then $\tau$ is an extreme point of $T(A)$ if and only if $\pi_\tau(A)''$ is a factor.
\end{proposition}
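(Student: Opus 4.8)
The plan is to realise both directions through the GNS picture for $\tau$. Write $\pi_\tau \colon A \to \mathcal B(\mathcal H_\tau)$ for the GNS representation, $\xi$ for the cyclic vector with $\tau = \langle \pi_\tau(\,\cdot\,)\xi,\xi\rangle$, and $\M \coloneqq \pi_\tau(A)''$. Since $\tau$ is a trace, the vector state $\bar\tau \coloneqq \langle\,\cdot\,\xi,\xi\rangle$ on $\M$ is a normal trace; its support projection is central, and it must equal $1_\M$ because $\xi$ is cyclic for $\M$, so $\bar\tau$ is a \emph{faithful} normal tracial state on $\M$ extending $\tau$. (That the trace identity passes from $\pi_\tau(A)$ to $\M$ is by normality of $\bar\tau$ together with Kaplansky density.) With this dictionary, the proposition becomes the statement that $\tau$ is extreme in $T(A)$ exactly when $Z(\M) = \mathbb C1_\M$.

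For the implication ``$\M$ a factor $\Rightarrow \tau$ extreme'', suppose $\tau = \tfrac12(\tau_1+\tau_2)$ with $\tau_1,\tau_2 \in T(A)$. Then $\tau_1 \le 2\tau$, so the Radon--Nikodym theorem for the GNS representation yields $T \in \pi_\tau(A)' = \M'$ with $0 \le T \le 2$ and $\tau_1 = \langle \pi_\tau(\,\cdot\,)T\xi,\xi\rangle$. Thus $\bar\tau_1 \coloneqq \langle (\,\cdot\,)T\xi,\xi\rangle$ is a normal positive functional on $\M$ restricting to $\tau_1$ on $A$, and it is a trace on $\M$ (the trace identity holds on $\pi_\tau(A)$ because $\tau_1$ is a trace, and extends to $\M$ by normality and Kaplansky density). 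By Proposition~\ref{prop:TracesFiniteVNAS}\ref{prop:TracesFiniteVNAS.1}, $\bar\tau_1$ factors through the centre-valued trace of $\M$; since $\M$ is a factor, the centre-valued trace is just $\bar\tau(\,\cdot\,)1_\M$ (a finite factor has a unique tracial state), so $\bar\tau_1 = c\,\bar\tau$ with $c = \bar\tau_1(1_\M) = 1$. Restricting to $A$ gives $\tau_1 = \tau$, and hence $\tau$ is extreme.

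For the converse I would prove the contrapositive. If $\M$ is not a factor, then $Z(\M)$ is an abelian von Neumann algebra different from $\mathbb C1_\M$ and so contains a projection $z$ with $0 < z < 1_\M$. Set $t \coloneqq \bar\tau(z)$, which lies in $(0,1)$ by faithfulness of $\bar\tau$, and define $\tau_1(a) \coloneqq t^{-1}\bar\tau(\pi_\tau(a)z)$ and $\tau_2(a) \coloneqq (1-t)^{-1}\bar\tau(\pi_\tau(a)(1_\M-z))$. Centrality of $z$ makes each $\tau_i$ a tracial state on $A$: positivity follows from $\pi_\tau(a^*a)z = (\pi_\tau(a)z)^*(\pi_\tau(a)z) \geq 0$, and the trace identity follows from that of $\bar\tau$ together with $z$ being central. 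By construction $\tau = t\tau_1 + (1-t)\tau_2$, and $\tau_1 \neq \tau$ since otherwise $\bar\tau(x(z - t1_\M)) = 0$ for all $x \in \M$, and taking $x = z - t1_\M$ and using faithfulness of $\bar\tau$ would force $z = t1_\M$, contradicting $0 < z < 1_\M$. Hence $\tau$ is a nontrivial convex combination of two distinct traces, so it is not extreme.

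The genuinely non-formal inputs are the GNS Radon--Nikodym theorem and Proposition~\ref{prop:TracesFiniteVNAS}\ref{prop:TracesFiniteVNAS.1}; everything else is bookkeeping with the GNS representation. The one subtlety is the middle step of the first implication -- checking that the normal extension $\bar\tau_1$ is a trace on all of $\M$, not merely on $\pi_\tau(A)$ -- but this is precisely handled by the fact that $\bar\tau_1$ is normal, so it is determined by its values on the Kaplansky-dense unit ball of $\pi_\tau(A)$. I do not anticipate a real obstacle here.
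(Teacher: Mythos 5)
Your proof is correct. Note that the paper does not actually prove this proposition: it is quoted as a special case of \cite[Theorem~6.7.3]{Di77}, so there is no internal argument to compare against. What you have written is the standard direct GNS argument, and all the steps check out. The two places that genuinely require care are both handled properly: (a) promoting the trace identity from $\pi_\tau(A)$ to $\M=\pi_\tau(A)''$ for the normal functionals $\bar\tau$ and $\bar\tau_1$, which you correctly reduce to normality plus Kaplansky density (joint strong continuity of multiplication on bounded sets); and (b) the faithfulness of $\bar\tau$, which you get from centrality of the support projection of a normal trace together with cyclicity of $\xi$. In the forward direction, the combination of the GNS Radon--Nikodym theorem (giving $T\in\M'$ with $\tau_1=\langle\pi_\tau(\,\cdot\,)T\xi,\xi\rangle$) and uniqueness of the trace on a finite factor (equivalently, Proposition~\ref{prop:TracesFiniteVNAS}\ref{prop:TracesFiniteVNAS.1}) is exactly the right input. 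In the converse, the only implicit step is that the identity $\bar\tau(\pi_\tau(a)(z-t1_\M))=0$ for $a\in A$ extends to all $x\in\M$ before you take $x=z-t1_\M$; this is the same normality-plus-density observation you already used, so it is not a gap, though it would be worth a clause. One could alternatively run the converse through Proposition~\ref{prop:TracesFiniteVNAS}\ref{prop:TracesFiniteVNAS.1} as well (a non-trivial centre gives two distinct traces on $\M$, which restrict to distinct traces on $A$ averaging to $\tau$), but your explicit construction with the central projection is cleaner and self-contained.
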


The following lemma relates traces on the von Neumann algebra $\pi_\tau(A)''$ to traces on $A$.  For a $C^*$-algebra $A$ and a set of traces $X \subseteq T(A)$, we define
\begin{equation}\label{eq:GNS-X}
	\pi_X \coloneqq \bigoplus_{\tau \in X} \pi_\tau \colon A \rightarrow \mathcal{B}\Big( \bigoplus_{\tau \in X} \mathcal H_\tau \Big)
\end{equation}
to be the direct sum of the GNS representations associated with the traces in $X$.

\begin{lemma}\label{lem:GNSTraceFace}
	Suppose $A$ is a unital $C^*$-algebra and $X \subseteq T(A)$.  If $\tau \in T(\pi_X(A)'')$, then $\tau \circ \pi_X$ is in the closed face generated by $X$.
\end{lemma}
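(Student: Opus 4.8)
The plan is to reduce, via the relative exposedness of closed faces in a Choquet simplex (Theorem~\ref{thm:exposed}\ref{item:exposed2}), to a single scalar inequality about $\tau\circ\pi_X$, and then to verify that inequality by transporting it to the centre of $\mathcal{M} := \pi_X(A)''$ and running a measure-theoretic estimate there.

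First I would record some structure on $\mathcal{M}$. Writing $P_{\tau'}$ for the orthogonal projection of $\bigoplus_{\tau'\in X}\mathcal H_{\tau'}$ onto $\mathcal H_{\tau'}$, each $P_{\tau'}$ lies in $\mathcal{M}'$, so the compression $q_{\tau'}(x):= P_{\tau'}xP_{\tau'}|_{\mathcal H_{\tau'}}$ is a normal $^*$-homomorphism $\mathcal{M}\to B(\mathcal H_{\tau'})$ with $q_{\tau'}(\pi_X(a)) = \pi_{\tau'}(a)$; hence $q_{\tau'}(\mathcal{M}) = \pi_{\tau'}(A)''$, and $x\mapsto(q_{\tau'}(x))_{\tau'}$ embeds $\mathcal{M}$ into the $\ell^\infty$-direct sum of the finite von Neumann algebras $\pi_{\tau'}(A)''$, so $\mathcal{M}$ is finite. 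Letting $\bar\tau'$ be the trace on $\pi_{\tau'}(A)''$ induced by $\tau'$, put $\hat\tau':= \bar\tau'\circ q_{\tau'}$, a normal tracial state on $\mathcal{M}$ with $\hat\tau'\circ\pi_X = \tau'$; since $\bigcap_{\tau'\in X}\ker q_{\tau'} = \{0\}$, the family $(\hat\tau')_{\tau'\in X}$ is jointly faithful on $\mathcal{M}$. Let $F$ be the closed face of the Choquet simplex $T(A)$ (Theorem~\ref{thm:traces-choquet}) generated by $X$. By Theorem~\ref{thm:exposed}\ref{item:exposed2} it suffices to show $f(\tau\circ\pi_X) = 0$ for every $f\in\mathrm{Aff}(T(A))_+$ with $f|_F = 0$, and as $\{f = 0\}$ is a closed face containing $X$ this amounts to requiring $f|_X = 0$. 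Finally, since normal traces are weak$^*$-dense in $T(\mathcal{M})$ (Proposition~\ref{prop:TracesFiniteVNAS}\ref{prop:TracesFiniteVNAS.2}), $\tau\mapsto\tau\circ\pi_X$ is weak$^*$-continuous, and $F$ is closed, I may assume $\tau$ is normal.

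Next I would push the problem into the centre. By Proposition~\ref{prop:TracesFiniteVNAS}\ref{prop:TracesFiniteVNAS.1} every trace on $\mathcal{M}$ factors through the normal centre-valued trace $E_Z\colon\mathcal{M}\to Z(\mathcal{M})$, so $\tau(x) = (\tau|_{Z})(E_Z(x))$ and $\hat\tau'(x) = (\hat\tau'|_{Z})(E_Z(x))$ for $x\in\mathcal{M}$. Let $z_0\in Z(\mathcal{M})$ be the support projection of the normal state $\tau|_{Z}$; then $z_0Z(\mathcal{M})$ is a $\sigma$-finite abelian von Neumann algebra, hence $^*$-isomorphic to $L^\infty(\Omega,\nu)$ for a probability measure $\nu$ with $\tau|_{z_0Z(\mathcal{M})}$ given by integration against $\nu$, and each normal functional $\hat\tau'|_{z_0Z(\mathcal{M})}$ is integration against a density $h_{\tau'}\in L^1(\nu)_+$. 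Joint faithfulness of $(\hat\tau')_{\tau'}$ restricts to joint faithfulness of $(h_{\tau'})_{\tau'}$ on $L^\infty(\nu)$, and a standard exhaustion argument then yields a countable subfamily with $H := \sum_k 2^{-k}h_{\tau'_k}\in L^1(\nu)_+$ strictly positive $\nu$-almost everywhere. Now fix $f\geq 0$ with $f|_X = 0$ and $\beta>0$. Applying Proposition~\ref{prop:CP} to the strictly positive $f+\epsilon$ gives $a\in A_+$ with $\|a\|\leq\|f\|_\infty + 2\epsilon$ and $\rho(a) = f(\rho)+\epsilon$ for all $\rho\in T(A)$; in particular $\hat\tau'(\pi_X(a)) = \tau'(a) = \epsilon$ for $\tau'\in X$, while $\tau(\pi_X(a)) = f(\tau\circ\pi_X)+\epsilon$. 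Let $\gamma\in L^\infty(\nu)$ be the image of $z_0E_Z(\pi_X(a))$, so $0\leq\gamma\leq\|f\|_\infty+2\epsilon$, and note $\int\gamma h_{\tau'}\,d\nu = \hat\tau'(z_0E_Z(\pi_X(a)))\leq\hat\tau'(\pi_X(a)) = \epsilon$ for $\tau'\in X$ and $\tau(\pi_X(a)) = \int\gamma\,d\nu$. Since $\int\gamma H\,d\nu = \sum_k 2^{-k}\int\gamma h_{\tau'_k}\,d\nu\leq\epsilon$, splitting at a level set of $H$ gives
\begin{equation*}
\int\gamma\,d\nu \;\leq\; (\|f\|_\infty+2\epsilon)\,\nu(\{H\leq\eta\}) + \frac1\eta\int_{\{H>\eta\}}\gamma H\,d\nu \;\leq\; (\|f\|_\infty+2\epsilon)\,\nu(\{H\leq\eta\}) + \frac\epsilon\eta .
\end{equation*}
Choosing $\eta$ small (using $\nu(\{H\leq\eta\})\to 0$ as $\eta\to 0$, since $H>0$ a.e.) and then $\epsilon$ small makes $\tau(\pi_X(a))<\beta$, hence $f(\tau\circ\pi_X) = \tau(\pi_X(a))-\epsilon<\beta$; letting $\beta\to 0$ and using $f\geq 0$ forces $f(\tau\circ\pi_X)=0$, which completes the proof via Theorem~\ref{thm:exposed}\ref{item:exposed2}.

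The hard part is that a soft Hahn--Banach or convexity argument does not suffice. Already when $X = \{\tau_1\}$ with $\pi_{\tau_1}(A)''$ a non-factor, a trace on $\pi_{\tau_1}(A)''$ can push forward to a trace on $A$ that lies in $F$ but is neither in $\overline{\mathrm{co}}(X)$ nor $\|\cdot\|_{2,X}$-dominated by a multiple of $\tau_1$ -- it is only a weak$^*$-limit of such. This is exactly why one is forced to reduce to normal $\tau$, pass to $Z(\mathcal{M})$, and exploit joint faithfulness of the fibre traces; the genuine work is the control, uniform as $\epsilon\to 0$, of the set where $\tau$ might concentrate its mass, which is what the countable-exhaustion and $L^1$-estimate above provide.
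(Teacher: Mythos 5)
Your proof is correct, and its skeleton coincides with the paper's: both reduce to normal $\tau$ via the weak$^*$-density of normal traces on the finite von Neumann algebra $\pi_X(A)''$, both reduce via relative exposedness of closed faces (Theorem~\ref{thm:exposed}\ref{item:exposed2}) to showing $f(\tau\circ\pi_X)=0$ for positive continuous affine $f$ vanishing on $X$, and both realise $f$ (approximately) by positive elements of $A$ using Proposition~\ref{prop:CP}. Where you diverge is in the one remaining step: showing that $\tau(\pi_X(a))$ is small when $a\in A_+$ is bounded and $\sigma(a)$ is small for every $\sigma\in X$. The paper does this in two lines: a bounded sequence of positive elements whose traces tend to $0$ at every $\sigma\in X$ converges strongly to $0$ in $\pi_X(A)''$ (since $\sigma(b^*a_nb)\le\|b\|^2\sigma(a_n)$ by traciality), and a normal state is continuous along such sequences. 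You instead push everything into $Z(\mathcal M)$ via the centre-valued trace, model the support corner of $\tau|_{Z(\mathcal M)}$ as $L^\infty(\Omega,\nu)$, extract by exhaustion a countable jointly faithful subfamily of densities $h_{\tau'_k}$, and run a quantitative $L^1$--$L^\infty$ level-set estimate against $H=\sum_k2^{-k}h_{\tau'_k}$. This works --- the exhaustion argument, the identification of a $\sigma$-finite abelian von Neumann algebra with a faithful normal state as $L^\infty$ of a probability space, and the inequality $\hat\tau'(z_0E_Z(\pi_X(a)))\le\hat\tau'(\pi_X(a))$ (using centrality of $z_0$) are all sound --- but it imports the centre-valued trace and the structure theory of abelian von Neumann algebras to reprove, in quantitative form, exactly the strong continuity of normal states on bounded sets that the paper invokes directly. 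What your route buys is an explicit rate (the $\nu(\{H\le\eta\})+\epsilon/\eta$ bound) and a concrete picture of where $\tau$ could concentrate mass relative to the fibre traces; what it costs is length and machinery. Your closing remark that the lemma is not a soft convexity statement is fair in spirit (the pushforward of a trace on a non-factor fibre need not be dominated by any multiple of $\tau_1$), though the paper's resolution is itself quite soft --- the real input in both arguments is normality of $\tau$, not any uniform control over $X$.
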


\begin{proof}
Let $F \subseteq T(A)$ be the weak$^*$-closed face generated by $X$. By the density of the normal traces on $\pi_X(A)''$ in the traces on $\pi_X(A)''$ (Proposition~\ref{prop:TracesFiniteVNAS}\ref{prop:TracesFiniteVNAS.2}), we may assume $\tau$ is normal.
	  By Theorem~\ref{thm:exposed}\ref{item:exposed2}, it suffices to show that if $f \colon T(A) \rightarrow [0, 1]$ is a continuous affine function vanishing on $X$, then $f(\tau \circ \pi_X) = 0$.  
	  Fix such an affine function $f$.  
	  By applying Proposition~\ref{prop:CP} to the strictly positive affine functions $f + \tfrac{1}{n}$ for each $n \in \mathbb{N}$, we get a bounded sequence $(a_n)_{n=1}^\infty \subseteq A_+$ such that
	\begin{equation}\label{eq:approx-point-eval2}
		\lim_{n\to\infty} \sup_{\sigma \in T(A)} | \sigma(a_n)-f(\sigma)| = 0.
	\end{equation}
	It follows that $\sigma(a_n) \rightarrow 0$ for all $\sigma \in X$.  
	Since $a_n \geq 0$ and the sequence $(a_n)_{n=1}^\infty$ is bounded, this implies $\pi_X(a_n) \rightarrow 0$ strongly in $\pi_X(A)''$.\footnote{Indeed, the topology on $\pi_X(A)''$ induced by the seminorms 
		\[\|a\|_{2, \sigma} \coloneqq \sigma(a^*a)^{1/2}, \qquad  a \in \pi_X(A)'',\ \sigma \in X, \] agrees with the strong topology on bounded sets -- see \cite[Proposition~III.2.2.19]{Bl06}, for example.}
	Since $\tau$ is normal, we have $\tau(\pi_X(a_n)) \rightarrow 0$, and hence using \eqref{eq:approx-point-eval2} with $\sigma=\tau\circ\pi_X$ shows that $f(\tau\circ \pi_X) = 0$.
\end{proof}

We shall need a non-commutative version of the Radon--Nikodym theorem for positive normal tracial functionals on von Neumann algebras. (More general non-commutative Radon-Nikodym theorems can be found in \cite{Dye52, Sakai65}.)  To set the stage, for positive normal functionals $\phi$ and $\psi$ on a von Neumann algebra $\mathcal M$, we write $\phi \leq \psi$ if $\phi(a) \leq \psi(a)$ for all $a \in \mathcal M_+$.  
Recall also that for a normal positive linear functional $\phi$ on a von Neumann algebra $\mathcal M$, the minimal projection $s(\phi) \in \mathcal M$ satisfying $\phi(s(\phi)as(\phi)) = \phi(a)$ for all $a \in \mathcal M$ is the \emph{support} of $\phi$.\footnote{Equivalently, $1-s(\phi)$ is the maximal projection $p$ with $\phi(p)=0.$}
Further, when $\phi$ is tracial, we have $s(\phi) \in Z(\mathcal M)$.\footnote{When $\phi$ is tracial, $\phi(u s(\phi) u^* a u s(\phi) u^*) = \phi(u^*au) = \phi(a)$
for any unitary $u \in \mathcal M$ and $a \in \mathcal M$. Hence, $u s(\phi)u^* \geq s(\phi)$. Replacing $u$ with $u^*$ shows the reverse inequality. Therefore, $s(\phi) \in \mathcal M \cap U(\mathcal M)' = Z(\mathcal M)$.}
For $a \in \mathcal M_+$, we write $s(a) \coloneqq \lim_{n \rightarrow \infty} a^{1/n}$ for the support projection of $a$, with the limit taken in the strong operator topology.  Note that $s(a) \in Z(\mathcal M)$ if $a \in Z(\mathcal M)$. 

\begin{theorem}[Radon--Nikodym Theorem]\label{thm:radon-nikodym}
    Let $\mathcal M$ be a von Neumann algebra and let $\sigma$ and $\tau$ be positive normal tracial functionals on $\mathcal M$.  If there is a constant  $C \geq 0$ such that $\sigma \leq C \tau$, there there is a unique $\frac{{\rm d} \sigma}{{\rm d} \tau} \in Z(\mathcal M)_+$ whose support is a subprojection of the support of $\tau$ and satisfies $\sigma(a) = \tau\big(\frac{{\rm d} \sigma}{{\rm d} \tau}a\big)$ for all $a \in \mathcal M$.
\end{theorem}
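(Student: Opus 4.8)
The plan is to realise $\sigma$ as a bounded operator on the tracial GNS Hilbert space of $\tau$ and to exploit the traciality of \emph{both} functionals to force this operator into the centre of $\M$.

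\emph{Reduction to faithful $\tau$.} The support projection $z \coloneqq s(\tau)$ lies in $Z(\M)$ because $\tau$ is tracial (cf.\ the footnote preceding the theorem), and $\sigma \le C\tau$ forces $\sigma(1 - z) = 0$, so $s(\sigma) \le z$ and both functionals are carried by the finite von Neumann algebra $z\M z = \M z$. It therefore suffices to construct $\tfrac{\mathrm{d}\sigma}{\mathrm{d}\tau}$ in $Z(\M z)_+ = Z(\M)z$; extending by $0$ then produces an element of $Z(\M)_+$ whose support lies under $z = s(\tau)$, as required, and the identity $\sigma(a) = \sigma(zaz)$ (valid since $s(\sigma) \le z$ and $z$ is central) guarantees that the formula $\sigma(a) = \tau\big(\tfrac{\mathrm{d}\sigma}{\mathrm{d}\tau} a\big)$ survives the extension. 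So I may assume $\M$ is finite and $\tau$ is a faithful normal trace (the case $\tau = 0$ being trivial).

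\emph{Construction of the derivative.} Put $\M$ in standard form on $H \coloneqq L^2(\M, \tau)$ with cyclic vector $\hat 1$, and write $\lambda$ and $\rho$ for the left and right actions of $\M$, so that $\lambda(\M)' = \rho(\M)$ as $\tau$ is a faithful normal trace. The estimate $\sigma(a^*a) \le C\tau(a^*a)$ shows that $(\hat a, \hat b) \mapsto \sigma(b^*a)$ extends to a bounded positive sesquilinear form on $H$, so there is $T \in \mathcal B(H)$ with $0 \le T \le C$ and $\langle T\hat a, \hat b\rangle = \sigma(b^*a)$ for all $a, b \in \M$. A routine manipulation of the defining identities shows $T$ commutes with every $\lambda(x)$; a second manipulation, using the trace property of $\sigma$ to rewrite $\sigma(b^*ax)$ as $\sigma(xb^*a)$, shows $T$ commutes with every $\rho(x)$. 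Thus $T \in \lambda(\M)'$ and $T \in \rho(\M)' = \lambda(\M)'' = \lambda(\M)$, so $T \in \lambda(\M) \cap \lambda(\M)' = \lambda(Z(\M))$, say $T = \lambda(h)$ with $h \in \M$; positivity of $T$ gives $h \in Z(\M)_+$ and $\|h\| = \|T\| \le C$. Finally $\sigma(a) = \langle T\hat a, \hat 1\rangle = \langle \widehat{ha}, \hat 1\rangle = \tau(ha)$ for all $a \in \M$, so $h$ is the desired element. For uniqueness, if $h, h' \in Z(\M)_+$ have support under $s(\tau)$ and induce the same functional, then $k \coloneqq h - h'$ satisfies $\tau(ka) = 0$ for all $a \in \M$; taking $a = k$ gives $\tau(k^2) = 0$, whence $k = 0$ by faithfulness of $\tau$ on $z\M z$.

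\emph{Where the work is.} There is no genuinely hard step, but the pivotal point is the second commutation computation: the traciality of $\sigma$ (together with that of $\tau$, which is what makes $\rho$ available with $\rho(\M) = \lambda(\M)'$ in the first place) is exactly what drives $T$ into $Z(\M)$ rather than merely into $\lambda(\M)$ --- without traciality one recovers only Sakai's non-central Radon--Nikodym derivative. The remaining care is bookkeeping in the reduction to faithful $\tau$, where one must verify the support condition on the derivative and the compatibility $\sigma(a) = \sigma(zaz)$ so that nothing is lost when extending by zero.
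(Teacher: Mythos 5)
Your proof is correct. It is worth noting, though, that the paper does not actually prove this statement from scratch: its entire proof consists of the same two reductions you perform first (cutting by the central support $s(\tau)$ to make $\tau$ faithful, and rescaling so that $\sigma\leq\tau$) followed by a citation of Dixmier [Theorem 3 in Part I, Chapter 6]. What you have done differently is to open up that black box and supply a direct argument: representing $\sigma$ as a bounded positive operator $T$ on $L^2(\mathcal M,\tau)$ via $\sigma(a^*a)\leq C\tau(a^*a)$ and Cauchy--Schwarz, and then using the commutation theorem $\lambda(\mathcal M)'=\rho(\mathcal M)$ together with traciality of $\sigma$ to push $T$ into $\lambda(\mathcal M)\cap\lambda(\mathcal M)'=\lambda(Z(\mathcal M))$. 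This is essentially the classical proof of the tracial Radon--Nikodym theorem, and your identification of the second commutation computation (with the right action) as the step where traciality of $\sigma$ enters is exactly right --- without it one only lands in $\lambda(\mathcal M)'$, i.e.\ Sakai's non-central derivative. The bookkeeping in your reduction is also sound: $z\mathcal M$ is finite because it carries a faithful normal tracial state, the extension of $h$ by zero has support under $z=s(\tau)$, and the identity $\sigma(a)=\sigma(za)$ ensures the formula $\sigma(a)=\tau(ha)$ holds for all $a\in\mathcal M$ and not merely for $a\in z\mathcal M$. The trade-off is simply self-containedness versus brevity; your version buys a complete argument at the cost of invoking the standard form and the commutation theorem, which the paper avoids by citing Dixmier.
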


\begin{proof}
    After replacing $\mathcal M$ with $s(\tau)\mathcal M$ and $\sigma$ and $\tau$ with their restrictions to $s(\tau) \mathcal M$, we can assume $\tau$ is faithful.  Further, after replacing $\tau$ with $C\tau$, we may assume $\sigma \leq \tau$.  The result then follows from \cite[Theorem 3 in Part I, Chapter 6]{Di69EngTrans}. 
\end{proof}

\section{Tracially complete \texorpdfstring{$C^*$}{C*}-algebras}\label{sec:tracially-complete}

In this section, we introduce tracially complete $C^*$-algebras, which are the central objects of study in this paper.
We discuss the motivating examples, including the tracial completion of a $C^*$-algebra, and prove some basic approximation and extension lemmas.
We then construct inductive limits in the category of tracially complete $C^*$-algebras. 
In the final subsection, we examine the relationship between Ozawa's $W^*$-bundles (\cite{Oz13}) and tracially complete $C^*$-algebras.

\subsection{Definitions and basic properties}\label{sec:tracially-complete-defs}
The definition of tracially complete $C^*$-algebras is based on uniform 2-norms.

\begin{definition}[{cf.\ \cite{KR14, Sa12, TWW15, Oz13}, for example}]\label{def:trace-norm}
For a $C^*$-algebra $A$ and a set $X \subseteq T(A)$, define the \emph{uniform $2$-seminorm} on $A$ by
\begin{equation}\label{eq:UniformTraceNorm}
	\|a\|_{2,X} \coloneqq \sup_{\tau \in X} \sqrt{\tau(a^*a)}, \qquad a \in A.
\end{equation}
\end{definition}

The uniform 2-seminorm is indeed a seminorm, and the following inequality is easily verified: 
\begin{equation}                           \label{eq:SpecialHolderIneq}
	\|ab\|_{2,X} \leq \min\{\|a\|\|b\|_{2,X}, \|a\|_{2,X}\|b\|\}, \qquad a,b \in A.
\end{equation}
It is easily seen that $\|\cdot\|_{2,X}$ is a norm if and only if for every non-zero $a \in A_+$ there exists $\tau \in X$ with $\tau(a) > 0$. In this case, $X$ is said to be a \emph{faithful set of traces}, and we refer to $\|\cdot\|_{2,X}$ as the \emph{uniform $2$-norm} with respect to $X$.

The following properties of the uniform 2-norm are standard in the unique trace setting (i.e.\ when $X$ is a singleton).
\begin{proposition}\label{prop:ball-closed}
	Suppose $A$ is a $C^*$-algebra and $X \subseteq T(A)$ is a faithful set of traces.
	\begin{enumerate}
		\item\label{item:lsc} $\|\cdot\|$ is lower semi-continuous with respect to $\|\cdot\|_{2, X}$;
		\item\label{item:ball-closed} The unit ball of $A$ is $\|\cdot\|_{2, X}$-closed;
		\item\label{item:cone-closed} $A_+$ is $\|\cdot\|_{2, X}$-closed.
	\end{enumerate}
\end{proposition}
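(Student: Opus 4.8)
The plan is to establish \ref{item:cone-closed} first, by a direct argument that stays inside $A$, then deduce \ref{item:lsc} from it, and finally observe that \ref{item:ball-closed} is just the assertion that the sublevel set $\{a\in A:\|a\|\le 1\}$ is closed, hence a special case of \ref{item:lsc}. Throughout I would use that the standing hypothesis that $X$ is faithful makes $\|\cdot\|_{2,X}$ a genuine norm, so $(A,\|\cdot\|_{2,X})$ is a Hausdorff topological vector space and limits of nets are unique; the only other ingredients needed are the Hölder-type inequality \eqref{eq:SpecialHolderIneq} and the Cauchy--Schwarz inequality $|\tau(y^*z)|\le\|y\|_{2,\tau}\,\|z\|_{2,\tau}$ for traces, together with basic facts about GNS representations.

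For \ref{item:cone-closed}: given a net $(a_\lambda)$ in $A_+$ with $a_\lambda\to a$ in $\|\cdot\|_{2,X}$ for some $a\in A$, I would first check that $a=a^*$. Since $\tau(b^*b)=\tau(bb^*)$ for every trace, the adjoint is a $\|\cdot\|_{2,X}$-isometry, so $a_\lambda^*\to a^*$; as $a_\lambda^*=a_\lambda$, uniqueness of limits gives $a=a^*$. Next, using continuous functional calculus, set $a_-\coloneqq\max(-a,0)\in A_+$, so that $aa_-=-a_-^2$, and hence $\tau(aa_-)=-\|a_-\|_{2,\tau}^2$ for every $\tau\in X$. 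On the other hand, $\tau(a_\lambda a_-)=\tau(a_-^{1/2}a_\lambda a_-^{1/2})\ge 0$ for all $\lambda$, while $|\tau((a_\lambda-a)a_-)|\le\|a_\lambda-a\|_{2,\tau}\|a_-\|_{2,\tau}\le\|a_\lambda-a\|_{2,X}\|a_-\|\to 0$. Passing to the limit yields $-\|a_-\|_{2,\tau}^2\ge 0$, so $\|a_-\|_{2,\tau}=0$; since this holds for every $\tau\in X$ we get $\|a_-\|_{2,X}=0$, and faithfulness of $X$ then forces $a_-=0$, i.e.\ $a\ge 0$.

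For \ref{item:lsc}, it suffices to show that for each $r\ge 0$ the ball $B_r\coloneqq\{a\in A:\|a\|\le r\}$ is $\|\cdot\|_{2,X}$-closed, and I would do this by realising it as an intersection of $\|\cdot\|_{2,X}$-closed sets:
\begin{equation*}
  B_r=\bigcap_{x\in A,\ \tau\in X}\bigl\{a\in A:\|ax\|_{2,\tau}\le r\,\|x\|_{2,\tau}\bigr\}.
\end{equation*}
The inclusion ``$\subseteq$'' is immediate from \eqref{eq:SpecialHolderIneq}. For ``$\supseteq$'', suppose $\|ax\|_{2,\tau}\le r\|x\|_{2,\tau}$ for all $x\in A$ and $\tau\in X$; fixing $\tau$, cyclicity of the GNS vector $\xi_\tau$ gives $\|\pi_\tau(a)\|=\sup\{\|ax\|_{2,\tau}:x\in A,\ \|x\|_{2,\tau}\le 1\}\le r$, and since $X$ is faithful the representation $\pi_X=\bigoplus_{\tau\in X}\pi_\tau$ of \eqref{eq:GNS-X} is faithful, hence isometric, so $\|a\|=\sup_{\tau\in X}\|\pi_\tau(a)\|\le r$. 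Finally, for fixed $x$ and $\tau$ the map $a\mapsto\|ax\|_{2,\tau}$ is $\|\cdot\|_{2,X}$-Lipschitz with constant $\|x\|$ (by \eqref{eq:SpecialHolderIneq} and $\|\cdot\|_{2,\tau}\le\|\cdot\|_{2,X}$), so each set in the intersection is $\|\cdot\|_{2,X}$-closed.

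The step I expect to require the most care is \ref{item:cone-closed}, and the subtlety is precisely that the approximating net has no a priori operator-norm bound. This rules out the naive approach of passing to a von Neumann algebra completion of $A$ (where one would first have to argue that the $\|\cdot\|_{2,X}$-limit actually defines a bounded operator), and is the reason the argument above is engineered to work entirely within $A$ via the element $a_-$. With \ref{item:cone-closed} in hand, \ref{item:lsc} and its special case \ref{item:ball-closed} (take $r=1$) are routine.
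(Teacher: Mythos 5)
Your proof is correct. For parts \ref{item:lsc} and \ref{item:ball-closed} you are using essentially the same mechanism as the paper: the key inequality $\|(a-b)x\|_{2,\tau}\le\|a-b\|_{2,X}\|x\|$ together with the fact that the faithful representation $\pi_X$ is isometric, so that $\|a\|$ is recovered as a supremum of $\|\cdot\|_{2,X}$-continuous functions of $a$; the paper phrases this as strong convergence of $\pi_\tau(a_n)$ on the dense subspace $\pi_\tau(A)\xi_\tau$ followed by a $\liminf$ estimate, while you phrase it as writing each ball $B_r$ as an intersection of closed sublevel sets --- the same idea, differently organised. Part \ref{item:cone-closed} is where you genuinely diverge: the paper again works in the GNS representation, showing $\langle\pi_\tau(a)\eta,\eta\rangle\ge 0$ on a dense set of vectors and concluding $\pi_\tau(a)\ge 0$ for each $\tau$, whereas you stay entirely inside $A$, first noting the adjoint is a $\|\cdot\|_{2,X}$-isometry to get $a=a^*$, and then testing against $a_-$ via traciality and Cauchy--Schwarz to force $\|a_-\|_{2,X}=0$. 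Both arguments correctly avoid needing any operator-norm bound on the approximating net. The paper's route has the virtue of uniformity (one strong-convergence observation drives all three parts), while your argument for \ref{item:cone-closed} is more elementary and self-contained, and makes the self-adjointness of the limit explicit rather than implicit.
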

\begin{proof}
	Fix $(a_n)_{n=1}^\infty \subseteq A$ and $a \in A$ with $\|a_n - a\|_{2, X} \rightarrow 0$. 
	For $\tau \in X$, let $\pi_\tau \colon A \rightarrow \mathcal B(\mathcal H_\tau)$ denote the GNS representation of $\tau$ with associated cyclic vector $\xi_\tau \in \mathcal H_\tau$.  If $b \in A$, then 
	\begin{equation}\label{eqn:ball-closed-1}
\begin{split}
		\|(\pi_\tau(a_n) - \pi_\tau(a))\pi_\tau(b)\xi_\tau\| &= \|(a_n - a)b\|_{2,\tau} \\
		&\leq \|a_n-a\|_{2,X}\|b\|.
	\end{split}
\end{equation}	
	So $\pi_\tau(a_n)\eta$ converges to $\pi(a)\eta$ for all $\eta \in \pi_\tau(A)\xi_\tau$.  As $\pi_\tau(A)\xi_\tau$ is dense in $\mathcal H_\tau$,
	\begin{equation}
		\|\pi_\tau(a)\| \leq \liminf_{n \rightarrow \infty} \|\pi_\tau(a_n)\| \leq \liminf_{n \rightarrow \infty} \|a_n\|.
	\end{equation}
	 The product of the $\pi_\tau$ over $\tau \in X$ is faithful, and hence isometric, since $\|\cdot\|_{2, X}$ is a norm on $A$.  Therefore,
	\begin{equation}
		\|a\| = \sup_{\tau \in X} \|\pi_\tau(a)\| \leq \liminf_{n \rightarrow \infty} \|a_n\|.
	\end{equation}
This proves \ref{item:lsc}, and \ref{item:ball-closed} is an easy consequence of \ref{item:lsc}.

For \ref{item:cone-closed}, suppose now that $a_n \geq 0$ for all $n \in \N$.
Fix $\tau \in X$.
By \eqref{eqn:ball-closed-1}, we have 
	\begin{equation}
		\langle \pi_\tau(a)\pi_\tau(b)\xi_\tau, \pi_\tau(b)\xi_\tau \rangle = \lim_{n\to\infty} \langle \pi_\tau(a_n)\pi_\tau(b)\xi_\tau, \pi_\tau(b)\xi_\tau \rangle \geq 0,
	\end{equation}
	since $\pi_\tau(a_n) \geq 0$ for all $n \in \N$.
	The density of $\pi_\tau(A)\xi_\tau$ in $\mathcal H_\tau$ implies $\langle \pi_\tau(a)\eta,\eta \rangle \geq 0$ for all $\eta \in \mathcal H_\tau$, so $\pi_\tau(a) \geq 0$.
	As $X$ is a faithful set of traces, the product of the $\pi_\tau$ over $\tau \in X$ is faithful. 
	Since $\pi_\tau(a) \geq 0$ for all $\tau \in X$, it follows that $a \geq 0$.	
\end{proof}

We will restrict to the case when $X \subseteq T(A)$ is weak$^*$-compact since the weak$^*$-closure of $X$ defines the same uniform 2-norm as $X$.  The following observation will allow us to further restrict to the case when $X$ is convex.  For a set $X \subseteq T(A)$, let $\overline{\mathrm{co}}(X)$ denote the closed convex hull of $X$ in $A^*$.

\begin{proposition}\label{prop:co-2-norm}
	If $A$ is a $C^*$-algebra and $X \subseteq T(A)$ is compact, then $\overline{\mathrm{co}}(X) \subseteq T(A)$ and $\|a\|_{2, X} = \|a\|_{2, \overline{\mathrm{co}}(X)}$ for all $a \in A$. 
\end{proposition}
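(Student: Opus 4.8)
The plan is to treat the two assertions separately; both are soft. Write $Q \coloneqq \overline{\mathrm{co}}(X)$ for the weak$^*$-closed convex hull, and note at the outset that, since $X$ lies in the unit ball of $A^*$, so does $Q$, hence $Q$ is weak$^*$-compact by Banach--Alaoglu.

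For the inclusion $Q \subseteq T(A)$, I would observe that every $\rho \in Q$ is a weak$^*$-limit of finite convex combinations of traces in $X$, so $\rho$ is automatically positive, tracial, and of norm at most $1$; the only thing to verify is that $\|\rho\| = 1$. When $A$ is unital this is immediate (as $\rho(1) = 1$), so the substance is in the non-unital case. Here I would fix an increasing approximate unit $(e_\lambda)$ for $A$: since each $\sigma \in X$ is a state, $\sigma(e_\lambda) \nearrow 1$, so the net $\sigma \mapsto 1 - \sigma(e_\lambda)$ of continuous functions on the compact space $X$ decreases pointwise to $0$, and Dini's theorem yields $\delta_\lambda \coloneqq \sup_{\sigma \in X}(1 - \sigma(e_\lambda)) \to 0$. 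Each finite convex combination of elements of $X$ then satisfies $\rho(e_\lambda) \geq 1 - \delta_\lambda$, and since $\{\rho \in A^* : \rho(e_\lambda) \geq 1 - \delta_\lambda\}$ is weak$^*$-closed and contains $\mathrm{co}(X)$, it contains $Q$. Hence $\|\rho\| = \sup_\lambda \rho(e_\lambda) \geq 1 - \inf_\lambda \delta_\lambda = 1$ for every $\rho \in Q$, as needed. (An alternative is to reduce to the unital case via the unitisation: the canonical extension $\sigma \mapsto \tilde\sigma$ is an affine weak$^*$-homeomorphism of $X$ onto a compact subset of $T(A^\dagger)$, and one then argues inside the compact convex set $T(A^\dagger)$, using the face structure recalled in the proof of Theorem~\ref{thm:traces-choquet}.)

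For the equality of seminorms, the inequality $\|a\|_{2,X} \leq \|a\|_{2,Q}$ is trivial from $X \subseteq Q$. For the reverse I would fix $a \in A$ and use that $f \colon A^* \to \mathbb{R}$, $f(\rho) \coloneqq \rho(a^*a)$, is affine and weak$^*$-continuous: affineness gives $\sup_{\mathrm{co}(X)} f = \sup_X f$, and continuity together with the density of $\mathrm{co}(X)$ in $Q$ gives $\sup_Q f = \sup_{\mathrm{co}(X)} f$. Hence $\|a\|_{2,Q}^2 = \sup_{\rho \in Q} \rho(a^*a) = \sup_{\sigma \in X} \sigma(a^*a) = \|a\|_{2,X}^2$, and taking square roots finishes the proof.

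There is no real obstacle here: the second assertion is essentially the remark that suprema of continuous affine functionals are unchanged on passing to the closed convex hull, and the only point in the first assertion that needs more than bookkeeping is the norm-$1$ claim for non-unital $A$, handled by Dini's theorem (or equivalently by the reduction to $A^\dagger$).
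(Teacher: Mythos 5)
Your proof is correct and follows essentially the same route as the paper: the norm-one claim is handled via an approximate unit, Dini's theorem on the compact set $X$, and weak$^*$-closedness of the relevant sets, while the equality of seminorms follows from the fact that the weak$^*$-continuous affine function $\rho \mapsto \rho(a^*a)$ has the same supremum over $X$, $\mathrm{co}(X)$, and $\overline{\mathrm{co}}(X)$. The paper phrases the second step slightly differently (observing that $\{\tau : \tau(a^*a) \leq \|a\|_{2,X}^2\}$ is weak$^*$-closed, convex, and contains $X$), but this is the same argument.
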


\begin{proof}

Every $\tau \in \overline{\mathrm{co}}(X)$ is a positive tracial functional with $\|\tau\| \leq 1$. To see the first claim, it remains to show that $\|\tau\| \geq 1$.  
	
	Let $(e_\lambda) \subseteq A$ be an approximate unit for $A$. Then $(\tau(e_\lambda))$ increases to 1 for all $\tau \in X$.  By Dini's Theorem,\footnote{Dini's Theorem is often stated for increasing sequences of continuous functions, but the standard proof is equally valid for increasing nets of continuous functions.} this convergence is uniform over $\tau$.  Fix $\epsilon > 0$ and let $\lambda$ be such that $\tau(e_\lambda) > 1 - \epsilon$ for all $\tau \in X$.  Then $\tau(e_\lambda) \geq 1 - \epsilon$ for all $\tau \in \overline{\mathrm{co}}(X)$.  Therefore, each $\tau \in \overline{\mathrm{co}}(X)$ has norm 1.
	
	Let $a \in A$. The set of all traces $\tau \in T(A)$ with $\tau(a^*a) \leq \|a\|_{2,X}^2$ is weak$^*$-closed, convex, and contains $X$. Therefore, $\|a\|_{2, \overline{\mathrm{co}}(X)} \leq \|a\|_{2, X}$. The reverse inequality is trivial as $X \subseteq \overline{\mathrm{co}}(X)$. 
\end{proof}
 
A tracial von Neumann algebra (i.e.\ a von Neumann algebra with a distinguished faithful normal trace) can be characterised abstractly as a pair $(\mathcal M, \tau)$ where $\mathcal M$ is a $C^*$-algebra and $\tau$ is a faithful trace on $\M$ such that the unit ball of $\mathcal M$ is $\|\cdot\|_{2, \tau}$-complete (this follows from \cite[Lemma A.3.3]{Si08}, for example).  A morphism from a tracial von Neumann algebra $(\mathcal M, \tau_\mathcal M)$ to another tracial von Neumann algebra $(\mathcal N, \tau_\mathcal N)$ is a (necessarily normal) $^*$-homomorphism $\phi \colon \mathcal M \rightarrow \mathcal N$ such that $\tau_\mathcal N \circ \phi = \tau_\mathcal M$.  Our definition of tracially complete $C^*$-algebras and their morphisms are modelled on these definitions.

\begin{definition}\label{def:TC}
	A \emph{tracially complete $C^*$-algebra} is a pair $(\M,X)$ where $\M$ is a $C^*$-algebra and $X \subseteq T(\M)$ is a compact convex set such that
	\begin{enumerate}
		\item $X$ is a faithful set of traces on $\M$, and
		\item the unit ball of $\M$ is $\|\cdot\|_{2,X}$-complete.
	\end{enumerate}
	Given two tracially complete $C^*$-algebras $(\M,X)$ and $(\mathcal N,Y)$, a \emph{morphism} $\phi\colon (\M,X) \rightarrow (\mathcal N,Y)$ between tracially complete $C^*$-algebras is a  $^*$-ho\-mo\-mor\-phism $\phi\colon \M \to \mathcal N$ such that $\phi^*(Y)\subseteq X$; i.e.\ $\tau\circ\phi\in X$ whenever $\tau\in Y$.
\end{definition}

Strictly speaking, $X$ can be the empty set, which forces $\M$ to be the zero $C^*$-algebra (which is unital).  Needless to say, this degenerate case is not of interest. Implicitly we always imagine $X$ to be non-empty.

A major source of examples of tracially complete $C^*$-algebras is obtained by completing a $C^*$-algebra with respect to a compact convex set of traces. 
In a special case, if $A$ is a $C^*$-algebra and $\tau$ is a trace on $A$, then the GNS completion $(\pi_\tau(A)'', \tau)$ is a tracial von Neumann algebra which coincides with the tracial completion of $A$ with respect to $\{\tau\}$.
Tracial completions will be discussed further in Section~\ref{sec:TraceNormCompletion} below.

Another significant source of examples are $W^*$-bundles, which were introduced by Ozawa in \cite{Oz13} and analysed further in \cite{Ev16,Ev18}.

\begin{definition}[{\cite[Section 5]{Oz13}}]
	\label{defn:W*bundle}
	Let $K$ be a compact Hausdorff space.\footnote{In~\cite{Oz13}, it is assumed $K$ is metrisable, but this is not needed here.  The non-metrisable case was also considered in~\cite{BBSTWW} where ultraproducts of $W^*$-bundles were developed.}  A \emph{$W^*$-bundle} over $K$ is a unital $C^*$-algebra $\M$ together with a unital embedding $C(K) \subseteq Z(\M)$ and a faithful conditional expectation $E\colon\M \to C(K)$ such that
	\begin{enumerate}
		\item $E$ is tracial in the sense that $E(ab) = E(ba)$ for all $a,b \in \M$, and
		\item the unit ball of $\M$ is complete with respect to the norm $\|\cdot\|_{2,E}$, defined by $\|a\|_{2,E}\coloneqq\|E(a^*a)\|^{1/2}$ for all $a \in \mathcal M$.
	\end{enumerate}
	The \emph{fibre} of $\M$ at a point $x \in K$ is $\pi_{\tau_x}(\M)$, where $\tau_x=\mathrm{ev}_x \circ E$; this is equal to $\pi_{\tau_x}(\M)''$ by~\cite[Theorem 11]{Oz13}.
\end{definition}

Every $W^*$-bundle gives rise to a tracially complete $C^*$-algebra.  We will give a partial converse of the next result (also essentially due to Ozawa) in Section \ref{sec:WStarBundles}.

\begin{proposition}\label{prop:WStarBundleToTC}
	Let $\M$ be a $W^*$-bundle over $K$ with conditional expectation $E\colon \M \rightarrow C(K)$.  Let $X$ be the set of traces on $\M$ of the form
	\begin{equation}\label{eqn:Traces}
		\tau(a) = \int_K E(a) \, \mathrm{d}\mu, \quad \quad a \in \M,
	\end{equation}
	for $\mu \in \mathrm{Prob}(K)$, the space of Radon probability measures on $K$.  Then $(\M, X)$ is a tracially complete $C^*$-algebra and $X$ is a Bauer simplex with extreme boundary $\partial_e X \cong K$.  
\end{proposition}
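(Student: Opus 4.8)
\textbf{Proof plan for Proposition~\ref{prop:WStarBundleToTC}.}

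The plan is to verify the three assertions in turn: that $(\M,X)$ is a tracially complete $C^*$-algebra, that $X$ is a Bauer simplex, and that $\partial_e X \cong K$. First I would check that every functional $\tau$ of the form \eqref{eqn:Traces} is indeed a trace: positivity and the norming condition $\tau(1_\M) = 1$ follow since $E$ is a unital positive map and $\mu$ is a probability measure, while the tracial identity $\tau(ab) = \tau(ba)$ is immediate from the tracial property of $E$. Faithfulness of $X$ as a set of traces is the next point: if $a \in \M_+$ is non-zero, then $E(a) \in C(K)_+$ is non-zero (since $E$ is a faithful conditional expectation --- here I would note that completeness of the unit ball of $\M$ in $\|\cdot\|_{2,E}$ forces $\|\cdot\|_{2,E}$ to be a genuine norm, hence $E$ is faithful), so there is $x \in K$ with $E(a)(x) > 0$, and then $\tau_x = \mathrm{ev}_x \circ E \in X$ satisfies $\tau_x(a) > 0$. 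The key identification, which underpins everything else, is that $\|\cdot\|_{2,X}$ coincides with Ozawa's norm $\|\cdot\|_{2,E}$: for $a \in \M$,
\begin{equation}
	\|a\|_{2,X}^2 = \sup_{\mu \in \mathrm{Prob}(K)} \int_K E(a^*a)\,\mathrm{d}\mu = \sup_{x \in K} E(a^*a)(x) = \|E(a^*a)\| = \|a\|_{2,E}^2,
\end{equation}
using that the supremum of $\int f \, \mathrm{d}\mu$ over probability measures $\mu$ equals $\sup_{x} f(x) = \|f\|$ for $f = E(a^*a) \in C(K)_+$. Given this, completeness of the unit ball of $\M$ in $\|\cdot\|_{2,X}$ is exactly axiom (ii) of Definition~\ref{defn:W*bundle}, so $(\M,X)$ is a tracially complete $C^*$-algebra.

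For the simplex structure, I would exhibit the affine bijection $\mathrm{Prob}(K) \to X$ sending $\mu$ to the trace $\tau_\mu$ defined by \eqref{eqn:Traces}. Surjectivity is the definition of $X$. For injectivity: if $\tau_\mu = \tau_\nu$, then $\int_K g \, \mathrm{d}\mu = \tau_\mu(g) = \tau_\nu(g) = \int_K g \, \mathrm{d}\nu$ for all $g \in C(K) \subseteq \M$ (using $E|_{C(K)} = \mathrm{id}$, which holds since $E$ is a conditional expectation onto $C(K)$), so $\mu = \nu$ by the Riesz representation theorem. Affinity is clear from linearity of the integral in $\mu$, and weak$^*$-to-weak$^*$ continuity of the map holds because for fixed $a \in \M$ the function $\mu \mapsto \int_K E(a) \, \mathrm{d}\mu$ is weak$^*$-continuous on $\mathrm{Prob}(K)$. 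Since $\mathrm{Prob}(K)$ is weak$^*$-compact, the map is a homeomorphism onto its image, so $X \cong \mathrm{Prob}(K)$ as compact convex sets. The compact convex set $\mathrm{Prob}(K)$ is a Bauer simplex with $\partial_e \mathrm{Prob}(K) = \{\delta_x : x \in K\} \cong K$ --- this is exactly the content recalled in the discussion of Bauer simplices in Section~\ref{sec:simplices} (where the affine homeomorphism $\mathrm{Prob}(\partial_e X) \cong X$ is described). Tracing the extreme points through the homeomorphism $\mathrm{Prob}(K) \to X$, the point mass $\delta_x$ maps to $\tau_x = \mathrm{ev}_x \circ E$, so $\partial_e X = \{\tau_x : x \in K\}$ and the map $x \mapsto \tau_x$ is the claimed homeomorphism $K \cong \partial_e X$ (it is continuous since $x \mapsto \tau_x(a) = E(a)(x)$ is continuous for each $a$, and it is injective since $\tau_x(g) = g(x)$ for $g \in C(K)$).

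I expect the main obstacle --- really the only non-bookkeeping point --- to be making sure the norm identification $\|\cdot\|_{2,X} = \|\cdot\|_{2,E}$ is airtight, in particular the step $\sup_{x\in K} E(a^*a)(x) = \|E(a^*a)\|$, which needs $E(a^*a) \geq 0$ (immediate from positivity of $E$) so that its supremum norm is attained as a genuine supremum of values; once that is in place, the fact that $\delta_x \in \mathrm{Prob}(K)$ already attains this supremum shows the two norms agree, and everything else is a direct translation between Ozawa's axioms and Definition~\ref{def:TC} together with standard facts about $\mathrm{Prob}(K)$. A secondary point to handle carefully is the faithfulness of the conditional expectation $E$, which is not listed as an axiom but follows from completeness of the unit ball in $\|\cdot\|_{2,E}$ (if $E(a^*a) = 0$ but $a \neq 0$, scaling would violate the norm property); this is needed both for faithfulness of $X$ and, implicitly, to know $\|\cdot\|_{2,E}$ is a norm rather than a seminorm.
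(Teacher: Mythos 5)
Your proof is correct and follows essentially the same route as the paper: identify $X$ with $\mathrm{Prob}(K)$ via the affine map $\mu \mapsto \int_K E(\,\cdot\,)\,\mathrm{d}\mu$, show $\|\cdot\|_{2,X} = \|\cdot\|_{2,E}$ (the paper does this via Proposition~\ref{prop:co-2-norm}, reducing to the supremum over $\partial_e X \cong K$, whereas you compute the supremum over measures directly), and then read off completeness from the $W^*$-bundle axiom. One small caveat: your parenthetical claim that completeness of the unit ball in $\|\cdot\|_{2,E}$ \emph{forces} $E$ to be faithful is not quite right (completeness with respect to a seminorm does not rule out a nontrivial kernel), but this is moot since Definition~\ref{defn:W*bundle} already declares $\|\cdot\|_{2,E}$ to be a norm, which is exactly faithfulness of $E$.
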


\begin{proof}
	There is a continuous affine embedding 
	\begin{equation}
		\mathrm{Prob}(K) \longrightarrow T(\mathcal M) \colon \mu \mapsto \int_K E(\,\cdot\,)\,\mathrm{d}\mu.
	\end{equation}
	If $X \subseteq T(\mathcal M)$ denotes the image of this map, then $X$ is compact and convex, and the map above restricts to a homeomorphism $K \rightarrow \partial_e X$.  Since $X$ is the closed convex hull of its extreme points, we have 
	\begin{equation}
		\|a\|_{2,X} = \|a\|_{2,\partial_e X} = \|E(a^*a)\|^{1/2} = \|a\|_{2, E}
	\end{equation} 
	for all $a \in \M$, using Proposition~\ref{prop:co-2-norm} for the first equality.  Since $\M$ is a $W^*$-bundle, it follows that $\|\cdot\|_{2,X}$ is a norm and the unit ball of $\M$ is $\|\cdot\|_{2,X}$-complete. 
\end{proof}

The most straightforward examples of $W^*$-bundles are the trivial bundles.

\begin{example}\label{ex:locally-trivial} (cf.\ \cite[Theorem 13 and the preceding paragraph]{Oz13})
	Let $K$ be a compact Hausdorff space and let $(\mathcal{N},\tau_{\mathcal N})$ be a tracial von Neumann algebra.
	Define $C_\sigma(K,\mathcal N)$ to be the set of $\|\cdot\|$-bounded $\|\cdot\|_{2,\tau_{\mathcal N}}$-continuous functions $K \to \mathcal N$.
	This is a unital $C^*$-algebra.
	
	Let $\mathrm{Prob}(K)$ be the set of Radon probability measures on $K$, and for $\mu \in \mathrm{Prob}(K)$, define $\tau_\mu \in T\big(C_\sigma(K,\mathcal N)\big)$ by
	\begin{align} 
    \tau_\mu(f) &\coloneqq \int_K \tau_{\mathcal N}(f(x))\,\mathrm{d}\mu(x), &&f \in C_\sigma(K,\mathcal N). 
	\intertext{Defining $X\coloneqq \{\tau_\mu: \mu \in \mathrm{Prob}(K)\}$, one finds that}
	\|f\|_{2,X} &= \max_{x \in K} \|f(x)\|_{2,\tau_{\mathcal N}}, &&f \in C_\sigma(K,\mathcal N). \end{align}
	Then $\big(C_\sigma(K,\mathcal N),X\big)$ is a tracially complete $C^*$-algebra called the
	\emph{trivial $W^*$-bundle} over $K$ with fibre $(\mathcal N, \tau_\mathcal N)$.
\end{example}

For any compact space $K$, we note that $\M\coloneqq C(K)$ together with $X \coloneqq T(\M) \cong \mathrm{Prob}(X)$ is a tracially complete $C^*$-algebra where $\|\cdot\|$ and $\|\cdot\|_{2,X}$ agree. More generally, if $M_n$ denotes the $C^*$-algebra of $n \times n$ matrices over $\mathbb C$, then $\M \coloneqq C(K, M_n)$, together with $X \coloneqq T(\M) \cong \mathrm{Prob}(X)$, is a tracially complete $C^*$-algebra where $\|\cdot\|$ and $\|\cdot\|_{2,X}$ are equivalent norms. These examples show that we cannot expect tracially complete $C^*$-algebras to behave like von Neumann algebras in general.

An important tool for investigating the structure of a tracially complete $C^*$-algebra $(\M,X)$ is the GNS representation $\pi_\tau\colon \M \rightarrow \mathcal B(\mathcal H_\tau)$ for $\tau \in X$. Each $\tau \in X$ induces a faithful trace on the von Neumann algebra $\pi_\tau(\M)''$. We view these GNS representations as giving rise to `fibres' of tracially complete $C^*$-algebras. We are intentionally somewhat vague as to the formal definition of the fibre at $\tau$: should it be $\pi_\tau(\M)$ or $\pi_\tau(\M)''$, and should we use all traces in $X$ or restrict to $\partial_e X$ (as in the $W^*$-bundle case)? 
For non-extreme traces, $\pi_\tau(\M)$ and $\pi_\tau(\M)''$ can differ, even when $\M$ is a $W^*$-bundle (i.e.\ $X$ is Bauer).
We return to this discussion with Question~\ref{Q:fibres}.

For many of our latter results in this paper, we will want to restrict to the case that the von Neumann algebras $\pi_\tau(\M)''$ are all type II$_1$.

\begin{definition}\label{def:II_1}
	A tracially complete $C^*$-algebra $(\M,X)$ is of \emph{type}~II$_1$ if $\pi_\tau(\M)''$ is a type II$_1$ von Neumann algebra for every $\tau \in X$.
\end{definition}  

We now turn to proving some basic properties of tracially complete $C^*$-algebras.
The following two results are both standard in the setting of tracial von Neumann algebras.

\begin{proposition}\label{prop:unital}
	Every tracially complete $C^*$-algebra is unital.
\end{proposition}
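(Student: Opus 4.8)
The plan is to produce the unit of $\M$ as the $\|\cdot\|_{2,X}$-limit of an approximate unit. Fix an increasing approximate unit $(e_\lambda)_{\lambda\in\Lambda}$ for $\M$ consisting of positive contractions. Since each $\tau\in X$ is a state, $\tau(e_\lambda)\to 1$ for every fixed $\tau$; and, exactly as in the proof of Proposition~\ref{prop:co-2-norm} (Dini's theorem applied to the increasing net of continuous functions $\tau\mapsto\tau(e_\lambda)$ on the compact set $X$), this convergence is uniform over $\tau\in X$.

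Next I would show that $(e_\lambda)_{\lambda\in\Lambda}$ is a $\|\cdot\|_{2,X}$-Cauchy net. For $\lambda\le\mu$ we have $0\le e_\mu-e_\lambda\le e_\mu$, so $e_\mu-e_\lambda$ is a positive contraction and hence $(e_\mu-e_\lambda)^2\le e_\mu-e_\lambda$, whence
\begin{equation*}
\tau\big((e_\mu-e_\lambda)^2\big)\le \tau(e_\mu)-\tau(e_\lambda)\le 1-\tau(e_\lambda),\qquad \tau\in X.
\end{equation*}
Taking the supremum over $\tau\in X$ and invoking the uniform convergence from the previous paragraph shows $\|e_\mu-e_\lambda\|_{2,X}\to 0$ as $\lambda,\mu\to\infty$ (for incomparable indices, pass to a common upper bound and use the triangle inequality). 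Since the unit ball of $\M$ is $\|\cdot\|_{2,X}$-complete and Cauchy nets converge in complete metric spaces, there is $e$ in the unit ball of $\M$ with $e_\lambda\to e$ in $\|\cdot\|_{2,X}$.

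Finally, I would check that $e$ is a two-sided identity. Given $a\in\M$, the inequality $\|\cdot\|_{2,X}\le\|\cdot\|$ gives $\|e_\lambda a-a\|_{2,X}\le\|e_\lambda a-a\|\to 0$, while \eqref{eq:SpecialHolderIneq} gives $\|e_\lambda a-ea\|_{2,X}\le\|e_\lambda-e\|_{2,X}\|a\|\to 0$; as $\|\cdot\|_{2,X}$ is a norm, it follows that $ea=a$, and symmetrically $ae=a$. Hence $e=1_\M$ and $\M$ is unital.

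The argument is routine; the only mildly delicate point is the passage from completeness (phrased for Cauchy sequences) to convergence of the Cauchy net $(e_\lambda)_{\lambda\in\Lambda}$ — this is the standard fact that Cauchy nets converge in complete metric spaces (extract a cofinal Cauchy sequence and note that its limit is also the limit of the net). One could sidestep this by working with a sequential approximate unit when $\M$ is $\sigma$-unital, but invoking the metric-space fact handles the general case without extra effort.
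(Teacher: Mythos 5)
Your proof is correct and follows essentially the same route as the paper's: Dini's theorem gives uniform convergence of $\tau(e_\lambda)$, the estimate $\tau((e_\mu-e_\lambda)^2)\le\tau(e_\mu)-\tau(e_\lambda)$ shows the approximate unit is $\|\cdot\|_{2,X}$-Cauchy, and completeness of the unit ball produces the limit $e$. The only (immaterial) difference is the final verification: the paper bounds $\|(1_{\M^\dag}-e)a\|_{2,X}^2\le\sup_\tau\tau(1_{\M^\dag}-e)\,\|a\|^2$ in the unitisation, whereas you compare the two limits of $e_\lambda a$; both are fine.
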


\begin{proof}
	Suppose $(\mathcal M, X)$ is a tracially complete $C^*$-algebra and let $(e_\lambda)$ be an approximate unit for $\mathcal M$.  Then $(\tau(e_\lambda)) \subseteq \mathbb R$ is an increasing net converging to 1 for all $\tau \in X$, and since $X$ is compact, Dini's Theorem implies this convergence is uniform in $\tau$.  As $(e_\lambda)$ is an increasing net of positive contractions,
	\begin{equation} \|e_\lambda - e_\mu\|_{2, X}^2 = \sup_{\tau \in X} \tau\big((e_\lambda - e_\mu)^2\big) \leq \sup_{\tau \in X} \tau(e_\lambda - e_\mu)
	\end{equation}
	whenever $\lambda \geq \mu$.
	Hence $(e_\lambda)$ is a $\|\cdot\|_{2, X}$-Cauchy net in the unit ball of $\M$.  Since $\M$ is tracially complete, $(e_\lambda)$ $\|\cdot\|_{2,X}$-converges to a positive contraction $e \in \M$ by Proposition~\ref{prop:ball-closed}\ref{item:cone-closed}.  For all $a \in \mathcal M$, working in the unitisation $\M^\dag$ of $\mathcal M$ and extending each $\tau \in X$ to $\M^\dag$,
	\begin{equation}
		\| (1_{\M^\dag} - e)a \|^2_{2, X} \leq \sup_{\tau \in X} \tau( 1_{\M^\dag}- e) \|a\|^2 = 0, 
	\end{equation}
	and hence $e$ is a unit for $\mathcal M$.
\end{proof}

\begin{proposition}
	Every morphism of tracially complete $C^*$-algebras is unital and contractive with respect to the uniform 2-norms.
\end{proposition}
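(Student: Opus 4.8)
The plan is to exploit the defining condition $\phi^*(Y) \subseteq X$ on a morphism $\phi \colon (\M, X) \to (\mathcal N, Y)$ together with the faithfulness of $Y$ as a set of traces on $\mathcal N$. Both $\M$ and $\mathcal N$ are unital by Proposition~\ref{prop:unital}, but a $^*$-homomorphism between unital $C^*$-algebras need not be unital in general, so the trace hypothesis is genuinely needed for the first claim.

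First I would prove unitality. Since $1_\M$ is a projection and $\phi$ is a $^*$-homomorphism, $p \coloneqq \phi(1_\M)$ is a projection in $\mathcal N$. For any $\tau \in Y$, the functional $\tau \circ \phi$ lies in $X \subseteq T(\M)$, hence is a tracial state, so $\tau(p) = (\tau \circ \phi)(1_\M) = 1$. Therefore $\tau(1_{\mathcal N} - p) = 0$ for every $\tau \in Y$. As $1_{\mathcal N} - p$ is a positive element of $\mathcal N$ and $Y$ is a faithful set of traces (so $\|\cdot\|_{2,Y}$ is a norm), we conclude $1_{\mathcal N} - p = 0$; that is, $\phi(1_\M) = 1_{\mathcal N}$.

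Next I would verify contractivity of $\phi$ with respect to the uniform $2$-norms. For $a \in \M$ and $\tau \in Y$, using that $\phi$ is a $^*$-homomorphism and $\tau \circ \phi \in X$,
\begin{equation*}
  \tau\big(\phi(a)^*\phi(a)\big) = \tau\big(\phi(a^*a)\big) = (\tau \circ \phi)(a^*a) \leq \sup_{\sigma \in X} \sigma(a^*a) = \|a\|_{2,X}^2.
\end{equation*}
Taking the supremum over $\tau \in Y$ gives $\|\phi(a)\|_{2,Y}^2 \leq \|a\|_{2,X}^2$, as required.

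Neither step presents a real obstacle: the only point requiring a moment's thought is recognising that unitality does not come for free from $\M$ and $\mathcal N$ being unital and must instead be extracted from the trace condition via faithfulness of $Y$; the contractivity estimate is a one-line computation.
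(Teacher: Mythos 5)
Your proof is correct and follows essentially the same route as the paper: the contractivity estimate is identical, and your unitality argument (positivity of $1_{\mathcal N} - \phi(1_\M)$ plus faithfulness of $Y$) is just a rephrasing of the paper's observation that $\|1_{\mathcal N} - \phi(1_\M)\|_{2,Y} = 0$ forces $\phi(1_\M) = 1_{\mathcal N}$ since $\|\cdot\|_{2,Y}$ is a norm.
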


\begin{proof}
	Suppose $\phi \colon (\mathcal M, X) \rightarrow (\mathcal N, Y)$ is a morphism between tracially complete $C^*$-algebras.  To see $\phi$ is contractive, fix $\tau \in T(Y)$ and $a \in \mathcal M$. Then $\tau \circ \phi \in X$ by the definition of a morphism, so
	\begin{equation}
		\tau\big(\phi(a)^* \phi(a)\big) = (\tau \circ \phi)(a^*a) \leq \sup\|a\|^2_{2, X}.
	\end{equation}
	Taking the supremum over $\tau$ yields $\|\phi(a)\|_{2, Y} \leq \|a\|_{2, X}$ for all $a \in \mathcal M$.
	
	To see $\phi$ is unital, first note that $\phi(1_\M) \leq 1_{\mathcal{N}}$.
	If $\tau \in Y$, then $\tau \circ \phi \in X$, so $\tau(\phi(1_\mathcal M)) = 1$.  
	Therefore, $\big\| 1_\mathcal N - \phi(1_\mathcal M)\big\|_{2, Y} = 0$, and hence $\phi(1_\mathcal M) = 1_\mathcal N$.
\end{proof}

Note that if $(\M, X)$ is a tracially complete $C^*$-algebra and $\mathcal N \subseteq \mathcal{M}$ is a $\|\cdot\|_{2, X}$-closed unital $C^*$-subalgebra,\footnote{Note that a $C^*$-subalgebra $\mathcal N \subseteq \mathcal M$ is $\|\cdot\|_{2, X}$-closed if and only if the unit ball of $\mathcal N$ is $\|\cdot\|_{2, X}$-closed in the unit ball of $\mathcal M$.  The forward direction is immediate, and the backward direction follows from Lemma~\ref{lem:UnitBallDensity} below.} we may form a tracially complete $C^*$-algebra $(\mathcal N, Y)$ where
\begin{equation}
	Y \coloneqq \{ \tau|_{\mathcal N} : \tau \in X \} \subseteq T(\mathcal N).
\end{equation}
We call $(\mathcal N,Y)$ a \emph{tracially complete $C^*$-subalgebra} of $(\mathcal M,X)$. This suggests the following notion of an embedding of tracially complete $C^*$-algebras.

\begin{definition}\label{def:embedding}
	Let $(\M, X)$ and $(\mathcal N, Y)$ be tracially complete $C^*$-algebras.  A morphism $\phi \colon (\M, X) \rightarrow (\mathcal{N}, Y)$ is called an \emph{embedding} if $\phi^*(Y) = X$.
\end{definition}

The next proposition justifies the terminology.

\begin{proposition}\label{prop:embedding-isometric}
	A morphism $\phi \colon (\M, X) \rightarrow (\mathcal N, Y)$ is an embedding if and only if $\|\phi(a)\|_{2, Y} = \|a\|_{2, X}$ for all $a \in \M$.  Further, if $\phi$ is an embedding, then $\phi$ is isometric in the operator norm. 
\end{proposition}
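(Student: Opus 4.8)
The plan is to prove the ``if and only if'' first and then extract the operator-norm statement from it.

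The forward implication is a one-line computation: if $\phi$ is an embedding, then $\phi^*(Y) = X$ by definition, so for every $a \in \M$,
\[
\|\phi(a)\|_{2,Y}^2 = \sup_{\tau \in Y} (\tau \circ \phi)(a^*a) = \sup_{\sigma \in \phi^*(Y)} \sigma(a^*a) = \|a\|_{2,X}^2 .
\]

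For the converse I would use Hahn--Banach separation. Since $\phi$ is a morphism, $\phi^*(Y) \subseteq X$ holds automatically, and because $\phi^*$ is weak$^*$-to-weak$^*$ continuous with $Y$ weak$^*$-compact and convex, $\phi^*(Y)$ is itself a weak$^*$-compact convex subset of $X$. Squaring the two-norm hypothesis and substituting $a = b^{1/2}$ shows that $\sup_{\sigma \in \phi^*(Y)} \sigma(b) = \sup_{\sigma \in X} \sigma(b)$ for every $b \in \M_+$. If $\phi^*(Y)$ were a proper subset of $X$, I would pick $\sigma_0 \in X \setminus \phi^*(Y)$ and separate it from $\phi^*(Y)$: since the dual of $(\M^*, \mathrm{weak}^*)$ is $\M$, there is $a \in \M$ with $\mathrm{Re}\,\sigma_0(a)$ strictly larger than $\sup_{\sigma \in \phi^*(Y)} \mathrm{Re}\,\sigma(a)$. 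Replacing $a$ by $(a + a^*)/2$ (which leaves the real parts of trace values unchanged, as traces are states) lets me assume $a = a^*$, and then $b \coloneqq a + \|a\| 1_\M \in \M_+$ would give $\sup_{\sigma \in X} \sigma(b) \geq \sigma_0(b) > \sup_{\sigma \in \phi^*(Y)} \sigma(b)$, contradicting the equality just obtained. Hence $\phi^*(Y) = X$, i.e.\ $\phi$ is an embedding.

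Finally, for the operator-norm assertion: if $\phi$ is an embedding, then by the part just proved $\phi$ is isometric for the uniform $2$-norms; since $X$ is a faithful set of traces, $\|\cdot\|_{2,X}$ is a genuine norm, so $\phi(a) = 0$ forces $a = 0$. Thus $\phi$ is an injective $^*$-homomorphism between $C^*$-algebras and is therefore automatically isometric in the operator norm. I do not expect any real obstacle here; the only point requiring a little care is arranging the Hahn--Banach separation in the weak$^*$ topology so that the separating functional is evaluation at an honest element of $\M$, together with the routine reductions from arbitrary elements to self-adjoint and then to positive ones.
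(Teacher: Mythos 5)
Your proof is correct and follows essentially the same route as the paper: the forward direction is the same one-line computation, the converse is the same Hahn--Banach separation of the weak$^*$-compact convex set $\phi^*(Y)$ from a point of $X$, with the same shift to a positive element and the same use of $\|b^{1/2}\|_{2,X}^2=\sup_{\sigma\in X}\sigma(b)$ (you phrase it as a direct contradiction where the paper proves the contrapositive, a cosmetic difference). The operator-norm statement is handled identically via injectivity of $\phi$.
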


\begin{proof}
	If $\phi$ is an embedding, then for $a\in\mathcal M$, we have
 \begin{equation}
 \|\phi(a)\|_{2,Y}=\sup_{\tau\in Y}\tau(\phi(a^*)\phi(a))^{1/2}=\sup_{\sigma\in X}\sigma(a^*a)^{1/2}=\|a\|_{2,X}.
 \end{equation}
To show the converse, suppose $\phi^*(Y) \neq X$ and fix $\tau_0 \in X \setminus \phi^*(Y)$. As $\phi^*(Y)$ is weak$^*$-closed and convex, there exists a self-adjoint $a \in \M$ such that $\tau_0(a) > \sup_{\sigma \in \phi^*(Y)} \sigma(a)$ by the Hahn--Banach theorem.  Replacing $a$ by $a + \|a\| 1_\M$, we may assume $a \in \M_+$. We then have 
\begin{equation}
    \|a^{1/2}\|_{2, X} \geq \tau_0(a) > \|\phi(a^{1/2})\|_{2, Y},
\end{equation} 
so $\phi$ is not isometric in the uniform 2-norm.

When $\phi$ is an embedding, it is injective since $\|\cdot\|_{2,X}$ is a norm, and therefore, it is isometric in the operator norm.
\end{proof}

\subsection{Factoriality}\label{sec:fact}

Our definition of tracially complete $C^*$-algebra makes very few restrictions on the set $X$ (to allow for certain constructions -- free products in particular -- to remain in the category). However, in practise we care most about the subclass of \emph{factorial} tracially complete $C^*$-algebras, which we define below. This class includes the tracial completion of a unital $C^*$-algebras $A$ with respect to the trace simplex $T(A)$, which is the main example of interest (see Proposition~\ref{prop:tracial-completion}\ref{item:completion-factorial}).

\begin{definition}\label{def:factorial}
	A tracially complete $C^*$-algebra $(\M, X)$ is said to be \emph{factorial} if $X$ is a closed face of $T(\M)$.
\end{definition} 

When $X$ is a singleton, a tracially complete $C^*$-algebra is a tracial von Neumann algebra $(\M, \tau)$, and in this case, $\{\tau\}$ is a face of $T(\M)$ if and only if $\M$ is a factor (by Proposition~\ref{prop:factorial-extreme-point}). 
Further, the trivial $W^*$-bundle $C_\sigma(K, \mathcal{N})$ from Example~\ref{ex:locally-trivial} is factorial as a tracially complete $C^*$-algebra if and only if the fibre $\mathcal {N}$ is a factor.  An analogous result holds for non-trivial $W^*$-bundles (see Proposition~\ref{prop:WStarBundleToTC}).  More generally, we have the following result.

\begin{proposition}\label{prop:factorial}
	A tracially complete $C^*$-algebra $(\M, X)$ is factorial if and only if $\pi_\tau(\M)''$ is a factor for every extreme point $\tau$ of $X$.
\end{proposition}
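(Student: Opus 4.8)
The plan is to reduce to the statement, essentially proven already in the excerpt, that a closed convex subset $F$ of a Choquet simplex is a face if and only if $\partial_e F \subseteq \partial_e(\text{ambient simplex})$ (Theorem~\ref{thm:choquet-faces}), combined with the identification of extreme traces of $\M$ with factorial GNS representations (Proposition~\ref{prop:factorial-extreme-point}). So first I would recall that $X$ is a compact convex subset of the Choquet simplex $T(\M)$ (which is a Choquet simplex by Theorem~\ref{thm:traces-choquet}, since $\M$ is unital by Proposition~\ref{prop:unital}), and that $X$ itself is compact and convex, hence by Theorem~\ref{thm:choquet-faces} the statement ``$X$ is a face of $T(\M)$'' is equivalent to ``$\partial_e X \subseteq \partial_e T(\M)$''.

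Next I would translate the right-hand side of the proposition into this language. For an extreme point $\tau$ of $X$, Proposition~\ref{prop:factorial-extreme-point} says $\pi_\tau(\M)''$ is a factor if and only if $\tau \in \partial_e T(\M)$. Therefore ``$\pi_\tau(\M)''$ is a factor for every $\tau \in \partial_e X$'' is precisely ``$\partial_e X \subseteq \partial_e T(\M)$''. Putting the two equivalences together immediately gives: $X$ is a face of $T(\M)$ (i.e. $(\M,X)$ is factorial, by Definition~\ref{def:factorial}) if and only if $\pi_\tau(\M)''$ is a factor for every extreme point $\tau$ of $X$, which is the assertion.

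The only genuinely delicate point is making sure the Choquet-theoretic inputs apply: one needs $X$ to be a \emph{compact convex subset} of $T(\M)$ (which holds by the definition of a tracially complete $C^*$-algebra, Definition~\ref{def:TC}), one needs $T(\M)$ to be an honest Choquet simplex rather than merely a compact convex set (which holds by Theorem~\ref{thm:traces-choquet} together with unitality of $\M$ from Proposition~\ref{prop:unital}), and one needs the characterisation in Proposition~\ref{prop:factorial-extreme-point}, which also requires $\M$ unital. I expect the main ``obstacle'' — really a bookkeeping matter — to be verifying that Roy's criterion (Theorem~\ref{thm:choquet-faces}) is being applied in exactly the form needed: it characterises closed faces among closed convex subsets of a Choquet simplex, so one should note explicitly that $X$ is closed and convex and that $T(\M)$ is a Choquet simplex, after which both directions of the proposition follow with no further work. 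In short, the proof is a two-line assembly of Theorem~\ref{thm:choquet-faces} and Proposition~\ref{prop:factorial-extreme-point}, and there is no hard analytic step.
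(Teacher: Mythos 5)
Your proof is correct and follows exactly the same route as the paper's: apply Theorem~\ref{thm:choquet-faces} to translate factoriality into $\partial_e X \subseteq \partial_e T(\M)$, then apply Proposition~\ref{prop:factorial-extreme-point} to translate that into the factoriality of the fibres $\pi_\tau(\M)''$ for $\tau \in \partial_e X$. The extra bookkeeping you flag (unitality, compactness, $T(\M)$ being a Choquet simplex) is all in order and the paper treats it as implicit.
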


\begin{proof}
	By Theorem~\ref{thm:choquet-faces}, $(\mathcal M, X)$ is factorial if and only if $\partial_e X \subseteq \partial_e T(\mathcal M)$.  Also, by Proposition~\ref{prop:factorial-extreme-point}, $\partial_e X \subseteq \partial_e T(\mathcal M)$ if and only if $\pi_\tau(\mathcal M)''$ is a factor for all $\tau \in \partial_e X$. 
\end{proof}

The following result shows the equivalence of the two versions of the question stated as the trace problem (Question \ref{Q:traces}).   The proof of this proposition illustrates the utility of factoriality: it allows for powerful separation arguments coming from the classical result that closed faces in Choquet simplices are relatively exposed.  The exposedness appears in the proof via an application of Theorem~\ref{thm:exposed}\ref{item:exposed2}.

\begin{proposition}\label{prop:face}
	If $(\M, X)$ is a tracially complete $C^*$-algebra, then every $\|\cdot\|_{2, X}$-continuous trace on $\M$ belongs to the closed face of $T(\M)$ generated by $X$.  In particular, if $(\M, X)$ is factorial, then $X$ is precisely the set of all $\|\cdot\|_{2, X}$-continuous traces on $\M$.
\end{proposition}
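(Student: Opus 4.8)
The plan is to imitate the proof of Lemma~\ref{lem:GNSTraceFace}, combining the separation property of closed faces in Choquet simplices (Theorem~\ref{thm:exposed}\ref{item:exposed2}) with the realisation of continuous affine functions by positive elements (Proposition~\ref{prop:CP}). First I would note that $\M$ is unital by Proposition~\ref{prop:unital}, so that $T(\M)$ is a Choquet simplex by Theorem~\ref{thm:traces-choquet}; write $F$ for the closed face of $T(\M)$ generated by $X$. Since $X$ is a faithful set of traces, $\|\cdot\|_{2,X}$ is a norm on $\M$, and a trace $\tau$ on $\M$ is $\|\cdot\|_{2,X}$-continuous precisely when it is bounded as a linear functional on $(\M,\|\cdot\|_{2,X})$, i.e.\ $|\tau(a)|\le C\|a\|_{2,X}$ for some constant $C$ and all $a\in\M$.

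Now let $\tau$ be a $\|\cdot\|_{2,X}$-continuous trace. By Theorem~\ref{thm:exposed}\ref{item:exposed2} (applied to the Choquet simplex $T(\M)$, the closed face $F$, and the point $\tau$), it suffices to show $f(\tau)=0$ for every $f\in\mathrm{Aff}(T(\M))_+$ with $f|_F=0$; since $X\subseteq F$, it is enough to handle $f\in\mathrm{Aff}(T(\M))_+$ with $f|_X=0$. Fix such an $f$. For each $n$, Proposition~\ref{prop:CP} applied to the strictly positive affine function $f+\tfrac1n$ provides a positive element $a_n\in\M$ with $\sigma(a_n)=f(\sigma)+\tfrac1n$ for all $\sigma\in T(\M)$ and $\|a_n\|\le\|f\|_\infty+2$; in particular $(a_n)$ is bounded. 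For $\sigma\in X$ this reads $\sigma(a_n)=\tfrac1n$, so $\|a_n\|_{2,X}^2=\sup_{\sigma\in X}\sigma(a_n^2)\le\|a_n\|\sup_{\sigma\in X}\sigma(a_n)=\|a_n\|/n\to 0$. Since $\tau$ is $\|\cdot\|_{2,X}$-continuous, $\tau(a_n)\to 0$; but $\tau\in T(\M)$, so $\tau(a_n)=f(\tau)+\tfrac1n$, whence $f(\tau)=0$. This gives $\tau\in F$ and proves the first assertion.

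For the second assertion, suppose $(\M,X)$ is factorial, so $X$ is a closed face of $T(\M)$ and hence $F=X$; the first part then shows every $\|\cdot\|_{2,X}$-continuous trace on $\M$ lies in $X$. Conversely, for $\tau\in X$ the Cauchy--Schwarz inequality in the GNS representation gives $|\tau(a)|\le\|a\|_{2,\tau}\le\|a\|_{2,X}$, so every trace in $X$ is $\|\cdot\|_{2,X}$-continuous. Thus $X$ is exactly the set of $\|\cdot\|_{2,X}$-continuous traces on $\M$.

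I do not expect a genuine obstacle here: the argument is essentially a transcription of the proof of Lemma~\ref{lem:GNSTraceFace}, the only points needing care being the identification of $\|\cdot\|_{2,X}$-continuity with a uniform bound and the (trivial but essential) observation that $\tau$ is itself a point of $T(\M)$ at which the functions $f+\tfrac1n$ may be evaluated. All the real work is encapsulated in Theorem~\ref{thm:exposed}\ref{item:exposed2} (that closed faces in Choquet simplices are relatively exposed) and Proposition~\ref{prop:CP}.
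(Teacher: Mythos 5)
Your proof is correct and follows essentially the same route as the paper's: reduce to showing $f(\tau)=0$ for continuous affine $f\ge 0$ vanishing on $X$ via the relative exposedness of closed faces (Theorem~\ref{thm:exposed}\ref{item:exposed2}), realise $f+\tfrac1n$ by positive elements $a_n$ via Proposition~\ref{prop:CP}, observe $\|a_n\|_{2,X}\to 0$, and invoke $\|\cdot\|_{2,X}$-continuity of $\tau$. The only cosmetic difference is that you bound $\sigma(a_n^2)\le\|a_n\|\sigma(a_n)$ directly where the paper passes through $\|a_n^{1/2}\|_{2,X}$ and \eqref{eq:SpecialHolderIneq}; these are the same estimate.
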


\begin{proof}
	Since $\M$ is a unital $C^*$-algebra, $T(\M)$ is a Choquet simplex (Theorem~\ref{thm:traces-choquet}). By Theorem~\ref{thm:exposed}, it suffices to show that if $\tau_0 \in T(\mathcal M)$ is $\|\cdot\|_{2, X}$-continuous and $f \colon T(\mathcal M) \rightarrow [0, 1)$ is a continuous affine function with $f|_X = 0$, then $f(\tau_0) = 0$.  Fix such a trace $\tau_0$ and function $f$.  By Proposition~\ref{prop:CP}, applied to $f + \frac1n$, there is a sequence of positive contractions $(a_n)_{n=1}^\infty \subseteq \M$ such that
	\begin{equation}
		\label{eq:approx-point-eval}
		\sup_{\tau \in T(\mathcal M)} |\tau(a_n) - f(\tau)| \rightarrow 0.
	\end{equation}
	Since $f(\tau) = 0$ for all $\tau \in X$, we have $\|a_n^{1/2}\|_{2, X} \rightarrow 0$.  As each $a_n$ is contractive, it follows from \eqref{eq:SpecialHolderIneq} that $\|a_n\|_{2, X} \rightarrow 0$.  Now, since $\tau_0$ is $\|\cdot\|_{2, X}$-continuous, we have $\tau_0(a_n) \rightarrow 0$, and hence \eqref{eq:approx-point-eval} implies $f(\tau_0) = 0$.
\end{proof}

Returning to our viewpoint that the GNS representations of tracially complete $C^*$-algebras should be viewed as giving rise to their fibres in some kind of affine bundle structure, the following question is natural. Ozawa's \cite[Theorem 11]{Oz13} gives a positive answer for all $W^*$-bundles (factorial or not).
\begin{question}\label{Q:fibres}
Let $(\M,X)$ be a factorial tracially complete $C^*$-algebra.  Is it the case that $\pi_\tau(\M)''=\pi_\tau(\M)$ for all $\tau\in\partial_eX$?
\end{question}

We end the subsection with a some examples.  Given a finite von Neumann algebra $\M$, instead of specifying a faithful trace $\tau$ (which will exist when $\M$ has a separable predual) and viewing $\M$ as a (generally non-factorial) tracially complete $C^*$-algebra over a singleton set, one can work with all the traces.  By \cite[Proposition~3.1.6]{Ev18}, $\|\cdot\|_{2, T(\M)}$ is a complete norm on the unit ball of $\M$, so that $\big(\M, T(\M)\big)$ is a tracially complete $C^*$-algebra which is evidently factorial.  This example will play a technical role in obtaining CPoU from property $\Gamma$ in Section \ref{sec:CPoU-proof}.

\begin{proposition}\label{prop:finitevnastc}
    Let $\M$ be a finite von Neumann algebra.  Then the pair $\big(\M,T(\M)\big)$ is a factorial tracially complete $C^*$-algebra.
\end{proposition}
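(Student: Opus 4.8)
The plan is to check, in turn, the conditions packaged in Definitions~\ref{def:TC} and~\ref{def:factorial}, all of which reduce to standard facts about finite von Neumann algebras.

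First, since $\M$ is a von Neumann algebra it is unital, so $T(\M)$ is automatically a weak$^*$-compact convex subset of $\M^*$; this supplies the compact convex set of traces demanded by the definition. The only substantive point at this stage is that $T(\M)$ is \emph{faithful}. For this I would use the centre-valued trace $\mathrm{ctr}\colon \M \to Z(\M)$ (Proposition~\ref{prop:TracesFiniteVNAS}\ref{prop:TracesFiniteVNAS.1}), which is faithful and normal: given a nonzero $a \in \M_+$, one has $\mathrm{ctr}(a) \in Z(\M)_+ \setminus \{0\}$, and, the centre being an abelian von Neumann algebra with a faithful family of normal states, there is a normal state $\phi$ on $Z(\M)$ with $\phi(\mathrm{ctr}(a)) > 0$. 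Then $\phi \circ \mathrm{ctr}$ is a normal trace on $\M$ which is nonzero on $a$. Hence $\|\cdot\|_{2, T(\M)}$ is a norm on $\M$; in fact the family of normal traces alone is already faithful.

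Second, for the completeness of the operator-norm unit ball of $\M$ in $\|\cdot\|_{2, T(\M)}$, I would cite \cite[Proposition~3.1.6]{Ev18}; this is the one place where a nontrivial input is used. (A self-contained argument is available along familiar lines: a $\|\cdot\|_{2,T(\M)}$-Cauchy sequence in the unit ball is $\|\cdot\|_{2,\tau}$-Cauchy for every normal trace $\tau$, hence converges in the strong operator topology inside each $\pi_\tau(\M)''$; these limits patch together, via the faithful normal representation given by the direct sum of the GNS representations over normal traces, to an element of the unit ball of $\M$. Since this merely reconstructs the cited result I would not carry it out.) With this, $(\M, T(\M))$ is a tracially complete $C^*$-algebra.

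Finally, factoriality is immediate: $X = T(\M)$ is trivially a closed face of $T(\M)$, so Definition~\ref{def:factorial} is satisfied. (Alternatively, one could invoke Proposition~\ref{prop:factorial} together with Proposition~\ref{prop:factorial-extreme-point}.) I do not anticipate any genuine obstacle here -- the statement is essentially a bookkeeping lemma assembling classical facts -- with the completeness assertion from \cite{Ev18} (itself a consequence of Kaplansky density together with completeness of the unit ball in a single $\|\cdot\|_{2,\tau}$) being the lone ingredient that is not purely formal.
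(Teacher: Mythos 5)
Your proposal is correct and follows essentially the same route as the paper, which likewise obtains completeness of the unit ball from \cite[Proposition~3.1.6]{Ev18} and observes that factoriality is immediate because $T(\M)$ is trivially a closed face of itself. The only addition is your explicit verification via the centre-valued trace that $T(\M)$ is a faithful set of traces, a point the paper treats as standard (it is recorded in the overview) but which is indeed required by Definition~\ref{def:TC}.
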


It is worth noting that when $(\M,\tau)$ is a tracial von Neumann algebra, the tracially complete $C^*$-algebras $\big(\M,\{\tau\}\big)$ and $\big(\M,T(\M)\big)$ can behave very differently, despite having the same underlying $C^*$-algebra. See Proposition~\ref{prop:vNA-non-uniform-amenable-trace}, for example.\footnote{When $T(\M)$ is not finite-dimensional, it is a large space with zero-dimensional boundary, and $(\M,T(\M))$ might not be considered a particularly natural example of a tracially complete $C^*$-algebra.}

For later use, we note that matrix algebras over (factorial) tracially complete $C^*$-algebras are also (factorial) tracially complete $C^*$-algebras in a natural way.  Let $\mathrm{tr}_d$ denote the unique trace on the $C^*$-algebra $M_d$ of $d \times d$ matrices over $\mathbb C$.  For a $C^*$-algebra $A$ and a set $X \subseteq T(A)$, define
\begin{equation}
	X \otimes \{\mathrm{tr}_d\} \coloneqq \{ \tau \otimes \mathrm{tr}_d : \tau \in X \} \subseteq T(A \otimes M_d).
\end{equation}

\begin{proposition}\label{prop:matrix-algebras}
	If $(\M, X)$ is a tracially complete $C^*$-algebra and $d \in \mathbb N$, then $\big(\M \otimes M_d, X \otimes \{\mathrm{tr}_d\}\big)$ is a tracially complete $C^*$-algebra.  Further, $(\M, X)$ is factorial if and only if $\big(\M \otimes M_d, X \otimes \{\mathrm{tr}_d\}\big)$ is factorial.
\end{proposition}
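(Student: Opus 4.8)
The plan is to reduce every assertion about $\big(\M\otimes M_d, X\otimes\{\mathrm{tr}_d\}\big)$ to the corresponding fact about $(\M,X)$, via matrix coordinates and the identification of the two trace spaces. Throughout I identify $\M\otimes M_d=M_d(\M)$ and write $a=\sum_{i,j}a_{ij}\otimes e_{ij}$ for a general element, so that $(\tau\otimes\mathrm{tr}_d)(a^*a)=\tfrac1d\sum_{i,j}\tau(a_{ij}^*a_{ij})$. Taking the supremum over $\tau\in X$ yields the two-sided estimate
\[
\tfrac1d\max_{i,j}\|a_{ij}\|_{2,X}^2\ \le\ \|a\|_{2,X\otimes\{\mathrm{tr}_d\}}^2\ \le\ \tfrac1d\sum_{i,j}\|a_{ij}\|_{2,X}^2,
\]
which is the workhorse for the whole argument: the lower bound controls entries in terms of the global uniform $2$-norm, and the upper bound reassembles an element from its entries.

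First I would check that $X\otimes\{\mathrm{tr}_d\}$ is a compact convex faithful set of traces on $M_d(\M)$. Compactness and convexity are immediate, since $\tau\mapsto\tau\otimes\mathrm{tr}_d$ is a continuous affine injection of the compact convex set $X$ into $T(M_d(\M))$. For faithfulness, suppose $a\in M_d(\M)_+$ has $(\tau\otimes\mathrm{tr}_d)(a)=0$ for all $\tau\in X$; then $\tau(a_{ii})=0$ for every $i$ and every $\tau\in X$ (each diagonal entry $a_{ii}$ is positive, being a compression of $a$), so $a_{ii}=0$ for all $i$ because $X$ is faithful on $\M$, and hence $a=0$, a positive matrix all of whose diagonal entries vanish being zero.

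Next I would prove that the unit ball of $M_d(\M)$ is complete in $\|\cdot\|_{2,X\otimes\{\mathrm{tr}_d\}}$. Given a Cauchy sequence $(a^{(n)})$ in that ball, compression by matrix units (contractive in the operator norm and, by \eqref{eq:SpecialHolderIneq}, in the uniform $2$-norm) shows each entry sequence $(a^{(n)}_{ij})_n$ lies in the unit ball of $\M$ and, by the lower bound above, is $\|\cdot\|_{2,X}$-Cauchy; tracial completeness of $(\M,X)$ then gives a limit $a_{ij}$ in the unit ball of $\M$. The upper bound shows $a^{(n)}\to a\coloneqq\sum_{i,j}a_{ij}\otimes e_{ij}$ in $\|\cdot\|_{2,X\otimes\{\mathrm{tr}_d\}}$, and finally Proposition~\ref{prop:ball-closed}\ref{item:ball-closed}, applied to the $C^*$-algebra $M_d(\M)$ with the (now established) faithful set of traces $X\otimes\{\mathrm{tr}_d\}$, shows $a$ lies in the unit ball. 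I expect this last point — that the operator-norm bound on the limit is not visible entrywise and must be recovered from $\|\cdot\|_{2,X\otimes\{\mathrm{tr}_d\}}$-closedness of the unit ball — to be the only step requiring any care, and it is dispatched by Proposition~\ref{prop:ball-closed}.

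Finally, for the factoriality statement I would observe that $\Phi\colon T(\M)\to T(M_d(\M))$, $\tau\mapsto\tau\otimes\mathrm{tr}_d$, is an affine homeomorphism: it is clearly an affine continuous injection, and it is surjective because any $\sigma\in T(M_d(\M))$ is the image of $\tau\coloneqq d\,\sigma(\,\cdot\otimes e_{11})\in T(\M)$, a short computation with matrix units using the traciality of $\sigma$ giving $\sigma(a\otimes e_{ij})=\tfrac1d\delta_{ij}\tau(a)$. Since an affine homeomorphism carries closed faces onto closed faces and $X\otimes\{\mathrm{tr}_d\}=\Phi(X)$, the set $X$ is a closed face of $T(\M)$ if and only if $X\otimes\{\mathrm{tr}_d\}$ is a closed face of $T(M_d(\M))$ — which is precisely the claimed equivalence. (Alternatively one could invoke Proposition~\ref{prop:factorial}, using $\partial_e\big(X\otimes\{\mathrm{tr}_d\}\big)=\{\tau\otimes\mathrm{tr}_d:\tau\in\partial_eX\}$ together with $\pi_{\tau\otimes\mathrm{tr}_d}(M_d(\M))''\cong M_d\big(\pi_\tau(\M)''\big)$, which is a factor precisely when $\pi_\tau(\M)''$ is.) Beyond the matrix-unit bookkeeping, no new ingredients are required.
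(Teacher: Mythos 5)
Your proposal is correct and follows essentially the same route as the paper's proof: the same two-sided entrywise estimate on $\|\cdot\|_{2,X\otimes\{\mathrm{tr}_d\}}$ drives both faithfulness and completeness (with Proposition~\ref{prop:ball-closed} supplying closedness of the unit ball), and factoriality is read off from the affine homeomorphism $\tau\mapsto\tau\otimes\mathrm{tr}_d$ between $T(\M)$ and $T(\M\otimes M_d)$. The only cosmetic differences are that you verify faithfulness via the diagonal entries of a positive matrix rather than via $a^*a$, and you spell out the surjectivity of the trace-space identification, which the paper takes as known.
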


\begin{proof}
	Since the map $T(\mathcal M) \rightarrow T(\M \otimes M_d)$ given by $\tau \mapsto \tau \otimes \mathrm{tr}_d$ is an affine homeomorphism, we have that $X \otimes \{\mathrm{tr}_d\}$ is a compact convex subset of $T(\M \otimes M_d)$ and is a face if and only if $X$ is a face in $T(\M)$.  Thus it suffices to show that $X \otimes \{\mathrm{tr}_d\}$ is a faithful set of traces on $\mathcal M \otimes M_d$ and the unit ball of $\M\otimes M_d$ is $\|\cdot\|_{2, X \otimes \{\mathrm{tr}_d\}}$-complete.  
 
 Given $a\in\mathcal M\otimes M_d$, write $a=\sum_{i,j=1}^da_{i,j}\otimes e_{i,j}$ for some $a_{i,j}\in\M$.  For $\tau\in X$, 
 \begin{equation}\label{eqn:matrix-algebras-1}
     (\tau\otimes\mathrm{tr}_d)(a^*a)=\frac{1}{d}\tau\Big(\sum_{i,j=1}^da_{i,j}^*a_{i,j}\Big).
 \end{equation}
If $a\neq 0$, then there exist $i$ and $j$ with $a_{i,j}^*a_{i,j}\neq 0$. As $X$ is a faithful set of traces, there exists $\tau\in X$ such that $\tau(a_{i,j}^*a_{i,j})\neq 0$. Therefore, \begin{equation}
    (\tau\otimes\mathrm{tr}_d)(a^*a)\geq \tau(a_{i,j}^*a_{i,j})>0,
\end{equation}
as required.

Since $X\times\{\mathrm{tr}_d\}$ is a faithful set of traces on $\M\otimes M_d$, the unit ball of $\M \otimes M_d$ is $\|\cdot\|_{2, X \otimes \{\mathrm{tr}_d\}}$-closed by Proposition~\ref{prop:ball-closed}\ref{item:ball-closed}.  Furthermore, 
	\begin{equation}\begin{split}
		\frac{1}{d}\max_{1 \leq i, j \leq d} \|a_{i, j}\|^2_{2, X} &\leq \| a \|^2_{2, X \otimes \{\mathrm{tr}_d\}}\leq \frac{1}{d}\sum_{i, j=1}^d \|a_{i, j}\|^2_{2, X}
	\end{split}\end{equation}
as an immediate consequence of \eqref{eqn:matrix-algebras-1}. The $\|\cdot\|_{2, X \otimes \{\mathrm{tr}_d\}}$-completeness of the unit ball of $\M\otimes M_d$ now follows from the $\|\cdot\|_{2, X}$-completeness of the unit ball of $\M$.
\end{proof}

\subsection{Tracial completions}\label{sec:TraceNormCompletion}

In this section, we recall Ozawa's tracial completion construction from \cite{Oz13} and its important features. This is both the motivation for our study of tracially complete $C^*$-algebras and an important tool in the theory that we develop.

\begin{definition}[{cf.~\cite{Oz13}}]\label{def:UTCompletion}
	Let $A$ be a $C^*$-algebra. For a compact convex set $X \subseteq T(A)$, the \emph{tracial completion} of $A$ with respect to $X$ is the $C^*$-algebra\footnote{It is not hard to see -- using \eqref{eq:SpecialHolderIneq} -- that the $\|\cdot\|$-bounded $\|\cdot\|_{2,X}$-Cauchy sequences in $A$ form a $C^*$-subalgebra of $\ell^\infty(A)$ and that the $\|\cdot\|$-bounded $\|\cdot\|_{2,X}$-null sequences form an ideal of this subalgebra.}
	\begin{equation}
		\completion{A}{X} \coloneqq  \frac{\{(a_n)_{n=1}^\infty \in \ell^\infty(A): (a_n)_{n=1}^\infty \text{ is }\|\cdot\|_{2,X}\text{-Cauchy}\}}{\{(a_n)_{n=1}^\infty \in \ell^\infty(A): (a_n)_{n=1}^\infty \text{ is }\|\cdot\|_{2,X}\text{-null}\}}.
	\end{equation}
\end{definition}

\begin{remark}\label{footnote:StrictClosure}
	In Ozawa's work, the notation $\completion{A}{\mathrm{u}}$ is used for the tracial completion with $X$ being understood from the context. Ozawa also notes that $\completion{A}{\mathrm u}$ can be viewed as the strict closure of the range of a certain representation of $A$ on a Hilbert module. This leads to the alternative terminology \emph{strict closure} for the uniform tracial completion and the alternative notation $\completion{A}{\mathrm{st}}$ used in~\cite{Oz13,BBSTWW}. Since it is important for us to keep track of the set $X$, we include it in the notation for tracial completions in this paper.
\end{remark}

We define $\alpha_X\colon A \to \completion{A}{X}$ to be the map given by sending $a \in A$ to the image of the constant sequence $(a,a,\dots)$. We also define a second norm on the tracial completion by $(a_n)_{n=1}^\infty \mapsto \lim_{n\rightarrow\infty} \|a_n\|_{2,X}$. It is immediate that this limit exists for any $\|\cdot\|_{2,X}$-Cauchy sequence $(a_n)_{n=1}^\infty \in \ell^\infty(A)$ and induces a well-defined norm on the quotient. Abusing notation slightly, we also write $\|\cdot\|_{2,X}$ for this second norm, the justification being that $\|\alpha_X(a)\|_{2,X} = \|a\|_{2,X}$ for all $a \in A$.\footnote{
	When $\|\cdot\|_{2,X}$ is a norm on $A$, $\alpha_X$ is an embedding, and it is natural to identify $A$ with $\alpha_X(A)$. If one desires, one can quotient $A$ by the ideal of $\|\cdot\|_{2,X}$-null elements first before performing the tracial completion (replacing $X$ with the set of induced traces on the quotient). In this way, one can reduce to the case that $\alpha_X$ is an embedding.
}

Tracial completions with respect to a single trace are nothing but the von Neumann algebra generated by the associated GNS representation.

\begin{proposition}\label{prop:tracialcompletiongns}
Let $A$ be a $C^*$-algebra and $\tau\in T(A)$.  Then, for $X\coloneqq\{\tau\}$, the tracial completion $\completion{A}{X}$ can be canonically identified with the von Neumann algebra $\pi_\tau(A)''$, where $\pi_\tau$ is the GNS representation associated to $\tau$; i.e.\ there is an isomorphism $\theta\colon \completion{A}{X}\to\pi_\tau(A)''$ such that $\theta\circ\alpha_X=\pi_\tau$.
\end{proposition}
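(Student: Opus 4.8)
The plan is to build $\theta$ by hand from the GNS data and then verify it is a $*$-isomorphism. Write $\xi_\tau \in \mathcal H_\tau$ for the cyclic vector of the GNS representation, so that $\tau(a) = \langle \pi_\tau(a)\xi_\tau, \xi_\tau \rangle$ and $\|\pi_\tau(a)\xi_\tau\| = \|a\|_{2,\tau}$ for $a \in A$. First I would record the standard facts I need about the target: the vector functional $\bar\tau \coloneqq \langle (\cdot)\xi_\tau, \xi_\tau\rangle$ is a faithful normal trace on $\pi_\tau(A)''$ extending $\tau$ (faithfulness coming from cyclicity of $\xi_\tau$ together with traciality, in the spirit of the support-projection discussion preceding Theorem~\ref{thm:radon-nikodym}), that $\|x\|_{2,\bar\tau} = \|x\xi_\tau\|$ for $x \in \pi_\tau(A)''$ so in particular $\|\pi_\tau(a)\|_{2,\bar\tau} = \|a\|_{2,\tau}$, and that the unit ball of $\pi_\tau(A)''$ is $\|\cdot\|_{2,\bar\tau}$-complete by the abstract characterisation of tracial von Neumann algebras recalled above (\cite[Lemma~A.3.3]{Si08}).

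Next I would define $\theta$. Given a $\|\cdot\|$-bounded, $\|\cdot\|_{2,\tau}$-Cauchy sequence $(a_n)_{n=1}^\infty$ in $A$, the sequence $(\pi_\tau(a_n))_{n=1}^\infty$ is $\|\cdot\|$-bounded and $\|\cdot\|_{2,\bar\tau}$-Cauchy in $\pi_\tau(A)''$, so by completeness of the unit ball it has a $\|\cdot\|_{2,\bar\tau}$-limit in $\pi_\tau(A)''$; this limit is $0$ exactly when $(a_n)_{n=1}^\infty$ is $\|\cdot\|_{2,\tau}$-null, so the assignment descends to a well-defined map $\theta \colon \completion{A}{X} \to \pi_\tau(A)''$, and $\theta \circ \alpha_X = \pi_\tau$ because the constant sequence at $a$ is sent to $\pi_\tau(a)$. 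That $\theta$ is linear and $*$-preserving is immediate from continuity of the algebraic operations in $\|\cdot\|_{2,\bar\tau}$ on bounded sets; for multiplicativity I would combine this with the H\"older-type inequality \eqref{eq:SpecialHolderIneq} (valid in $\pi_\tau(A)''$ with the trace $\bar\tau$), so that when $\pi_\tau(a_n) \to x$ and $\pi_\tau(b_n) \to y$ in $\|\cdot\|_{2,\bar\tau}$ with both sequences bounded, $\pi_\tau(a_n b_n) = \pi_\tau(a_n)\pi_\tau(b_n) \to xy$.

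Injectivity and isometry then come for free: for $x = [(a_n)_{n=1}^\infty] \in \completion{A}{X}$ one computes $\|\theta(x)\|_{2,\bar\tau} = \lim_n \|\pi_\tau(a_n)\|_{2,\bar\tau} = \lim_n \|a_n\|_{2,\tau} = \|x\|_{2,X}$, so $\theta$ is isometric for the uniform $2$-norms, and since $\|\cdot\|_{2,X}$ is a genuine norm on $\completion{A}{X}$ (as $X$ is faithful) this forces $\theta$ to be injective, hence also isometric in operator norm. The step that takes the most care is surjectivity, and the only subtlety is that the tracial completion is built from \emph{sequences}. I would fix $x \in \pi_\tau(A)''$ with $\|x\| \leq 1$, apply the Kaplansky density theorem to place $x$ in the strong-operator closure of $\{\pi_\tau(a) : a \in A,\ \|a\| \leq 1\}$, observe that strong-operator convergence implies $\|\cdot\|_{2,\bar\tau}$-convergence (apply the operators to $\xi_\tau$), and then use that $\|\cdot\|_{2,\bar\tau}$ is an honest norm on $\pi_\tau(A)''$ so that this closure consists of limits of \emph{sequences} from that set; a sequence $(\pi_\tau(a_n))_{n=1}^\infty$ converging to $x$ with $\|a_n\| \leq 1$ gives a $\|\cdot\|$-bounded, $\|\cdot\|_{2,\tau}$-Cauchy sequence $(a_n)_{n=1}^\infty$ with $\theta([(a_n)_{n=1}^\infty]) = x$, and scaling handles general $x$. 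Finally, uniqueness of $\theta$ subject to $\theta \circ \alpha_X = \pi_\tau$ is automatic, since $\alpha_X(A)$ is $\|\cdot\|_{2,X}$-dense in $\completion{A}{X}$ and $\theta$ is $\|\cdot\|_{2,X}$-continuous.
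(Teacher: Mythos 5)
Your proposal is correct and follows essentially the same route as the paper: define $\theta$ on representative sequences via the $\|\cdot\|_{2,\tau}$-completeness of the unit ball of $\pi_\tau(A)''$, note the kernel is exactly the null sequences, use isometry of the uniform $2$-norms for injectivity, and deduce surjectivity from Kaplansky's density theorem. Your extra care over the sequence-versus-net issue in the Kaplansky step is a sound (if slightly more explicit) rendering of the same argument, since the $\|\cdot\|_{2,\tau}$-topology on the unit ball is metrisable.
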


\begin{proof}
As the unit ball of $\pi_\tau(A)''$ is $\|\cdot\|_{2,\tau}$-complete (see \cite[Lemma~A.3.3]{Si08}, for example), there is a well-defined $^*$-homomorphism
\begin{equation}
\tilde{\theta}\colon\{(a_n)_{n=1}^\infty \in \ell^\infty(A): (a_n)_{n=1}^\infty \text{ is }\|\cdot\|_{2,X}\text{-Cauchy}\}\to\pi_\tau(A)''
\end{equation}
given by $\tilde{\theta}((a_n))=\lim\pi_\tau(a_n)$, where the limit is taken in the norm $\|\cdot\|_{2,\tau}$.  The kernel of this map consists of the $\|\cdot\|_{2,X}$-null sequences, so the first isomorphism theorem gives an injective $^*$-homomorphism $\theta$ with $\theta \circ \alpha_X = \pi_\tau$.  As the unit ball of $\completion{A}{X}$ is complete in the 2-norm and $\theta$ is isometric with respect to the 2-norms, the image of the unit ball of $\theta$ is $\|\cdot\|_{2, \tau}$-closed and contains the unit ball of $\pi_\tau(A)$.  By Kaplansky's density theorem, the unit ball of $\pi_\tau(A)$ is $\|\cdot\|_{2, \tau}$-dense in the unit ball of $\pi_\tau(A)''$, and hence $\theta$ is surjective, proving that $\theta$ is an isomorphism.
\end{proof}

Very similarly, tracial completions with respect to finite dimensional simplices also come from GNS representations.
\begin{example}
    When $X$ is a finite dimensional simplex, the tracial completion is $\pi_\tau(A)''$, where $\tau$ is the average of the traces in $\partial_e X$, and so this tracial completion is again a von Neumann algebra.
\end{example}

For each trace $\tau \in X$, we define an induced trace $\widetilde{\tau}$ on $\completion{A}{X}$ by setting $\widetilde{\tau}(a)\coloneqq \lim_{n\rightarrow\infty}\tau(a_n)$ for any representative $\|\cdot\|_{2,X}$-Cauchy sequence $(a_n)_{n=1}^\infty \in \ell^\infty(A)$. The inequality $|\tau(a)| \leq \|a\|_{2,X}$, which holds for all $a \in A$, ensures that this limit exists and that $\widetilde{\tau}$ is well-defined on the tracial completion. The induced trace satisfies $\widetilde{\tau}(\alpha_X(a)) = \tau(a)$  for all $a \in A$ and is the unique $\|\cdot\|_{2,X}$-continuous trace with this property since $\alpha_X(A)$ is $\|\cdot\|_{2,X}$-dense in the tracial completion.\footnote{By construction, $\widetilde\tau$ is $\|\cdot\|_{2, X}$-continuous on $\|\cdot\|$-bounded subsets of $\completion{A}{X}$;  continuity on all of $\completion{A}{X}$ follows from Proposition~\ref{prop:tracial-completion}(i) (or from \eqref{eq:unbounded-continuity} in the proof).}

The following proposition establishes the basic properties of tracial completions.

\begin{proposition}\label{prop:tracial-completion}
	Let $A$ be a $C^*$-algebra and let $X$ be a compact convex subset of $T(A)$.
	Let $\widetilde{X} \subseteq T(\completion{A}{X})$ denote the set of all traces on $\completion{A}{X}$ that are induced by traces in $X$.
	\begin{enumerate}
		\item\label{item:completion-norm} With the notation in the paragraph preceding the proposition, the seminorm $\|\cdot\|_{2,\widetilde{X}}$ coincides with the norm $\|\cdot\|_{2,X}$ on $\completion{A}{X}$.
		\item\label{item:completion-trace} The map $X\rightarrow \widetilde{X}$ sending a trace to its induced trace is an affine homeomorphism, where $X$ and $\widetilde{X}$ are equipped with their respective weak$^*$ topologies.
		\item\label{item:completion-complete} The pair $(\completion{A}{X}, \widetilde{X})$ is a tracially complete $C^*$-algebra.
		\item\label{item:completion-factorial} The completion
		$(\completion{A}{X}, \widetilde{X})$ is factorial if and only if $X$ is a face in $T(A)$.
  \item\label{item:completion-subset} Let $Y\subset X$ be a compact convex set and let $\widetilde{Y}$ denote the image of $Y$ in $\tilde{X}$.  Then $(\completion{A}{Y},Y)$ and $(\completion{\completion{A}{X}}{\widetilde{Y}},\widetilde{Y})$ are isomorphic via an isomorphism $\theta$ such that
  \begin{equation}\label{eqn:completion-subset}
      \begin{tikzcd}          A\ar[r,"\alpha_Y"]\ar[d,swap,"\alpha_X"]&\completion{A}{Y}\ar[d,"\theta"]\\\completion{A}{X}\ar[r,swap,"\alpha_{\tilde{Y}}"]&\completion{\completion{A}{X}}{\widetilde{Y}}
      \end{tikzcd}
  \end{equation}
  commutes. 
  \item \label{item:completion-fibres} Given $\tau\in X$ with extension $\widetilde{\tau}$ to $\completion{A}{X}$, write $\pi_\tau\colon A\to \pi_\tau(A)''$ 
  and 
  $\pi_{\widetilde{\tau}}\colon \completion{A}{X}\to\pi_{\widetilde{\tau}}(\completion{A}{X})''$ for the respective GNS representations.  Then $\alpha_X\colon A\to \completion{A}{X}$ induces an isomorphism $\theta\colon\pi_\tau(A)''\to\pi_{\widetilde{\tau}}(\completion{A}{X})''$ such that
  \begin{equation}
      \begin{tikzcd}          A\ar[r,"\pi_\tau"]\ar[d,swap,"\alpha_X"]&\pi_\tau(A)''\ar[d,"\theta"]\\\completion{A}{X}\ar[r,swap,"\pi_{\widetilde{\tau}}"]&\pi_{\widetilde{\tau}}(\completion{A}{X})''
      \end{tikzcd}
  \end{equation}
  commutes.
	\end{enumerate}	
\end{proposition}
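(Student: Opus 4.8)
The plan is to prove parts~(i)--(vi) in that order, since the norm identity in~(i) underpins everything, (ii) feeds into the later parts, and (vi) will drop out of~(v). Throughout I will freely use that $\alpha_X(A)$ is $\|\cdot\|_{2,X}$-dense in $\completion{A}{X}$ (immediate from the construction) and the Kaplansky-type unit-ball density (Lemma~\ref{lem:UnitBallDensity}). For~(i), the inequality $\|a\|_{2,\widetilde X}\le\|a\|_{2,X}$ for $a=[(a_n)_n]\in\completion{A}{X}$ is immediate, since $\widetilde\tau(a^*a)=\lim_n\tau(a_n^*a_n)\le\lim_n\|a_n\|_{2,X}^2$ for each $\tau\in X$. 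For the reverse, fix $\epsilon>0$, pick $N$ with $\|a_n-a_m\|_{2,X}<\epsilon$ for $n,m\ge N$; for a fixed $n\ge N$ choose $\tau\in X$ with $\|a_n\|_{2,\tau}>\|a_n\|_{2,X}-\epsilon$, and observe $\|a_m\|_{2,\tau}>\|a_n\|_{2,X}-2\epsilon$ for all $m\ge N$ via the triangle inequality for $\|\cdot\|_{2,\tau}$ (which is dominated by $\|\cdot\|_{2,X}$), so $\|a\|_{2,\widetilde X}\ge\widetilde\tau(a^*a)^{1/2}=\lim_m\|a_m\|_{2,\tau}\ge\|a_n\|_{2,X}-2\epsilon$; letting $n\to\infty$ and $\epsilon\to0$ gives the claim, and in particular $\widetilde X$ is a faithful set of traces. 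For~(ii), the map $\tau\mapsto\widetilde\tau$ is affine; its inverse is $\widetilde\sigma\mapsto\widetilde\sigma\circ\alpha_X$ (using $\widetilde\tau\circ\alpha_X=\tau$ and $\|\cdot\|_{2,X}$-density for injectivity), and $\tau\mapsto\widetilde\tau$ is weak$^*$-continuous by an $\epsilon/3$-argument: approximate $a\in\completion{A}{X}$ by some $\alpha_X(b)$ in $\|\cdot\|_{2,X}$ and use $|\widetilde\sigma(a)-\widetilde\sigma(\alpha_X(b))|\le\|a-\alpha_X(b)\|_{2,\widetilde X}=\|a-\alpha_X(b)\|_{2,X}$ (part~(i)) for $\sigma\in X$; a continuous bijection between compact Hausdorff spaces is a homeomorphism. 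For~(iii), $\widetilde X$ is compact convex by~(ii) and faithful by~(i), so only completeness of the unit ball remains: given a $\|\cdot\|_{2,X}$-Cauchy sequence of contractions $(x_k)$, pass to a rapidly convergent subsequence, represent $x_k$ by a $\|\cdot\|$-bounded $\|\cdot\|_{2,X}$-Cauchy sequence with norm-bound $\le2$, pick entries $b_k\in A$ with $\|\alpha_X(b_k)-x_k\|_{2,X}$ geometrically small, note $(b_k)_k$ is $\|\cdot\|$-bounded and $\|\cdot\|_{2,X}$-Cauchy, and check its class $x\in\completion{A}{X}$ is the $\|\cdot\|_{2,X}$-limit of $(x_k)$, with $\|x\|\le1$ by Proposition~\ref{prop:ball-closed}\ref{item:ball-closed} applied to $(\completion{A}{X},\widetilde X)$.

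For~(iv), I will use the elementary fact that if a positive tracial functional $\sigma$ satisfies $\sigma\le C\tau$ with $\tau$ in $X$ (resp.\ $\widetilde X$), then $\|\cdot\|_{2,\sigma}\le C^{1/2}\|\cdot\|_{2,X}$, so $\sigma$ is $\|\cdot\|_{2,X}$-continuous and hence, being determined on a $\|\cdot\|_{2,X}$-dense subalgebra, restricts/extends uniquely between $A$ and $\completion{A}{X}$ compatibly with $\alpha_X$. For ``$X$ a face $\Rightarrow\widetilde X$ a face'': if $\widetilde\tau=\lambda\sigma_1+(1-\lambda)\sigma_2$ in $T(\completion{A}{X})$ with $0<\lambda<1$, each $\sigma_i\le\lambda^{-1}\widetilde\tau$ is $\|\cdot\|_{2,X}$-continuous, $\sigma_i\circ\alpha_X\in X$ (as $X$ is a face and $\widetilde\tau\circ\alpha_X\in X$), and then $\sigma_i=\widetilde{(\sigma_i\circ\alpha_X)}\in\widetilde X$ by uniqueness of $\|\cdot\|_{2,X}$-continuous extensions; $\widetilde X$ is closed by~(ii). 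Conversely, a convex decomposition of $\tau\in X$ inside $T(A)$ has $\|\cdot\|_{2,X}$-continuous pieces that extend to a convex decomposition of $\widetilde\tau$ inside $T(\completion{A}{X})$, whose pieces lie in $\widetilde X$ (a face), and restricting via $\alpha_X$ and using~(ii) returns the pieces of $\tau$ to $X$. Thus factoriality of $(\completion{A}{X},\widetilde X)$ (i.e.\ $\widetilde X$ a face of $T(\completion{A}{X})$) is equivalent to $X$ being a face of $T(A)$. (One could instead combine~(ii), (vi), Proposition~\ref{prop:factorial}, Theorem~\ref{thm:choquet-faces} and Proposition~\ref{prop:factorial-extreme-point}, but the argument above is more direct and sidesteps having to invoke that $X$ is a simplex or that $A$ is unital.)

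For~(v), by~(ii) we identify $\widetilde Y\subseteq T(\completion{A}{X})$ with $Y$, and for $a\in A$ one has $\|\alpha_X(a)\|_{2,\widetilde Y}=\|a\|_{2,Y}$. Hence $\alpha_{\widetilde Y}\circ\alpha_X$ carries $\|\cdot\|$-bounded $\|\cdot\|_{2,Y}$-Cauchy sequences in $A$ to $\|\cdot\|$-bounded $\|\cdot\|_{2,\widetilde Y}$-Cauchy sequences and null sequences to null sequences, and so descends to a $^*$-homomorphism $\theta\colon\completion{A}{Y}\to\completion{\completion{A}{X}}{\widetilde Y}$ with $\theta\circ\alpha_Y=\alpha_{\widetilde Y}\circ\alpha_X$ that is isometric for the uniform $2$-norms (the $2$-norm of a class is the limit of the $2$-norms of the sequence); in particular $\theta$ is injective and, as in Proposition~\ref{prop:embedding-isometric}, operator-norm isometric. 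For surjectivity, $\theta$ maps the unit ball of $\completion{A}{Y}$, which is $\|\cdot\|_{2,Y}$-complete by~(iii), onto a $\|\cdot\|_{2,\widetilde Y}$-closed subset of the unit ball of $\completion{\completion{A}{X}}{\widetilde Y}$ containing $\alpha_{\widetilde Y}(\alpha_X(B_1(A)))$; by two applications of unit-ball density (first that $\alpha_X(B_1(A))$ is $\|\cdot\|_{2,X}$-dense, hence $\|\cdot\|_{2,\widetilde Y}$-dense as $\widetilde Y\subseteq\widetilde X$, in $B_1(\completion{A}{X})$, then that $\alpha_{\widetilde Y}(B_1(\completion{A}{X}))$ is $\|\cdot\|_{2,\widetilde Y}$-dense in $B_1(\completion{\completion{A}{X}}{\widetilde Y})$) this set is dense there, so $\theta$ is onto; scaling handles all of $\completion{A}{Y}$. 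That $\theta$ matches the designated traces (the abuse ``$Y$'' vs ``$\widetilde Y$'') is checked on the $\|\cdot\|_{2,Y}$-dense subalgebra $\alpha_Y(A)$. Finally, (vi) is the special case $Y=\{\tau\}$ of~(v): then $\widetilde Y=\{\widetilde\tau\}$, and by Proposition~\ref{prop:tracialcompletiongns} we may identify $\completion{A}{\{\tau\}}$ with $\pi_\tau(A)''$ (via an isomorphism sending $\alpha_{\{\tau\}}$ to $\pi_\tau$) and $\completion{\completion{A}{X}}{\{\widetilde\tau\}}$ with $\pi_{\widetilde\tau}(\completion{A}{X})''$ (via one sending $\alpha_{\{\widetilde\tau\}}$ to $\pi_{\widetilde\tau}$); composing these with $\theta$ from~(v) yields the required isomorphism $\pi_\tau(A)''\to\pi_{\widetilde\tau}(\completion{A}{X})''$, and $\theta\circ\alpha_{\{\tau\}}=\alpha_{\{\widetilde\tau\}}\circ\alpha_X$ translates into commutativity of the stated square.

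The computationally busiest step is the completeness argument in~(iii), but it is entirely routine. The genuine technical core is the surjectivity of $\theta$ in~(v), where one must push through the two layers of completion and invoke unit-ball density twice while keeping the various ``tildes'' straight; everything else is bookkeeping around parts~(i) and~(ii). Deriving~(vi) from~(v) rather than re-running a GNS-uniqueness argument keeps the presentation clean.
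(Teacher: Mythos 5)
Your proposal is correct and follows essentially the same route as the paper's proof: the easy inequality plus density/Cauchy estimates for (i), the $\epsilon/3$ continuity argument plus compactness for (ii), domination of traces and uniqueness of $\|\cdot\|_{2,X}$-continuous extensions for (iv), the induced map on Cauchy sequences together with isometry, completeness, and unit-ball density for (v), and the reduction of (vi) to (v) via Proposition~\ref{prop:tracialcompletiongns}. The only differences are cosmetic: you prove the reverse inequality in (i) by a direct estimate rather than by density and continuity, and you spell out the unit-ball completeness in (iii) that the paper takes as part of the construction.
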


\begin{proof}
	\ref{item:completion-norm}: Let $a \in \completion{A}{X}$ be represented by a $\|\cdot\|_{2,X}$-Cauchy sequence $(a_n)_{n=1}^\infty \in \ell^\infty(A)$. Let $\tau \in X$. Then
	\begin{equation}\label{eq:unbounded-continuity}
		\|a\|_{2,\widetilde{\tau}}^2 = \lim_{n\rightarrow\infty} \tau(a_n^*a_n)
		\leq \lim_{n\rightarrow\infty} \|a_n\|_{2,X}^2\\
		= \|a\|_{2,X}^2.
	\end{equation}
	Hence $\|a\|_{2,\widetilde{X}} \leq \|a\|_{2,X}$ for all $a \in \completion{A}{X}$, and this implies $\|\cdot\|_{2, \widetilde{X}}$ is $\|\cdot\|_{2, X}$-continuous.  Since $\|\alpha_X(a)\|_{2,X}=\|a\|_{2,X} = \|\alpha_X (a)\|_{2, \widetilde{X}}$ for all $a \in A$ and  $\alpha_X(A)$ is $\|\cdot\|_{2, X}$-dense in $\completion{A}{X}$, we deduce $\|a\|_{2,X} = \|a\|_{2, \widetilde{X}}$ for all $a \in \completion{A}{X}$.
	
	\ref{item:completion-trace}: It is clear that the map $\tau \mapsto \widetilde{\tau}$ is affine and bijective.  As $X$ is compact and $\widetilde{X}$ is Hausdorff, it suffices to show continuity of this map.  Let $(\tau_\lambda)_\lambda$ be a net in $X$ that converges to $\tau \in X$ and let $\widetilde{\tau}_\lambda,\widetilde{\tau} \in \widetilde{X}$ denote the $\|\cdot\|_{2,X}$-continuous traces induced by $\tau_\lambda$ and $\tau$, respectively.  We must show that $\widetilde\tau_\lambda \to \widetilde\tau$.
	
	Let $a \in \completion{A}{X}$ and let $\epsilon>0$. Pick $b \in A$ such that $\|a-\alpha_X(b)\|_{2,X} < \epsilon/3$. Choose $\lambda_0$ such that for $\lambda \geq \lambda_0$, we have $\tau_{\lambda}(b)\approx_{\epsilon/3} \tau(b)$. Then for $\lambda \geq \lambda_0$,
	\begin{equation}
		\widetilde\tau(a) \approx_{\epsilon/3} \widetilde\tau(\alpha_X(b)) = \tau(b) \approx_{\epsilon/3} \tau_\lambda(b) = \widetilde\tau_\lambda(\alpha_X(b)) \approx_{\epsilon/3} \widetilde\tau_\lambda(a),
	\end{equation}
	showing continuity of the extension map.
	
	\ref{item:completion-complete}: It follows from \ref{item:completion-trace} that $\widetilde X$ is a compact convex subset of $T(\completion{A}{X})$.  By the construction of $\completion{A}{X}$, the seminorm $\|\cdot\|_{2, X}$ is a norm and the unit ball of $\completion{A}{X}$ is $\|\cdot\|_{2, X}$-complete, so by \ref{item:completion-norm}, the unit ball of $\completion{A}{X}$ is also $\|\cdot\|_{2, \widetilde{X}}$-complete.
	
	\ref{item:completion-factorial}: Suppose that $X$ is a face in $T(A)$.  To show that $\widetilde{X}$ is a face of $T(\completion{A}{X})$, suppose we can write $\tau \in \widetilde{X}$ as $\tau = \frac12(\tau_1+\tau_2)$ for some $\tau_1,\tau_2 \in T(\completion{A}{X})$. Since $(\alpha_X(e_\lambda))$ converges to the unit of $\completion{A}{X}$ for any approximate unit $(e_\lambda)$ of $A$, it follows that $\tau \circ \alpha_X$ is a trace on $A$. As $X$ is a face of $T(A)$, $\tau_1\circ\alpha_X, \tau_2\circ\alpha_X \in X$. Also, since $\tau_1,\tau_2 \leq 2\tau$ and $\tau$ is $\|\cdot\|_{2,\widetilde{X}}$-continuous, it follows that $\tau_1$ and $\tau_2$ are also $\|\cdot\|_{2, X}$-continuous. Thus $\tau_1$ and $\tau_2$ are the $\|\cdot\|_{2,\widetilde{X}}$-continuous extensions of $\tau_1 \circ \alpha_X,\tau_2 \circ \alpha_X$ respectively, so $\tau_1,\tau_2 \in \widetilde{X}$.
	
	Conversely, suppose $(\completion{A}{X}, \widetilde{X})$ is factorial and $\tau_1, \tau_2 \in T(A)$ satisfy $\tau \coloneqq \frac12(\tau_1 + \tau_2) \in \widetilde X$.  Then $\tau$ is $\|\cdot\|_{2, X}$-continuous, and hence so are $\tau_1$ and $\tau_2$ as they are dominated by $2 \tau$.  Now, $\tau$, $\tau_1$, and $\tau_2$ extend  to $\|\cdot\|_{2, X}$-continuous traces $\widetilde\tau$, $\widetilde\tau_1$, and $\widetilde{\tau}_2$ on $\completion{A}{X}$, respectively, using that the unit ball of $\alpha_X(A)$ is $\|\cdot\|_{2, \widetilde{X}}$-dense in the unit ball of $\completion{A}{X}$. Then $\widetilde\tau \in \widetilde{X}$ and $\widetilde\tau = \frac12 (\widetilde\tau_1 + \widetilde\tau_2)$.  Since $(\completion{A}{X}, \widetilde{X})$ is factorial, we have $\widetilde\tau_1$ and $\widetilde\tau_2$ are in $\widetilde{X}$, so $\tau_1$ and $\tau_2$ are in $X$.

 \ref{item:completion-subset}: The map $\alpha_X\colon A\to\completion{A}{X}$ is $\|\cdot\|_{2,Y}$-$\|\cdot\|_{2,\widetilde{Y}}$-isometric. Therefore, it sends $\|\cdot\|_{2,Y}$-Cauchy and $\|\cdot\|_{2,Y}$-null sequences in $A$ to $\|\cdot\|_{2,\widetilde{Y}}$-Cauchy and $\|\cdot\|_{2,\widetilde{Y}}$-null sequences in $\completion{A}{Y}$, respectively.  Accordingly, it induces $\theta$ making \eqref{eqn:completion-subset} commute.
 By commutativity of this diagram and the inequality $\|\cdot\|_{2, \widetilde Y} \leq \|\cdot\|_{2, X}$ on $\completion{A}{X}$, the image of the unit ball of $\completion{A}{Y}$ under $\theta$ is dense in the unit ball of $\completion{\completion{A}{X}}{\widetilde Y}$.  Since $\theta$ is isometric with respect to the uniform 2-norms and the unit ball of $\completion{A}{Y}$ is $\|\cdot\|_{2, Y}$-complete, $\theta$ is surjective.
 
\ref{item:completion-fibres}: This follows from \ref{item:completion-subset} using Proposition \ref{prop:tracialcompletiongns} to identify uniform tracial completions at $\tau$ and $\widetilde{\tau}$ with the von Neumann algebras generated by the associated GNS representations.

\end{proof}

\begin{notation}
	With Proposition~\ref{prop:tracial-completion} now established, we will typically identify $X$ with $\widetilde{X}$ henceforth. Furthermore, for a trace $\tau \in X$, we will write $\tau$ in place of $\widetilde\tau$ for the induced trace on $\completion{A}{X}$.
\end{notation}

Not surprisingly, tracial completions satisfy a universal property allowing for trace-preserving u.c.p.\ maps to be extended by continuity to tracial completions.

\begin{proposition}\label{prop:extend-by-continuity}
	Let $A$ be a $C^*$-algebra and let $X$ be a compact convex subset of $T(A)$.  If $(\mathcal N, Y)$ is a tracially complete $C^*$-algebra and $\theta \colon A \rightarrow \mathcal N$ is a u.c.p.\ map such that $\tau \circ \theta \in X$ for all $\tau \in Y$, then there is a unique u.c.p.\ map $\overline{\theta} \colon \completion{A}{X} \rightarrow \mathcal N$ of tracially complete $C^*$-algebras such that $\overline{\theta} \circ \alpha_X = \theta$ and $\tau \circ \overline{\theta} \in X$ for all $\tau \in Y$.  Further, if $\theta$ is a $^*$-homomorphism, then so is $\overline{\theta}$.
\end{proposition}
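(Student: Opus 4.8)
The plan is to extend $\theta$ by uniform continuity. The key observation is that the hypothesis $\tau\circ\theta\in X$ for all $\tau\in Y$ makes $\theta$ contractive from $\|\cdot\|_{2,X}$ to $\|\cdot\|_{2,Y}$: for $a\in A$ and $\tau\in Y$, the Kadison--Schwarz inequality $\theta(a)^*\theta(a)\leq\theta(a^*a)$ for the u.c.p.\ map $\theta$ gives
\begin{equation}
	\tau\big(\theta(a)^*\theta(a)\big)\leq(\tau\circ\theta)(a^*a)\leq\|a\|_{2,X}^2,
\end{equation}
and taking the supremum over $\tau\in Y$ yields $\|\theta(a)\|_{2,Y}\leq\|a\|_{2,X}$. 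First I would define $\overline\theta$: given $a\in\completion{A}{X}$ represented by a $\|\cdot\|$-bounded $\|\cdot\|_{2,X}$-Cauchy sequence $(a_n)_{n=1}^\infty$ in $A$, the sequence $(\theta(a_n))_{n=1}^\infty$ is $\|\cdot\|_{2,Y}$-Cauchy and $\|\cdot\|$-bounded, hence $\|\cdot\|_{2,Y}$-convergent in $\mathcal N$ since the (scaled) unit ball of $\mathcal N$ is $\|\cdot\|_{2,Y}$-complete; set $\overline\theta(a)$ to be its limit. The contractivity estimate shows null sequences map to null sequences, so $\overline\theta$ is well defined; it is linear, satisfies $\overline\theta\circ\alpha_X=\theta$, and $\|\overline\theta(a)\|_{2,Y}\leq\|a\|_{2,X}$ by Proposition~\ref{prop:tracial-completion}\ref{item:completion-norm}, so $\overline\theta$ is $\|\cdot\|_{2,X}$-$\|\cdot\|_{2,Y}$ contractive.

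Next I would check $\overline\theta$ is u.c.p. Unitality is clear when $A$ is unital, and in general follows by applying the argument of Proposition~\ref{prop:unital} to the increasing net $(\theta(e_\lambda))$ for an approximate unit $(e_\lambda)$ of $A$, together with $\alpha_X(e_\lambda)\to 1_{\completion{A}{X}}$. For complete positivity, fix $k\geq 1$ and a positive $b\in M_k(\completion{A}{X})$. Writing $b=c^*c$ with $c=b^{1/2}\in M_k(\completion{A}{X})$ and choosing $\|\cdot\|$-bounded matrices $(c^{(n)})_{n=1}^\infty$ over $A$ with $c^{(n)}\to c$ entrywise in $\|\cdot\|_{2,X}$ --- hence in $\|\cdot\|_{2,X\otimes\{\mathrm{tr}_k\}}$ by the norm comparison in the proof of Proposition~\ref{prop:matrix-algebras} --- the generalised H\"older inequality \eqref{eq:SpecialHolderIneq} gives $(c^{(n)})^*c^{(n)}\to b$ in $\|\cdot\|_{2,X\otimes\{\mathrm{tr}_k\}}$. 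Applying the entrywise amplification $\overline\theta^{(k)}$ (which is $\|\cdot\|_{2,X\otimes\{\mathrm{tr}_k\}}$-$\|\cdot\|_{2,Y\otimes\{\mathrm{tr}_k\}}$ continuous by the same norm comparison) and using $\theta^{(k)}\big((c^{(n)})^*c^{(n)}\big)\geq 0$ as $\theta$ is $k$-positive, we see $\overline\theta^{(k)}(b)$ is a $\|\cdot\|_{2,Y\otimes\{\mathrm{tr}_k\}}$-limit of positive elements, hence positive by Proposition~\ref{prop:ball-closed}\ref{item:cone-closed} applied to the tracially complete $C^*$-algebra $(M_k(\mathcal N),Y\otimes\{\mathrm{tr}_k\})$ of Proposition~\ref{prop:matrix-algebras}.

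It remains to verify the trace condition, uniqueness, and the $^*$-homomorphism case. For $\tau\in Y$, the trace $\tau\circ\overline\theta$ on $\completion{A}{X}$ is $\|\cdot\|_{2,X}$-continuous (since $\overline\theta$ is $\|\cdot\|_{2,X}$-$\|\cdot\|_{2,Y}$ contractive and $|\tau(\cdot)|\leq\|\cdot\|_{2,Y}$) and restricts along $\alpha_X$ to $\tau\circ\theta\in X$; by the uniqueness of $\|\cdot\|_{2,X}$-continuous extensions of traces in $X$ (noted before Proposition~\ref{prop:tracial-completion}), it must be the induced trace of $\tau\circ\theta$, so $\tau\circ\overline\theta\in X$ under the identification of $X$ with $\widetilde X$. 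Uniqueness is then immediate: any u.c.p.\ map $\psi\colon\completion{A}{X}\to\mathcal N$ with $\psi\circ\alpha_X=\theta$ and $\tau\circ\psi\in X$ for all $\tau\in Y$ is $\|\cdot\|_{2,X}$-$\|\cdot\|_{2,Y}$ contractive by the same Kadison--Schwarz argument, hence determined on the $\|\cdot\|_{2,X}$-dense set $\alpha_X(A)$. Finally, if $\theta$ is a $^*$-homomorphism then, using bounded representatives and joint $\|\cdot\|_{2,Y}$-continuity of multiplication on bounded sets (again \eqref{eq:SpecialHolderIneq}), $\overline\theta(ab)=\lim\theta(a_nb_n)=\lim\theta(a_n)\theta(b_n)=\overline\theta(a)\overline\theta(b)$, and $\overline\theta$ preserves adjoints as $*$ is a $\|\cdot\|_2$-isometry; thus $\overline\theta$ is a $^*$-homomorphism.

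I expect the main obstacle to be the verification of complete positivity: although each individual step is routine, one must correctly match positive elements of $M_k(\completion{A}{X})$ with uniform-$2$-norm limits of images of positive elements of $M_k(A)$, which is precisely where the functional calculus $b=c^*c$, the generalised H\"older inequality, and closedness of the positive cone in the matricial tracial completion all come together. Everything else is standard extension-by-continuity bookkeeping.
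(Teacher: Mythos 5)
Your proof is correct and follows essentially the same route as the paper's: the Kadison--Schwarz contractivity estimate, extension by continuity on bounded Cauchy sequences, the trace condition via $\|\cdot\|_{2,X}$-continuity of $\tau\circ\overline\theta$, and uniqueness from density of $\alpha_X(A)$. Your detailed verification of complete positivity is exactly the ``density argument'' the paper leaves implicit while citing the same two ingredients (closedness of the positive cone and the matricial tracially complete structure), so you have simply filled in a step the authors chose to compress.
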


\begin{proof}
	For uniqueness, consider a u.c.p.\ map $\phi\colon \completion{A}{X}\to\mathcal N$ with $\tau\circ\phi\in X$ for all $\tau\in Y$.  Then for $\tau \in Y$ and $a \in \completion{A}{X}$,
   \begin{equation}\label{eq:trace-norm-contractive}
		\tau(\phi(a)^*\phi(a)) \leq \tau(\phi(a^*a)) \leq \|a\|_{2, X}^2,
	\end{equation}
 where the first inequality follows from the Schwarz inequality for u.c.p.\ maps (cf.\ \cite[Proposition 3.3]{Paulsen-cp-maps}) and the second inequality is a consequence of the hypothesis on traces. Accordingly, 
 $\phi$ is $\|\cdot\|_{2,X}$-$\|\cdot\|_{2,Y}$-continuous.  By the $\|\cdot\|_{2,X}$-density of $\alpha_A(A)$ in $\completion{A}{X}$, it follows that any extension of $\theta$ as in the proposition must be unique.

	We turn to the proof of existence.  Since $\theta^*(Y) \subseteq X$, the same computation used above shows $\|\theta(a)\|_{2, Y} \leq \|a\|_{2, X}$ for all $a \in A$.  Hence if $(a_n)_{n=1}^\infty \subseteq A$ is a $\|\cdot\|$-bounded $\|\cdot\|_{2, X}$-Cauchy sequence in $A$, then $(\theta(a_n))_{n=1}^\infty$ is a $\|\cdot\|$-bounded $\|\cdot\|_{2, Y}$-Cauchy sequence in $\mathcal N$.  Therefore, we obtain a well-defined map $\overline{\theta} \colon \completion{A}{X} \rightarrow \mathcal N$ by $\overline{\theta}(a) \coloneqq \lim_{n \rightarrow \infty} \theta(a_n)$, where $(a_n)_{n=1}^\infty \subseteq A$ is a $\|\cdot\|$-bounded $\|\cdot\|_{2, X}$-Cauchy sequence representing $a \in \completion{A}{X}$ and where the limit is taken in the norm $\|\cdot\|_{2, Y}$. By construction,  $\overline{\theta} \circ \alpha_X = \theta$. Using Propositions \ref{prop:ball-closed}\ref{item:cone-closed} and \ref{prop:matrix-algebras}, a density argument gives that $\overline{\theta}$ is a u.c.p.\ map.    Likewise when $\theta$ is a $^*$-homomorphism, a density argument, this time using continuity of multiplication in the uniform 2-norm on norm-bounded sets, shows that so too is $\bar{\theta}$.
	
	It remains to check that $\tau \circ \overline{\theta} \in X$ for all $\tau \in Y$.  Note that $\|\overline{\theta}(a)\|_{2, Y} \leq \|a\|_{2, X}$ for all $a \in \completion{A}{X}$.  Therefore, if $\tau \in Y$, then $\tau \circ \overline{\theta}$ is $\|\cdot\|_{2, X}$-continuous.  Now, since $\tau \circ \overline{\theta} \circ \alpha_X = \tau \circ \theta \in X$, we have $\tau \circ \overline{\theta} \in X$.
\end{proof}

\subsection{Dense subalgebras of tracially complete \texorpdfstring{$C^*$}{C*}-algebras}\label{sec:DenseSubalgberas}

We will frequently need a version of Kaplansky's density theorem for tracially complete $C^*$-algebras (Lemma~\ref{lem:UnitBallDensity} and Proposition~\ref{prop:UnitBallDensity} below).  This is obtained using the well-known matrix amplification trick to reduce to the self-adjoint case.  At the core is the following estimate.

\begin{lemma}\label{lem:Lipschitz}
	Let $A$ be a commutative $C^*$-algebra and $\tau \in T(A)$. Suppose $f\colon\mathbb{R} \rightarrow \mathbb{R}$ is Lipschitz continuous with constant $M > 0$. If $a,b \in A$ are self-adjoint, then $\|f(a) - f(b)\|_{2, \tau} \leq M\|a-b\|_{2,\tau}$.
\end{lemma}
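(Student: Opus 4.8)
The plan is to reduce the statement to an elementary pointwise inequality by exploiting the commutative structure. First I would invoke the Gelfand representation to identify $A$ with $C_0(\Omega)$ for a locally compact Hausdorff space $\Omega$; passing to the unitisation, write $\widetilde{A} \cong C(K)$ with $K$ the one-point compactification $\Omega \cup \{\infty\}$ (this detour through $\widetilde{A}$ also disposes of the case $f(0) \neq 0$, for which $f(a)$ and $f(b)$ need not individually lie in $A$, although their difference does, since $a$ and $b$ both vanish at $\infty$). By the Riesz representation theorem, the extension of the trace $\tau$ is given by integration against a Radon probability measure $\mu$ on $K$, so that $\tau(x) = \int_K x \, d\mu$. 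Under this identification, self-adjoint elements $a, b \in A$ correspond to real-valued functions in $C(K)$, and continuous functional calculus is nothing but composition, so $f(a) = f \circ a$ and $f(b) = f \circ b$ as functions on $K$.

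With this in hand the proof is a one-line computation. Since $f(a) - f(b)$ is self-adjoint,
\[
  \|f(a) - f(b)\|_{2,\tau}^2 = \tau\big((f(a)-f(b))^2\big) = \int_K \big(f(a(\omega)) - f(b(\omega))\big)^2 \, d\mu(\omega).
\]
The Lipschitz hypothesis gives $\big(f(a(\omega)) - f(b(\omega))\big)^2 \leq M^2 \big(a(\omega) - b(\omega)\big)^2$ for every $\omega \in K$, so
\[
  \|f(a) - f(b)\|_{2,\tau}^2 \leq M^2 \int_K \big(a(\omega) - b(\omega)\big)^2 \, d\mu(\omega) = M^2 \, \|a - b\|_{2,\tau}^2,
\]
and taking square roots finishes the argument.

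There is essentially no hard step here: all the content lies in recognising that a commutative $C^*$-algebra equipped with a trace behaves like $L^\infty(\mu)$, after which the claim is the triviality that an $M$-Lipschitz function induces an $M$-Lipschitz (indeed $M$-Lipschitz on the level of squared norms) map on $L^2(\mu)$. The only point requiring a moment's care is the bookkeeping around non-unitality and the constant term $f(0)$, which is precisely why I would pass to $C(K)$ at the outset rather than work directly in $C_0(\Omega)$; alternatively, one could simply replace $f$ by $f - f(0)$ — which alters neither the Lipschitz constant nor the difference $f(a) - f(b)$ — and then work inside $A$ throughout.
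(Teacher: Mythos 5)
Your proof is correct and follows essentially the same route as the paper's: identify $A$ with a commutative function algebra via Gelfand, observe that functional calculus is composition, apply the pointwise Lipschitz inequality to get $|f(a)-f(b)|^2 \leq M^2|a-b|^2$, and apply $\tau$. Your extra bookkeeping around the unitisation and the case $f(0)\neq 0$ is a reasonable refinement of a point the paper's proof passes over silently, but it does not change the argument.
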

\begin{proof}
	View $A \cong C_0(K)$ and $a,b \in C_0(K)$ for a locally compact Hausdorff space $K$. Since $f$ is $M$-Lipschitz continuous, we have 
    \begin{align}
		|f(a(x)) - f(b(x))| &\leq M|a(x) - b(x)|
	\intertext{for all $x \in K$. It follows that}
		|f(a)-f(b)|^2 &\leq M^2|a-b|^2
    \end{align}
    in $A$, and the claim follows by applying $\tau$.
\end{proof}

We isolate the following quantitative version of Kaplansky's density theorem for later use.

\begin{lemma}\label{lem:UnitBallDensity}
	Let $(\M,X)$ be a tracially complete $C^*$-algebra and let $A \subseteq \M$ be a $C^*$-subalgebra.  If $b \in \mathcal M$ is a contraction, $\epsilon > 0$, and there is an $a \in A$ with $\|a - b\|_{2, X} < \epsilon$, then there is a contraction $a' \in A$ with $\|a' - b\|_{2, X} < 3 \epsilon$.
\end{lemma}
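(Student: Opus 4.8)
The plan is to reduce to the self-adjoint case by the standard $2\times2$ matrix amplification trick, and then to handle the self-adjoint case by truncating with $f(t):=\max\{-1,\min\{1,t\}\}$ and controlling the resulting error fibrewise by means of a trace-preserving conditional expectation onto a commutative subalgebra, so that the only functional-calculus input needed is the commutative Lipschitz bound of Lemma~\ref{lem:Lipschitz}.

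\textbf{Step 1: the self-adjoint case.} Suppose $b\in\M$ is a self-adjoint contraction and $a\in A$ is self-adjoint with $\|a-b\|_{2,X}<\epsilon$. Since $f(0)=0$, the element $a':=f(a)$ lies in the (possibly non-unital) $C^*$-subalgebra of $A$ generated by $a$, and $\|a'\|\le1$. I would estimate $\|a'-b\|_{2,\tau}$ one trace at a time. Fix $\tau\in X$, pass to the GNS representation, and work in the tracial von Neumann algebra $(\pi_\tau(\M)'',\tau)$. Let $N_\tau:=\pi_\tau(a)''$, an abelian von Neumann subalgebra, and let $E_\tau\colon\pi_\tau(\M)''\to N_\tau$ be the $\tau$-preserving conditional expectation. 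Since $E_\tau$ fixes $N_\tau$ and is $\|\cdot\|_{2,\tau}$-contractive (it is the compression by the relevant orthogonal projection), the nearest-point property gives $\|\pi_\tau(b)-E_\tau\pi_\tau(b)\|_{2,\tau}\le\|\pi_\tau(b)-\pi_\tau(a)\|_{2,\tau}=\|a-b\|_{2,\tau}$. As $E_\tau\pi_\tau(b)$ is a self-adjoint contraction in the abelian $N_\tau$, it is fixed by $f$, so Lemma~\ref{lem:Lipschitz} applied in the commutative $C^*$-algebra $N_\tau$ with trace $\tau$ yields $\|f(\pi_\tau(a))-E_\tau\pi_\tau(b)\|_{2,\tau}=\|f(\pi_\tau(a))-f(E_\tau\pi_\tau(b))\|_{2,\tau}\le\|\pi_\tau(a)-E_\tau\pi_\tau(b)\|_{2,\tau}\le\|a-b\|_{2,\tau}$, again using that $E_\tau$ is contractive and fixes $\pi_\tau(a)$. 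Combining the two bounds by the triangle inequality gives $\|a'-b\|_{2,\tau}\le2\|a-b\|_{2,\tau}$; taking the supremum over $\tau\in X$ gives $\|a'-b\|_{2,X}\le2\|a-b\|_{2,X}<2\epsilon$.

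\textbf{Step 2: the general case.} For an arbitrary contraction $b\in\M$ and $a\in A$ with $\|a-b\|_{2,X}<\epsilon$, set $\tilde b:=\left(\begin{smallmatrix}0&b\\b^*&0\end{smallmatrix}\right)$ and $\tilde a:=\left(\begin{smallmatrix}0&a\\a^*&0\end{smallmatrix}\right)$, which are self-adjoint elements of $\M\otimes M_2$ and $A\otimes M_2$ respectively, with $\|\tilde b\|=\|b\|\le1$. By Proposition~\ref{prop:matrix-algebras}, $(\M\otimes M_2,X\otimes\{\mathrm{tr}_2\})$ is a tracially complete $C^*$-algebra, and a short computation using that $\tau$ is a trace shows $\|\tilde a-\tilde b\|_{2,X\otimes\{\mathrm{tr}_2\}}=\|a-b\|_{2,X}<\epsilon$. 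Applying Step~1 inside $A\otimes M_2\subseteq\M\otimes M_2$ produces a self-adjoint contraction $\tilde c\in A\otimes M_2$ with $\|\tilde c-\tilde b\|_{2,X\otimes\{\mathrm{tr}_2\}}<2\epsilon$. Setting $a':=(\tilde c)_{12}\in A$, one has $\|a'\|\le\|\tilde c\|\le1$, and the entrywise estimate from the proof of Proposition~\ref{prop:matrix-algebras} gives $\|a'-b\|_{2,X}=\|(\tilde c-\tilde b)_{12}\|_{2,X}\le\sqrt2\,\|\tilde c-\tilde b\|_{2,X\otimes\{\mathrm{tr}_2\}}<2\sqrt2\,\epsilon<3\epsilon$, as required.

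\textbf{Main obstacle.} The delicate point is that the truncation function $f$ is \emph{not} operator Lipschitz in operator norm, so one cannot directly bound $\|f(\pi_\tau a)-f(\pi_\tau b)\|_{2,\tau}$ by $\|a-b\|_{2,\tau}$ for non-commuting $a,b$ without heavier machinery (double operator integrals). Routing through the conditional expectation $E_\tau$ onto the commutative $N_\tau$ is exactly the device that reduces everything to the commutative Lemma~\ref{lem:Lipschitz}, at the cost of the factor $2$ in Step~1; together with the $\sqrt2$ lost in the amplification, the final constant $2\sqrt2$ still comes in under the claimed bound $3$. The only other thing to watch is that $A$ need not be unital, which is why we use $f(0)=0$ to keep $f(a)$ inside $A$.
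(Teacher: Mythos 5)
Your proof is correct and follows essentially the same route as the paper's: truncate with the same function $f$, reduce the non-commutativity to the commutative Lipschitz bound via the trace-preserving conditional expectation onto $\{\pi_\tau(a)\}''$, and handle general $b$ by the $2\times 2$ self-adjoint amplification. The only differences are bookkeeping: you use the nearest-point property of $E_\tau$ to get $2\epsilon$ in the self-adjoint step and then pay a $\sqrt{2}$ in the generic entrywise estimate, whereas the paper gets $3\epsilon$ in the self-adjoint step but recovers the off-diagonal entry with no loss; both land under the stated bound.
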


\begin{proof}
	We first prove the result in the case where $b$ is self-adjoint.  
	By replacing $a$ with its real part, we may assume $a$ is self-adjoint as well.  
	Consider the function $f\colon\mathbb{R} \rightarrow \mathbb{R}$ given by
	\begin{equation} 
		f(t)\coloneqq \begin{cases} -1 & t < -1; \\ t, & t \in [-1,1]; \\ 1, & t>1. \end{cases}
	\end{equation}
   We will show $\|f(a) - b\|_{2, X} < 3\epsilon$, so that the result (in this case) holds by setting $a'\coloneqq f(a)$.

	Fix $\tau \in X$ and let $E \colon \pi_\tau(\M)'' \rightarrow \{\pi_\tau(a)\}''$ be the $\tau$-preserving conditional expectation, which is contractive with respect to each of the norms $\|\cdot\|$ and $\|\cdot\|_{2, \tau}$.\footnote{The expectation is given explicitly by $E(\pi_\tau(c)) \xi_\tau = P \pi_\tau(c)\xi_\tau$ for $c \in \mathcal M$, where $\xi_\tau \in \mathcal H_\tau$ is the canonical cyclic vector and $P$ is the projection of $\mathcal H_\tau$ onto  the closure of $\pi_\tau(a) \mathcal H_\tau$ -- see \cite[Lemma~1.5.11]{Br08}, for example, for more details.}
		Set $a_1 \coloneqq \pi_\tau(a)$, and note that $E(a_1) = a_1$.  
	Therefore, for $b_1 \coloneqq E(\pi_\tau(b))$, 
	\begin{align}
		\|a_1 - b_1\|_{2, \tau} &\leq \|\pi_\tau(a) - \pi_\tau(b)\|_{2, \tau} < \epsilon,
	\intertext{and Lemma \ref{lem:Lipschitz} applied to $\{\pi_\tau(a)\}''$ implies}
		\|f(a_1) - f(b_1)\|_{2,\tau} &\leq \|a_1 - b_1\|_{2, \tau} < \epsilon.
	\end{align}
	Since $b_1$ is a self-adjoint contraction, $f(b_1) = b_1$.

	Hence, in the norm $\|\cdot\|_{2, \tau}$, we have strict approximations
	\begin{equation} \pi_\tau(f(a)) = f(a_1) \approx_\epsilon f(b_1) =  b_1 \approx_\epsilon a_1 = \pi_\tau(a) \approx_\epsilon \pi_\tau(b). \end{equation}
	Therefore, $\|\pi_\tau(f(a)) - \pi_\tau(b)\|_{2, \tau} < 3\epsilon$.
	This completes the proof for the case when $b$ is self-adjoint.
	
	In the general case, view $\mathcal M \otimes M_2$ as a tracially complete $C^*$-algebra as in Proposition~\ref{prop:matrix-algebras}.  By the first part of the proof, there is a self-adjoint contraction $a'' = (a_{ij}'') \in A \otimes M_2$ such that
	\begin{equation}
		\Big\| a'' - \begin{pmatrix} 0 & b \\ b^* & 0 \end{pmatrix} \Big\|_{2, X \otimes \{\mathrm{tr}_2\}} < 3 \epsilon.
	\end{equation}
	Since $a''$ is self-adjoint, $a_{21}'' = a_{12}''^*$.
	For any $\tau \in X$, we compute that 
	\begin{equation}\begin{split} 
		\|a_{12}''-b\|_{2, \tau}^2 &\leq \frac{1}{2}\tau\big(|a_{11}''|^2 + 2|a_{12}''-b|^2 + |a_{22}''|^2\big) \\
		&= \Big\| a'' - \begin{pmatrix} 0 & b \\ b^* & 0 \end{pmatrix} \Big\|_{2, \tau \otimes \mathrm{tr}_2}^2 \\
		&< (3\epsilon)^2.
	\end{split}\end{equation}
	Hence, taking $a' \coloneqq a_{12}''$, we have $\|a' - b\|_{2, X} < 3 \epsilon$.
\end{proof}

The following version of Kaplansky's density theorem follows immediately from the previous lemma.

\begin{proposition}[{cf.~\cite[Theorem 4.3.3]{Mu90}}]
	\label{prop:UnitBallDensity}
	Let $(\M,X)$ be a tracially complete $C^*$-algebra and let $A \subseteq \M$ be a $\|\cdot\|_{2,X}$-dense $C^*$-subalgebra.
	Then the unit ball of $A$ is $\|\cdot\|_{2,X}$-dense in the unit ball of $\M$.
\end{proposition}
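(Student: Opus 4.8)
The plan is to deduce the proposition immediately from the quantitative Kaplansky density estimate already established in Lemma~\ref{lem:UnitBallDensity}. Let $b \in \M$ be a contraction and fix $\epsilon > 0$. Since $A$ is $\|\cdot\|_{2, X}$-dense in $\M$, there is some $a \in A$ with $\|a - b\|_{2, X} < \epsilon/3$; applying Lemma~\ref{lem:UnitBallDensity} with $\epsilon/3$ in place of $\epsilon$ then yields a contraction $a' \in A$ with $\|a' - b\|_{2, X} < \epsilon$. As $b$ and $\epsilon$ were arbitrary, this shows the unit ball of $A$ is $\|\cdot\|_{2, X}$-dense in the unit ball of $\M$. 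So the proposition itself presents no obstacle; all the content is in Lemma~\ref{lem:UnitBallDensity}.

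For completeness, I record the shape of the argument behind that lemma, which follows the classical route to Kaplansky density. First one reduces to the self-adjoint case via matrix amplification: working in the tracially complete $C^*$-algebra $\big(\M \otimes M_2, X \otimes \{\mathrm{tr}_2\}\big)$ of Proposition~\ref{prop:matrix-algebras}, one applies the self-adjoint case to the self-adjoint contraction $\left(\begin{smallmatrix} 0 & b \\ b^{*} & 0\end{smallmatrix}\right)$, reads off an approximant from an off-diagonal corner, and controls the error using that the $\|\cdot\|_{2, \tau}$-norm of a matrix entry is dominated by $\|\cdot\|_{2, \tau \otimes \mathrm{tr}_2}$ of the whole matrix. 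For a self-adjoint contraction $b$, one takes a self-adjoint approximant $a \in A$ (replacing $a$ by its real part) and forces it into the unit ball by applying the $1$-Lipschitz truncation $f(t) = \max\{-1, \min\{1, t\}\}$, so the task becomes bounding $\|f(a) - b\|_{2, X}$.

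The one genuinely new point --- and the step I would flag as the main obstacle --- is that $2$-norm contractivity of Lipschitz functional calculus (Lemma~\ref{lem:Lipschitz}) is available only in the commutative setting, so $\|f(a) - f(b)\|_{2, \tau}$ cannot be estimated directly. The remedy is to work fibrewise: at each $\tau \in X$, let $E$ be the $\tau$-preserving conditional expectation of $\pi_\tau(\M)''$ onto the abelian von Neumann subalgebra $\{\pi_\tau(a)\}''$. Since $E$ fixes $\pi_\tau(a)$ and is $\|\cdot\|_{2, \tau}$-contractive, one gets $\|\pi_\tau(a) - E(\pi_\tau(b))\|_{2, \tau} < \epsilon$; moreover $E(\pi_\tau(b))$ is a self-adjoint contraction, hence fixed by $f$, so Lemma~\ref{lem:Lipschitz} applied inside $\{\pi_\tau(a)\}''$ gives $\|f(\pi_\tau(a)) - E(\pi_\tau(b))\|_{2, \tau} < \epsilon$. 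Chaining these with $\|\pi_\tau(a) - \pi_\tau(b)\|_{2, \tau} < \epsilon$ bounds $\|\pi_\tau(f(a)) - \pi_\tau(b)\|_{2, \tau}$ by $3\epsilon$, and since this bound is uniform over $\tau \in X$ it upgrades to $\|f(a) - b\|_{2, X} < 3\epsilon$ --- the uniformity across fibres being precisely what makes the passage to the uniform $2$-norm work.
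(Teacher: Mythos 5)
Your proposal is correct and matches the paper exactly: the paper states that Proposition~\ref{prop:UnitBallDensity} ``follows immediately from the previous lemma,'' i.e.\ from the quantitative estimate in Lemma~\ref{lem:UnitBallDensity}, which is precisely your deduction. Your recap of the proof of that lemma (matrix amplification to reduce to the self-adjoint case, truncation by the $1$-Lipschitz function, and the fibrewise conditional expectation onto $\{\pi_\tau(a)\}''$ to make Lemma~\ref{lem:Lipschitz} applicable) also follows the paper's argument.
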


As an application, a tracially complete $C^*$-algebra is the tracial completion of any of its $\|\cdot\|_{2,X}$-dense subalgebras. 

\begin{corollary}\label{cor:dense-subalgebra}
	Suppose $(\M, X)$ is a tracially complete $C^*$-algebra and let $A \subseteq \M$ be a $\|\cdot\|_{2, X}$-dense $C^*$-subalgebra.
	\begin{enumerate}
		\item\label{item:dense-subalg1} $X_A \coloneqq \{\tau|_A : \tau \in X\} \subseteq T(A)$ is a compact convex set and the inclusion $A \hookrightarrow \M$ induces an isomorphism $(\completion{A}{X_A}, X_A) \rightarrow (\M, X)$.
		\item\label{item:dense-subalg2} If $(\M, X)$ is factorial, then $X_A$ is a closed face in $T(A)$.
        \item\label{item:dense-subalg3} If $(\mathcal N,Y)$ is another tracially complete $C^*$-algebra and $\phi\colon A\to \mathcal N$ is a $^*$-homomorphism with $\phi^*(Y)\subseteq X_A$, then $\phi$ has a unique extension to $\overline{\phi}\colon (\M,X)\to(\mathcal N,Y)$.
    \end{enumerate}
\end{corollary}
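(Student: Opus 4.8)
The plan is to prove~\ref{item:dense-subalg1} directly and to deduce~\ref{item:dense-subalg2} and~\ref{item:dense-subalg3} from it; the substantive content is~\ref{item:dense-subalg1}, and I expect its only real difficulty to be the surjectivity of the natural comparison map, which is precisely where the Kaplansky-type density result (Proposition~\ref{prop:UnitBallDensity}) enters. First I would check that $X_A \subseteq T(A)$. The restriction map $\M^* \to A^*$ is weak$^*$-continuous and affine, so it carries the compact convex set $X$ to a compact convex subset of $A^*$ consisting of positive tracial functionals of norm at most $1$; the only thing to verify is that $\|\tau|_A\| = 1$ for $\tau \in X$. Since $A$ is $\|\cdot\|_{2, X}$-dense in $\M$, Lemma~\ref{lem:UnitBallDensity} provides contractions $a \in A$ with $\|a - 1_\M\|_{2, X}$ arbitrarily small; then $a^*a \in A_+$ is a contraction with $\|a^*a - 1_\M\|_{2, X}$ small (using~\eqref{eq:SpecialHolderIneq} and the $^*$-invariance of $\|\cdot\|_{2, X}$), so $\tau|_A(a^*a) = \tau(a^*a)$ is close to $1$ for every $\tau \in X$, whence $\|\tau|_A\| = 1$. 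Thus $X_A \subseteq T(A)$ is compact and convex, and $\|a\|_{2, X_A} = \|a\|_{2, X}$ for all $a \in A$ straight from the definitions.

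Next, the inclusion $\iota \colon A \hookrightarrow \M$ is $\|\cdot\|$-bounded and $\|\cdot\|_{2, X_A}$-$\|\cdot\|_{2, X}$-isometric, so it sends $\|\cdot\|$-bounded $\|\cdot\|_{2, X_A}$-Cauchy (respectively null) sequences in $A$ to $\|\cdot\|$-bounded $\|\cdot\|_{2, X}$-Cauchy (respectively null) sequences in $\M$, which converge by completeness of the unit ball of $\M$. This induces a $^*$-homomorphism $\theta \colon \completion{A}{X_A} \to \M$ with $\theta \circ \alpha_{X_A} = \iota$ that is isometric for the uniform $2$-norms, hence injective since $\|\cdot\|_{2, X_A}$ is a norm. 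The image of the unit ball of $\completion{A}{X_A}$ under $\theta$ is then $\|\cdot\|_{2, X}$-complete, hence $\|\cdot\|_{2, X}$-closed, and contains the unit ball of $A$ (as $\theta \circ \alpha_{X_A} = \iota$ and $\alpha_{X_A}$ is a contractive $^*$-homomorphism); by Proposition~\ref{prop:UnitBallDensity} the latter is $\|\cdot\|_{2, X}$-dense in the unit ball of $\M$, so $\theta$ maps the unit ball of $\completion{A}{X_A}$ onto the unit ball of $\M$ and is therefore surjective, hence a $^*$-isomorphism. Finally, for each $\tau \in X$, the trace $\tau \circ \theta$ on $\completion{A}{X_A}$ is $\|\cdot\|_{2, X_A}$-continuous (as $\tau$ is $\|\cdot\|_{2, X}$-continuous on $\M$ and $\theta$ is a $2$-norm isometry) and restricts along $\alpha_{X_A}$ to $\tau|_A$, so it is the induced trace of $\tau|_A$; letting $\tau$ range over $X$ gives $\theta^*(X) = X_A$ under the identification of Proposition~\ref{prop:tracial-completion}\ref{item:completion-trace}. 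Hence $\theta$ is an isomorphism $(\completion{A}{X_A}, X_A) \to (\M, X)$ of tracially complete $C^*$-algebras, proving~\ref{item:dense-subalg1}.

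For~\ref{item:dense-subalg2}, the isomorphism $\theta$ from~\ref{item:dense-subalg1} induces an affine homeomorphism $\theta^* \colon T(\M) \to T(\completion{A}{X_A})$ carrying $X$ onto $X_A$; since affine homeomorphisms of Choquet simplices preserve closed faces, $(\completion{A}{X_A}, X_A)$ is factorial precisely because $(\M, X)$ is, so Proposition~\ref{prop:tracial-completion}\ref{item:completion-factorial} shows $X_A$ is a face of $T(A)$, and it is closed since it is compact by~\ref{item:dense-subalg1}. For~\ref{item:dense-subalg3}, I would argue as in Proposition~\ref{prop:extend-by-continuity}, using~\ref{item:dense-subalg1} to identify $\M$ with $\completion{A}{X_A}$: from $\phi^*(Y) \subseteq X_A$ and $\|\cdot\|_{2, X_A} = \|\cdot\|_{2, X}$ on $A$ one gets $\|\phi(a)\|_{2, Y} \leq \|a\|_{2, X}$ for $a \in A$, so $\phi$ extends by continuity (using completeness of the unit ball of $\mathcal N$) to $\overline{\phi} \colon \M \to \mathcal N$; continuity of multiplication in the uniform $2$-norm on bounded sets (from~\eqref{eq:SpecialHolderIneq}) makes $\overline{\phi}$ a $^*$-homomorphism, and for $\tau \in Y$ the trace $\tau \circ \overline{\phi}$ is $\|\cdot\|_{2, X}$-continuous and agrees with $\tau \circ \phi \in X_A$ on the $\|\cdot\|_{2, X}$-dense subalgebra $A$, hence lies in $X$, so $\overline{\phi} \colon (\M, X) \to (\mathcal N, Y)$ is a morphism. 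Uniqueness is immediate, since any morphism extending $\phi$ is $\|\cdot\|_{2, X}$-$\|\cdot\|_{2, Y}$-continuous and $A$ is $\|\cdot\|_{2, X}$-dense in $\M$.
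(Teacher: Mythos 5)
Your proof is correct and follows essentially the same route as the paper's: Kaplansky density (Proposition~\ref{prop:UnitBallDensity}) to get $X_A \subseteq T(A)$ and the surjectivity of the comparison map via completeness of the image of the unit ball, with \ref{item:dense-subalg2} and \ref{item:dense-subalg3} deduced from \ref{item:dense-subalg1} together with Propositions~\ref{prop:tracial-completion}\ref{item:completion-factorial} and~\ref{prop:extend-by-continuity}. The only difference is cosmetic: you build the map $\theta$ and the extension in \ref{item:dense-subalg3} by hand, where the paper simply cites Proposition~\ref{prop:extend-by-continuity}.
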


\begin{proof}
	\ref{item:dense-subalg1}: Proposition~\ref{prop:UnitBallDensity} implies the unit ball of $A$ is $\|\cdot\|_{2, X}$-dense in the unit ball of $\mathcal{M}$.  Therefore, if $\tau \in X$, then $\tau|_A$ has norm 1, and hence $X_A \subseteq T(A)$.  As $X$ is compact and convex so too is $X_A$.  By Proposition~\ref{prop:extend-by-continuity}, the inclusion $A \hookrightarrow \M$ extends to a morphism $\theta \colon (\completion{A}{X_A}, X_A) \rightarrow (\M, X)$ of tracially complete $C^*$-algebras.  
	
	Note that $\theta$ is an embedding in the sense of Definition~\ref{def:embedding}, and hence is isometric by Proposition~\ref{prop:embedding-isometric}.  Further, the range of $\theta$ contains the $\|\cdot\|_{2, X}$-dense subalgebra $A \subseteq \mathcal{M}$.  Since the unit ball of $\completion{A}{X_A}$ is $\|\cdot\|_{2,X_A}$-complete, Kaplansky's density theorem (Proposition~\ref{prop:UnitBallDensity}) implies $\theta$ maps the unit ball of $\completion{A}{X_A}$ onto the unit ball of $\mathcal M$, and in particular $\theta$ is surjective.  Finally, given $\rho\in X_A$, then there is $\tau\in X$ such that $\rho$ is the unique $\|\cdot\|_{2,X_A}$-continuous extension to $\completion{A}{X_A}$ of $\tau|_A$.  As $\tau\circ\theta$ is $\|\cdot\|_{2,X_A}$-continuous and $(\tau\circ\theta)|_A=\rho|_A$, we have $\tau\circ\theta=\rho$, and so $\rho\circ\theta^{-1}=\tau\in X$. In this way $\theta^{-1}\colon(\mathcal M,X)\to(\completion{A}{X_A}, X_A)$ is a morphism of tracially complete $C^*$-algebras.   
    
	\ref{item:dense-subalg2}: In light of \ref{item:dense-subalg1}, this follows from Proposition~\ref{prop:tracial-completion}\ref{item:completion-factorial}.

\ref{item:dense-subalg3}: This follows from \ref{item:dense-subalg1} and Proposition \ref{prop:extend-by-continuity}.
\end{proof}

\subsection{Constructions} There is a recipe for producing constructions on tracially complete $C^*$-algebras.
\begin{enumerate}
\item Perform the corresponding (spatial) $C^*$-construction on the underlying $C^*$-algebras.
\item Identify the suitable collection of traces on the newly constructed $C^*$-algebra
\item Take the tracial completion with respect to these traces.\label{constructions3}
\end{enumerate}

We illustrate this process here with direct sums, tensor products, and sequential inductive limits. Products and reduced products (including ultraproducts) are constructed in Section \ref{sec:reduced-product}.  In the case of direct sums, step \ref{constructions3} above is redundant as the $C^*$-direct sum is already tracially complete.

\begin{definition}\label{def:UTCdirect-sums}
Let $(\M,X)$ and $(\mathcal N,Y)$ be tracially complete $C^*$-algebras.  Let $X\oplus Y$ denote the set of traces of the form $t \tau_X+(1-t)\tau_Y$ for $\tau_X\in X$, $\tau_Y\in Y$ and $0\leq t \leq 1$.  The direct sum of $(\M,X)$ and $(\mathcal N,Y)$ is the tracially complete $C^*$-algebra $(\M\oplus \mathcal N,X\oplus Y)$. 
\end{definition}

Note that the tracially complete direct sum of two finite von Neumann algebras $(\M,T(\M))$ and $(\mathcal N,T(\mathcal N))$ is the finite von Neumann algebra $(\M\oplus \mathcal N,T(\mathcal M\oplus N))$. As the extreme traces on $X\oplus Y$ are precisely the union of the extreme traces on $X$ and the extreme traces on $Y$, the direct sum $(\M,X)\oplus (\mathcal N,Y)$ is factorial if and only if both $(\M,X)$ and $(\mathcal N,Y)$ are factorial.

Next up are tensor products.

\begin{definition}\label{def:UTCtensor-products}
Let $(\M,X)$ and $(\mathcal{N},Y)$ be tracially complete $C^*$-algebras.  Let $\M \otimes \mathcal N$ denote the minimal $C^*$-tensor product of $\M$ and $\mathcal N$ and let $X \otimes Y \subseteq T(\M \otimes \mathcal N)$ be the closed convex hull of the traces $\sigma \otimes \rho$ for $\sigma \in X$ and $\rho \in Y$.  Define $\M \barotimes \mathcal{N}$ to be the tracial completion of $\M \otimes \mathcal N$ with respect to $X \otimes Y$. The tensor product $(\M,X) \barotimes (\mathcal{N},Y)$ is the pair $(\M \barotimes \mathcal N, X \otimes Y)$.
\end{definition}

\begin{proposition}
If $(\M, X)$ and $(\mathcal N, Y)$ are factorial tracially complete $C^*$-algebras, then $(\M, X) \barotimes (\mathcal N, Y)$ is also factorial.
\end{proposition}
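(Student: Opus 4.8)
The plan is to reduce, via Proposition~\ref{prop:tracial-completion}\ref{item:completion-factorial}, to showing that $X \otimes Y$ is a closed face of $T(\M \otimes \mathcal N)$. Here $\M \otimes \mathcal N$ is unital by Proposition~\ref{prop:unital}, so $T(\M \otimes \mathcal N)$ is a Choquet simplex by Theorem~\ref{thm:traces-choquet}, and $X \otimes Y$ is closed and convex by construction. By Roy's theorem (Theorem~\ref{thm:choquet-faces}), it then suffices to prove $\partial_e(X \otimes Y) \subseteq \partial_e T(\M \otimes \mathcal N)$.

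The first step is to locate the extreme points of $X \otimes Y$. Since the set $K \coloneqq \{\sigma \otimes \rho : \sigma \in X,\ \rho \in Y\}$ is a continuous image of the compact space $X \times Y$, it is compact, and $X \otimes Y = \overline{\mathrm{co}}(K)$; so Milman's partial converse to the Krein--Milman theorem gives $\partial_e(X \otimes Y) \subseteq K$. I would then check that if $\sigma \otimes \rho \in \partial_e(X \otimes Y)$, then in fact $\sigma \in \partial_e X$ and $\rho \in \partial_e Y$: writing $\sigma = \tfrac12(\sigma_1 + \sigma_2)$ with $\sigma_i \in X$ gives $\sigma \otimes \rho = \tfrac12(\sigma_1 \otimes \rho + \sigma_2 \otimes \rho)$ with both summands in $X \otimes Y$, hence $\sigma_1 \otimes \rho = \sigma_2 \otimes \rho$, and restricting to $\M \otimes 1_{\mathcal N}$ forces $\sigma_1 = \sigma_2$; the argument for $\rho$ is symmetric.

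The second step identifies the relevant GNS von Neumann algebra. For $\sigma \in \partial_e X$ and $\rho \in \partial_e Y$, Proposition~\ref{prop:factorial} (using factoriality of $(\M,X)$ and $(\mathcal N,Y)$) shows $\pi_\sigma(\M)''$ and $\pi_\rho(\mathcal N)''$ are factors. The GNS triple of $\M \otimes \mathcal N$ at the product trace $\sigma \otimes \rho$ is unitarily equivalent to $(\pi_\sigma \otimes \pi_\rho,\ \mathcal H_\sigma \otimes \mathcal H_\rho,\ \xi_\sigma \otimes \xi_\rho)$ — the standard identification of GNS data for product states, which descends to the minimal $C^*$-tensor product appearing in Definition~\ref{def:UTCtensor-products}. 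Therefore, by Tomita's commutation theorem for von Neumann tensor products,
\[
	\pi_{\sigma \otimes \rho}(\M \otimes \mathcal N)'' = \big(\pi_\sigma(\M) \otimes \pi_\rho(\mathcal N)\big)'' = \pi_\sigma(\M)'' \barotimes \pi_\rho(\mathcal N)''.
\]
Since the centre of a von Neumann tensor product is the tensor product of the centres, the right-hand side is a factor; hence $\sigma \otimes \rho \in \partial_e T(\M \otimes \mathcal N)$ by Proposition~\ref{prop:factorial-extreme-point}. Combined with the first step, this gives $\partial_e(X \otimes Y) \subseteq \partial_e T(\M \otimes \mathcal N)$ and finishes the proof.

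The only non-formal ingredients are the two classical von Neumann algebra facts used in the second step — the commutation theorem identifying $\pi_{\sigma \otimes \rho}(\M \otimes \mathcal N)''$ with $\pi_\sigma(\M)'' \barotimes \pi_\rho(\mathcal N)''$, and the fact that $Z(M \barotimes N) = Z(M) \barotimes Z(N)$; everything else is bookkeeping with Choquet faces and the already-established properties of tracial completions and their GNS representations. I expect the only point needing real care to be the GNS identification at $\sigma \otimes \rho$ — in particular, verifying that the product representation is well defined and has a cyclic vector on the minimal tensor product — but no deep classification input (such as Connes' theorem) enters anywhere.
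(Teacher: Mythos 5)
Your proof is correct, but it takes a genuinely different route from the paper's. The paper works with the closed face $F$ generated by $X \otimes Y$ and shows $F \subseteq X \otimes Y$: it takes $\tau_0 \in \partial_e F$, invokes \cite[Proposition~3.5]{BBSTWW} to factorise $\tau_0 = \sigma_0 \otimes \rho_0$ with $\sigma_0 \in T(\M)$ and $\rho_0 \in T(\mathcal N)$, and then runs a separation argument -- Theorem~\ref{thm:exposed}\ref{item:exposed2} together with Proposition~\ref{prop:CP} to realise an affine function vanishing on $X$ by a self-adjoint $a \in \M$, and a Krein--Milman argument showing $\{\tau : \tau(a \otimes 1_{\mathcal N}) = 0\}$ is a closed face containing $X \otimes Y$ -- to conclude $\sigma_0 \in X$ and $\rho_0 \in Y$. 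You instead verify Roy's criterion (Theorem~\ref{thm:choquet-faces}) for $X \otimes Y$ directly: Milman's partial converse to Krein--Milman pins $\partial_e(X \otimes Y)$ inside the product traces, an easy restriction argument shows the tensor factors of such an extreme point are themselves extreme, and the commutation theorem identifies the GNS von Neumann algebra at $\sigma \otimes \rho$ as $\pi_\sigma(\M)'' \barotimes \pi_\rho(\mathcal N)''$, a factor, so that $\sigma \otimes \rho \in \partial_e T(\M \otimes \mathcal N)$ by Proposition~\ref{prop:factorial-extreme-point}. The trade-off is where the nontrivial tensor-product input sits: the paper outsources it to the factorisation of extremal traces on minimal tensor products from \cite{BBSTWW} and keeps the rest purely Choquet-theoretic, whereas you avoid that citation but lean on classical von Neumann tensor product theory (the GNS identification at a product trace and $Z(M \barotimes N) = Z(M) \barotimes Z(N)$); your argument also mirrors more closely the proof of Proposition~\ref{prop:factorial}, which is aesthetically pleasant. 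The one step worth spelling out carefully in your write-up is the joint weak$^*$-continuity of $(\sigma, \rho) \mapsto \sigma \otimes \rho$ (tested on elementary tensors, using norm-boundedness of states), which is what makes $K$ compact and legitimises the appeal to Milman.
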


\begin{proof}
Let $F \subseteq T(\M \otimes \mathcal N)$ be the closed face generated by $X \otimes Y$.  By Proposition~\ref{prop:tracial-completion}\ref{item:completion-factorial}, it is enough to show that $F = X \otimes Y$.  In fact, by the Krein--Milman theorem, it is enough to show $\partial_e F \subseteq X \otimes Y$.  To this end, let $\tau_0 \in \partial_e F$ be given.  Since $F$ is a closed face in $T(\M \otimes \mathcal N)$, we have $\tau_0 \in \partial_e T(\M \otimes \mathcal N)$.  By \cite[Proposition~3.5]{BBSTWW}, there are traces $\sigma_0 \in T(\M)$ and $\rho_0 \in T(\mathcal N)$ such that $\tau_0 = \sigma_0 \otimes \rho_0$.  Now, it is enough to show $\sigma_0 \in X$ and $\rho_0 \in Y$ -- we will only show the former as the latter follows by symmetry.

By Theorem~\ref{thm:exposed}\ref{item:exposed2}, to show $\sigma_0 \in X$, it is enough to show $f(\sigma_0) = 0$ for every continuous affine function $f \colon T(\M) \rightarrow [0, 1]$ with $f|_X = 0$.  Let $f$ be such a function and apply Proposition~\ref{prop:CP} to produce a self-adjoint $a \in \M$ with $\sigma(a) = f(\sigma)$ for all $\sigma \in T(\M)$.  Define
\begin{equation} 
G \coloneqq \{ \tau \in T(\M \otimes \mathcal N) : \tau(a \otimes 1_\mathcal N) = 0 \}. 
\end{equation}
For any $\sigma \in T(\M)$ and $\rho\in T(\mathcal{N})$, we have $(\sigma \otimes \rho)(a \otimes 1_\mathcal N) = f(\sigma) \geq 0$.
By \cite[Proposition~3.5]{BBSTWW} and the Krein--Milman theorem, 
\begin{equation}
    T(\M \otimes \mathcal N) = \overline{\mathrm{co}}\,\big\{\sigma \otimes \rho: \sigma \in T(\M),\ \rho \in T(\mathcal{N})\big\}.
\end{equation}
Hence $\tau(a \otimes 1_\mathcal N) \geq 0$ for all $\tau \in T(\M \otimes \mathcal N)$.  It follows that $G$ is a closed face in $T(\M \otimes \mathcal N)$.  For all $\sigma \in X$ and $\rho\in Y$, 
\begin{equation}
    (\sigma\otimes\rho)(a\otimes 1_{\mathcal N}) = \sigma(a) = f(\sigma) = 0,
\end{equation} 
and hence $X \otimes Y \subseteq G$.  Then, as $G$ is a closed face containing $X \otimes Y$, we have $F \subseteq G$, and so $\tau_0 \in G$. Since $\tau_0 =\sigma_0\otimes\rho_0$, this shows that $f(\sigma_0) = \sigma_0(a) = 0$.
\end{proof}

Finally, we construct sequential inductive limits in the category of tracially complete $C^*$-algebras.

\begin{definition}\label{UTC:Limit}
	Let 
	\begin{equation}\label{eq:inductive}
		\cdots \rightarrow (\M_n,X_n) \xrightarrow{\phi_n^{n+1}} (\M_{n+1},X_{n+1}) \xrightarrow{\phi_{n+1}^{n+2}} (\M_{n+2}, X_{n+2}) \rightarrow \cdots
	\end{equation}
	be a sequential inductive system of tracially complete $C^*$-algebras.
	Form the $C^*$-inductive limit $A \coloneqq \varinjlim (\M_n,\phi_n^{n+1})$ and let $\hat\phi_n^\infty\colon\M_n \rightarrow A$ be the canonical unital $^*$-homomorphism. The inductive system \eqref{eq:inductive} induces a projective system
\begin{equation}
	\cdots \longleftarrow X_n \xleftarrow{(\phi_n^{n+1})^*} X_{n+1} \xleftarrow{(\phi_{n+1}^{n+2})^*} X_{n+2} \longleftarrow \cdots 
\end{equation}
of compact convex sets.  Set
	\begin{equation}\label{eq:ind-lim-traces}
		X \coloneqq \{ \tau \in T(A) : \tau \circ \hat\phi_n^\infty \in X_n \text{ for all } n \geq 1 \} \subseteq T(A)
	\end{equation}
	and note that $X$ is a compact convex subset of $T(A)$.  Further, the maps $(\hat\phi_n^\infty)^* \colon X \rightarrow X_n$ induce an affine homeomorphism
\begin{equation}\label{eq:proj-lim-traces}
	 X\stackrel{\cong}{\longrightarrow}\varprojlim \big(X_n, (\phi_n^{n+1})^*\big)
\end{equation}
(and in particular, $X$ is non-empty whenever all of the $X_n$ are non-empty; see \cite[Theorem~VIII.3.5]{ES52}, for example). Define $\M\coloneqq\completion{A}{X}$ and
	\begin{equation} \varinjlim \big((\M_n,X_n),\phi_n^{n+1}\big) \coloneqq (\M,X)\end{equation}
	Finally, write $\phi_n^\infty \colon \M_n \rightarrow \M$ for the unital $^*$-homomorphism obtained by composing $\hat\phi_n^\infty$ with the canonical map $\alpha_X \colon A \to \M$.  
\end{definition}

We now show the definition of inductive limit in Definition~\ref{UTC:Limit} satisfies the required universal property and hence is an inductive limit in the category of tracially complete $C^*$-algebras.
\begin{proposition}
	\label{prop:UTCInductiveLimit}
	Let $\big((\M_n,X_n)\big)_{n=1}^\infty$ be a sequence of tracially complete $C^*$-algebras and $\big(\phi_n^{n+1}\colon (\M_n, X_n) \to (\M_{n+1}, X_{n+1})\big)_{n=1}^\infty$ be a sequence of morphisms. Set
	\begin{equation} (\M,X) \coloneqq \varinjlim \big((\M_n,X_n),\phi_n^{n+1}\big) \end{equation}
	as in Definition~\ref{UTC:Limit} and write $\phi_n^\infty \colon \M_n \rightarrow \M$ for the canonical $^*$-homo\-mor\-phisms as above.
	Then $(\M,X)$ is a tracially complete $C^*$-algebra, $\phi_n^\infty$ is a morphism for $n \geq 1$, and together, they define the inductive limit of $\big((\M_n,X_n),\phi_n^{n+1}\big)_{n=1}^\infty$ in the category of tracially complete $C^*$-algebras.
	If each $X_n$ is metrisable, then so is $X$.  Further, if each $(\M_n, X_n)$ is factorial, then so is $(\M, X)$.
\end{proposition}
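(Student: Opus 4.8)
The plan is to reduce the whole proposition to Proposition~\ref{prop:tracial-completion} and Corollary~\ref{cor:dense-subalgebra}, together with the universal property of the $C^*$-algebraic inductive limit $A=\varinjlim(\M_n,\phi_n^{n+1})$ --- the genuine analytic content is already packaged in those results. Throughout the argument I would identify the set $\widetilde X$ of induced traces on $\completion{A}{X}$ with $X$ via the affine homeomorphism of Proposition~\ref{prop:tracial-completion}\ref{item:completion-trace}.

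First I would record the routine parts. Since each $\phi_n^{n+1}$ is unital, $A$ is a unital $C^*$-algebra and each $\hat\phi_n^\infty\colon\M_n\to A$ is unital; with $X\subseteq T(A)$ the compact convex set of Definition~\ref{UTC:Limit}, Proposition~\ref{prop:tracial-completion}\ref{item:completion-complete} immediately gives that $(\M,X)=(\completion{A}{X},\widetilde X)$ is a tracially complete $C^*$-algebra. For $\tau\in X$, the trace $\tau\circ\alpha_X$ lies in $X$ as a trace on $A$, so by the definition \eqref{eq:ind-lim-traces} of $X$ one has $\tau\circ\phi_n^\infty=(\tau\circ\alpha_X)\circ\hat\phi_n^\infty\in X_n$; hence every $\phi_n^\infty$ is a morphism. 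Metrisability of $X$ when every $X_n$ is metrisable is the standard fact that a countable projective limit of compact metrisable spaces is compact metrisable, using the identification \eqref{eq:proj-lim-traces}.

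Next I would check the universal property. Given a tracially complete $C^*$-algebra $(\mathcal N,Y)$ and a compatible family of morphisms $\psi_n\colon(\M_n,X_n)\to(\mathcal N,Y)$, the universal property of the $C^*$-inductive limit produces a unique $^*$-homomorphism $\hat\psi\colon A\to\mathcal N$ with $\hat\psi\circ\hat\phi_n^\infty=\psi_n$ for all $n$. For $\tau\in Y$ and each $n$ we have $(\tau\circ\hat\psi)\circ\hat\phi_n^\infty=\tau\circ\psi_n\in X_n$, so $\tau\circ\hat\psi\in X$ by \eqref{eq:ind-lim-traces}; thus $\hat\psi^*(Y)\subseteq X$. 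By Corollary~\ref{cor:dense-subalgebra}\ref{item:dense-subalg3} (with the $\|\cdot\|_{2,X}$-dense subalgebra $\alpha_X(A)\subseteq\M$), $\hat\psi$ extends uniquely to a morphism $\psi\colon(\M,X)\to(\mathcal N,Y)$ with $\psi\circ\alpha_X=\hat\psi$, whence $\psi\circ\phi_n^\infty=\psi_n$ for all $n$. Any morphism with this last property agrees with $\psi$ on $\bigcup_n\phi_n^\infty(\M_n)$, which is $\|\cdot\|_{2,X}$-dense in $\M$ (it is the $\alpha_X$-image of the norm-dense subalgebra $\bigcup_n\hat\phi_n^\infty(\M_n)\subseteq A$), and morphisms are $\|\cdot\|_{2,X}$-$\|\cdot\|_{2,Y}$-contractive, so $\psi$ is unique.

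Finally, for factoriality: assuming each $X_n$ is a closed face of $T(\M_n)$, by Proposition~\ref{prop:tracial-completion}\ref{item:completion-factorial} it suffices to prove that $X$ is a closed face of $T(A)$, and since $X$ is compact it is automatically closed. Suppose $\tau=t\tau_1+(1-t)\tau_2$ with $\tau_1,\tau_2\in T(A)$, $t\in(0,1)$, and $\tau\in X$. For each $n$, the functionals $\tau_1\circ\hat\phi_n^\infty$ and $\tau_2\circ\hat\phi_n^\infty$ are traces on $\M_n$ (here using that $\hat\phi_n^\infty$ is unital), and
\[
\tau\circ\hat\phi_n^\infty=t(\tau_1\circ\hat\phi_n^\infty)+(1-t)(\tau_2\circ\hat\phi_n^\infty)\in X_n .
\]
Since $X_n$ is a face in $T(\M_n)$, this forces $\tau_i\circ\hat\phi_n^\infty\in X_n$ for $i=1,2$; as $n$ was arbitrary, \eqref{eq:ind-lim-traces} gives $\tau_1,\tau_2\in X$, so $X$ is a face. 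I do not expect a genuine obstacle: the entire proof is bookkeeping among the three incarnations of the trace space --- the set $X$ on $A$, the set $\widetilde X$ on $\M$, and $\varprojlim X_n$ --- with all the analysis delegated to the cited results. The only point needing care is the last one, namely that a trace lying in the face $X\subseteq T(A)$ pulls back along the unital map $\hat\phi_n^\infty$ to a point of the face $X_n\subseteq T(\M_n)$, which is precisely what makes the face condition propagate through the limit.
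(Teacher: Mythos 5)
Your proposal is correct and follows essentially the same route as the paper's proof: reduce everything to the properties of tracial completions (Proposition~\ref{prop:tracial-completion}), verify the trace conditions directly from \eqref{eq:ind-lim-traces}, and obtain the universal property by extending the $C^*$-level map via continuity (the paper invokes Proposition~\ref{prop:extend-by-continuity} directly where you route through Corollary~\ref{cor:dense-subalgebra}\ref{item:dense-subalg3}, which is the same mechanism). Your explicit verification that $X$ is a face of $T(A)$ is exactly the argument the paper leaves implicit, including the necessary observation that the $\hat\phi_n^\infty$ are unital so that the $\tau_i\circ\hat\phi_n^\infty$ are genuine traces.
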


\begin{proof}
	Define $A$, $X$, and $\hat\phi_n^\infty$ as in Definition~\ref{UTC:Limit}, so that $(\M, X)=(\completion{A}{X}, X)$. By Proposition~\ref{prop:tracial-completion}\ref{item:completion-complete}, $(\M,X)$ is a tracially complete $C^*$-algebra.  By construction, $(\hat\phi_n^\infty)^*(X) \subseteq X_n$, so $(\phi_n^\infty)^*(X)\subseteq X_n$ and each $\phi_n^\infty$ is a morphism.  As sequential projective limits of metrisable spaces are metrisable, $X$ is metrisable if each $X_n$ is metrisable.  If each $(\M_n, X_n)$ is factorial, then $X_n$ is a face in $T(\M_n)$ for all $n \geq 1$.  It follows from \eqref{eq:ind-lim-traces} that $X$ is a face in $T(A)$. Proposition~\ref{prop:tracial-completion}\ref{item:completion-factorial} then implies that $(\M, X)$ is factorial.
	
	To show that $(\M,X)$ is the inductive limit of $\big((\M_n,X_n),\phi_n^{n+1}\big)_{n=1}^\infty$, suppose we have a tracially complete $C^*$-algebra $(\mathcal N,Y)$ and morphisms 
    \begin{equation}
        \psi_n \colon(\M_n,X_n) \to (\mathcal{N},Y), \qquad n \in \mathbb{N},
    \end{equation}
    such that $\psi_{n+1} \circ \phi_n^{n+1} = \psi_n$.  Then, working in the category of unital $C^*$-algebras, we obtain a unique unital $^*$-homomorphism $\hat\psi \colon A \to \mathcal{N}$ such that $\psi_n = \hat{\psi} \circ \hat\phi_n^\infty$ for each $n\in\mathbb N$.  We have
	\begin{equation} \tau \circ \hat\psi \circ \hat\phi_n^\infty = \tau \circ \psi_n \in X_n, \quad \tau \in Y,\ n \geq 1,
    \end{equation}
	so by \eqref{eq:ind-lim-traces}, $\tau \circ \hat\psi \in X$.  This shows $\hat\psi^*(Y) \subseteq X$.  Accordingly, 
	there is a unique morphism $\psi\colon (\M,X)\rightarrow(\mathcal N,Y)$ with $\hat\psi=\psi\circ \alpha_X$ by Proposition~\ref{prop:extend-by-continuity}.
	Moreover
	\begin{equation}
        \psi_n=\hat{\psi}\circ\hat{\phi}_n^\infty=\psi\circ\alpha_X \circ\hat\phi_n^\infty=\psi\circ\phi_n^\infty,
	\end{equation}
	as required.  Uniqueness of $\psi$ follows from uniqueness in Proposition~\ref{prop:extend-by-continuity} as any morphism $\psi\colon (\M,X) \to (\mathcal{N},Y)$ with $\psi\circ\phi_n = \psi_n$ for all $n$ satisfies $\psi\circ\alpha_X = \hat\psi$.
\end{proof}

For each metrisable Choquet simplex $X$, we now use an inductive limit construction to produce the concrete model $(\mathcal R_X, X)$ of a tracially complete $C^*$-algebra covered by Theorem~\ref{InformalClassification} as discussed in the overview of results.  This is achieved by mimicking construction of a simple AF algebra $A$ for which $T(A)=X$, found in~\cite{Bl80,Go77}, and can be deduced from these results by considering the tracial completion of such an AF algebras.  The details are slightly easier in the tracially complete setting since $K_0(\mathcal{R}) \cong \mathbb{R}$ and one need not worry about simplicity.

\begin{example}\label{Ex:ConcreteModels}
	Let $X$ be a metrisable Choquet simplex. By~\cite[Theorem~11.6]{Go86}, we can write $X$ as the inverse limit of a system of finite dimensional simplices
	\begin{equation} X_1 \stackrel{\alpha_2^{1}}\longleftarrow X_2 \stackrel{\alpha_3^{2}}\longleftarrow \cdots, \end{equation}
	where the connecting maps are continuous and affine. We will construct an inductive system of tracially complete $C^*$-algebras realising this data.
	
	Set $\M_n \coloneqq \mathcal R^{\oplus \, \partial_e X_n}$,
	where $\mathcal R$ is the hyperfinite II$_1$ factor. Note that $T(\M_n)$ can be canonically identified with $X_n$ and $\big(\M_n,T(\M_n)\big)$ is tracially complete and factorial.  Next, we can choose a $^*$-homomorphism $\phi_n^{n+1}\colon \M_n \to \M_{n+1}$ that induces the map $\alpha_{n+1}^{n}$.  To do this explicitly, write $\partial_e X_{n+1} = \{x_1,\dots,x_k\}$ and $\partial_e X_n = \{y_1,\dots,y_l\}$.
	For each $i=1,\dots,k$, write $\alpha_{n+1}^{n}(x_i)$ as the convex combination $\sum_{j=1}^l t_{i,j} y_j$.
	Fixing $i$, find a partition of unity $p_{i,1},\dots,p_{i,l} \in \mathcal R$ of projections such that $\tau(p_{i,j}) = t_{i,j}$ for each $j$.
	For each $i$ and $j$, choose any unital $^*$-homomorphism $\psi_{i,j}\colon \mathcal R \to p_{i,j}\mathcal R p_{i,j}$ (which exists since $\mathcal R$ has full fundamental group, a result which goes back to Murray and von Neumann in \cite{MvN43}).
	Then define $\phi_n^{n+1}\colon \mathcal{R}^{\oplus l} \to \mathcal{R}^{\oplus k}$ by
	\begin{equation} \phi_n^{n+1}(a_1\oplus \cdots \oplus a_l) \coloneqq \Big(\sum_{j=1}^l \psi_{1,j}(a_j),\dots,\sum_{j=1}^l \psi_{k,j}(a_j)\Big), 
	\end{equation}
	and note that $(\phi_n^{n+1})^* = \alpha_{n+1}^n$.
	  
	Define $(\R_X, X')\coloneqq \varinjlim \big((\M_n,X_n),\phi_n^{n+1}\big)$.  As $(\phi_n^{n+1})^* = \alpha_{n+1}^n$, \eqref{eq:proj-lim-traces} provides an isomorphism $X \rightarrow X'$, so after identifying $X$ with $X'$ via this isomorphism, $(\R_X, X)$ is the desired factorial tracially complete $C^*$-algebra.
\end{example}

A priori, the construction outlined above depends not only on the Choquet simplex $X$ but also on the choice of inverse limit and the choices made when defining connecting maps. However, it will follow from Theorem~\ref{thm:classification} that the tracially complete $C^*$-algebra $(\R_X,X)$ depends only on $X$.  Moreover, as $X$ varies over all metrisable Choquet simplices, these will provide models for the classifiable tracially complete $C^*$-algebras (see Theorem \ref{thm:hyperfinite}, which contains Theorem \ref{InformalClassification}).

\begin{remark}
    The infinite tensor product $\bigotimes_{n=1}^\infty (\M_n,X_n)$ of a countable family $(\M_n,X_n)$ of tracially complete $C^*$-algebras can now be constructed as the inductive limit of the finite tensor products $\bigotimes_{n=1}^N(\M_n,X_n)$ with the obvious connecting maps.
\end{remark}

\subsection{\texorpdfstring{$W^*$}{W*}-bundles}\label{sec:WStarBundles}

In Proposition~\ref{prop:WStarBundleToTC}, we showed that Ozawa's $W^*$-bundles can be viewed as tracially complete $C^*$-algebras.  Here we show that, by a reformulation of a theorem due to Ozawa (\cite{Oz13}), any factorial tracially complete $C^*$-algebra $(\mathcal M, X)$ with $X$ a Bauer simplex can be given the structure of a $W^*$-bundle whose fibres are factors.  

Recall from Section~\ref{sec:simplices} that if $X$ is a Choquet simplex then for every $\tau \in X$ there is a \emph{unique} determined Radon probability measure $\mu_X$ on $X$ supported on $\partial X$ with barycentre $\tau$. As discussed in Section~\ref{sec:simplices}, the meaning of supported on $\partial X$ is slightly subtle in general as $\partial X$ may not be a Borel set; however, when $X$ is a Bauer simplex, this is not an issue and there is an affine homeomorphism
\begin{equation}
\label{eq:mu_tau-def}
	X \overset\cong\longrightarrow \mathrm{Prob}(\partial_e X) 
\end{equation}
given by $\tau \mapsto \mu_\tau$ where $\mu_\tau$ is the Choquet measure of $\tau$ restricted to $\partial_e X$.

\begin{theorem}[{cf.\ \cite[Theorem 3]{Oz13}}]\label{thm:TCtoWStarBundle}
	If $(\mathcal M, X)$ is a factorial tracially complete $C^*$-algebra such that $X$ is a Bauer simplex with $K \coloneqq \partial_e X$, then there is a unique embedding $C(K) \subseteq Z(\mathcal M)$ such that
	\begin{equation}\label{eq:central-embedding}
		\tau(fa) = \int_K f(\sigma)\sigma(a) \, \mathrm{d} \mu_\tau(\sigma), \qquad f \in C(K),\ a \in \mathcal M,\ \tau \in X.
	\end{equation}
 	 Further, the map $E \colon \mathcal M \rightarrow C(K)$ given by $E(a)(\tau) \coloneqq \tau(a)$ is a conditional expectation endowing $\mathcal M$ with the structure of a $W^*$-bundle.
\end{theorem}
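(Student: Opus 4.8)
The plan is to construct the central copy of $C(K)$ inside $Z(\mathcal M)$ first, and then read off the conditional expectation and the $W^*$-bundle axioms by a routine verification. I will use repeatedly that, since $X$ is a Bauer simplex, restriction to $K=\partial_e X$ is an order-isomorphism $\mathrm{Aff}(X)\cong C(K)$ with inverse $g\mapsto\hat g$, $\hat g(\tau)=\int_K g\,\mathrm{d}\mu_\tau$; that $X=\overline{\mathrm{co}}(K)$, so $\|a\|_{2,X}=\|a\|_{2,K}$ for $a\in\mathcal M$ by Proposition~\ref{prop:co-2-norm}; and that $\tau(a)=\int_K\sigma(a)\,\mathrm{d}\mu_\tau(\sigma)$ for $\tau\in X$ and $a\in\mathcal M$, since $\sigma\mapsto\sigma(a)$ is continuous and affine and $\tau$ is the barycentre of $\mu_\tau$. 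Finally, since $X$ is a closed face of the Choquet simplex $T(\mathcal M)$ (factoriality) and $K=\partial_e X$, Theorem~\ref{thm:choquet-faces} gives $K\subseteq\partial_e T(\mathcal M)$, so every fibre $\pi_\sigma(\mathcal M)''$ with $\sigma\in K$ is a factor by Proposition~\ref{prop:factorial-extreme-point}.

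The heart of the matter is to construct, for each self-adjoint $f\in C(K)$, an element $\iota(f)\in\mathcal M_{\mathrm{sa}}$ satisfying $\tau(\iota(f)a)=\int_K f(\sigma)\sigma(a)\,\mathrm{d}\mu_\tau(\sigma)$ for all $a\in\mathcal M$ and $\tau\in X$; concretely this says $\pi_\sigma(\iota(f))=f(\sigma)1$ in each fibre $\sigma\in K$. Granting existence, $\iota(f)$ is unique by faithfulness of $\|\cdot\|_{2,X}$ (if $z,z'$ both work then $\tau((z-z')a)=0$ for all $\tau,a$; take $a=(z-z')^*$ and the supremum over $\tau$), hence so is the eventual embedding; and $\iota$ is automatically linear, central (compute $\tau(\iota(f)ab)=\tau(b\iota(f)a)$ from the trace property inside the integral, then kill $\iota(f)a-a\iota(f)$ in $\|\cdot\|_{2,X}$), contractive with $\|\iota(f)\|=\|f\|_\infty$ (lower bound from the fibres, upper bound from lower semicontinuity of $\|\cdot\|$, Proposition~\ref{prop:ball-closed}), and multiplicative, $\iota(f)\iota(g)=\iota(fg)$ (both sides obey the defining equation for $fg$, so agree by uniqueness). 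Complexifying yields a unital embedding $\iota\colon C(K)\hookrightarrow Z(\mathcal M)$. The existence of $\iota(f)$ is exactly the content of the reformulation of Ozawa's theorem \cite[Theorem~3]{Oz13}: fixing $\epsilon>0$, extend $\hat f$ to $\mathrm{Aff}(T(\mathcal M))$ of the same norm (Theorem~\ref{thm:extending-affine}), use Proposition~\ref{prop:CP} to get $a\in\mathcal M_{\mathrm{sa}}$ with $\|a\|\le\|f\|+\epsilon$ and $\sigma(a)=f(\sigma)$ for all $\sigma\in X$, and then run a Dixmier-averaging argument over the compact base $K$ — the fibrewise convex averages pushing $\pi_\sigma(a)$ toward $f(\sigma)1$ work on a neighbourhood of each point, and a partition of unity subordinate to a finite subcover of $K$ patches them — to replace $a$ by an element within $\epsilon$ of $f(\sigma)1$ in every $\|\cdot\|_{2,\sigma}$; a $\|\cdot\|_{2,X}$-Cauchy sequence of such elements converges, by completeness of the unit ball of $\mathcal M$, to $\iota(f)$. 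I expect this construction to be the main obstacle: it is a genuine local-to-global transfer problem, and compactness of $K$ (i.e.\ that $X$ is Bauer, not merely a Choquet simplex) is used in an essential way — outside the Bauer case the centre of $\mathcal M$ may be trivial, so nothing like $\iota$ can exist.

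With $\iota$ in hand the rest is routine. Define $E\colon\mathcal M\to C(K)$ by letting $E(a)$ be the function $\sigma\mapsto\sigma(a)$ (a continuous affine function on $X$, restricted to $K$). Then $E$ is unital, positive, hence contractive, and tracial since $E(ab)(\sigma)=\sigma(ab)=\sigma(ba)=E(ba)(\sigma)$; and, identifying $C(K)$ with $\iota(C(K))\subseteq Z(\mathcal M)$, we have $E\circ\iota=\mathrm{id}_{C(K)}$ because $\sigma(\iota(f))=f(\sigma)$ for $\sigma\in K$. Thus $E$ is a norm-one projection of $\mathcal M$ onto the $C^*$-subalgebra $C(K)$, hence a conditional expectation by Tomiyama's theorem; $C(K)$-bimodularity of $E$ together with the defining equation for $\iota$ gives the displayed identity $\tau(fa)=\int_K f(\sigma)\sigma(a)\,\mathrm{d}\mu_\tau(\sigma)$ for all $f\in C(K)$. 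Finally $\|a\|_{2,E}^2=\|E(a^*a)\|=\sup_{\tau\in X}\tau(a^*a)=\|a\|_{2,X}^2$, so the unit ball of $\mathcal M$ is $\|\cdot\|_{2,E}$-complete because $(\mathcal M,X)$ is tracially complete; with traciality of $E$ this verifies the two axioms of Definition~\ref{defn:W*bundle}. Hence $(\mathcal M,C(K),E)$ is a $W^*$-bundle over $K$, with fibre at $x\in K$ equal to $\pi_{\tau_x}(\mathcal M)''$ for $\tau_x=\mathrm{ev}_x\circ E=x$, which is a factor as observed above; and uniqueness of the central embedding subject to the displayed identity is precisely the uniqueness of $\iota$.
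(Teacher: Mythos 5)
Your overall architecture is right, and the routine verifications (uniqueness of $\iota(f)$, linearity, centrality, multiplicativity, the Tomiyama argument for $E$, and $\|\cdot\|_{2,E}=\|\cdot\|_{2,X}$) all check out. You also correctly identify where the real difficulty lies. But the existence step as you describe it has a genuine gap: the patching is circular. After fibrewise Dixmier averaging you have, for each $\sigma\in K$, a convex combination $b_\sigma$ of unitary conjugates of $a$ with $\|b_\sigma-f(\sigma)1\|_{2,\sigma}$ small, and by continuity this persists on a neighbourhood $U_\sigma\subseteq K$; so far so good. But to patch a finite subcover you propose $\sum_j g_j b_j$ with $(g_j)$ a partition of unity subordinate to $(U_j)$ --- and the $g_j$ must act as (approximately central, positive) elements of $\mathcal M$ realising functions on $K$, which is exactly the embedding $C(K)\hookrightarrow Z(\mathcal M)$ you are trying to construct. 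A plain convex combination $\sum_j t_j b_j$ does not substitute for this: at a point $\sigma$ covered only by $U_1$, the terms $b_j$ for $j\neq 1$ may be far from $f(\sigma)1$ in $\|\cdot\|_{2,\sigma}$, and the estimate $\|\sum_j t_j b_j - f(\sigma)1\|_{2,\sigma}^2\leq\sum_j t_j\|b_j-f(\sigma)1\|_{2,\sigma}^2$ then gives nothing.

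The paper closes this gap differently. First (Lemma~\ref{lem:oz-local}) it computes the centre of \emph{every} fibre $\pi_\tau(\mathcal M)''$ for $\tau\in X$, not just the extreme ones, identifying $Z(\pi_\tau(\mathcal M)'')\cong L^\infty(X,\mu_\tau)$ via the non-commutative Radon--Nikodym theorem (Theorem~\ref{thm:radon-nikodym}); this is where factoriality and the uniqueness of representing measures on the simplex are used. These local maps are then glued into a single $^*$-homomorphism $\theta\colon B(X)\to Z(\pi_X(\mathcal M)'')$ (Lemma~\ref{lem:oz-global}), so the target element $\theta(f)$ exists \emph{globally} in the enveloping von Neumann algebra before any approximation is attempted. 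Kaplansky density then yields a single net $(a_\lambda)\subseteq\mathcal M$ with $\pi_X(a_\lambda)\to\theta(f)$ strongly, so the continuous affine functions $h_\lambda(\tau)=\|\pi_X(a_\lambda)-\theta(f)\|_{2,\bar\tau}^2$ tend to $0$ pointwise at \emph{every} $\tau\in X$ simultaneously; by Proposition~\ref{prop:pointwise-to-uniform} they tend to $0$ weakly in $\mathrm{Aff}(X)$, and a Hahn--Banach/Mazur convex combination (the mechanism of Lemma~\ref{lem:affine-selection}) makes the error uniformly small over $X$. In other words, the local-to-global transfer is done by convexity in $\mathrm{Aff}(X)$ against a globally defined target, not by a topological partition of unity over $K$. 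If you want to keep the Dixmier-averaging flavour, you would have to average in every fibre $\pi_\tau(\mathcal M)''$ ($\tau\in X$ arbitrary) toward the centre-valued trace of $\pi_\tau(a)$ and identify that central element with $f$ via the Radon--Nikodym computation --- which is essentially reconstructing Lemma~\ref{lem:oz-local} --- before the convexity argument can be applied.
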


Ozawa's version of Theorem~\ref{thm:TCtoWStarBundle} has an additional assumption that $X$ is metrisable.\footnote{Ozawa's work predates the formalism of tracially complete $C^*$-algebras. In \cite[Theorem 3]{Oz13}, take $A\coloneqq \mathcal{M}$ and $S\coloneqq X$ so that $\mathcal M$ coincides with the strict closure of $A$ in \cite[Theorem 3]{Oz13}. The condition that $S$ is a Bauer simplex ensures that $\theta(C(\partial_e S))$ lands in the strict closure and so in $\mathcal Z(\mathcal M)$ (as explained in the paragraphs before \cite[Theorem 3]{Oz13}) recapturing the statement given here for metrisable $X$. The factorality hypothesis of Theorem \ref{thm:TCtoWStarBundle} appears in Ozawa's work by requiring $S$ to be a closed face of $T(A)$.}  With some minor technical variations of Ozawa's proof, the result extends to the non-metrisable case.  For the convenience of the reader, we devote most of the rest of this section to giving a complete proof and include some of the details omitted in \cite{Oz13}.   Some of the preliminary lemmas will be stated in greater generality than Theorem~\ref{thm:TCtoWStarBundle} as they require no more work and might be useful in other situations. However we emphasise that the proof is essentially that of \cite{Oz13}.

Until the end of the proof of Theorem~\ref{thm:TCtoWStarBundle} we assume $\mathcal M$ is a $C^*$-algebra and $X \subseteq T(\mathcal M)$ is a compact face -- note that this holds when $(\mathcal M, X)$ is a factorial tracially complete $C^*$-algebra.  

For $\tau \in X$, let $\pi_\tau \colon \mathcal M \rightarrow \mathcal B(\mathcal H_\tau)$ denote the GNS representation, with canonical cyclic vector $\xi_\tau \in \mathcal H_\tau$.  Let $\hat \tau$ denote the faithful normal trace on $\pi_\tau(\mathcal M)''$ given by 
\begin{equation}
\label{eq:hattau-def}
\hat \tau(a) \coloneqq \langle a \xi_\tau, \xi_\tau \rangle,\quad a\in \pi_\tau(\mathcal M)'',
\end{equation}
so $\hat \tau \circ \pi_\tau = \tau$.  Our first goal is to compute the centre of $\pi_\tau(\mathcal M)''$.

\begin{lemma}[{cf.\ \cite[Lemma~10]{Oz13}}]\label{lem:oz-local}
    Suppose $\mathcal M$ is a $C^*$-algebra, $X \subseteq T(\mathcal M)$ is a compact face, and $\tau \in X$. Let $\mu_\tau \in \mathrm{Prob}(X)$ be the Choquet measure for $\tau$ supported on $\partial X$, and let $\hat \tau$ denote the faithful normal trace on $\pi_\tau(\mathcal M)''$ as in \eqref{eq:hattau-def}.  Then there is a normal isomorphism
    $\hat\theta_\tau\colon L^\infty(X, \mu_\tau) \rightarrow Z(\pi_\tau(\M)'')$ such that
    \begin{equation}\label{oz-1}
        \hat\tau\big(\hat\theta_\tau(f)\pi_\tau(a)\big) = \int_{X} f(\sigma)\sigma(a) \,{\rm d}\mu_\tau(\sigma), \quad a \in \mathcal M,\ f\in L^\infty(X,\mu_\tau).
    \end{equation}
\end{lemma}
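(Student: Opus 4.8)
The plan is to follow the strategy of Ozawa's proof of \cite[Lemma~10]{Oz13}: construct the map from the Radon--Nikodym theorem and then identify it with the central decomposition of $\pi_\tau(\M)''$. First I would use the barycentre identity $\tau = \int_X \sigma \, {\rm d}\mu_\tau(\sigma)$ to define, for each $f \in C(X)_+$, a positive tracial functional $\tau_f$ on $\M$ by $\tau_f(a) \coloneqq \int_X f(\sigma)\sigma(a)\,{\rm d}\mu_\tau(\sigma)$, noting $\tau_f \leq \|f\|_\infty \tau$. Since $\tau$ is the restriction to $\M$ of the faithful normal trace $\hat\tau$ on $\pi_\tau(\M)''$, the functional $\tau_f$ extends uniquely to a normal positive tracial functional on $\pi_\tau(\M)''$ dominated by $\|f\|_\infty\hat\tau$, and Theorem~\ref{thm:radon-nikodym} then yields $\hat\theta_\tau(f) \in Z(\pi_\tau(\M)'')_+$ with $0 \leq \hat\theta_\tau(f) \leq \|f\|_\infty 1$ and $\hat\tau(\hat\theta_\tau(f)x) = \tau_f(x)$ for all $x \in \pi_\tau(\M)''$. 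Uniqueness in the Radon--Nikodym theorem makes $f \mapsto \hat\theta_\tau(f)$ additive and positively homogeneous on $C(X)_+$, so it extends to a unital positive linear map $C(X) \to Z(\pi_\tau(\M)'')$; a routine monotone-class argument using normality of $\hat\tau$ then extends it to a normal unital positive map $\hat\theta_\tau \colon L^\infty(X,\mu_\tau) \to Z(\pi_\tau(\M)'')$, well defined on $\mu_\tau$-classes since $f = 0$ a.e.\ forces $\tau_f = 0$. In every case \eqref{oz-1} holds by construction.

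The real work is to show $\hat\theta_\tau$ is a normal $^*$-isomorphism onto $Z(\pi_\tau(\M)'')$. For injectivity, if $\hat\theta_\tau(f) = 0$ with $f \geq 0$ then $\int_X f(\sigma)\sigma(a)\,{\rm d}\mu_\tau(\sigma) = 0$ for all $a \in \M$; the functions $\sigma \mapsto \sigma(a)$ with $a \in \M_{\mathrm{sa}}$ are dense in $\mathrm{Aff}(X)$ (Proposition~\ref{prop:CP} combined with Theorem~\ref{thm:extending-affine}), so the measure $f\,{\rm d}\mu_\tau$ annihilates $\mathrm{Aff}(X)$; as it is supported on $\partial_e X$ and $X$ is a Choquet simplex, uniqueness of the maximal representing measure forces $f\,{\rm d}\mu_\tau = 0$. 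For multiplicativity it is enough to prove $\hat\theta_\tau(\chi_E)$ is a projection for every Baire set $E \subseteq X$, since then $\hat\theta_\tau$ is multiplicative on simple functions, hence on all of $L^\infty(X,\mu_\tau)$ by normality. Here I would invoke that $\mu_\tau$ is the central measure of $\tau$: its canonical extension to $T(\M)$ is supported on $\partial_e X \subseteq \partial_e T(\M)$ (Theorem~\ref{thm:choquet-faces}), hence is the unique maximal measure representing $\tau$ on the simplex $T(\M)$, which by Proposition~\ref{prop:factorial-extreme-point} is exactly the decomposition of $\tau$ into tracial factor states. In particular $\mu_\tau$ is orthogonal, so writing $p = \hat\theta_\tau(\chi_E)$ and $q = \hat\theta_\tau(\chi_{X \setminus E})$ (central, with $p + q = 1$ and $0 \leq p,q \leq 1$) the support projections satisfy $s(p)s(q) = 0$; together with $s(p) \vee s(q) \geq s(p+q) = 1$ this gives $s(p) + s(q) = 1$, so $1 = p + q \leq s(p) + s(q) = 1$ forces $p = s(p)$, and hence $p$ is a projection. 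Finally, since $\mu_\tau$ is the central measure, the central decomposition of $\pi_\tau(\M)''$ is realised over $(X,\mu_\tau)$, which gives surjectivity of $\hat\theta_\tau$ onto $Z(\pi_\tau(\M)'')$; alternatively one checks directly that every positive $z$ in the centre produces a tracial functional $\hat\tau(z\,\cdot\,)|_\M$ dominated by a multiple of $\tau$, whose Choquet measure on $X$ is (again by centrality of $\mu_\tau$) of the form $f\,{\rm d}\mu_\tau$, so $z = \hat\theta_\tau(f)$.

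The main obstacle is precisely the input that $\mu_\tau$ coincides with the central measure of $\tau$ together with the attendant reduction theory, which is where Ozawa's metrisability hypothesis on $X$ is used (so that $\partial_e X$ is Borel and classical direct integral theory applies cleanly). In the general case I would avoid reduction theory altogether, relying instead on the hands-on Radon--Nikodym construction above and on the Choquet-theoretic fact that a maximal measure on a Choquet simplex is orthogonal; the remaining points (the $L^\infty$-extension, density of the $\sigma \mapsto \sigma(a)$ in $\mathrm{Aff}(X)$, normality of $\hat\theta_\tau$) are routine. This is the sense in which the argument needs only minor technical variations of Ozawa's.
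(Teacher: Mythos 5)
Your construction of $\hat\theta_\tau$ via the non-commutative Radon--Nikodym theorem, and your injectivity argument via uniqueness of boundary representing measures on the simplex $X$, match the paper's proof closely (the paper defines $\hat\theta_\tau$ directly on $L^\infty(X,\mu_\tau)_+$ rather than passing through $C(X)$ and a monotone-class extension, but this is cosmetic; note only that for injectivity the real content is the case of signed $f$, where one compares the two maximal measures $\alpha^{-1}f_\pm\mu_\tau$ representing the same trace). Where you genuinely diverge is multiplicativity and surjectivity. The paper never proves multiplicativity directly: it observes at the outset that it suffices to produce a \emph{unital positive linear bijection with positive inverse}, since a positive map between commutative $C^*$-algebras is completely positive and a complete order isomorphism is automatically multiplicative (Schwarz inequality), with normality then automatic for a von Neumann algebra isomorphism. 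Surjectivity with positive inverse is obtained exactly as in your ``alternative'' sketch -- $\tau_z \coloneqq \hat\tau(z\pi_\tau(\,\cdot\,))/\hat\tau(z)$ lies in $X$ because $X$ is a face, and its boundary measure is $f\mu_\tau$ by the classical Radon--Nikodym theorem -- but no centrality of $\mu_\tau$ is invoked there; only the face property and simplex uniqueness are used. This buys a proof that is entirely elementary Choquet theory plus the order-isomorphism trick, with no reduction theory anywhere.

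By contrast, your route to multiplicativity hinges on $\hat\theta_\tau(\chi_E)$ being a projection, which you derive from orthogonality of $\mu_\tau$, justified by identifying $\mu_\tau$ with the central measure of $\tau$. That identification is not ``the Choquet-theoretic fact that a maximal measure on a Choquet simplex is orthogonal'' -- orthogonality is not a Choquet-theoretic notion, and maximal measures on state-space simplices need not be orthogonal in general; what you actually need is Sakai's theorem that for a trace the unique maximal measure on $T(\mathcal M)$ is the central (hence orthogonal) measure. That input is available, and your support-projection argument $s(p)s(q)=0$, $p+q=1$ then does force $p$ to be a projection, so the route can be made to work; but it reimports precisely the orthogonal-measure/central-decomposition machinery you set out to avoid, and it is strictly heavier than the paper's argument, which gets multiplicativity for free once bijectivity with positive inverse is in hand. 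If you keep your architecture, you should cite the central-measure identification explicitly rather than attributing it to Choquet theory.
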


\begin{proof}
    It suffices to construct a unital positive linear invertible map $\hat\theta_\tau$ such that $\hat\theta_\tau^{-1}$ is positive and \eqref{oz-1} holds.  Indeed, positive linear maps between commutative $C^*$-algebras are completely positive, and it is a standard consequence of the Schwarz inequality that a complete order isomorphism between $C^*$-algebras is multiplicative.  Finally, isomorphisms between von Neumann algebras are automatically normal (see \cite[Corollary 1 in Part I, Chapter 4, Section 3]{Di69EngTrans}).

    Let $f \in L^\infty(X, \mu_\tau)_+$.  
    We may define a positive tracial functional on $\M$ by setting $\tau_f(a)$ equal to the right-hand side of \eqref{oz-1}.  We have
    \begin{equation}
  \tau_f(a)\leq \|f\|_\infty\int_X\sigma(a) \,{\rm d}\mu_\tau(\sigma)=\|f\|_\infty\tau(a),\quad a\in \mathcal M_+,
    \end{equation} so that $\tau_f \leq \|f\|_{\infty}\tau$.  It follows that $\tau_f$ extends to a positive normal tracial functional $\hat\tau_f$ on $\pi_\tau(\mathcal M)''$ with  $\hat\tau_f\leq\|f\|_\infty\hat{\tau}$.  Define \begin{equation}
        \hat\theta_\tau(f) \coloneqq \frac{{\rm d} \hat\tau_f}{{\rm d} \hat\tau} \in Z(\pi_\tau(\mathcal M)'')_+
    \end{equation}
 using the Radon--Nikodym result recalled as Theorem~\ref{thm:radon-nikodym} and note that \eqref{oz-1} holds by construction.  Since $\hat\theta_\tau(f)$ is uniquely determined by $f$, the linearity of the right-hand side of \eqref{oz-1} ensures that $\hat\theta_\tau$ preserves addition and multiplication by non-negative scalars. Therefore, we can extend $\hat\theta_\tau$ to a $\mathbb R$-linear map $L^\infty(X, \mu_\tau)_\mathrm{sa} \rightarrow Z(\M)_\mathrm{sa}$ by setting 
    \begin{equation}
        \hat\theta_\tau(f) \coloneqq \hat\theta_\tau(f_+)-\hat\theta_\tau(f_-) \in Z(\pi_\tau(\M)''),
    \end{equation}
    where $f_+ \coloneqq \max(f,0)$ and $f_- \coloneqq \max(-f,0)$, and then to a linear map $L^\infty(X, \mu_\tau) \rightarrow Z(\M)$.
    Hence $\hat\theta_\tau$ is a well-defined positive linear map, and it is obviously unital.

    We now prove injectivity. 
    Suppose $f \in L^\infty(X, \mu_\tau)$ and $\hat\theta_\tau(f) = 0$. Without loss of generality, assume $f$ is real-valued.
    Write $f = f_+ - f_-$, where $f_+ \coloneqq \max(f,0)$ and $f_- \coloneqq \max(-f,0)$. By applying \eqref{oz-1} to an approximate unit in $\mathcal M$, we have $\int_X f \, \rm{d} \mu_\tau$ = 0. 
    Set 
    \begin{equation}
        \alpha \coloneqq  \int_X f_+ \, {\rm d}\mu_\tau = \int_X f_- \, {\rm d}\mu_\tau.
    \end{equation}
    If $\alpha = 0$, then $f = 0$ in $L^\infty(X, \mu_\tau)$, as required. Suppose $\alpha \neq 0$.    Then $\alpha^{-1} f_+ \mu_\tau$ and $\alpha^{-1} f_- \mu_\tau$ are Radon probability measures on $X$ supported on $\partial X$ (as $\mu_\tau$ is supported on $\partial X$) giving rise to the traces
     \begin{align}
    \tau_+(a)&\coloneqq\frac{1}{\alpha}\int_X f_+(\sigma)\sigma(a)\,{\rm d}\mu_\tau, && a\in \mathcal M 
    \shortintertext{and} \tau_-(a)&\coloneqq \frac{1}{\alpha}\int_X f_-(\sigma)\sigma(a)\,{\rm d}\mu_\tau, &&a\in\mathcal M.
    \end{align}
    As $\hat{\theta}_\tau(f)=0$, the traces $\tau_+$ and $\tau_-$ are equal. Since $X$ is a compact face in $T(\mathcal M)$, $X$ is a Choquet simplex by Theorem~\ref{thm:traces-choquet}. Accordingly, $\tau_+=\tau_-$ has a unique representing measure, so that $\alpha^{-1} f_+ \mu_\tau = \alpha^{-1} f_- \mu_\tau$. Then, $f_+ =  f_-$ holds $\mu_\tau$-a.e, so $f = 0 $ in  $L^\infty(X, \mu_\tau)$.

    Finally, we prove that $\hat\theta_\tau$ is surjective with a positive inverse. Suppose $z \in Z(\pi_\tau(\M)'')_+ \setminus\{0\}$.  
    Then $\tau_z(a) \coloneqq \hat\tau(z\pi_\tau(a)) / \hat\tau(z)$ defines a trace on $\mathcal M$ with $\tau_z \leq \|z\| \hat{\tau}(z)^{-1}\tau$.
    Since $\tau \in X$ and $X$ is a face in $T(\M)$, we have $\tau_z \in X$. As $X$ is a Choquet simplex, there is a unique Radon probability measure $\mu_{\tau_z}$ supported on $\partial X$ representing $\tau_z$. Since $\tau_z \leq \|z\|\hat{\tau}(z)^{-1}\tau$, the classical Radon--Nikodym theorem implies that $\mu_{\tau_z} = f \mu_\tau$ for some $f \in L^\infty(X, \mu_\tau)_+$.  For $a \in \mathcal M$, we have
    \begin{equation}
    \begin{array}{rcl}
        \hat\tau\big(\hat\theta_\tau(f)\pi_\tau(a)\big) &\stackrel{\eqref{oz-1}}{=}& \int_X f(\sigma) \sigma(a) \, {\rm d}\mu_\tau(\sigma) \\
        &=& \int_X \sigma(a) \, {\rm d}\mu_{\tau_z}(\sigma) \\
        &=& \tau_z(a) \\
        &=& \frac{\hat\tau\big(z\pi_\tau(a)\big)}{\hat\tau(z)}.
    \end{array}
    \end{equation}
    Hence $\hat\theta_\tau(\hat{\tau}(z)f) = z$, which shows $\hat\theta_\tau$ is surjective.  Since $\hat\tau(z)f \geq 0$, this also shows $\hat\theta_\tau^{-1}$ is positive.
\end{proof}

Our next goal is to combine the maps $\hat\theta_\tau$ for $\tau \in X$ into a single map $\theta$ from the algebra $B(X)$ of bounded Borel functions $X \rightarrow \mathbb C$ to a suitable von Neumann algebra.  As in \eqref{eq:GNS-X}, define
\begin{equation}
    \pi_X \coloneqq \bigoplus_{\tau \in X} \pi_\tau \colon \mathcal M \rightarrow \mathcal B\Big(\bigoplus_{\tau \in X} \mathcal H_\tau \Big)
\end{equation}
to be the product of the GNS representations of $\mathcal M$ over $\tau \in X$.  For each $\tau \in X$, compressing to the summand $\mathcal H_\tau$ produces a normal representation $\bar \pi_\tau \colon \pi_X(\mathcal M)'' \rightarrow \mathcal B(\mathcal H_\tau)$.  Then $\bar\pi_\tau \circ \pi_X = \pi_\tau$.  Also, for $\tau \in X$
\begin{equation}\label{eq:bartau-def}
    \bar\tau(a) \coloneqq \langle \bar\pi_\tau(a) \xi_\tau, \xi_\tau \rangle, \quad a \in \pi_X(\mathcal M)'',
\end{equation}
defines a normal trace on $\pi_X(\mathcal M)''$ with $\bar \tau \circ \pi_X = \tau$, where $\xi_\tau$ is the canonical cyclic vector for $\pi_\tau$ (that is, $\bar\tau = \hat\tau\circ\bar\pi_\tau$, using $\hat\tau$ from \eqref{eq:hattau-def}).

Identifying $\pi_\tau(\mathcal M)''$ with a direct summand of $\pi_X(\mathcal M)''$ and viewing $L^\infty(X, \mu_\tau)$ as a quotient of $B(X)$, the maps $\hat\theta_\tau$ induce maps $\theta_\tau \colon B(X) \rightarrow \pi_X(\mathcal M)''$.  The next lemma shows these maps can be glued together to form a single map $\theta \colon B(X) \rightarrow Z(\pi_X(\mathcal M)'')$. Note that in the proof, if $X$ is small enough that there is a $\tau\in X$ which induces a faithful trace $\bar{\tau}$ on $\pi_X(\M)''$, then the limit argument is not needed and one can just work with the map $\theta_\tau$ (from \eqref{defthetatau-oz2}).

\begin{lemma}[{cf.\ \cite[Theorem~3]{Oz13}}]\label{lem:oz-global}
    Let $\mathcal M$ be a $C^*$-algebra and let $X \subseteq T(\mathcal M)$ be a compact face. For each $\tau \in X$,  let $\mu_\tau \in \mathrm{Prob}(X)$ be the Choquet measure for $\tau$ supported on $\partial X$, and let $\bar \tau$ denote the induced normal trace on $\pi_X(\mathcal M)''$ as in \eqref{eq:bartau-def}. Then there is a $^*$-homomorphism $\theta \colon B(X) \rightarrow Z(\pi_X(\mathcal M)'')$ such that
    \begin{equation}\label{oz-2}
        \bar\tau\big(\theta(f)\pi_X(a)\big) = \int_X f(\sigma) \sigma(a) \,{\rm d}\mu_\tau(\sigma)
    \end{equation}
    for all $\tau \in X$, $f \in B(X)$, and $a \in \mathcal M$.
\end{lemma}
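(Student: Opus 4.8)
The plan is to assemble the local isomorphisms $\hat\theta_\tau$ of Lemma~\ref{lem:oz-local} into a single map. The obstruction is that, for a \emph{single} $\tau\in X$, the induced trace $\bar\tau$ on $\pi_X(\M)''$ need not be faithful, so no one $\pi_\tau(\M)''$ is a copy of all of $\pi_X(\M)''$; I will get around this by averaging over finite subsets of $X$. For a finite $F\subseteq X$ put $\tau_F\coloneqq\frac1{|F|}\sum_{\sigma\in F}\sigma\in X$ (using that $X$ is convex) and let $p_F\coloneqq s(\bar\tau_F)\in Z(\pi_X(\M)'')$ be its support projection, which is central since $\bar\tau_F$ is tracial. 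Since $\bar\sigma\le|F|\bar\tau_F$ for $\sigma\in F$ we get $s(\bar\sigma)\le p_F$, and since $X$ is a faithful set of traces one checks $\sup_F p_F=1$ (if $q\coloneqq1-\sup_F p_F\neq0$ then, being central with $\bar\sigma(q)=0$ for all $\sigma\in X$, it would kill $\bar\pi_\sigma(q)\xi_\sigma$ and hence all of $\bigoplus_\sigma\pi_\sigma(\M)\xi_\sigma$, contradicting density). Using faithfulness of $\bar\tau_F$ on $p_F\pi_X(\M)''$ together with $\bar\tau_F=\hat\tau_F\circ\bar\pi_{\tau_F}$ and $\bar\pi_{\tau_F}(p_F)=1$, one shows $\bar\pi_{\tau_F}$ restricts to a normal $*$-isomorphism $p_F\pi_X(\M)''\xrightarrow{\ \cong\ }\pi_{\tau_F}(\M)''$; write $\rho_F$ for its inverse. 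Composing the quotient $B(X)\to L^\infty(X,\mu_{\tau_F})$, the isomorphism $\hat\theta_{\tau_F}$, and $\rho_F$ yields a $*$-homomorphism $\theta_F\colon B(X)\to p_FZ(\pi_X(\M)'')$ with $\theta_F(1)=p_F$ and $\|\theta_F(f)\|\le\|f\|_\infty$.

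\textbf{The two key facts.} I will prove: (a) for every $\sigma\in F$, $f\in B(X)$, $a\in\M$,
\begin{equation*}
\bar\sigma\big(\theta_F(f)\pi_X(a)\big)=\int_X f(\sigma')\sigma'(a)\,{\rm d}\mu_\sigma(\sigma');
\end{equation*}
and (b) the coherence $p_F\theta_{F'}(f)=\theta_F(f)$ whenever $F\subseteq F'$. For (a), transport $\bar\sigma$ through $\rho_F$ to a normal tracial functional $\nu_\sigma\coloneqq\bar\sigma\circ\rho_F$ on $\pi_{\tau_F}(\M)''$ with $\nu_\sigma\le|F|\hat\tau_F$; by the Radon--Nikodym theorem (Theorem~\ref{thm:radon-nikodym}) and surjectivity of $\hat\theta_{\tau_F}$ with positive inverse there is $g_\sigma\in L^\infty(X,\mu_{\tau_F})_+$ with $\nu_\sigma(y)=\hat\tau_F\big(\hat\theta_{\tau_F}(g_\sigma)y\big)$. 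Feeding $y=\pi_{\tau_F}(a)$ in and using $\rho_F(\pi_{\tau_F}(a))=p_F\pi_X(a)$ gives $\sigma(a)=\int_X g_\sigma(\sigma')\sigma'(a)\,{\rm d}\mu_{\tau_F}(\sigma')$ for all $a\in\M$. Now every $h\in\mathrm{Aff}(X)$ has the form $\sigma'\mapsto\sigma'(a)$ for a self-adjoint $a\in\M$: extend $h$ to $\mathrm{Aff}(T(\M))$ by Theorem~\ref{thm:extending-affine} and realise it exactly by Proposition~\ref{prop:CP}. Hence $g_\sigma\mu_{\tau_F}$ is a positive measure of total mass $1$ with barycentre $\sigma$, supported on $\partial X$ (as $g_\sigma\mu_{\tau_F}\ll\mu_{\tau_F}$); since $X$ is a Choquet simplex (Theorem~\ref{thm:traces-choquet}), uniqueness of representing measures forces $g_\sigma\mu_{\tau_F}=\mu_\sigma$. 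Expanding $\bar\sigma(\theta_F(f)\pi_X(a))=\nu_\sigma\big(\bar\pi_{\tau_F}(\theta_F(f)p_F\pi_X(a))\big)=\hat\tau_F\big(\hat\theta_{\tau_F}(g_\sigma\cdot f)\pi_{\tau_F}(a)\big)$ and applying \eqref{oz-1} then gives (a), because $\int f g_\sigma\,\sigma'(a)\,{\rm d}\mu_{\tau_F}=\int f\,\sigma'(a)\,{\rm d}\mu_\sigma$. For (b): set $z\coloneqq p_F\theta_{F'}(f)-\theta_F(f)\in p_FZ(\pi_X(\M)'')$. By (a) applied to $F'$ and to $F$ (and $s(\bar\sigma)\le p_F$), $\bar\sigma(z\pi_X(a))=0$ for all $\sigma\in F$, $a\in\M$; summing over $\sigma\in F$ gives $\bar\tau_F(z\pi_X(a))=0$, i.e.\ $\int_X g\,\sigma'(a)\,{\rm d}\mu_{\tau_F}=0$ for all $a$, where $\bar\pi_{\tau_F}(z)=\hat\theta_{\tau_F}(g)$. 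Thus $\int g h\,{\rm d}\mu_{\tau_F}=0$ for every $h\in\mathrm{Aff}(X)$; decomposing $g$ into real/imaginary and positive/negative parts and using once more that $\mu_{\tau_F}$ has a unique representing measure on $\partial X$ forces $g=0$ in $L^\infty(X,\mu_{\tau_F})$, hence $z=\rho_F(0)=0$.

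\textbf{Passing to the limit.} Since $p_F\nearrow1$ and the $\theta_F(f)$ are uniformly bounded, coherence makes $\big(\theta_F(f)\big)_F$ strong-operator Cauchy: for $F\subseteq F'$ one has $\theta_{F'}(f)-\theta_F(f)=(1-p_F)\theta_{F'}(f)$ and $(1-p_F)\to0$ strongly. Define $\theta(f)$ to be the strong operator limit; then $\theta(f)\in Z(\pi_X(\M)'')$ (the centre is strong-operator closed), $p_F\theta(f)=\theta_F(f)$, and $\theta$ is a $*$-homomorphism since each $\theta_F$ is and multiplication and adjoint are appropriately continuous on bounded sets. For \eqref{oz-2}, fix $\tau\in X$ and take $F=\{\tau\}$: then $p_F=s(\bar\tau)$ and $\theta_{\{\tau\}}(f)=p_F\theta(f)$, so $\bar\tau(\theta(f)\pi_X(a))=\bar\tau\big(\theta_{\{\tau\}}(f)\pi_X(a)\big)$, which equals $\int_X f(\sigma')\sigma'(a)\,{\rm d}\mu_\tau(\sigma')$ by (a).

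\textbf{Main obstacle.} The crux is the identification $g_\sigma\mu_{\tau_F}=\mu_\sigma$ in (a): this is exactly where the simplex structure of $X$ (uniqueness of Choquet measures) is genuinely used, and it is also where one must handle the fact that $\partial X$ need not be Baire measurable and interpret ``supported on $\partial X$'' correctly. The remaining ingredients -- the compression-isomorphism $\rho_F$, the support-projection bookkeeping, and the strong-operator limit -- are routine once the averaging device and this identification are in place, and the argument is, as the lemma states, essentially that of \cite[Theorem~3]{Oz13}.
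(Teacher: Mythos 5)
Your proof is correct and follows essentially the same route as the paper's: local maps into corners $p_F\,\pi_X(\M)''$ via the compression isomorphisms, support projections increasing to $1$ along a directed family of convex combinations of traces, coherence via the classical and non-commutative Radon--Nikodym theorems together with uniqueness of representing measures on the simplex, and a strong-operator limit. The only differences are cosmetic (finite averages $\tau_F$ in place of the order $\tau_1\preceq\tau_2 \Leftrightarrow \tau_1\le C\tau_2$, and deriving the compatibility $p_F\theta_{F'}=\theta_F$ from the trace formula rather than directly), so no further comparison is needed.
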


\begin{proof}
    For notational convenience, set $\mathcal N \coloneqq \pi_X(\mathcal M)''$.  For each $\tau \in X$, compressing $\mathcal N$ to the direct summand $\mathcal H_\tau$ provides a normal representation $\bar\pi_\tau \colon \mathcal N \rightarrow \mathcal B(\mathcal H_\tau)$.  
    
    Let $p_\tau \in Z(\mathcal N)$ be the support of $\bar\pi_\tau$ -- i.e.\ the projection $1 - p_\tau$ is the unit of $\ker(\bar\pi_\tau)$.  Note that $p_\tau$ is also the support projection of $\bar{\tau}$.

    For each $\tau \in X$, the representation $\bar\pi_\tau$ (co)restricts to an isomorphism $p_\tau \mathcal N \rightarrow \pi_\tau(\mathcal M)''$.  Hence the map $\hat\theta_\tau$ of Lemma~\ref{lem:oz-local} induces a $^*$-homomorphism $\theta_\tau \colon B(X) \rightarrow \mathcal N$ such that 
    \begin{equation}\label{defthetatau-oz2}
        \theta_\tau(1_{B(X)}) = p_\tau \qquad \text{and}  \qquad \bar\pi_\tau(\theta_\tau(f)) = \hat\theta_\tau(f)
    \end{equation} 
    for $f \in B(X)$, where the $f$ on the right-hand side of the second equation denotes the class of $f$ in $L^\infty(X, \mu_\tau)$.  Then
    \begin{equation}\label{oz-neweq}
        \bar\tau\big(\theta_\tau(f)\pi_X(a)) = \int_X f(\sigma) \sigma(a) \, {\rm d}\mu_\tau(\sigma), \quad f \in B(X),\ a \in \mathcal M.
    \end{equation}
    We will define $\theta$ as a pointwise limit of the maps $\theta_\tau$.

    Define a partial order on $X$ by $\tau_1 \preceq \tau_2$ if there is a constant $C \geq 0$ such that $\tau_1\leq C \tau_2$.  Then $\preceq$ is directed since for any $\tau_1, \tau_2 \in X$, the element $\frac12(\tau_1 + \tau_2) \in X$ is a common upper bound.  If $\tau_1 \preceq \tau_2$, then $\bar\tau_2(p_{\tau_1}) > 0$, 
    and hence $p_{\tau_1} \leq p_{\tau_2}$.  Therefore, $(p_\tau)_{\tau \in X}$ is an increasing net of central projections in $Z(\mathcal N)$.  Further, if $p \coloneqq \sup_{\tau \in X} p_\tau \in Z(\mathcal N)$, then $\bar\tau(p_\tau) = 1$ for all $\tau \in X$.  
    As $\{\bar \tau : \tau \in X\}$ is a faithful set of traces on $\mathcal N$, it follows that $p = 1_\mathcal N$, so the $p_\tau$ converge strongly to $1_\mathcal N$.

    Suppose $\tau_1, \tau_2 \in X$ with $\tau_1 \preceq \tau_2$.  We work to show $\theta_{\tau_1} = p_{\tau_1} \theta_{\tau_2}$.  As both $^*$-homomorphisms send the unit to $p_{\tau_1}$, it suffices to show 
    \begin{equation}\label{oz-neweq3}
        \bar\tau_1\big(\theta_{\tau_1}(f)\pi_X(a)\big) = \bar\tau_1\big(\theta_{\tau_2}(f)\pi_X(a)\big), \quad f \in B(X),\ a \in \mathcal M.
    \end{equation}
    To this end, first note that $\tau_1 \preceq \tau_2$ implies $\mu_{\tau_1}$ is bounded by a constant multiple of $\mu_{\tau_2}$ as measures on $X$,\footnote{Say $\tau_1\leq K\tau_2$ with $K>1$, and set $\sigma\coloneqq (K\tau_2-\tau_1)/(K-1)$. Since $X$ is a face and $\tfrac{1}{K}\tau_1 + \left(1-\tfrac{1}{K}\right)\sigma = \tau_2$, we have $\sigma \in X$. Then $\mu_{\tau_1} \leq \mu_{\tau_1} + (K-1)\mu_\sigma = K\mu_{\tau_2}$.} and so is absolutely continuous with respect to $\mu_{\tau_2}$. Let $\frac{{\rm d} \mu_{\tau_1}}{{\rm d} \mu_{\tau_2}} \in B(X)$ denote any representative of the (classical) Radon--Nikodym derivative in $L^\infty(X, \mu_\tau)$.  Note that for all $a \in \mathcal M$,
    \begin{equation}\label{oz-compatible}
    \begin{array}{rcl}
        \bar\tau_2\Big(\theta_{\tau_2}\Big(\frac{{\rm d} \mu_{\tau_1}}{{\rm d} \mu_{\tau_2}}\Big)\pi_X(a)\Big) &\stackrel{\eqref{oz-neweq}}=& 
        \int_X \frac{{\rm d} \mu_{\tau_1}}{{\rm d} \mu_{\tau_2}}(\sigma) \sigma(a) \,{\rm d}\mu_{\tau_2}(\sigma)\\
        &=& \int_X \sigma(a) \,{\rm d} \mu_{\tau_1}(\sigma) \\
        &=& \bar\tau_1(\pi_X(a)).
    \end{array}
    \end{equation}
    Further, $\theta_{\tau_2}\big(\frac{{\rm d} \mu_{\tau_1}}{{\rm d} \mu_{\tau_2}}\big)$ has support contained in $p_{\tau_2} = \theta_{\tau_2}(1_{B(X)})$.
    Therefore, by the uniqueness in the non-commutative Radon--Nikodym theorem (Theorem~\ref{thm:radon-nikodym}), 
\begin{equation}\label{oz-neweq2}
    \frac{{\rm d} \bar \tau_1}{{\rm d} \bar \tau_2} = \theta_{\tau_2}\left(\frac{{\rm d} \mu_{\tau_1}}{{\rm d} \mu_{\tau_2}}\right).
    \end{equation} 
    Now, for all $f \in B(X)$ and $a \in \mathcal M$,
    \begin{equation}
    \begin{array}{rcl}
        \bar\tau_1\big(p_{\tau_1}\theta_{\tau_2}(f)\pi_X(a)\big) &=&\bar\tau_1\big(\theta_{\tau_2}(f)\pi_X(a)\big)\\
        &=&\bar\tau_2\Big(\frac{{\rm d} \bar\tau_1}{{\rm d} \bar \tau_2} \theta_{\tau_2}(f)\pi_X(a)\Big) \\
        &\stackrel{\eqref{oz-neweq2}}{=}& \bar\tau_2\Big(\theta_{\tau_2}\Big(\frac{{\rm d} \mu_{\tau_1}}{{\rm d} \mu_{\tau_2}}\Big)\theta_{\tau_2}(f)\pi_X(a)\Big) \\
        &=& \bar\tau_2\Big(\theta_{\tau_2}\Big(\frac{{\rm d} \mu_{\tau_1}}{{\rm d} \mu_{\tau_2}} f\Big)\pi_X(a)\Big) \\
        &\stackrel{\eqref{oz-neweq}}{=}& \int_X \frac{{\rm d} \mu_{\tau_1}}{{\rm d} \mu_{\tau_2}}(\sigma) f(\sigma) \sigma(a) \, {\rm d} \mu_{\tau_2}(\sigma) \\
        &=& \int_X f(\sigma) \sigma(a) \, {\rm d}\mu_{\tau_1}(\sigma) \\
        &\stackrel{\eqref{oz-neweq}}{=}& \bar\tau_1\big( \theta_{\tau_1}(f)\pi_X(a)\big),
    \end{array}
    \end{equation}
    which verifies \eqref{oz-neweq3} and hence shows $p_{\tau_1} \theta_{\tau_2} = \theta_{\tau_1}$.

    For each $\tau_0 \in X$, $f \in B(X)$, and $a \in \mathcal M$, the net $\big(\bar\tau_0(\theta_\tau(f)\pi_X(a))\big)_{\tau \in X}$ is eventually constant -- in fact, by the previous paragraph, the net is constant as soon as $\tau_0 \preceq \tau$.  Therefore, for each $\tau_0 \in X$, $f \in B(X)$, and $b \in \mathcal N$, the net $\big(\bar\tau_0(\theta_\tau(f)b)\big)_{\tau \in X}$ converges, and hence the net $\big(\theta_\tau(f)\big)_{\tau \in X}$ in $Z(\mathcal N)$ converges strongly.  Let $\theta(f) \in Z(\mathcal N)$ denote the strong limit.  As multiplication on $\mathcal N$ is strongly continuous on $\|\cdot\|$-bounded sets and the adjoint on $\mathcal N$ is strongly continuous (since $\mathcal N$ is finite), the map $\theta$ is a $^*$-homomorphism.  Further, it is unital as $\theta_\tau(1_{B(X)}) = p_\tau \rightarrow 1_{\mathcal N}$.  By construction, \eqref{oz-2} holds.
\end{proof}

Suppose now that $X$ is a Bauer simplex.  In this case, $\partial_e X$ is compact and hence Borel.  Further, if $\mu$ is a Radon probability measure on $X$ supported on $\partial_e X$, then $\mu(X \setminus \partial_e X) = 0$.\footnote{Using the inner regularity of $\mu$, it is enough to show that every compact set $K \subseteq X \setminus \partial_e X$ satisfies $\mu(K) = 0$.  
Given such a set $K$, by Urysohn's lemma, there is a positive contraction $f \in C(K)$ vanishing on $\partial_e X$ with $f(\tau) = 1$ for all $\tau \in K$.  Then $E \coloneqq f^{-1}(1)$ is a Baire measurable subset of $X$ containing $K$ with $E \cap \partial_e X = \emptyset$.  As $\mu$ is supported on $\partial_e X$, we have $\mu(E) = 0$, and hence $\mu(K) = 0$.}  This allows us to take the integrals in \eqref{oz-1} and \eqref{oz-2} over $\partial_e X$ in place of $X$.  Further, in Lemma~\ref{lem:oz-local} we may replace $L^\infty(X, \mu_\tau)$ with $L^\infty(\partial_e X, \mu_\tau)$, and in Lemma~\ref{lem:oz-global}, we may replace $B(X)$ with $B(\partial_e X)$.  In particular, restricting the map $\theta$ in Lemma~\ref{lem:oz-global} to $C(\partial_e X) \subseteq B(\partial_e X)$ provides the following result, which is what will be used in the proof of Theorem~\ref{thm:TCtoWStarBundle}.

\begin{corollary}\label{cor:oz-continuous}
    Let $\mathcal M$ be a $C^*$-algebra, let $X \subseteq T(\mathcal M)$ be a compact face, and for each $\tau \in X$, let $\bar \tau$ denote the induced normal trace on $\pi_X(\mathcal M)''$.  If $X$ is a Bauer simplex, then there is an embedding $\theta \colon C(\partial_e X) \rightarrow Z(\pi_X(\mathcal M)'')$ such that 
    \begin{equation}\label{oz-3}
        \bar\tau\big(\theta(f)\pi_X(a)\big) = \int_{\partial_e X} f(\sigma) \sigma(a) \,{\rm d}\mu_\tau(\sigma)
    \end{equation}
    for all $\tau \in X$, $f \in C(\partial_e X)$, and $a \in \mathcal M$.
\end{corollary}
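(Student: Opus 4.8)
The plan is to derive Corollary~\ref{cor:oz-continuous} by restriction from Lemma~\ref{lem:oz-global}. Since $X$ is a compact face in $T(\mathcal M)$, it is a Choquet simplex by Theorem~\ref{thm:traces-choquet}, so Lemma~\ref{lem:oz-global} furnishes a unital $^*$-homomorphism $\theta_0 \colon B(X) \to Z(\pi_X(\mathcal M)'')$ with $\bar\tau\big(\theta_0(f)\pi_X(a)\big) = \int_X f(\sigma)\sigma(a)\,{\rm d}\mu_\tau(\sigma)$ for all $\tau \in X$, $f \in B(X)$, $a \in \mathcal M$. As $X$ is Bauer, $\partial_e X$ is compact, hence closed in $X$ and therefore Borel, so extension by zero gives a $^*$-homomorphism $\iota \colon C(\partial_e X) \to B(X)$, $f \mapsto \tilde f$; Borel measurability of $\tilde f$ and multiplicativity of $\iota$ are routine, using that $\partial_e X$ is closed. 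I would then set $\theta \coloneqq \theta_0 \circ \iota \colon C(\partial_e X) \to Z(\pi_X(\mathcal M)'')$.

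To verify \eqref{oz-3}, note that for $f \in C(\partial_e X)$, $\tau \in X$, and $a \in \mathcal M$,
\[
\bar\tau\big(\theta(f)\pi_X(a)\big) = \int_X \tilde f(\sigma)\sigma(a)\,{\rm d}\mu_\tau(\sigma) = \int_{\partial_e X} f(\sigma)\sigma(a)\,{\rm d}\mu_\tau(\sigma),
\]
the second equality holding because $\tilde f$ vanishes off $\partial_e X$ and $\mu_\tau(X \setminus \partial_e X) = 0$ in the Bauer case, as recorded in the discussion preceding the corollary. For unitality one checks that $\theta(1_{C(\partial_e X)}) = \theta_0(\chi_{\partial_e X})$ is a projection in $Z(\pi_X(\mathcal M)'')$ with $\bar\tau\big(1 - \theta_0(\chi_{\partial_e X})\big) = 0$ for every $\tau \in X$ --- applying the displayed identity with $a$ running over an approximate unit of $\mathcal M$ and using normality of $\bar\tau$ together with $\mu_\tau(\partial_e X) = 1$ --- so $\theta_0(\chi_{\partial_e X}) = 1$ by faithfulness of $\{\bar\tau : \tau \in X\}$.

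For the embedding claim it remains to see $\theta$ is injective. If $\theta(f) = 0$ then \eqref{oz-3} gives $\int_{\partial_e X} f(\sigma)\sigma(a)\,{\rm d}\mu_\tau(\sigma) = 0$ for all $\tau \in X$ and $a \in \mathcal M$; taking $\tau = \sigma_0 \in \partial_e X$, uniqueness of representing measures in the Choquet simplex $X$ forces $\mu_{\sigma_0} = \delta_{\sigma_0}$, so $f(\sigma_0)\sigma_0(a) = 0$ for all $a$, whence $f(\sigma_0) = 0$ (choosing $a$ from an approximate unit with $\sigma_0(a) \neq 0$). Thus $f = 0$ in $C(\partial_e X)$, so $\theta$ is an injective $^*$-homomorphism and hence isometric. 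I do not expect any real obstacle here: all the substance sits in Lemma~\ref{lem:oz-global}, and the only genuinely new ingredients are the elementary measure-theoretic reduction to $\partial_e X$ (already established before the corollary) and the injectivity argument above, which merely exploits that the point masses at extreme points occur among the $\mu_\tau$.
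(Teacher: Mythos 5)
Your proof is correct and follows essentially the same route as the paper: both pass from the map of Lemma~\ref{lem:oz-global} to $C(\partial_e X)$ using that $\mu_\tau(X \setminus \partial_e X) = 0$ in the Bauer case, and both prove injectivity by evaluating \eqref{oz-3} at extreme traces via an approximate unit. The only cosmetic difference is that you extend by zero from $C(\partial_e X)$ into $B(X)$ (which forces your extra, correctly executed, unitality check) whereas the paper factors $\theta$ through the restriction map $B(X) \to B(\partial_e X)$.
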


\begin{proof}
    As mentioned above the corollary, the existence of $\theta$ follows by first noting that the map $\theta$ in Lemma~\ref{lem:oz-global} factors through $B(\partial_e X)$ as $\mu_\tau(X \setminus \partial_e X) = 0$ for all $\tau \in X$ and then restricting the induced map from $B(\partial_e X)$ to $C(\partial_e X)$.  
    
    The remaining claim is that $\theta$ is injective.  To this end, fix $f \in \ker(\theta)$ and note that by taking $a$ to be an approximate unit in \eqref{oz-3} and limiting, we have $\int_{\partial_e X} f(\sigma) \, {\rm d} \mu_\tau(\sigma) = 0$ for all $\tau \in X$.  In particular, restricting to $\tau \in \partial_e X$ implies $f(\tau) = 0$ for all $\tau \in \partial_e X$, and hence $f = 0$.
\end{proof}

We can now give the proof of Theorem~\ref{thm:TCtoWStarBundle}.  Note that factoriality ensures that $X$ is a face in $T(\M)$, allowing the use of Choquet theory in Lemma \ref{lem:oz-local} for both injectivity and surjectivity of the local maps $\hat{\theta_\tau}\colon L^\infty(X,\mu_\tau)\to Z(\pi_\tau(\M)'')$.

\begin{proof}[Proof of Theorem~\ref{thm:TCtoWStarBundle}]
	We first show the embedding of $C(K)$ is unique, assuming it exists.  Let $\phi, \psi \colon C(K) \rightarrow Z(\mathcal M)$ be two embeddings satisfying \eqref{eq:central-embedding}.  Fix $f \in C(K)$.  If $\tau \in K$, then $\mu_\tau$ is the point mass at $\tau$, so \eqref{eq:central-embedding} gives
	\begin{equation}
		\tau(\phi(f) \psi(f)) = f(\tau) \tau(\psi(f)) = f(\tau)^2.
	\end{equation}
	Similarly, $\tau(\phi(f)^2) = \tau(\psi(f)^2) = f(\tau)^2$.  Now, if $f \in C(K)$ is self-adjoint, we have
	\begin{equation}
		\|\phi(f) - \psi(f)\|_{2, \tau}^2 = \tau\big((\phi(f) - \psi(f))^2\big) = 0
	\end{equation}
	for all $\tau \in \partial_e X$, and hence for all $\tau \in X$.  Therefore, $\phi(f) = \psi(f)$ for all self-adjoint $f \in C(K)$, and hence for all $f \in C(K)$.

We now turn to existence of the embedding $C(\partial_e X) \rightarrow Z(\mathcal M)$.
Let $\mathcal N \coloneqq \pi_X(\mathcal M)''$.  For $\tau \in X$, let $\bar\tau \in T(\mathcal N)$ denote the canonical extension of $\tau$ to a normal trace on $\mathcal N$ as defined in \eqref{eq:bartau-def}, and let $\bar X \coloneqq \{\bar \tau : \tau \in X \} \subseteq T(\mathcal N)$.  Then $\|\pi_X(a)\|_{2, \bar X} = \|a\|_{2, X}$ for all $a \in \mathcal M$.  Let $\theta \colon C(\partial_e X) \rightarrow Z(\mathcal N)$ be the embedding given by Corollary~\ref{cor:oz-continuous} (which applies as $(\M,X)$ is factorial). Since $X$ is a faithful set of traces on $\M$, our objective is to show that $\theta$ takes values in $\pi_X(\M)$, so that it gives rise to the required embedding.  To this end, fix $f \in C(\partial_e X)$ and $\epsilon > 0$. As $(\M,X)$ is tracially complete, it will suffice to show there exists $a \in \mathcal M$ with $\|a\| \leq \|f\|$ and $\|\pi_X(a) - \theta(f)\|_{2, \bar X} < \epsilon$.

As $\pi_X(\mathcal M)$ is strongly dense in $\mathcal N$, Kaplansky's density theorem provides a net $(a_\lambda) \subseteq \mathcal M$ with $\pi_X(a_\lambda) \rightarrow \theta(f)$ strongly and $\|a_\lambda\| \leq \|f\|$ for all $\lambda$.  As each $\bar \tau \in \bar X$ is normal, we also have $\|\pi_X(a_\lambda) - \theta(f)\|_{2, \bar \tau}^2 \rightarrow 0$.  Define $h_\lambda \colon X \rightarrow \mathbb R$ by 
\begin{equation}
    h_\lambda(\tau) = \|\pi_X(a_\lambda) - \theta(f)\|_{2, \bar\tau}^2, \quad \tau \in X. 
\end{equation}
Then $h_\lambda$ is affine.  Further, for all $\tau \in X$, we have
\begin{equation}\label{oz-neweq4}
\begin{alignedat}{2}
    h_\lambda(\tau) &= &\, &\bar\tau\big(\pi_X(a_\lambda)^*\pi_X(a_\lambda)\big) - 2 \mathrm{Re} \,\bar\tau\big(\theta(f)^* \pi_X(a_\lambda)\big) \\ & &&+ \bar\tau\big(\theta(f)^*\theta(f)\big) \\
    &= &&\tau(a_\lambda^*a_\lambda) - 2 \mathrm{Re} \,\int_{\partial_e X} \overline{f(\sigma)} \sigma(a_\lambda) \, {\rm d}\mu_\tau(\sigma) \\ & &&+ \int_{\partial_e X} |f(\sigma)|^2 \, {\rm d}\mu_{\tau}(\sigma),
\end{alignedat}
\end{equation}
and hence each $h_\lambda$ is continuous.\footnote{The integrals in \eqref{oz-neweq4} are continuous in $\tau$ because, as $X$ is Bauer, any continuous function $g\in C(\partial_e X)$ extends to a continuous affine function $\hat{g} \in \mathrm{Aff}_\C(X)$ by \cite[Theorem II.4.1]{Alf71} and $\hat{g}(\tau) = \int_{\partial X} g(\sigma)\, {\rm d}\mu_\tau(\sigma)$ by the definition of $\mu_\tau$.}
Note that $h_\lambda(\tau) \rightarrow 0$ for all $\tau \in X$ and hence Proposition~\ref{prop:pointwise-to-uniform}\ref{prop:pointwise-to-uniform2} implies $h_\lambda \rightarrow 0$ weakly in $\mathrm{Aff}(X)$.

By the Hahn--Banach theorem, there are indices $\lambda_1, \ldots, \lambda_n$ and real numbers $t_1, \ldots t_n \geq 0$ summing to 1 such that
\begin{equation}
    \sum_{i=1}^n t_i \|\pi_X(a_{\lambda_i}) - \theta(f)\|_{2, \bar\tau}^2 < \epsilon^2, \quad \tau \in X.
\end{equation}
Let $a \coloneqq \sum_{i=1}^n t_i a_{\lambda_i} \in \mathcal M$ and note that $\|a\| \leq \|f\|$.  Further, for all $\tau \in X$,
\begin{equation}
\begin{split}
    \| \pi_X(a) - \theta(f) \|_{2, \bar\tau} &= \Big\| \sum_{i=1}^n t_i \big(\pi_X(a_{\lambda_i}) - \theta(f)\big) \Big\|_{2, \bar\tau} \\
    &\leq \sum_{i=1}^n t_i^{1/2} \cdot t_i^{1/2} \bar\| \pi_X(a_{\lambda_i}) - \theta(f)\|_{2, \bar\tau} \\
    &\leq\Big(\sum_{i=1}^n t_i \|\pi_X(a_{\lambda_i}) - \theta(f) \|_{2, \bar\tau}^2 \Big)^{1/2} \\
    &< \epsilon,
\end{split}
\end{equation}
where the inequality on the third line follows from the Cauchy--Schwarz inequality and the fact that the $t_i$ sum to 1.  This shows the image of $\theta$ is contained in $\pi_X(\mathcal M)$.

Finally, the map $E\colon \M\to C(K)$ given by $E(a)(\tau)\coloneqq\tau(a)$, for $a\in\M$ and $\tau\in X$ is necessarily a tracial conditional expectation onto $C(K)$ such that $\|\cdot\|_{2, X} = \|\cdot\|_{2, E}$.  In particular, the unit ball of $\mathcal M$ is $\|\cdot\|_{2, E}$-complete, and $(\M,E)$ is a $W^*$-bundle over $\partial_eX$.
\end{proof}

As the following example demonstrates, Theorem~\ref{thm:TCtoWStarBundle} can fail without factoriality even for the one-dimensional simplex. The salient point is that in a $W^*$-bundle the fibres are fairly independent of each other, whereas in a non-factorial tracially complete algebra $(\M,X)$, the GNS representation $\pi_\tau$ with respect to the extreme traces $\tau \in \partial_eX$ may not be.

\begin{example}\label{eg:nonbundle}
Let $\M\coloneqq \R \oplus \R \oplus \R$ and let $\tau_i \in T(\M)$ be given by $\tau_i(a_1, a_2, a_3) \coloneqq \tau_\R(a_i)$ for $i = 1, 2, 3$.  Define $\sigma_1, \sigma_2 \in T(\mathcal M)$ by
\begin{equation}
	\sigma_1 \coloneqq \frac12 (\tau_1 + \tau_2) \qquad \text{and} \qquad \sigma_2 \coloneqq \frac12(\tau_1 + \tau_3)
\end{equation}

Let $Y\coloneqq\mathrm{co}\{\sigma_1, \sigma_2\} \subseteq T(\M)$.  Then $(\M, Y)$ is a tracially complete $C^*$-algebra and $Y$ is a Bauer simplex.  Note that $(\M,Y)$ is not factorial -- in fact, neither extreme point of $Y$ is an extreme point of $T(\M)$.  Let $K \coloneqq \partial_e Y = \{\sigma_1,\sigma_2 \}$.

We will show there is no embedding of $C(K) \cong \mathbb C^2$ into $Z(\M) \cong \mathbb C^3$ satisfying \eqref{eq:central-embedding}.  Note that any embedding satisfying \eqref{eq:central-embedding} is unital.  Let $f_1 \in C(K)$ be given by $f_1(\sigma_1) \coloneqq 1$ and $f_1(\sigma_2) \coloneqq 0$ and define $f_2 \coloneqq 1_{C(K)} - f_1 \in C(K)$.   Exploiting the symmetry of $\tau_2$ and $\tau_3$, it suffices to show that the three unital embeddings $j_1, j_2, j_3 \colon C(K) \rightarrow Z(\M)$ determined by
\begin{equation}
	j_1(f_1) = (1_\mathcal R, 0, 0),\ j_2(f_1) = (1_\mathcal R, 1_\mathcal R, 0),\ \text{and}\ j_3(f_1) = (0, 1_\mathcal R, 0)
\end{equation}
fail \eqref{eq:central-embedding}.

In the first case, for $a \coloneqq (0, 1_\mathcal R, 0)$, $\sigma_1(j_1(f_1) a) = 0$, but 
\begin{equation}
	\int_K f_1(\sigma)\sigma(a) \, \mathrm{d}\mu_{\sigma_1} = f_1(\sigma_1)\sigma_1(a) = \frac12.
\end{equation}
In the second case, for $a \coloneqq (1_\mathcal R, 0, 0)$, $\sigma_2(j_2(f_2)a) = 0$, but 
\begin{equation}
 	\int_K f_2(\sigma)\sigma(a) \, \mathrm{d}\mu_{\sigma_2} = f_2(\sigma_2)\sigma_2(a) = \frac12.
\end{equation}
In the third case, for $a \coloneqq (1_\R, 0, 0)$, $\sigma_1(j_3(f_1)a) = 0$, but 
\begin{equation}
	\int_K f_1(\sigma)\sigma(a) \, \mathrm{d}\mu_{\sigma_1} = f_1(\sigma_1)\sigma_1(a) = \frac12.
\end{equation}
So \eqref{eq:central-embedding} fails in all three cases.
\end{example}

The following question is natural in the light of Example~\ref{eg:nonbundle} and the factoriality requirement of Theorem~\ref{thm:TCtoWStarBundle}.
The subtlety in the following is that a general $W^*$-bundle need not have factorial fibres.

\begin{question}
    Characterise when a tracially complete $C^*$-algebra $(\mathcal M,X)$ (assuming $X\subseteq T(\mathcal M)$ is a Bauer simplex) is a $W^*$-bundle over $\partial_eX$.
\end{question}

\section{Amenability for tracially complete \texorpdfstring{$C^*$}{C*}-algebras}\label{sec:amenable}

In this section, we show how to combine fibrewise amenability to obtain a global uniform 2-norm completely positive approximation property. The main result is Theorem~\ref{thm:amenable}, characterising morphisms into tracially complete $C^*$-algebras which are \emph{tracially nuclear} -- the appropriate notion of amenability which feeds into classification. Theorem~\ref{thm:introamenable} will follow as a special case of Theorem~\ref{thm:amenable}.

\subsection{Definition and basic properties}

Our notion of amenability for tracially complete $C^*$-algebras is given by the following version of the completely positive approximation property.

\begin{definition}\label{def:tracially-nuclear}
	Let $A$ be a $C^*$-algebra and let $(\mathcal N, Y)$ be a tracially complete $C^*$-algebra.  We say that a c.p.\ map $\theta \colon A \rightarrow \mathcal N$ is \emph{tracially nuclear} if there are nets of finite dimensional $C^*$-algebras $F_\lambda$ and c.p.\ maps
	\begin{equation}\label{eqn:tracially-nuclear}
		A \overset{\psi_\lambda}{\longrightarrow} F_\lambda \overset{\phi_\lambda}{\longrightarrow} \mathcal N
	\end{equation}
	such that
	\begin{equation}
		\lim_\lambda \| \phi_\lambda(\psi_\lambda(a)) - \theta(a) \|_{2, Y} = 0, \quad a \in A.
	\end{equation}
	Further, we say $(\mathcal N, Y)$ is \emph{{}amenable} if $\mathrm{id}_\mathcal N$ is tracially nuclear.
\end{definition}

As usual, we may restrict to sequences in Definition~\ref{def:tracially-nuclear} when $A$ is separable.\footnote{This requires having a uniform bound on the norms of $\phi_\lambda$ and $\psi_\lambda$, which is always possible by Proposition~\ref{prop:bounded-cpap}.}  

Using the canonical inclusion and projection maps, it immediate that {}amenability passes to direct sums.
\begin{proposition}\label{prop:semidiscretenessdirectsums}
    For tracially complete $C^*$-algebras $(\M,X)$ and $(\mathcal N,Y)$ the direct sum $(\M\oplus\mathcal N,X\oplus Y)$ is {}amenable if and only if both $(\M,X)$ and $(\mathcal N,Y)$ are {}amenable.
\end{proposition}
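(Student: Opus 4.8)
The statement is an easy consequence of the definition of tracial nuclearity together with the naturality of the canonical inclusion and projection $^*$-homomorphisms for direct sums, so the plan is mostly to assemble these pieces carefully. Write $\iota_\M \colon \M \to \M \oplus \mathcal N$ and $\iota_\mathcal N \colon \mathcal N \to \M \oplus \mathcal N$ for the inclusions and $p_\M \colon \M \oplus \mathcal N \to \M$, $p_\mathcal N \colon \M \oplus \mathcal N \to \mathcal N$ for the coordinate projections; all four are morphisms of tracially complete $C^*$-algebras (for the inclusions one checks that $X\oplus Y$ restricts to $X$ and $Y$, and for the projections that a trace in $X$ pulls back into $X\oplus Y$ via $t=1$), hence each is contractive for the relevant uniform $2$-norms by the proposition following Definition~\ref{def:TC}. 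I would also record the elementary fact that $\|(a,b)\|_{2,X\oplus Y}^2 = \sup_{\tau_X\in X,\,\tau_Y\in Y,\, t\in[0,1]}\big(t\tau_X(a^*a)+(1-t)\tau_Y(b^*b)\big) = \max\{\|a\|_{2,X}^2,\|b\|_{2,Y}^2\}$, since the supremum over $t$ is attained at an endpoint.

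For the forward direction, suppose $(\M\oplus\mathcal N, X\oplus Y)$ is amenable, witnessed by a net $\M\oplus\mathcal N \xrightarrow{\psi_\lambda} F_\lambda \xrightarrow{\phi_\lambda} \M\oplus\mathcal N$ with $\|\phi_\lambda\psi_\lambda(c) - c\|_{2,X\oplus Y}\to 0$ for all $c$. Then for $a\in\M$ one has $\phi_\lambda\psi_\lambda(\iota_\M(a)) \to \iota_\M(a)$ in $\|\cdot\|_{2,X\oplus Y}$, and applying the contractive map $p_\M$ gives $p_\M\phi_\lambda\psi_\lambda\iota_\M(a) \to p_\M\iota_\M(a) = a$ in $\|\cdot\|_{2,X}$. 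Thus the c.p.\ maps $\M \xrightarrow{\psi_\lambda\iota_\M} F_\lambda \xrightarrow{p_\M\phi_\lambda} \M$ witness tracial nuclearity of $\mathrm{id}_\M$, and symmetrically for $\mathcal N$.

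For the reverse direction, suppose $(\M,X)$ and $(\mathcal N,Y)$ are amenable, with witnessing nets $\M \xrightarrow{\psi^\M_\lambda} F^\M_\lambda \xrightarrow{\phi^\M_\lambda}\M$ and $\mathcal N \xrightarrow{\psi^\mathcal N_\mu} F^\mathcal N_\mu \xrightarrow{\phi^\mathcal N_\mu}\mathcal N$. Index over the product directed set and set $F_{(\lambda,\mu)} := F^\M_\lambda \oplus F^\mathcal N_\mu$, with $\psi_{(\lambda,\mu)} := (\psi^\M_\lambda \circ p_\M) \oplus (\psi^\mathcal N_\mu \circ p_\mathcal N)$ and $\phi_{(\lambda,\mu)} := (\iota_\M\circ\phi^\M_\lambda)\oplus_{\mathrm{sum}}(\iota_\mathcal N\circ\phi^\mathcal N_\mu)$; concretely $\phi_{(\lambda,\mu)}(x\oplus y) = \big(\phi^\M_\lambda(x),\ \phi^\mathcal N_\mu(y)\big) \in \M\oplus\mathcal N$. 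These are c.p.\ (direct sums and compositions of c.p.\ maps), $F_{(\lambda,\mu)}$ is finite dimensional, and for $(a,b)\in\M\oplus\mathcal N$ one computes $\phi_{(\lambda,\mu)}\psi_{(\lambda,\mu)}(a,b) = \big(\phi^\M_\lambda\psi^\M_\lambda(a),\ \phi^\mathcal N_\mu\psi^\mathcal N_\mu(b)\big)$, so by the max-formula above $\|\phi_{(\lambda,\mu)}\psi_{(\lambda,\mu)}(a,b)-(a,b)\|_{2,X\oplus Y} = \max\{\|\phi^\M_\lambda\psi^\M_\lambda(a)-a\|_{2,X},\ \|\phi^\mathcal N_\mu\psi^\mathcal N_\mu(b)-b\|_{2,Y}\} \to 0$. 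Hence $\mathrm{id}_{\M\oplus\mathcal N}$ is tracially nuclear, i.e.\ $(\M\oplus\mathcal N,X\oplus Y)$ is amenable.

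There is no real obstacle here; the only points requiring a word of care are checking that the four structure maps are genuinely morphisms of tracially complete $C^*$-algebras (so that one may invoke contractivity in the uniform $2$-norm) and the identity $\|(a,b)\|_{2,X\oplus Y} = \max\{\|a\|_{2,X},\|b\|_{2,Y}\}$, which is what makes the product-net estimate in the reverse direction go through cleanly. Since the excerpt already flags this proposition as "immediate from the canonical inclusion and projection maps", a short paragraph in the final text recording the two directions as above should suffice.
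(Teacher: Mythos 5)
Your proof is correct and is exactly the argument the paper has in mind (the paper records no proof at all, only the remark that the statement is immediate from the canonical inclusion and projection maps). One small misstatement to fix: the inclusions $\iota_\M$ and $\iota_{\mathcal N}$ are \emph{not} morphisms of tracially complete $C^*$-algebras --- they are non-unital, and for $0<t<1$ the trace $t\tau_X+(1-t)\tau_Y\in X\oplus Y$ pulls back under $\iota_\M$ to the subnormalised functional $t\tau_X\notin X$ --- so you cannot invoke the automatic contractivity of morphisms for them; this is harmless, since your identity $\|(a,b)\|_{2,X\oplus Y}=\max\{\|a\|_{2,X},\|b\|_{2,Y}\}$ shows directly that the inclusions are $\|\cdot\|_2$-isometric, and in any case your two directions only use contractivity of the projections $p_\M,p_{\mathcal N}$, which genuinely are morphisms.
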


Recall that if $\mathcal N$ is a von Neumann algebra, then a c.p.\ map $\theta\colon A\to \mathcal N$ is \emph{weakly nuclear} if there are c.p.\ maps as in \eqref{eqn:tracially-nuclear} such that for all $a\in \mathcal N$, $\phi_\lambda(\psi_\lambda(a))\rightarrow a$ in the weak$^*$ topology on $\mathcal N$.  When $\tau$ is a faithful normal trace, then $(\mathcal N,\{\tau\})$ is a tracially complete $C^*$-algebra tracial nuclearity and weak nuclearity agree (and likewise, semidiscreteness of $\mathcal N$ agrees with {}amenability of $(\mathcal N,\{\tau\})$).  We defer the proof to Proposition~\ref{prop:tracialnuc=weaknuc} as we first need some prerequisite results.

The following standard lemma allows us to reduce problems about tracial nuclearity to the unital case. We use $A^\dagger$ for the forced unitisation of $A$; i.e.\ when $A$ is unital, we add a new unit to form $A^\dagger\cong A\oplus \mathbb C$.

\begin{lemma}\label{lem:amenable-unitise}
	Suppose $A$ is a $C^*$-algebra, $(\mathcal N, Y)$ is a tracially complete $C^*$-algebra, and $\theta \colon A \rightarrow \mathcal N$ is a c.p.c.\ map. Then $\theta$ is tracially nuclear if and only if the unitisation $\theta^\dag \colon A^\dag \rightarrow \mathcal N$ is also tracially nuclear.
\end{lemma}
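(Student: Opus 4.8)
The statement is the routine observation that tracial nuclearity of a c.p.c.\ map is unaffected by forcing a unit. The plan is to prove both implications directly from Definition~\ref{def:tracially-nuclear}.

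For the ``only if'' direction, suppose $\theta\colon A\to\mathcal N$ is tracially nuclear, witnessed by c.p.\ maps $A\xrightarrow{\psi_\lambda}F_\lambda\xrightarrow{\phi_\lambda}\mathcal N$ with $\|\phi_\lambda\psi_\lambda(a)-\theta(a)\|_{2,Y}\to 0$ for all $a\in A$. Since $\mathcal N$ is unital (Proposition~\ref{prop:unital}), $\theta^\dag(x+\mu 1_{A^\dag})\coloneqq\theta(x)+\mu 1_{\mathcal N}$ defines the unitisation, which is c.p.c. I would replace each $F_\lambda$ by $F_\lambda^\dag=F_\lambda\oplus\mathbb C$ (still finite dimensional), set $\psi_\lambda^\dag\colon A^\dag\to F_\lambda^\dag$ to be $\psi_\lambda$ on $A$ and $1_{A^\dag}\mapsto 1_{F_\lambda^\dag}$, and $\phi_\lambda^\dag\colon F_\lambda^\dag\to\mathcal N$ to be $\phi_\lambda$ on $F_\lambda$ with $1_{F_\lambda^\dag}\setminus F_\lambda\mapsto 1_{\mathcal N}$ (more precisely, $\phi_\lambda^\dag(y+\mu(1_{F_\lambda^\dag}-1_{F_\lambda})) = \phi_\lambda(y)+\mu 1_{\mathcal N}$). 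Both unitised maps are c.p.\ (positivity is checked on $F_\lambda^\dag$ and $A^\dag$ directly, using that $\phi_\lambda,\psi_\lambda$ are c.p.). For $x+\mu 1_{A^\dag}\in A^\dag$ one computes $\phi_\lambda^\dag\psi_\lambda^\dag(x+\mu 1_{A^\dag})=\phi_\lambda\psi_\lambda(x)+\mu 1_{\mathcal N}$, so
\begin{equation}
\|\phi_\lambda^\dag\psi_\lambda^\dag(x+\mu 1_{A^\dag})-\theta^\dag(x+\mu 1_{A^\dag})\|_{2,Y}=\|\phi_\lambda\psi_\lambda(x)-\theta(x)\|_{2,Y}\to 0,
\end{equation}
giving tracial nuclearity of $\theta^\dag$.

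For the ``if'' direction, suppose $\theta^\dag\colon A^\dag\to\mathcal N$ is tracially nuclear, witnessed by $A^\dag\xrightarrow{\psi_\lambda}F_\lambda\xrightarrow{\phi_\lambda}\mathcal N$. Restricting $\psi_\lambda$ to $A$ gives c.p.\ maps $\psi_\lambda|_A\colon A\to F_\lambda$, and $\phi_\lambda(\psi_\lambda|_A(a))=\phi_\lambda\psi_\lambda(a)\to\theta^\dag(a)=\theta(a)$ in $\|\cdot\|_{2,Y}$ for all $a\in A$. Hence $\theta$ is tracially nuclear. (No uniform bound on norms is needed here since we work with nets; if one prefers sequences in the separable case, Proposition~\ref{prop:bounded-cpap} supplies a uniform bound, and the unitisation in the first direction only adds maps of norm at most $1$ more on the new summand.)

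\textbf{Main obstacle.} There is essentially no obstacle; the only points requiring a line of care are verifying that the unitised maps $\psi_\lambda^\dag$ and $\phi_\lambda^\dag$ remain completely positive (which follows since a c.p.\ map between $C^*$-algebras extends to a c.p.\ unital map on unitisations, e.g.\ by the standard fact that $A\mapsto A^\dag$ with $1\mapsto 1$ preserves complete positivity), and checking the algebra $\phi_\lambda^\dag\psi_\lambda^\dag = \phi_\lambda\psi_\lambda$ on $A$ together with both sending $1_{A^\dag}\mapsto 1_{\mathcal N}$. These are immediate from the definitions.
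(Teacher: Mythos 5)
Your ``if'' direction is fine and is exactly the paper's argument: $\theta$ factors as the inclusion $A\hookrightarrow A^\dag$ followed by $\theta^\dag$, so restricting the approximating maps does the job. The ``only if'' direction, however, has a genuine gap, and it is located precisely in the step you dismiss as ``immediate from the definitions''. With your definitions, $\psi_\lambda^\dag(x+\mu 1_{A^\dag})=(\psi_\lambda(x)+\mu 1_{F_\lambda},\mu)$ and $\phi_\lambda^\dag(y,\nu)=\phi_\lambda(y)+\nu 1_{\mathcal N}$, so
\begin{equation}
\phi_\lambda^\dag\big(\psi_\lambda^\dag(x+\mu 1_{A^\dag})\big)=\phi_\lambda(\psi_\lambda(x))+\mu\,\phi_\lambda(1_{F_\lambda})+\mu 1_{\mathcal N},
\end{equation}
not $\phi_\lambda(\psi_\lambda(x))+\mu 1_{\mathcal N}$ as you claim. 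The extra term $\mu\,\phi_\lambda(1_{F_\lambda})$ does not vanish: for instance if $A$, $\theta$ and the $\psi_\lambda$ are unital, then $\phi_\lambda(1_{F_\lambda})=\phi_\lambda(\psi_\lambda(1_A))\to 1_{\mathcal N}$ in $\|\cdot\|_{2,Y}$, so your candidate approximations miss $\theta^\dag$ by roughly $\mu 1_{\mathcal N}$. The natural repair is to replace $\nu 1_{\mathcal N}$ by $\nu(1_{\mathcal N}-\phi_\lambda(1_{F_\lambda}))$ in the definition of $\phi_\lambda^\dag$, but positivity of that map requires $\phi_\lambda(1_{F_\lambda})\le 1_{\mathcal N}$, and positivity of $\psi_\lambda^\dag$ requires $\psi_\lambda$ to be contractive (your ``standard fact'' that unitisation preserves complete positivity is only true for c.p.c.\ maps; the unitisation of $2\,\mathrm{id}_{\mathbb C}$ is already not positive). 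Definition~\ref{def:tracially-nuclear} only provides c.p.\ maps with no norm control, and arranging $\psi_\lambda$ contractive and $\phi_\lambda$ unital is exactly Proposition~\ref{prop:bounded-cpap} --- whose proof in the paper begins by invoking this very lemma, so you cannot appeal to it here without circularity.

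The paper avoids all of this by not unitising the approximating maps at all. Taking an approximate unit $(e_\lambda)$ of positive contractions in $A$, it sets $\theta_\lambda(a+\alpha 1_{A^\dag})\coloneqq\theta(e_\lambda a e_\lambda)+\alpha 1_{\mathcal N}$ and writes $\theta_\lambda$ as the sum of $a+\alpha 1_{A^\dag}\mapsto\theta(e_\lambda(a+\alpha 1_{A^\dag})e_\lambda)$ (tracially nuclear, being $\theta$ precomposed with the c.p.\ cut-down $A^\dag\to A$) and the rank-one c.p.\ map $a+\alpha 1_{A^\dag}\mapsto\alpha(1_{\mathcal N}-\theta(e_\lambda^2))$, which is positive precisely because $\theta$ is contractive. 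Since $\theta_\lambda\to\theta^\dag$ in point-norm and each $\theta_\lambda$ is tracially nuclear, so is $\theta^\dag$. If you want to salvage your direct construction, you would need to first establish the norm control of Proposition~\ref{prop:bounded-cpap} independently of this lemma; as written, the argument does not go through.
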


\begin{proof}
First note that $\theta$ factorises as the inclusion $A \to A^\dag$ followed by $\theta^\dag$, so that tracial nuclearity of $\theta^\dag$ implies that of $\theta$.

Conversely, suppose that $\theta$ is tracially nuclear.
	Let $(e_\lambda)$ be an approximate unit for $A$ consisting of positive contractions and define $\theta_\lambda \colon A^\dag \rightarrow \mathcal N$ by
	\begin{equation}
		\theta_\lambda(a + \alpha 1_{A^\dag}) \coloneqq \theta(e_\lambda a e_\lambda) + \alpha 1_\mathcal N.
	\end{equation}
	Then $\theta_\lambda$ is unital.  Further, $\theta_\lambda$ is tracially nuclear as it is the sum of the tracially nuclear maps
	\begin{align}
		a + \alpha 1_{A^\dag} &\mapsto \theta(e_\lambda (a + \alpha 1_{A^\dag}) e_\lambda)
	\intertext{and}
		a + \alpha 1_{A^\dag} &\mapsto \alpha (1_\mathcal N - \theta(e_\lambda^2)).
	\end{align}
	Since each $\theta_\lambda$ is unital and $\|\theta_\lambda(a) - \theta(a)\| \rightarrow 0$ for all $a \in A$, we have $\|\theta_\lambda(a) - \theta^\dag(a)\| \rightarrow 0$ for all $a \in A^\dag$.  As each $\theta_\lambda$ is tracially nuclear, so is $\theta$.
\end{proof}

As with the analogous versions of the completely positive approximation property for both $C^*$-algebras and von Neumann algebras, we can arrange for a uniform bound on the norms of the $\phi_\lambda$ and $\psi_\lambda$ in Definition~\ref{def:tracially-nuclear}.  The proof here follows the proof of the von Neumann algebraic version in \cite[Proposition~3.8.2]{Br08}, taking care to avoid the use of Borel functional calculus, which, in general, does not exist in tracially complete $C^*$-algebras.

\begin{proposition}\label{prop:bounded-cpap}
	Suppose that $A$ is a $C^*$-algebra, $(\mathcal N, Y)$ is a tracially complete $C^*$-algebra, and $\theta \colon A \rightarrow \mathcal N$ is a tracially nuclear map.  Then there are nets
	\begin{equation}\label{eq:bounded-cpap-maps}
		A \overset{\psi_\lambda}{\longrightarrow} F_\lambda \overset{\phi_\lambda}{\longrightarrow}  \mathcal N
	\end{equation}
	as in Definition~\ref{def:tracially-nuclear} with $\|\psi_\lambda\| \leq \|\theta\|$ and $\phi_\lambda(1_{F_\lambda}) = 1_\mathcal N$.  Further, if $A$ and $\theta$ are unital, we may arrange each $\psi_\lambda$ to be unital.
\end{proposition}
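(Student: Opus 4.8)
The plan is to follow the strategy of \cite[Proposition~3.8.2]{Br08} for the von Neumann case, replacing any use of Borel functional calculus by genuine functional calculus that is available in a general $C^*$-algebra, at the cost of introducing a small additional error that is then absorbed into the net. First I would reduce to the unital case via Lemma~\ref{lem:amenable-unitise}: if $A$ and $\theta$ are not unital, replace $A$ by $A^\dag$ and $\theta$ by $\theta^\dag$; the maps witnessing tracial nuclearity of $\theta^\dag$ restrict to maps witnessing that of $\theta$, and arranging $\phi_\lambda(1_{F_\lambda})=1_\mathcal N$ for $\theta^\dag$ clearly gives the desired conclusion for $\theta$ (with the non-unital $\psi_\lambda$ being restrictions of unital ones, so still of norm $\leq\|\theta\|=\|\theta^\dag\|$). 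So assume $A$, $\theta$ unital; then $\|\theta\|=1$.

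So suppose we are given c.p.\ maps $A\xrightarrow{\psi_\lambda}F_\lambda\xrightarrow{\phi_\lambda}\mathcal N$ with $\|\phi_\lambda(\psi_\lambda(a))-\theta(a)\|_{2,Y}\to 0$ for all $a\in A$. The first step is to normalise $\psi_\lambda$: since $\psi_\lambda(1_A)\in (F_\lambda)_+$, for any $\delta>0$ the element $h_\lambda\coloneqq (\psi_\lambda(1_A)+\delta 1_{F_\lambda})^{-1/2}$ (ordinary functional calculus in the finite dimensional algebra $F_\lambda$) is well-defined, and $\psi_\lambda'\coloneqq h_\lambda\psi_\lambda(\cdot)h_\lambda$ is a c.p.\ map with $\psi_\lambda'(1_A)=h_\lambda\psi_\lambda(1_A)h_\lambda\leq 1_{F_\lambda}$, so $\|\psi_\lambda'\|\leq 1$. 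Setting $\phi_\lambda'\coloneqq \phi_\lambda(h_\lambda^{-1}(\cdot)h_\lambda^{-1})$ (again ordinary functional calculus, as $h_\lambda$ is invertible in $F_\lambda$) we have $\phi_\lambda'(\psi_\lambda'(a))=\phi_\lambda(\psi_\lambda(a))$, so nothing is lost. The point of this step — exactly as in the von Neumann case — is that passing to a subnet indexed also by $\delta\downarrow 0$, one shows $\psi_\lambda(1_A)+\delta 1_{F_\lambda}$ is close (in a suitable sense detected by $\mathcal N$ through $\phi_\lambda$) to $1_{F_\lambda}$, so this renormalisation only perturbs the approximation by a vanishing amount in $\|\cdot\|_{2,Y}$; here one uses that $\phi_\lambda(\psi_\lambda(1_A))\to\theta(1_A)=1_\mathcal N$ in $\|\cdot\|_{2,Y}$, together with the inequality \eqref{eq:SpecialHolderIneq} for the uniform $2$-seminorm and operator-norm bounds, to control the resulting discrepancy. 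This gives a new net with $\|\psi_\lambda\|\leq 1=\|\theta\|$ and still approximating $\theta$.

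The second step is to normalise $\phi_\lambda$ so that it becomes unital. Here is where the original proof uses Borel functional calculus on $\phi_\lambda(1_{F_\lambda})\in\mathcal N$, which is not available. Instead I would argue as follows: $\phi_\lambda(1_{F_\lambda})$ is a positive contraction in $\mathcal N$ (contractivity because $\|\psi_\lambda\|\leq 1$ and $\phi_\lambda(\psi_\lambda(\cdot))$ approximates a unital map in $\|\cdot\|_{2,Y}$ — more precisely $\phi_\lambda(1_{F_\lambda})\geq\phi_\lambda(\psi_\lambda(1_A))$ and the latter $\to 1_\mathcal N$ in $\|\cdot\|_{2,Y}$, so $1_\mathcal N-\phi_\lambda(1_{F_\lambda})$ is a positive element of norm $\le 1$ with $\|1_\mathcal N-\phi_\lambda(1_{F_\lambda})\|_{2,Y}\to 0$). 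Now form $F_\lambda^\dag=F_\lambda\oplus\mathbb C$ and extend $\phi_\lambda$ to $\widetilde\phi_\lambda\colon F_\lambda^\dag\to\mathcal N$ by $\widetilde\phi_\lambda(x+\alpha 1)\coloneqq\phi_\lambda(x)+\alpha(1_\mathcal N-\phi_\lambda(1_{F_\lambda}))$, which is c.p.\ (as the sum of a c.p.\ map and a c.p.\ map into $\mathbb C\cdot(1_\mathcal N-\phi_\lambda(1_{F_\lambda}))$) and unital by construction, with $F_\lambda^\dag$ still finite dimensional. Precomposing $\psi_\lambda$ with the unital inclusion $A\hookrightarrow F_\lambda^\dag\to$ wait — rather, keep $\psi_\lambda\colon A\to F_\lambda\hookrightarrow F_\lambda^\dag$, which remains c.p.\ of norm $\leq 1$; then $\widetilde\phi_\lambda(\psi_\lambda(a))=\phi_\lambda(\psi_\lambda(a))$, so the approximation of $\theta$ is unchanged. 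If moreover $A$ and $\theta$ are unital and we additionally want $\psi_\lambda$ unital, replace $\psi_\lambda$ by $\psi_\lambda''(a)\coloneqq\psi_\lambda(a)+f_a^\lambda(1_{F_\lambda^\dag}-\iota(1_A))$ where... — more simply: since $\psi_\lambda(1_A)\leq 1_{F_\lambda}\leq 1_{F_\lambda^\dag}$, set $\psi_\lambda''(a)\coloneqq\psi_\lambda(a)+\tau_\lambda(a)(1_{F_\lambda^\dag}-\psi_\lambda(1_A))$ wait, there is no natural trace. I would instead, for the unital refinement, apply the same $A^\dag$-style trick on the domain: compose with a state $\omega$ on $A$ to define $\psi_\lambda''(a)=\psi_\lambda(a)+\omega(a)(1_{F_\lambda^\dag}-\psi_\lambda(1_A))$, which is unital c.p., and adjust $\widetilde\phi_\lambda$ to send $1_{F_\lambda^\dag}-\psi_\lambda(1_A)$... this needs $\widetilde\phi_\lambda$ of that element to be small, which holds since $\phi_\lambda(1_{F_\lambda}-\psi_\lambda(1_A))=\phi_\lambda(1_{F_\lambda})-\phi_\lambda(\psi_\lambda(1_A))\to 0$ in $\|\cdot\|_{2,Y}$. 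So the extra term contributes a vanishing $\|\cdot\|_{2,Y}$-error and we keep the approximation.

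\textbf{Main obstacle.} The genuinely delicate point is the first renormalisation: bounding $\|\psi_\lambda\|$ by exactly $\|\theta\|$ rather than $\|\theta\|+\epsilon$. In the von Neumann setting one uses Borel functional calculus to split off a spectral projection of $\psi_\lambda(1_A)$ on $[\delta,\infty)$; here, in $F_\lambda$ (finite dimensional) Borel functional calculus \emph{is} available, so the step goes through verbatim inside each $F_\lambda$, and the only care needed is that the $\delta\to 0$ and $\lambda$ limits interact correctly so the final net still satisfies the approximation in $\|\cdot\|_{2,Y}$ — this is a standard diagonal/subnet argument using lower semicontinuity of $\|\cdot\|$ with respect to $\|\cdot\|_{2,Y}$ (Proposition~\ref{prop:ball-closed}\ref{item:lsc}) and the inequality \eqref{eq:SpecialHolderIneq}. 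So in fact the Borel functional calculus concern is a red herring at the level of $F_\lambda$; the only place one must be scrupulous is that all functional calculus and inversions are performed in the finite dimensional algebras $F_\lambda$ (or in $\mathbb C$), never in $\mathcal N$, and that the corrections on the $\mathcal N$ side are additive perturbations by elements that are small in $\|\cdot\|_{2,Y}$.
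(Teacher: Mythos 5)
There is a genuine gap at the step where you normalise $\phi_\lambda$ to be unital. You assert that $\phi_\lambda(1_{F_\lambda})$ is a positive \emph{contraction}, deducing this from $\phi_\lambda(1_{F_\lambda}) \geq \phi_\lambda(\psi_\lambda(1_A))$ and $\|\phi_\lambda(\psi_\lambda(1_A)) - 1_\mathcal N\|_{2, Y} \rightarrow 0$. But these facts only bound $\phi_\lambda(1_{F_\lambda})$ from \emph{below}: the operator norm is merely lower semicontinuous with respect to $\|\cdot\|_{2, Y}$ (Proposition~\ref{prop:ball-closed}), so $\|\cdot\|_{2,Y}$-convergence to $1_\mathcal N$ gives $\liminf_\lambda \|\phi_\lambda(\psi_\lambda(1_A))\| \geq 1$ and nothing from above. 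For instance $\phi_\lambda(\psi_\lambda(1_A))$ could equal $1_\mathcal N + e_\lambda$ with $e_\lambda \geq 0$, $\|e_\lambda\| = 1$, $\|e_\lambda\|_{2, Y} \rightarrow 0$; then $1_\mathcal N - \phi_\lambda(1_{F_\lambda}) \leq -e_\lambda$ is not positive, and your extension $\widetilde\phi_\lambda(x + \alpha 1) = \phi_\lambda(x) + \alpha(1_\mathcal N - \phi_\lambda(1_{F_\lambda}))$ is not completely positive. Everything downstream (the unitisation of $\psi_\lambda$ included) hinges on this false positivity. Your closing remark that functional calculus is never needed in $\mathcal N$ is therefore exactly backwards: the one place where functional calculus in $\mathcal N$ is unavoidable is on the element $\phi_\lambda(1_{F_\lambda}) \in \mathcal N$, to cut it down to a contraction. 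The paper does this with the \emph{continuous} function $f(t) = \min\{1, \max\{2 - t, 0\}\}$, setting $b_\lambda \coloneqq f(\phi''_\lambda(1_{F_\lambda}))$ and replacing $\phi''_\lambda$ by $b_\lambda \phi''_\lambda(\cdot) b_\lambda$; the elementary inequalities $0 \leq t f(t)^2 \leq 1$ and $0 \leq 1 - f(t) \leq |t - 1|$ give, respectively, that the compressed map is c.p.c.\ and that $\|1_\mathcal N - b_\lambda\|_{2, Y} \rightarrow 0$, so the approximation of $\theta$ survives. (It is Borel, not continuous, functional calculus that is unavailable in $\mathcal N$; the continuous cutoff is precisely the substitute for the spectral projection used in the von Neumann argument.)

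A secondary issue: your $\delta$-regularised compression on the $F_\lambda$ side produces $\phi'_\lambda(1_{F_\lambda}) = \phi_\lambda(\psi_\lambda(1_A)) + \delta\, \phi_\lambda(1_{F_\lambda})$, and Definition~\ref{def:tracially-nuclear} imposes no bound whatsoever on $\|\phi_\lambda\|$, so even after the cutoff fix you would need $\|\phi'_\lambda(1_{F_\lambda}) - 1_\mathcal N\|_{2, Y} \rightarrow 0$, which forces $\delta$ to be chosen against $\|\phi_\lambda(1_{F_\lambda})\|_{2, Y}$ in a diagonal argument. This is repairable but must be said. The paper sidesteps it entirely by invoking \cite[Lemma~2.2.5]{Br08} to write $\psi''_\lambda(a) = \psi''_\lambda(1_A)^{1/2}\psi'_\lambda(a)\psi''_\lambda(1_A)^{1/2}$ with $\psi'_\lambda$ genuinely u.c.p., so the compression is exact and no $\delta$ appears.
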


\begin{proof}
Rescaling $\theta$, we may assume that $\|\theta\| \leq 1$.  
Then by adding a unit to $A$ and using Lemma~\ref{lem:amenable-unitise}, we may assume $A$ and $\theta$ are unital.  We will construct u.c.p.\ maps $\phi_\lambda$ and $\psi_\lambda$ as in \eqref{eq:bounded-cpap-maps} approximately factorising $\theta$.

Since $\theta$ is assumed to be tracially nuclear, there are nets of finite dimensional $C^*$-algebras $F_\lambda$ and c.p.\ maps 
	\begin{equation}
		A \overset{\psi_\lambda''}{\longrightarrow} F_\lambda \overset{\phi_\lambda'''}{\longrightarrow} \mathcal N
	\end{equation}
	such that for all $a \in A$,
	\begin{equation}
		\lim_\lambda \| \phi'''_\lambda(\psi''_\lambda(a)) - \theta(a) \|_{2, Y} = 0, \quad a \in A.
	\end{equation}

	By \cite[Lemma~2.2.5]{Br08}, there is a u.c.p.\ map $\psi_\lambda' \colon A \rightarrow F_\lambda$ such that
	\begin{equation}
		\psi_\lambda''(a) = \psi_\lambda''(1_A)^{1/2} \psi_\lambda'(a) \psi_\lambda''(1_A)^{1/2}, \qquad a \in A.
	\end{equation}
	Define $\phi_\lambda'' \colon F_\lambda \rightarrow \mathcal N$ by
	\begin{equation}
		\phi_\lambda''(x) \coloneqq \phi_\lambda'''\big( \psi_\lambda''(1_A)^{1/2} x \psi_\lambda''(1_A)^{1/2}\big), \qquad x \in F_\lambda,
	\end{equation}
	and note that $\phi_\lambda'' \circ \psi_\lambda' = \phi_\lambda''' \circ \psi_\lambda''$.  Therefore, we have
	\begin{equation}\label{eq:cpap-proof1}
		\lim_\lambda \| \phi_\lambda''(\psi_\lambda'(a)) - \theta(a) \|_{2, Y} = 0, \qquad a \in A.
	\end{equation}	

	Define a continuous function  $f \colon \mathbb R \rightarrow [0,1]$ by
\begin{equation}
		f(t) \coloneqq \begin{cases} 1, & t \leq 1; \\ 2 - t, & 1 < t < 2; \\ 0, & t \geq 2. \end{cases}
	\end{equation}
	Let $b_\lambda \coloneqq f(\phi_\lambda''(1_{F_\lambda}))$ and define $\phi_\lambda' \colon F_\lambda \rightarrow \mathcal N$ by
	\begin{equation}
		\phi_\lambda'(x) \coloneqq b_\lambda \phi''_\lambda(x) b_\lambda, \qquad x \in F_\lambda.
	\end{equation}
	By elementary calculus, $0 \leq t f(t)^2 \leq 1$ for all $t \in [0,\infty)$. Therefore, $\|\phi_\lambda'(1_{F_\lambda})\| \leq 1$, and $\phi_\lambda'$ is a c.p.c.\ map.
	Also, for all $t \in \mathbb R$, we have $0 \leq 1 - f(t) \leq |t - 1|$, and so
	\begin{equation}\label{eq:cpap-proof2}
		0 \leq 1_\mathcal N - b_\lambda \leq |\phi''_\lambda(1_{F_\lambda}) - 1_\mathcal N|.
	\end{equation}
	Since $\theta$ and $\psi_\lambda'$ are unital, \eqref{eq:cpap-proof1} and \eqref{eq:cpap-proof2} imply
	\begin{equation}\label{eq:cpap-proof6}
		\lim_\lambda \|1_\mathcal N - b_\lambda\|_{2, Y} = 0.
	\end{equation}
	Fix $a \in A$. Since the elements $b_\lambda$ are contractions, we have 
	\begin{equation}
		\| \phi_\lambda'(\psi_\lambda'(a)) - b_\lambda \theta(a)b_\lambda  \|_{2, Y} 
		\leq \|\phi''_\lambda(\psi'_\lambda(a)) - \theta(a)\|_{2,Y},	
	\end{equation}
	so $\lim_\lambda \| \phi'_\lambda(\psi'_\lambda(a)) - b_\lambda \theta(a)b_\lambda  \|_{2, Y} = 0$ by \eqref{eq:cpap-proof1}. Moreover, as multiplication in $\mathcal{N}$ is $\|\cdot\|_{2,Y}$-continuous on $\|\cdot\|$-bounded sets, we have 
	\begin{equation}	
	\lim_\lambda \| b_\lambda \theta(a)b_\lambda - \theta(a) \|_{2, Y} = 0, \qquad a \in A,
	\end{equation}
	by \eqref{eq:cpap-proof6}. Therefore, 
	\begin{equation}\label{eq:cpap-proof7}
		\lim_\lambda \|\phi_\lambda'(\psi_\lambda'(a)) - \theta(a)\|_{2, Y} = 0, \qquad a \in A.
	\end{equation}
	
	While $\psi'_\lambda$ was arranged to be unital, the map $\phi'_\lambda$ need not be.  We modify the maps once more to correct this.  Let $\rho$ be a state on $A$ and define $\psi_\lambda \colon A \rightarrow F_\lambda \oplus \mathbb C$ by $\psi_\lambda(a) \coloneqq (\psi'_\lambda(a), \rho(a))$.  Then $\psi_\lambda$ is u.c.p.\ as both $\psi'_\lambda$ and $\rho$ are.  Also, define $\phi_\lambda \colon F_\lambda \oplus \mathbb C \rightarrow \mathcal N$ by
	\begin{equation}
		\phi_\lambda(x, \alpha) \coloneqq \phi_\lambda'(x) + \alpha(1_\mathcal N - \phi_\lambda'(1_{F_\lambda}))
	\end{equation}
	Then $\phi_\lambda$ is u.c.p.\ as $\phi'_\lambda$ is c.p.c.  By \eqref{eq:cpap-proof7}, to see that 
	\begin{equation}
		\lim_\lambda \|\phi_\lambda(\psi_\lambda(a)) - \theta(a) \|_{2, Y} = 0, \qquad a \in A,
	\end{equation} 
	it suffices to show that $\lim_\lambda \|1_\mathcal N -  \phi_\lambda(1_{F_\lambda}) \|_{2, Y} = 0$.  However, this follows immediately from taking $a \coloneqq 1_A$ in \eqref{eq:cpap-proof7} using that the maps $\psi_\lambda'$ and $\theta$ are both unital.
\end{proof}

We now return to the promised connection between tracial and weak nuclearity.

\begin{proposition}\label{prop:tracialnuc=weaknuc}
Let $A$ be a $C^*$-algebra and $(\mathcal N,\tau)$ be a tracial von Neumann algebra.  A c.p.\ map $\theta\colon A\to (\mathcal N,\{\tau\})$ is tracially nuclear if and only if it is weakly nuclear. In particular, $(\mathcal N,\tau)$ is semidiscrete as a tracial von Neumann algebra if and only if $(\mathcal N,\{\tau\})$ is {}amenable as a tracially complete $C^*$-algebra.
\end{proposition}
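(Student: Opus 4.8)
The plan is to treat this as a Hahn--Banach/Mazur argument comparing the point-$\|\cdot\|_{2,\tau}$ topology with the point-$\sigma$-weak topology on completely positive maps $A\to\mathcal N$, exploiting that the collection of c.p.\ maps factoring through finite dimensional $C^*$-algebras is convex. Throughout I would identify $\mathcal N$ with $\pi_\tau(\mathcal N)''$ acting on the standard Hilbert space $\mathcal H_\tau=L^2(\mathcal N,\tau)$ with cyclic and separating vector $\xi_\tau$, so that $\|x\|_{2,\tau}=\|x\xi_\tau\|$ for $x\in\mathcal N$. The first preliminary fact to record is that on any $\|\cdot\|$-bounded subset of $\mathcal N$ the $\|\cdot\|_{2,\tau}$-topology agrees with the strong operator topology (one direction being the estimate $\|(x-y)b\xi_\tau\|\le\|b\|\,\|x-y\|_{2,\tau}$ together with density of $\mathcal N\xi_\tau$ in $\mathcal H_\tau$, the other trivial), and that on bounded sets the weak operator topology agrees with the $\sigma$-weak topology. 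The second, and genuinely crucial, ingredient is the classical fact that a convex subset of $\mathcal N$ is strong-operator closed if and only if it is weak-operator closed, since the two topologies have the same continuous linear functionals; hence on a $\|\cdot\|$-bounded convex subset of $\mathcal N$ (or of $\mathcal N^n$) the $\|\cdot\|_{2,\tau}$-closure and the $\sigma$-weak closure coincide.

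With this in hand, the easy implication is tracial nuclearity $\Rightarrow$ weak nuclearity. Here I would first invoke Proposition~\ref{prop:bounded-cpap} to obtain factorizations $A\xrightarrow{\psi_\lambda}F_\lambda\xrightarrow{\phi_\lambda}\mathcal N$ witnessing tracial nuclearity of $\theta$ with $\|\psi_\lambda\|\le\|\theta\|$ and $\phi_\lambda$ unital, so that all the values $\phi_\lambda(\psi_\lambda(a))$ and $\theta(a)$ lie in the ball of radius $\|\theta\|\,\|a\|$ of $\mathcal N$. Then $\|\cdot\|_{2,\tau}$-convergence on this bounded set gives strong-operator, hence $\sigma$-weak, convergence of $\phi_\lambda(\psi_\lambda(a))$ to $\theta(a)$ for each $a$, which is exactly weak nuclearity.

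For the reverse implication the plan is: after rescaling assume $\|\theta\|\le1$, and by Lemma~\ref{lem:amenable-unitise} reduce to $A$ and $\theta$ unital; take u.c.p.\ witnesses $\phi\circ\psi$ of weak nuclearity (the u.c.p.\ strengthening being standard, cf.\ \cite[Proposition~3.8.2]{Br08}); and let $\mathcal C$ be the convex set of c.p.c.\ maps $A\to\mathcal N$ factoring through finite dimensional algebras --- convexity holding because two factorizations through $F_1,F_2$ combine into one through $F_1\oplus F_2$ via $a\mapsto(\psi_1(a),\psi_2(a))$ and $(x_1,x_2)\mapsto t\phi_1(x_1)+(1-t)\phi_2(x_2)$. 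For a fixed finite $\mathcal F\subseteq A$ and $\epsilon>0$, the image $S$ of $\mathcal C$ under evaluation at $\mathcal F$ is a $\|\cdot\|$-bounded convex subset of $\mathcal N^{\mathcal F}$ whose $\sigma$-weak closure contains $(\theta(a))_{a\in\mathcal F}$ by weak nuclearity; by the crucial observation this closure equals the $\|\cdot\|_{2,\tau}$-closure, so there is a single map in $\mathcal C$ that $\|\cdot\|_{2,\tau}$-approximates $\theta$ on $\mathcal F$ to within $\epsilon$, and letting $\mathcal F,\epsilon$ vary yields the net required by Definition~\ref{def:tracially-nuclear}. The ``in particular'' then follows by specializing $\theta=\id_{\mathcal N}$: semidiscreteness of $(\mathcal N,\tau)$ is by definition the weak nuclearity of $\id_{\mathcal N}$, and amenability of $(\mathcal N,\{\tau\})$ is by Definition~\ref{def:tracially-nuclear} its tracial nuclearity.

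The main obstacle I anticipate is not the $C^*$-algebraic bookkeeping but the clean justification that, on $\|\cdot\|$-bounded convex subsets of $\mathcal N$, the $\|\cdot\|_{2,\tau}$-closure and the $\sigma$-weak closure agree; this rests on the classical coincidence of strong- and weak-operator closures for convex sets and on working in the standard form, where $\|x\|_{2,\tau}=\|x\xi_\tau\|$ ties the $2$-norm directly to the strong topology (and where one also needs $\tau$ faithful so that $\|\cdot\|_{2,\tau}$ is a norm and $\mathcal N=\pi_\tau(\mathcal N)''$). Everything else --- convexity of $\mathcal C$, the unitization reduction, and the uniform bounding of approximating maps --- is routine and already packaged in the results above.
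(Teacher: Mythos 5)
Your proposal is correct and follows essentially the same route as the paper: both directions reduce to the coincidence of the $\|\cdot\|_{2,\tau}$- and strong-operator topologies on bounded sets, with Proposition~\ref{prop:bounded-cpap} supplying uniformly bounded witnesses for the easy direction, and a convexity-plus-Hahn--Banach argument (which the paper outsources to \cite[Lemma~3.8.1]{Br08} rather than invoking Mazur's theorem directly, as you do) converting point-weak$^*$ approximation into point-$\|\cdot\|_{2,\tau}$ approximation for the converse. The only cosmetic difference is that you separate the topologies' coincidence on bounded convex sets as an explicit preliminary, whereas the paper cites the packaged lemma; the mathematical content is identical.
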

\begin{proof}
The key observation is that the strong operator topology on the operator norm unit ball of $\mathcal N$ coincides with the topology induced by $\|\cdot\|_{2, \tau}$ (see \cite[Proposition III.2.2.17]{Bl06}, for example).  From here, assume that $\theta$ is weakly nuclear.  
By scaling $\theta$, we may assume $\|\theta\| \leq 1$.  Then, as in the proof of Lemma~\ref{lem:amenable-unitise}, the unitisation $\theta^\dag \colon A^\dag \rightarrow \mathcal N$ 
is weakly nuclear.  Hence we may assume that $A$ and $\theta$ are unital.  By \cite[Proposition~3.8.2]{Br08}, there are nets of finite dimensional $C^*$-algebras and c.p.c.\ maps
\begin{equation}
    A \overset{\psi_\lambda}{\longrightarrow} F_\lambda \overset{\phi_\lambda}{\longrightarrow} \mathcal N
\end{equation}
such that $\phi_\lambda(\psi_\lambda(a)) \rightarrow \theta(a)$ weak$^*$ for all $a \in A$.  Since the weak$^*$ and weak operator topologies agree on $\|\cdot\|$-bounded sets, we also have $\phi_\lambda(\psi_\lambda(a)) \rightarrow \theta(a)$ in the weak operator topology for all  $a \in A$.  Since the set of c.p.c.\ maps $A \rightarrow \mathcal N$ 
which admit c.p.c.\ factorisations through a finite dimensional $C^*$-algebra form a convex set (\cite[Lemma~2.3.6 and Remark~2.3.7]{Br08}), the Hahn--Banach theorem, in the form of \cite[Lemma~3.8.1]{Br08}, shows that there is a net of c.p.c.\ maps $\theta_\lambda \colon A \rightarrow \mathcal N$ such that 
$\theta_\lambda(a) \rightarrow \theta(a)$ in the strong operator topology for all $a \in A$, and each $\theta_\lambda$ admits a c.p.c.\ factorisation through a finite dimensional $C^*$-algebra.  
As each $\theta_\lambda$ is contractive, this implies $\|\theta(a) - \theta_\lambda(a)\|_{2, \tau} \rightarrow 0$ for all $a \in A$, and hence $\theta$ is tracially nuclear.

Conversely, suppose that $\theta$ is tracially nuclear. Again, after scaling $\theta$, we may assume that $\theta$ is contractive.  By Proposition~\ref{prop:bounded-cpap}, there are nets of finite dimensional $C^*$-algebras and c.p.c.\ maps
\begin{equation}
    A \overset{\psi_\lambda}{\longrightarrow} F_\lambda \overset{\phi_\lambda}{\longrightarrow} \mathcal N
\end{equation}
such that $\|\theta(a) - \phi_\lambda(\psi_\lambda(a))\|_{2, \tau} \rightarrow 0$ for all $a \in A$.  Then $\phi_\lambda(\psi_\lambda(a)) \rightarrow a$ in the strong operator topology and hence in the weak operator topology.  So the convergence holds weak$^*$, and this implies that $\theta$ is weakly nuclear.
\end{proof}

It is known that if $A$ is an exact $C^*$-algebra and $\mathcal N$ is a von Neumann algebra, then any weakly nuclear map $A \rightarrow \mathcal N$ is nuclear -- see \cite[Remark~3.4]{HKW12} (as observed there, this characterises exactness of $A$).  This suggests the following question, which we have been unable to answer even with the additional assumptions that $(\mathcal N, Y)$ is factorial and has CPoU.  The difficulty is that the passage from weak nuclearity to nuclearity involves taking point-weak$^*$-limit points.\footnote{The proof in \cite[Remark~3.4]{HKW12} uses Kirchberg's $\mathcal O_2$-embedding theorem, but this can be avoided using the following slight variation of their proof.  Let $\theta \colon A \rightarrow \mathcal N$ be weakly nuclear.  Fix nets of finite dimensional $C^*$-algebras $F_\lambda$ and c.p.c.\ maps $\psi_\lambda \colon A  \rightarrow F_\lambda$ and $\phi_\lambda \colon F_\lambda \rightarrow \mathcal N$ such that $\phi_\lambda \circ \psi_\lambda$ converges to $\theta$ point-weak$^*$.  Let $\pi \colon A \rightarrow \mathcal B(\mathcal H)$ be a faithful representation of $A$ and use Arveson's extension theorem to find c.p.c.\ maps $\widetilde\psi_\lambda \colon \mathcal B(\mathcal H) \rightarrow F_\lambda$ with $\widetilde \psi_\lambda \circ \pi = \psi_\lambda$.  Let $\rho \colon \mathcal B(\mathcal H) \rightarrow \mathcal N$ be a point-weak$^*$ limit point of $\phi \circ \widetilde\psi_\lambda$ and note that $\theta = \rho \circ \pi$.  As $A$ is exact, $\pi$ is nuclear, and hence so is $\theta$.}  We do not have the required compactness for this in the setting of tracially complete $C^*$-algebras.

\begin{question}\label{Q:exact-nuclear}
	Suppose $A$ is an exact $C^*$-algebra and $(\mathcal N, Y)$ is a tracially complete $C^*$-algebra.  Is every tracially nuclear map $A \rightarrow \mathcal N$ nuclear?
\end{question}

\subsection{Fibrewise {}amenability}

We work towards showing that {}amenability of tracially complete $C^*$-algebras can be detected in its tracial von Neumann algebra completions.  The basic strategy is to use a convexity argument similar to the one in the fundamental result from \cite{Effros-Lance77,Choi-Effros76, Choi-Effros77} stating that if $A$ is a $C^*$-algebra such that $A^{**}$ is semidiscrete, then $A$ is nuclear (see \cite[Proposition~2.3.8]{Br08}, for example).  

We set up the convexity argument in a somewhat general setting for later use in Section~\ref{sec:CPoU-proof}.  Both the statement and proof are abstracted from an argument in the proof of Ozawa's~\cite[Theorem~3]{Oz13} (cf.\ the proof of Theorem~\ref{thm:TCtoWStarBundle} above) computing the centre of tracial completions of $C^*$-algebras.  A similar Hahn--Banach argument appears in the proof of (8)$\Rightarrow$(1) in \cite[Theorem~2.5]{GGKNV}.

\begin{lemma}\label{lem:affine-selection}
	Let $\mathcal C$ be a convex subset of a real vector space and let $(\M, X)$  be a tracially complete $C^*$-algebra. Suppose we are given $m \in \mathbb N$ and a finite collection $f_1,\dots,f_m \colon \mathcal C \rightarrow \M$ of affine functions such that for all $\tau \in X$ and $\epsilon > 0$, there is $c_\tau \in \mathcal C$ such that
	\begin{align}
		\max_{1 \leq i \leq m} \|f_i(c_\tau)\|_{2, \tau} &< \epsilon.
	\intertext{Then for all $\epsilon > 0$, there is $c \in \mathcal C$ such that} 
		 \max_{1 \leq i \leq m}  \|f_i(c)\|_{2, X} &< \epsilon.
	\end{align}
\end{lemma}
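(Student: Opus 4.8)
The statement asks us to upgrade a collection of finitely many affine conditions that hold separately at each trace $\tau \in X$ to a single condition that holds uniformly in the $\|\cdot\|_{2,X}$-norm. The mechanism is the Hahn--Banach trick enabled by Choquet theory: a set of elements satisfying each local condition up to $\epsilon$ exists by hypothesis, and convexity of $\mathcal C$ together with the fact that $\mathrm{Aff}(X)$ carries the topology of pointwise convergence (Proposition~\ref{prop:pointwise-to-uniform}) lets us take a convex combination of finitely many such elements to obtain one working simultaneously over all of $X$.

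\textbf{Key steps.} First I would fix $\epsilon > 0$ and, for each $\tau \in X$, choose $c_\tau \in \mathcal C$ with $\max_{1 \leq i \leq m} \|f_i(c_\tau)\|_{2,\tau} < \epsilon/2$. Next, for fixed $c \in \mathcal C$, consider the functions $h_{c,i} \colon X \to \mathbb R$ given by $h_{c,i}(\sigma) \coloneqq \|f_i(c)\|_{2,\sigma}^2 = \sigma(f_i(c)^*f_i(c))$. These are continuous (weak$^*$-continuity of $\sigma \mapsto \sigma(b)$ for fixed $b \in \M$) and affine in $\sigma$, so $h_{c,i} \in \mathrm{Aff}(X)$. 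Now form the function $h_c \coloneqq \sum_{i=1}^m h_{c,i} \in \mathrm{Aff}(X)$, which is nonnegative, and observe that $h_{c_\tau}(\tau) < m(\epsilon/2)^2$. The set $\{h_c : c \in \mathcal C\} \subseteq \mathrm{Aff}(X)$ is convex because $\mathcal C$ is convex and each $f_i$ is affine: for $c, c' \in \mathcal C$ and $t \in [0,1]$, $f_i(tc + (1-t)c') = t f_i(c) + (1-t)f_i(c')$; while $h$ is not itself affine in $c$, one checks by the parallelogram-type estimate (or Cauchy--Schwarz in each GNS space, exactly as in the proof of Theorem~\ref{thm:TCtoWStarBundle}) that the \emph{convex combination of the $c_\tau$'s} controls the relevant quantity. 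Concretely: by compactness of $X$ and continuity of each $h_{c_\tau}$, there are $\tau_1, \dots, \tau_n \in X$ such that $X = \bigcup_{j=1}^n \{ \sigma \in X : h_{c_{\tau_j}}(\sigma) < m(\epsilon/2)^2 \}$; then, because $\mathrm{co}\{h_{c_{\tau_j}} : 1 \leq j \leq n\}$ is a compact convex subset of $\mathrm{Aff}(X)$ whose elements have the property that at every $\sigma \in X$ at least one of the generators is small, the Hahn--Banach separation theorem (in the form used in the excerpt, via Proposition~\ref{prop:pointwise-to-uniform}\ref{prop:pointwise-to-uniform1}, that all states on $\mathrm{Aff}(X)$ are point evaluations) produces scalars $t_1, \dots, t_n \geq 0$ summing to $1$ with $\sum_{j=1}^n t_j h_{c_{\tau_j}}(\sigma) < m(\epsilon/2)^2$ for all $\sigma \in X$. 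Finally, set $c \coloneqq \sum_{j=1}^n t_j c_{\tau_j} \in \mathcal C$. Then for each $i$, using that $f_i$ is affine and the Cauchy--Schwarz inequality $\|\sum_j t_j b_j\|_{2,\sigma} \leq \big(\sum_j t_j \|b_j\|_{2,\sigma}^2\big)^{1/2}$,
\begin{equation}
\|f_i(c)\|_{2,\sigma}^2 = \Big\| \sum_{j=1}^n t_j f_i(c_{\tau_j}) \Big\|_{2,\sigma}^2 \leq \sum_{j=1}^n t_j \|f_i(c_{\tau_j})\|_{2,\sigma}^2 \leq \sum_{j=1}^n t_j h_{c_{\tau_j}}(\sigma) < m(\epsilon/2)^2
\end{equation}
for all $\sigma \in X$, so $\|f_i(c)\|_{2,X} < \sqrt{m}\,\epsilon/2$. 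Rescaling $\epsilon$ at the start (replace $\epsilon/2$ by $\epsilon/(2\sqrt m)$, say, or simply absorb the constant) gives $\max_{1 \leq i \leq m}\|f_i(c)\|_{2,X} < \epsilon$.

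\textbf{Main obstacle.} The only delicate point is the Hahn--Banach step: one must phrase the local hypothesis as a statement about a convex subset of $\mathrm{Aff}(X)$ not meeting a suitable open half-space, then invoke that the dual of $\mathrm{Aff}(X)$ is spanned by point evaluations (Proposition~\ref{prop:pointwise-to-uniform}) to pass from ``for every $\sigma$ some generator is small'' to ``some fixed convex combination is uniformly small.'' The standard way to package this is: the function $\sigma \mapsto \inf_{c \in \mathcal C} h_c(\sigma)$ is, on finite subsets, controlled below by $m(\epsilon/2)^2$; apply Ky Fan's minimax theorem or the separation argument of \cite[Lemma~3.8.1]{Br08}-type to the compact convex set $X$ and the convex set $\{h_c\}$ to extract the convex combination. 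This is exactly the pattern already executed in the proof of Theorem~\ref{thm:TCtoWStarBundle} in the excerpt, so it should transcribe with minimal friction; no deep input (in particular nothing like Connes' theorem) is needed.
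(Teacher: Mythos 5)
Your argument is correct, but it is organised differently from the paper's proof, and the difference is worth recording. The paper never extracts a finite subcover of $X$: instead it indexes a net by pairs $(\mathcal T,\epsilon)$ of finite subsets of $X$ and tolerances, and applies the hypothesis at the \emph{average} trace $\sigma_\lambda=|\mathcal T_\lambda|^{-1}\sum_{\tau\in\mathcal T_\lambda}\tau$ with tolerance $\epsilon_\lambda/|\mathcal T_\lambda|^{1/2}$; since $\|b\|_{2,\tau}^2\le|\mathcal T_\lambda|\,\|b\|_{2,\sigma_\lambda}^2$ for $\tau\in\mathcal T_\lambda$, this single application already produces one $c_\lambda$ that is small at \emph{every} trace of $\mathcal T_\lambda$. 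The functions $h_{i,\lambda}(\tau)=\|f_i(c_\lambda)\|_{2,\tau}^2$ then converge to $0$ pointwise, hence weakly in $\mathrm{Aff}(X)^{\oplus m}$ by Proposition~\ref{prop:pointwise-to-uniform}, and Mazur's lemma (Hahn--Banach) puts a convex combination within $\epsilon^2$ of $0$ in norm; the proof finishes with the same Cauchy--Schwarz estimate you use. You instead get finitely many candidates $c_{\tau_1},\dots,c_{\tau_n}$ from compactness of $X$ and an open cover, and then must combine them by a genuine minimax step: from ``for every $\sigma$ some $h_{c_{\tau_j}}(\sigma)<\delta$'' to ``some fixed convex combination satisfies $\sum_j t_jh_{c_{\tau_j}}<\delta$ uniformly.'' That step is valid, but only because the $h$'s are \emph{affine} on the compact convex set $X$ (so that $\sup_{\mu\in\mathrm{Prob}(X)}\min_j\int h_{c_{\tau_j}}\,d\mu=\sup_{\sigma\in X}\min_jh_{c_{\tau_j}}(\sigma)$ via barycentres, and Sion/Ky Fan applies); for merely continuous nonnegative functions the analogous statement is false (two disjointly supported bumps on $[0,1]$ give a counterexample), so this is the one place where your sketch asserts more than it proves and where the written argument would need to be completed. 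Your route buys a shorter indexing (finite subcover rather than a directed net) at the cost of invoking a minimax theorem; the paper's averaging trick buys a softer Hahn--Banach input (a weakly null net has $0$ in the norm-closed convex hull) at the cost of the slightly fiddlier net construction. Both are legitimate, and your final Cauchy--Schwarz computation and the rescaling by $\sqrt m$ are exactly as in the paper.
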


\begin{proof}
	Let $\Lambda$ be the directed set of pairs $(\mathcal T, \epsilon)$ where $\mathcal T \subseteq X$ is a non-empty finite set and $\epsilon > 0$, equipped  with the ordering $(\mathcal T_1, \epsilon_1) \leq (\mathcal T_2, \epsilon_2)$ if and only if $\mathcal T_1 \subseteq \mathcal T_2$ and $\epsilon_1 \geq \epsilon_2$.  
	
	Fix $\lambda \coloneqq (\mathcal T_\lambda, \epsilon_\lambda) \in \Lambda$ and let $\sigma_\lambda \coloneqq |\mathcal T_\lambda|^{-1} \sum_{\tau\in \mathcal T_\lambda}\tau\in X$ denote the average of the traces in $\mathcal T_\lambda$.  By assumption, there is $c_\lambda \in \mathcal C$ such that 
	\begin{align}
		|\mathcal T_\lambda|^{1/2} \max_{1 \leq i \leq m} \|f_i(c_\lambda)\|_{2, \sigma_\lambda} &<  \epsilon_\lambda.
	\intertext{It follows from the definition of $\sigma_\lambda$ that}
		\max_{\tau \in \mathcal T_\lambda} \max_{1 \leq i \leq m} \|f_i(c_\lambda)\|_{2, \tau} &< \epsilon_\lambda \label{eq:net-pointwise-small}.
	\end{align}
	
	For $\lambda \in \Lambda$ and $i=1,\ldots,m$, define $h_{i,\lambda}\colon X\to \mathbb R$ by
	\begin{equation}\label{eq:hilambdaDef}
		h_{i, \lambda}(\tau)\coloneqq\|f_i(c_\lambda)\|_{2, \tau}^2
	\end{equation}
	and note that $h_{i, \lambda} \in {\rm Aff}(X)$.  For all $i=1,
	\ldots,m$, $h_{i, \lambda} \rightarrow 0$ pointwise on $X$ by \eqref{eq:net-pointwise-small}, and hence Proposition~\ref{prop:pointwise-to-uniform}\ref{prop:pointwise-to-uniform2} implies $h_{i, \lambda} \rightarrow 0$ weakly.  View $h_\lambda \coloneqq (h_{i,\lambda}, \ldots, h_{m,\lambda})$ as a net in the Banach space ${\rm Aff}(X)^{\oplus m}$ and note that $h_\lambda \rightarrow 0$ weakly.  
	
	Fix $\epsilon > 0$.  By the Hahn--Banach theorem, there are $l\in\mathbb N$, $\lambda_1,\dots,\lambda_l\in\Lambda$, and real numbers  $t_1, \ldots,  t_l \geq 0$ such that $\sum_{k=1}^l t_k = 1$ and
	\begin{equation}\label{eq:affine-selection1}
		\max_{1 \leq i \leq m} \Big\|\sum_{k=1}^l t_k h_{i,\lambda_k}\Big\|_\infty < \epsilon^2.
	\end{equation}
	Define $c \coloneqq \sum_{k=1}^l t_k c_{\lambda_k} \in \mathcal C$.  For $i=1,\ldots,m$, using the triangle inequality and the Cauchy--Schwarz inequality, we have
	\begin{equation}\begin{split}
		\|f_i(c)\|_{2,X} 
		&= \Big\|\sum_{k=1}^l t_k f_i(c_{\lambda_k})\Big\|_{2, X}  \\
		&\leq \sum_{k=1}^l t_k \|f_i(c_{\lambda_k})\|_{2, X}  \\
		&= \sum_{k=1}^l \big(t_k^{1/2})\big( t_k^{1/2} \|f_i(c_{\lambda_k})\|_{2, X}\big)  \\
		&\leq \Big(\sum_{k=1}^l t_k\Big)^{1/2} \Big( \sum_{k=1}^l t_k \|f_i(c_{\lambda_k})\|_{2, X}^2 \Big)^{1/2}  \\
		&= \Big( \sum_{k=1}^l t_k \|f_i(c_{\lambda_k})\|_{2, X}^2 \Big)^{1/2} \label{eq:convexity-bound}
	\end{split}\end{equation}
	Then for $i=1,
	\ldots,m$ and $\tau \in X$, we have
	\begin{equation}
		\|f_i(c)\|_{2, \tau}^2 \stackrel{\eqref{eq:hilambdaDef},\eqref{eq:convexity-bound}}{\leq} \sum_{k=1}^l t_k h_{i,\lambda_k}(\tau) \stackrel{\eqref{eq:affine-selection1}}{<} \epsilon^2.
	\end{equation}
	Hence $\|f_i(c)\|_{2, X} < \epsilon$ for all $i=1,\ldots,m$.
\end{proof}

The following lemma is standard.  For example, it follows from the proof of \cite[Lemma~1.1]{HKW12} by quoting the Choi--Effros lifting theorem in place of the projectivity of order zero maps in the last paragraph.

\begin{lemma}\label{lem:finite-rank-cp}
	If $F$ and $B$ are $C^*$-algebras with $F$ finite dimensional, $\pi \colon B \rightarrow \mathcal B(H)$ is a $^*$-homomorphism, and $\phi \colon F \rightarrow \mathcal \pi(B)''$ is a c.p.\ map, then there is a net of c.p.\ maps $\phi_\lambda \colon F \rightarrow B$ such that $\|\phi_\lambda\| \leq \|\phi\|$ for all $\lambda$ and
	\begin{equation}
		\phi(b) = \text{\rm strong$^*$-}\lim_\lambda \pi(\phi_\lambda(b)), \qquad b \in B.	
	\end{equation}
	Further, if $\phi$ is unital, we may arrange for each $\phi_\lambda$ to be unital.
\end{lemma}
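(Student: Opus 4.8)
The plan is to reduce to the case where $B$ acts on $H$ as its universal representation, so that $\pi(B)'' \cong B^{**}$, and then exploit the standard fact that finite-rank c.p.\ maps into $B^{**}$ can be approximated point-$\sigma$-weakly by c.p.\ maps into $B$. First I would recall that since $F$ is finite dimensional, a c.p.\ map $\phi \colon F \to \pi(B)''$ is determined by the finitely many positive elements $\phi(e)$ for $e$ ranging over a system of matrix units (more precisely, $\phi$ is the compression of a single positive element of $\pi(B)'' \otimes M_k$ for suitable $k$, via the Choi/Stinespring correspondence for c.p.\ maps out of finite dimensional algebras). The key analytic input is Kaplansky's density theorem applied to $\pi(B) \otimes M_k \subseteq (\pi(B) \otimes M_k)'' = \pi(B)'' \otimes M_k$: the positive part of the unit ball of $\pi(B) \otimes M_k$ is strong$^*$-dense in the positive part of the unit ball of $\pi(B)'' \otimes M_k$ (after rescaling so that the relevant element is a contraction). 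Translating back through the c.p.--positive-element dictionary produces the net $\phi_\lambda \colon F \to B$ with $\|\phi_\lambda\| \le \|\phi\|$ and $\pi(\phi_\lambda(b)) \to \phi(b)$ strong$^*$; here one uses that strong$^*$-convergence of the defining positive elements passes to strong$^*$-convergence of the compressions, since multiplication is strong$^*$-continuous on bounded sets and $F$ is finite dimensional so only finitely many matrix-unit coefficients are involved.

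For the unital refinement, I would use the same trick as in Lemma~\ref{lem:UnitBallDensity} and the proof of Proposition~\ref{prop:bounded-cpap}: given the approximating net $\phi_\lambda$, correct the defect $1_B - \phi_\lambda(1_F)$ (which is small in strong$^*$ when $\phi$ is unital, since $\phi_\lambda(1_F) \to \phi(1_F) = 1_{\pi(B)''}$ strong$^*$) by passing to $\mathcal{B}(H)$, applying Arveson's extension theorem or the standard compression-by-$f(\phi_\lambda(1_F))$ manoeuvre, and then pre-composing with a unital c.p.\ map $F \to F$ or adding a rank-one correction $x \mapsto \rho(x)(1_B - \phi_\lambda(1_F))$ for a state $\rho$ on $F$. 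One checks this does not destroy the norm bound (it can only improve it, as a convex combination of u.c.p.\ maps) nor the strong$^*$-convergence.

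Alternatively — and this is the route the parenthetical remark in the excerpt suggests — one can deduce the lemma directly from the cited \cite[Lemma~1.1]{HKW12}: that lemma gives an analogous statement for the completely positive approximation property, and inspecting its proof, the only place where projectivity of order zero maps from matrix algebras is used is in the final paragraph to lift a c.p.\ map; replacing that invocation with the Choi--Effros lifting theorem (applied to the ideal $\ker(\pi^{**})$ in $B^{**}$, or directly to lift a c.p.\ contraction $F \to \pi(B)''$ through the quotient $\pi(B)'' \twoheadleftarrow$ some separable sub-$C^*$-algebra) yields exactly the net of c.p.\ maps $\phi_\lambda \colon F \to B$ with the stated properties. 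I would likely present the argument this way, as it is shortest, and simply spell out the one modification.

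The main obstacle — really the only subtle point — is getting the norm bound $\|\phi_\lambda\| \le \|\phi\|$ simultaneously with strong$^*$-convergence: Kaplansky density gives contractions, but $\|\phi_\lambda\|$ for a c.p.\ map equals $\|\phi_\lambda(1)\|$ only in the unital case, so in the non-unital case one must be slightly careful to phrase the density statement for the correct positive element (the Choi element in $\pi(B)'' \otimes M_k$, whose norm controls $\|\phi\|$) rather than naively truncating $\phi_\lambda(1_F)$. Everything else is a routine transcription of the $C^*$-to-von-Neumann-algebra approximation machinery, together with the observation that for finite dimensional $F$ the point-strong$^*$ topology on $\mathrm{CP}(F, \pi(B)'')$ is controlled by the values on a finite generating set.
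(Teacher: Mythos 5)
Your preferred route is exactly the paper's: the paper's entire proof is the one-sentence observation that the lemma follows from the proof of \cite[Lemma~1.1]{HKW12} upon replacing the use of projectivity of order zero maps in its last paragraph by the Choi--Effros lifting theorem, which is precisely what you propose to present. The only caveat concerns your alternative Kaplansky-density sketch: the norm of the Choi element in $M_k \otimes \pi(B)''$ does not control $\|\phi\| = \|\phi(1_F)\|$ in the direction you need (bounding $\|C_\lambda\|$ by $\|C_\phi\|$ does not bound $\|\psi_\lambda(1_F)\|$ by $\|\phi(1_F)\|$), so that route would need an extra correction step --- but this is moot for the argument you actually intend to write.
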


We are now in a position to give the `fibrewise' characterisation of tracially nuclear $^*$-homomorphisms (Theorem \ref{thm:amenable}) from which Theorem \ref{thm:introamenable} follows immediately (by taking $A\coloneqq \mathcal M$ and $\theta\coloneqq \id_{\mathcal M}$).  Note that the following theorem also implies the first part of Theorem~\ref{IntroThmClassMap} from the overview.

The equivalence of \ref{amen2} and \ref{amen3} in Theorem \ref{thm:amenable} is essentially obtained by Brown in \cite[Theorem 3.2.2]{Bro06} (which is a variation on Connes' theorem), so the main implication we need to show is \ref{amen2} implies \ref{amen1}.  This will follow from the Hahn--Banach argument of Lemma~\ref{lem:affine-selection}.

\begin{theorem}\label{thm:amenable}
   Let $A$ be a $C^*$-algebra, let $(\M, X)$ be a tracially complete $C^*$-algebra, and let $\theta \colon A \rightarrow \mathcal M$ be a c.p.\ map.   The following are equivalent:
\begin{enumerate}
	\item\label{amen1} $\theta$ is tracially nuclear;
	\item\label{amen2} for all $\tau \in X$, $\pi_\tau \circ \theta \colon A \rightarrow \pi_\tau(\M)''$ is weakly nuclear.
 \end{enumerate}
 If $\theta$ is a $^*$-homomorphism then these are also equivalent to
 \begin{enumerate}
 \setcounter{enumi}{2}
	\item\label{amen3} for every $\tau \in X$ with $\tau \circ \theta \neq 0$, the trace $\| \tau \circ \theta\|^{-1} \cdot \tau \circ \theta \in T(A)$ is uniformly amenable in the sense of \cite[Definition~3.2.1]{Bro06}.\footnote{Brown implicitly only defines uniformly amenable for traces on separable unital $C^*$-algebras in \cite[Definition 3.2.1]{Br08}: $\tau$ is uniformly amenable if there is a sequence of u.c.p.\ maps $\phi_n\colon A \to M_{k_n}$ such that $\|\phi_n(ab)-\phi_n(a)\phi_n(b)\|_{2,\mathrm{tr}_{k_n}} \to 0$ and $\|\mathrm{tr}_{k_n}\circ \phi_n-\tau\| \to 0$ in the norm topology on the dual of $A$ (note the use of u.c.p.\ maps forces $A$ to be unital, and the use of sequences is only reasonable if $A$ is separable). We extend this definition to the non-separable case by allowing nets in place of sequences, and to the non-unital case, by saying that $\tau \in T(A)$ is uniformly amenable if its unitisation, $\tau^\dag \in T(A^\dag)$, is uniformly amenable.}
\end{enumerate}
\end{theorem}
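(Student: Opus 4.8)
The plan is to establish the chain of implications $\ref{amen1} \Rightarrow \ref{amen2}$, $\ref{amen2} \Rightarrow \ref{amen1}$, and, under the additional hypothesis that $\theta$ is a $^*$-homomorphism, $\ref{amen2} \Leftrightarrow \ref{amen3}$. The implication $\ref{amen1} \Rightarrow \ref{amen2}$ is the routine direction: if $(F_\lambda, \psi_\lambda, \phi_\lambda)$ witnesses tracial nuclearity of $\theta$, then for each fixed $\tau \in X$ the compositions $\pi_\tau \circ \phi_\lambda \circ \psi_\lambda$ converge to $\pi_\tau \circ \theta$ in $\|\cdot\|_{2,\tau}$, hence (since the $\|\cdot\|_{2,\tau}$-topology agrees with the strong topology on bounded sets, and after normalising via Proposition~\ref{prop:bounded-cpap} to get uniformly bounded maps) in the strong, and therefore weak, operator topology; this is exactly weak nuclearity of $\pi_\tau \circ \theta$ into $\pi_\tau(\M)''$.

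For the key implication $\ref{amen2} \Rightarrow \ref{amen1}$, I would first reduce to the unital case: rescale $\theta$ to be contractive and pass to the unitisation $\theta^\dagger \colon A^\dagger \to \M$ using Lemma~\ref{lem:amenable-unitise} (noting that weak nuclearity of $\pi_\tau \circ \theta$ passes to $\pi_\tau \circ \theta^\dagger$ by the same unitisation argument as in Proposition~\ref{prop:tracialnuc=weaknuc}). So assume $A$ and $\theta$ are unital. Now fix a finite set $a_1, \dots, a_m \in A$ (contractions, say) and $\epsilon > 0$; it suffices to produce a single pair of u.c.p.\ maps $A \xrightarrow{\psi} F \xrightarrow{\phi} \M$ through a finite dimensional $C^*$-algebra $F$ with $\|\phi(\psi(a_j)) - \theta(a_j)\|_{2,X} < \epsilon$ for all $j$, since then a standard net/diagonal argument assembles these into the witnessing net for tracial nuclearity. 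Take $\mathcal C$ to be the convex set of all u.c.p.\ maps from $A$ to $\M$ that admit a u.c.p.\ factorisation through some finite dimensional $C^*$-algebra (this is convex by the usual argument, cf.\ \cite[Lemma~2.3.6]{Br08}), and define affine functions $f_j \colon \mathcal C \to \M$ by $f_j(\rho) \coloneqq \rho(a_j) - \theta(a_j)$. The hypothesis needed to invoke Lemma~\ref{lem:affine-selection} is that for each $\tau \in X$ and each $\delta > 0$ there is $\rho_\tau \in \mathcal C$ with $\max_j \|f_j(\rho_\tau)\|_{2,\tau} < \delta$. To get this, use that $\pi_\tau \circ \theta$ is weakly nuclear: by Proposition~\ref{prop:tracialnuc=weaknuc} applied to the tracial von Neumann algebra $(\pi_\tau(\M)'', \hat\tau)$, weak nuclearity is the same as tracial nuclearity there, so there are u.c.p.\ maps $A \to F \to \pi_\tau(\M)''$ with the composition $\delta$-close to $\pi_\tau \circ \theta$ in $\|\cdot\|_{2,\hat\tau}$; then apply Lemma~\ref{lem:finite-rank-cp} to replace the second map $F \to \pi_\tau(\M)''$ by a u.c.p.\ map $F \to \M$ whose $\pi_\tau$-image approximates it strong$^*$ (and hence in $\|\cdot\|_{2,\hat\tau}$ on the finite dimensional, bounded domain $F$), giving $\rho_\tau \in \mathcal C$ with $\|\rho_\tau(a_j) - \theta(a_j)\|_{2,\tau}$ small. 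Lemma~\ref{lem:affine-selection} then yields $\rho \in \mathcal C$ with $\max_j \|f_j(\rho)\|_{2,X} < \epsilon$, i.e.\ the desired $\phi, \psi$, completing $\ref{amen2} \Rightarrow \ref{amen1}$.

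For the equivalence of $\ref{amen2}$ and $\ref{amen3}$ when $\theta$ is a $^*$-homomorphism: fix $\tau \in X$ with $\sigma \coloneqq \tau \circ \theta \neq 0$ and normalise to a trace $\bar\sigma \coloneqq \|\sigma\|^{-1}\sigma \in T(A)$. The GNS representation of $A$ attached to $\bar\sigma$ can be identified with a corner of $\pi_\tau \circ \theta$, so $\pi_{\bar\sigma}(A)'' \cong p\, \pi_\tau(\theta(A))'' p$ where $p$ is the relevant support projection; weak nuclearity of $\pi_\tau \circ \theta$ is equivalent to semidiscreteness of $\pi_\tau(\theta(A))''$ in its induced trace (unitising as needed to handle the non-unital corner), and by Brown's uniform amenability characterisation \cite[Theorem~3.2.2]{Bro06} — a variant of Connes' theorem — this is in turn equivalent to uniform amenability of $\bar\sigma$ in the sense of \cite[Definition~3.2.1]{Bro06} (extended to nets and to the non-unital case via unitisation as in the footnote). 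Running this for all $\tau \in X$ gives $\ref{amen2} \Leftrightarrow \ref{amen3}$.

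\textbf{Main obstacle.} The delicate point is the verification of the local hypothesis of Lemma~\ref{lem:affine-selection}: turning weak nuclearity of $\pi_\tau\circ\theta$ (a condition about point-weak$^*$ convergence into the von Neumann algebra $\pi_\tau(\M)''$) into the existence of a genuine finite dimensional factorisation $A \to F \to \M$ (through $\M$ itself, not the completion) that is $\|\cdot\|_{2,\tau}$-close to $\theta$. This requires the interplay of three facts — the coincidence of the $\|\cdot\|_{2,\tau}$-topology with the strong topology on bounded sets, Proposition~\ref{prop:tracialnuc=weaknuc} to pass between weak and tracial nuclearity in the fibre, and Lemma~\ref{lem:finite-rank-cp} to pull the codomain maps back from $\pi_\tau(\M)''$ to $\M$ — together with care about norm bounds (Proposition~\ref{prop:bounded-cpap}) so that the strong$^*$-approximation of Lemma~\ref{lem:finite-rank-cp} actually controls the relevant $2$-norm errors. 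Once that local input is in hand, the Hahn--Banach/convexity machinery of Lemma~\ref{lem:affine-selection} does the global patching essentially automatically.
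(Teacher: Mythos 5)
Your proposal is correct and follows essentially the same route as the paper: the Hahn--Banach convexity argument of Lemma~\ref{lem:affine-selection} applied to the convex set of finite-dimensionally factorable maps, with the local hypothesis supplied by weak nuclearity in each fibre via Lemma~\ref{lem:finite-rank-cp}, and the equivalence with \ref{amen3} via Brown's theorem. The only (harmless) cosmetic differences are that the paper merely rescales $\theta$ to be contractive rather than unitising in the proof of \ref{amen2}$\Rightarrow$\ref{amen1}, and for \ref{amen2}$\Leftrightarrow$\ref{amen3} it identifies $\pi_{\tau\circ\theta}(A)''$ with $\pi_\tau(\theta(A))''$ directly (using the trace-preserving conditional expectation $\pi_\tau(\M)''\to\pi_\tau(\theta(A))''$) rather than via a corner.
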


\begin{proof}
	To see \ref{amen1} implies \ref{amen2}, note that if $\theta$ is tracially nuclear, then for all $\tau \in X$, $\pi_\tau \circ \theta$ is tracially nuclear as a map into the tracial von Neumann algebra $(\pi_\tau(\mathcal M)'', \tau)$.  By Proposition~\ref{prop:tracialnuc=weaknuc}, for maps into tracial von Neumann algebras, weak and tracial nuclearity are equivalent, so \ref{amen2} follows.
	
	When $\theta$ is a $^*$-homomorphism, the equivalence of \ref{amen2} and \ref{amen3} can be reduced to the case that $A$ and $\theta$ are unital by adding a unit to $A$ and using Lemma~\ref{lem:amenable-unitise}. For $\tau \in X$, since there is a normal trace-preserving conditional expectation $\pi_\tau(\M)'' \rightarrow \pi_\tau(\theta(A))''$ (see \cite[Lemma 1.5.10]{Br08}, for example), $\pi_{\tau \circ \theta} = \pi_\tau \circ \theta$ is weakly nuclear when viewed as a map into $\pi_\tau(\M)''$ if and only if it is weakly nuclear when viewed as a map into $\pi_{\tau \circ \theta}(A)'' = \pi_\tau(\theta(A))''$.  The equivalence of (ii) and (iii) then follows from the equivalence of (1) and (6) in  \cite[Theorem~3.2.2]{Bro06}.
	
	It remains to show that \ref{amen2} implies \ref{amen1}.  Assume that $\theta$ is contractive. Fix a finite set $\mathcal F \subseteq A$ and $\epsilon > 0$.  Let $\mathcal C$ denote the set of all c.p.\ maps $A \rightarrow \M$ which factor by c.p.\ maps through a finite dimensional $C^*$-algebra and note that $\mathcal C$ is convex (cf.~\cite[Lemma~2.3.6]{Br08}).  We will apply Lemma~\ref{lem:affine-selection} to the affine functions
	\begin{equation} 
		f_a \colon \mathcal C \rightarrow \M \colon \eta \mapsto \eta(a) - \theta(a), \qquad a \in \mathcal F.
	\end{equation}
	
	For $\tau \in X$, $\pi_\tau \circ \theta$ is weakly nuclear by~\ref{amen2}, so there are a finite dimensional $C^*$-algebra $F_\tau$ and  c.p.c.\ maps
	\begin{equation}
		A \overset{\psi_\tau}{\longrightarrow} F_\tau \overset{\bar\phi_\tau}{\longrightarrow} \pi_\tau(\M)''
	\end{equation}
	such that
	\begin{alignat}{2}
		\| \bar\phi_\tau(\psi_\tau(a)) - \pi_\tau(\theta(a)) \|_{2, \tau} &< \epsilon, & \hspace{4ex} &a \in \mathcal F.
	\intertext{By Lemma~\ref{lem:finite-rank-cp}, there is then a c.p.c.\ map $\phi_\tau \colon F_\tau \rightarrow \mathcal M$ such that} 
		\|\phi_\tau(\psi_\tau(a)) - \theta(a) \|_{2, \tau} &< \epsilon, & \hspace{4ex} &a \in \mathcal F.
	\end{alignat}
	Then $\eta_\tau \coloneqq \phi_\tau \circ \psi_\tau \in \mathcal C$ and $\|f_a(\eta_\tau)\|_{2, \tau} < \epsilon$ for all $a \in \mathcal F$.  By Lemma~\ref{lem:affine-selection}, there is $\eta \in \mathcal C$ such that $\|f_a(\eta)\|_{2, X} < \epsilon$ for all $a \in \mathcal F$.  Unpacking notation, $\eta$ is a c.p.\ map factoring through a finite dimensional $C^*$-algebra, and
	\begin{equation}
		\|\eta(a) - \theta(a)\|_{2, X} < \epsilon, \qquad a \in \mathcal F,
	\end{equation}
	so $\theta$ is tracially nuclear.
\end{proof}

Combining the previous result with Connes' theorem gives a tracially complete analogue of the fact that a von Neumann algebra completion of a nuclear $C^*$-algebra is semidiscrete.  Let $T_{\rm am}(A) \subseteq T(A)$ denote the set of amenable traces on $A$.

\begin{corollary}\label{cor:semidiscrete-completion}
	If $A$ is an exact $C^*$-algebra with $T_{\rm am}(A)$ compact,\footnote{Note that $T_{\rm am}(A)$ is closed in $T(A)$ by \cite[Proposition~3.5.1]{Bro06}, so the compactness is automatic if $A$ is unital.} then the tracial completion of $A$ with respect to $T_{\mathrm{am}}(A)$ is an {}amenable factorial tracially complete $C^*$-algebra.  In particular, if $A$ is a nuclear $C^*$-algebra with $T(A)$ compact, then the tracial completion of $A$ with respect to $T(A)$ is an {}amenable factorial tracially complete $C^*$-algebra.
\end{corollary}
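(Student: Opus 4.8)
The plan is to deduce Corollary~\ref{cor:semidiscrete-completion} from Theorem~\ref{thm:amenable} together with Connes' theorem (in the form: a semidiscrete tracial von Neumann algebra is exactly one arising from an amenable trace). First I would record the input facts. For an exact $C^*$-algebra $A$, the set $T_{\rm am}(A)$ of amenable traces is a closed (hence, when compact, weak$^*$-compact) convex subset of $T(A)$, and it is in fact a face of $T(A)$: this is standard (for instance, \cite[Proposition~3.5.1 and the discussion around Theorem~4.3.3]{Bro06}), using that a convex combination of traces whose GNS von Neumann algebras are hyperfinite again has hyperfinite GNS completion. By Proposition~\ref{prop:co-2-norm} we may as well assume $T_{\rm am}(A)$ is convex, and since it is a nonempty compact convex set we can form the tracial completion $\completion{A}{T_{\rm am}(A)}$. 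By Proposition~\ref{prop:tracial-completion}\ref{item:completion-complete} and \ref{item:completion-factorial}, the pair $\big(\completion{A}{T_{\rm am}(A)}, T_{\rm am}(A)\big)$ is a tracially complete $C^*$-algebra which is factorial precisely because $T_{\rm am}(A)$ is a face of $T(A)$.

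Next I would verify amenability of this tracially complete $C^*$-algebra using the equivalence \ref{amen1}$\Leftrightarrow$\ref{amen3} of Theorem~\ref{thm:amenable}, applied to the canonical $^*$-homomorphism $\theta \coloneqq \alpha_{T_{\rm am}(A)} \colon A \to \completion{A}{T_{\rm am}(A)} =: \mathcal M$. Write $X \coloneqq T_{\rm am}(A)$ (identified with its image $\widetilde X$ in $T(\mathcal M)$ via Proposition~\ref{prop:tracial-completion}\ref{item:completion-trace}). For $\tau \in X$, the trace $\tau \circ \theta$ on $A$ is just $\tau$ itself (under the identification), which has norm $1$ and is by definition an amenable trace on $A$. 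By \cite[Theorem~3.2.2]{Bro06}, a trace is amenable if and only if it is uniformly amenable; hence condition \ref{amen3} of Theorem~\ref{thm:amenable} holds for $\theta$. Therefore $\theta$ is tracially nuclear. To conclude that $(\mathcal M, X)$ is amenable, i.e.\ that $\mathrm{id}_{\mathcal M}$ is tracially nuclear, I would note that the c.p.c.\ approximations $A \to F_\lambda \to \mathcal M$ witnessing tracial nuclearity of $\theta$ can be post-composed with nothing and pre-composed using $\|\cdot\|_{2,X}$-density of $\theta(A)$ in $\mathcal M$ (Proposition~\ref{prop:UnitBallDensity}): given a finite set $\mathcal F \subseteq \mathcal M$ and $\epsilon > 0$, choose $a_1, \dots, a_k \in A$ with $\|\theta(a_j) - m_j\|_{2,X} < \epsilon$ for the elements $m_j \in \mathcal F$, apply tracial nuclearity of $\theta$ to the finite set $\{a_1, \dots, a_k\}$ to get c.p.c.\ maps $A \xrightarrow{\psi} F \xrightarrow{\phi} \mathcal M$ with $\|\phi(\psi(a_j)) - \theta(a_j)\|_{2,X} < \epsilon$, and post-compose $\psi$ with a $\|\cdot\|_{2,X}$-continuous c.p.c.\ map $\mathcal M \to A$... which does not exist. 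The cleaner route is instead: observe directly that since $\pi_\tau \circ \theta \colon A \to \pi_\tau(\mathcal M)''$ has weakly dense range and is weakly nuclear for each $\tau \in X$, and since $\pi_\tau(\mathcal M)'' = \pi_\tau(\theta(A))''$, the von Neumann algebra $\pi_\tau(\mathcal M)''$ is semidiscrete for every $\tau \in X$; then apply the equivalence \ref{amen1}$\Leftrightarrow$\ref{amen2} of Theorem~\ref{thm:amenable} (equivalently Theorem~\ref{thm:introamenable}) directly to $(\mathcal M, X)$ to conclude $(\mathcal M, X)$ is amenable.

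For the final sentence of the corollary: if $A$ is nuclear with $T(A)$ compact, then nuclear $C^*$-algebras are exact and every trace on a nuclear $C^*$-algebra is amenable (this is Connes' theorem together with the Choi--Effros--Kirchberg characterisation; see \cite[Theorem~4.2.7]{Bro06}), so $T_{\rm am}(A) = T(A)$ and the first part applies verbatim, giving that $\big(\completion{A}{T(A)}, T(A)\big)$ is an amenable factorial tracially complete $C^*$-algebra.

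The main obstacle, and the point requiring the most care, is the bookkeeping in passing from tracial nuclearity of the \emph{inclusion} $\theta \colon A \to \mathcal M$ to amenability of $(\mathcal M, X)$ itself — one cannot naively "extend" the approximations along $\theta$ since there is no c.p.\ splitting $\mathcal M \to A$. As indicated above, the right fix is to not argue through $\theta$ at all for this step but to check hypothesis \ref{amen2} of Theorem~\ref{thm:amenable} for $(\mathcal M, X)$ directly: for $\tau \in X$, identify $\pi_\tau(\mathcal M)''$ with $\pi_\tau(\theta(A))''$ (using $\|\cdot\|_{2,X}$-density of $\theta(A)$, Proposition~\ref{prop:tracial-completion}\ref{item:completion-fibres}), note this is semidiscrete since it is generated by the amenable trace $\tau$ restricted to $A$ (Connes' theorem), and conclude via Theorem~\ref{thm:amenable}. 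The only other mild subtlety is confirming that $T_{\rm am}(A)$ is a face of $T(A)$ — needed for factoriality via Proposition~\ref{prop:tracial-completion}\ref{item:completion-factorial} — which follows from the permanence properties of amenable traces under convex combinations recorded in \cite[Chapter~3]{Bro06}.
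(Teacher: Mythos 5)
The overall architecture here matches the paper's proof — establish that $T_{\rm am}(A)$ is a closed face to get factoriality via Proposition~\ref{prop:tracial-completion}\ref{item:completion-factorial}, then verify semidiscreteness of the fibres $\pi_\tau(\M)''$ and invoke Theorem~\ref{thm:amenable} (equivalently Theorem~\ref{thm:introamenable}) — but there is a genuine gap at the central step: your argument never uses the exactness of $A$, and the place where it is needed is exactly where your proof breaks. You assert, citing \cite[Theorem~3.2.2]{Bro06}, that ``a trace is amenable if and only if it is uniformly amenable,'' and in your ``cleaner route'' you assert that $\pi_\tau \circ \theta$ is weakly nuclear (equivalently, that $\pi_\tau(A)''$ is semidiscrete) merely because $\tau$ is amenable. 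Neither claim is true in general: amenability of $\tau$ only provides u.c.p.\ maps $\phi_n \colon A \to M_{k_n}$ with $\mathrm{tr}_{k_n} \circ \phi_n \to \tau$ in the weak$^*$ topology, whereas uniform amenability — which is what \cite[Theorem~3.2.2]{Bro06} shows to be equivalent to semidiscreteness of $\pi_\tau(A)''$ and to weak nuclearity of $\pi_\tau$ — requires norm convergence. The paper itself records the counterexample: for $A = \prod M_{k_n}$, the limit trace $\tau_\omega$ is amenable but not uniformly amenable, and $\pi_{\tau_\omega}(A)''$ contains $L(\mathbb{F}_2)$ and so is not semidiscrete. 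The missing bridge is that exactness implies local reflexivity (Kirchberg), and for locally reflexive $C^*$-algebras amenable traces are uniformly amenable \cite[Theorem~4.3.3]{Bro06}; this is the entire content of the exactness hypothesis in the statement, and it is precisely the step your proof omits.

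A secondary point: your justification that $T_{\rm am}(A)$ is a face of $T(A)$ — that a convex combination of traces with hyperfinite GNS completions again has hyperfinite GNS completion — establishes convexity, not the face property; for a face you need the converse direction, namely that the summands of an amenable convex combination are themselves amenable. The fact is true (the paper cites \cite[Lemma~3.4]{Kirchberg94}), but your stated reason does not prove it. The remainder of your argument — correctly abandoning the attempt to ``extend'' the completely positive approximations along $\theta$ (there is indeed no c.p.\ splitting $\mathcal M \to A$) in favour of checking condition \ref{amen2} of Theorem~\ref{thm:amenable} fibrewise via Proposition~\ref{prop:tracial-completion}\ref{item:completion-fibres}, and the reduction of the nuclear case to $T_{\rm am}(A) = T(A)$ — is sound and coincides with the paper's route.
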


\begin{proof}
	By \cite[Lemma~3.4]{Kirchberg94}, $T_{\rm am}(A)$ is a closed face in $T(A)$.  Therefore, the tracial completion of $A$ with respect to $T_{\mathrm{am}}(A)$ is a factorial tracially complete $C^*$-algebra by Proposition~\ref{prop:tracial-completion}\ref{item:completion-factorial}.

	As $A$ is exact, $A$ is locally reflexive by \cite[Remark~(11)]{Kirchberg95}, and hence by \cite[Theorem~4.3.3]{Bro06}, all amenable traces on $A$ are uniformly amenable.  By the equivalence of (1) and (5) in \cite[Theorem~3.2.2]{Bro06}, it follows that $\pi_\tau(A)''$ is semidiscrete for $\tau \in T_{\rm am}(A)$.  Using Proposition~\ref{prop:tracial-completion}\ref{item:completion-fibres}, we also have that $\pi_\tau(\completion{A}{T_{\rm am}(A)})''$ is semidiscrete for all $\tau \in T_{\rm am}(A)$.
    By Theorem~\ref{thm:introamenable}, this implies that $\big(\completion{A}{T_{\rm am}(A)},T_{\rm am}(A)\big)$ is {}amenable.
    This proves the first sentence of the theorem.  The second sentence follows since $T(A) = T_{\rm am}(A)$ when $A$ is nuclear (see \cite[Theorem~4.2.1]{Bro06}, for example).
\end{proof}

Without exactness, it need not be the case that all amenable traces are uniformly amenable. Indeed, given a sequence $(k_n)_{n=1}^\infty$ of natural numbers converging to $\infty$, let $A\coloneqq \prod M_{k_n}$.  For a free ultrafilter $\omega$ on $\mathbb N$, the trace $\tau_\omega((x_n))\coloneqq \lim_{n\to\omega}\mathrm{tr}_{k_n}(x_n)$ is an amenable trace that is not uniformly amenable.\footnote{This is because the tracial von Neumann ultraproduct $\prod^\omega M_{n_k}$ is not amenable as it contains a copy of $L(F_2)$, by \cite[Theorem~3.8]{Voi91}.\label{fn:NonAmenableUltraprod}} This observation leads to the following characterisation of those finite von Neumann algebras which are {}amenable as tracially complete $C^*$-algebras over their trace space.

\begin{proposition}\label{prop:vNA-non-uniform-amenable-trace}
Let $\mathcal M$ be a semidiscrete finite von Neumann algebra.  Then $\big(\mathcal M,T(\M)\big)$ is {}amenable as a tracially complete $C^*$-algebra if and only if its type {\rm II}$_1$ summand has only finitely many extremal traces and it has no type ${\rm I}_n$ summand for all sufficiently large $n$.
\end{proposition}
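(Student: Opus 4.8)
The plan is to characterise when the tracially complete $C^*$-algebra $\big(\M, T(\M)\big)$ is amenable by reducing, via the central decomposition of $\M$, to understanding the type $\mathrm{II}_1$ and type $\mathrm{I}_n$ parts separately, and then using the criterion of Theorem~\ref{thm:amenable} together with the obstruction coming from non-amenable tracial ultraproducts (Footnote~\ref{fn:NonAmenableUltraprod}).

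First I would write $\M = \M_{\mathrm{I}} \oplus \M_{\mathrm{II}_1}$ with $\M_{\mathrm I}$ further decomposed as $\bigoplus_n \M_{\mathrm I_n}$ (where each $\M_{\mathrm I_n} = Z_n \barotimes M_n$ for an abelian von Neumann algebra $Z_n$, some of which may be zero), and observe that $\big(\M, T(\M)\big)$ is the tracially complete direct sum of the pieces $\big(\M_{\mathrm I_n}, T(\M_{\mathrm I_n})\big)$ and $\big(\M_{\mathrm{II}_1}, T(\M_{\mathrm{II}_1})\big)$ in the sense of Definition~\ref{def:UTCdirect-sums} (one should check $T(\M)$ decomposes compatibly as the relevant direct sum of trace simplices; this uses Proposition~\ref{prop:TracesFiniteVNAS}\ref{prop:TracesFiniteVNAS.1} identifying traces with states on the centre). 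Since $\M$ is semidiscrete, each summand is semidiscrete. By Proposition~\ref{prop:semidiscretenessdirectsums} (extended to countable direct sums — this needs a small argument, but amenability of a countable direct sum is easily seen to be equivalent to amenability of each summand together with a uniformity, which for direct sums is automatic since one can approximate the identity on finitely many summands at a time and absorb the tail using that the tail projections go to $0$ in $\|\cdot\|_{2,T(\M)}$; alternatively one invokes the fibrewise criterion directly), $\big(\M,T(\M)\big)$ is amenable if and only if each $\big(\M_{\mathrm I_n}, T(\M_{\mathrm I_n})\big)$ is amenable and $\big(\M_{\mathrm{II}_1}, T(\M_{\mathrm{II}_1})\big)$ is amenable.

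Next I would analyse the type $\mathrm{II}_1$ summand. Using Theorem~\ref{thm:amenable}, $\big(\M_{\mathrm{II}_1}, T(\M_{\mathrm{II}_1})\big)$ is amenable iff $\pi_\tau(\M_{\mathrm{II}_1})''$ is semidiscrete for every $\tau \in T(\M_{\mathrm{II}_1})$ — which holds automatically by semidiscreteness of $\M$ — \emph{and} (implicitly, in the proof of \ref{amen2}$\Rightarrow$\ref{amen1}) one needs a single system of approximations working uniformly over $T(\M_{\mathrm{II}_1})$; the obstruction to this is exactly non-amenability of the tracial ultrapower. Concretely: if $\M_{\mathrm{II}_1}$ has infinitely many extreme traces, then $T(\M_{\mathrm{II}_1})$ contains a sequence of distinct extreme points, and passing to a suitable ultrapower (or using that the centre $Z(\M_{\mathrm{II}_1})$ is an infinite-dimensional abelian von Neumann algebra so one can find a diffuse or infinite-dimensional corner) one produces inside $\prod^\omega(\text{fibres})$ a copy of $L(F_2)$ as in Footnote~\ref{fn:NonAmenableUltraprod}, contradicting amenability. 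Conversely, if $\M_{\mathrm{II}_1}$ has only finitely many extreme traces, then $Z(\M_{\mathrm{II}_1})$ is finite-dimensional, so $\M_{\mathrm{II}_1}$ is a finite direct sum of semidiscrete $\mathrm{II}_1$ \emph{factors}, each of which is the hyperfinite $\mathcal R$ by Connes' theorem, and hence $\big(\M_{\mathrm{II}_1}, T(\M_{\mathrm{II}_1})\big)$ is amenable (a finite direct sum of amenable tracial von Neumann algebras viewed over their full trace space; here Theorem~\ref{thm:amenable}\ref{amen2} applies directly and compactness of the finite-dimensional simplex $T(\M_{\mathrm{II}_1})$ makes the uniform approximation routine).

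Finally I would handle the type $\mathrm I_n$ summands. For fixed $n$, $\big(\M_{\mathrm I_n}, T(\M_{\mathrm I_n})\big) \cong \big(Z_n \barotimes M_n, T(Z_n) \otimes \{\mathrm{tr}_n\}\big)$, and by Proposition~\ref{prop:matrix-algebras} this is amenable iff $\big(Z_n, T(Z_n)\big)$ is, i.e.\ iff the abelian von Neumann algebra $Z_n$ over its full state space is amenable — which is always true since abelian $C^*$-algebras are nuclear (so the finite-dimensional-simplex-free Hahn--Banach argument of Lemma~\ref{lem:affine-selection} applies: all fibres $\pi_\tau(Z_n)'' = \mathbb C$ are trivially semidiscrete, and convex combinations of the trivial one-dimensional approximations work). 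Thus \emph{each} individual type $\mathrm I_n$ block is harmless; the only way the $\mathrm I$-part can fail amenability is the uniformity issue \emph{across} infinitely many $n$: if $\M$ has nonzero $\mathrm I_{n}$-summands for infinitely many $n$, then choosing $\tau_n$ a trace supported on the $\mathrm I_{n}$-block, a free ultraproduct along these reproduces exactly the $\prod^\omega M_{k_n}$ example of Footnote~\ref{fn:NonAmenableUltraprod} containing $L(F_2)$, obstructing amenability; whereas if only finitely many $\mathrm I_n$ summands are nonzero, the $\mathrm I$-part is a finite direct sum of blocks each handled as above and the whole thing is amenable. Combining the two analyses gives precisely the stated criterion: amenability of $\big(\M,T(\M)\big)$ $\iff$ the $\mathrm{II}_1$ summand has finitely many extremal traces \emph{and} $\M$ has no $\mathrm I_n$ summand for all sufficiently large $n$.

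The main obstacle I expect is making the failure direction rigorous: one must carefully extract, from "infinitely many extreme traces in the $\mathrm{II}_1$ part" or "$\mathrm I_n$ summands for arbitrarily large $n$", an actual copy of a non-amenable von Neumann algebra inside a reduced product of the fibres that genuinely obstructs the uniform completely positive approximation property — i.e.\ one needs to know that tracial nuclearity of $\id$ forces the relevant reduced product (or at least some corner built from the designated traces) to be amenable, and then invoke Voiculescu's result \cite{Voi91} as in Footnote~\ref{fn:NonAmenableUltraprod}. Getting the bookkeeping right for why a uniform system of finite-dimensional approximations over all of $T(\M)$ cannot exist when the trace space has this kind of infinite complexity, as opposed to the easy fibrewise statement, is the crux.
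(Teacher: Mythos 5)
Your overall architecture is close to the paper's (finite direct sums of amenable pieces in the positive direction; the $\prod M_{n_k}$/Voiculescu obstruction in the negative direction), and your positive direction is essentially correct, but two of the reductions you lean on are false, and they are false in a way that matters. First, the claimed extension of Proposition~\ref{prop:semidiscretenessdirectsums} to countable direct sums does not hold, and your justification --- that the tail projections go to $0$ in $\|\cdot\|_{2,T(\M)}$ --- fails: the designated trace set is \emph{all} of $T(\M)$, which contains traces supported entirely on the tail, so $\big\|1_{\M}-\sum_{k\leq N}e_k\big\|_{2,T(\M)}=1$ for every $N$ whenever infinitely many central summands are nonzero. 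If the reduction ``amenable iff each summand is amenable'' were true, it would contradict the very proposition you are proving: each type $\mathrm I_n$ block is nuclear, hence amenable over its full trace simplex, yet the presence of $\mathrm I_n$ summands for arbitrarily large $n$ is exactly what the statement says obstructs amenability. You notice this tension later in the type~$\mathrm I$ analysis but never resolve the inconsistency.

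Second, your assertion that $\pi_\tau(\M_{\mathrm{II}_1})''$ is semidiscrete for every $\tau\in T(\M_{\mathrm{II}_1})$ ``automatically by semidiscreteness of $\M$'' is wrong: this holds for \emph{normal} traces, but $T(\M)$ contains non-normal traces (e.g.\ limit traces along a free ultrafilter), whose GNS von Neumann algebras are tracial ultraproducts and can contain $L(\mathbb F_2)$. Theorem~\ref{thm:amenable} is an exact equivalence --- there is no additional ``uniformity'' to verify beyond fibrewise semidiscreteness, equivalently uniform amenability of every designated trace --- so the obstruction is not a failure of uniformity on top of a true fibrewise statement, but the failure of the fibrewise statement itself at a single non-normal trace. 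This is how the paper runs the converse: if the condition fails, one finds orthogonal central projections $(e_k)$ and unital embeddings $M_{n_k}\to e_k\M$ with $n_k\to\infty$ (every type II$_1$ von Neumann algebra contains a unital copy of every $M_n$), hence a unital copy of $\prod M_{n_k}$ in $\M$; choosing $\tau_k\in T(\M)$ with $\tau_k(e_k)=1$ and setting $\tau_\omega\coloneqq\lim_{k\to\omega}\tau_k\in T(\M)$, the restriction of $\tau_\omega$ to $\prod M_{n_k}$ is not uniformly amenable, and since uniform amenability passes to subalgebras, neither is $\tau_\omega$, contradicting Theorem~\ref{thm:introamenable}. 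Note that this single construction covers both failure modes at once (infinitely many extreme traces in the II$_1$ part yields infinitely many orthogonal central projections there, and arbitrarily large $\mathrm I_n$ summands do so in the type~$\mathrm I$ part), whereas your proposal treats them separately and leaves the II$_1$ obstruction at the level of a gesture about ``ultrapowers of fibres''.
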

\begin{proof}
Suppose $\M$ satisfies the stated condition. Then the type I part $\M_{\mathrm I}$ of $\M$ is a finite direct sum of matrices over abelian $C^*$-algebras (see \cite[Theorem V.1.27]{Tak79}, for example) and hence nuclear, so it has the completely positive approximation property in norm.  Hence $\big(\M_{\mathrm I},T(\M_{\mathrm I})\big)$ is {}amenable as a tracially complete $C^*$-algebra.  Let $e_1,\dots,e_m$ denote the minimal central projections of the type II$_1$ part $\M_{\rm II}$ of $\M$  so that each $e_i \M =e_i \M_{\mathrm{II}}$ is a semidiscrete factor, whence $\big(e_i \M ,T(e_i \M )\big)$ is an {}amenable tracially complete $C^*$-algebra (by Proposition \ref{prop:tracialnuc=weaknuc}).  Since a finite direct sum of {}amenable tracially complete $C^*$-algebras is {}amenable (Proposition \ref{prop:semidiscretenessdirectsums}), it follows that $\big(\M,T(\M)\big)$ is {}amenable.

Conversely, supposing the condition does not hold, we can find  sequences $(n_k)_{k=1}^\infty$ converging to $\infty$ and orthogonal central projections $(e_k)_{k=1}^\infty$ in $\M$ such that there are unital embeddings $M_{n_k}\to e_k\M$.\footnote{This uses the fact that for any type II$_1$ von Neumann algebra $\mathcal N$, and any $n\in\mathbb N$, one can find a unital embedding $M_n\to\mathcal N$. This goes back to Murray and von Neumann. The proof for $n=2$ from \cite[Proposition V.1.35]{Tak79} can be readily modified to cover general $n$.}  In this way, we have an embedding of the infinite product $C^*$-algebra $\prod M_{n_k}$ into $\M$.  For each $k$, let $\tau_k$ be a trace on $\M$ with $\tau_k(e_k)=1$, and for a free ultrafilter $\omega$ on $\N$, let $\tau_\omega\coloneqq \lim_{k\to\omega}\tau_k$, which exists by the compactness of the trace space of $\prod M_{n_k}$.  Then $\tau_\omega$ restricts to the trace $(x_n)\mapsto \lim_{n\to\omega}\mathrm{tr}_{n_k}(x_n)$ on $\prod M_{n_k}$.  This restricted trace is not uniformly amenable (see Footnote~\ref{fn:NonAmenableUltraprod}). As uniform amenability of traces passes to subalgebras (this is immediate from the approximation form of the definition in \cite[Definition 3.2.1]{Bro06}), $\tau_\omega$ is not uniformly amenable, and hence $\big(\M,T(\M)\big)$ is not {}amenable.
\end{proof}

In particular, the  tracially complete $C^*$-algebra $\big(\ell^\infty(\R), T(\ell^\infty(\R))\big)$ is not {}amenable even though $\ell^\infty(\R)$ is semidiscrete as a von Neumann algebra.

We end this section by applying the characterisation of tracial nuclearity to show that it can be tested on $C^*$-subalgebras which are dense in the uniform 2-norm. A naive argument (showing that the witnesses of tracial nuclearity for $\phi|_A$ extend to witnesses of tracial nuclearity for $\phi$) is not possible as one cannot a priori assume uniform $\|\cdot\|_{2,X}$-boundedness of the c.p.c.\ approximations in the definition of tracial nuclearity.

\begin{lemma}\label{lem:semidiscrete-dense}
	Suppose $(\mathcal M, X)$ and $(\mathcal N, Y)$ are tracially complete $C^*$-alge\-bras and $\phi \colon (\mathcal M , X) \rightarrow (\mathcal N, Y)$ is a morphism.  If $A \subseteq \mathcal \M$ is a $\|\cdot\|_{2, X}$-dense $C^*$-subalgebra, then $\phi$ is tracially nuclear if and only if $\phi|_A$ is tracially nuclear.
\end{lemma}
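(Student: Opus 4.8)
The plan is to bypass any direct manipulation of the approximating systems — which, as the remark preceding the lemma points out, is obstructed by the absence of an a priori $\|\cdot\|_{2,X}$-bound — and instead route everything through the fibrewise characterisation of tracial nuclearity in Theorem~\ref{thm:amenable}. The forward implication is immediate: if $\psi_\lambda\colon\mathcal M\to F_\lambda$ and $\phi_\lambda\colon F_\lambda\to\mathcal N$ witness tracial nuclearity of $\phi$, then $\psi_\lambda|_A$ and $\phi_\lambda$ witness it for $\phi|_A$. So I will assume $\phi|_A$ is tracially nuclear and deduce the same for $\phi$.

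The key reformulation is the following. By Theorem~\ref{thm:amenable}, $\phi$ (respectively $\phi|_A$) is tracially nuclear if and only if $\pi_\tau\circ\phi$ (respectively $\pi_\tau\circ\phi|_A$) is weakly nuclear for every $\tau\in Y$. Moreover, as established in the proof of Theorem~\ref{thm:amenable} — using the normal trace-preserving conditional expectation onto the von Neumann algebra generated by the range, together with the identification of that algebra with a GNS closure — combined with Brown's characterisation of uniformly amenable traces (\cite[Theorem~3.2.2]{Bro06}, used there and in Corollary~\ref{cor:semidiscrete-completion}, after the standard normalisation and, if needed, unitisation of the relevant trace), for any $^*$-homomorphism $\psi$ from a $C^*$-algebra $B$ into $\mathcal N$ and any $\tau\in Y$ the map $\pi_\tau\circ\psi$ is weakly nuclear if and only if the von Neumann algebra $\pi_{\tau\circ\psi}(B)''$ is semidiscrete. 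Applying this to $\psi=\phi$ and to $\psi=\phi|_A$, the lemma reduces to the following purely $C^*$-algebraic claim: for each $\tau\in Y$, writing $\sigma\coloneqq\tau\circ\phi$ (which lies in $X$ because $\phi$ is a morphism), the von Neumann algebras $\pi_\sigma(\mathcal M)''$ and $\pi_{\sigma|_A}(A)''$ coincide.

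This identification is where the density hypothesis enters. Since $\sigma\in X$, we have $\|\cdot\|_{2,\sigma}\le\|\cdot\|_{2,X}$ on $\mathcal M$, so $A$ is $\|\cdot\|_{2,\sigma}$-dense in $\mathcal M$; hence the GNS Hilbert space $\mathcal H_{\sigma|_A}$, being the $\|\cdot\|_{2,\sigma}$-completion of $A$, is canonically $\mathcal H_\sigma$, and $\pi_{\sigma|_A}$ is the restriction $\pi_\sigma|_A$. It remains to see that $\pi_\sigma(A)$ is strongly dense in $\pi_\sigma(\mathcal M)''$. Here I will use that the ambient pair $(\mathcal M,X)$ — not merely the subalgebra — is tracially complete, so Kaplansky density for tracially complete $C^*$-algebras (Proposition~\ref{prop:UnitBallDensity}) makes the unit ball of $A$ $\|\cdot\|_{2,X}$-dense, hence $\|\cdot\|_{2,\sigma}$-dense, in the unit ball of $\mathcal M$. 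Since the $\|\cdot\|_{2,\sigma}$-topology agrees with the strong operator topology on the unit ball of $\pi_\sigma(\mathcal M)''$ (\cite[Proposition~III.2.2.19]{Bl06}), every $\pi_\sigma(m)$ with $\|m\|\le1$ is a strong limit of a bounded net from $\pi_\sigma(A)$, whence $\pi_\sigma(\mathcal M)\subseteq\pi_\sigma(A)''$ and therefore $\pi_\sigma(\mathcal M)''=\pi_\sigma(A)''=\pi_{\sigma|_A}(A)''$. Feeding this back into the reformulation of the previous paragraph finishes the proof.

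The substantive content lies entirely in the reformulation step: once the two tracial-nuclearity statements are translated into ``$\pi_{\tau\circ\phi}(\mathcal M)''$ is semidiscrete for all $\tau\in Y$'' (for $\phi$) versus ``$\pi_{\tau\circ\phi|_A}(A)''$ is semidiscrete for all $\tau\in Y$'' (for $\phi|_A$), the lemma collapses to the fact that passing to a $\|\cdot\|_{2,\sigma}$-dense subalgebra does not change the generated von Neumann algebra. I do not expect a genuine obstacle; the only care needed is to quote Theorem~\ref{thm:amenable} and Brown's theorem in the per-trace form in which they are actually proved, and to confirm that Proposition~\ref{prop:UnitBallDensity} is available because $(\mathcal M,X)$ is tracially complete.
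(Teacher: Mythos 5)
Your proof is correct and follows essentially the same route as the paper's: both reduce via Theorem~\ref{thm:amenable} and Brown's \cite[Theorem~3.2.2]{Bro06} to the observation that, for each $\tau\in Y$, the traces $\tau\circ\phi$ and $\tau\circ\phi|_A$ generate the same von Neumann algebra. The only difference is cosmetic — you verify the identification $\pi_\sigma(A)''=\pi_\sigma(\mathcal M)''$ directly via Kaplansky density and the agreement of $\|\cdot\|_{2,\sigma}$ with the strong topology on bounded sets, where the paper cites Corollary~\ref{cor:dense-subalgebra}\ref{item:dense-subalg1} and Proposition~\ref{prop:tracial-completion}\ref{item:completion-fibres}.
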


\begin{proof}
	For all $\tau \in X$, the inclusion $A \hookrightarrow \M$ induces an isomorphism $\pi_\tau(A)'' \rightarrow \pi_\tau(\M)''$ (by combining Corollary \ref{cor:dense-subalgebra}\ref{item:dense-subalg1} and Proposition \ref{prop:tracial-completion}\ref{item:completion-fibres}, for example).  Since a trace on a $C^*$-algebra is uniformly amenable if and only if it generates a semidiscrete von Neumann algebra (see \cite[Theorem~3.2.2(2)$\Leftrightarrow$(3)]{Bro06}), we have that for each $\tau \in Y$, $\tau \circ \phi$ is uniformly amenable if and only if $\tau \circ \phi|_A$ is uniformly amenable.  The result follows from Theorem~\ref{thm:amenable}.
\end{proof}

\section{Reduced products and central sequences}
\label{sec:InductiveLimitsETC} 

In this section, we define the reduced product $\prod^\omega(\M_n,X_n)$ associated with a sequence of tracially complete $C^*$-algebras $\big((\M_n,X_n)\big)_{n=1}^\infty$ and a free filter $\omega$ on $\N$. Using this language, and motivated by analogous conditions in von Neumann algebra theory, we introduce the \mbox{McDuff} property in Section~\ref{sec:McDuff} and property $\Gamma$ in Section~\ref{sec:Gamma}.

\subsection{Reduced products}\label{sec:reduced-product}

Reduced products provide an algebraic setting for manipulating properties involving approximations.  The most common constructions are ultrapowers with respect to a free ultrafilter on the natural numbers $\N$ and sequence algebras, consisting of the algebra of bounded sequences modulo the ideal of $c_0$-sequences.  

For many basic applications, ultrapowers and sequence algebras can be used interchangeably, but each has its technical advantages.  In settings where traces are considered, ultrapowers are often more natural as the resulting set of limit traces is already convex; on the other hand, working with sequence algebras allows for the reparameterisation argument of \cite[Theorem~4.3]{Gabe18} (see Theorem~\ref{thm:reparameterisation}), which will be used in our classification result in Section \ref{sec:Classification}.  In order to allow for both constructions simultaneously, we will work with reduced products defined with respect to a free filter.

For the remainder of the paper, $\omega$ will denote a free filter on the natural numbers $\N$.  We recall that a filter $\omega$ on the natural numbers is free if and only if it contains all cofinite sets (see \cite[Appendix~A]{Br08}, for example, for a general discussion of filters).  The following selection theorem of Kirchberg will be used frequently.  It is most often stated for ultrafilters, but the result (and proof) is equally valid for general filters.  For the readers convenience, we include the details.

\begin{lemma}[Kirchberg's $\epsilon$-test, {cf.\ \cite[Lemma A.1]{Kir06}}]
	\label{lem:EpsTest}
	Let $\omega$ be a free filter on $\mathbb N$,
	let $(X_n)_{n=1}^\infty$ be a sequence of non-empty sets, and for $k,n\in\mathbb N$, let $f^{(k)}_n\colon X_n\rightarrow [0,\infty]$ be a function.  Define functions $f^{(k)}\colon \prod_{n=1}^\infty X_n \to [0,\infty]$ by
	\begin{equation}
		\label{eq:EpsTest1}
		f^{(k)}(x_1,x_2,\dots) \coloneqq \limsup_{n\to\omega} f^{(k)}_n(x_n). 
	\end{equation}
	If for every $\epsilon>0$ and $k_0\in\mathbb N$, there exists $x\in \prod_{n=1}^\infty X_n$ such that $f^{(k)}(x)<\epsilon$ for all $k=1,\dots,k_0$, then there exists $y\in \prod_{n=1}^\infty X_n$ such that $f^{(k)}(y)=0$ for all $k\in\mathbb N$.
\end{lemma}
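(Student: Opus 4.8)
\textbf{Proof proposal for Kirchberg's $\epsilon$-test (Lemma~\ref{lem:EpsTest}).}

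The plan is to run the standard diagonal argument, choosing the coordinates $y_n$ one sequence-entry at a time, using the hypothesis repeatedly with ever-smaller $\epsilon$ and ever-larger threshold $k_0$. First I would fix, for each $m \in \mathbb N$, an element $x^{(m)} = (x^{(m)}_1, x^{(m)}_2, \dots) \in \prod_{n=1}^\infty X_n$ with $f^{(k)}(x^{(m)}) < 1/m$ for all $k = 1, \dots, m$; this exists by the hypothesis applied with $\epsilon \coloneqq 1/m$ and $k_0 \coloneqq m$. Unwinding the definition \eqref{eq:EpsTest1}, for each $m$ the set
\begin{equation}
	A_m \coloneqq \Big\{ n \in \mathbb N : f^{(k)}_n(x^{(m)}_n) < \tfrac{1}{m} \text{ for all } k = 1, \dots, m \Big\}
\end{equation}
satisfies $A_m \in \omega$ (since $\limsup_{n \to \omega} f^{(k)}_n(x^{(m)}_n) < 1/m$ forces $\{ n : f^{(k)}_n(x^{(m)}_n) < 1/m\} \in \omega$ for each of the finitely many $k \leq m$, and $\omega$ is closed under finite intersections). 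Replacing $A_m$ by $A_1 \cap \dots \cap A_m$, I may assume the $A_m$ are decreasing.

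Next I would build $y$ by ``following the filter down the diagonal.'' Since $\omega$ is free, each $A_m$ is infinite, so I can pick a strictly increasing sequence $n_1 < n_2 < \cdots$ with $n_m \in A_m$. I would now like to define $y_n \coloneqq x^{(m)}_n$ for $n_m \le n < n_{m+1}$, but there is a subtlety: a general free filter need not be an ultrafilter, so membership ``$n \in A_m$'' along the blocks $[n_m, n_{m+1})$ is not automatic. The clean fix is to work directly with the sets rather than with a single increasing enumeration: define $y$ coordinatewise by setting $y_n \coloneqq x^{(m(n))}_n$ where $m(n) \coloneqq \max\{ m \le n : n \in A_m \}$ (with $m(n) \coloneqq 1$ if $n \notin A_1$), so that $m(n) \to \infty$ along $\omega$ because for each fixed $M$, every $n \in A_M$ with $n \ge M$ has $m(n) \ge M$, and $\{ n \in A_M : n \ge M \} \in \omega$. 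Then for any fixed $k$ and any $\epsilon > 0$, choosing $M \ge \max(k, 1/\epsilon)$, the set $\{ n : m(n) \ge M\} \in \omega$ and for such $n$ we have $n \in A_{m(n)}$ with $m(n) \ge M \ge k$, hence $f^{(k)}_n(y_n) = f^{(k)}_n(x^{(m(n))}_n) < 1/m(n) \le 1/M \le \epsilon$; therefore $f^{(k)}(y) = \limsup_{n \to \omega} f^{(k)}_n(y_n) \le \epsilon$. Since $\epsilon > 0$ was arbitrary, $f^{(k)}(y) = 0$ for every $k$, as required.

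The main obstacle is purely bookkeeping: making sure the coordinatewise selection respects the filter $\omega$ even though $\omega$ is not assumed to be an ultrafilter, so that one cannot simply partition $\mathbb N$ into blocks and declare the answer on each block. The device of defining $m(n)$ as a maximum over the indices $m$ with $n \in A_m$ sidesteps this, and once $m(n) \to \infty$ along $\omega$ is established the rest is immediate from the definition of $\limsup_{n \to \omega}$. (One could alternatively phrase the whole argument with an auxiliary ultrafilter refining $\omega$, but keeping everything at the level of $\omega$-large sets is cleaner and matches the stated generality.)
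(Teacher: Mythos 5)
Your proposal is correct and follows essentially the same route as the paper's proof: both select approximants $x^{(m)}$, extract $\omega$-large sets $A_m$ (the paper's $I_r$, truncated to $\{r,r+1,\dots\}$ so the relevant maximum is finite, where you instead cap the maximum at $m\le n$), and define $y_n \coloneqq x_n^{(m(n))}$ via the largest admissible index. The only cosmetic differences are your preliminary intersection to make the $A_m$ decreasing and the phrasing via ``$m(n)\to\infty$ along $\omega$,'' neither of which changes the argument.
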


\begin{proof}
	For each $r \in \mathbb N$, there exists $x^{(r)}= (x_1^{(r)},x_2^{(r)},\dots) \in \prod_{n=1}^\infty X_n$ such that $f^{(k)}(x^{(r)}) < \frac1r$ for $k=1,\dots,r$. By \eqref{eq:EpsTest1}, there exists $I_r \in \omega$ such that $f_n^{(k)}(x_n^{(r)}) < \frac1r$ for all $n \in I_r$ and $k=1,\dots,r$. As $\omega$ is a free filter we may assume that $I_r\subseteq \{r,r+1,\dots\}$.
	
	For each $n\in\mathbb N$, if $n$ lies in $\bigcup_{r=1}^\infty I_r$, then let $r_n\in\mathbb N$ be maximal such that $n\in I_{r_n}$ (noting that $n\notin I_r$ for $r>n$) and set $y_n\coloneqq x_n^{(r_n)}$. Otherwise, define $y_n\in X_n$ arbitrarily.  Fix $k,r\in\mathbb N$ with $k\leq r$. Then for all $n\in I_r$, it follows that $r_n\geq r$, and hence
	\begin{equation} f^{(k)}_n(y_n) < \frac1{r_n}\leq\frac{1}{r}. \end{equation}
	Thus 
    \begin{equation}
        f^{(k)}(y) = \limsup_{n\to\omega} f_n(y_n) \leq \sup_{n \in I_r} f_n(y_n) \leq \frac1r.
    \end{equation}
	Since this holds for all $r\geq k$, we obtain $f^{(k)}(y)=0$, as required.
\end{proof} 

We now formally define reduced products.
\begin{definition}\label{def:reduced-product}
For a sequence $\big((\M_n,X_n)\big)_{n=1}^\infty$ of tracially complete $C^*$-algebras, define a $C^*$-algebra\footnote{
Here, $\prod_{n=1}^\infty \M_n$ denotes the $\ell^\infty$-product.  Using \eqref{eq:SpecialHolderIneq}, it is easy to see that \begin{equation*}
    \big\{(a_n)_{n=1}^\infty: \lim_{n\to\omega} \|a_n\|_{2,X_n} = 0\big\}
\end{equation*} 
is an ideal of $\prod_{n=1}^\infty \M_n$, so this quotient is a $C^*$-algebra.}
\begin{equation}
\prod^\omega \M_n \coloneqq  \prod_{n=1}^\infty \M_n \big/ \big\{(a_n)_{n=1}^\infty: \lim_{n\to\omega} \|a_n\|_{2,X_n} = 0\big\}.
\end{equation}
For every sequence of traces $(\tau_n)_{n=1}^\infty \in \prod_{n=1}^\infty X_n$ and every ultrafilter $\omega'$ containing $\omega$, there is a trace defined on $\prod^\omega \M_n$ by $a \mapsto \lim_{n \rightarrow \omega'} \tau_n(a_n)$, where $(a_n)_{n=1}^\infty \in \prod_{n=1}^\infty \M_n$ is any sequence representing $a$ -- such traces are called \emph{limit traces}. Let $\sum^\omega X_n$ be the closed convex hull of the set of limit traces on $\prod^\omega \M_n$.

Then the pair
\begin{equation}
 \prod^\omega (\M_n,X_n) \coloneqq \Big(\prod^\omega \M_n, \sum^\omega X_n\Big)
\end{equation}
is called the \emph{reduced product} of the sequence $\big((\M_n, X_n)\big)_{n=1}^\infty$ with respect to $\omega$ (and the \emph{ultraproduct} when $\omega$ is an ultrafilter).  In the case when $\omega$ is the Fr\'echet filter, we write the reduced product as
\begin{equation}
\prod^\infty (\M_n, X_n) \coloneqq \Big( \prod^\infty \M_n, \sum^\infty X_n \Big).
\end{equation}
\end{definition}

Our first goal is to prove that the reduced product of a sequence of tracially complete $C^*$-algebras (with respect to a given free filter $\omega$) is itself a tracially complete $C^*$-algebra. Before doing that, we isolate a useful lemma that will be used frequently in our analysis of reduced products.  

\begin{lemma}\label{lem:2norm-reducedproduct}
Let $\big((\M_n, X_n)\big)_{n=1}^\infty$ be a sequence of tracially complete $C^*$-algebras. If $a \in \prod^\omega (\M_n, X_n)$ is represented by the sequence $(a_n)_{n=1}^\infty \in \prod_{n=1}^\infty \M_n$, then
\begin{equation}
	\|a\|_{2,\sum^{\omega} X_n} = \limsup_{n\to\omega} \|a_n\|_{2,X_n}.
\end{equation}
\end{lemma}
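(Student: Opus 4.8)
The plan is to establish the two inequalities separately, in each case reducing everything to a statement about limit traces, which are concrete enough to estimate directly.

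First I would prove the bound $\|a\|_{2,\sum^\omega X_n} \leq \limsup_{n\to\omega}\|a_n\|_{2,X_n}$. Since $a^*a \geq 0$, the function $\tau \mapsto \tau(a^*a)$ is affine and weak$^*$-continuous, so its supremum over $\sum^\omega X_n$ (the closed convex hull of the limit traces) equals its supremum over the limit traces themselves. It therefore suffices to bound $\tau(a^*a)$ for a limit trace $\tau(b) = \lim_{n\to\omega'}\tau_n(b_n)$, with $(\tau_n)_{n=1}^\infty \in \prod_{n=1}^\infty X_n$ and $\omega'$ an ultrafilter containing $\omega$. For such $\tau$ one has
\begin{equation}
\tau(a^*a) = \lim_{n\to\omega'}\tau_n(a_n^*a_n) \leq \limsup_{n\to\omega'}\|a_n\|_{2,X_n}^2 \leq \limsup_{n\to\omega}\|a_n\|_{2,X_n}^2,
\end{equation}
where the last inequality holds because $\omega \subseteq \omega'$ makes the infimum defining $\limsup_{n\to\omega'}$ range over a larger collection of sets. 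Taking square roots and then the supremum over limit traces gives the claimed inequality.

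For the reverse inequality, set $L \coloneqq \limsup_{n\to\omega}\|a_n\|_{2,X_n}$. The key point is the standard fact that $L = \lim_{n\to\omega'}\|a_n\|_{2,X_n}$ for some ultrafilter $\omega' \supseteq \omega$; this follows by checking that the sets in $\omega$ together with the sets $\{n : \|a_n\|_{2,X_n} > L - \epsilon\}$ for $\epsilon > 0$ have the finite intersection property (a consequence of $L = \inf_{I \in \omega}\sup_{n \in I}\|a_n\|_{2,X_n}$) and extending this family to an ultrafilter. For each $n$, the weak$^*$-compactness of $X_n$ and continuity of $\tau \mapsto \tau(a_n^*a_n)$ let us choose $\tau_n \in X_n$ with $\tau_n(a_n^*a_n) = \|a_n\|_{2,X_n}^2$. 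Letting $\tau$ be the limit trace on $\prod^\omega \M_n$ determined by $(\tau_n)_{n=1}^\infty$ and $\omega'$ (well-defined on the quotient by the discussion in Definition~\ref{def:reduced-product}), we get $\tau(a^*a) = \lim_{n\to\omega'}\tau_n(a_n^*a_n) = \lim_{n\to\omega'}\|a_n\|_{2,X_n}^2 = L^2$, so that $\|a\|_{2,\sum^\omega X_n} \geq \sqrt{\tau(a^*a)} = L$.

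I do not expect a serious obstacle here. The only points needing care are the ultrafilter-refinement argument connecting $\limsup$ along $\omega$ with limits along ultrafilters refining $\omega$, and the observation that the chosen limit trace descends to $\prod^\omega \M_n$ (which is immediate from $|\tau_n(b_n)| \leq \|b_n\|_{2,X_n}$), both of which are routine.
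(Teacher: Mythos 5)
Your proof is correct and follows essentially the same route as the paper's: both directions reduce to limit traces, with the upper bound coming from $\|a_n\|_{2,\tau_n}\leq\|a_n\|_{2,X_n}$ and the lower bound from choosing an ultrafilter $\omega'\supseteq\omega$ realising the $\limsup$ together with near-optimal (in the paper, within $2^{-n}$; in your version, exactly optimal by weak$^*$-compactness of $X_n$) traces $\tau_n\in X_n$. The only differences are cosmetic: you make explicit the finite-intersection-property argument for the existence of $\omega'$ and the reduction from the closed convex hull to the limit traces, both of which the paper leaves implicit.
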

\begin{proof}
Given a sequence of traces $(\tau_n)_{n=1}^\infty$ and an ultrafilter $\omega' \supseteq \omega$, let $\tau$ be the associated limit trace.
Since $\|a_n\|_{2,\tau_n} \leq \|a_n\|_{2,X_n}$ for all $n \in \N$, we have 
\begin{equation}
	\|a\|_{2,\tau} = \lim_{n\to\omega'}\|a_n\|_{2,\tau_n} \leq \lim_{n\to\omega'}\|a_n\|_{2,X_n} \leq \limsup_{n \to\omega} \|a_n\|_{2,X_n}.
\end{equation}
Hence $\|a\|_{2, \sum^\omega X_n} \leq \limsup_{n \to\omega} \|a_n\|_{2,X_n}$. 

Conversely, let $\omega' \supseteq \omega$ be an ultrafilter with 
\begin{equation}
\lim_{n\to\omega'} \|a_n\|_{2,X_n} = \limsup_{n\to\omega} \|a_n\|_{2,X_n}.
\end{equation}
For every $n \in \N$, there exists $\tau_n \in X_n$ with $\|a_n\|_{2,\tau_n} > \|a_n\|_{2,X_n} - 2^{-n}$. Let $\tau$ be the limit trace corresponding to the sequence $(\tau_n)_{n=1}^\infty$ and the ultrafilter $\omega'$. Then
\begin{equation}
\|a\|_{2, \sum^\omega X_n} \geq \|a\|_{2,\tau} = \lim_{n \to\omega'} \|a_n\|_{2,\tau_n} \geq \limsup_{n\to\omega}\|a_n\|_{2,X_n}.\qedhere
\end{equation}
\end{proof}

We now prove that a reduced product of a sequence of tracially complete $C^*$-algebras is a tracially complete $C^*$-algebra.

\begin{proposition}[{cf.\ \cite[Lemma~1.6]{CETWW}}]\label{prop:reduced-product}
Let $\big((\M_n, X_n)\big)_{n=1}^\infty$ be a sequence of tracially complete $C^*$-algebras. Then $\prod^\omega (\M_n, X_n)$ is a tracially complete $C^*$-algebra.
\end{proposition}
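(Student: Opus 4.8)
The plan is to verify the two axioms in Definition~\ref{def:TC}: first that $\sum^\omega X_n$ is a faithful set of traces on $\prod^\omega \M_n$, and second that the unit ball of $\prod^\omega \M_n$ is complete in the uniform $2$-norm $\|\cdot\|_{2,\sum^\omega X_n}$. The compactness and convexity of $\sum^\omega X_n \subseteq T(\prod^\omega \M_n)$ are built into its definition as a closed convex hull of a set of traces (one should also note that each limit trace really is a trace, and hence the closed convex hull consists of traces — this is routine since each $\M_n$ is unital by Proposition~\ref{prop:unital} and $1 = (1_{\M_n})_{n=1}^\infty$ represents a unit in $\prod^\omega \M_n$). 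Faithfulness is immediate from Lemma~\ref{lem:2norm-reducedproduct}: if $a \in \prod^\omega \M_n$ is represented by $(a_n)_{n=1}^\infty$ and $\|a\|_{2,\sum^\omega X_n} = 0$, then $\limsup_{n\to\omega}\|a_n\|_{2,X_n} = 0$, so $(a_n)_{n=1}^\infty$ lies in the ideal by which we quotiented, i.e.\ $a = 0$. Equivalently, $\|\cdot\|_{2,\sum^\omega X_n}$ is already a norm.

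The substantive point is completeness of the unit ball, and here the main obstacle is the usual one for reduced products: a Cauchy sequence in the quotient must be lifted to representative sequences in $\prod_{n=1}^\infty \M_n$ in a way that is compatible across the limit $n\to\omega$, which is exactly what Kirchberg's $\epsilon$-test (Lemma~\ref{lem:EpsTest}) is designed to handle. Concretely, let $(a^{(k)})_{k=1}^\infty$ be a $\|\cdot\|_{2,\sum^\omega X_n}$-Cauchy sequence in the unit ball of $\prod^\omega \M_n$; passing to a subsequence we may assume $\|a^{(k+1)} - a^{(k)}\|_{2,\sum^\omega X_n} < 2^{-k}$ for all $k$. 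Fix representatives $(a^{(k)}_n)_{n=1}^\infty \in \prod_{n=1}^\infty \M_n$ of $a^{(k)}$ with $\|a^{(k)}_n\| \leq 1$ for all $n$; this is legitimate because the unit ball of each $\M_n$ is $\|\cdot\|_{2,X_n}$-closed (Proposition~\ref{prop:ball-closed}\ref{item:ball-closed}), so a bounded-in-operator-norm representative exists, and truncating via functional calculus as in Lemma~\ref{lem:UnitBallDensity} — or simply rescaling — keeps the operator norm bound. I would then apply Kirchberg's $\epsilon$-test with $X_n$ the unit ball of $\M_n$ and, for each $k$, the functions $f^{(k)}_n(x) \coloneqq \|x - a^{(k)}_n\|_{2,X_n}$ so that $f^{(k)}(x_1,x_2,\dots) = \limsup_{n\to\omega}\|x_n - a^{(k)}_n\|_{2,X_n}$. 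For a given $\epsilon > 0$ and $k_0$, choose $k \geq k_0$ with $2^{-k+1} < \epsilon$ and take $x_n \coloneqq a^{(k)}_n$; then for $j \leq k_0 \leq k$, Lemma~\ref{lem:2norm-reducedproduct} together with the Cauchy estimate gives $f^{(j)}(x) = \|a^{(k)} - a^{(j)}\|_{2,\sum^\omega X_n} \leq \sum_{i=j}^{k-1} 2^{-i} < 2^{-j+1} < \epsilon$. The $\epsilon$-test now produces $(b_n)_{n=1}^\infty \in \prod_{n=1}^\infty \M_n$, with $\|b_n\|\leq 1$, such that $\limsup_{n\to\omega}\|b_n - a^{(k)}_n\|_{2,X_n} = 0$ for every $k$.

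Setting $b \coloneqq (b_n)_{n=1}^\infty + \{\,\text{$2$-norm null sequences}\,\} \in \prod^\omega \M_n$, we have $\|b\| \leq 1$, and by Lemma~\ref{lem:2norm-reducedproduct}, $\|b - a^{(k)}\|_{2,\sum^\omega X_n} = \limsup_{n\to\omega}\|b_n - a^{(k)}_n\|_{2,X_n} = 0$ for all $k$, so $a^{(k)} \to b$ in $\|\cdot\|_{2,\sum^\omega X_n}$. Since an arbitrary Cauchy sequence in the unit ball has a subsequence converging to a point $b$ of the unit ball, the whole sequence converges to $b$; this establishes completeness. It remains only to reassemble: $\prod^\omega \M_n$ is a $C^*$-algebra (as noted in the footnote to Definition~\ref{def:reduced-product}, the $2$-norm null sequences form an ideal, using \eqref{eq:SpecialHolderIneq}), $\sum^\omega X_n$ is a compact convex faithful set of traces, and the unit ball is $\|\cdot\|_{2,\sum^\omega X_n}$-complete, so $\prod^\omega(\M_n,X_n)$ satisfies Definition~\ref{def:TC}. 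I expect the only place requiring genuine care is the lifting step via Lemma~\ref{lem:EpsTest} and the verification of the hypothesis of the $\epsilon$-test; everything else is bookkeeping.
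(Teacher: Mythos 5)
There is a genuine gap at the heart of your argument: the hypothesis of Kirchberg's $\epsilon$-test is not satisfied by the functions you chose, and in fact cannot be. You set $f^{(k)}_n(x) \coloneqq \|x - a^{(k)}_n\|_{2,X_n}$, so that $f^{(k)}(x) = \|x - a^{(k)}\|_{2,\sum^\omega X_n}$ by Lemma~\ref{lem:2norm-reducedproduct}. The $\epsilon$-test requires that for every $\epsilon>0$ and $k_0$ there be a single $x$ with $f^{(j)}(x)<\epsilon$ for \emph{all} $j=1,\dots,k_0$, i.e.\ an element within $\epsilon$ of each of $a^{(1)},\dots,a^{(k_0)}$ simultaneously. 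These are fixed, generally distinct elements: if $\|a^{(1)}-a^{(2)}\|_{2,\sum^\omega X_n}\geq 2\epsilon$, no such $x$ exists by the triangle inequality. Your verification breaks down precisely at the chain $f^{(j)}(x)\leq \sum_{i=j}^{k-1}2^{-i}<2^{-j+1}<\epsilon$: since $j$ ranges down to $1$, the quantity $2^{-j+1}$ can be as large as $1$, and choosing $k$ with $2^{-k+1}<\epsilon$ gives no control on $2^{-j+1}$ for small $j$ (indeed $2^{-j+1}\geq 2^{-k+1}$). A further sanity check: the conclusion you draw from the $\epsilon$-test, namely $\limsup_{n\to\omega}\|b_n - a^{(k)}_n\|_{2,X_n}=0$ for every $k$, would say $b = a^{(k)}$ for every $k$, forcing the Cauchy sequence to be constant.

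The repair is exactly the device used in the paper's proof: replace your functions by $f^{(k)}_n(x) \coloneqq \max\bigl\{\|x - a^{(k)}_n\|_{2,X_n} - \epsilon^{(k)},\, 0\bigr\}$, where $\epsilon^{(k)}$ is the tail Cauchy modulus $\sup\{\|a^{(l)}-a^{(l')}\|_{2,\sum^\omega X_n} : l,l'\geq k\}$ (or, with your subsequence normalisation, $2^{-k+2}$, say). Then taking $x = a^{(k_0)}$ makes $f^{(j)}(x)=0$ for all $j\leq k_0$, so the hypothesis of Lemma~\ref{lem:EpsTest} holds trivially, and the conclusion $f^{(k)}(b)=0$ for all $k$ translates into $\|b - a^{(k)}\|_{2,\sum^\omega X_n}\leq \epsilon^{(k)}\to 0$, which is what you actually need. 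The remainder of your write-up — faithfulness via Lemma~\ref{lem:2norm-reducedproduct}, choosing contractive representatives, and the closing bookkeeping — is fine and agrees with the paper.
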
 

\begin{proof}
By Lemma \ref{lem:2norm-reducedproduct} and the definition of $\prod^\omega \M_n$, it is clear that $\|\cdot\|_{2, \sum^\omega X_n}$ is a norm. It remains to show that the $\|\cdot\|$-closed unit ball is $\|\cdot\|_{2, \sum^\omega X_n}$-complete.

Let $(a^{(k)})_{k=1}^\infty$ be a $\|\cdot\|_{2,\sum^\omega X_n}$-Cauchy sequence in the unit ball of $\prod^\omega\M_n$, and for each $k\in \N$, fix a sequence $(a_n^{(k)})_{n=1}^\infty$ of contractions which represents $a^{(k)}$. Set
\begin{equation}
  \epsilon^{(k)}\coloneqq \sup\big\{\|a^{(l)}-a^{(l')}\|_{2,\sum^\omega X_n}:l,l'\geq k\big\},
\end{equation}
noting that $\epsilon^{(k)}\rightarrow 0$ as $k\rightarrow\infty$ since $(a^{(k)})_{k=1}^\infty$ is $\|\cdot\|_{2,\sum^\omega X_n}$-Cauchy. Define functions $f^{(k)}_n\colon \{ b \in \M : \|b \| \leq 1 \} \rightarrow [0,\infty]$ by
\begin{equation}
\label{eq:fknDef}
  f^{(k)}_n(b) \coloneqq \max\big\{\|b-a^{(k)}_n\|_{2,X_n}-\epsilon^{(k)},0\big\}.
\end{equation}
For $k_0\in \N$ and $k=1,\dots,k_0$, since $\|a^{(k_0)}-a^{(k)}\|_{2,\sum^\omega X_n}\leq \epsilon^{(k)}$, we have $\limsup_{n\rightarrow\omega}f^{(k)}_n(a_n^{(k_0)})=0$ by Lemma \ref{lem:2norm-reducedproduct}.  Therefore, Kirchberg's $\epsilon$-test (Lemma~\ref{lem:EpsTest}) gives a sequence $(a_n)_{n=1}^\infty$ of contractions representing an element $a$ in the unit ball of $\prod^\omega \M_n$ such that $\limsup_{n\rightarrow\omega}f_n^{(k)}(a_n)=0$ for all $k\in \N$. By Lemma \ref{lem:2norm-reducedproduct} and \eqref{eq:fknDef}, this means that
\begin{equation}
  \|a-a^{(k)}\|_{2,\sum^\omega X_n}\leq\epsilon^{(k)}\rightarrow 0
\end{equation}
as $k\rightarrow\infty$. Hence the unit ball of $\prod^\omega\M_n$ is $\|\cdot\|_{2, \sum^\omega X_n}$-complete.
\end{proof}

A particularly relevant case of a reduced product is when $\big( (\M_n, X_n) \big)_{n=1}^\infty$ is a constant sequence; i.e.\ for some tracially complete $C^*$-algebra $(\M, X)$, we have $(\M_n, X_n) = (\M, X)$ for all $n \in \N$. In this case, we write
\begin{equation}
  (\M^\omega, X^\omega) \coloneqq \prod^\omega (\M_n, X_n)
\end{equation}
and call $(\M^\omega, X^\omega)$ the \emph{reduced power} of $(\M, X)$ with respect to the free filter $\omega$, or the \emph{ultrapower} if $\omega$ is an ultrafilter. Again, when $\omega$ is the Fr\'echet filter, we write $(\M^\infty, X^\infty)$ in place of $(\M^\omega, X^\omega)$.

Note that there is a natural embedding $\iota_{(\M, X)} \colon (\M, X) \rightarrow (\M^\omega, X^\omega)$ of tracially complete $C^*$-algebras given by identifying $\M$ with constant sequences in $\M^\omega$. Typically, the map $\iota_{(\M, X)}$ will be suppressed, and we will view $\M$ as a subalgebra of $\M^\omega$ -- this is the case, for example, when considering the central sequence algebra $\M^\omega \cap \M'$.

For a $C^*$-algebra $A$ with $T(A)$ compact, we write $A^\omega$ for the reduced power of $A$ in the norm $\|\cdot\|_{2, T(A)}$, which is defined in a way analogous to Definition~\ref{def:reduced-product}, and we write $T_\omega(A)$ for the limit traces on $A^\omega$ induced by $T(A)$.  These uniform tracial reduced powers of $C^*$-algebras appear explicitly in connection with the Toms--Winter conjecture in \cite{CETWW} (working with ultrafilters $\omega$) and in the abstract approach to classification in \cite{CGSTW} (working with the Fr\'echet filter).\footnote{Various  related constructions appeared earlier.  A suitable quotient of the norm ultrapower, which is isomorphic to the tracial ultrapower, appeared in \cite{KR14, TWW15}, and all these ideas have their spiritual origins in Matui and Sato's work \cite{MS12,MS14}.  Uniform tracial ultrapowers of $W^*$-bundles also were used in \cite{BBSTWW}.}
In our formalism, the pair $\big(A^\omega, \overline{\rm co}(T_\omega(A))\big)$ is a tracially complete $C^*$-algebra, and in fact, if $(\M, X)$ is the tracial completion of $A$ with respect to $T(A)$, then the canonical map $\alpha_X \colon A \rightarrow \M$ induces an isomorphism
\begin{equation}
  \big(A^\omega, \overline{\rm co}(T_\omega(A))\big) \rightarrow (\M^\omega, X^\omega),
\end{equation}
defined on representative sequences by $(a_n)_{n=1}^\infty  \mapsto (\alpha_X(a_n))_{n=1}^\infty$.
This follows from a more general result on the compatibility of tracial completions: Proposition~\ref{prop:CStarVsTCreducedProducts} below.  We first introduce some more notation.

\begin{definition}
Let $(A_n)_{n=1}^\infty$ be a sequence of $C^*$-algebras and let $X_n \subseteq T(A_n)$ be a compact convex set for each $n \geq 1$. We write
\begin{equation}
\prod^\omega A_n \coloneqq  \prod_{n=1}^\infty A_n \big/ \big\{(a_n)_{n=1}^\infty: \lim_{n\to\omega} \|a_n\|_{2,X_n} = 0\big\}.
\end{equation}
Let $\sum^\omega X_n$ be the closed convex hull of the limit traces on $\prod^\omega A_n$ defined by sequences $(\tau_n)_{n=1}^\infty \in \prod_{n=1}^\infty X_n$.  In the case of a constant sequence, say $A_n = A$ and $X_n = X$, we write $A^\omega = \prod^\omega A_n$ and $X^\omega = \sum^\omega X_n$. 
\end{definition}

\begin{proposition}\label{prop:CStarVsTCreducedProducts}
Let $(A_n)_{n=1}^\infty$ be a sequence of $C^*$-algebras, let $X_n \subseteq T(A_n)$ be a compact convex set for each $n \geq 1$, let $(\M_n, X_n)$ be the tracial completion of $A_n$ with respect to $X_n$ as in Definition~\ref{def:UTCompletion}, and let $\alpha_n \colon A_n \rightarrow \M_n$ be the canonical map for $n \geq 1$.  Then $\big(\prod^\omega A_n, \sum^\omega X_n\big)$ is a tracially complete $C^*$-algebra and there is an isomorphism of tracially complete $C^*$-algebras
\begin{equation}
  \prod^\omega \alpha_n \colon \Big( \prod^\omega A_n, \sum^\omega X_n \Big) 
  \overset\cong\longrightarrow \Big( \prod^\omega \M_n, \sum^\omega X_n \Big),
\end{equation}
defined at the level of representative sequences by $(a_n)_{n=1}^\infty  \mapsto (\alpha_n(a_n))_{n=1}^\infty$.
\end{proposition}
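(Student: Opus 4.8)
The plan is to verify the three conditions in turn: that $\prod^\omega\alpha_n$ is a well-defined map, that it is an isometric $^*$-isomorphism of the underlying $C^*$-algebras, and that it intertwines the designated trace sets. Throughout, the key technical input is Kaplansky density in the form of Proposition~\ref{prop:UnitBallDensity} applied fibrewise, together with Lemma~\ref{lem:2norm-reducedproduct} which identifies the uniform $2$-norm on a reduced product with the $\omega$-limit superior of the fibrewise uniform $2$-norms.

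First I would record that $\big(\prod^\omega A_n,\sum^\omega X_n\big)$ is a tracially complete $C^*$-algebra. Since $\alpha_n\colon A_n\to\M_n$ is $\|\cdot\|_{2,X_n}$-isometric (as a map into the tracial completion), a sequence $(a_n)_{n=1}^\infty\in\prod_{n=1}^\infty A_n$ is $\|\cdot\|_{2,X_n}$-null in the relevant sense if and only if $(\alpha_n(a_n))_{n=1}^\infty$ is; hence $(a_n)_{n=1}^\infty\mapsto(\alpha_n(a_n))_{n=1}^\infty$ descends to a well-defined injective $^*$-homomorphism $\prod^\omega\alpha_n\colon\prod^\omega A_n\to\prod^\omega\M_n$, and by Lemma~\ref{lem:2norm-reducedproduct} this map is isometric for the uniform $2$-norms (using that $\|\alpha_n(a_n)\|_{2,X_n}=\|a_n\|_{2,X_n}$ for every $n$). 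Being injective on a $C^*$-algebra, it is isometric for the operator norms as well. In particular $\|\cdot\|_{2,\sum^\omega X_n}$ is a norm on $\prod^\omega A_n$, and the operator-norm unit ball is complete in it: this transfers from the already-established completeness of the unit ball of $\prod^\omega\M_n$ (Proposition~\ref{prop:reduced-product}) via the operator-norm isometry $\prod^\omega\alpha_n$, once we know the map is \emph{surjective}. So the remaining content is surjectivity of $\prod^\omega\alpha_n$, together with the identification of the trace sets.

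For surjectivity, fix $b\in\prod^\omega\M_n$ represented by a bounded sequence $(b_n)_{n=1}^\infty$ with $\|b_n\|\le 1$. For each $n$, Proposition~\ref{prop:UnitBallDensity} (Kaplansky density in $(\M_n,X_n)$, noting $\alpha_n(A_n)$ is $\|\cdot\|_{2,X_n}$-dense in $\M_n$) lets me choose a contraction $a_n\in A_n$ with $\|\alpha_n(a_n)-b_n\|_{2,X_n}<2^{-n}$. Then $(\alpha_n(a_n))_{n=1}^\infty$ represents the same element as $(b_n)_{n=1}^\infty$ by Lemma~\ref{lem:2norm-reducedproduct}, so $b=\prod^\omega\alpha_n\big((a_n)_{n=1}^\infty\big)$; alternatively one can run Kirchberg's $\epsilon$-test (Lemma~\ref{lem:EpsTest}) if one prefers to avoid the summable-error estimate. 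This also completes the verification that $\prod^\omega A_n$ is tracially complete.

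Finally, for the trace sets: a limit trace on $\prod^\omega A_n$ defined by $(\tau_n)_{n=1}^\infty\in\prod_{n=1}^\infty X_n$ and an ultrafilter $\omega'\supseteq\omega$ is exactly the composition with $\prod^\omega\alpha_n$ of the limit trace on $\prod^\omega\M_n$ defined by the induced traces $\widetilde{\tau}_n\in X_n$ on $\M_n$ and the same $\omega'$, because $\widetilde{\tau}_n\circ\alpha_n=\tau_n$ by construction of the tracial completion. Thus $\prod^\omega\alpha_n$ carries limit traces to limit traces bijectively, and since it is $\|\cdot\|_{2,\sum^\omega X_n}$-isometric its adjoint is a weak$^*$-homeomorphism on the relevant sets, so it carries $\overline{\mathrm{co}}$ of the limit traces on one side onto $\overline{\mathrm{co}}$ of the limit traces on the other; that is, $(\prod^\omega\alpha_n)^*$ maps $\sum^\omega X_n$ (computed in $\prod^\omega\M_n$) onto $\sum^\omega X_n$ (computed in $\prod^\omega A_n$). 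Hence $\prod^\omega\alpha_n$ is an isomorphism of tracially complete $C^*$-algebras. The only mildly delicate point — and the one I would be most careful about — is keeping the two uses of the symbol $\sum^\omega X_n$ straight (one formed from traces on the $A_n$, one from traces on the $\M_n$) and checking that the bijection between limit traces is genuinely compatible with taking closed convex hulls; this is where the operator-norm isometry, not merely the $2$-norm isometry, of $\prod^\omega\alpha_n$ is used, to guarantee the dual map is a homeomorphism of the full trace spaces.
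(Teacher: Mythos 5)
Your proof is correct and follows essentially the same route as the paper's: observe that $\prod^\omega\alpha_n$ is a uniform $2$-norm isometric $^*$-homomorphism and obtain surjectivity by approximating each entry of a representative sequence by an element of $A_n$ with controlled operator norm. The paper treats the identification of the designated trace sets as immediate, whereas you spell it out via the bijection of limit traces and their closed convex hulls; this is a matter of level of detail rather than of method.
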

\begin{proof}
It is easy to see that $\alpha \coloneqq \prod^\omega \alpha_n$ is a $^*$-homomorphism and is isometric with respect to the uniform 2-norms, so it suffices to show that $\alpha$ is surjective.  Fix $b \in \prod^\omega \mathcal M_n$ and represent $b$ by a bounded sequence $(b_n)_{n=1}^\infty$.  By the construction of the tracial completion $(\M_n, X_n)$, there is, for each $n \geq 1$, $a_n \in A_n$ such that $\| \alpha_n(a_n) - b_n\|_{2, X_n} < \frac1n$ and $\|a_n\| \leq \|b_n\|$.  Then the sequence $(a_n)_{n=1}^\infty$ defines an element $a \in \prod^\omega A_n$ such that $\alpha(a) = b$.
\end{proof}

In the case of reduced powers, an important additional observation is that the isomorphism defined in Proposition~\ref{prop:CStarVsTCreducedProducts} is well-behaved on the central sequence algebras. 

\begin{proposition}\label{rem:CStarVsTCreducedProducts}
Let $A$ be a $C^*$-algebra, let $X \subseteq T(A)$ be compact and convex, let $(A^\omega, X^\omega)$ be the uniform tracial reduced power, let $\iota:A \rightarrow A^\omega$ be given by constant sequences,
let $(\M, X)$ be the tracial completion of $A$ with respect to $X$, and let $\alpha_X:A \rightarrow \M$ be the canonical map. 
The isomorphism $\alpha^\omega:(A^\omega, X^\omega) \rightarrow (\M^\omega,  X^\omega)$ defined at the level of representative sequences by $(a_n)_{n=1}^\infty  \mapsto (\alpha_X(a_n))_{n=1}^\infty$ satisfies 
\begin{equation}
	\alpha^\omega(A^\omega \cap \iota(A)') = \M^\omega \cap \M'.
\end{equation}
More generally, for any $\|\cdot\|_{2,X}$-separable subset $S \subseteq \M$, there is a $\|\cdot\|$-separable subset $S_0 \subseteq A$ such that $\alpha(A^\omega \cap \iota(S_0)') \subseteq \M^\omega \cap S'$.
\end{proposition}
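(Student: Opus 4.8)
The first statement, that $\alpha^\omega(A^\omega \cap \iota(A)') = \M^\omega \cap \M'$, is the special case $S = \M$ of the second, so I would prove the general statement and deduce the first by taking $S_0$ to be a $\|\cdot\|$-dense countable subset of $A$ (recalling $\alpha_X(A)$ is $\|\cdot\|_{2,X}$-dense in $\M$ by the construction of the tracial completion, so $\iota(A)'$ computed in $A^\omega$ maps under $\alpha^\omega$ to $\M'$ computed in $\M^\omega$ after a density argument). Throughout, I will freely use that $\alpha^\omega$ is an isomorphism of tracially complete $C^*$-algebras by Proposition~\ref{prop:CStarVsTCreducedProducts}, and that multiplication is $\|\cdot\|_{2,X}$-continuous on $\|\cdot\|$-bounded sets, so commutation relations are detected by representative sequences.

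\textbf{Construction of $S_0$.} Suppose $S \subseteq \M$ is $\|\cdot\|_{2,X}$-separable; without loss of generality $S$ is a $\|\cdot\|_{2,X}$-separable bounded set (for detecting commutants, only the bounded elements of $S$ matter, and any separable subset is a countable union of bounded separable pieces). Fix a countable $\|\cdot\|_{2,X}$-dense subset $\{s_k : k \in \N\}$ of the unit ball of $S$ (rescaling). For each $k$ and each $m \in \N$, use the $\|\cdot\|_{2,X}$-density of $\alpha_X(A)$ in $\M$ together with Kaplansky's density theorem for tracially complete $C^*$-algebras (Proposition~\ref{prop:UnitBallDensity}) to pick a contraction $a_{k,m} \in A$ with $\|\alpha_X(a_{k,m}) - s_k\|_{2,X} < 1/m$. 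Let $S_0 \subseteq A$ be the $\|\cdot\|$-closed separable $C^*$-subalgebra generated by $\{a_{k,m} : k,m \in \N\}$; this is $\|\cdot\|$-separable.

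\textbf{The containment.} Take $b \in A^\omega \cap \iota(S_0)'$, represented by a bounded sequence $(b_n)_{n=1}^\infty$ of elements of $A$. I claim $\alpha^\omega(b)$ commutes with every $s \in S$. By $\|\cdot\|_{2,X}$-continuity of multiplication on bounded sets and density of $\{s_k\}$, it suffices to show $[\alpha^\omega(b), s_k] = 0$ in $\M^\omega$ for each $k$. Now $\alpha^\omega(b)$ is represented by $(\alpha_X(b_n))_{n=1}^\infty$, and $s_k = \lim_m \alpha_X(a_{k,m})$ in $\|\cdot\|_{2,X}$; since $b \in \iota(S_0)'$ and $a_{k,m} \in S_0$, we have $[b, \iota(a_{k,m})] = 0$, hence $[\alpha^\omega(b), \alpha^\omega(\iota(a_{k,m}))] = 0$, i.e.\ $\alpha^\omega(b)$ commutes with the constant sequence of $\alpha_X(a_{k,m})$. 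Using Lemma~\ref{lem:2norm-reducedproduct} and the bound $\|[\alpha^\omega(b), s_k]\|_{2,X^\omega} \leq \|[\alpha^\omega(b), s_k - \alpha_X(a_{k,m})]\|_{2,X^\omega} \leq 2\|\alpha^\omega(b)\|\,\|s_k - \alpha_X(a_{k,m})\|_{2,X} < 2\|b\|/m$ for all $m$, we conclude $[\alpha^\omega(b), s_k] = 0$, so $\alpha^\omega(b) \in \M^\omega \cap S'$. This gives $\alpha^\omega(A^\omega \cap \iota(S_0)') \subseteq \M^\omega \cap S'$.

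\textbf{The reverse inclusion for the first statement.} For the equality $\alpha^\omega(A^\omega \cap \iota(A)') = \M^\omega \cap \M'$, it remains to see $\supseteq$. Given $c \in \M^\omega \cap \M'$, write $c = \alpha^\omega(b)$ for some $b \in A^\omega$ (surjectivity of $\alpha^\omega$). For $a \in A$, $\iota(\alpha_X(a))$ is a constant sequence in $\M^\omega$ lying in $\M$, so $[\alpha^\omega(b), \iota(\alpha_X(a))] = 0$; since $\alpha^\omega$ is an isomorphism intertwining $\iota$ on both sides (i.e.\ $\alpha^\omega \circ \iota_A = \iota_\M \circ \alpha_X$), this forces $[b, \iota(a)] = 0$. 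Hence $b \in A^\omega \cap \iota(A)'$ and $c \in \alpha^\omega(A^\omega \cap \iota(A)')$, completing the proof.

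\textbf{Main obstacle.} The only genuinely delicate point is the one I flagged in the paragraph on the containment: an element of the commutant $A^\omega \cap \iota(S_0)'$ need not, at the level of a representative sequence, commute with the approximating elements $a_{k,m}$ \emph{exactly} — commutation only holds in the reduced product. But this is precisely handled by the $2\|b\|/m$ estimate above combined with Lemma~\ref{lem:2norm-reducedproduct}, letting $m \to \infty$. There is no compactness or diagonal-sequence subtlety here because we are only approximating finitely/countably many fixed elements $s_k$, each by a sequence indexed by $m$, and the estimate is uniform in the reduced-product variable. So I expect the write-up to be short and essentially bookkeeping once $S_0$ is set up correctly.
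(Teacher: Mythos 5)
Your proof is correct and follows essentially the same route as the paper: both use that $\alpha^\omega$ is an isomorphism carrying $\iota(A)$ (resp.\ $\iota(S_0)$) onto $\alpha_X(A)$ (resp.\ $\alpha_X(S_0)$), and that relative commutants in $\M^\omega$ are unchanged under passing to $\|\cdot\|_{2,X}$-closures because multiplication by a fixed element is uniform-2-norm continuous, with a countable $S_0\subseteq A$ chosen so that $\overline{\alpha_X(S_0)}^{\|\cdot\|_{2,X}}\supseteq S$. The only cosmetic difference is that you spell out the closure-of-commutant step via the explicit $2\|b\|/m$ estimate, where the paper states it abstractly.
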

\begin{proof} 
By Proposition \ref{prop:CStarVsTCreducedProducts}, $\alpha^\omega$ is an isomorphism. 
Since 
\begin{equation}
 \alpha^\omega(\iota(A)) = \alpha_X(A) \subseteq \mathcal M^\omega,
\end{equation}
we have 
\begin{equation}
	\alpha^\omega(A^\omega \cap \iota(A)') = \M^\omega \cap \alpha_X(A)'.
\end{equation}

By Lemma \ref{lem:2norm-reducedproduct}, $\iota_{(\M, X)}$ is an isometry for the respective uniform 2-norms. Hence $\alpha_X(A)$ is  $\|\cdot\|_{2,X^\omega}$-dense in $\M$. 
In a tracially complete $C^*$-algebra, left and right multiplication by a fixed element are continuous with respect to the uniform 2-norm by \eqref{eq:SpecialHolderIneq}. Therefore, any element of $\M^\omega$ that commutes with $\alpha_X(A)$ must also commute with $\M$. Hence  $\M^\omega \cap \alpha_X(A)' = \M^\omega \cap \M'$.

Let $S \subseteq \M$ be $\|\cdot\|_{2,X}$-separable. Since $\alpha_X(A)$ is $\|\cdot\|_{2,X^\omega}$-dense in $\M$, there is a countable subset $S_0 \subseteq A$ such that $\overline{\alpha_X(S_0)}^{\|\cdot\|_{2,X}} \supseteq S$. Then 
\begin{equation}
    \alpha^\omega(A^\omega \cap \iota(S_0)') = \M^\omega \cap \alpha_X(S_0)' = \M^\omega \cap \big(\overline{\alpha_X(S_0)}^{\|\cdot\|_{2,X}}\big)' \subseteq \M^\omega \cap S',
\end{equation}
as claimed.
\end{proof}

As with operator norm reduced products (see \cite[Lemma~3.9.5]{Br08}), matrix amplifications commute with reduced products in the sense of the following theorem.  The proof is essentially the same as in the $C^*$-algebra setting.

\begin{proposition}\label{prop:matrix-ultrapower}
	For any sequence $\big((\M_n, X_n)\big)_{n=1}^\infty$ of tracially complete $C^*$-algebras with reduced product $(\M, X)$ and $d \in \mathbb N$, there is a natural isomorphism
	\begin{equation}\label{eq:matrix-ultrapower}
		\big(\M \otimes M_d, X \otimes \{\mathrm{tr_d}\}\big) \overset\cong\longrightarrow  \prod^\omega \big( \M_n \otimes M_d, X_n \otimes \{\mathrm{tr_d}\}\big).
	\end{equation}
	defined on representing sequences by $(a_n)_{n=1}^\infty \otimes b \mapsto (a_n \otimes b)_{n=1}^\infty$.
\end{proposition}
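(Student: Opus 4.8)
The plan is to construct the isomorphism in \eqref{eq:matrix-ultrapower} directly and check it is a well-defined, isometric, surjective $^*$-homomorphism of tracially complete $C^*$-algebras, mimicking the proof of the $C^*$-algebra statement \cite[Lemma~3.9.5]{Br08} with the operator norm replaced by the uniform $2$-norm. First I would recall that by Proposition~\ref{prop:matrix-algebras}, both sides of \eqref{eq:matrix-ultrapower} are tracially complete $C^*$-algebras: on the left, $\big(\M\otimes M_d, X\otimes\{\tr_d\}\big)$ is the matrix amplification of the reduced product $(\M,X)=\prod^\omega(\M_n,X_n)$, while on the right each $\big(\M_n\otimes M_d, X_n\otimes\{\tr_d\}\big)$ is a tracially complete $C^*$-algebra and Proposition~\ref{prop:reduced-product} makes their reduced product one.

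Next I would define the map. Writing a typical element of $\M\otimes M_d$ as $\sum_{i,j=1}^d a_{i,j}\otimes e_{i,j}$ with $a_{i,j}\in\M$ represented by bounded sequences $(a_{i,j,n})_{n=1}^\infty$ in $\M_n$, the assignment $\sum_{i,j}a_{i,j}\otimes e_{i,j}\mapsto \big(\sum_{i,j}a_{i,j,n}\otimes e_{i,j}\big)_{n=1}^\infty$ makes sense once we check independence of the choice of representatives. This follows from the norm equivalence in the proof of Proposition~\ref{prop:matrix-algebras}, namely
\begin{equation}
\frac{1}{d}\max_{1\leq i,j\leq d}\|b_{i,j}\|_{2,Y}^2 \leq \Big\|\sum_{i,j}b_{i,j}\otimes e_{i,j}\Big\|_{2,Y\otimes\{\tr_d\}}^2 \leq \frac{1}{d}\sum_{i,j}\|b_{i,j}\|_{2,Y}^2,
\end{equation}
applied both with $Y=X_n$ (to see null sequences go to null sequences) and with $Y$ the reduced product traces, combined with Lemma~\ref{lem:2norm-reducedproduct} which gives $\|a_{i,j}\|_{2,X}=\limsup_{n\to\omega}\|a_{i,j,n}\|_{2,X_n}$. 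The same chain of inequalities shows the map is isometric for the two uniform $2$-norms, and it is visibly a $^*$-homomorphism. It is also clearly unital. That it is a morphism of tracially complete $C^*$-algebras then follows from Proposition~\ref{prop:embedding-isometric}: an isometric morphism is automatically an embedding, and one checks it preserves the designated trace simplices by computing on the spanning limit traces $\sigma\otimes\tr_d$ for $\sigma$ a limit trace on $\prod^\omega\M_n$, using that every limit trace on the right of \eqref{eq:matrix-ultrapower} is, by definition, $\lim_{n\to\omega'}(\tau_n\otimes\tr_d)$ for $(\tau_n)\in\prod X_n$, which matches $\sigma\otimes\tr_d$ for the limit trace $\sigma=\lim_{n\to\omega'}\tau_n$.

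The remaining point is surjectivity, which I expect to be the only mildly nontrivial step. Given $b\in\prod^\omega(\M_n\otimes M_d)$ represented by a bounded sequence $(b_n)_{n=1}^\infty$ with $b_n\in\M_n\otimes M_d$, write $b_n=\sum_{i,j}b_{i,j,n}\otimes e_{i,j}$ with $b_{i,j,n}=(1_{\M_n}\otimes e_{1,i})\,b_n\,(1_{\M_n}\otimes e_{j,1})$ suitably interpreted, so that $\sup_n\|b_{i,j,n}\|\leq\sup_n\|b_n\|<\infty$; the sequences $(b_{i,j,n})_{n=1}^\infty$ then define elements $a_{i,j}\in\M$, and $\sum_{i,j}a_{i,j}\otimes e_{i,j}$ is sent to $b$. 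This completes the verification that the map is a bijective isometric $^*$-homomorphism respecting the trace simplices, hence an isomorphism of tracially complete $C^*$-algebras. I do not anticipate any real obstacle here; the entire argument is bookkeeping with the matrix-entry norm estimates from Proposition~\ref{prop:matrix-algebras} and the reduced-product norm formula from Lemma~\ref{lem:2norm-reducedproduct}, and no filter-specific subtlety (such as Kirchberg's $\epsilon$-test) is needed because the maps are defined literally at the level of representative sequences.
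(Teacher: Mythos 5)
Your argument is correct and is essentially the proof the paper gives: both construct the natural map on representative sequences and verify that the uniform $2$-norms and limit traces correspond, so that the map descends to an isomorphism of the reduced products. The only organizational difference is that the paper delegates surjectivity and multiplicativity to the known $C^*$-isomorphism $\big(\prod_{n=1}^\infty\M_n\big)\otimes M_d\cong\prod_{n=1}^\infty(\M_n\otimes M_d)$ at the level of $\ell^\infty$-products and then only checks the limit-trace identity, whereas you re-derive these facts by hand from the matrix-entry estimates of Proposition~\ref{prop:matrix-algebras} and Lemma~\ref{lem:2norm-reducedproduct}.
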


\begin{proof}
	The natural map 
	\begin{equation}
		\phi \colon \Big(\prod_{n=1}^\infty \M_n \Big)\otimes M_d \longrightarrow \prod_{n=1}^\infty (\M_n \otimes M_d)
	\end{equation}
	is an isomorphism of $C^*$-algebras (see the proof of \cite[Lemma~3.9.4]{Br08}, for example).
   Now suppose $(\tau_n)_{n=1}^\infty \in \prod_{n=1}^\infty T(\M_n)$ and $\omega'$ is an ultrafilter containing $\omega$, and let $\tau$ denote the trace on $\prod_{n=1}^\infty \M_n$ given by
	\begin{equation}
		\tau(a) \coloneqq \lim_{n \rightarrow \omega'} \tau_n(a_n), \qquad a =(a_n)_{n=1}^\infty \in \mathcal \prod_{n=1}^\infty \M_n.
	\end{equation}
	Then for $a =(a_n)_{n=1}^\infty \in \prod_{n=1}^\infty (\M_n \otimes M_d)$, we have
	\begin{equation}
		\lim_{n \rightarrow \omega'} (\tau_n \otimes \mathrm{tr}_d)(a_n) = (\tau \otimes \mathrm{tr}_d)(\phi^{-1}(a)).
	\end{equation}
	Therefore, $\phi$ descends to an isomorphism as in \eqref{eq:matrix-ultrapower}.
\end{proof}

We now turn to the question of traces on reduced products.  It is well known that the tracial ultrapower of II$_1$ factor is also a II$_1$ factor.  We have been unable to answer the general question for tracially complete $C^*$-algebras.  Under the additional hypothesis of CPoU (see Section~\ref{sec:CPoU}), a positive answer to the following question is given in Theorem \ref{thm:no-silly-traces}.  In particular, the answer is affirmative in the presence of property $\Gamma$ (see Section~\ref{sec:Gamma}) by Theorem~\ref{introthmgammaimpliescpou}.

\begin{question}
	If $\big((\M_n, X_n)\big)_{n=1}^\infty$ is a sequence of type II$_1$ factorial tracially complete $C^*$-algebras, is the reduced power $\big(\prod^\omega \M_n, \sum^\omega X_n\big)$ also factorial?
\end{question}

\begin{remark}\label{VaccaroRemark}
In an earlier draft of this paper, we asked this question without the type II$_1$ assumption.  A counterexample in the type I setting was given by Vaccaro in \cite{Va23}.  It is based on a family of $C^*$-algebras introduced in \cite{PP70}: $\M_n$ is the continuous  sections of a bundle over the complex projective space $\mathbb C P^n$ with fibre $M_2$, and $X_n = T(\M_n)$.
\end{remark}
 
We end this subsection with a reparameterisation theorem which will allow us to prove existence results for morphisms given both existence and uniqueness results for approximate morphisms.  This should be regarded as an abstract version of the standard intertwining arguments commonly used in $C^*$-algebra theory.  An operator norm version of this result appears in \cite[Theorem~4.3]{Gabe18}, which, in turn, is a sequential version of \cite[Proposition~1.3.7]{Phillips00}, attributed to Kirchberg.  

If $(\mathcal N, Y)$ is a tracially complete $C^*$-algebra and $r \colon \mathbb N \rightarrow \mathbb N$ is a function such that $\lim_{n \rightarrow \infty} r(n) = \infty$, then there is an induced endomorphism $r^*$ of $(\mathcal N^\infty, Y^\infty)$ given on representing sequences by
\begin{equation}
  r^*\big((b_n)_{n=1}^\infty) \coloneqq (b_{r(n)})_{n=1}^\infty, \qquad (b_n)_{n=1}^\infty \in \ell^\infty(\mathcal N).
\end{equation}
Equivalently, viewing $\ell^\infty(\mathcal N)$ as bounded functions $\mathbb N \rightarrow \mathcal N$, $r^*$ is the map induced by
\begin{equation}
	\ell^\infty(\mathcal N) \rightarrow \ell^\infty(\mathcal N) \colon f \mapsto f \circ r.
\end{equation}
Note that the map $r^*$ would typically not be well-defined if $(\mathcal N^\infty, Y^\infty)$ were replaced with an ultrapower -- this is the main reason for working with general reduced products of tracially complete $C^*$-algebras.

In applications of the following theorem, the metric space $S$ will typically be either a separable $C^*$-algebra with the operator norm or a tracially complete $C^*$-algebra which is separable in its uniform 2-norm.

\begin{theorem}[Intertwining via reparameterisation]\label{thm:reparameterisation}
 Let $(\mathcal N, Y)$ be a tracially complete $C^*$-algebra, let $S$ be a separable metric space, and let $\phi \colon S \rightarrow \mathcal N^\infty$ be a $\|\cdot\|_{2, Y^\infty}$-continuous function. Suppose also that every unitary in $\mathcal N^\infty$ lifts to a unitary in $\ell^\infty(\mathcal N)$.  If for every function $r \colon \mathbb N \rightarrow \mathbb N$ with $\lim_{n \rightarrow \infty} r(n) = \infty$, we have $r^* \circ \phi$ is approximately unitarily equivalent to $\phi$, then there is a $\|\cdot\|_{2, Y^\infty}$-continuous function $\psi \colon S \rightarrow (\mathcal N, Y)$ such that $\iota_{(\mathcal{N}, Y)} \circ \psi$ is unitarily equivalent to $\phi$.
\end{theorem}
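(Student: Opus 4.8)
\textbf{Proof strategy for Theorem~\ref{thm:reparameterisation}.} The plan is to run the standard Elliott--Kirchberg intertwining by reparameterisation argument, adapted to the uniform $2$-norm setting, exactly as in \cite[Theorem~4.3]{Gabe18}, with the hypothesis on lifting unitaries substituting for the automatic lifts available in the operator-norm case. First I would fix a countable dense subset $\{s_1, s_2, \dots\}$ of $S$ and a decreasing sequence $\epsilon_k \to 0$. The idea is to build a \emph{single} reparameterisation $r \colon \mathbb{N} \to \mathbb{N}$ with $r(n) \to \infty$ together with a sequence of unitaries $v_n \in \mathcal{N}$ such that, writing $v \in \mathcal{N}^\infty$ for the unitary represented by $(v_n)_{n=1}^\infty$, conjugation by $v$ carries $r^* \circ \phi$ to a function $\psi' \colon S \to \mathcal{N}^\infty$ that actually takes values in (the image of) $\mathcal{N}$, i.e.\ in $\iota_{(\mathcal{N},Y)}(\mathcal{N})$. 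Since $\phi$ and $r^* \circ \phi$ are approximately unitarily equivalent by hypothesis, and approximate unitary equivalence is witnessed by unitaries in $\mathcal{N}^\infty$ which (by the lifting hypothesis) lift to unitaries in $\ell^\infty(\mathcal{N})$, one gets genuine unitary equivalence after passing through the reparameterisation; composing with the inverse of $\iota_{(\mathcal{N},Y)}$ (which is an isometric embedding by Lemma~\ref{lem:2norm-reducedproduct}, hence has a well-defined inverse on its image) then produces the desired $\psi \colon S \to (\mathcal{N}, Y)$.

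The construction of $r$ and the $v_n$ is the technical heart and proceeds by a standard one-sided back-and-forth/diagonal argument. At stage $k$, having chosen $r$ on $\{1, \dots, n_{k-1}\}$ and unitaries up to index $n_{k-1}$, one uses the approximate unitary equivalence of $r_k^* \circ \phi$ and $\phi$ for an appropriate tail-modification $r_k$ of $r$ to find a unitary $w_k \in \mathcal{N}^\infty$, lift it to a unitary $(w_{k,n})_{n=1}^\infty \in \ell^\infty(\mathcal{N})$ using the lifting hypothesis, and then — because the relevant approximations only need to hold on the finitely many points $s_1, \dots, s_k$ to within $\epsilon_k$, and only for indices $n$ in some set belonging to the Fr\'echet filter — select a large $n_k > n_{k-1}$ and define $r$ on $(n_{k-1}, n_k]$ and the $v_n$ there so that the running composite conjugation brings $\phi(s_i)$, for $i \le k$, within $\epsilon_k$ (in $\|\cdot\|_{2, Y^\infty}$) of a constant sequence. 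Because $S$ is separable and $\phi$ is $\|\cdot\|_{2, Y^\infty}$-continuous, controlling the images of the dense points $s_i$ to within $\epsilon_k$ automatically controls all of $\phi$ up to a uniform error tending to $0$, so the limiting function $\psi'$ is well-defined, $\|\cdot\|_{2, Y^\infty}$-continuous, and takes values in $\iota_{(\mathcal{N},Y)}(\mathcal{N})$. Here Kirchberg's $\epsilon$-test (Lemma~\ref{lem:EpsTest}) is the natural tool to organise the countably many approximation requirements across the sequence and extract the single sequence $(v_n)$ that works simultaneously for all $k$.

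The main obstacle I anticipate is \emph{bookkeeping the reparameterisation against the conjugating unitaries}: one must make sure that the map $r$, once its values are committed on an initial segment, is never revised there, while still having enough freedom in the tail to absorb each successive approximate unitary equivalence; and simultaneously that the partial products of the $w_k$'s converge in the right sense so that the composite $v$ is a genuine unitary in $\mathcal{N}^\infty$ lifting to $\ell^\infty(\mathcal{N})$. This is precisely the delicate point in \cite[Theorem~4.3]{Gabe18} and \cite[Proposition~1.3.7]{Phillips00}, and the only genuinely new ingredient needed here is that each individual $w_k$ lifts to a unitary in $\ell^\infty(\mathcal{N})$ — which is exactly the standing hypothesis — together with the observation that multiplication and adjoints in $\mathcal{N}$ are $\|\cdot\|_{2,X}$-continuous on $\|\cdot\|$-bounded sets (used repeatedly to pass between the algebra and its reduced power, cf.\ \eqref{eq:SpecialHolderIneq}). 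Everything else — the separability reductions, the use of the Fr\'echet filter to make $r^*$ well-defined on $\mathcal{N}^\infty$ (noted just before the theorem statement), and the final identification of the image with $\mathcal{N}$ — is routine.
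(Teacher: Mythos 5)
Your proposal is correct and follows essentially the same route as the paper: a direct adaptation of Gabe's intertwining-by-reparameterisation argument, with the standing unitary-lifting hypothesis supplying the lifts and the $\|\cdot\|_{2,Y}$-continuity of multiplication on bounded sets doing the rest. The one organisational point worth noting is that the paper resolves the circularity you flag (committing to $r$ on a tail before knowing the conjugating unitary) exactly as in Gabe/Phillips, by proving a stability claim for the lifted sequences $(\phi_n)$ via contradiction — a hypothetical failure produces a single bad reparameterisation $r(k)=n_k$ contradicting the hypothesis — and only reading off $r(n)=k_n$ at the very end.
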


The proof is a minor modification of the $C^*$-algebra version obtained in \cite[Theorem~4.3]{Gabe18}, where the reduced power is taken in the operator norm.  In the operator norm setting, the condition on unitaries is automatic since every approximate unitary is close to a genuine unitary.  We do not know if the condition on unitaries in $(\mathcal N^\infty, Y^\infty)$ is necessary.  In all of our applications of Theorem~\ref{thm:reparameterisation}, $(\mathcal N, Y)$ will satisfy CPoU (see Section~\ref{sec:CPoU}), and hence the condition on unitaries will follow from Corollary~\ref{cor:unitary-stable-relation}.

\begin{proof}[Proof of Theorem~\ref{thm:reparameterisation}]
	Define a sequence of functions $\phi_n\colon S \rightarrow Y$ by choosing a $\|\cdot\|$-preserving lift $(\phi_n(b))_{n=1}^\infty \in \ell^\infty(\mathcal{N})$ of $\phi(b) \in \mathcal{N}^\infty$ for every $b \in S$. 
	We claim that for every finite subset $\mathcal F \subseteq S$, every $\epsilon > 0$, and every $m \in \N$, there exists $k \geq m$ such that for every $n \geq k$, there exists a unitary $u \in \mathcal{N}$ such that
	\begin{equation}
		\max_{a \in F}\|u^*\phi_n(a)u - \phi_k(a)\|_{2,Y} < \epsilon,\quad a\in\mathcal F.
	\end{equation}

Suppose for a contradiction that the claim is false. Then there exist a finite set $\mathcal F_0 \subseteq S$, $\epsilon_0 > 0$, $m_0 \in \N$, and a sequence of natural numbers $(n_k)_{k=m_0}^\infty$ with $n_k \geq k$ such that for all unitaries $u \in \mathcal{N}$ and $k\geq m_0$,
	\begin{equation}\label{eqn:contra-1}
		\max_{a \in \mathcal F_0}\|u^*\phi_{n_k}(a)u - \phi_k(a)\|_{2,Y} \geq \epsilon_0.
	\end{equation} 
Let $r:\N \rightarrow \N$ be given by $r(k) \coloneqq n_k$ for $k \geq m_0$ and define $r(k)$ arbitrarily for $k < m_0$. Then $\lim_{k\rightarrow\infty} r(k) = \infty$. By our hypothesis, $\phi$ and $r^* \circ \phi$ are approximately unitarily equivalent. Therefore, there exists a unitary $u \in \mathcal{N}^\infty$ such that
\begin{equation}\label{eqn:contra-2}
	\|u^*(r^* \circ\phi)(a)u - \phi(a)\|_{2,Y^\infty} < \epsilon_0
\end{equation}
for all $a \in \mathcal F_0$. By our hypothesis, we may lift $u$ to a sequence of unitaries $(u_k)_{k=1}^\infty$ in $\mathcal{N}$. By Lemma \ref{lem:2norm-reducedproduct}, we have 
\begin{equation}\begin{split}\label{eqn:contra-3}
	\limsup_{k\rightarrow\infty}\|u_k^*\phi_{n_k}(a)u_k - \phi_k(a)\|_{2,Y} < \epsilon_0
\end{split}\end{equation}
for all $a \in \mathcal F_0$. 
Then \eqref{eqn:contra-3} contradicts \eqref{eqn:contra-1} for some sufficiently large $k \in \N$. This proves the claim. We now use this to construct $\psi$. 

Let $(\mathcal F_i)_{i=1}^\infty$ be an increasing sequence of finite subsets whose union is dense in $S$. Applying the claim recursively, we obtain an increasing sequence of natural numbers $(k_n)_{n=1}^\infty$ and a sequence of unitaries $(u_n)_{n=1}^\infty$ in $\mathcal{N}$ such that
\begin{equation}
	\|u_n^*\phi_{k_n}(a)u_n - \phi_{k_{n-1}}(a)\|_{2,Y} < 2^{-n}
\end{equation} 
for all $a \in \mathcal F_n$. Set $v_n\coloneqq u_nu_{n-1}\cdots u_1$ (and put $v_0 \coloneqq 1_{\mathcal N}$). Then, since $v_n = u_nv_{n-1}$ and $\|v_{n-1}\| \leq 1$, we have
\begin{equation}\begin{split}
	\|v_n^*\phi_{k_n}(a)v_n - v_{n-1}^*\phi_{k_{n-1}}(a)v_{n-1}\|_{2,Y} < 2^{-n}
\end{split}\end{equation}
for all $n \in \N$ and $a \in \mathcal F_n$. 

By construction, for every $a \in \bigcup_{i=1}^\infty \mathcal F_i$, the sequence $(v_n^*\phi_{k_n}(a)v_n)_{n=1}^\infty$ is $\|\cdot\|_{2,Y}$-Cauchy. Indeed, if $a \in \mathcal F_{i_0}$, then for $n > m > i_0$, we  have
\begin{equation}
	\|v_m^*\phi_{k_m}(a)v_m - v_n^*\phi_{k_n}(a)v_n\|_{2,Y} < \sum_{i=n+1}^m 2^{-i} < 2^{-n}.
\end{equation}

Let $b \in S$ and $\epsilon > 0$. Since $\bigcup_{i=1}^\infty \mathcal F_i$ is dense in $S$ and $\phi\colon S \rightarrow (\mathcal{N}^\infty, Y^\infty)$ is $\|\cdot\|_{2,Y^\infty}$-continuous, there exist $i_0 \in \N$ and $a \in F_{i_0}$ with \begin{equation}
    \|\phi(b) - \phi(a)\|_{2,Y^\infty} < \epsilon.
\end{equation}
Hence, by Lemma \ref{lem:2norm-reducedproduct}, we have
\begin{equation}
	\limsup_{k\to\infty}\|\phi_k(b) - \phi_k(a)\|_{2,Y} < \epsilon.
\end{equation}
Choose $N_1 \in \N$ with $\|\phi_k(b) - \phi_k(a)\|_{2,Y} < \epsilon$ for all $k > N_1$, and choose $N_2 \in \N$ with $\|v_m^*\phi_{k_m}(a)v_m -v_n^*\phi_{k_n}(a)v_n\|_{2,Y} < \epsilon$ for all $n,m > N_2$. Then a simple $3\epsilon$-argument gives that
\begin{equation}
	\|v_m^*\phi_{k_m}(b)v_m - v_n^*\phi_{k_n}(b)v_n\|_{2,Y} < 3\epsilon
\end{equation}
whenever $n,m > \max(N_1,N_2)$. Hence $(v_n^*\phi_{k_n}(b)v_n)_{n=1}^\infty$ is $\|\cdot\|_{2,Y}$-Cauchy for all $b \in S$. Moreover, we have 
\begin{equation}
    \|v_n^*\phi_{k_n}(b)v_n\| = \|\phi_n(b)\| \leq  \|\phi(b)\|, \quad n \in \mathbb N.
\end{equation}
Since $(\mathcal{N},Y)$ is a tracially complete $C^*$-algebra,  $(v_n^*\phi_{k_n}(b)v_n)_{n=1}^\infty$ converges in the uniform 2-norm for all $b \in S$.
Hence we may define $\psi\colon S \rightarrow (\mathcal{N},Y)$ by $\psi(b) \coloneqq \lim_{n\to\infty} v_n^*\phi_{k_n}(b)v_n$. 
  
By construction, $\iota_{(\mathcal{N}, Y)} \circ \psi$ is unitarily equivalent to $r^* \circ \phi$, where $r:\N \rightarrow \N$ is given by $r(n) \coloneqq k_n$, via the unitary represented by the sequence $(v_{k_n})_{n=1}^\infty$. Since $r^* \circ \phi$ is unitarily equivalent to $\phi$ by hypothesis (using Kirchberg's $\epsilon$-test to replace approximate unitary equivalence with unitary equivalence), we conclude that $\iota_{(\mathcal{N}, Y)} \circ \psi$ is unitarily equivalent to $\phi$. 
\end{proof}

\subsection{The McDuff property}\label{sec:McDuff}

Central sequences in II$_1$ factors have been studied beginning with the foundational work of Murray and von Neumann (\cite{MvN43}) when they used them to distinguish the hyperfinite II$_1$ factor from any free group factor. The systematic study of central sequences was later instigated by two breakthrough results of McDuff: the existence of infinitely many (and in fact, uncountably many) non-isomorphic II$_1$ factors (\cite{Du69a,Du69b}) and, of particular relevance to us, her characterisation those II$_1$ factors $\M$ with separable predual which tensorially absorb the hyperfinite II$_1$ factor (i.e.\ $\M\cong\M\barotimes\R$) as those $\mathcal M$ with  non-abelian central sequence algebras.  Motivated by this last result, a II$_1$ factor $\M$ is said to have the \emph{McDuff property} if there are approximately central unital embeddings of matrix algebras into $\M$.\footnote{From a ultrapower viewpoint, McDuff's work shows that the central sequence algebra $\mathcal M^\omega\cap \mathcal M'$ is non-abelian if and only if it is type II$_1$ (and so admits unital embeddings of all matrix algebras).  For II$_1$ factors with non-separable predual, the equivalence of tensorial absorption of $\mathcal R$ and the McDuff property no longer holds (\cite[Theorem~1.5]{FHKT}). Experience has shown that the formulation in terms of approximately central matrix embeddings is the correct way to extend the McDuff property to the non-separable predual situation (where one should of course work with ultrapowers over uncountable sets).  Indeed, the ultrapower $\mathcal M^\omega$ of a McDuff II$_1$ factor $\M$ with separable predual has the McDuff property but is not stable under tensoring by $\mathcal R$ -- see \cite[Footnote~65]{CGSTW}, which uses \cite{Gh15}.}

The McDuff property of II$_1$ factors has been of considerable interest to $C^*$-algebraists working in the classification programme because of its relation to $\Z$-stability at both conceptual and technical levels (\cite{TW07,MS12,KR14,TWW15,Sa12,CETW,CGSTW}).

In this section, we generalise the McDuff property to the setting of tracially complete $C^*$-algebras and show that the McDuff property is equivalent to $\R$-stability in the separable case (Theorem~\ref{thm:McDuff-tensor}). We begin with a formal definition of the McDuff property before establishing some useful technical reformulations and permanence properties.

\begin{definition}[{cf.\ \cite[Definition~4.2]{CETW}}]\label{def:UTCmcDuff}
Let $(\M,X)$ be a tracially complete $C^*$-algebra.  We say that $(\M,X)$ is \emph{McDuff} if for any $\|\cdot\|_{2, X}$-separable set $S \subseteq \M$ and $k \geq 1$, there is a unital embedding $M_k \rightarrow \M^\omega \cap S'$.\footnote{Of course, when $\M$ is $\|\cdot\|_{2, X}$-separable, we can just take $S=\M$.}
\end{definition}

The \emph{uniform McDuff property} was defined for a separable $C^*$-algebra $A$ with $T(A)$ compact in \cite[Definition 4.2]{CETW} as the existence of unital embeddings $M_k \rightarrow A^\omega \cap A'$ for all $k \geq 1$. By Proposition~\ref{rem:CStarVsTCreducedProducts}, this is consistent with our definition.

\begin{proposition}
    Let $A$ be a separable $C^*$-algebra with $T(A)$ compact.
    Then $A$ is uniformly McDuff in the sense of \cite{CETW} if and only if its tracial completion with respect to $T(A)$ is McDuff in the sense of Definition~\ref{def:UTCmcDuff}.
\end{proposition}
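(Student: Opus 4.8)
The key point is the compatibility result Proposition~\ref{rem:CStarVsTCreducedProducts}, which identifies the central sequence algebras. Let $(\M, X)$ denote the tracial completion of $A$ with respect to $T(A)$, so that $X$ identifies with $\overline{\mathrm{co}}(T_\omega(A))$-induced traces, and let $\alpha_X \colon A \to \M$ be the canonical map. By Proposition~\ref{rem:CStarVsTCreducedProducts}, the isomorphism $\alpha^\omega \colon (A^\omega, X^\omega) \to (\M^\omega, X^\omega)$ (defined on representative sequences by $(a_n)_{n=1}^\infty \mapsto (\alpha_X(a_n))_{n=1}^\infty$) satisfies $\alpha^\omega(A^\omega \cap \iota(A)') = \M^\omega \cap \M'$.

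\textbf{Forward direction.} Suppose $A$ is uniformly McDuff in the sense of \cite{CETW}, so for each $k \geq 1$ there is a unital embedding $M_k \to A^\omega \cap \iota(A)'$. Composing with the $^*$-isomorphism $\alpha^\omega$ and using the identification above gives a unital embedding $M_k \to \M^\omega \cap \M'$. Since $\M$ is $\|\cdot\|_{2,X}$-separable (as $A$ is norm-separable and $\alpha_X(A)$ is $\|\cdot\|_{2,X}$-dense in $\M$), the footnote to Definition~\ref{def:UTCmcDuff} shows that taking $S = \M$ suffices, so $(\M, X)$ is McDuff in the sense of Definition~\ref{def:UTCmcDuff}.

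\textbf{Reverse direction.} Suppose $(\M, X)$ is McDuff. Since $\M$ is $\|\cdot\|_{2,X}$-separable, for each $k \geq 1$ there is a unital embedding $M_k \to \M^\omega \cap \M'$. Applying $(\alpha^\omega)^{-1}$ and again using the identification of central sequence algebras in Proposition~\ref{rem:CStarVsTCreducedProducts}, we obtain a unital embedding $M_k \to A^\omega \cap \iota(A)'$, which is exactly uniform McDuffness of $A$ in the sense of \cite{CETW}.

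\textbf{Main obstacle.} There is essentially no obstacle: the entire content is the already-established Proposition~\ref{rem:CStarVsTCreducedProducts}. The only point requiring a word of care is the separability of $\M$ in $\|\cdot\|_{2,X}$ (needed so that the ``$S = \M$'' special case of Definition~\ref{def:UTCmcDuff} applies and so that the quantifier over $\|\cdot\|_{2,X}$-separable sets $S$ collapses); this follows immediately from norm-separability of $A$ together with $\|\cdot\|_{2,X}$-density of $\alpha_X(A)$ in $\M$ (Lemma~\ref{lem:2norm-reducedproduct} for the isometry, or directly from the construction of the tracial completion).
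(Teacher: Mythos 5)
Your proposal is correct and is exactly the argument the paper intends: the paper states this proposition with no written proof beyond the remark that it follows from Proposition~\ref{rem:CStarVsTCreducedProducts}, and your writeup simply fills in the intended details (transfer of unital matrix embeddings through the isomorphism of central sequence algebras, plus the observation that $\|\cdot\|_{2,X}$-separability of $\M$ lets one take $S=\M$ in Definition~\ref{def:UTCmcDuff}). The only cosmetic quibble is the phrase ``$X$ identifies with $\overline{\mathrm{co}}(T_\omega(A))$-induced traces'', which conflates $X=T(A)$ with the trace set of the reduced power, but this plays no role in the argument.
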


We now establish some equivalent reformulations of the McDuff property. Note that the equivalence of \ref{item:McDuff} and \ref{item:McDuff-approx} in the following theorem shows that the McDuff property is independent of choice of free filter $\omega$. The result is standard in the setting of $\mathrm{II}_1$ factors, and the same techniques work in this context.

\begin{proposition}[{cf.\ \cite[Proposition~3.11]{BBSTWW}}]\label{prop:McDuff}
Let $(\M,X)$ be a tracially complete $C^*$-algebra. The following are equivalent:
\begin{enumerate}[wide, labelwidth=0pt, labelindent=0pt]
  \item\label{item:McDuff} $(\M, X)$ is McDuff.
   \item\label{item:McDuff-approx} Given a finite set $\mathcal F \subseteq \M$ and $\epsilon > 0$, there is a contraction $v \in \M$ such that
   \begin{equation}
	  \max_{a \in \mathcal F} \|[v, a]\|_{2, X} < \epsilon,\ \|v^*v + vv^* -1_\mathcal M\|_{2, X} < \epsilon,\ \text{and}\ \|v^2\|_{2, X} < \epsilon.
   \end{equation}

   \item\label{item:v-reduced-product} For each $\|\cdot\|_{2, X^\omega}$-separable subset $S \subseteq \mathcal{M}^\omega$, there is a contraction $v \in \M^\omega \cap S'$ such $v^2 = 0$ and $vv^* + vv^* = 1_{\M^\omega}$.
   
   \item\label{item:approx-central-matrix} For each $\|\cdot\|_{2, X^\omega}$-separable subset $S \subseteq \mathcal{M}^\omega$, there is $k \geq 2$ and a unital embedding $M_k \rightarrow \M^\omega \cap S'$.
  
  \item\label{item:approx-central-R}For each $\|\cdot\|_{2, X^\omega}$-separable subset $S \subseteq \mathcal{M}^\omega$, there is a unital embedding $\R \rightarrow \M^\omega \cap S'$.
\end{enumerate}
\end{proposition}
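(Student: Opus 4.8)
This is a standard cycle of implications, following the template for McDuff II$_1$ factors (and for separable $C^*$-algebras in \cite[Proposition~3.11]{BBSTWW,CETW}), adapted to the tracially complete setting using Kirchberg's $\epsilon$-test (Lemma~\ref{lem:EpsTest}) and the reduced-product formalism of Section~\ref{sec:reduced-product}. The plan is to prove \ref{item:McDuff}$\Rightarrow$\ref{item:McDuff-approx}$\Rightarrow$\ref{item:v-reduced-product}$\Rightarrow$\ref{item:approx-central-matrix}$\Rightarrow$\ref{item:McDuff} and separately \ref{item:approx-central-R}$\Rightarrow$\ref{item:approx-central-matrix} and \ref{item:v-reduced-product}$\Rightarrow$\ref{item:approx-central-R} (or \ref{item:approx-central-matrix}$\Rightarrow$\ref{item:approx-central-R} via an inductive limit).

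\textbf{The core implications.} For \ref{item:McDuff}$\Rightarrow$\ref{item:McDuff-approx}: apply the McDuff property with $k = 2$ to the finite set $S = \mathcal F$ (or rather its image in $\M^\omega$) to obtain a unital embedding $M_2 \to \M^\omega \cap \mathcal F'$; the matrix unit $e_{12}$ is a contraction $v$ with $v^2 = 0$, $v^*v + vv^* = 1$, and $[v,a] = 0$ for $a \in \mathcal F$; lifting $v$ to a representative sequence $(v_n)$ of contractions (using Lemma~\ref{lem:UnitBallDensity} to keep contractivity, or simply cutting down), the defining conditions say exactly that $\limsup_{n\to\omega}$ of each relevant $\|\cdot\|_{2,X}$-quantity is $0$, so some $v_n$ works. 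For \ref{item:McDuff-approx}$\Rightarrow$\ref{item:v-reduced-product}: let $S \subseteq \M^\omega$ be $\|\cdot\|_{2,X}$-separable with a countable dense sequence $(a^{(k)})$, lift each $a^{(k)}$ to a bounded sequence in $\ell^\infty(\M)$, and run Kirchberg's $\epsilon$-test with test functions measuring $\|[v_n, a^{(k)}_n]\|_{2,X_n}$, $\|v_n^*v_n + v_n v_n^* - 1\|_{2,X_n}$, and $\|v_n^2\|_{2,X_n}$; hypothesis \ref{item:McDuff-approx} supplies, for each finite set of indices and each $\epsilon$, a single contraction $v_n \in \M$ making all these small, so the $\epsilon$-test produces $v \in \M^\omega$ with all three quantities equal to $0$, i.e.\ $v$ is a contraction in $\M^\omega \cap S'$ with $v^2 = 0$ and $v^*v + vv^* = 1_{\M^\omega}$; here one uses Lemma~\ref{lem:2norm-reducedproduct} to translate between the reduced-product 2-norm and the $\limsup$ over representatives, and the fact that the relative commutant only depends on commuting with a dense subset (as in the proof of Proposition~\ref{rem:CStarVsTCreducedProducts}). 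For \ref{item:v-reduced-product}$\Rightarrow$\ref{item:approx-central-matrix}: from a contraction $v$ with $v^2 = 0$ and $v^*v + vv^* = 1$, the elements $e_{11} := v^*v$, $e_{22} := vv^*$, $e_{21} := v$, $e_{12} := v^*$ satisfy the $2\times2$ matrix-unit relations, giving a unital embedding $M_2 \to \M^\omega \cap S'$, so \ref{item:approx-central-matrix} holds with $k = 2$. For \ref{item:approx-central-matrix}$\Rightarrow$\ref{item:McDuff}: take $S \subseteq \M$ to be $\|\cdot\|_{2,X}$-separable; \ref{item:approx-central-matrix} gives a unital embedding $M_k \to \M^\omega \cap S'$ for some $k \geq 2$, and since $M_2$ embeds unitally into $M_{k!}$ and, more to the point, $M_j$ embeds unitally into $M_{j'}$ whenever $j \mid j'$, one iterates: applying \ref{item:approx-central-matrix} repeatedly to $S$ together with the previously-constructed copies of matrix algebras, and using the standard trick of amplifying and taking a diagonal sequence (or more cleanly, using \ref{item:approx-central-R} below), one obtains unital embeddings $M_k \to \M^\omega \cap S'$ for \emph{every} $k$.

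\textbf{Incorporating $\R$.} For \ref{item:v-reduced-product}$\Rightarrow$\ref{item:approx-central-R} (which also yields the full strength of \ref{item:approx-central-matrix}$\Rightarrow$\ref{item:McDuff}): given a $\|\cdot\|_{2,X}$-separable $S \subseteq \M^\omega$, build a unital copy of $R_0 := \bigotimes_{\mathbb N} M_2$ inside $\M^\omega \cap S'$ by a recursive reindexing argument. Concretely, use \ref{item:v-reduced-product} to get a first copy of $M_2$ commuting with $S$; then apply \ref{item:v-reduced-product} again with $S$ enlarged by this copy to get a second commuting copy; iterate. To package the countably many copies into a single unital embedding of $R_0$ (and hence, completing in $\|\cdot\|_{2,X}$, of the hyperfinite $\R$ since $\M^\omega$ is tracially complete and the embedding is isometric for the 2-norms by a standard computation with the unique trace on finite-dimensional algebras), one uses a reindexing/diagonal argument across the representing sequences, exactly as in the II$_1$-factor case; Kirchberg's $\epsilon$-test again handles the passage to an honest embedding on the nose. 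Then \ref{item:approx-central-R}$\Rightarrow$\ref{item:approx-central-matrix} is trivial since $M_k \subseteq \R$ unitally. The remark that the McDuff property is independent of $\omega$ follows because \ref{item:McDuff-approx} makes no reference to $\omega$, so all the reduced-power formulations are equivalent to it.

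\textbf{Expected main obstacle.} The routine implications are genuinely routine here; the only place requiring care is the diagonalisation producing a \emph{single} unital embedding of $\R$ (or of all $M_k$ simultaneously) in \ref{item:v-reduced-product}$\Rightarrow$\ref{item:approx-central-R}, because one must ensure the countably many approximate-commutation conditions and matrix-unit relations can be met simultaneously along one representing sequence. This is exactly what Kirchberg's $\epsilon$-test is designed for — one assembles a countable list of test functions (commutators with a dense subset of $S$, matrix-unit relation defects for all the copies, and the crucial mutual-commutation defects between distinct copies) and checks that any finite sub-list can be made simultaneously small using finitely many applications of \ref{item:v-reduced-product}. One subtlety worth flagging: to keep the constructed elements contractions (so that they lie in the complete unit ball and the $\epsilon$-test applies on a bounded set), one should invoke the quantitative Kaplansky density estimate (Lemma~\ref{lem:UnitBallDensity}) or simply observe that matrix units are automatically contractive. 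No deep input is needed beyond these bookkeeping tools.
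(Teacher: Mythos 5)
Your proof is correct and follows essentially the same cycle of implications as the paper — \ref{item:McDuff}$\Rightarrow$\ref{item:McDuff-approx}$\Rightarrow$\ref{item:v-reduced-product}$\Rightarrow$\ref{item:approx-central-matrix}$\Rightarrow$\ref{item:approx-central-R}$\Rightarrow$\ref{item:McDuff} — with only cosmetic differences (you invoke Kirchberg's $\epsilon$-test for \ref{item:McDuff-approx}$\Rightarrow$\ref{item:v-reduced-product}, where the paper uses a direct diagonal sequence with $\epsilon_n = 2^{-n}$). One small caution: the aside in your \ref{item:approx-central-matrix}$\Rightarrow$\ref{item:McDuff} step about ``amplifying and taking a diagonal sequence'' would not by itself produce unital embeddings of $M_k$ for \emph{every} $k$ (commuting copies of $M_{k_1},\dots,M_{k_n}$ only give $M_{k_1\cdots k_n}$, and $M_3$ does not embed unitally into $M_{2^n}$), so the detour through the embedding of $\R$ — which you do carry out, exactly as the paper does — is the essential step rather than an optional cleaner alternative.
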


\begin{proof} \ref{item:McDuff}$\Rightarrow$\ref{item:McDuff-approx}:  Given a finite set $\mathcal F \subseteq \M$, fix a unital embedding $\phi\colon M_2 \rightarrow \M^\omega \cap \mathcal F'$.  Let $(v_k)_{k=1}^\infty \subseteq \M$ be a sequence of contractions representing $\phi(e_{1, 2})$ and set $v \coloneqq v_k$ for some suitable index $k$.

\ref{item:McDuff-approx}$\Rightarrow$\ref{item:v-reduced-product}:
Let $\{s^{(i)}:i\in\N\}$ be a countable dense set in $S$. For each $i\in\N$, let $(s^{(i)}_n)_{n=1}^\infty$ be a sequence representing $s^{(i)}$.
For each $n \in \N$, set $\epsilon_n \coloneqq 2^{-n}$ and $\mathcal{F}_n \coloneqq \{ s^{(i)}_n: i \leq n\}$. Let $v_n \in \M$ be given as in \ref{item:McDuff-approx} with $(\mathcal{F}_n, \epsilon_n)$ in place of $(\mathcal F, \epsilon)$.  Then the sequence $(v_n)_{n=1}^\infty$ induces a contraction $v \in \M^\omega \cap S'$ with $v^2 = 0$ and $v^*v + vv^* = 1_{\M^\omega}$.

\ref{item:v-reduced-product}$\Rightarrow$\ref{item:approx-central-matrix}:
Let $v \in \M^\omega \cap S'$ be as in \ref{item:v-reduced-product}. 
Since $v$ is a contraction and $v^2 = 0$, we have $v^*v$ and $vv^*$ are orthogonal positive contractions.  As $v^*v + vv^* = 1_\mathcal M$, $v^*v$ and $vv^*$ are projections, and in particular, $v$ is a partial isometry.  It follows that the $C^*$-subalgebra generated by $v$ is spanned by $1_\mathcal M$, $v$, $v^*$, and $v^*v$.  As this subalgebra is non-commutative and has dimension at most four, it is isomorphic to $M_2$.  This verifies \ref{item:approx-central-matrix} with $k = 2$.

\ref{item:approx-central-matrix}$\Rightarrow$\ref{item:approx-central-R}: 
Fix the subset $S$ from \ref{item:approx-central-R}. Using \ref{item:approx-central-matrix}, let $k_1\geq 2$ and let $\phi_1 \colon M_{k_1} \rightarrow \M^\omega \cap S'$ be a unital embedding.  Let $S_1 \coloneqq C^*(\phi_1(M_{k_1}) \cup S)$ and use \ref{item:approx-central-matrix} again to produce an integer $k_2 \geq 2$ and a unital embedding $\phi_2 \colon M_{k_2} \rightarrow \M^\omega \cap S_1'$.  Continuing inductively, there are integers $k_n \geq 2$ and unital embeddings $\phi_n \colon M_{k_n} \rightarrow \M^\omega \cap S'$ with commuting ranges.  The $\phi_n$ induce a unital embedding
\begin{equation}
	 \R \cong \overline{\bigotimes}_{n=1}^{\,\infty} (M_{k_n}, \mathrm{tr}_{k_n}) \rightarrow \M^\omega \cap S'.
\end{equation}

\ref{item:approx-central-R}$\Rightarrow$\ref{item:McDuff}: This follows as there is a unital embedding $M_k \rightarrow \R$ for all $k \geq 1$.
\end{proof}

\begin{remark}\label{rmk:ultrapower-commutant}
Other equivalent properties to McDuffness can be given by strengthening Definition~\ref{def:UTCmcDuff} to allow $S$ to be any $\|\cdot\|_{2, X^\omega}$-separable subset of $\M^\omega$ or weakening any of \ref{item:v-reduced-product}, \ref{item:approx-central-matrix} or \ref{item:approx-central-R} to only have $S \subseteq \M$.
In all cases, the argument (for the non-trivial direction) is by reindexing.

For example, to go from the weakening of \ref{item:approx-central-matrix} to \ref{item:approx-central-matrix}, given a $\|\cdot\|_{2, X}$-separable subset $S\subseteq \M$, let $T\subseteq \M$ be a countable set consisting of the entries of sequences lifting a countable dense subset of $S$.
Then, given a unital embedding $\phi \colon M_k \to \M^\omega \cap T'$, we may lift $\phi$ to a sequence of c.p.c.\ maps $\phi_n:M_k \to \M$ which are asymptotically multiplicative, unital, and commute with $T$.
An appropriate reindexing $(\phi_{m_n})_{n=1}^\infty$ will provide a unital embedding $\phi\colon M_k \to \M^\omega \cap S'$.
\end{remark}

As a corollary of the equivalence between \ref{item:McDuff} and \ref{item:McDuff-approx} above, we obtain the following permanence properties.

\begin{corollary}\label{cor:McDuff-indlim-ultraproducts}
Inductive limits and reduced products of McDuff tracially complete $C^*$-algebras are McDuff.
\end{corollary}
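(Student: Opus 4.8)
\textbf{Proof plan for Corollary~\ref{cor:McDuff-indlim-ultraproducts}.} The plan is to verify the approximate reformulation of the McDuff property, namely condition \ref{item:McDuff-approx} of Proposition~\ref{prop:McDuff}, in each of the two cases; by Proposition~\ref{prop:McDuff} this suffices. Both cases hinge on the fact that \ref{item:McDuff-approx} only involves finitely many elements, a single $\epsilon>0$, and the uniform $2$-norm, together with the density of the relevant canonical images in the uniform $2$-norm.

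For inductive limits, suppose $(\M,X)=\varinjlim\big((\M_n,X_n),\phi_n^{n+1}\big)$ with each $(\M_n,X_n)$ McDuff, and write $\phi_n^\infty\colon\M_n\to\M$ for the canonical morphisms. Given a finite set $\mathcal F\subseteq\M$ and $\epsilon>0$, first observe that $\bigcup_n \phi_n^\infty(\M_n)$ is $\|\cdot\|_{2,X}$-dense in $\M$: indeed, its closure contains a $\|\cdot\|_{2,X}$-dense $C^*$-subalgebra of the $C^*$-inductive limit $A=\varinjlim(\M_n,\phi_n^{n+1})$, and $\completion{A}{X}=\M$ by construction, so Proposition~\ref{prop:UnitBallDensity} applies. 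Hence, after perturbing in $\|\cdot\|_{2,X}$ by at most $\epsilon/4$ (using that $\mathcal F$ is finite and replacing $\epsilon$ by a smaller quantity), we may assume $\mathcal F\subseteq\phi_n^\infty(\M_n)$ for some $n$, say $\mathcal F=\phi_n^\infty(\mathcal F_0)$ with $\mathcal F_0\subseteq\M_n$ finite. Apply \ref{item:McDuff-approx} for $(\M_n,X_n)$ to $\mathcal F_0$ and $\epsilon/2$ to obtain a contraction $w\in\M_n$ with the three required estimates in $\|\cdot\|_{2,X_n}$, and set $v\coloneqq\phi_n^\infty(w)\in\M$. Since $\phi_n^\infty$ is a $^*$-homomorphism which is contractive in the uniform $2$-norms (indeed $\|\phi_n^\infty(a)\|_{2,X}\le\|a\|_{2,X_n}$, as $(\phi_n^\infty)^*(X)\subseteq X_n$), the element $v$ satisfies the three estimates with the original $\mathcal F$ up to the small perturbation already accounted for. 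Thus $(\M,X)$ is McDuff.

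For reduced products, let $(\M,X)=\prod^\nu(\M_n,X_n)$ for a free filter $\nu$ on $\mathbb N$, with each $(\M_n,X_n)$ McDuff. Given a finite set $\mathcal F\subseteq\M$ and $\epsilon>0$, lift each $a\in\mathcal F$ to a bounded sequence $(a_n)_{n=1}^\infty$. For each $n$, apply \ref{item:McDuff-approx} for $(\M_n,X_n)$ with the finite set $\{a_n:a\in\mathcal F\}$ and tolerance $\epsilon/2$ to obtain a contraction $v_n\in\M_n$ with $\max_{a\in\mathcal F}\|[v_n,a_n]\|_{2,X_n}<\epsilon/2$, $\|v_n^*v_n+v_nv_n^*-1_{\M_n}\|_{2,X_n}<\epsilon/2$, and $\|v_n^2\|_{2,X_n}<\epsilon/2$. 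The sequence $(v_n)_{n=1}^\infty$ represents a contraction $v\in\M$, and by Lemma~\ref{lem:2norm-reducedproduct} we have $\|[v,a]\|_{2,X}=\limsup_{n\to\nu}\|[v_n,a_n]\|_{2,X_n}\le\epsilon/2<\epsilon$ for all $a\in\mathcal F$, and similarly $\|v^*v+vv^*-1_\M\|_{2,X}\le\epsilon/2<\epsilon$ and $\|v^2\|_{2,X}\le\epsilon/2<\epsilon$. (Here one uses that $(v_n^*v_n+v_nv_n^*-1_{\M_n})_{n=1}^\infty$ represents $v^*v+vv^*-1_\M$ and $(v_n^2)_{n=1}^\infty$ represents $v^2$, which is immediate since the quotient map is a $^*$-homomorphism.) This verifies \ref{item:McDuff-approx} for $(\M,X)$, so the reduced product is McDuff.

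I do not expect any serious obstacle here: the entire argument is a routine translation of the approximate characterisation of the McDuff property through the structural facts already established (density via Proposition~\ref{prop:UnitBallDensity}, and the computation of the uniform $2$-norm on reduced products via Lemma~\ref{lem:2norm-reducedproduct}). If anything, the only point requiring a moment's care is the reduction to $\mathcal F\subseteq\phi_n^\infty(\M_n)$ in the inductive limit case, where one must track how the $\epsilon/4$-perturbation propagates through the three estimates; this is handled by choosing tolerances appropriately at the outset.
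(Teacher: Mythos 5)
Your proposal is correct and follows exactly the route the paper intends: the paper derives this corollary directly from the equivalence of \ref{item:McDuff}\ and \ref{item:McDuff-approx}\ in Proposition~\ref{prop:McDuff}, leaving the routine verification (density of $\bigcup_n\phi_n^\infty(\M_n)$ for limits, Lemma~\ref{lem:2norm-reducedproduct} for reduced products) unwritten. You have simply supplied those details, and they check out.
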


The following result gives a large supply of examples of McDuff tracially complete $C^*$-algebras.  The result extends \cite[Proposition~2.3]{CETWW}, and the proof is very similar. 

\begin{proposition}[{cf.\ \cite[Proposition~2.3]{CETWW}}]\label{prop:z-stable-mcduff}
If $A$ is a separable $\mathcal Z$-stable $C^*$-algebra and $X \subseteq T(A)$ is a compact convex set, then $(\completion{A}{X}, X)$ is a McDuff tracially complete $C^*$-algebra.
\end{proposition}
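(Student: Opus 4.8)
The strategy is to reduce the statement about the tracially complete $C^*$-algebra $(\completion{A}{X},X)$ to the (uniform) central sequence algebra of $A$, and then invoke the characterisation of McDuffness from Proposition~\ref{prop:McDuff}\ref{item:McDuff-approx}, which is a statement purely about uniform $2$-norm approximations witnessed inside $A$ itself. Concretely, I would use the equivalence \ref{item:McDuff}$\Leftrightarrow$\ref{item:McDuff-approx} together with the observation (Proposition~\ref{rem:CStarVsTCreducedProducts}, or directly Lemma~\ref{lem:2norm-reducedproduct} applied to the constant sequence) that the uniform $2$-norm of an element of $\completion{A}{X}$ can be computed using a $\|\cdot\|$-bounded $\|\cdot\|_{2,X}$-Cauchy sequence from $A$, and that $\alpha_X(A)$ is $\|\cdot\|_{2,X}$-dense in $\completion{A}{X}$. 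Thus it suffices to show: for every finite $\mathcal F\subseteq A$ and $\epsilon>0$, there is a contraction $v\in A$ with $\max_{a\in\mathcal F}\|[v,\alpha_X(a)]\|_{2,X}<\epsilon$, $\|v^*v+vv^*-1\|_{2,X}<\epsilon$, and $\|v^2\|_{2,X}<\epsilon$ (for general elements of $\completion{A}{X}$ one approximates in $\|\cdot\|_{2,X}$ by elements of $\alpha_X(A)$, using that commutators and products are $\|\cdot\|_{2,X}$-continuous on bounded sets by \eqref{eq:SpecialHolderIneq}).

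First I would recall the concrete model of the Jiang--Su algebra's regularity that makes this work: $\mathcal Z$ embeds unitally into $M_2$ in the sense that there is a unital $^*$-homomorphism $M_2\to\mathcal Z^\infty\cap\mathcal Z'$ (equivalently, $\mathcal Z$ has approximately central order-two partial isometries $v$ with $v^*v+vv^*=1$). Concretely, since $A$ is separable and $\mathcal Z$-stable, fix an isomorphism $A\cong A\otimes\mathcal Z$, and inside $\mathcal Z$ take a sequence of contractions $(w_n)$ which are asymptotically central in $\mathcal Z$, asymptotically satisfy $w_n^2\to 0$ and $w_n^*w_n+w_nw_n^*\to 1$ in the operator norm (such $w_n$ exist because $\mathcal Z\cong\mathcal Z^{\otimes\infty}$ and $M_2$ embeds approximately centrally — this is the content of $\mathcal Z$-stability, cf.\ \cite[Proposition~2.3]{CETWW}). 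Then set $v_n\coloneqq 1_A\otimes w_n\in A\otimes\mathcal Z\cong A$. These are contractions in $A$, and since the approximations $v_n^2\to 0$, $v_n^*v_n+v_nv_n^*\to 1$, and $[v_n,a]\to 0$ for $a\in A$ all hold in the operator norm (the last because $1_A\otimes w_n$ is exactly central for elements of $A\otimes 1$, and a density argument handles all of $A$ after identifying $A$ with $A\otimes 1$ via the isomorphism — here one uses that the isomorphism $A\cong A\otimes\mathcal Z$ can be chosen so that the image of $A$ contains approximately $A\otimes 1$), they certainly hold in $\|\cdot\|_{2,X}\leq\|\cdot\|$. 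Passing through $\alpha_X$ and using that $\alpha_X$ is contractive for $\|\cdot\|_{2,X}$, we get the required witnesses for the McDuff condition \ref{item:McDuff-approx} of $(\completion{A}{X},X)$.

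Finally, I would assemble: given the finite set $\mathcal F\subseteq\completion{A}{X}$ and $\epsilon>0$, approximate each element of $\mathcal F$ to within $\epsilon/10$ in $\|\cdot\|_{2,X}$ by a contraction in $\alpha_X(A)$ (Proposition~\ref{prop:UnitBallDensity} / Lemma~\ref{lem:UnitBallDensity}), apply the previous paragraph to get $v\in A$ witnessing the McDuff inequalities for the approximants with tolerance $\epsilon/2$, and use \eqref{eq:SpecialHolderIneq} to transfer the commutator estimates back to $\mathcal F$ with tolerance $\epsilon$. By Proposition~\ref{prop:McDuff}, $(\completion{A}{X},X)$ is McDuff. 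I expect the main (though minor) obstacle to be the bookkeeping in the density/approximation argument identifying $A$ with $A\otimes 1_{\mathcal Z}$ compatibly with the $\mathcal Z$-stable isomorphism, so that the approximately central order-two partial isometries in $\mathcal Z$ really do give approximately central ones in $A$ for \emph{all} of $\mathcal F$; this is standard (it is exactly the mechanism of \cite[Proposition~2.3]{CETWW}) but needs to be stated carefully. Everything else is a direct translation through $\alpha_X$ and the inequality $\|\cdot\|_{2,X}\leq\|\cdot\|$.
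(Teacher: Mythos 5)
There is a genuine gap, and it sits exactly at the point you flagged as routine. You claim that $\mathcal Z$ contains contractions $w_n$ with $w_n^2\to 0$, $w_n^*w_n+w_nw_n^*\to 1$, and $[w_n,\,\cdot\,]\to 0$ \emph{in the operator norm}, "because $M_2$ embeds approximately centrally --- this is the content of $\mathcal Z$-stability". This is false. The relations $\|v^2\|<\epsilon$, $\|v^*v+vv^*-1\|<\epsilon$ are (weakly) stable: for small $\epsilon$ one gets $\|(w^*w)(ww^*)\|=\|w^*(w^2)w^*\|<\epsilon$, so $w^*w$ is an almost-projection in norm and hence norm-close to a genuine projection of trace close to $1/2$. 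Since $\mathcal Z$ is projectionless, no such $w_n$ exist (equivalently, $M_2$ does not embed unitally into the operator-norm ultrapower $\mathcal Z_\omega$, let alone into $\mathcal Z_\omega\cap\mathcal Z'$). The content of $\mathcal Z$-stability is the much weaker statement that $\mathcal Z$ itself --- not $M_2$ --- embeds unitally into the norm central sequence algebra, and the passage from $\mathcal Z$ (projectionless) to matrix algebras is precisely what requires the tracial completion: $\pi_{\tau_{\mathcal Z}}(\mathcal Z)''\cong\mathcal R$, and only after completing does one find the projections. Your argument skips the step where this completion does its work, so the witnesses $v_n=1_A\otimes w_n$ you feed into Proposition~\ref{prop:McDuff}\ref{item:McDuff-approx} do not exist as described. (The relations \emph{do} hold for suitable $w_n\in\mathcal Z$ in $\|\cdot\|_{2,\tau_{\mathcal Z}}$, by Kaplansky density applied to a copy of $M_2$ in $\mathcal R$; repairing your argument would mean working with $2$-norm relations throughout and using that every trace on $A\otimes\mathcal Z$ has the form $\sigma\otimes\tau_{\mathcal Z}$ to convert $\|\cdot\|_{2,\tau_{\mathcal Z}}$-estimates into $\|\cdot\|_{2,X}$-estimates.)

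A secondary issue: the proposition does not assume $A$ unital, so $v_n=1_A\otimes w_n$ is not defined in general, and replacing $1_A$ by an approximate unit degrades the relation $v_n^*v_n+v_nv_n^*\approx 1_{\completion{A}{X}}$ in a way you would have to control. The paper's proof avoids both problems by a different route: it takes Kirchberg's unital embedding $\mathcal Z\to(A_\omega\cap A')/A^\perp$ (operator-norm ultrapower, with the annihilator $A^\perp$ handling non-unitality), shows the quotient map $q\colon A_\omega\to\M^\omega$ kills $A^\perp$ because traces in $X$ have norm $1$, and then extends the resulting unital map $\mathcal Z\to\M^\omega\cap\M'$ to $\mathcal R\to\M^\omega\cap\M'$ via Proposition~\ref{prop:extend-by-continuity}; matrix algebras then embed because they embed in $\mathcal R$. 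That last extension step is the honest version of the step your proposal asserts without justification.
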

\begin{proof}
Let $\omega$ be a free ultrafilter on $\mathbb N$, let $A_\omega \coloneqq \ell^\infty(A) / c_\omega(A)$ be the operator norm ultrapower of $A$, and let $\M \coloneqq \completion{A}{X}$.  By \cite[Proposition~4.4(4)]{Kir06}, there is a unital embedding $\phi \colon \mathcal Z \rightarrow (A_\omega \cap A')/A^\perp$, where 
\begin{equation}
    A^\perp \coloneqq \{a \in A_\omega : ab = ba = 0 \text{ for all } b \in A \}.
\end{equation}
The natural map $\alpha_X \colon A \rightarrow \M$ induces a $^*$-homomorphism $q \colon A_\omega \rightarrow \M^\omega$.  Since $\alpha_X$ maps the unit ball of $A$ onto a $\|\cdot\|_{2, X}$-dense subset of the unit ball of $\M$, we have $q$ is surjective and $q(A_\omega \cap A') \subseteq \M^\omega \cap \M'$.

Following the proof of \cite[Lemma~1.10]{CETWW}, we show $q(A^\perp) = 0$.  Assume $b \in A^\perp$ with $0 \leq b \leq 1$.  Let $\tau \in X^\omega \subseteq T(\mathcal M^\omega)$ be given and note that
\begin{equation}\label{eq:z-stable-mcduff-1}
  \tau \circ q|_A = \tau|_{\mathcal M} \circ \alpha_X.
\end{equation}
Combining Corollary~\ref{cor:dense-subalgebra}\ref{item:dense-subalg1} with \eqref{eq:z-stable-mcduff-1} shows that $\tau \circ q|_A$ has norm 1.  Fix $\epsilon > 0$ and a positive contraction $e\in A$ with $\tau(q(e)) \geq 1 - \epsilon$.  Since $b \in A^\perp$ and $e \in A$ are orthogonal positive contractions, their sum is also a positive contraction.  Therefore,
\begin{equation}\label{eq:z-stable-mcduff-2}
  0 \leq \tau(q(b)) = \tau(q(b + e)) - \tau(q(e)) \leq 1 - (1 - \epsilon) = \epsilon.
\end{equation}
Since $\epsilon > 0$ was arbitrary, $\tau(q(b)) = 0$ for all $\tau \in X^\omega$, and since $q(b) \geq 0$, this implies $q(b) = 0$.  Hence $q(A^\perp) = 0$.

Let $\bar{q} \colon (A_\omega \cap A') / A^\perp \rightarrow \M^\omega \cap \M'$ be the $^*$-homomorphism determined by $q$.  Then $\bar{q} \circ \phi \colon \mathcal Z \rightarrow \M^\omega \cap \M'$ is a unital $^*$-homomorphism.  As $\M^\omega \cap \M'$ is a $\|\cdot\|_{2, X^\omega}$-closed, unital $C^*$-subalgebra of $\M^\omega$, we may view $\M^\omega \cap \M'$ as a tracially complete $C^*$-algebra as in the comments preceding Definition~\ref{def:embedding}.  Since $\mathcal Z$ has a unique trace $\tau$ and $\pi_\tau(\mathcal Z)'' \cong \R$, Proposition~\ref{prop:extend-by-continuity} allows us to extend $\bar q \circ \phi$ to a unital embedding $\R \rightarrow \M^\omega \cap \M'$. Since $M_k$ embeds unitally in $\R$ for all $k \in \N$, we see that $\M$ has the McDuff property. 
\end{proof}

The following result gives a tensor product characterisation of McDuff tracially complete $C^*$-algebras in the separable setting analogous to McDuff's original result for II$_1$ factors from \cite{Du70}. Our proof is an adaptation of the argument found in \cite[Proposition~3.11]{BBSTWW} in the setting of $W^*$-bundles. These follow the framework of the analogous results for absorption of a strongly self-absorbing $C^*$-algebra (see \cite[Theorem~2.2]{TW07} or \cite[Theorem~7.2.2]{Rordam-Book}), which are powered by an Elliott intertwining argument.

\begin{theorem}\label{thm:McDuff-tensor}
Let $(\M, X)$ be a $\|\cdot\|_{2,X}$-separable tracially complete $C^*$-algebra.  Then $(\M, X)$ is McDuff if and only if $(\M,X) \barotimes (\R,\tr_\R) \cong (\M,X)$.
\end{theorem}

\begin{proof}
Since  $\M$ is $\|\cdot\|_{2, X}$-separable and McDuff, there exists a unital embedding $\phi\colon \R \rightarrow \M^\omega \cap \M'$.
Take a set-theoretic lift of $\phi$ to a sequence $(\phi_{n}\colon \R \to \M)_{n=1}^\infty$  of maps.
We define commuting unital embeddings 
\begin{equation}
    \alpha,\beta\colon \R \rightarrow (\M \barotimes \R)^\omega \cap (\M \otimes 1_{\R})'
\end{equation}
at the level of representative sequences by 
\begin{equation}
    \alpha(x)\coloneqq (1_{\M} \otimes x)_{n=1}^\infty \quad \text{and} \quad \beta(x) \coloneqq (\phi_n(x) \otimes 1_{\R} )_{n=1}^\infty,
\end{equation} respectively. 

Let $M_{2^\infty}$ be the UHF algebra with supernatural number $2^\infty$, which we view as a $\|\cdot\|_{2,\tr_\R}$-dense subalgebra of $\R$.
As $M_{2^\infty}$ is nuclear, there exists an embedding 
\begin{equation}
    \gamma\colon  \M \otimes M_{2^\infty}  \otimes M_{2^\infty}  \longrightarrow (\M \barotimes \R)^\omega
\end{equation}
such that 
\begin{equation}
    \gamma(a \otimes b \otimes c) = (a \otimes 1_\R)\alpha(b)\beta(c)= (a \otimes b)\beta(c)
\end{equation}
for all $a \in \M$ and $b,c \in M_{2^\infty}$.
Since  $M_{2^\infty}$ has a unique trace, it follows that $\gamma$ is trace-preserving\footnote{To ease notation, set $A \coloneqq \mathcal M \otimes  M_{2^\infty}$ and $(\mathcal N, Y) = (\mathcal M, X) \bar\otimes (\mathcal R, \mathrm{tr}_\mathcal R)$, and view $A$ as a $\|\cdot\|_{2, Y}$-dense subalgebra of $\mathcal N$.  Then $\gamma$ is a map $A \otimes M_{2^\infty} \rightarrow (\mathcal N \otimes \mathcal R)^\omega$ given by $\gamma(a \otimes c) = a \beta(c)$.  It suffices to show $\gamma^*(\tau) = \tau|_A \otimes \mathrm{tr}_{2^\infty}$ for all $\tau \in Y^\omega$.  Fix $\tau \in Y^\omega$ and positive $a \in A$.  If $\tau(a) = 0$, then $\tau(a \beta(c)) = 0 = \tau(a) \mathrm{tr}_{2^\infty}(c)$ by the Cauchy--Schwarz inequality.  If $\tau(a) \neq 0$, then $c \mapsto \tau(\gamma(a \otimes c)) / \tau(a)$ is a trace on $M_{2^\infty}$ and hence is the unique trace $\mathrm{tr}_{2^\infty}$.  Therefore, for all $a \in A$ and $c \in M_{2^\infty}$, we have $\tau(\gamma(a \otimes c)) = \tau(a) \mathrm{tr}_{2^\infty}(c) = (\tau \otimes \mathrm{tr}_{M_{2^\infty}})(a\otimes c)$.}
and, accordingly,  extends to an embedding of the tracial completion 
\begin{equation}
    \bar{\gamma}\colon \M \barotimes \R \barotimes \R \longrightarrow (\M \barotimes \R)^\omega
\end{equation}
with 
\begin{equation}
    \bar{\gamma}(a \otimes b \otimes c) = (a \otimes 1_\R)\alpha(b)\beta(c) = (a \otimes b)\beta(c)
\end{equation}
for all  $a \in \M$ and $b,c \in \R$.

Recall that $\R$ has an approximately inner half-flip in the sense that there exists unitaries $(\bar{u}_m)_{m=1}^\infty$ in $\R \barotimes \R$ such that for all $b \in \R$,
\begin{equation}
	\lim_{m\to\infty} \|\bar{u}_m^*(b \otimes 1_\R) \bar{u}_m - 1_\R \otimes b\|_{2, \tr_{\R} \otimes \tr_{\R}} = 0.
\end{equation}
Moreover, since $\R \barotimes \R$ is a von Neumann algebra, each unitary $\bar{u}_m$ is of the form $\exp(2\pi i \bar{h}_m)$ for some self-adjoint $\bar{h}_m \in \R\barotimes\R$.

Set $u_m \coloneqq \bar{\gamma}(1_{\M} \otimes \bar{u}_m) \in (\M \barotimes \R)^\omega \cap (\M \otimes 1_\R)'$ (it is in this commutant because $\bar\gamma(\M \otimes 1_\R \otimes 1_\R) = \M \otimes 1_\R$).
Then the two embeddings $\M \barotimes \R \rightarrow (\M \barotimes \R)^\omega$ given by 
\begin{equation}
    a \otimes b \mapsto \bar{\gamma}(a \otimes b \otimes 1_\R)=a\otimes b\ \ \text{and}\ \  a \otimes b \mapsto \bar{\gamma}(a \otimes 1_\R \otimes b)=(a\otimes 1)\beta(b)
\end{equation}
are approximately unitary equivalent in $\|\cdot\|_{2,(X \otimes \{\tr_\R\})^\omega}$ via the sequence $(u_m)_{m=1}^\infty$.
Since $\beta(b)\in (\M\otimes 1_{\R})^\omega$, it follows that
\begin{equation}\label{eqn:inf-formula}
	\lim_{m\to\infty} \inf\Big\{ \|u_m^* x u_m - z\|_{2,(X \otimes \{\tr_\R\})^\omega}: \begin{matrix}z \in (\M \otimes 1_\R)^\omega \\ \|z\| \leq 1\end{matrix}\Big\} = 0
\end{equation}
for all contractions $x \in \M \otimes \R$. We also have $u_m = \exp(2\pi i h_m)$, where $h_m = \bar{\gamma}(1_{\M} \otimes \bar{h}_m)$. Since we can lift the self adjoint $h_m \in (\M \barotimes \R)^\omega$ to a sequence of self-adjoint elements $(h_{m,n})_{n=1}^\infty$ in $\ell^\infty(\M \barotimes \R)$, we can find a sequence of unitaries $(u_{m,n})_{n=1}^\infty$ in $\M \otimes \R$ representing $u_m$. 

We are now ready to construct an isomorphism $\psi\colon \M \rightarrow \M \otimes \R$. 
Let $(x_k)_{k=1}^\infty$ be a $\|\cdot\|_{2,X}$-dense sequence in the unit ball of $\M$ and $(y_k)_{k=1}^\infty$ be a $\|\cdot\|_{2,X \otimes \{\tr_\R\}}$-dense sequence in the unit ball of $\M \otimes \R$. 
We shall iteratively produce unitaries $w_k$ in $\M \otimes \R$ and contractions $z_k^{(j)} \in \M$ for $k\in\mathbb N$ and $1 \leq j \leq k$.
The construction begins with $w_0 \coloneqq 1$. Fix $k \geq 1$ and suppose that $w_s, z^{(r)}_s \in \M$ for $1 \leq r \leq s < k$ have already been constructed.
By \eqref{eqn:inf-formula}, there exists contractions $\bar{z}^{(1)},\ldots,\bar{z}^{(k)} \in  (\M \otimes 1_\R)^\omega$ and $m_k \in \N$ such that
\begin{equation}\begin{split}
	\|u_{m_k}^* w_{k-1}^* y_j w_{k-1}u_{m_k} - \bar{z}_k^{(j)} \otimes 1_\R\|_{2,(X \otimes \{\tr_\R\})^\omega} &< \frac{1}{k}.
\end{split}\end{equation}
for $j=1,\ldots,k-1$.
Let $(u_{m_k,n})_{n=1}^\infty$ be a sequence of unitaries in $\M \otimes \R$ representing $u_{m_k}$ and let $(z^{(j)}_{k,n})_{n=1}^\infty$ be sequences of contractions representing each $\bar{z}_k^{(j)}$. 
Since $u_{m_k} \in (\M \barotimes \R)^\omega \cap (\M \otimes 1_\R)'$, we can choose $n_k \in \N$ such that
\begin{align}\
	\|u_{m_k,n_k}^* w_{k-1}^* y_j w_{k-1}u_{m_k,n_k} - z^{(j)}_{k,n_k} \otimes 1_\R\|_{2,X \otimes \{\tr_\R\}} &< \frac{1}{k},
	\label{eqn:inductive-McDuff1}\\
	\|[u_{m_k,n_k},x_j\otimes 1_\R]\|_{2,X \otimes \{\tr_\R\}} &< 2^{-k}, \label{eqn:inductive-McDuff2}
\end{align}
for all $1 \leq j \leq k$, and 
\begin{equation}\begin{split}\label{eqn:inductive-McDuff3}
	\|[u_{m_k,n_k},z^{(r)}_{s}\otimes 1_\R]\|_{2,X \otimes \{\tr_\R\}} &< 2^{-k},
\end{split}\end{equation}
for all $1\leq r \leq s < k$. 
Set $w_k \coloneqq w_{k-1}u_{m_k,n_k}$ and set $z^{(j)}_k \coloneqq z^{(j)}_{k,n_k}$ for $j=1,\ldots,k$.

For $j \in \N$, the sequence $(w_k (x_j \otimes 1_\R) w_k^*)_{k=1}^\infty$ is $\|\cdot\|_{2,X \otimes \{\tr_\R\}}$-Cauchy by \eqref{eqn:inductive-McDuff2} as $\sum_{k=1}^\infty 2^{-k}$ converges. Since $(x_j)_{j=1}^\infty$ is  a $\|\cdot\|_{2,X}$-dense sequence in the unit ball of $\M$, a $3\epsilon$-argument gives that  $(w_k (a \otimes 1_\R) w_k^*)_{n=1}^\infty$ is $\|\cdot\|_{2,X \otimes \{\tr_\R\}}$-Cauchy for all $a \in \M$. As $(w_k (a \otimes 1_\R) w_k^*)_{n=1}^\infty$ is $\|\cdot\|$-bounded, we may define a map $\psi\colon \M \rightarrow \M \otimes \R$ by $\psi(a) \coloneqq \lim_{k\to\infty} w_k (a \otimes 1_\R) w_k^*$. Since the $w_k$ are unitaries, we see that $\psi$ is an injective $^*$-homomorphism and we have $(\tau \otimes \tr_{\R}) \circ \psi = \tau$ for all $\tau \in X$. Surjectivity follows from the $\|\cdot\|_{2,X \otimes \{\tr_\R\}}$-density of $(y_j)_{j=1}^\infty$ in the unit ball of $\M$, because for $k > j$, we have
\begin{equation}\begin{array}{rcl}
	\|y_j - \psi(z^{(j)}_k)\|_{2,X \otimes \{\tr_\R\}} \!\! &\stackrel{\eqref{eqn:inductive-McDuff3}}{\leq}& \!\displaystyle\sum_{r=k+1}^\infty 2^{-r} + \|w_k(z^{(j)}_{k,n_k} \otimes 1_{\R})w_k^*\|  \\
	&\stackrel{\eqref{eqn:inductive-McDuff1}}{<}& \!\displaystyle\sum_{r=k+1}^\infty 2^{-r} + \frac{1}{k},
\end{array}\end{equation} 
which converges to zero as $k \to \infty$. Indeed, as the unit ball of $\M$ is \mbox{$\|\cdot\|_{2,X}$}-complete and $\psi$ preserves the uniform 2-norm, its image must be \mbox{$\|\cdot\|_{2,X \otimes \{\tr_\R\}}$}-closed.
\end{proof}

\subsection{Property \texorpdfstring{$\Gamma$}{Gamma}}\label{sec:Gamma}

In its original formulation, a II$_1$ factor $\mathcal M$ has property $\Gamma$ if there is an approximately central net of trace-zero unitaries in $\mathcal M$. Since $\R \cong \overline{\bigotimes}_{n=1}^\infty(\R, \tr_\R)$, all McDuff II$_1$ factors have property $\Gamma$.
On the other hand, Murray and von Neumann's  $14\epsilon$-argument shows that the factors associated to free groups do not have property $\Gamma$ (\cite[Lemma 6.2.1]{MvN43}).

Dixmier extended the work of Murray and von Neumann, proving that property $\Gamma$ was equivalent to the existence of approximately central projections that sum to the unit and equally divide the trace (\cite{Di69}).  It is through this reformulation that most structural consequence of property $\Gamma$ are obtained for II$_1$ factors (see \cite{Chr01,CPSS03,GP98,Pi01}), and so it was the basis for the definition of \emph{uniform property $\Gamma$} for $C^*$-algebras introduced in \cite{CETWW} and studied further in \cite{CETW}.  

Here, we define property $\Gamma$ for tracially complete $C^*$-algebras. It is an immediate consequence of Proposition~\ref{rem:CStarVsTCreducedProducts} and a reindexing argument (see Remark \ref{rmk:gamma-ultrapower-commutant} below) that a $C^*$-algebra $A$ with $T(A)$ compact has uniform property $\Gamma$ in the sense of \cite[Definition 2.1]{CETWW} if and only if its tracial completion with respect to $T(A)$ has property $\Gamma$ as defined here (see Proposition~\ref{prop:Gamma-Agrees}).

\begin{definition}[{cf.\ \cite[Definition~2.1]{CETWW}}]\label{def:UTCgamma}
Let $(\M,X)$ be a factorial tracially complete $C^*$-algebra.  We say that $(\M,X)$ has \emph{property $\Gamma$} if for any $\|\cdot\|_{2, X}$-separable subset $S \subseteq \M$ and any $k \in \N$ there exist projections $p_1,\ldots,p_k \in \M^\omega \cap S'$ summing to $1_{\M^\omega}$ such that
\begin{equation}\label{eq:tracially-divides}
  \tau(ap_i) = \frac1k\tau(a),\qquad  a \in S,\ \tau\in X^\omega,\ i=1,\dots,k.
\end{equation}
\end{definition}

\begin{proposition}\label{prop:Gamma-Agrees}
    Let $A$ be a separable $C^*$-algebra with $T(A)$ compact.
    Then $A$ has uniform property $\Gamma$ as in \cite[Definition~2.1]{CETWW} if and only if its tracial completion with respect to $T(A)$ has property $\Gamma$ in the sense of Definition~\ref{def:UTCgamma}.
\end{proposition}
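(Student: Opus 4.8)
The plan is to deduce Proposition~\ref{prop:Gamma-Agrees} from Proposition~\ref{rem:CStarVsTCreducedProducts} together with a reindexing argument. Write $\M \coloneqq \completion{A}{T(A)}$ for the tracial completion and let $\alpha \colon A \to \M$ be the canonical map. By Proposition~\ref{prop:CStarVsTCreducedProducts} (or rather its reduced-power form), the induced map $\alpha^\omega \colon (A^\omega, T(A)^\omega) \to (\M^\omega, X^\omega)$ is an isomorphism of tracially complete $C^*$-algebras, where $X$ denotes the image of $T(A)$ in $T(\M)$. Moreover, by Proposition~\ref{rem:CStarVsTCreducedProducts}, $\alpha^\omega$ carries $A^\omega \cap \iota(A)'$ onto $\M^\omega \cap \M'$, and $\alpha^\omega$ identifies the limit traces on $A^\omega$ (induced by $T(A)$) with the limit traces on $\M^\omega$ (induced by $X$). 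Since $A$ is separable, in Definition~\ref{def:UTCgamma} we may take $S = \alpha(A)$, and a $\|\cdot\|_{2,X}$-dense subalgebra suffices by continuity of multiplication in the uniform $2$-norm on bounded sets (via \eqref{eq:SpecialHolderIneq}); correspondingly, uniform property $\Gamma$ for $A$ in the sense of \cite[Definition~2.1]{CETWW} is witnessed by projections in $A^\omega \cap A'$ dividing the limit traces $T_\omega(A)$.

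First I would spell out the translation. Suppose $A$ has uniform property $\Gamma$. Fix $k \in \N$; then there are projections $q_1,\dots,q_k \in A^\omega \cap \iota(A)'$ summing to $1_{A^\omega}$ with $\tau(aq_i) = \tfrac1k \tau(a)$ for all $a \in A$, all limit traces $\tau$, and all $i$. Applying $\alpha^\omega$, the elements $p_i \coloneqq \alpha^\omega(q_i)$ are projections in $\M^\omega \cap \M'$ summing to $1_{\M^\omega}$. For $b \in \alpha(A)$, say $b = \alpha(a)$, and a limit trace $\sigma$ on $\M^\omega$, we have $\sigma(bp_i) = (\sigma \circ \alpha^\omega)(q_i \iota(a)) = \tfrac1k (\sigma\circ\alpha^\omega)(\iota(a)) = \tfrac1k \sigma(b)$, using that $\sigma \circ \alpha^\omega$ is a limit trace on $A^\omega$ induced by $T(A)$. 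Since the limit traces on $\M^\omega$ span a weak$^*$-dense subset of $X^\omega$ and $\tau(bp_i)$, $\tau(b)$ are $\|\cdot\|_{2,X^\omega}$-continuous (hence weak$^*$-continuous on bounded sets in $b$ for fixed $p_i$, and affine and weak$^*$-continuous in $\tau$), the identity \eqref{eq:tracially-divides} holds for all $\tau \in X^\omega$ and all $b \in \alpha(A)$. A density argument (replacing $\alpha(A)$ by its $\|\cdot\|_{2,X}$-closure and using \eqref{eq:SpecialHolderIneq}) upgrades this to all $b$ in any prescribed $\|\cdot\|_{2,X}$-separable $S \subseteq \M$ after a reindexing, giving property $\Gamma$ for $(\M,X)$. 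The converse is symmetric: given projections witnessing property $\Gamma$ for $(\M,X)$ with $S = \alpha(A)$, pull them back through $(\alpha^\omega)^{-1}$ to projections in $A^\omega \cap \iota(A)'$ and check the trace condition on limit traces, which is exactly uniform property $\Gamma$ for $A$.

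The one point requiring care — and the analogue of Remark~\ref{rmk:ultrapower-commutant} and the reindexing in Remark~\ref{rmk:gamma-ultrapower-commutant} — is passing from a separable subset $S$ of $\M$ (rather than $\alpha(A)$ itself) to the commutant condition, and from $\alpha(A) \subseteq \M$ to $\alpha(A) \subseteq \M^\omega$ in the statement of Definition~\ref{def:UTCgamma}; this is handled by the second part of Proposition~\ref{rem:CStarVsTCreducedProducts}, which produces a $\|\cdot\|$-separable $S_0 \subseteq A$ with $\alpha^\omega(A^\omega \cap \iota(S_0)') \subseteq \M^\omega \cap S'$, followed by a standard diagonal reindexing to produce genuine projections in $\M^\omega \cap S'$ from approximately central, approximately orthogonal projections in $\M$. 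I expect this bookkeeping (tracking separable subsets through the isomorphism and reindexing) to be the only mildly technical obstacle; the core content is immediate from the cited propositions. Since these manipulations are entirely parallel to those already carried out for the McDuff property in Proposition~\ref{prop:McDuff} and Remark~\ref{rmk:ultrapower-commutant}, the proof can be kept brief by referring to those arguments.
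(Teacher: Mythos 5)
Your proposal is correct and follows exactly the route the paper intends: the paper gives no separate proof of Proposition~\ref{prop:Gamma-Agrees}, stating only that it is an immediate consequence of Proposition~\ref{rem:CStarVsTCreducedProducts} together with a reindexing argument (Remark~\ref{rmk:gamma-ultrapower-commutant}), and your write-up is precisely a fleshed-out version of that argument. The translation of the trace-division condition through $\alpha^\omega$, the passage from limit traces to $X^\omega$ by affineness and weak$^*$-continuity, and the handling of general separable $S$ via the second part of Proposition~\ref{rem:CStarVsTCreducedProducts} plus reindexing are all as the paper intends.
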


\begin{remark}
We have chosen to restrict our definition of property $\Gamma$ to the case of factorial tracially complete $C^*$-algebras. Although Definition~\ref{def:UTCgamma} makes sense for non-factorial tracially complete $C^*$-algebras, in the absence of results, we are not confident that this would be the appropriate definition outside of the factorial setting.
\end{remark}

The following simple observation provides many examples of factorial tracially complete $C^*$-algebras with property $\Gamma$.  Combining this with Proposition~\ref{prop:z-stable-mcduff} shows that the tracial completion of a separable $\mathcal Z$-stable $C^*$-algebra with respect to a compact face of traces has  property~$\Gamma$ (cf.\ \cite[Proposition 2.3]{CETWW}).

\begin{proposition}\label{prop:McDuff-implies-Gamma}
If $(\M, X)$ is a factorial McDuff tracially complete $C^*$-algebra, then $(\M, X)$ satisfies property $\Gamma$.
\end{proposition}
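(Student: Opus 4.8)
The plan is to deduce property $\Gamma$ directly from the McDuff property by using approximately central unital copies of matrix algebras to build the required partitions of unity. Let $(\mathcal M, X)$ be a factorial McDuff tracially complete $C^*$-algebra, fix a $\|\cdot\|_{2,X}$-separable subset $S \subseteq \mathcal M$, and fix $k \in \mathbb N$. First I would apply the McDuff property (Definition~\ref{def:UTCmcDuff}) to obtain a unital embedding $\phi \colon M_k \to \mathcal M^\omega \cap S'$. Let $e_{11}, \dots, e_{kk} \in M_k$ be the standard matrix units and set $p_i \coloneqq \phi(e_{ii}) \in \mathcal M^\omega \cap S'$ for $i = 1, \dots, k$. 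These are projections in $\mathcal M^\omega \cap S'$ summing to $\phi(1_{M_k}) = 1_{\mathcal M^\omega}$, so the only point that needs checking is the tracial condition \eqref{eq:tracially-divides}: that $\tau(a p_i) = \frac1k \tau(a)$ for all $a \in S$, $\tau \in X^\omega$, and $i = 1, \dots, k$.

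The key observation is that the partial isometries $\phi(e_{ji})$ conjugate the $p_i$ onto each other: since $e_{ji} e_{ii} e_{ij} = e_{jj}$ in $M_k$, we have $\phi(e_{ji}) p_i \phi(e_{ij}) = p_j$ and $\phi(e_{ij})\phi(e_{ji}) = p_i$. Fix $a \in S$ and $\tau \in X^\omega$. Because $a$ commutes with the range of $\phi$ (as $\phi$ takes values in $S'$), the trace property of $\tau$ gives, for any $i, j$,
\begin{equation}
  \tau(a p_j) = \tau\big(a\, \phi(e_{ji}) p_i \phi(e_{ij})\big) = \tau\big(\phi(e_{ij}) a\, \phi(e_{ji}) p_i\big) = \tau\big(a\, \phi(e_{ij})\phi(e_{ji}) p_i\big) = \tau(a p_i),
\end{equation}
using that $\phi(e_{ij}) a = a \phi(e_{ij})$ in the second-to-last step. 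Hence $\tau(a p_1) = \dots = \tau(a p_k)$. Summing over $i$ and using $\sum_i p_i = 1_{\mathcal M^\omega}$ yields $k \cdot \tau(a p_i) = \tau(a)$, so $\tau(a p_i) = \frac1k \tau(a)$ as required. This verifies \eqref{eq:tracially-divides} and completes the proof.

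I do not anticipate any serious obstacle here; the argument is purely formal once the McDuff property is invoked, and the factoriality hypothesis is not even needed for this direction (it is only built into Definition~\ref{def:UTCgamma} so that property $\Gamma$ is stated for factorial algebras). One minor point worth a sentence in the writeup is that the computation above is carried out at the level of the reduced power $(\mathcal M^\omega, X^\omega)$, where the traces $\tau \in X^\omega$ need not be limit traces; but since \eqref{eq:tracially-divides} is an identity of $\|\cdot\|_{2,X^\omega}$-continuous affine functionals in $\tau$ and it holds on the limit traces, it holds on their closed convex hull $X^\omega = \sum^\omega X_n$ by continuity and affinity — in fact the trace-property manipulation above is valid verbatim for every $\tau \in T(\mathcal M^\omega)$, so no such subtlety actually arises.
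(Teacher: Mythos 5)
Your proof is correct and is essentially the paper's own argument: the paper also takes $p_i = \phi(e_{ii})$ for a unital embedding $\phi\colon M_k \to \M^\omega \cap S'$ and observes that $x \mapsto \tau(a\phi(x))$ is a tracial functional on $M_k$ (which is exactly your matrix-unit computation), so that $\tau(ap_i) = \frac1k\tau(a)$. Your closing remark is also accurate — the identity holds for every trace on $\M^\omega$, so no limit-trace subtlety arises.
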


\begin{proof}
Let $S \subseteq \M$ be a $\|\cdot\|_{2, X}$-separable set and fix $k \geq 1$.  Fix a unital embedding $\phi \colon M_k \rightarrow \M^\omega \cap S'$.  If $a \in S$, then the function $\tau(a\phi(\,\cdot\,))$ is a tracial functional on $M_k$.  Hence $\tau(a\phi(e_{ii})) = \tau(a\phi(e_{11}))$ for all $i = 1, \ldots, k$, and since $\phi$ is unital and $\sum_{i=1}^k e_{ii} = 1_{M_k}$, we have $\tau(a \phi(e_{ii})) = \frac1k \tau(a)$ for all $a \in S$.  Thus the projections $p_i \coloneqq \phi(e_{ii})$, $i = 1, \ldots, k$, satisfy the conditions in Definition~\ref{def:UTCgamma}.
\end{proof}

The following proposition records several equivalent formulation of property $\Gamma$.

\begin{proposition}[{cf.\ \cite[Proposition~2.3]{CETW}}]\label{prop:Gamma}
For a factorial tracially complete $C^*$-algebra $(\M, X)$, the following are equivalent.
\begin{enumerate}[wide, labelwidth=0pt, labelindent=0pt]
	\item\label{item:Gamma} $(\M, X)$ satisfies property $\Gamma$.
	
		\item\label{item:Gamma-approx} For every finite set $\mathcal F \subseteq \M$ and $\epsilon > 0$, there is a self-adjoint contraction $p \in \M$ such that for all $a \in \mathcal F$ and $\tau \in X$,
	\begin{equation}
		\|[p, a]\|_{2, X} < \epsilon,\
		\|p - p^2\|_{2, X} < \epsilon,\ \text{and}\ \
		\big| \tau(ap) - \frac12 \tau(a)\big| < \epsilon.
	\end{equation}
	
	\item\label{item:approx-central-projection} There is  $c \in (0, 1)$ such that for every $\|\cdot\|_{2, X}$-separable set $S \subseteq \M^\omega$, there is a projection $p \in \M^\omega \cap S'$ such that $\tau(ap) = c \tau(a)$ for all $a \in S$ and $\tau \in X^\omega$.
 
\item\label{item:embeddingC2} There exists a faithful trace $\sigma \in T(\C^2)$ such that for every $\|\cdot\|_{2, X}$-separable set $S \subseteq \M^\omega$, there is a unital embedding $\phi\colon \C^2 \rightarrow \M^\omega \cap S'$ such that $\tau(a\phi(x)) = \tau(a)\sigma(x)$ for all $a \in S$, $x \in \C^2$, and $\tau \in X^\omega$.

  	\item\label{item:approx-central-interval} For every $\|\cdot\|_{2, X}$-separable set $S \subseteq \M^\omega$, there is a $^*$-homomorphism $\phi \colon L^\infty[0, 1] \rightarrow \M^\omega \cap S'$ such that 
	\begin{equation}
		\tau(a \phi(f)) = \tau(a) \int_0^1 f(t) dt
	\end{equation}
	for all $a \in S$, $f \in L^\infty[0, 1]$, and $\tau \in X^\omega$.
\end{enumerate}
\end{proposition}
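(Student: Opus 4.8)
\textbf{Proof proposal for Proposition~\ref{prop:Gamma}.}

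The plan is to prove the chain of implications
\[
\ref{item:Gamma}\Rightarrow\ref{item:Gamma-approx}\Rightarrow\ref{item:approx-central-projection}\Rightarrow\ref{item:embeddingC2}\Rightarrow\ref{item:approx-central-interval}\Rightarrow\ref{item:Gamma},
\]
mirroring the corresponding argument in \cite[Proposition~2.3]{CETW} but carried out at the level of tracially complete $C^*$-algebras, with Kirchberg's $\epsilon$-test (Lemma~\ref{lem:EpsTest}) used to pass between approximate statements in $\M$ and exact statements in $\M^\omega$, and Lemma~\ref{lem:2norm-reducedproduct} used to compute uniform $2$-norms of representative sequences. For \ref{item:Gamma}$\Rightarrow$\ref{item:Gamma-approx}: take $k=2$ in Definition~\ref{def:UTCgamma} applied to the separable set $S\coloneqq\mathcal F$, obtaining commuting projections $p_1,p_2\in\M^\omega\cap\mathcal F'$ with $p_1+p_2=1_{\M^\omega}$ and $\tau(ap_1)=\tfrac12\tau(a)$ for $a\in\mathcal F$, $\tau\in X^\omega$; lifting $p_1$ to a representative sequence of self-adjoint contractions and using Lemma~\ref{lem:2norm-reducedproduct} to translate the $2$-norm conditions gives, for a suitable index $n$, a self-adjoint contraction $p\in\M$ satisfying the three approximate conditions (the limit trace condition translates because every $\tau\in X$ gives a limit trace in $X^\omega$ via the constant sequence). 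The reverse direction \ref{item:Gamma-approx}$\Rightarrow$\ref{item:Gamma} (routed through \ref{item:approx-central-interval}) will be handled below.

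For \ref{item:Gamma-approx}$\Rightarrow$\ref{item:approx-central-projection} with $c=\tfrac12$: given a $\|\cdot\|_{2,X}$-separable $S\subseteq\M^\omega$, by a reindexing argument as in Remark~\ref{rmk:ultrapower-commutant} it suffices to treat $S\subseteq\M$ countable; enumerate a $\|\cdot\|_{2,X}$-dense sequence $(a_i)$ in the unit ball of $S$, apply \ref{item:Gamma-approx} with $\mathcal F=\{a_1,\dots,a_n\}$ and $\epsilon=2^{-n}$ to produce self-adjoint contractions $q_n\in\M$, and then feed the functions $f^{(k)}_n(q)\coloneqq\max\{\|[q,a_k]\|_{2,X},\ \|q-q^2\|_{2,X},\ |\mathrm{tr}_2\text{-type errors}|\}$ (for $n\geq k$) into Kirchberg's $\epsilon$-test to obtain a single self-adjoint contraction $q\in\M^\omega\cap S'$ with $q=q^2$ and $\tau(aq)=\tfrac12\tau(a)$ for all $a\in S$, $\tau\in X^\omega$; then $p\coloneqq q$ is the required projection. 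Then \ref{item:approx-central-projection}$\Rightarrow$\ref{item:embeddingC2} is a bootstrapping/telescoping argument: starting from a projection $p_1\in\M^\omega\cap S'$ of relative trace $c$, one applies \ref{item:approx-central-projection} inside the corner determined by $p_1$ (enlarging $S$ to include $p_1$ and its complement at each stage) to successively split off projections, arranging via a geometric-series choice of the parameters that the resulting partition-of-unity projection has relative trace exactly $\tfrac12$; this gives a unital embedding $\phi\colon\C^2\to\M^\omega\cap S'$ with $\tau(a\phi(x))=\tau(a)\sigma(x)$ for $\sigma$ the uniform trace on $\C^2$. (One must check $p_1$ and its complement are genuinely in the commutant of the enlarged $S$, which is automatic since $\M^\omega\cap S_1'\subseteq\M^\omega\cap S'$.) For \ref{item:embeddingC2}$\Rightarrow$\ref{item:approx-central-interval}: iterate as in the proof of Proposition~\ref{prop:McDuff}\ref{item:approx-central-matrix}$\Rightarrow$\ref{item:approx-central-R} to get commuting unital copies of $(\C^2,\sigma)$ in $\M^\omega\cap S'$ with the trace-factorisation property; their infinite tensor product is $L^\infty\big(\prod_{n}(\{0,1\},\mathrm{Bernoulli}(\sigma))\big)\cong L^\infty[0,1]$ (a non-atomic separable measure algebra), with the product measure pushing forward to Lebesgue measure, and the trace-factorisation conditions pass to the limit by $\|\cdot\|_{2,X^\omega}$-continuity of multiplication. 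Finally \ref{item:approx-central-interval}$\Rightarrow$\ref{item:Gamma}: given $k$, let $\phi\colon L^\infty[0,1]\to\M^\omega\cap S'$ be as in \ref{item:approx-central-interval} and set $p_i\coloneqq\phi(\chi_{[(i-1)/k,\,i/k]})$; these are projections in $\M^\omega\cap S'$ summing to $1_{\M^\omega}$, and $\tau(ap_i)=\tau(a)\int\chi_{[(i-1)/k,i/k]}=\tfrac1k\tau(a)$.

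The step I expect to be the main obstacle is \ref{item:approx-central-projection}$\Rightarrow$\ref{item:embeddingC2}: upgrading from an approximately-central projection of \emph{some} fixed relative trace $c\in(0,1)$ to one of relative trace exactly $\tfrac12$ (and more generally $\tfrac1k$), while keeping everything inside the commutant of a prescribed separable set. The difficulty is that in a general factorial tracially complete $C^*$-algebra one does not have direct access to continuous functional calculus with Borel functions or to halving by a single trace argument, so the relative-trace bookkeeping through the telescoping construction must be done carefully; however, since factoriality is assumed and $X^\omega$ consists of the closed convex hull of limit traces, one can control all the relative-trace conditions simultaneously over $X^\omega$ using the Cauchy--Schwarz estimate \eqref{eq:SpecialHolderIneq} exactly as in the McDuff case, so I expect this to go through with the same techniques used for Proposition~\ref{prop:McDuff}. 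A secondary routine-but-fiddly point is the identification in \ref{item:embeddingC2}$\Rightarrow$\ref{item:approx-central-interval} of the infinite $W^*$-tensor product of $(\C^2,\sigma)$'s with $L^\infty[0,1]$; this is standard measure theory (any separable non-atomic probability algebra is Lebesgue) combined with the fact, already used in the proof of Proposition~\ref{prop:McDuff}, that commuting unital copies of finite-dimensional tracial von Neumann algebras in a reduced power assemble into a unital copy of their tracial tensor product.
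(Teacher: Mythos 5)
Your chain of implications is the same as the paper's, and four of the five steps match the paper's proof essentially verbatim: \ref{item:Gamma}$\Rightarrow$\ref{item:Gamma-approx} by taking $k=2$ and a suitable representative, \ref{item:Gamma-approx}$\Rightarrow$\ref{item:approx-central-projection} by a diagonal/$\epsilon$-test argument giving $c=\tfrac12$, \ref{item:embeddingC2}$\Rightarrow$\ref{item:approx-central-interval} by assembling commuting copies of $(\C^2,\sigma)$ and identifying the completed infinite tensor product with $L^\infty[0,1]$ via the uniqueness of the standard probability space, and \ref{item:approx-central-interval}$\Rightarrow$\ref{item:Gamma} by evaluating $\phi$ on characteristic functions of subintervals of length $\tfrac1k$. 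Those parts are fine.

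The step you flag as the main obstacle, \ref{item:approx-central-projection}$\Rightarrow$\ref{item:embeddingC2}, is where you have gone wrong --- but the problem is a misreading of the statement, not a genuine difficulty. Condition \ref{item:embeddingC2} only asks for \emph{some} faithful trace $\sigma\in T(\C^2)$, not the uniform trace. Given the projection $p\in\M^\omega\cap S'$ with $\tau(ap)=c\tau(a)$ from \ref{item:approx-central-projection}, one simply sets $\phi(e_1)\coloneqq p$, $\phi(e_2)\coloneqq 1_{\M^\omega}-p$ and $\sigma(x_1,x_2)\coloneqq cx_1+(1-c)x_2$; the step is two lines. Your proposed telescoping argument to force relative trace exactly $\tfrac12$ is therefore unnecessary, and as sketched it does not work: condition \ref{item:approx-central-projection} produces projections in $\M^\omega\cap S'$, not in corners $p_1\M^\omega p_1$ (which are not reduced powers of tracially complete $C^*$-algebras to which the hypothesis applies), and for a general $c\in(0,1)$ the semigroup of relative traces reachable by finitely many independent $c$-splittings need not contain $\tfrac12$ --- producing a projection of relative trace exactly $\tfrac12$ from such data really requires first passing to the diffuse completion $L^\infty[0,1]$, i.e.\ it is downstream of \ref{item:approx-central-interval}, which would make your argument circular. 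Dropping the insistence on the uniform trace removes the obstacle entirely; note that your own treatment of \ref{item:embeddingC2}$\Rightarrow$\ref{item:approx-central-interval} already accommodates a general Bernoulli parameter, so nothing later in your argument depends on $\sigma$ being uniform.
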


\begin{proof}
\ref{item:Gamma}$\Rightarrow$\ref{item:Gamma-approx}: Apply the definition of property $\Gamma$ with $k \coloneqq 2$ and $S \coloneqq \mathcal F$ and take a suitable element of a representative sequence for one of the projections witnessing property $\Gamma$ to give \ref{item:Gamma-approx}.

\ref{item:Gamma-approx}$\Rightarrow$\ref{item:approx-central-projection}: 
Let $\{s^{(i)}:i\in\N\}$ be a countable dense set in $S$. For each $i\in\N$, let $(s^{(i)}_n)_{n=1}^\infty$ be a sequence representing $s^{(i)}$.
For each $n \in \N$, set $\epsilon_n \coloneqq 2^{-n}$ and $\mathcal{F}_n \coloneqq \{ s^{(i)}_n: i \leq n\}$. Let $p_n \in \M$ be given as in \ref{item:Gamma-approx} with $(\mathcal{F}_n, \epsilon_n)$ in place of $(\mathcal F, \epsilon)$.  Then the sequence $(p_n)_{n=1}^\infty$ induces a projection $p \in \M^\omega \cap S'$ such that $\tau(ap) = \frac{1}{2} \tau(a)$ for all $a \in S$ and $\tau \in X^\omega$.

\ref{item:approx-central-projection}$\Rightarrow$\ref{item:embeddingC2}:
Let $e_1, e_2 \in \mathbb C^2$ be the standard basis vectors and let $p \in \M^\omega \cap S'$ be the projection constructed in \ref{item:approx-central-projection}.
Define $\phi\colon \C^2 \rightarrow \M^\omega \cap S'$ by $\phi(e_1)\coloneqq p$ and $\phi(e_2) \coloneqq  1_{\mathcal M^\omega} -p$.
Define $\sigma(x_1,x_2)\coloneqq cx_1 + (1-c)x_2$. Then $\tau(a\phi(x)) = \tau(a)\sigma(x)$ for all $a \in S$, $x \in \C^2$, and $\tau \in X^\omega$.

\ref{item:embeddingC2}$\Rightarrow$\ref{item:approx-central-interval}:  
Let $\phi_1 \colon \C^2 \rightarrow \M^\omega \cap S'$ be a unital embedding such that $\tau(a\phi_1(x)) = \tau(a)\sigma(x)$ for all $a \in S$, $x \in \C^2$, and $\tau \in X^\omega$. Then define $S_1 \coloneqq  C^*(\phi_1(\C^2) \cup S)$ and use \ref{item:approx-central-matrix} again to produce a unital embedding $\phi_2 \colon \C^2 \rightarrow \M^\omega \cap S_1'$ such that $\tau(a\phi_2(x)) = \tau(a)\sigma(x)$ for all $a \in S_1$, $x \in \C^2$, and $\tau \in X^\omega$.

Continuing inductively, there are unital embeddings $\phi_n \colon \C^2 \rightarrow \M^\omega \cap S'$ with commuting ranges such that the induced embedding
\begin{equation}
	 \phi \colon \bigotimes_{n=1}^\infty \C^2 \rightarrow \M^\omega \cap S'
\end{equation}
satisfies $\tau(a\phi(x)) = \tau(a)\sigma^{\otimes\infty}(x)$ for all $a \in S$, $x \in \bigotimes_{n=1}^\infty\C^2$, and $\tau \in X^\omega$.

Since $\M^\omega \cap S'$ is tracially complete, $\phi$ extends to a $^*$-homomorphism
\begin{equation}
  \bar\phi \colon \pi_{\sigma^{\otimes\infty}}\Big(\bigotimes_{n=1}^\infty \mathbb C^2\Big)'' \rightarrow \M^\omega \cap S'.
\end{equation}
Writing $\bar{\sigma}$ for the trace on $\pi_{\sigma^{\otimes\infty}} \Big(\bigotimes_{n=1}^\infty \C^2 \Big)''$ induced by $\sigma^{\otimes\infty}$, we have $\tau(a\bar\phi(x)) = \tau(a) \bar{\sigma}(x)$ for all $a \in S$, $x \in \pi_{\sigma^{\otimes \infty}}\big(\bigotimes_{n=1}^\infty \mathbb C^2\big)''$, and $\tau \in X^\omega$.  The uniqueness of the standard probability space (\cite[Corollary A.11]{Tak79}) then provides an isomorphism 
\begin{equation}
  \pi_{\sigma^{\otimes \infty}}\Big(\bigotimes_{n=1}^\infty \mathbb C^2\Big)'' \cong L^\infty[0, 1]
\end{equation}
which carries $\bar{\sigma}$ to the trace on $L^\infty[0, 1]$ given by integration with respect to the Lebesgue measure.

\ref{item:approx-central-interval}$\Rightarrow$\ref{item:Gamma}: For $k \geq 1$ and $1 \leq i \leq k$, set $p_i \coloneqq  \phi(\chi_{[(i-1)/k, i/k)})$.
\end{proof}

\begin{remark}\label{rmk:gamma-ultrapower-commutant}
Using reindexing, as in Remark~\ref{rmk:ultrapower-commutant}, we can see that Definition~\ref{def:UTCmcDuff} is equivalent to an \emph{a priori} stronger definition where $S$ is allowed to be any $\|\cdot\|_{2, X^\omega}$-separable subset of $\M^\omega$. Similarly, each of \ref{item:approx-central-projection}, \ref{item:embeddingC2} and \ref{item:approx-central-interval} is equivalent to an \emph{a priori} weaker statement with $S \subseteq \M$.
\end{remark}

The local characterisation of property $\Gamma$ given in Proposition~\ref{prop:Gamma}\ref{item:Gamma-approx} provides the following permanence property.  Recall that factoriality is preserved by inductive limits by Proposition~\ref{prop:UTCInductiveLimit}.

\begin{proposition}\label{prop:Gamma-indlim}
Property $\Gamma$ is preserved by inductive limits of factorial tracial complete $C^*$-algebras.
\end{proposition}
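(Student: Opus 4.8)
The plan is to exploit the local (finitary, approximate) characterisation of property $\Gamma$ from Proposition~\ref{prop:Gamma}\ref{item:Gamma-approx}, which says that $(\M,X)$ has property $\Gamma$ if and only if for every finite $\mathcal F\subseteq\M$ and $\epsilon>0$ there is a self-adjoint contraction $p\in\M$ with $\|[p,a]\|_{2,X}<\epsilon$, $\|p-p^2\|_{2,X}<\epsilon$, and $|\tau(ap)-\tfrac12\tau(a)|<\epsilon$ for all $a\in\mathcal F$ and $\tau\in X$. Since this condition only involves finitely many elements and approximations in the uniform $2$-norm, and since the canonical maps $\phi_n^\infty\colon\M_n\to\M$ into an inductive limit $(\M,X)=\varinjlim((\M_n,X_n),\phi_n^{n+1})$ have $\|\cdot\|_{2,X_n}$-$\|\cdot\|_{2,X}$-isometric behaviour up to the density of $\bigcup_n\phi_n^\infty(\M_n)$, this should transfer directly.

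First I would recall from Definition~\ref{UTC:Limit} and Proposition~\ref{prop:UTCInductiveLimit} that $(\M,X)$ is factorial (so that Proposition~\ref{prop:Gamma} applies) and that $\bigcup_{n=1}^\infty\phi_n^\infty(\M_n)$ is $\|\cdot\|_{2,X}$-dense in $\M$; indeed $\M=\completion{A}{X}$ for $A=\varinjlim(\M_n,\phi_n^{n+1})$, and each $\phi_n^\infty$ factors through $\alpha_X$, with $\|\phi_n^\infty(a)\|_{2,X}=\|a\|_{2,X_n}$ for $a\in\M_n$ (using $(\phi_n^\infty)^*(X)\subseteq X_n$ together with the fact that $X\to\varprojlim X_n$ is an affine homeomorphism, so that in fact equality of $2$-norms holds). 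Then, given a finite set $\mathcal F\subseteq\M$ and $\epsilon>0$, I would use this density together with Proposition~\ref{prop:UnitBallDensity} (Kaplansky density for tracially complete $C^*$-algebras) to find, for some $n$, contractions $b_1,\dots,b_m\in\M_n$ with $\|\phi_n^\infty(b_j)-a_j\|_{2,X}<\epsilon/10$ for an enumeration $a_1,\dots,a_m$ of $\mathcal F$. Apply Proposition~\ref{prop:Gamma}\ref{item:Gamma-approx} to $(\M_n,X_n)$ with the finite set $\{b_1,\dots,b_m\}$ and a suitably small tolerance $\epsilon'$ to get a self-adjoint contraction $q\in\M_n$ with $\|[q,b_j]\|_{2,X_n}<\epsilon'$, $\|q-q^2\|_{2,X_n}<\epsilon'$, and $|\sigma(b_jq)-\tfrac12\sigma(b_j)|<\epsilon'$ for all $j$ and all $\sigma\in X_n$. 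Set $p\coloneqq\phi_n^\infty(q)\in\M$, a self-adjoint contraction.

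The verification that $p$ works is then a routine estimate using: the isometry property $\|\phi_n^\infty(x)\|_{2,X}=\|x\|_{2,X_n}$ (giving $\|p-p^2\|_{2,X}<\epsilon'$ directly, and $\|[p,\phi_n^\infty(b_j)]\|_{2,X}<\epsilon'$); the inequality \eqref{eq:SpecialHolderIneq} together with the triangle inequality to replace $\phi_n^\infty(b_j)$ by $a_j$ in the commutator estimate (since $p$ is a contraction, $\|[p,a_j]\|_{2,X}\leq\|[p,\phi_n^\infty(b_j)]\|_{2,X}+2\|a_j-\phi_n^\infty(b_j)\|_{2,X}$); and for the trace condition, the fact that each $\tau\in X$ satisfies $\tau\circ\phi_n^\infty\in X_n$, so $\tau(\phi_n^\infty(b_j)p)=(\tau\circ\phi_n^\infty)(b_jq)$ is within $\epsilon'$ of $\tfrac12(\tau\circ\phi_n^\infty)(b_j)=\tfrac12\tau(\phi_n^\infty(b_j))$, after which one again swaps $\phi_n^\infty(b_j)$ for $a_j$ at a cost controlled by $\|a_j-\phi_n^\infty(b_j)\|_{2,X}$ and $|\tau(\cdot)|\le\|\cdot\|_{2,X}$. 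Choosing $\epsilon'$ and the approximation tolerance $\epsilon/10$ small enough relative to $\epsilon$ makes all three quantities less than $\epsilon$, so $(\M,X)$ satisfies Proposition~\ref{prop:Gamma}\ref{item:Gamma-approx} and hence property $\Gamma$. There is no serious obstacle here; the only mild care needed is the bookkeeping to ensure that the approximations in $\M_n$ (where the tolerance is $\epsilon'$ and the ambient set is $\{b_j\}$) combine correctly with the $\|\cdot\|_{2,X}$-approximation of $\mathcal F$ by $\phi_n^\infty(\{b_j\})$, and recording the (standard) fact that $\phi_n^\infty$ preserves the uniform $2$-norm, which follows from \eqref{eq:proj-lim-traces}.
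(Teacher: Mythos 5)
Your proposal is correct and is essentially the paper's argument: the paper derives this proposition directly from the local characterisation of property $\Gamma$ in Proposition~\ref{prop:Gamma}\ref{item:Gamma-approx}, exactly as you do, by approximating a finite set in $\M$ by images of elements of some $\M_n$ and pushing forward an approximate witness. One small correction: your parenthetical claim that $\|\phi_n^\infty(x)\|_{2,X}=\|x\|_{2,X_n}$ is not true in general (the connecting maps need only satisfy $(\phi_n^{n+1})^*(X_{n+1})\subseteq X_n$, so traces in $X_n$ need not extend to $X$, and $\phi_n^\infty$ is merely $\|\cdot\|_{2,X_n}$-$\|\cdot\|_{2,X}$-contractive); fortunately every estimate in your verification only uses contractivity, which does hold, so the proof is unaffected.
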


\begin{remark}\label{rem:GammaUltrapower}
It is also true that the reduced product of a sequence of factorial tracially complete $C^*$-algebras with property $\Gamma$ is both factorial and has property $\Gamma$, but we do not yet have the machinery to prove the factorial part of this claim.\footnote{Recall that in general it is not true that a reduced product of factorial tracially complete $C^*$-algebras is factorial (see Remark~\ref{VaccaroRemark}).} Once the factorial issue is sorted, it is clear that the condition in Proposition~\ref{prop:Gamma}\ref{item:Gamma-approx} is preserved by reduced products.  See Corollary~\ref{cor:factorial-ultrapower}\ref{cor:factorial-ultrapower:Gamma}.
\end{remark}

The definition of property $\Gamma$ requires that one can tracially divide all elements of a tracially complete $C^*$-algebra in an approximately central fashion, whereas for a II$_1$ factor $\M$, it suffices just to divide the unit (i.e.\ \eqref{eq:tracially-divides} only needs to hold for $a=1_{\M^\omega}$).  In \cite[Proposition 3.2]{CETW}, several of the present authors observed that such a result is also true for separable $C^*$-algebras with Bauer trace simplices.  The same holds in the tracially complete setting with a very similar proof.

\begin{proposition}[{cf.\ \cite[Corollary 3.2]{CETW}}] \label{prop:BauerDivideUnit}
Let $(\M,X)$ be a factorial tracially complete $C^*$-algebra such that $X$ is a Bauer simplex.  Suppose that for any $\|\cdot\|_{2, X}$-separable subset $S\subseteq \M$ and $k\in \N$, there exists pairwise orthogonal projections $p_1,\dots,p_k\in\M^\omega\cap S'$ with $\tau(p_i)=1/k$ for all $i=1,\dots,k$. Then $(\M,X)$ has property $\Gamma$.
\end{proposition}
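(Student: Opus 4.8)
The plan is to upgrade the hypothesis from dividing the unit into $k$ equal approximately central projections to the full strength of property $\Gamma$ (in the form of Proposition~\ref{prop:Gamma}\ref{item:Gamma-approx}), using the Bauer structure to control $\tau(ap_i)$ for arbitrary $a$. The key point, exactly as in \cite[Corollary~3.2]{CETW}, is the tracial factorisation phenomenon recorded in Footnote~\ref{fn:TracialFactorisation}: when $X$ is Bauer and $p$ is an approximately central projection, $\tau(ap)\approx\tau(a)\tau(p)$ uniformly over $\tau\in X$. More precisely, after invoking Theorem~\ref{thm:TCtoWStarBundle} to realise $(\M,X)$ as a $W^*$-bundle over $K\coloneqq\partial_eX$ with conditional expectation $E\colon\M\to C(K)$ and $\|\cdot\|_{2,X}=\|\cdot\|_{2,E}$, one has $E(ap)=E(aE(p))$ whenever $p$ is central, and a standard approximation shows that if $p\in\M$ is an $\epsilon$-approximately central projection (in $\|\cdot\|_{2,X}$) relative to a finite set $\mathcal F$, then $\|E(ap)-E(a)E(p)\|$ is small uniformly in $a\in\mathcal F$, so $|\tau(ap)-\tau(a)\tau(p)|$ is small for all $\tau\in X$.

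First I would pass to the reduced power: fix a $\|\cdot\|_{2,X}$-separable $S\subseteq\M$ and $k\in\N$, and let $p_1,\dots,p_k\in\M^\omega\cap S'$ be the pairwise orthogonal projections with $\tau(p_i)=1/k$ provided by hypothesis (so $\sum p_i$ is a projection of trace $1$, hence $\sum p_i=1_{\M^\omega}$ since $X^\omega$ is a faithful set of traces — here one uses factoriality to know $\sum p_i-1_{\M^\omega}$ has all traces zero). The reduced power $(\M^\omega,X^\omega)$ is again a tracially complete $C^*$-algebra by Proposition~\ref{prop:reduced-product}, and its designated traces are generated by limit traces. The Bauer simplex $X$ being Bauer does \emph{not} pass directly to $X^\omega$, so I cannot simply apply Theorem~\ref{thm:TCtoWStarBundle} to $\M^\omega$; instead the factorisation estimate must be applied at finite stages. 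So I would lift each $p_i$ to a sequence $(p_{i,n})_{n=1}^\infty$ of projections in $\M$ (using that projections lift along the quotient, or more carefully, lift to approximate projections and then note that the tracial conclusion is stable under the small error — this is where one must be a little careful since genuine projections need not lift, but as the conclusion of Proposition~\ref{prop:Gamma}\ref{item:Gamma-approx} only asks for $\|p-p^2\|_{2,X}<\epsilon$, approximate lifts suffice). For each finite $\mathcal F\subseteq S$ and $\epsilon>0$, for $\omega$-many $n$ the elements $p_{1,n},\dots,p_{k,n}$ are pairwise $\|\cdot\|_{2,X}$-approximately orthogonal, approximately central relative to $\mathcal F$, sum approximately to $1_\M$, and have $\tau(p_{i,n})\approx 1/k$ for all $\tau\in X$ (the last using that $\tau\mapsto\tau(p_i)$ is the limit of $\tau\mapsto\tau(p_{i,n})$).

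Then, at such an $n$, I apply the $W^*$-bundle factorisation estimate from Footnote~\ref{fn:TracialFactorisation}: since $p_{i,n}$ is approximately central relative to $\mathcal F$ in $\|\cdot\|_{2,E}$, we get $\tau(a p_{i,n})\approx\tau(a)\tau(p_{i,n})\approx\frac1k\tau(a)$ for all $a\in\mathcal F$ and all $\tau\in X$, with error controlled by $\epsilon$ and $\max_{a\in\mathcal F}\|a\|$. This is precisely what is needed: taking $k=2$ and $p\coloneqq p_{1,n}$ gives a self-adjoint contraction with $\|[p,a]\|_{2,X}<\epsilon'$, $\|p-p^2\|_{2,X}<\epsilon'$, and $|\tau(ap)-\frac12\tau(a)|<\epsilon'$ for all $a\in\mathcal F$, $\tau\in X$, verifying Proposition~\ref{prop:Gamma}\ref{item:Gamma-approx} and hence property $\Gamma$.

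\textbf{Main obstacle.} The delicate step is the tracial factorisation estimate: making precise, with explicit $\epsilon$-$\delta$ bounds, that approximate centrality of a projection $p$ in the uniform $2$-norm forces $\|E(ap)-E(a)E(p)\|_\infty$ to be small uniformly over $a$ in a finite set, and that this is genuinely a Bauer phenomenon (it fails without compactness of $\partial_eX$, as emphasised in Footnote~\ref{fn:TracialFactorisation} and Example~\ref{eg:free-group-bundle}). One cleanly organises this by working fibrewise: for each $x\in K$, in the fibre $\pi_{\tau_x}(\M)''$, an approximately central projection approximately commutes with the given elements in $\|\cdot\|_{2,\tau_x}$, and $\tau_x(a p)$ vs.\ $\tau_x(a)\tau_x(p)$ is the usual II$_1$-factor estimate — but one needs this uniformly in $x$, which is exactly what the $W^*$-bundle structure ($E$ being a genuine $C(K)$-valued conditional expectation with $\|\cdot\|_{2,X}=\|\cdot\|_{2,E}$) delivers. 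I would quote or reproduce the estimate from the proof of \cite[Corollary~3.2]{CETW}, which handles precisely this in the separable $C^*$-setting with Bauer trace simplex; the tracially complete version is a formal transcription once Theorem~\ref{thm:TCtoWStarBundle} is in hand.
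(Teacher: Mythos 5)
There is a genuine gap at the step you yourself flag as the crux: the claimed factorisation $\tau(ap_{i,n})\approx\tau(a)\tau(p_{i,n})$ does \emph{not} follow from approximate centrality of $p_{i,n}$ relative to the finite set $\mathcal F$ (nor from exact centrality relative to a separable $S$). The obstruction is that the functional $x\mapsto\tau(xp_{i})$ is only tracial on the subalgebra that $p_i$ commutes with, and even when $\tau$ is an extreme point of $T(\M)$, its restriction to $C^*(\mathcal F)$ (or $C^*(S)$) need not be extremal there, so domination by $\tau$ gives no proportionality. Concretely: take $(\M,X)=(\mathcal R,\{\tau_{\mathcal R}\})$, $\mathcal F=S=\{q\}$ for a projection $q$ of trace $\tfrac12$, and $p=q$ viewed as a constant sequence in $\mathcal R^\omega\cap S'$. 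Then $p$ is a projection of trace $\tfrac12$ commuting exactly with $\mathcal F$, yet $\tau(qp)=\tfrac12\neq\tfrac14=\tau(q)\tau(p)$. So the hypothesis of the proposition, applied with this $S$, can return projections for which your step 5 fails outright. Note also that the tool you lean on, Footnote~\ref{fn:TracialFactorisation}, points to this very proposition for its justification, so invoking it here is circular.

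What actually makes the argument work (and what the paper does) is to arrange that the $p_i$ commute with \emph{all} of $\M$: then $\sigma(x)\coloneqq\tau(xp_i)$ is a genuine positive tracial functional on $\M$ with $\sigma\leq\tau$, and for $\tau$ a limit trace built from $\partial_eX$ (whose restriction to $\M$ lies in $\partial_eX\subseteq\partial_eT(\M)$ by compactness of $\partial_eX$ and factoriality), extremality forces $\sigma=\tau(p_i)\tau=\tfrac1k\tau$. In the $\|\cdot\|_{2,X}$-separable case one simply takes $S=\M$ in the hypothesis; the passage from limit traces over $\partial_eX$ to all of $X^\omega$ uses the $W^*$-bundle structure of $\M^\omega$ over $(\partial_eX)^\omega$. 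In the non-separable case one cannot take $S=\M$, and the fix is not your finite-stage estimate but separable inheritability of factoriality (Theorem~\ref{thm:sep}\ref{thm:sep-factorial}): embed $S$ in a $\|\cdot\|_{2,X}$-separable factorial tracially complete subalgebra $\M_0$ still satisfying the hypothesis, apply the separable case to $\M_0$, and observe that witnesses for property $\Gamma$ in $\M_0$ are witnesses in $\M$. Your proposal is missing both the ``commute with everything'' reduction and the extremality argument that replaces the (false) finite-set factorisation estimate.
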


\begin{proof}
First assume that $\M$ is $\|\cdot\|_{2,X}$-separable, so we can take $\mathcal S\coloneqq\M$ in the statement.
Also, let $\omega$ be a free ultrafilter.
Using Remark~\ref{rmk:gamma-ultrapower-commutant}, it will suffice to show that we automatically have 
\begin{equation}
\label{eq:BauerDivideUnit1}
    \tau(ap_i) = \frac1k\tau(a),\qquad a\in \mathcal M,\ \tau \in X^\omega,\ i = 1, \ldots k.
\end{equation}
By \cite[Proposition~3.9]{BBSTWW}, $\M^\omega$ is a $W^*$-bundle over $(\partial_e X)^\omega$ (in the notation defined before \cite[Proposition~3.9]{BBSTWW}), where $(\partial_e X)^\omega$ is the closure of the limit traces coming from a sequence of traces in $\partial_e X$.
Therefore, it suffices to prove \eqref{eq:BauerDivideUnit1} where $\tau$ is such a limit trace. 
In this case, since $\partial_e X$ is compact, $\tau|_{\M} \in \partial_e X$.
Define $\sigma\colon \M \to \C$ by $\sigma(x)\coloneqq \tau(xp_i)$.
This is a positive tracial functional such that $\sigma(1_{\M})=\frac1k$; since $\tau$ is an extreme point in $T(\M)$ and $\sigma\leq\tau$, we have $\sigma=\frac1k\tau$, which amounts to \eqref{eq:BauerDivideUnit1}.

In the non-separable case, we can use the fact that factoriality is separably inheritable (Theorem~\ref{thm:sep}\ref{thm:sep-factorial}) to see that for any $\|\cdot\|_{2,X}$-separable subset $S$ of $\M$, there exists a $\|\cdot\|_{2, X}$-separable factorial tracially complete subalgebra $\M_0$ of $\M$ which contains $S$ and satisfies the hypotheses of the proposition and, therefore, has property $\Gamma$.
Consequently, $\M$ has property $\Gamma$ as the projections witnessing property $\Gamma$ for $\mathcal M_0$ also witness property $\Gamma$ for $\mathcal M$.
\end{proof}

In the setting of $W^*$-bundles, abstracting the results in \cite{TWW15,Sa12,KR14}, an amenable factorial type II$_1$ tracially complete $C^*$-algebra $(\M,X)$ with $\partial_eX$ compact and finite dimensional turns out to have property $\Gamma$. This will be generalized in the forthcoming work by the third- and fifth-named authors (\cite{EvingtonSchafhauser}), to show that a factorial $W^*$-bundle over a finite dimensional space has property $\Gamma$ if and only if each fibre has property $\Gamma$ as a von Neumann algebra.\footnote{Recall that by Ozawa's \cite[Theorem 11]{Oz13}, there is no ambiguity of the meaning of a fibre $\pi_\tau(\M)$ of a $W^*$-bundle at an extreme trace $\tau$: these are automatically von Neumann algebras. Accordingly in Proposition \ref{prop:zerodimgamma}, the double commutants are superfluous.}  We show here that this holds in the zero-dimensional setting.

\begin{proposition}\label{prop:zerodimgamma}
    If $(\mathcal M, X)$ is a factorial tracially complete $C^*$-algebra such that $\partial_eX$ is compact and totally disconnected and $\pi_\tau(\mathcal M)''$ has property $\Gamma$ for each $\tau \in \partial_eX$, then $(\mathcal M, X)$ has property $\Gamma$.  In particular, if $(\mathcal M, X)$ is an amenable type $\mathrm{II}_1$ factorial tracially complete $C^*$-algebra such that $\partial_e X$ is compact and totally disconnected, then $(\mathcal M, X)$ has property $\Gamma$.
\end{proposition}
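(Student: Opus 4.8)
The plan is to reduce property $\Gamma$ for $(\mathcal M, X)$ to a gluing problem over the totally disconnected compact space $K \coloneqq \partial_e X$, using the $W^*$-bundle structure provided by Theorem~\ref{thm:TCtoWStarBundle} together with a clopen partition argument. First I would record that, since $(\mathcal M, X)$ is factorial and $X$ is a Bauer simplex (as $K = \partial_e X$ is compact), Theorem~\ref{thm:TCtoWStarBundle} endows $\mathcal M$ with the structure of a $W^*$-bundle over $K$: there is a central embedding $C(K) \subseteq Z(\mathcal M)$ and a tracial conditional expectation $E \colon \mathcal M \to C(K)$ with $\|\cdot\|_{2, X} = \|\cdot\|_{2, E}$, and the fibre at $\tau \in K$ is $\pi_\tau(\mathcal M)''$. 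By Proposition~\ref{prop:BauerDivideUnit}, it suffices to show that for every $\|\cdot\|_{2, X}$-separable $S \subseteq \mathcal M$ and every $k \in \mathbb N$, there are pairwise orthogonal projections $p_1, \dots, p_k \in \mathcal M^\omega \cap S'$ with $\tau(p_i) = 1/k$ for all $i$ and all $\tau \in X$; equivalently (by Kirchberg's $\epsilon$-test, Lemma~\ref{lem:EpsTest}, and Lemma~\ref{lem:2norm-reducedproduct}), it suffices to produce, for every finite $\mathcal F \subseteq \mathcal M$ and $\epsilon > 0$, pairwise orthogonal projections $q_1, \dots, q_k \in \mathcal M$ with $\max_{a \in \mathcal F}\|[q_i, a]\|_{2, X} < \epsilon$ and $\|\tau(q_i) - 1/k\| < \epsilon$ uniformly over $\tau \in K$ (the latter being a statement in $C(K)$ about $E(q_i)$).

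The key step is the local-to-global gluing, which exploits total disconnectedness to make partitions of unity by \emph{projections} in $C(K)$ available. For each $\tau \in K$, the fibre $\pi_\tau(\mathcal M)''$ is a type $\mathrm{II}_1$ von Neumann algebra (the von Neumann completion of $\mathcal M$ at $\tau$) with property $\Gamma$; by Dixmier's reformulation of property $\Gamma$ and a reindexing argument, there are pairwise orthogonal projections in $\pi_\tau(\mathcal M)''$, of trace $1/k$ each, approximately commuting in $\|\cdot\|_{2, \tau}$ with the images $\pi_\tau(\mathcal F)$ to within $\epsilon/2$. Using Kaplansky density (Proposition~\ref{prop:UnitBallDensity}) and continuity of the functional calculus in $\|\cdot\|_{2, \tau}$ (Lemma~\ref{lem:UnitBallDensity} and the matrix-amplification trick, applied to push approximate projections to genuine ones as in the proof of Proposition~\ref{prop:reduced-product}), I would pull these back to elements of $\mathcal M$ itself: for each $\tau \in K$ there is a collection of pairwise orthogonal projections $q_1^{(\tau)}, \dots, q_k^{(\tau)} \in \mathcal M$ with $\max_{a \in \mathcal F} \|[q_i^{(\tau)}, a]\|_{2, \tau} < \epsilon$ and $|\tau(q_i^{(\tau)}) - 1/k| < \epsilon$. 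Since $E(q_i^{(\tau)})$, the functions $\sigma \mapsto \|[q_i^{(\tau)}, a]\|_{2, \sigma}$, and $\sigma \mapsto \sigma(q_i^{(\tau)})$ all lie in $C(K)$ and depend continuously on $\sigma$, the conditions hold not only at $\tau$ but on an open neighbourhood $U_\tau \ni \tau$. By compactness and total disconnectedness, finitely many such neighbourhoods admit a refinement to a \emph{clopen} partition $K = V_1 \sqcup \cdots \sqcup V_m$ with $V_j \subseteq U_{\tau_j}$; letting $z_j \in C(K) \subseteq Z(\mathcal M)$ be the central projection corresponding to $\chi_{V_j}$, set $q_i \coloneqq \sum_{j=1}^m z_j q_i^{(\tau_j)}$. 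Because the $z_j$ are \emph{central} projections, orthogonality of the $q_i$ for fixed varying $i$ is preserved, each $q_i$ is a projection, $[q_i, a] = \sum_j z_j [q_i^{(\tau_j)}, a]$, and $\|[q_i, a]\|_{2, \sigma}^2 = \sum_j \chi_{V_j}(\sigma) \|[q_i^{(\tau_j)}, a]\|_{2, \sigma}^2 < \epsilon^2$ for all $\sigma \in K$, with the trace condition $|\sigma(q_i) - 1/k| < \epsilon$ following identically. Taking suprema over $\sigma \in K = \partial_e X$ and invoking $\|\cdot\|_{2, X} = \|\cdot\|_{2, \partial_e X}$ (Proposition~\ref{prop:co-2-norm}) gives the required global approximate partition, completing the first claim; the ``in particular'' follows because, by Connes' theorem applied fibrewise (each $\pi_\tau(\mathcal M)''$ for $\tau \in \partial_e X$ is a semidiscrete type $\mathrm{II}_1$ factor via Theorem~\ref{thm:amenable} and factoriality, hence McDuff, hence has property $\Gamma$).

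The main obstacle I anticipate is not the gluing itself — total disconnectedness makes that essentially formal once central clopen projections are in hand — but rather the passage from fibrewise approximate projections to genuine \emph{elements of $\mathcal M$} (as opposed to elements of the fibre $\pi_\tau(\mathcal M)''$) that still approximately commute with $\mathcal F$ in $\|\cdot\|_{2, \tau}$ and have trace close to $1/k$, while simultaneously tracking the continuous dependence of all the relevant quantities on $\sigma \in K$ so that the neighbourhoods $U_\tau$ genuinely exist. This requires care: one must use that $\mathcal M$ is a $W^*$-bundle so that the local von Neumann data at $\tau$ is ``visible'' within $\mathcal M$ up to $\|\cdot\|_{2, \tau}$-small error, then upgrade approximate projections to exact projections inside $\mathcal M$ (not just inside the fibre) via functional calculus, controlling the resulting error uniformly over a neighbourhood. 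A clean way to organise this is to first do everything inside a single fibre, obtaining the projections and the strict inequalities there, and then observe that each of finitely many inequalities, being a statement about a fixed element of $\mathcal M$ evaluated through the continuous maps $\sigma \mapsto \|\cdot\|_{2,\sigma}$ on $C(K)$, is an open condition on $K$ — so no separate uniformity argument is needed beyond continuity in $C(K)$, which is automatic in a $W^*$-bundle.
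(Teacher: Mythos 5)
Your overall architecture is exactly that of the paper's proof: view $\mathcal M$ as a $W^*$-bundle over $K=\partial_e X$ via Theorem~\ref{thm:TCtoWStarBundle}, reduce property $\Gamma$ to Proposition~\ref{prop:BauerDivideUnit}, produce local data at each fibre via property $\Gamma$ of $\pi_\tau(\mathcal M)''$ and Kaplansky density, note that the relevant conditions are open on $K$ because $\sigma\mapsto\sigma(x^*x)=E(x^*x)(\sigma)$ is continuous, refine a finite cover to a clopen partition using total disconnectedness, and glue with the central projections $\chi_{V_j}\in C(K)\subseteq Z(\mathcal M)$. The ``in particular'' clause is also handled the same way (Theorem~\ref{thm:introamenable}, factoriality, and Connes' theorem fibrewise).

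However, one step as you have written it would fail: you ask for genuine pairwise orthogonal \emph{projections} $q_1^{(\tau)},\dots,q_k^{(\tau)}$ in $\mathcal M$ itself that are $\|\cdot\|_{2,\tau}$-close to the fibre projections, obtained by ``upgrading approximate projections to exact projections inside $\mathcal M$ via functional calculus.'' This cannot be done by the means you cite. A positive contraction $p\in\mathcal M$ with $\|p-p^2\|_{2,\tau}$ small can have operator-norm spectrum equal to all of $[0,1]$, so continuous functional calculus never yields a nontrivial projection close to $p$; the Borel functional calculus does, but its output lives in the fibre $\pi_\tau(\mathcal M)''$, not in $\mathcal M$. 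Lemma~\ref{lem:UnitBallDensity} only produces (self-adjoint) contractions, and Lemma~\ref{lem:orthogonal-projections} only operates inside finite dimensional subalgebras. Indeed, producing exact projections in $\mathcal M$ with prescribed tracial behaviour is precisely Theorem~\ref{thm:existence-projections}, which requires CPoU and is not available here. The fix is what the paper does: drop exactness at the local stage and work throughout with positive contractions $p_{i,j}\in\mathcal M$ satisfying the \emph{approximate} relations $\|p_{i,j}-p_{i,j}^2\|_{2,\sigma}<\epsilon$, $\|p_{i,j}p_{i',j}\|_{2,\sigma}<\epsilon$, $\|[p_{i,j},a]\|_{2,\sigma}<\epsilon$, and $|\sigma(p_{i,j})-1/k|<\epsilon$ on $U_j$. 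Your clopen central gluing preserves all of these verbatim, and the passage to $\mathcal M^\omega$ (via Kirchberg's $\epsilon$-test and Lemma~\ref{lem:2norm-reducedproduct}, where $\limsup$-zero defects become exact relations) then delivers the exact orthogonal projections of trace $1/k$ in $\mathcal M^\omega\cap S'$ that Proposition~\ref{prop:BauerDivideUnit} requires. With that single weakening your argument is complete and coincides with the paper's.
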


\begin{proof}
   Note that the second statement follows from the first.  Indeed, if $(\mathcal M, X)$ is as in the second statement, then for all $\tau \in \partial_e X$, $\pi_\tau(\mathcal M)''$ is semidiscrete by Theorem~\ref{thm:introamenable} and is a II$_1$ factor by Proposition~\ref{prop:factorial}.  It then follows from \cite[Corollary 2.2]{Co76} that $\pi_\tau(\mathcal M)''$ has property $\Gamma$.

   We now prove the first statement.  As $\partial_e X$ is compact, $\mathcal M$ can be viewed as a $W^*$-bundle over $K \coloneqq \partial_e X$ by Theorem~\ref{thm:TCtoWStarBundle}.  Let $C(K) \subseteq Z(\mathcal M)$ be the corresponding inclusion.  Fix a finite set $\mathcal F \subset \mathcal M$, $\epsilon > 0$, and $k \in \mathbb N$.  By Proposition~\ref{prop:BauerDivideUnit}, it suffices to show that there are positive contractions $p_1, \ldots, p_k \in \mathcal M$ such that
   \begin{enumerate}
       \item\label{zdb1} $\|[p_i, a]\|_{2, X} < \epsilon$ for all $a \in \mathcal F$ and $i = 1, \ldots, k$,
       \item\label{zdb2} $\|p_i - p_i^2\|_{2, X} < \epsilon$ for all $i, \ldots, k$,
       \item\label{zdb3} $\|p_i p_{i'}\|_{2, X} < \epsilon$ for $i, i' = 1,\ldots,k$ with $i \neq i'$, and
       \item\label{zdb4} $|\tau(p_i) - 1/k| < \epsilon$ for all $\tau \in X$ and $i = 1, \ldots, k$.
   \end{enumerate}

   For each $\tau \in K$, as the II$_1$ factor $\pi_\tau(\mathcal M)''$ satisfies property $\Gamma$, there are mutually orthogonal projections $\bar p_{1, \tau}, \ldots, \bar p_{k, ,\tau} \in \pi_\tau(\mathcal M)''$ with trace $1/k$ such that
   \begin{equation}
       \|[\bar p_{i, \tau}, \pi_\tau(a)]\|_{2, \tau} < \epsilon, \qquad a \in \mathcal F,\ i = 1, \ldots, k.
   \end{equation}
   Use Kaplansky's density theorem to obtain a $\|\cdot\|_{2,\tau}$ approximation of each $\bar p_{\tau, i}$ by a positive contraction in $\mathcal M$.
   Then, combining this with the compactness of $K$, there is a finite open cover $U_1, \ldots, U_n$ of $K$ and positive contractions $p_{i, j} \in \mathcal M$ for $i =1, \ldots, k$ and $j = 1, \ldots, n$ such that 
    \begin{enumerate}
    \renewcommand{\labelenumi}{\theenumi}
    \renewcommand{\theenumi}{\rm{(\roman{enumi}$'$)}}
    
       \item\label{lzdb1} $\|[p_{i, j}, a]\|_{2, U_j} < \epsilon$ for all $a \in \mathcal F$, $i = 1, \ldots, k$, and $j = 1, \ldots, n$,
       \item\label{lzdb2} $\|p_{i, j} - p_{i, j}^2\|_{2, U_j} < \epsilon$ for all $i, \ldots, k$ and $j= 1, \ldots, n$,
       \item\label{lzdb3} $\|p_{i,j} p_{i',j}\|_{2, U_j} < \epsilon$ for $i, i' = 1,\ldots,k$ with $i \neq i'$ and $j = 1, \ldots, n$, and
       \item\label{lzdb4} $|\tau(p_{i,j}) - 1/k| < \epsilon$ for all $\tau \in U_j$ and $i = 1, \ldots, k$.
   \end{enumerate}

   As $K$ is totally disconnected, after refining the open cover, we may assume that the sets $U_1, \ldots, U_n$ form a clopen partition of $K$.  Then the elements
   \begin{equation}
        p_i \coloneqq \sum_{j=1}^n \chi_{U_j} p_{i, j} \in \mathcal M, \qquad j = 1, \ldots n,
   \end{equation}
   are positive contractions satisfying \ref{zdb1}--\ref{zdb4}.
\end{proof}

It remains open whether the approximately central division of the unit as in Proposition \ref{prop:BauerDivideUnit} implies property $\Gamma$ outside the setting of Bauer simplices (\cite[Question 3.5]{CETW}). This question is just as valid in the tracially complete framework.
\begin{question}
    Does Proposition \ref{prop:BauerDivideUnit} hold for all factorial tracially complete $C^*$-algebras (i.e.\ without assuming that $X$ is a Bauer simplex)?
\end{question}

Immediately following \cite[Proposition 5.10]{CETW}, the second-, third-, and last two named authors asserted that all the Villadsen algebras of the first type -- namely the examples from \cite{Vi98}, which were used by Toms to give striking counterexamples to Elliott's original classification conjecture (\cite{To08}) -- have property $\Gamma$. These algebras are so-called diagonal AH algebras (i.e.\ inductive limits of homogeneous algebras of a particularly nice form), and they prove a fertile testing ground for comparing regularity properties of very different natures, with the analysis in \cite{TW09} leading to an early version of the Toms--Winter conjecture (\cite[Remark 3.5]{TW09}) and its positive solution for this class of algebras.  Uniform tracial completions of diagonal AH algebras satisfy the central divisibility-of-the-unit hypothesis of Proposition \ref{prop:BauerDivideUnit} (this is \cite[Proposition 5.10]{CETW}), and the assertion that Villadsen algebras of the first type have property $\Gamma$ was based on \cite[Section 8]{TW09} (where it is suggested, based on computations in \cite[Theorem 4.1]{To09}, that these have Bauer simplices of traces).\footnote{In \cite{CETW}, the second-, third-, and last two named authors incorrectly referenced the non-existent \cite[Theorem~4.1]{To08} rather than \cite[Theorem 4.1]{To09}.} However, recent results of Elliott, Li, and Niu (\cite[Theorem 4.5]{ELN:preprint}) show that, in fact, the trace space of Villadsen algebras of the first type is the Poulsen simplex  (whenever the algebra is not $\mathcal Z$-stable) -- maximally far from being Bauer!  Accordingly, the argument in \cite{CETW} is not valid, and it does not seem straightforward to determine whether the Villadsen type I algebras have property $\Gamma$.

\begin{question}\label{Q:VA}
Do the uniform tracial completions of the (non-$\Z$-stable) Villadsen algebras of the first type have property $\Gamma$?
\end{question}

The connecting maps in the inductive limit construction of a Villadsen type I algebra involve a combination of coordinate projections and point evaluations, the latter of which ensure simplicity of the inductive limit.  In the case of a non-$\Z$-stable Villadsen type I algebra $A$, the point evaluations are relatively sparse (see \cite[Theorem 3.4 and Lemma 5.1]{TW09}). As such, they do not affect the structure of the tracial completion of $A$.  For this reason, the heart of Question \ref{Q:VA} lies in determining whether
$\Gamma$ holds for the tracially complete inductive limit of the trivial $W^*$-bundles
\begin{equation}
    C([0,1]) \to C([0,1]^2,M_2) \to C([0,1]^4, M_4) \to \cdots, 
\end{equation}
where each stage is equipped with all its traces, and the connecting maps are given as the direct sum of the two coordinate projections.

\section{Complemented partitions of unity}\label{sec:CPoU}

This section formally introduces the notion of CPoU for factorial tracially complete $C^*$-algebras as discussed in Section~\ref{sec:intro:ltg2} and proves Theorem~\ref{introthmgammaimpliescpou} (property $\Gamma$ implies CPoU).  The definition of CPoU is given in Section~\ref{sec:cpou-def}.  Some initial examples of tracially complete $C^*$-algebras with CPoU are set out in Section~\ref{sec:cpou-examples}, including the special case of finite von Neumann algebras which will be used to prove Theorem~\ref{introthmgammaimpliescpou}, with permanence properties discussed in Section~\ref{sec:cpou-permanence}.  Section~\ref{sec:CPoU-proof} contains the proof of Theorem~\ref{introthmgammaimpliescpou}.

\subsection{Formal definition and its reformulations}\label{sec:cpou-def}

The following definition of CPoU extends the one introduced in \cite{CETWW} from the $C^*$-setting to factorial tracial complete $C^*$-algebras.  More precisely, a separable $C^*$-algebra  $A$ with $T(A)$ compact has CPoU in the sense of \cite{CETWW} if and only if its tracial completion with respect to $T(A)$ has CPoU in the sense of Definition~\ref{dfn:utcCPOU} below; this is immediate from the definition and Proposition~\ref{rem:CStarVsTCreducedProducts}, which identifies the uniform 2-norm central sequence algebra of $A$ with that of its uniform tracial completion.\footnote{In \cite{CETWW}, CPoU was only defined in the separable setting, but we will not make this restriction here.}  Recall our convention that (unless specified otherwise) $\omega$ is a free filter on $\mathbb N$. It will follow from the local characterisation in Proposition~\ref{CPoU:Ultra:FiniteSet} that Definition~\ref{dfn:utcCPOU} does not depend on the choice of $\omega$.

\begin{definition}[{cf.\ \cite[Definition~3.1]{CETWW}}]\label{dfn:utcCPOU}
	Let $(\M,X)$ be a factorial tracially complete $C^*$-algebra. We say that $(\M,X)$ has \emph{complemented partitions of unity} (CPoU) if for any $\|\cdot\|_{2,X}$-separable subset $S \subseteq \M$, any family $a_1,\dots,a_k$ of positive elements in $\M$, and any scalar
	\begin{equation}\label{eq:CPoUTraceIneq1}
		\delta>\sup_{\tau \in X} \min_{1 \leq i \leq k}\tau(a_i),
	\end{equation}
	there exist orthogonal projections $p_1,\dots,p_k\in \M^\omega\cap S'$ summing to $1_{\M^\omega}$ such that
	\begin{equation}\label{eq:CPoUTraceIneq2}
		\tau(a_ip_i)\leq \delta\tau(p_i),\qquad \tau\in X^\omega,\ i=1,\dots,k.
	\end{equation}
\end{definition}

We note that when $\M$ is $\|\cdot\|_{2,X}$-separable, it suffices to take $S \coloneqq \M$. 
As with property $\Gamma$, we have have chosen to restrict the definition to the setting of factorial tracially complete $C^*$-algebras since it is not clear if this definition is appropriate outside of the factorial setting.

The following proposition provides equivalent formulations of CPoU; \ref{item:CPOU-local} avoids the language of reduced products and \ref{item:CPOU++} is useful in many applications.

\begin{proposition}\label{CPoU:Ultra:FiniteSet}
	Let $(\M, X)$ be a factorial tracially complete $C^*$-algebra. Then the following are equivalent:
	\begin{enumerate}[wide, labelwidth=0pt, labelindent=0pt]
		\item\label{item:CPOU} $(\M,X)$ has CPoU;
		\item\label{item:CPOU-local} 
		for every finite set $\mathcal F \subseteq \M$, $\epsilon>0$, $a_1,\dots,a_k \in \M_+$, and 
	\begin{equation}
		\delta>\sup_{\tau\in X}\min_{1 \leq i \leq k}\tau(a_i),
	\end{equation}
	there exist orthogonal positive contractions $e_1,\dots,e_k \in \M$ such that
	\begin{equation}\begin{aligned}\label{eqn:CPoU-local-form}
		\Big\|\sum_{j=1}^ke_j - 1_\M \Big\|_{2,X} &< \epsilon, && \\
		\max_{x \in \mathcal F} \|[e_i,x]\|_{2,X} &<\epsilon,   && i=1,\dots,k, \text{ and} \\
		\tau(a_ie_i) - \delta\tau(e_i) &< \epsilon,  && \tau \in X,\ i=1,\dots,k;
	\end{aligned}\end{equation}
	\item\label{item:CPOU++} 
		for every $\|\cdot\|_{2,X}$-separable subset $S \subseteq \M^\omega$, $a_1,\dots,a_k \in (\M^\omega)_+$, and
	\begin{equation}\label{eq:CPoU++TraceIneq1}
		\delta > \sup_{\tau \in X^\omega} \min_{1 \leq i \leq k}\tau(a_i),
	\end{equation}
	there exist orthogonal projections $p_1,\dots,p_k\in \M^\omega\cap S'$ summing to $1_{\M^\omega}$ such that
	\begin{equation}\label{eq:CPoU++TraceIneq2}
		\tau(a_ip_i)\leq \delta\tau(p_i),\qquad \tau\in X^\omega,\ i=1,\dots,k.
	\end{equation}
	\end{enumerate}
\end{proposition}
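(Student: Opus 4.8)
The plan is to prove the cycle of implications $\ref{item:CPOU}\Rightarrow\ref{item:CPOU-local}\Rightarrow\ref{item:CPOU++}\Rightarrow\ref{item:CPOU}$. The last implication is trivial since \ref{item:CPOU} is the special case of \ref{item:CPOU++} where the positive elements $a_1,\dots,a_k$ and the set $S$ are taken to lie in $\M\subseteq\M^\omega$ (and one uses that $\sup_{\tau\in X}\min_i\tau(a_i)=\sup_{\tau\in X^\omega}\min_i\tau(a_i)$ for $a_i\in\M$, which holds because limit traces are weak$^*$-dense in $X^\omega$ and $\tau\mapsto\min_i\tau(a_i)$ is weak$^*$-continuous and concave). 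So the content is in the first two implications.

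For $\ref{item:CPOU}\Rightarrow\ref{item:CPOU-local}$: given $\mathcal F$, $\epsilon$, $a_1,\dots,a_k$, and $\delta$, apply CPoU with $S\coloneqq\mathcal F$ to obtain orthogonal projections $p_1,\dots,p_k\in\M^\omega\cap\mathcal F'$ summing to $1_{\M^\omega}$ with $\tau(a_ip_i)\leq\delta\tau(p_i)$ for all $\tau\in X^\omega$, $i=1,\dots,k$. Lift each $p_i$ to a sequence of positive contractions $(p_{i,n})_{n=1}^\infty$ in $\M$; by a standard functional calculus perturbation one can moreover arrange that the lifts are pairwise orthogonal for each fixed $n$ (apply the argument lifting orthogonal projections, e.g. as in \cite[Lemma~1.6]{CETWW} or by the usual $C^*$-trick: replace $p_{i,n}$ by $g(p_{i,n})$ for suitable continuous functions and then orthogonalise). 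The commutator condition $\|[p_{i,n},x]\|_{2,X}\to_\omega 0$ for $x\in\mathcal F$, the condition $\|\sum_j p_{j,n}-1_\M\|_{2,X}\to_\omega 0$, and the trace condition are each expressed via Lemma~\ref{lem:2norm-reducedproduct} and the fact that $\sup_{\tau\in X^\omega}$ of an affine function is computed along limit traces; the trace condition requires a small argument since $X^\omega$ is the closed convex hull of limit traces and $\tau\mapsto\tau(a_ip_i)-\delta\tau(p_i)$ is affine and weak$^*$-continuous on $X^\omega$, so its supremum is attained on the closure of the limit traces, which lets one transfer the estimate to a pointwise-over-$n$ statement. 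Then for a suitable index $n$ in the filter set where all finitely many estimates hold up to $\epsilon$, set $e_i\coloneqq p_{i,n}$.

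For $\ref{item:CPOU-local}\Rightarrow\ref{item:CPOU++}$: this is the standard reindexing/diagonal-sequence argument. Let $S\subseteq\M^\omega$ be $\|\cdot\|_{2,X}$-separable with countable dense subset $\{s^{(i)}:i\in\N\}$, lift each $s^{(i)}$ to a sequence $(s^{(i)}_n)_{n=1}^\infty$ in $\M$, and lift each $a_j\in(\M^\omega)_+$ to a sequence $(a_{j,n})_{n=1}^\infty$ of positive elements. Pick $\delta'$ with $\sup_{\tau\in X^\omega}\min_i\tau(a_i)<\delta'<\delta$. Using Lemma~\ref{lem:2norm-reducedproduct} and continuity of $\tau\mapsto\min_i\tau(a_{i,n})$ one checks that for $n$ in a filter set, $\sup_{\tau\in X}\min_i\tau(a_{i,n})<\delta'$. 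For each $n$, apply \ref{item:CPOU-local} with $\mathcal F_n\coloneqq\{s^{(i)}_n:i\leq n\}$, $\epsilon_n\coloneqq 2^{-n}$, the elements $a_{1,n},\dots,a_{k,n}$, and the scalar $\delta'$, producing orthogonal positive contractions $e_{1,n},\dots,e_{k,n}\in\M$ satisfying the three estimates in \eqref{eqn:CPoU-local-form}. The sequences $(e_{i,n})_{n=1}^\infty$ define orthogonal positive contractions $e_i\in\M^\omega\cap S'$ summing to $1_{\M^\omega}$; orthogonality and the partition-of-unity property force them to be projections (as in the proof of Proposition~\ref{prop:McDuff}\ref{item:v-reduced-product}$\Rightarrow$\ref{item:approx-central-matrix}, orthogonal positive contractions summing to the identity are projections). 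Finally the trace estimate gives $\tau(a_ie_i)\leq\delta'\tau(e_i)\leq\delta\tau(e_i)$ for all limit traces $\tau$, hence for all $\tau\in X^\omega$ by taking closed convex hulls and using affineness and weak$^*$-continuity. Setting $p_i\coloneqq e_i$ completes the proof.

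The main obstacle is the careful bookkeeping around traces: since $X^\omega$ is defined as the \emph{closed convex hull} of the limit traces rather than the set of limit traces itself, one must repeatedly observe that the relevant quantities ($\tau(a_ip_i)-\delta\tau(p_i)$, $\|\sum_j p_j - 1\|_{2,\tau}^2$, $\|[p_i,x]\|_{2,\tau}^2$, $\min_i\tau(a_i)$) are affine or concave and weak$^*$-continuous on $X^\omega$, so that suprema and the governing inequalities are controlled by the limit traces alone; this is exactly where the hypothesis that $X$ is a closed face (factoriality) is not needed but the structural results on $X^\omega$ from Section~\ref{sec:reduced-product}, together with Lemma~\ref{lem:2norm-reducedproduct}, do the work. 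The orthogonalisation of the lifts is a minor technical point handled by a routine functional-calculus argument.
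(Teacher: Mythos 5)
Your proposal is correct and follows essentially the same route as the paper: the cycle \ref{item:CPOU}$\Rightarrow$\ref{item:CPOU-local}$\Rightarrow$\ref{item:CPOU++}$\Rightarrow$\ref{item:CPOU}, with orthogonal positive contraction lifts of the projections (the paper cites the projectivity result \cite[Proposition~2.6]{AP77} where you invoke a functional-calculus orthogonalisation, but these are interchangeable), a diagonal/reindexing argument over a countable dense subset of $S$ for \ref{item:CPOU-local}$\Rightarrow$\ref{item:CPOU++}, and the observation that the trace inequalities need only be checked on limit traces since the relevant functionals are affine and weak$^*$-continuous on the closed convex hull $\sum^\omega X_n$. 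The only cosmetic difference is your insertion of an intermediate $\delta'$ in \ref{item:CPOU-local}$\Rightarrow$\ref{item:CPOU++}, where the paper instead lets the errors $\epsilon_j\to 0$ with $\delta$ fixed; both are fine.
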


\begin{proof}
	\ref{item:CPOU}$\Rightarrow$\ref{item:CPOU-local}:
	Let $\epsilon > 0$, let $a_1,\dots,a_k \in \M_+$, let $\mathcal{F} \subseteq \M$ be a finite subset, and  let $\delta > 0$ be such that \eqref{eq:CPoUTraceIneq1} holds. 
	Since $(\M, X)$ has CPoU, we obtain orthogonal projections $p_1,\ldots,p_k \in \M^\omega \cap \mathcal F'$ summing to $1_{\mathcal M^\omega}$ such that \eqref{eq:CPoUTraceIneq2} holds. We can lift $p_1,\ldots,p_k \in \M^\omega$ to orthogonal positive contractions $e^{(1)},\ldots,e^{(k)} \in \ell^\infty(\M)$ by \cite[Proposition~2.6]{AP77} (this is a special case of projectivity of cones over finite dimensional $C^*$-algebras; see \cite[Corollary~3.8]{LP98}). 
	Write $(e^{(i)}_j)_{j=1}^\infty$ for a sequence of positive contractions representing $e^{(i)}$ for $i=1,\ldots,k$. 
	Note that for each $j \in \N$, $e^{(1)}_j,\ldots,e^{(k)}_j$ are orthogonal positive contractions. 
	Since the $p_i$ sum to $1_{\M^\omega}$ and commute with $\mathcal{F}$, we have
	\begin{equation}
		\limsup_{j\to \omega} \big\|e^{(1)}_j + \cdots +e^{(k)}_j - 1_{\M}\big\|_{2,X}= 0, 
	\end{equation}
	and
	\begin{equation}
		\limsup_{j\to\omega} \|[e^{(i)}_j,x]\|_{2,X} = 0, \quad x \in \mathcal{F},\ i=1,\ldots,k.
	\end{equation}
	Moreover, we must have
	\begin{equation}
		\limsup_{j\to\omega}\sup_{\tau \in X} \big(\tau\big(a_ie^{(i)}_j\big) - \delta\tau\big(e^{(i)}_j\big)\big) \leq 0, \quad  i = 1, \ldots, k,
	\end{equation}
	as otherwise we could choose a sequence of traces $(\tau_j)_{j=1}^\infty$ in $X$ and an ultrafilter $\omega' \supseteq \omega$ such that the corresponding limit trace $\tau$ does not satisfy $\tau(a_ip_i) \leq \delta \tau(p_i)$.
	Therefore, taking $e_i \coloneqq e^{(i)}_j$ for a suitable choice of $j$, the orthogonal positive contractions $e_1,\ldots,e_k$ will satisfy
	\eqref{eqn:CPoU-local-form}.
	
\ref{item:CPOU-local}$\Rightarrow$\ref{item:CPOU++}:
Let $S \subseteq \M^\omega$ be a $\|\cdot\|_{2,X^\omega}$-separable subset and fix a countable dense subset $\{s^{(i)}:i\in\N\}$ of $S$. For each $i\in\N$, let $(s^{(i)}_j)_{j=1}^\infty$ be a sequence representing $s^{(i)}$.
Let $a_1,\ldots,a_k \in \M^\omega$ be positive contractions in $\M^\omega$. Let $(a^{(i)}_j)_{j=1}^\infty$ be a sequence of positive contractions representing $a_i$ for $i=1,\ldots,k$.
Let $\delta > 0$ satisfy \eqref{eq:CPoU++TraceIneq1}.
Then we have
\begin{equation}
	\limsup_{j\to\omega} \sup_{\tau \in X} \min_{1\leq i \leq k} \tau\big(a^{(i)}_j\big) < \delta.
\end{equation}
Therefore, there exists a set $J \in \omega$ such that for all $j \in J$, we have
\begin{equation}
	\sup_{\tau \in X} \min_{1\leq i \leq k} \tau\big(a^{(i)}_j\big) < \delta.
\end{equation}
For each $j \in \N$, set $\epsilon_j \coloneqq 2^{-j}$ and $\mathcal{F}_j \coloneqq \{ s^{(i)}_j: i \leq j\}$. Applying \ref{item:CPOU-local} for each $j \in J$, we obtain orthogonal positive contractions $e^{(1)}_j,\ldots,e^{(k)}_j$ such that
\begin{equation}\begin{aligned}\label{eqns:local-j}
		\Big\|\sum_{i=1}^k e^{(i)}_j - 1_\M \Big\|_{2,X} &< \epsilon_j, &&    \\
		\max_{x \in \mathcal F_j} \big\|\big[e^{(i)}_j,x\big]\big\|_{2,X} &<\epsilon_j, && i=1,\dots,k,\ x\in \mathcal F_j,\text{ and}  \\
		\tau\big(a^{(i)}_je^{(i)}_j\big) - \delta\tau\big(e^{(i)}_j\big) &< \epsilon_j, && \tau \in X,\ i=1,\dots,k.
\end{aligned}\end{equation}
For $j \not\in J$, we may choose $e^{(1)}_j,\ldots,e^{(k)}_j$ arbitrarily. Let $p_i \in \M^\omega$ be the element represented by $\big(e^{(i)}_j\big)_{j=1}^\infty$. Then \eqref{eqns:local-j} ensures that the $p_i$ sum to $1_{\M^\omega}$, commute with $S$, and satisfy \eqref{eq:CPoU++TraceIneq2}. Since $p_1,\ldots,p_k$ are orthogonal positive contractions summing to the identity, they are in fact projections.

\ref{item:CPOU++}$\Rightarrow$\ref{item:CPOU}: If $S \subseteq \M$ is $\|\cdot\|_{2,X}$-separable, then viewed as a subset of $\M^\omega$, it is $\|\cdot\|_{2,X^\omega}$-separable. If $a_1,\ldots,a_k \in \M_+$, we have \begin{equation}
\sup_{\tau \in X^\omega}\min_{1\leq i \leq k} \tau(a_i) = \sup_{\tau \in X}\min_{1\leq i \leq k} \tau(a_i).
\end{equation} 
Therefore, \ref{item:CPOU} is a special case of \ref{item:CPOU++}.
\end{proof}

\subsection{First examples and non-examples}\label{sec:cpou-examples}

The main source of examples of factorial tracially complete $C^*$-algebras with CPoU come from Theorem~\ref{introthmgammaimpliescpou}, which, as a special case, implies the uniform tracial completion of a separable $\mathcal Z$-stable $C^*$-algebra with respect to a compact face of traces will have CPoU.  In this subsection, we shall investigate some more elementary examples of tracially complete $C^*$-algebras $(\M,X)$ with CPoU in the case when $X$ is a Bauer simplex.  We will also prove a special case of Theorem~\ref{introthmgammaimpliescpou}, showing that  $\Gamma$ implies CPoU in the case of $W^*$-bundles (Proposition~\ref{prop:Gamma-bundle}), which is easier and more conceptual than the general result taken up in Section~\ref{sec:CPoU-proof}. We also provide examples of type II$_1$ factorial tracially complete $C^*$-algebras without CPoU in Example~\ref{eg:free-group-bundle}.

In the setting of $W^*$-bundles with totally disconnected base space, a strong form a CPoU holds: one can take the partition of unity to be central as opposed to approximately central. Moreover, as will be important later, we can additionally work under any given positive element $q \in \M$ that is nonzero on all traces (this technicality will be used in Section~\ref{sec:CPoU-proof}).

\begin{proposition}
	\label{prop:ZeroDimBoundary}
	Let $(\M,X)$ be a factorial tracially complete $C^*$-algebra such that $\partial_e X$ is compact and totally disconnected. Let $q \in \M_+$ with $\tau(q) > 0$ for all $\tau \in X$. Let $a_1, \ldots, a_k \in \M_+$ with
	\begin{equation}
		 \sup_{\tau \in X} \min_{1 \leq i \leq k} \tau(a_iq) < \delta \tau(q).
	\end{equation}
	Then there are projections $p_1, \ldots, p_k \in Z(\M)$ summing to $1_\mathcal M$ such that $\tau(a_i p_i q) \leq \delta \tau(p_i q)$ for all $\tau \in X$ and $i = 1,\ldots, k$.  In particular, $(\M,X)$ has CPoU.
\end{proposition}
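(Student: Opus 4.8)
Since $\partial_e X$ is compact, Theorem~\ref{thm:TCtoWStarBundle} gives an embedding $C(K) \subseteq Z(\M)$ with $K = \partial_e X$, and under this identification $X \cong \mathrm{Prob}(K)$ via the barycentre map, so that elements $\tau \in X$ correspond to Radon probability measures $\mu_\tau$ on $K$ with point masses at the elements of $K$. The plan is to work directly on $K$: for each point $\sigma \in K$, the hypothesis $\sup_{\tau \in X} \min_i \tau(a_i q) < \delta \tau(q)$ applied to the point mass at $\sigma$ gives $\min_i \sigma(a_i q) < \delta \sigma(q)$, so there is some index $i = i(\sigma)$ with $\sigma(a_{i(\sigma)} q) < \delta \sigma(q)$. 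The functions $\sigma \mapsto \sigma(a_i q)$ and $\sigma \mapsto \sigma(q)$ are continuous on $K$ (this is the content of the $W^*$-bundle structure: $E(\cdot)(\sigma) = \sigma(\cdot)$ defines the conditional expectation onto $C(K)$, so $\sigma \mapsto \sigma(b)$ is continuous for each $b \in \M$). Hence for each $\sigma$ there is an open neighbourhood $U_\sigma$ of $\sigma$ on which $\sigma'(a_{i(\sigma)} q) < \delta\, \sigma'(q)$ for all $\sigma' \in U_\sigma$.

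Next, I would use compactness of $K$ to extract a finite subcover $U_{\sigma_1}, \ldots, U_{\sigma_n}$, and then use total disconnectedness of $K$ to refine this to a \emph{clopen partition} $V_1, \ldots, V_n$ of $K$ with $V_j \subseteq U_{\sigma_j}$ for each $j$. For each $j$ let $i_j \coloneqq i(\sigma_j)$. Now set, for each $i = 1, \ldots, k$,
\begin{equation}
	p_i \coloneqq \sum_{\{j \,:\, i_j = i\}} \chi_{V_j} \in C(K) \subseteq Z(\M),
\end{equation}
with the convention that an empty sum is $0$. Since the $V_j$ partition $K$ into clopen sets, each $\chi_{V_j}$ is a central projection, the $p_i$ are mutually orthogonal central projections, and $\sum_{i=1}^k p_i = \sum_{j=1}^n \chi_{V_j} = 1_\M$. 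This handles the structural requirements; what remains is the trace estimate.

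For the trace estimate, fix $\tau \in X$ and $i \in \{1, \ldots, k\}$. Using the integral formula from Theorem~\ref{thm:TCtoWStarBundle} (i.e.\ $\tau(f b) = \int_K f(\sigma')\sigma'(b)\, \mathrm{d}\mu_\tau(\sigma')$ for $f \in C(K)$ and $b \in \M$, together with the fact that $\chi_{V_j}^2 = \chi_{V_j}$ so $\chi_{V_j} a_i q \chi_{V_j}$ has $\tau$-value $\int_{V_j} \sigma'(a_i q)\,\mathrm{d}\mu_\tau$), I compute
\begin{equation}
	\tau(a_i p_i q) = \sum_{\{j : i_j = i\}} \int_{V_j} \sigma'(a_i q) \, \mathrm{d}\mu_\tau(\sigma') \leq \sum_{\{j : i_j = i\}} \int_{V_j} \delta\, \sigma'(q) \, \mathrm{d}\mu_\tau(\sigma') = \delta\, \tau(p_i q),
\end{equation}
where the inequality uses that $V_j \subseteq U_{\sigma_j}$ and $i = i_j$, so $\sigma'(a_i q) < \delta\, \sigma'(q)$ on $V_j$. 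This gives exactly $\tau(a_i p_i q) \leq \delta\, \tau(p_i q)$ for all $\tau \in X$ and all $i$, as required.

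Finally, for the ``in particular'' clause, take $q \coloneqq 1_\M$ (which satisfies $\tau(q) = 1 > 0$ for all $\tau$): given a $\|\cdot\|_{2,X}$-separable $S \subseteq \M$, positive $a_1, \ldots, a_k \in \M$, and $\delta > \sup_{\tau \in X} \min_i \tau(a_i)$, the projections $p_1, \ldots, p_k$ produced above lie in $Z(\M)$, hence in $\M^\omega \cap S'$ after applying $\iota_{(\M,X)}$, and satisfy $\tau(a_i p_i) \leq \delta\tau(p_i)$ for all $\tau \in X$, and therefore for all $\tau \in X^\omega$ since the $p_i$ and $a_i$ come from $\M$ and limit traces restrict to elements of $X$ on $\M$ (the bound $\tau(a_i p_i) \le \delta \tau(p_i)$ is an affine condition in the limit trace, preserved under the closed convex hull). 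Thus Definition~\ref{dfn:utcCPOU} is verified, so $(\M, X)$ has CPoU.

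I do not anticipate a serious obstacle here; the only points requiring a little care are: (i) justifying the continuity of $\sigma \mapsto \sigma(b)$ on $K$ and the integral representation of $\tau(fbq)$, both of which are immediate from Theorem~\ref{thm:TCtoWStarBundle}; and (ii) passing from the finite open subcover to a clopen partition refining it, which is the standard argument using that a compact totally disconnected Hausdorff space has a basis of clopen sets (so each $U_{\sigma_j}$ is a union of clopen sets, finitely many of which cover $K$ by compactness, and these can be disjointified into a clopen partition, each piece landing inside some $U_{\sigma_j}$).
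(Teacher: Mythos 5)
Your proof is correct and follows essentially the same route as the paper's: cover $\partial_e X$ by the open sets where $\sigma(a_iq)<\delta\sigma(q)$, refine to a clopen partition using total disconnectedness, take the corresponding characteristic functions in $C(\partial_e X)\subseteq Z(\M)$ via Theorem~\ref{thm:TCtoWStarBundle}, and verify the trace inequality. The only cosmetic differences are that the paper covers $\partial_e X$ directly by the $k$ sets $U_i=\{\tau:\tau(a_iq)<\delta\tau(q)\}$ rather than by point neighbourhoods, and passes from $\partial_e X$ to $X$ via the Krein--Milman theorem rather than by integrating against $\mu_\tau$ — both steps being equivalent uses of \eqref{eq:central-embedding}.
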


\begin{proof}
	It suffices to prove the first part to see CPoU for $(\M,X)$, because we may set $q\coloneqq 1_{\mathcal M}$ and, for any $\|\cdot\|_{2, X}$-separable subset $S \subseteq \M$, we have $Z(\M) \subseteq \M^\omega \cap S'$.
		
	For $i=1,\ldots,k$, define
	\begin{equation} 
		U_i\coloneqq \{\tau \in \partial_e X: \tau(a_iq) < \delta \tau(q) \}, 
	\end{equation}
	so that $U_1,\dots,U_k$ is an open cover of $\partial_e X$.  By total disconnectedness, we may partition $\partial_e X$ into clopen sets $V_1,\dots,V_k$ such that $V_i \subseteq U_i$ for $i=1,\dots,k$.
	
	As $\M$ is factorial, $X$ is a closed face of $T(\M)$ and so is a Choquet simplex in its own right.  It is therefore Bauer by the assumption that  $\partial_e X$ is compact.  By Theorem~\ref{thm:TCtoWStarBundle}, there is a natural embedding $C(\partial_e X) \subseteq Z(\M)$. Let us define 
	\begin{equation}
		p_i\coloneqq \chi_{V_i} \in C(\partial_e X) \subseteq Z(\M),
	\end{equation}
 so that the defining feature of the embedding $C(\partial_e X) \subseteq Z(\M)$ in \eqref{eq:central-embedding} gives $\tau(p_ib)=\chi_{V_i}(\tau)\tau(b)$ for $\tau\in\partial_eX$ and $b\in\M$. Accordingly, for $\tau\in \partial_eX$, we have $\tau(a_ip_iq)=\tau(a_iqp_i)\leq\delta\tau(qp_i)$ as both sides of the inequality vanish for $\tau\in\partial_eX\setminus V_i$, and we have $\tau(a_iq) < \delta \tau(q)$ for $\tau \in V_i$. 
 It then follows from the Krein--Milman theorem that $\tau(a_i p_iq) \leq \delta \tau(p_iq)$ for all $i = 1, \ldots, k$ and $\tau \in X$.
\end{proof}

A noteworthy special case of Proposition \ref{prop:ZeroDimBoundary} is when $\M$ is a finite dimensional $C^*$-algebra and $X = T(\M)$. This is easily seen to be a tracially complete $C^*$-algebra, as the norms $\|\cdot\|_{2,X}$ and $\|\cdot\|$ are equivalent. Although this example is fairly trivial, it played a notable role in the proof of \cite[Theorem 3.8]{CETWW}, which could be viewed as bootstrapping CPoU from the finite dimensional setting to the nuclear setting with the aid of uniform property $\Gamma$. The proof of Theorem \ref{introthmgammaimpliescpou} will similarly make use of the following special case of Proposition \ref{prop:ZeroDimBoundary}.

\begin{corollary}\label{cor:vNa-CPoU}
	If $\M$ is a finite von Neumann algebra,
	then the factorial tracially complete $C^*$-algebra $\big(\M, T(\M)\big)$ has CPoU.  In fact, if $q \in \M$ with $\tau(q) > 0$ for all $\tau \in T(\M)$, if and $a_1, \ldots, a_k \in \M_+$ satisfy
	\begin{equation}
		 \sup_{\tau \in X} \min_{1 \leq i \leq k} \tau(a_iq) < \delta \tau(q),
	\end{equation}
	then there are projections $p_1, \ldots, p_k \in Z(\M)$ summing to $1_\mathcal M$ such that $\tau(a_i p_iq) \leq \delta \tau(p_iq)$ for all $\tau \in T(\M)$ and $i = 1,\ldots, k$.
\end{corollary}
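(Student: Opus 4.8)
The plan is to obtain Corollary~\ref{cor:vNa-CPoU} as the special case of Proposition~\ref{prop:ZeroDimBoundary} in which the designated set of traces is all of $T(\M)$. By Proposition~\ref{prop:finitevnastc}, the pair $\big(\M, T(\M)\big)$ is a factorial tracially complete $C^*$-algebra, so the only hypothesis of Proposition~\ref{prop:ZeroDimBoundary} still in need of verification is that $\partial_e T(\M)$ is compact and totally disconnected; once this is done, Proposition~\ref{prop:ZeroDimBoundary} applied with $X \coloneqq T(\M)$ delivers both the displayed estimate involving $q$ and, via its ``in particular'' clause, the assertion that $\big(\M, T(\M)\big)$ has CPoU.

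To identify $\partial_e T(\M)$, I would use the centre-valued trace. By Proposition~\ref{prop:TracesFiniteVNAS}\ref{prop:TracesFiniteVNAS.1}, every trace on $\M$ factors uniquely through the centre-valued trace $\mathrm{ctr}\colon \M \to Z(\M)$; since $\mathrm{ctr}$ is unital, positive, and restricts to the identity on $Z(\M)$, the restriction map $\tau \mapsto \tau|_{Z(\M)}$ is an affine bijection of $T(\M)$ onto the state space $S(Z(\M))$, with inverse $\phi \mapsto \phi \circ \mathrm{ctr}$. Both maps are weak$^*$-continuous, so this is an affine homeomorphism $T(\M) \cong S(Z(\M))$. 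Now $Z(\M)$ is a commutative von Neumann algebra, hence $*$-isomorphic to $C(\Omega)$ for its Gelfand spectrum $\Omega$, a compact Hausdorff space which is extremally disconnected because $C(\Omega)$ is a von Neumann algebra (equivalently, monotone complete), and therefore totally disconnected. Consequently $T(\M) \cong S(C(\Omega)) \cong \mathrm{Prob}(\Omega)$ is a Bauer simplex whose extreme boundary is homeomorphic to $\Omega$, so $\partial_e T(\M)$ is compact and totally disconnected, as required.

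With the hypotheses of Proposition~\ref{prop:ZeroDimBoundary} verified for $\big(\M, T(\M)\big)$, the corollary follows by direct specialisation. The substantive content is thus entirely the homeomorphism $\partial_e T(\M) \cong \Omega$ obtained from the centre-valued trace; this is also the only place where the finiteness of $\M$ is used, and it is the step I expect to require the most care (in particular, checking that $\Omega$ is totally disconnected, i.e.\ that an abelian von Neumann algebra has zero-dimensional spectrum). Everything else is a routine appeal to the results already established.
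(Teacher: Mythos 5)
Your proposal is correct and follows essentially the same route as the paper: the paper likewise uses Proposition~\ref{prop:TracesFiniteVNAS}\ref{prop:TracesFiniteVNAS.1} to identify $\partial_e T(\M)$ with the Gelfand spectrum of $Z(\M)$ via the centre-valued trace, concludes that this boundary is compact and totally disconnected (indeed hyperstonean), and then invokes Proposition~\ref{prop:ZeroDimBoundary}. Your write-up merely spells out the affine homeomorphism $T(\M)\cong S(Z(\M))$ and the extremal disconnectedness of the spectrum in slightly more detail than the paper does.
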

\begin{proof}	
As all traces on $\M$ factor through the centre-valued trace $\M \rightarrow Z(\M)$ (Proposition \ref{prop:TracesFiniteVNAS}\ref{prop:TracesFiniteVNAS.1}), composition with the centre-valued trace produces a homeomorphism from the Gelfand spectrum of $Z(\M)$ to $\partial_e T(\M)$.  
In particular, $\partial_e T(\M)$ is compact and totally disconnected (in fact hyperstonean), as $Z(\M)$ is a von Neumann algebra, and so the result follows from Proposition~\ref{prop:ZeroDimBoundary}.
\end{proof}

In the next proposition, we consider what it means for a trivial $W^*$-bundle to have CPoU. (See Example~\ref{ex:locally-trivial} for the definition of trivial $W^*$-bundles.)  
The heuristic idea is that for a $W^*$-bundle over a space that is not totally disconnected, the approximately central projections in Definition~\ref{dfn:utcCPOU} cannot come from the centre of the algebra, and hence any constructions of approximately central projections must make use of central sequences in the fibres.  In particular, if there are no central sequences in the fibres, then the $W^*$-bundle must not have CPoU.  

\begin{proposition}\label{prop:trivial-bundle-CPoU}
	Let $(\M, X)$ be the trivial $W^*$-bundle over a compact Hausdorff space $K$ with fibre a finite factor $\mathcal N$.  If $(\M, X)$ has CPoU, then either $K$ is totally disconnected or $\mathcal N$ satisfies property $\Gamma$.
\end{proposition}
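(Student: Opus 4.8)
The plan is to prove the contrapositive: assuming $K$ is not totally disconnected, I will show that if $\mathcal N$ does not have property $\Gamma$, then $(\mathcal M, X)$ fails CPoU. Since $K$ is not totally disconnected, there is a point $x_0 \in K$ whose connected component is not a singleton; by passing to this component (or, more simply, by finding two points $x_0, x_1 \in K$ that cannot be separated by a clopen set), I get a continuous function $g \colon K \to [0,1]$ with $g(x_0) = 0$ and $g(x_1) = 1$ that takes all intermediate values. The key is to exploit the rigidity of the two positive elements $a_1 = g \cdot 1_{\mathcal N}$ and $a_2 = (1-g) \cdot 1_{\mathcal N}$ (viewed as central elements of $\mathcal M$), for which $\min(\tau(a_1), \tau(a_2))$ is small only when the measure $\mu_\tau$ concentrates near the level set $g^{-1}(1/2)$, which is nonempty and in fact "fat" in the relevant sense.

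The heart of the argument will be a spectral-gap-type obstruction. If $\mathcal N$ does not satisfy property $\Gamma$, then (by the appropriate version of Connes--Dixmier--Murray--von Neumann) the central sequence algebra $\mathcal N^\omega \cap \mathcal N'$ is trivial, i.e.\ equals $\mathbb C 1_{\mathcal N^\omega}$; more usefully, there is a finite set $\mathcal F \subseteq \mathcal N$ and $\kappa > 0$ such that any $b \in \mathcal N$ with $\max_{a \in \mathcal F}\|[b,a]\|_{2,\tau_{\mathcal N}} < \kappa \|b - \tau_{\mathcal N}(b)1\|_{2,\tau_{\mathcal N}}$ forces $b$ to be close to a scalar. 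Feeding the constant functions $a \in \mathcal F \subseteq \mathcal N \subseteq \mathcal M = C_\sigma(K,\mathcal N)$ into a putative CPoU partition $p_1, p_2 \in \mathcal M^\omega \cap \mathcal F'$ summing to $1$ with $\tau(a_i p_i) \le \delta\tau(p_i)$, I will argue fibrewise: at a limit trace coming from a sequence converging to a point $y \in K$, each $p_i$ must be (approximately) central in $\mathcal N^\omega$, hence a scalar $0$ or $1$; the condition $\tau(a_i p_i) \le \delta\tau(p_i)$ then forces $p_1 \approx \chi_{\{g < \delta\}}$ and $p_2 \approx \chi_{\{g > 1-\delta\}}$ at the level of the base, but choosing $\delta$ slightly above $\sup_\tau \min(\tau(a_1),\tau(a_2)) = 1/2$ (the sup is attained at point masses on $g^{-1}(1/2)$) makes these two "clopen-like" sets overlap on a neighbourhood of $g^{-1}(1/2)$, where $\chi$-like central projections summing to $1$ cannot exist because that set is not clopen. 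This is the contradiction.

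Concretely, the steps I would carry out are: (i) reduce "$K$ not totally disconnected" to the existence of $g \in C(K,[0,1])$ with $g^{-1}(1/2) \ne \emptyset$ and such that $g^{-1}([0,1/2))$ and $g^{-1}((1/2,1])$ have disjoint closures only through a non-clopen separator — i.e.\ there is no clopen $V$ with $g^{-1}([0,1/2]) \subseteq V \subseteq g^{-1}([0,1/2 + \eta])$ for small $\eta$; (ii) set $a_1 \coloneqq g\cdot 1_{\mathcal N}$, $a_2 \coloneqq (1-g)\cdot 1_{\mathcal N} \in Z(\mathcal M)$, compute $\sup_{\tau\in X}\min(\tau(a_1),\tau(a_2)) = 1/2$, and fix $\delta \in (1/2, 1/2+\eta)$; (iii) using the failure of property $\Gamma$ for $\mathcal N$, extract a finite set $\mathcal F \subseteq \mathcal N$ witnessing a spectral gap, and apply CPoU to $S \coloneqq \mathcal F$, $a_1, a_2$, and $\delta$ to get $p_1, p_2 \in \mathcal M^\omega \cap \mathcal F'$; (iv) lift $p_1$ to a bounded sequence $(p_1^{(n)})$ in $\mathcal M = C_\sigma(K,\mathcal N)$, and show via the spectral gap applied fibre-by-fibre (together with $\mathcal N$ being a factor, so that approximately central elements are approximately scalar uniformly) that $p_1^{(n)}$ is, in $\|\cdot\|_{2,X}$, approximately a $\{0,1\}$-valued function $f_n \colon K \to \{0,1\}$; (v) continuity of $p_1^{(n)}$ in $\|\cdot\|_{2,\tau}$ plus the near-$\{0,1\}$ property forces $f_n$ to be (approximately) continuous, hence (approximately) the indicator of a clopen set $V_n \subseteq K$; (vi) the trace condition $\tau(a_1 p_1) \le \delta \tau(p_1)$, evaluated at point masses, forces $g \le \delta$ on $V_n$ up to small error and $g \ge 1 - \delta$ on $K \setminus V_n$ up to small error, i.e.\ $g^{-1}([0,1/2]) \subseteq V_n$ and $V_n \subseteq g^{-1}([0,\delta])$ approximately, contradicting (i) once the errors are taken below $\eta/2$ (using Kirchberg's $\epsilon$-test to upgrade the "approximately clopen" to an exact contradiction, or simply noting that a set within Hausdorff-measure $\epsilon$ of being clopen and satisfying these containments cannot exist for $\epsilon$ small).

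The main obstacle, and the place requiring the most care, is step (iv)--(v): turning "$p_1 \in \mathcal M^\omega \cap \mathcal F'$" into genuine fibrewise information. A priori $p_1$ is an equivalence class of sequences of sections, and its commutant condition is only with respect to constant sections; I need that at $\|\cdot\|_{2,X^\omega}$-a.e.\ limit trace the corresponding element of $\mathcal N^\omega$ is central, hence scalar. For limit traces arising from sequences $\tau_n = \mathrm{ev}_{x_n}\circ E$ with $x_n \to y$, this uses that the fibre of $\mathcal M^\omega$ over such a limit trace surjects onto $\mathcal N^\omega$ (this kind of fibre computation for ultrapowers of $W^*$-bundles is in \cite[Section~3]{BBSTWW}), and that the evaluation of the central-sequence condition passes through. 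One must also handle the fact that $\|\cdot\|_{2,X}$ only controls the \emph{sup} over fibres, so "approximately $\{0,1\}$-valued in $\|\cdot\|_{2,X}$" genuinely means approximately $\{0,1\}$-valued in \emph{every} fibre uniformly — which is exactly what makes the continuity argument in (v) go through. I expect the cleanest route is to phrase everything with a fixed finite tolerance via Proposition~\ref{CPoU:Ultra:FiniteSet}\ref{item:CPOU-local}, producing orthogonal positive contractions $e_1, e_2 \in \mathcal M$ with the three displayed estimates, and then run the fibrewise spectral-gap and continuity analysis directly on $e_1$, avoiding reduced products entirely; this sidesteps the fibre-of-ultrapower bookkeeping at the cost of a slightly more hands-on $\epsilon$-management.
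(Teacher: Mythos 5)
Your proposal is correct in outline, but it is a genuinely different argument from the one in the paper, so let me compare the two. The paper argues in the \emph{direct} direction: assuming CPoU and that $K$ is not totally disconnected, it fixes a point $x$ with no clopen neighbourhood inside some open $U$, takes a bump function $a_1$ at $x$ supported in $U$ (and $a_2 = 1-a_1$), and feeds these into CPoU. The resulting projection $p_1$ is then shown to have fibrewise trace tending to $0$ at $x$ and to $1$ off $U$; since the fibrewise trace $y \mapsto \tau_{\mathcal N}(p_{1,n}(y))$ is continuous and cannot avoid the value $1/2$ (otherwise its sublevel set would be a forbidden clopen neighbourhood of $x$), evaluating $p_{1,n}$ at a point where the trace equals exactly $1/2$ \emph{produces} the approximately central trace-$\tfrac12$ projection witnessing $\Gamma$ for $\mathcal N$. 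You instead argue the contrapositive: the \emph{absence} of $\Gamma$ yields a spectral-gap inequality which rigidifies the CPoU partition, forcing each $e_i$ to be fibrewise approximately scalar, hence the indicator of a clopen subset of $K$, which is then incompatible with two points of a nontrivial connected component being separated by your function $g$. Both proofs hinge on the same continuity of the fibre-trace function; the paper uses it positively (intermediate value theorem locates the $\Gamma$-witness), you use it negatively (the gap forbids intermediate values, so the level sets are clopen). What the paper's route buys is that it needs no input about non-$\Gamma$ factors at all, and in particular works verbatim for arbitrary finite factors $\mathcal N$; your route is arguably more illuminating about \emph{why} CPoU fails (the partition would have to live in $C(K)$), but it leans on the classical equivalence of ``not $\Gamma$'' with a finite-set spectral gap (Connes--Dixmier, \cite{Co76,Di69}), which is standard for II$_1$ factors with separable predual but requires a word of justification for general finite factors. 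Two smaller points: your step (i) is over-engineered --- all you need is two points $x_0,x_1$ in the same connected component and any $g \in C(K,[0,1])$ with $g(x_0)=0$, $g(x_1)=1$, since in a compact Hausdorff space components coincide with quasi-components and so no clopen set can separate $x_0$ from $x_1$; and for step (iv) you do not need the full ``approximately central implies approximately scalar'' gap --- the weaker quantitative statement that approximately central approximate projections have trace bounded away from $\tfrac12$ (extracted from the failure of $\Gamma$ via Lemma~\ref{lem:EpsTest}) already makes $\{y : \tau_{\mathcal N}(e_1(y)) > \tfrac12\}$ clopen, and this weaker form is available for arbitrary finite factors. Your closing suggestion to run everything through the local form of CPoU in Proposition~\ref{CPoU:Ultra:FiniteSet} is the right way to avoid the fibre-of-ultrapower bookkeeping, and is needed to get the trace inequalities for a single index $n$ uniformly over $K$.
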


\begin{proof}
	Suppose $(\M, X)$ satisfies CPoU and $K$ is not totally disconnected.  Let $\tau_{\mathcal N}$ be the trace on $\mathcal N$ and let $S \subseteq \mathcal N$ be a $\|\cdot\|_{2, \tau_\mathcal N}$-separable subset.  We work to show that there is a projection $q \in \mathcal{N}^\omega \cap S'$ with trace $1/2$.  A standard von Neumann algebra result will therefore show that $\mathcal N$ satisfies property $\Gamma$.  This is also a special case of \ref{item:Gamma}$\Rightarrow$\ref{item:approx-central-projection} of Proposition~\ref{prop:Gamma}, for example.
	
	Fix a point $x \in K$ and an open neighbourhood $U \subseteq K$ of $x$ such that there is no clopen neighbourhood of $x$ contained in $U$.  Let $a_1 \in C(K) \subseteq \M$ be such that $0 \leq a_1 \leq 1_{\mathcal M}$, $a_1(x) = 1$, and $\mathrm{supp}(a_1) \subseteq U$, and set $a_2 \coloneqq 1_{\mathcal M}- a_1$.  Then
	\begin{equation}
		\sup_{\tau \in X} \min \{\tau(a_1), \tau(a_2) \} \leq \frac12 < \frac23.
	\end{equation}
	Let $\omega$ be a free ultrafilter on $\mathbb N$ and view $\mathcal N \subseteq \mathcal M$ as constant functions.  Since $(\M, X)$ has CPoU, there are projections $p_1, p_2 \in \M^\omega \cap S'$ such that $p_1 + p_2 = 1_{\mathcal M^\omega}$ and
	\begin{equation}\label{eq:cpou-trivial-bundle-1}
		\tau(a_i p_i) \leq \frac23 \tau(p_i), \qquad \ i=1, 2,\ \tau \in X^\omega.
	\end{equation}
	
	For $i = 1,2$, let $(p_{i, n})_{n=1}^\infty \subseteq \M$ be a sequence of positive contractions representing $p_i$.  If $\tau \in X^\omega$ is the limit trace determined by the constant sequence $(\tau_{\mathcal N} \circ \mathrm{ev}_x)_{n=1}^\infty \subseteq X$.  Then $\tau(a_1) = 1$, and hence $\tau(a_2) = 0$.  Since $a_2\geq 0$, the Cauchy--Schwarz inequality implies  $\tau(a_2 p_1) = 0$.  So $\tau(a_1 p_1) = \tau(p_1)$.  Now, \eqref{eq:cpou-trivial-bundle-1} implies $\tau(p_1) = 0$.  Therefore,
	\begin{equation}\label{eq:cpou-trivial-bundle-2}
		\lim_{n \rightarrow \omega} \tau_{\mathcal N}(p_{1, n}(x)) = 0.
	\end{equation}
	Similarly, consider a sequence $(y_n)_{n=1}^\infty \subseteq K \setminus U$, and let $\tau \in X^\omega$ be the limit trace defined by the sequence $(\tau_{\mathcal N} \circ \mathrm{ev}_{y_n})_{n=1}^\infty \subseteq X$.  Then $\tau(a_2) = 1$, and as above, this shows $\tau(p_2) = 0$.  This implies $\tau(p_1) = 1$.  As this holds for all sequences $(y_n)_{n=1}^\infty \subseteq K \setminus U$, it follows that
	\begin{equation}\label{eq:cpou-trivial-bundle-3} 
		\lim_{n \rightarrow \omega} \inf_{y \in K \setminus U} \tau_{\mathcal N}(p_{1, n}(y)) = 1.
	\end{equation}
	
	Define 
	\begin{equation}
		N \coloneqq \Big\{n \in \mathbb N : \tau_{\mathcal N}(p_{1,n}(x)) < \frac12, \inf_{y \in K \setminus U} \tau(p_{1, n}(y)) > \frac12 \Big\},
	\end{equation}
	and note that $N \in \omega$ by \eqref{eq:cpou-trivial-bundle-2} and \eqref{eq:cpou-trivial-bundle-3}.  For each $n \in N$, there is a $y_n \in K$ such that $\tau_{\mathcal N}(p_{1, n}(y)) = 1/2$ since otherwise
	\begin{equation}
		\Big\{ y \in K : \tau_{\mathcal N}(p_{1 ,n}(y)) < \frac12 \Big\} \subseteq K
	\end{equation}
	is a clopen neighbourhood of $x$ contained in $U$.  For $n \in N$, set $q_n \coloneqq p_{1, n}(y_n) \in \mathcal N$ and for $n \in \mathbb N \setminus N$, set $q_n \coloneqq 0 \in \mathcal N$.  Then the sequence $(q_n)_{n=1}^\infty$ defines a projection $q \in \mathcal N^\omega \cap S'$ with trace $1/2$.
\end{proof}

Proposition~\ref{prop:trivial-bundle-CPoU} provides us with explicit examples of tracially complete $C^*$-algebras that do not have CPoU.

\begin{example}\label{eg:free-group-bundle}
	For any $n \geq 2$, the trivial $W^*$-bundle $C_\sigma([0,1], L(\mathbb{F}_n))$ does not have CPoU.  Indeed, the II$_1$ factors $L(\mathbb{F}_n)$ do not have property~$\Gamma$ (\cite{MvN43}; see also \cite[Theorem A.7.2]{Si08}), and hence the result follows from Proposition~\ref{prop:trivial-bundle-CPoU}.
\end{example}

The converse of Proposition~\ref{prop:trivial-bundle-CPoU} is also true.  If $(\M, X)$ is a trivial $W^*$-bundle whose fibre is a $\mathrm{II}_1$ factor with property $\Gamma$, then it is easy to show $(\M, X)$ has property $\Gamma$, and then Theorem~\ref{introthmgammaimpliescpou} will show that $(\M, X)$ has CPoU.  In fact, in the W$^*$-bundle case we can give a more direct proof that property $\Gamma$ implies CPoU.
The following result is a special case of Theorem \ref{introthmgammaimpliescpou}, which we choose to include for its conceptual value. Our subsequent proof of Theorem \ref{introthmgammaimpliescpou} will not depend on Proposition \ref{prop:Gamma-bundle}.

\begin{proposition}\label{prop:Gamma-bundle}
	Let $(\M,X)$ be a factorial tracially complete $C^*$-algebra coming from a $W^*$-bundle over the space $K \coloneqq \partial_e X$.
	If $(\M,X)$ has property $\Gamma$, then $(\M,X)$ has CPoU. 
\end{proposition}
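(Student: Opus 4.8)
The plan is to exploit the $W^*$-bundle structure over the compact space $K=\partial_eX$ to localise CPoU to the fibres, where property $\Gamma$ of $(\M,X)$ will produce the required approximately central projections fibrewise, and then glue these local pieces together using a partition of unity on $K$. I would verify CPoU through the local reformulation in Proposition~\ref{CPoU:Ultra:FiniteSet}\ref{item:CPOU-local}: given a finite set $\mathcal F\subseteq\M$, $\epsilon>0$, positive elements $a_1,\dots,a_k\in\M$, and $\delta>\sup_{\tau\in X}\min_i\tau(a_i)$, I must produce orthogonal positive contractions $e_1,\dots,e_k\in\M$ satisfying the three displayed approximations in \eqref{eqn:CPoU-local-form}. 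By Theorem~\ref{thm:TCtoWStarBundle}, we have a central embedding $C(K)\subseteq Z(\M)$ and a conditional expectation $E\colon\M\to C(K)$ with $E(a)(\tau)=\tau(a)$; in particular $\|a\|_{2,X}=\|a\|_{2,E}=\|E(a^*a)\|^{1/2}$.

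The first step is to work in each fibre. Fix $x\in K$ and write $\tau_x=\mathrm{ev}_x\circ E\in\partial_eX$, with fibre $\mathcal N_x\coloneqq\pi_{\tau_x}(\M)=\pi_{\tau_x}(\M)''$ a finite factor. The hypothesis $\delta>\sup_{\tau\in X}\min_i\tau(a_i)$ ensures that for each $x$ there is an index $i(x)$ with $\tau_x(a_{i(x)})<\delta$. Since $(\M,X)$ has property $\Gamma$, Proposition~\ref{prop:Gamma}\ref{item:approx-central-interval} (or the factored-down version \ref{item:embeddingC2}) gives, in the reduced power, enough approximately central projections; concretely, for each $x$ I can find a positive contraction $p_x\in\M$ that is close in $\|\cdot\|_{2,\tau_x}$ to a projection, $\epsilon$-commutes with $\mathcal F$ in $\|\cdot\|_{2,\tau_x}$, and has $\tau_x(p_x)$ close to $1$ while being supported (in the fibre sense) where $a_{i(x)}$ is small — more precisely, arrange a partition of unity $p_{x,1},\dots,p_{x,k}$ of approximately central projections in the fibre with $\tau_x(a_jp_{x,j})\leq\delta\tau_x(p_{x,j})$, which is possible because property $\Gamma$ lets one refine an abstract $L^\infty[0,1]$ (or $\mathbb C^k$) embedding so that $p_{x,i(x)}$ carries essentially all the mass and the remaining projections are tiny. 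Lifting via Kaplansky density (Proposition~\ref{prop:UnitBallDensity}) realises these as positive contractions in $\M$.

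The second step is the gluing. Using continuity of $\tau\mapsto\tau(a)$ on $K$ and the fibrewise estimates, compactness of $K$ yields a finite open cover $U_1,\dots,U_n$ and, for each $l$, orthogonal positive contractions $e_{1,l},\dots,e_{k,l}\in\M$ with $\|\sum_je_{j,l}-1_\M\|_{2,U_l}<\epsilon$, $\max_{x\in\mathcal F}\|[e_{i,l},x]\|_{2,U_l}<\epsilon$, and $\tau(a_ie_{i,l})-\delta\tau(e_{i,l})<\epsilon$ for $\tau\in U_l$. Choose a partition of unity $f_1,\dots,f_n\in C(K)\subseteq Z(\M)$ subordinate to the cover, consisting of positive contractions summing to $1$, and set $e_i\coloneqq\sum_{l=1}^nf_le_{i,l}$. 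Since the $f_l$ are \emph{central} in $\M$, the $e_i$ remain positive contractions, and — crucially, unlike the naive patching discussed in the introduction — the orthogonality $e_ie_j=\sum_{l}f_l^2e_{i,l}e_{j,l}+\sum_{l\neq m}f_lf_me_{i,l}e_{j,m}$ is controlled because $f_lf_m$ is supported on $U_l\cap U_m$ where both families are approximately orthogonal; in fact one should instead arrange the cover so that $f_lf_m$ has small support, or argue directly that $\|e_ie_j\|_{2,X}$ is small using that on the support of $f_l$ the $e_{\cdot,l}$ are orthogonal. The centrality of $f_l$ also gives $\|[e_i,x]\|_{2,X}\leq\max_l\|[e_{i,l},x]\|_{2,U_l}<\epsilon$ for $x\in\mathcal F$, and $\|\sum_ie_i-1_\M\|_{2,X}=\max_l\|\sum_ie_{i,l}-1_\M\|_{2,U_l}<\epsilon$ since $\|a\|_{2,X}=\max_{x\in K}\|a\|_{2,\tau_x}$ and $\tau_x$ is supported in whichever $U_l$ contains $x$. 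Finally, $\tau(a_ie_i)=\sum_l\tau(f_la_ie_{i,l})\leq\delta\sum_l\tau(f_le_{i,l})+\epsilon=\delta\tau(e_i)+\epsilon$ using that $f_l\in Z(\M)$ and $\tau(f_l\cdot)$ is a (scaled) trace bounded by evaluation on $U_l$.

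\textbf{Main obstacle.} The delicate point is the last paragraph: patching with a central partition of unity from $C(K)$ does \emph{not} automatically preserve orthogonality of the $e_i$, precisely the difficulty flagged in Section~\ref{sec:intro:ltg2}. The resolution — and the technical heart of the argument — is that property $\Gamma$ lets us, after the fact, \emph{projectionise and orthogonalise}: having produced positive contractions $e_i$ that are approximately orthogonal, approximately central, approximately sum to $1$, and approximately satisfy the trace inequality, one passes to the reduced power and applies a maximality/orthogonalisation argument (exactly the second step in the strategy for Theorem~\ref{introthmgammaimpliescpou} outlined in the introduction, and \cite[Lemma~3.7]{CETWW}) to replace them with genuine orthogonal projections in $\M^\omega\cap S'$ summing to $1_{\M^\omega}$ with $\tau(a_ip_i)\leq\delta\tau(p_i)$. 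In the $W^*$-bundle setting this orthogonalisation is cleaner than in general because one can work fibrewise and use that $E$ is a genuine conditional expectation onto the centre; the continuity of the construction over $K$ is what the trivialisation/partition-of-unity machinery of Ozawa and \cite{BBSTWW} handles. I expect the cleanest writeup first establishes the weak (non-orthogonal, contraction) form over the bundle directly from $\Gamma$, then invokes the general orthogonalisation step, thereby also making transparent why this is a genuine special case of Theorem~\ref{introthmgammaimpliescpou}.
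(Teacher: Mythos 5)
Your overall architecture (fibrewise data, glue over $K$ with the central copy of $C(K)$, then repair orthogonality) identifies the right difficulty but does not close it, and the one step where you try to close it directly is wrong. In the gluing paragraph you claim the cross terms $f_lf_m e_{i,l}e_{j,m}$ ($l\neq m$) are controlled ``because $f_lf_m$ is supported on $U_l\cap U_m$ where both families are approximately orthogonal''. Orthogonality within the family $\{e_{1,l},\dots,e_{k,l}\}$ and within $\{e_{1,m},\dots,e_{k,m}\}$ says nothing about a product $e_{i,l}e_{j,m}$ of elements taken from two independently constructed families, so there is no reason for $\|f_lf_me_{i,l}e_{j,m}\|_{2,X}$ to be small; nor can one in general shrink the overlaps $U_l\cap U_m$ uniformly against all measures in $\mathrm{Prob}(K)$ unless $K$ is totally disconnected. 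Hence the glued $e_i$ are not approximately orthogonal by this argument. Your fallback --- establish only a weak (non-orthogonal) form and then invoke the projectionisation/orthogonalisation/maximality machinery --- is logically sound, since that machinery is developed independently of this proposition, but it simply reproduces the general proof of Theorem~\ref{introthmgammaimpliescpou}: the weak form you obtain over the bundle is no stronger than what Theorem~\ref{thm:weak-CPoU} already gives with no bundle hypothesis, so the $W^*$-bundle structure buys nothing and the step that does all the work is deferred rather than carried out.

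The paper's proof avoids orthogonalisation entirely by using property $\Gamma$ once, globally, rather than fibrewise. Take the open cover $U_i=\{\tau\in K:\tau(a_i)<\delta\}$, a subordinate partition of unity $g_1,\dots,g_k\in C(K)$, and the cumulative sums $h_i=\sum_{j\leq i}g_j$. Property $\Gamma$ in the form of Proposition~\ref{prop:Gamma}\ref{item:approx-central-interval} gives a single $^*$-homomorphism $\phi\colon L^\infty[0,1]\to\M^\omega\cap S'$ with $\tau(a\phi(f))=\tau(a)\int_0^1f(t)\,dt$; since its image commutes with $C(K)\subseteq Z(\M)$, it combines with the central copy of $C(K)$ into an embedding of the trivial bundle $C_\sigma\big(K,(L^\infty[0,1],\tau_{\mathrm{Leb}})\big)$ into $\M^\omega\cap S'$. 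Setting $p_i$ to be the image of the function $\tau\mapsto\chi_{[h_{i-1}(\tau),h_i(\tau)]}$ produces projections that are \emph{exactly} orthogonal and sum to $1_{\M^\omega}$, because the intervals $[h_{i-1}(\tau),h_i(\tau)]$ partition $[0,1]$ for every $\tau$, while $\tau(a_ip_i)=\lim_{n\to\omega}\tau_n(a_i)g_i(\tau_n)\leq\delta\tau(p_i)$ because $g_i$ vanishes off $U_i$. In other words, the partition of unity is used to cut the fibre $L^\infty[0,1]$ into subintervals of varying length rather than to average operators; that is the idea missing from your write-up, and it is what makes this a genuinely more direct argument than the general one.
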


\begin{proof}
	Fix an ultrafilter $\omega$ on $\N$. 
	By the Krein--Milman theorem and Proposition~\ref{prop:co-2-norm}, we have that $\|a\|_{2,X} = \|a\|_{2,K}$ for all $a \in \M$. Moreover, writing $K^\omega$ for the weak$^*$-closure of the set of limit traces coming from a sequence of traces in $K$, we have $\|a\|_{2,X^\omega} =  \|a\|_{2,K^\omega}$ for all $a \in \M^\omega$. 

	Let $a_1,\ldots,a_k \in \M_+$ and $\delta > 0$ satisfy
	\begin{equation}
		\sup_{\tau \in K} \min_{1\leq i\leq k} \tau(a_i) < \delta.
	\end{equation}
	
	Let $S \subseteq \M$ be a $\|\cdot\|_{2,X}$-separable subset, which we may assume contains $a_1,\ldots,a_k$. 
	Let $U_i \coloneqq \{\tau \in K: \tau(a_i) < \delta \}$ for $i=1,\ldots,k$. Then $\{U_1,\ldots,U_k\}$ is an open cover of $K$. Let $g_1,\ldots,g_k\colon K \rightarrow  [0,1]$ be a continuous partition of unity subordinate to this open cover. Set $h_0 \coloneqq 0$ and $h_i \coloneqq \sum_{j=1}^{i} g_i$ for $i=1,\ldots,k$.
	
	Since $\M$ has property $\Gamma$, by \ref{item:Gamma}$\Rightarrow$\ref{item:approx-central-interval} of  Proposition~\ref{prop:Gamma}, there exists a $^*$-homomorphism $\phi \colon L^\infty[0, 1] \rightarrow \M^\omega \cap S'$ such that 
	\begin{equation}
		\tau(a \phi(f)) = \tau(a)\tr_{\mathrm{Leb}}(f)
	\end{equation}
	for all $a \in S$, $f \in L^\infty[0, 1]$ and $\tau \in X^\omega$, where $\tr_{\mathrm{Leb}}$ is integration with respect to the Lebesgue measure.
	Since $\M$ is a $W^*$-bundle, there is a canonical copy of $C(K)$ in $Z(\M)$. 
	As the image of $\phi$ commutes with $C(K) \subseteq  Z(\M^\omega)$, we have an induced $^*$-homomorphism $\psi\colon C(K) \otimes L^\infty[0, 1] \rightarrow \M^\omega \cap S'$.  Let $\tau$ be a limit trace given by a sequence of traces $(\tau_n)_{n=1}^\infty $ in $K$. Then, since every $\tau_n \in K$ restricts to a point evaluation on $C(K)$, we see that  
	\begin{equation}\begin{split}
		\tau(a\psi(g \otimes f)) &= \lim_{n\to\omega} \tau_n(a)g(\tau_n) \tr_{\mathrm{Leb}}(f)\\
		&= \lim_{n\to\omega} \tau_n(a) \tr_{\mathrm{Leb}}(g(\tau_n)f)
	\end{split}\end{equation} 
	for all $a \in S$, $g \in C(K)$, and $f \in L^\infty[0, 1]$. 
 
    Identifying $C(K) \otimes L^\infty[0, 1]$ with $C(K, L^\infty[0, 1])$, we have 
	\begin{equation}\label{eqn:trace-on-C(K,L-inf)}
		\tau(a\psi(F)) =\lim_{n\to\omega} \tau_n(a)\tr_{\mathrm{Leb}}(F(\tau_n))
	\end{equation} 
	for all $a \in S$ and $F \in C(K, L^\infty[0, 1])$.  In particular, we have \begin{equation}\|\psi(F)\|_{2,K^\omega} = \sup_{\tau \in K} \|F(\tau)\|_{2,\tau_{\mathrm{Leb}}}. \end{equation} It follows that $\psi$ extends to an embedding \begin{equation}
    C_\sigma\big(K, (L^\infty[0, 1],\tau_{\mathrm{Leb}})\big) \rightarrow \M^\omega \cap S',\end{equation}
    which we also denote by $\psi$, of the trivial $W^*$-bundle with fibre $L^\infty[0, 1]$ such that \eqref{eqn:trace-on-C(K,L-inf)} holds for all $a \in S$ and $F \in C_\sigma\big(K, (L^\infty[0, 1],\tau_{\mathrm{Leb}})\big)$. 
	
Write $\chi_{[h_{i-1},h_i]}\colon K \to L^\infty([0,1])$ for the function $\tau \mapsto \chi_{[h_{i-1}(\tau),h_i(\tau)]}$. This function is in $C_\sigma(K,(L^\infty[0,1],\tau_{\mathrm{Leb}}))$, and so we can define \begin{equation}
    p_i \coloneqq \psi(\chi_{[h_{i-1},h_i]}), \qquad i = 1, \dots, k.
\end{equation}
Then $p_1,\ldots ,p_k$ are orthogonal projections in $\M^\omega \cap S'$ summing to $1_{\M^\omega}$. Let $\tau$ be a limit trace given by a sequence of traces $(\tau_n)_{n=1}^\infty $ in $K$. Then
	\begin{equation}\begin{split}\label{eqn:trace-comp}
		\tau(a_ip_i) &= \lim_{n\to\omega} \tau_n(a_i)\tr_{\mathrm{Leb}}(\chi_{[h_{i-1},h_i]}(\tau_n))\\
		&= \lim_{n\to\omega} \tau_n(a_i)g_i(\tau_n)\\
		&\leq \lim_{n\to\omega}\delta g_i(\tau_n)\\
		&= \delta \tau(p_i),
	\end{split}\end{equation} 
where the inequality in the third line holds as for any $\tau_n \in K$, either $\tau_n(a_i) < \delta$ or $g_i(\tau_n) = 0$ because $g_1,\ldots g_k$ is a partition of unity subordinate to $\{U_1,\ldots,U_k\}$. An application of the Krein--Milman theorem shows that \eqref{eqn:trace-comp} holds for all $\tau \in X^\omega$. Therefore, $\M$ has CPoU.
\end{proof}

\subsection{Permanence properties of CPoU}\label{sec:cpou-permanence}

We will show that CPoU is preserved by inductive limits and matrix amplifications.  CPoU is also preserved under reduced products of tracially complete $C^*$-algebras, but the proof will be deferred to Section~\ref{sec:app-CPoU} (see Remark~\ref{rem:reduced-products-CPoU}).  We begin by showing that CPoU passes to ``quotients'' in the sense that restricting to a closed face of traces and completing in the corresponding uniform 2-norm preserves CPoU (see Corollary~\ref{cor:CPoU-quotient}).  We isolate the following lemma, which will go into the proof of Proposition \ref{prop:CPoU-quotient}, as well as being recycled in Appendix \ref{sec:sep}.

\begin{lemma}\label{lem:separable-face}
	If $\mathcal M$ is a unital $C^*$-algebra, $Y \subseteq \mathcal T(\mathcal M)$ is a closed face, and $S \subseteq \mathcal M$ is $\|\cdot\|$-separable, then there is a separable unital $C^*$-algebra $A \subseteq \mathcal M$ containing $S$ such that 
	\begin{equation}\label{eq:separable-face}
		Y_A \coloneqq \{ \tau|_A : \tau \in Y \} \subseteq T(A)
	\end{equation}
	is a closed face in $T(A)$.
\end{lemma}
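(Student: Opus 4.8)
The statement is a separability-selection lemma: given a separable subset $S$ of $\mathcal M$ and a closed face $Y \subseteq T(\mathcal M)$, find a separable unital $C^*$-subalgebra $A \supseteq S$ such that the restricted set $Y_A$ is a \emph{closed face} in $T(A)$. The natural approach is a L\"owenheim--Skolem style back-and-forth argument: build $A$ as the closure of an increasing union of separable subsets, arranging at each stage that an obstruction to $Y_A$ being a face is ``pushed out''.

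First I would recall, via Theorem~\ref{thm:exposed}\ref{item:exposed2}, what it means for $Y_A$ to be a face in the Choquet simplex $T(A)$: since $Y$ is a closed face, $Y_A$ will be the set of restrictions, and the test for a closed convex set to be a face is detecting it by positive affine functions vanishing on it. Concretely, $Y_A$ fails to be a face precisely when there is a trace $\sigma \in T(A) \setminus Y_A$ and a decomposition $\sigma = \frac12(\sigma_1 + \sigma_2)$ with, say, $\sigma_1 \in Y_A$ but $\sigma_2 \notin Y_A$; equivalently (using the characterisation via closed faces in Choquet simplices) there is some continuous affine $f \colon T(A) \to [0,1]$ vanishing on $Y_A$ and some trace in the face generated by $Y_A$ inside $T(A)$ on which $f$ is positive. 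The cleaner route is: $Y_A$ is a closed face of $T(A)$ if and only if whenever $\tau \in Y$ and $\rho \in T(\mathcal M)$ satisfies $\rho|_A \leq 2\tau|_A$, then $\rho|_A$ is the restriction of some element of $Y$. So the obstructions to be killed are pairs $(\tau, \rho)$ with $\rho|_A \leq 2\tau|_A$ but $\rho|_A \notin Y_A$.

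The construction proceeds by iterating: set $A_0$ to be a separable unital $C^*$-algebra containing $S$. Given $A_n$, for each pair $(\tau, \rho) \in Y \times T(\mathcal M)$ with $\rho|_{A_n} \leq 2 \tau|_{A_n}$ but $\rho|_{A_n} \notin (Y)_{A_n}$ (where this last set is the restriction of $Y$ to $A_n$), by Theorem~\ref{thm:exposed}\ref{item:exposed1} applied in $T(A_n)$ there is a continuous affine $f \colon T(A_n) \to [0,1]$ vanishing on $(Y)_{A_n}$ with $f(\rho|_{A_n}) > 0$, and by Proposition~\ref{prop:CP} this $f$ is (approximately) implemented by a self-adjoint element of $A_n$ — but the point is the other direction: I want, for each element $a \in A_n$ whose associated affine function $\tau \mapsto \tau(a)$ vanishes on $Y$, to witness this vanishing inside the algebra. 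The standard trick is: enumerate a countable dense subset of the self-adjoint part of $A_n$; for each such $a$ with $\sup_{\tau \in Y} |\tau(a)| < \varepsilon$ for appropriate rationals $\varepsilon$, there is nothing to adjoin; the real work is ensuring that Proposition~\ref{prop:CP}-type approximants landing in $\mathcal M$ can be captured — i.e.\ for each continuous affine $g$ on $T(\mathcal M)$ vanishing on $Y$ we adjoin a sequence of positive elements $b_m \in \mathcal M$ with $\sup_{\sigma}|\sigma(b_m) - g(\sigma)| \to 0$. Adjoining countably many such witnesses (one cannot do all $g$, but one does the countably many that arise from the countable dense set of $A_n$ via Hahn--Banach separation) produces $A_{n+1} \supseteq A_n$, still separable. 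Let $A \coloneqq \overline{\bigcup_n A_n}$.

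Finally I would verify $Y_A$ is a closed face of $T(A)$. It is weak$^*$-compact and convex (image of the compact convex $Y$ under the continuous affine restriction map). To see it is a face: suppose $\sigma \in T(A)$, $\sigma_1 \in Y_A$, $\sigma = \frac12(\sigma_1+\sigma_2)$. Lift $\sigma_1 = \tau_1|_A$ with $\tau_1 \in Y$; one checks $\sigma_2 \leq 2\sigma$ on $A$. The separability engineering ensures that any continuous affine function on $T(A)$ vanishing on $Y_A$ is a uniform limit of functions $\tau \mapsto \tau(a)$ for $a \in A_{\mathrm{sa}}$ with $a$ vanishing on $Y$ (in trace), hence — using that such $a$ were chosen so that their vanishing on $Y$ propagates via the adjoined witnesses — that $\sigma_2$ annihilates all positive affine functions on $T(A)$ vanishing on $Y_A$; Theorem~\ref{thm:exposed}\ref{item:exposed2} then gives $\sigma_2 \in Y_A$, as required. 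The main obstacle is getting the bookkeeping right in the iteration: precisely identifying the countable family of ``vanishing witnesses'' that must be adjoined at each stage so that, in the limit, the affine functions on $T(A)$ vanishing on $Y_A$ are exactly those detected by algebra elements — this is where one leans on Proposition~\ref{prop:CP} together with a Hahn--Banach separation inside the (metrisable) simplex $T(A_n)$, and where one must take care that $\|a\|$-bounds are preserved so $A$ stays separable.
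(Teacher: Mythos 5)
Your overall strategy (an iterative construction adjoining countably many ``witnesses'' from $\mathcal M$ at each stage and taking the closed union) is in the right family, but there are two genuine gaps, and both are exactly the points the paper's argument is built around.

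First, you never arrange that traces on $A$ extend to traces on $\mathcal M$. Your test for $Y_A$ being a face is phrased in terms of pairs $(\tau,\rho)\in Y\times T(\mathcal M)$ with $\rho|_A\le 2\tau|_A$, but the face condition concerns decompositions $\tau|_A=\frac12(\sigma_1+\sigma_2)$ with $\sigma_1,\sigma_2\in T(A)$ \emph{arbitrary} traces on $A$, which a priori need not be restrictions of traces on $\mathcal M$. All of your witnesses $b_m$ are elements of $\mathcal M$ whose defining property ($\sup_{\sigma\in T(\mathcal M)}|\sigma(b_m)-g(\sigma)|\to 0$ for $g$ vanishing on $Y$ and positive off the relevant set) is a statement about $T(\mathcal M)$; for a trace $\sigma_2\in T(A)$ that does not extend to $\mathcal M$, the numbers $\sigma_2(b_m)$ carry no information about membership in $Y_A$. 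The paper's proof handles this by interleaving a second requirement into the induction -- every trace on $A_n$ extends to a trace on $\mathcal M$, obtained from \cite[Lemma~9]{Oz13} -- and this extension property is used twice in the verification (to get $\sigma(a_n)\ge 0$ and to apply the detecting property of $a_n$ to $\sigma$). Without it your final verification cannot get off the ground.

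Second, your concluding step is circular: you invoke Theorem~\ref{thm:exposed}\ref{item:exposed2} with $F=Y_A$ inside $X=T(A)$ to conclude $\sigma_2\in Y_A$, but that theorem is a statement about \emph{closed faces} of a Choquet simplex (relative exposedness), which is precisely the property of $Y_A$ you are trying to establish; for a general closed convex subset the conclusion of \ref{item:exposed2} fails. The paper avoids this by making the witnesses do all the work at the level of $T(\mathcal M)$: at stage $n$ it uses metrisability of $T(A_n)$ to see that $U_n=\{\tau\in T(\mathcal M):\tau|_{A_n}\notin Y_n\}$ is an $F_\sigma$ set disjoint from the closed face $Y$, applies Theorem~\ref{thm:exposed}\ref{item:exposed1} (legitimately, since $Y$ \emph{is} a closed face of $T(\mathcal M)$) to get $f\ge 0$ vanishing on $Y$ and strictly positive on $U_n$, and realises $f$ exactly by a single self-adjoint $a_n\in\mathcal M$ via Proposition~\ref{prop:CP}; then $\tau(a_n)=0\Leftrightarrow\tau|_{A_n}\in Y_n$ for $\tau\in T(\mathcal M)$, and the extension property converts vanishing of $\sigma$ on the $a_n$ into $\sigma|_{A_n}\in Y_n$ for every $n$, whence a weak$^*$-limit of extensions lands in $Y$ and restricts to $\sigma$. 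If you add the trace-extension requirement to your induction and replace the final appeal to \ref{item:exposed2} by this direct limit-of-extensions argument, your proof closes up.
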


\begin{proof}
	We will construct a sequence $(A_n)_{n=1}^\infty$ of separable unital $C^*$-sub\-algebras of $\M$ containing $S$ and self-adjoint elements $a_n \in A_{n+1}$ such that for each $n \geq 1$,
	\begin{enumerate}
		\item\label{item:sep-face-increase} $A_n \subseteq A_{n+1}$,
		\item\label{item:sep-face-extend} every trace on $A_n$ extends to a trace on $\M$,
		\item\label{item:sep-face-positive} $\tau(a_n) \geq 0$ for all $\tau \in T(\M)$, and
		\item\label{item:sep-face-detect} if $Y_n \coloneqq \{\tau|_{A_n} : \tau \in Y \}$, then for all $\tau \in T(\M)$, we have $\tau(a_n) = 0$ if and only if $\tau|_{A_n} \in Y_n$.
	\end{enumerate}
	
	The construction is by induction on $n$.  Using \cite[Lemma~9]{Oz13}, there is a separable unital $C^*$-algebra $A_1 \subseteq \M$ which contains $S$ such that every trace on $A_1$ extends to a trace on $\M$.  With $Y_1$ as in \ref{item:sep-face-detect}, note that $Y_1$ is closed\footnote{Given a net $(\tau_i|_A)$ in $Y_1$ arising from a net of traces $\tau_i\in T(\M)$ with $\tau_i|_A\to\sigma\in T(A)$, it follows that any weak$^*$-cluster point $\tau\in T(\M)$ of the $\tau_i$ is an extension of $\sigma$, so $\sigma\in Y_1$.\label{foot:closed}} and $T(A_1)$ is metrisable, and hence $T(A_1) \setminus Y_1$ is an $F_\sigma$-set.  By the continuity of the restriction map $T(\M) \rightarrow T(A_1)$, we have
	\begin{equation}
		U_1 \coloneqq \{ \tau \in T(\M) : \tau|_{A_1} \notin Y_1 \}
	\end{equation}
	is an $F_\sigma$-set in $T(\M)$ disjoint from $Y$.  By Theorem~\ref{thm:exposed}, there is a continuous affine function $f \colon T(\M) \rightarrow [0, 1]$ such that $f(\tau) = 0$ for all $\tau \in Y$ and $f(\tau) > 0$ for all $\tau \in U_1$.  By Proposition~\ref{prop:CP}, there is a self-adjoint $a_1 \in \M$ with $f(\tau) = \tau(a_1)$ for all $\tau \in T(\M)$.  Now repeat the argument with $A_1 \cup \{a_1\}$ in place of $S$ to obtain $A_2$ and $a_2$, and continue in this fashion.
	
Let $A\subseteq \M$ be the $\|\cdot\|$-closure of the union of the $A_n$.  
Note that every trace $\tau \in T(A)$ extends to a trace on $\M$ -- indeed, for each $n \geq 1$, $\tau|_{A_n}$ extends to a trace $\widetilde\tau_n \in T(\M)$, and then any weak$^*$-limit point of the traces $\widetilde\tau_n$ will extend $\tau$.  
Now suppose $\sigma, \rho \in T(A)$ and  $\tau \coloneqq \frac{1}{2}(\sigma + \rho) \in Y_A$.  
Since $\sigma$ and $\rho$ extend to traces on $\M$, \ref{item:sep-face-positive} implies $\sigma(a_n), \rho(a_n) \geq 0$ for all $n \geq 1$.  
Since $\tau \in Y_A$, we have $\tau|_{A_n} \in Y_n$ for all $n \geq 1$, and hence $\tau(a_n) = 0$ for all $n \geq 1$.  
Therefore, $\sigma(a_n)= 0$ for all $n \geq 1$.  
Using again that $\sigma$ extends to a trace on $\M$, it follows from \ref{item:sep-face-detect} that $\sigma|_{A_n} \in Y_n$ for all $n \geq 1$.  
Let $\widetilde\sigma_n \in Y$ be a trace with $\widetilde\sigma_n|_{A_n} = \sigma|_{A_n}$.  
Let $\widetilde\sigma$ be a weak$^*$-limit point of the $\widetilde\sigma_n$; then $\widetilde\sigma \in Y$ and $\widetilde\sigma|_{A} = \sigma$.  
So $\sigma \in Y_A$.  Similarly, $\rho \in Y_A$, and hence $Y_A$ is a face in $T(A)$. The argument of Footnote \ref{foot:closed} shows $Y_A$ is closed in $T(A)$.
\end{proof}

\begin{proposition}\label{prop:CPoU-quotient}
	If $(\mathcal M, X)$ is a factorial tracially complete $C^*$-algebra with CPoU and $Y \subseteq X$ is a closed face, then $\big(\completion{\mathcal M}{Y}, Y\big)$ is factorial and satisfies CPoU.
\end{proposition}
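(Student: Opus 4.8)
The plan is to verify the two assertions of the proposition in turn. For factoriality, I would first observe that $Y$, being a closed face of the Choquet simplex $X$, which in turn is a closed face of $T(\mathcal M)$, is itself a closed face of $T(\mathcal M)$; hence $(\mathcal M, Y)$ satisfies the hypotheses of a tracial completion with respect to a face, and Proposition~\ref{prop:tracial-completion}\ref{item:completion-factorial} (applied with $A \coloneqq \mathcal M$ and $X \coloneqq Y$, noting that $\completion{\mathcal M}{Y}$ makes sense even though $\mathcal M$ is already tracially complete) gives that $\big(\completion{\mathcal M}{Y}, Y\big)$ is factorial. Concretely, one takes $A$ to be any $\|\cdot\|_{2,X}$-dense $C^*$-subalgebra of $\mathcal M$ (e.g.\ $\mathcal M$ itself as an abstract $C^*$-algebra), uses Corollary~\ref{cor:dense-subalgebra} to identify $\big(\completion{\mathcal M}{Y},Y\big)$ with the tracial completion of $A$ with respect to $Y_A$, and then invokes Proposition~\ref{prop:tracial-completion}\ref{item:completion-factorial}; alternatively, since $Y$ is a closed face of $T(\completion{\mathcal M}{Y})$ once one checks that no new $\|\cdot\|_{2,Y}$-continuous traces appear, Definition~\ref{def:factorial} applies directly. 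This part is essentially bookkeeping with the results already established.

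For CPoU, I would use the local characterisation in Proposition~\ref{CPoU:Ultra:FiniteSet}\ref{item:CPOU-local}. Fix a finite set $\mathcal F \subseteq \completion{\mathcal M}{Y}$, an $\epsilon > 0$, positive elements $a_1,\dots,a_k \in \completion{\mathcal M}{Y}$, and $\delta > \sup_{\tau \in Y}\min_i \tau(a_i)$. Using the Kaplansky density theorem for tracially complete $C^*$-algebras (Proposition~\ref{prop:UnitBallDensity}) and the density of $\mathcal M$ (as a $\|\cdot\|_{2,Y}$-dense subalgebra of $\completion{\mathcal M}{Y}$), I would first replace $\mathcal F$ and the $a_i$ by elements of $\mathcal M$, at the cost of shrinking $\epsilon$ and enlarging $\delta$ slightly; here one must be careful that $\delta$ can still be taken strictly larger than $\sup_{\tau\in Y}\min_i\tau(a_i)$ after the perturbation, which is fine since that supremum is a continuous function of the $a_i$ in the uniform $2$-norm. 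Now the crucial point is that for $a_1,\dots,a_k \in \mathcal M_+$ one has
\begin{equation}
	\sup_{\tau \in X}\min_{1 \leq i \leq k}\tau(a_i) \;\geq\; \sup_{\tau \in Y}\min_{1 \leq i \leq k}\tau(a_i),
\end{equation}
so $\delta$ need \emph{not} dominate the left-hand side, and one cannot directly feed the data into the CPoU hypothesis for $(\mathcal M, X)$. The fix is to rescale: let $f \colon X \to [0,1]$ be a continuous affine function that vanishes on $Y$ and is strictly positive off $Y$ on a suitable $F_\sigma$ set (Theorem~\ref{thm:exposed}\ref{item:exposed1}), realise $f$ by a positive element $q \in \mathcal M$ via Proposition~\ref{prop:CP} (so $\tau(q) = f(\tau)$, small near $Y$), and apply CPoU for $(\mathcal M,X)$ to the elements $a_i' \coloneqq a_i + M q$ for a large constant $M$; for $M$ large enough, $\min_i\tau(a_i') > \delta$ whenever $\tau(q)$ is bounded away from $0$, i.e.\ away from a neighbourhood of $Y$, so that $\sup_{\tau\in X}\min_i\tau(a_i') < \delta$ still holds after possibly enlarging $\delta$ by a controlled amount. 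The projections $p_1,\dots,p_k$ that CPoU for $(\mathcal M,X)$ returns then satisfy $\tau(a_i p_i) \leq \tau(a_i' p_i) \leq \delta\tau(p_i)$ for all $\tau \in X^\omega$, in particular for all $\tau \in Y^\omega$; and since the $p_i$ are approximately central in $\mathcal M$ in $\|\cdot\|_{2,X} \geq \|\cdot\|_{2,Y}$, a representative sequence of contractions gives the required $e_1,\dots,e_k$ satisfying \eqref{eqn:CPoU-local-form} in $(\completion{\mathcal M}{Y}, Y)$.

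I expect the main obstacle to be precisely the step just described: going from the hypothesis ``$\delta$ dominates the $Y$-supremum'' to ``$\delta$ dominates the $X$-supremum'' after a perturbation. The subtlety is that near $Y$ one has no control whatsoever over $\min_i\tau(a_i)$ for $\tau \in X$, so the auxiliary element $q$ must be genuinely supported near $Y$ and vanishing on $Y$; getting the quantifiers straight (first choose how much slack to add to $\delta$, then choose the neighbourhood of $Y$ where $q$ is small, then choose $M$) requires some care, and the use of $q$ mirrors the way an auxiliary positive element is threaded through Proposition~\ref{prop:ZeroDimBoundary} and Corollary~\ref{cor:vNa-CPoU}. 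A cleaner alternative I would also consider: work directly in the reduced-power formulation \ref{item:CPOU} of CPoU and note that $Y^\omega$ is a closed face of $X^\omega$, reducing the problem to a statement about passing CPoU from a tracially complete $C^*$-algebra to a ``face quotient'' at the level of ultrapowers — but this still requires the same rescaling trick to match the trace inequalities, so the genuine content is unavoidable. Everything else (density reductions, passing from approximately central positive contractions to the reduced-product statement, lifting projections via projectivity of cones over finite-dimensional $C^*$-algebras as in the proof of Proposition~\ref{CPoU:Ultra:FiniteSet}) is routine given the machinery already in the paper.
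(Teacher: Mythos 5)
Your factoriality argument is fine and matches the paper, which simply invokes Proposition~\ref{prop:tracial-completion}\ref{item:completion-factorial}. The problem is in the CPoU step, and it is a genuine one: the perturbation $a_i \mapsto a_i' = a_i + Mq$ goes in the wrong direction. Since $q$ vanishes on $Y$ and is positive off $Y$, adding $Mq$ only \emph{increases} $\tau(a_i)$ for $\tau \in X \setminus Y$, so $\sup_{\tau \in X}\min_i \tau(a_i')$ is at least as large as $\sup_{\tau \in X}\min_i \tau(a_i)$ --- which is exactly the quantity you have no control over. The hypothesis \eqref{eq:CPoUTraceIneq1} of CPoU for $(\mathcal M, X)$ requires this supremum to be \emph{below} $\delta$, and no choice of $M$ (nor any ``controlled enlargement'' of $\delta$) achieves that while retaining the bound $\delta$ needed on $Y^\omega$: if you enlarge the threshold past $\sup_{\tau\in X}\min_i\tau(a_i)$ you can apply CPoU, but the resulting estimate $\tau(a_ip_i)\leq\delta''\tau(p_i)$ is then too weak. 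So as written the CPoU hypothesis over $X$ is never satisfied and no projections are produced.

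The correct manoeuvre is not to modify the $a_i$ but to \emph{augment} the family with one new element whose trace profile is complementary to theirs: let $C \coloneqq \{\tau \in T(\mathcal M) : \min_i\tau(a_i) \geq \delta\}$, a compact convex set disjoint from $Y$, and use Hahn--Banach separation (rather than Theorem~\ref{thm:exposed}) together with Proposition~\ref{prop:CP} to produce $a_0 \in \mathcal M_+$ with $\tau(a_0) > \delta$ for all $\tau \in Y$ and $\tau(a_0) < \delta$ for all $\tau \in C$. Then $\sup_{\tau\in X}\min_{0\leq i\leq k}\tau(a_i) < \delta$, so CPoU for $(\mathcal M, X)$ applies to the enlarged family and yields $k+1$ projections $p_0, p_1, \ldots, p_k$. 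This creates a second non-routine obligation that your sketch does not anticipate: one must show that the extra projection $p_0$ dies under the quotient map to $\completion{\mathcal M}{Y}^\omega$, so that $p_1,\ldots,p_k$ sum to the unit there. The paper does this by choosing, via Lemma~\ref{lem:separable-face}, a separable $A \subseteq \mathcal M$ for which $Y_A$ is a closed face, and arguing that if $\phi^\omega(p_0) \neq 0$ then $\sigma_0 \coloneqq \tau(\phi^\omega(\,\cdot\,p_0))/\tau(\phi^\omega(p_0))$ is a trace on $A$ dominated by a multiple of a trace in $Y_A$, hence lies in $Y_A$ and extends to some $\sigma \in Y$ with $\sigma(a_0) > \delta$, contradicting the CPoU conclusion for $a_0$. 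Both the complementary element $a_0$ (large on $Y$, small on $C$ --- the opposite sign pattern from your $q$) and the face argument killing $p_0$ are essential and missing from your proposal.
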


\begin{proof}
	Note that $(\mathcal N, Y) \coloneqq (\completion{\mathcal M}{Y}, Y)$ is factorial by Proposition~\ref{prop:tracial-completion}\ref{item:completion-factorial}. Define $\phi \colon \mathcal (\mathcal M, X) \rightarrow (\mathcal N, Y)$ to be the canonical map.  Since the range of $\phi$ is $\|\cdot\|_{2, Y}$-dense in $\mathcal N$, it suffices to show that for every $\delta > 0$, $a_1, \ldots, a_n \in \mathcal M$, and $S \subseteq \mathcal M$ such that $S$ is $\|\cdot\|$-separable and
	\begin{equation} \sup_{\tau \in Y} \min_{1 \leq i \leq n} \tau(\phi(a_i)) < \delta,
	\end{equation}
	there are projections $p_1, \ldots, p_n \in \mathcal M^\omega \cap S'$ such that 
	\begin{equation}\label{eq:CPoU-condition}
		\sum_{j=1}^n \phi^\omega(p_j) = 1_\mathcal {N^\omega} \qquad \text{and} \qquad \tau(\phi^\omega(a_i p_i)) < \delta \tau(\phi^\omega(p_i))
	\end{equation}
	for all $i = 1, \ldots, n$ and $\tau \in Y^\omega$.
	
	Given $\delta$, $S$ and  $a_1,\ldots,a_n$ as above, define
	\begin{equation}\label{eq:CPoU-quotient}
		C \coloneqq \{ \tau \in T(\mathcal M) : \min_{1 \leq i \leq n} \tau(a_i) \geq \delta \} \subseteq T(\mathcal M).
	\end{equation}
	Then $C$ is a compact convex subset of $T(\mathcal M)\subseteq \mathcal M^*$ disjoint from the compact convex set $Y \subseteq T(\mathcal M)$.  
	By the Hahn--Banach theorem, there is $\alpha > 0$ and self-adjoint $b \in \M$ such that $\tau(b) > \alpha$ for all $\tau \in Y$ and $\tau(b) < \alpha$ for all $\tau \in C$. Replacing $b$ with $b + (\|b\|+1)1_\M$ and $\alpha$ with $\alpha + \|b\|+1$, we may assume that $b \in \M_+$.  Finally, by scaling $b$, there exists $a_0 \in \M_+$ such that $\tau(a_0) > \delta$ for all $\tau \in Y$ and $\tau(a_0) < \delta$ for all $\tau \in C$.
	
	If $\tau \in X$ such that $\tau(a_i) \geq \delta$ for all $i = 1, \ldots, n$, then $\tau \in C$, and hence $\tau(a_0) = f(\tau) < \delta$.  Therefore, 
	\begin{equation}
		\sup_{\tau \in X} \min_{0 \leq i \leq n} \tau(a_i) < \delta.
	\end{equation}
	By Lemma~\ref{lem:separable-face}, there is a separable unital $C^*$-algebra $A \subseteq \mathcal M$ containing $S$ and $a_0$ such that the set $Y_A \subseteq T(A)$ defined in \eqref{eq:separable-face} is a face in $T(A)$.  Now, since $(\mathcal M, X)$ has CPoU, there are projections $p_0, p_1, \ldots, p_n \in \mathcal M^\omega \cap A'$ such that
	\begin{equation}\label{eq:cpou-extension}
		\sum_{j=0}^n p_j = 1_\mathcal {M^\omega} \qquad \text{and} \qquad \tau(a_i p_i) \leq \delta \tau(p_i)
	\end{equation}
	for all $i = 0, \ldots, n$ and $\tau \in X^\omega$.
	
	To verify \eqref{eq:CPoU-condition}, it suffices to show $\phi^\omega(p_0) = 0$.  Suppose this fails.  Then there is a trace $\tau \in Y^\omega$ with $\tau(\phi^\omega(p_0)) \neq 0$.  Define $\sigma_0 \colon A \rightarrow \mathbb C$ by
	\begin{equation}
		\sigma_0(a) \coloneqq \frac{\tau(\phi^\omega(a p_0))}{\tau(\phi^\omega(p_0))}, \qquad a \in A.
	\end{equation}
	Then $\sigma_0$ is a trace on $A$ that is dominated by a multiple of $\tau \circ \phi|_A \in Y_A$, and since $Y_A$ is a face in $T(A)$, we have $\sigma_0 \in Y_A$.  By the definition of $Y_A$, there is a trace $\sigma \in Y$ such that $\sigma|_A = \sigma_0$.  Then $\sigma(a_0) > \delta$ by the choice of $a_0$.  But this contradicts \eqref{eq:cpou-extension} with $i= 0$ and with $\tau \circ \phi^\omega$ in place of $\tau$.  Therefore, $\phi^\omega(p_0) = 0$, as required.
\end{proof}

As recalled at the beginning of Section~\ref{sec:cpou-def}, a $C^*$-algebra $A$ with $T(A)$ compact has CPoU in the sense of \cite{CETWW} if and only if, in our notation, the uniform tracial completion of $A$ with respect to $T(A)$ has CPoU as a tracially complete $C^*$-algebra.  As a consequence of Proposition~\ref{prop:CPoU-quotient}, CPoU passes to quotients of $C^*$-algebras.

\begin{corollary}\label{cor:CPoU-quotient}
	Let $A$ be a $C^*$-algebra with $T(A)$ compact and let $I \unlhd A$ be an ideal.  Then $T(A/I)$ is compact.  Further, if $A$ satisfies CPoU in the sense of \cite{CETWW}, then so does $A/I$.
\end{corollary}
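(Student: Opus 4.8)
\textbf{Proof plan for Corollary~\ref{cor:CPoU-quotient}.}

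The plan is to deduce this corollary from Proposition~\ref{prop:CPoU-quotient} together with the observation (recalled at the start of Section~\ref{sec:cpou-def}) that a $C^*$-algebra $B$ with $T(B)$ compact has CPoU in the sense of \cite{CETWW} precisely when its uniform tracial completion $\big(\completion{B}{T(B)}, T(B)\big)$ has CPoU as a tracially complete $C^*$-algebra. So the task reduces to two things: showing $T(A/I)$ is compact, and identifying $\big(\completion{A/I}{T(A/I)}, T(A/I)\big)$ with a quotient of $\big(\completion{A}{T(A)}, T(A)\big)$ by a closed face of traces in the sense of Proposition~\ref{prop:CPoU-quotient}.

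First I would handle compactness of $T(A/I)$. The quotient map $q \colon A \to A/I$ induces an affine weak$^*$-continuous injection $q^* \colon T(A/I) \hookrightarrow T(A)$ whose image is exactly $\{\tau \in T(A) : \tau|_I = 0\}$. This set is weak$^*$-closed in $T(A)$ (it is cut out by the conditions $\tau(x^*x) = 0$ for $x$ ranging over a dense subset of $I$, or more directly by $\tau(e_\lambda) = 0$ for an approximate unit $(e_\lambda)$ of $I$), hence compact since $T(A)$ is compact; and $q^*$ is a homeomorphism onto its image. Moreover this set is a \emph{face} in $T(A)$: if $\tau = \tfrac12(\tau_1+\tau_2)$ with $\tau|_I = 0$ and each $\tau_i$ positive, then $\tau_i|_I = 0$ because $\tau_i \le 2\tau$ on the positive cone. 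So $Y \coloneqq q^*(T(A/I))$ is a closed face of $T(A) = X$.

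The key identification is then that $\big(\completion{A/I}{T(A/I)}, T(A/I)\big) \cong \big(\completion{\completion{A}{X}}{Y}, Y\big)$ as tracially complete $C^*$-algebras, where on the right $Y$ is viewed inside $T(\completion{A}{X})$ via the identification of $X$ with the induced traces (Proposition~\ref{prop:tracial-completion}). To see this, note that for $a \in A$ one has $\|q(a)\|_{2, T(A/I)} = \|a\|_{2, Y}$ by the homeomorphism $q^* \colon T(A/I) \to Y$, so $q$ descends to a $\|\cdot\|_{2}$-isometry and hence $\completion{A/I}{T(A/I)}$ is canonically the tracial completion of (the image of) $A$ with respect to $Y$; by Proposition~\ref{prop:tracial-completion}\ref{item:completion-subset} this tracial completion of $A$ with respect to $Y \subseteq X$ agrees with $\completion{\completion{A}{X}}{\widetilde Y}$, and the limit traces match up. Since $\big(\completion{A}{X}, X\big)$ has CPoU by hypothesis (via \cite{CETWW}) and $Y$ is a closed face of $X$, Proposition~\ref{prop:CPoU-quotient} gives that $\big(\completion{\completion{A}{X}}{Y}, Y\big)$ is factorial and has CPoU; transporting along the isomorphism, $\big(\completion{A/I}{T(A/I)}, T(A/I)\big)$ has CPoU, i.e.\ $A/I$ has CPoU in the sense of \cite{CETWW}.

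The main obstacle is really just bookkeeping: making sure the face $Y = \{\tau \in T(A) : \tau|_I = 0\}$, its image in $T(\completion{A}{X})$, and the trace space $T(A/I)$ are all correctly identified, and that Proposition~\ref{prop:tracial-completion}\ref{item:completion-subset} is applied to the right pair $Y \subseteq X$. There is no genuinely hard analytic point here; all the substance sits in Proposition~\ref{prop:CPoU-quotient} (in particular in Lemma~\ref{lem:separable-face}), which we are permitted to quote.
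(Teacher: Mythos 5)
Your argument is correct and is essentially the paper's own proof: identify $T(A/I)$ with the closed face of $T(A)$ consisting of traces vanishing on $I$ (giving compactness), then apply Proposition~\ref{prop:CPoU-quotient}, using Proposition~\ref{prop:tracial-completion}\ref{item:completion-subset} to identify the uniform tracial completion of $A/I$ with the completion of $\completion{A}{T(A)}$ with respect to that face. You simply spell out the bookkeeping (face, closedness, isometric identification) that the paper leaves implicit.
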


\begin{proof}
We may identify $T(A/I)$ with the closed face $Y$ of $T(A)$ consisting of traces on $A$ vanishing on $I,$ so $T(A/I)$ is  compact.  Apply Proposition~\ref{prop:CPoU-quotient} after making this identification -- note that we are implicitly using Proposition~\ref{prop:tracial-completion}\ref{item:completion-subset} to identify the uniform tracial completion of $\completion{A}{T(A)}$ with respect to $T(A/I)$ with the uniform tracial completion of $A/I$ with respect to $T(A/I)$.
\end{proof}

Now we show that CPoU is preserved under direct limits.  While the corresponding result for the McDuff property and property $\Gamma$ followed easily from the approximate characterisations of these properties, this permanence property for CPoU will take more work.  The new difficulty arises from the supremum over traces in \eqref{eq:CPoUTraceIneq1} since traces on the finite terms of an inductive system will not generally extend to traces on the limit.  We overcome this challenge with the following lemma.

\begin{lemma}\label{lem:Technical}
	Let $(\M, X) \coloneqq \varinjlim\, \big((\M_n,X_n),\phi_n^{n+1}\big)$ be an inductive limit of tracially complete $C^*$-algebras and write the inductive limit morphisms as $\phi_n^\infty\colon (\M_n,X_n) \rightarrow (\M, X)$.  Suppose $k, m_0 \geq 1$, $\delta > 0$, and $b_1,\ldots,b_k \in \M_{m_0}$ are positive with
	\begin{equation}\label{eqn:Hypothesis}
		\sup_{\tau \in X} \min_{1 \leq i \leq k} \tau(\phi_{m_0}^\infty(b_i)) < \delta.
	\end{equation}
	Then there exists $m \geq m_0$ such that
	\begin{equation}\label{eqn:Conclusion1}
		\sup_{\tau \in X_m} \min_{1 \leq i \leq k} \tau(\phi_{m_0}^m(b_i)) < \delta,
	\end{equation}
	where $\phi_{m_0}^m \coloneqq \phi_{m-1}^{m} \circ \cdots \circ \phi_{m_0}^{m_0+1}$.
\end{lemma}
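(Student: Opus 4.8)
The statement asks us to promote an inequality involving $\sup_{\tau \in X}$ to one involving $\sup_{\tau \in X_m}$ for some finite stage $m$. The subtlety flagged in the excerpt is that a trace $\tau \in X_m$ does \emph{not} generally extend to a trace on $\M$; only those in the projective limit $X \cong \varprojlim(X_n, (\phi_n^{n+1})^*)$ do. So we cannot simply restrict. Instead, I would argue by contradiction and extract a limit trace using compactness. Suppose the conclusion fails: then for every $m \geq m_0$ there is some $\tau_m \in X_m$ with $\min_{1 \leq i \leq k} \tau_m(\phi_{m_0}^m(b_i)) \geq \delta$. I want to use these $\tau_m$ to build a trace $\tau \in X$ violating the hypothesis \eqref{eqn:Hypothesis}.

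\textbf{Building the limit trace.} For each $m \geq m_0$ and each $n$ with $m_0 \leq n \leq m$, push $\tau_m$ down to $X_n$ via $(\phi_n^m)^*$, obtaining $\sigma_{m,n} \coloneqq \tau_m \circ \phi_n^m \in X_n$ (for $n < m_0$, push down further using the connecting maps; for $n > m$ we simply ignore that coordinate until $m$ is large enough). Fix a free ultrafilter $\omega'$ on $\{m_0, m_0+1, \dots\}$. For each $n \geq m_0$, the net $(\sigma_{m,n})_{m \geq n}$ lives in the weak$^*$-compact set $X_n$, so $\sigma_n \coloneqq \lim_{m \to \omega'} \sigma_{m,n}$ exists in $X_n$. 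By continuity of the connecting maps $(\phi_n^{n+1})^*$ and the compatibility $\sigma_{m,n} = \sigma_{m,n+1} \circ \phi_n^{n+1}$ for $m \geq n+1$, the sequence $(\sigma_n)_{n \geq m_0}$ is compatible with the projective system, i.e.\ $\sigma_n = \sigma_{n+1} \circ \phi_n^{n+1}$. Extending backwards below $m_0$ (again via the connecting maps, or just by fiat since $\varprojlim$ is determined by a cofinal tail), the affine homeomorphism \eqref{eq:proj-lim-traces} produces a trace $\tau \in X$ with $\tau \circ \phi_n^\infty = \sigma_n$ for all $n$.

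\textbf{Deriving the contradiction.} Fix $i \in \{1,\dots,k\}$. For each $m \geq m_0$ we have $\tau_m(\phi_{m_0}^m(b_i)) \geq \delta$ by assumption. Now $\phi_{m_0}^m(b_i) = \phi_{m_0}^m(b_i)$ is a fixed element once we note $\sigma_{m, m_0} = \tau_m \circ \phi_{m_0}^m$, so $\sigma_{m,m_0}(b_i) = \tau_m(\phi_{m_0}^m(b_i)) \geq \delta$. Taking the limit along $\omega'$ gives $\sigma_{m_0}(b_i) = \lim_{m \to \omega'} \sigma_{m, m_0}(b_i) \geq \delta$. But $\sigma_{m_0} = \tau \circ \phi_{m_0}^\infty$, so $\tau(\phi_{m_0}^\infty(b_i)) \geq \delta$ for every $i$, hence $\min_{1 \leq i \leq k} \tau(\phi_{m_0}^\infty(b_i)) \geq \delta$, contradicting \eqref{eqn:Hypothesis}. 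Therefore some $m \geq m_0$ satisfies \eqref{eqn:Conclusion1}.

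\textbf{Expected main obstacle.} The only genuinely fiddly point is the bookkeeping around the indices $m_0$, $n$, and $m$ — in particular making sure the pushed-down traces $\sigma_{m,n}$ are coherently defined for all $n$ (including $n < m_0$, where one uses the connecting maps of the system, and $n > m$, which are simply irrelevant for the ultralimit since $\omega'$ is free and hence eventually $m > n$) and that the resulting family is genuinely an element of the projective limit so that \eqref{eq:proj-lim-traces} applies. One should also double-check that weak$^*$-limits along $\omega'$ commute with evaluation at the fixed element $b_i$, which is immediate since point-evaluation is weak$^*$-continuous. No deep input is needed beyond weak$^*$-compactness of each $X_n$ and the description of $X$ as a projective limit established in Definition~\ref{UTC:Limit} and \eqref{eq:proj-lim-traces}.
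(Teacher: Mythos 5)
Your proof is correct and is essentially the paper's own argument: both proceed by contradiction, choose $\tau_m \in X_m$ witnessing the failure at each stage, push these down to each $X_n$ via the connecting maps, take an ultralimit in each compact $X_n$ to obtain a compatible family, and use the projective-limit description \eqref{eq:proj-lim-traces} of $X$ to produce a trace violating \eqref{eqn:Hypothesis}. The only differences are notational (the paper indexes the ultralimit by the witnessing stage directly), so there is nothing further to add.
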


\begin{proof}
	Suppose the conclusion of the lemma does not hold.  For each $m \geq m_0$, since $X_m$ is compact, the failure of \eqref{eqn:Conclusion1} implies there exists $\tau_m \in X_m$ such that
	\begin{equation}
		\min_{1 \leq i \leq k} \tau_m (\phi_{m_0}^m(b_i)) \geq \delta \label{eqn:Criminal}.
	\end{equation}
	
	Fix a free ultrafilter $\omega$ on $\mathbb N$, and for every $m \geq m_0$, define 
	\begin{equation}
		\sigma_m \coloneqq \lim_{n \rightarrow \omega} \tau_n \circ \phi_m^n \in X_m,
	\end{equation} 
	which exists as $X_m$ is compact.  If $n\geq m_2 \geq m_1 \geq m_0$, then $\tau_n \circ \phi_{m_1}^n = \tau_n \circ \phi_{m_2}^n \circ \phi_{m_1}^{m_2}$ for all $n \geq m_2$. Since $\omega$ is a free ultrafilter, we obtain $\sigma_{m_1} = \sigma_{m_2} \circ \phi_{m_1}^{m_2}$. Therefore, the sequence $(\sigma_m)_{m=m_0}^\infty$ induces a trace $\sigma \in X$ such that $\sigma_m = \sigma \circ \phi^{\infty}_m$ for all $m \geq m_0$.
	
	By \eqref{eqn:Criminal}, $\min_{1 \leq i \leq k} \sigma_{m_0}(b_i) \geq \delta$. Hence, $\min_{1\leq i\leq k}\sigma(\phi_{m_0}^\infty(b_i)) \geq \delta$, contrary to \eqref{eqn:Hypothesis}. This completes the proof.
\end{proof}

With the above technical lemma in hand, we now proceed with the proof that CPoU passes to inductive limits.

\begin{proposition}\label{prop:InductiveLimitCPoU}
	Let $(\M, X) \coloneqq \varinjlim\, \big((\M_n,X_n),\phi_n^{n+1}\big)$ be an inductive limit of factorial tracially complete $C^*$-algebras.
	If $(\M_n, X_n)$ has CPoU for all $n \geq 1$, then $(\M, X)$ has CPoU as well.\footnote{Note that $(\M, X)$ is factorial by Proposition~\ref{prop:UTCInductiveLimit}.}
\end{proposition}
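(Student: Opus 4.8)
The strategy is to use the local characterisation of CPoU in Proposition~\ref{CPoU:Ultra:FiniteSet}\ref{item:CPOU-local}, which reduces matters to producing, for a fixed finite set, $\epsilon>0$, positive elements $a_1,\dots,a_k\in\M$, and $\delta>\sup_{\tau\in X}\min_i\tau(a_i)$, orthogonal positive contractions $e_1,\dots,e_k\in\M$ satisfying the three approximate conditions in \eqref{eqn:CPoU-local-form}. First I would reduce to the case that the data $a_1,\dots,a_k$ and the finite set $\mathcal F$ all lie in the image of some $\phi_{m_0}^\infty$: by the definition of the inductive limit, the subalgebra $\bigcup_n\phi_n^\infty(\M_n)$ is $\|\cdot\|_{2,X}$-dense in $\M$, so we may perturb $\mathcal F$ and the $a_i$ slightly in $\|\cdot\|_{2,X}$ (using the standard estimate \eqref{eq:SpecialHolderIneq} to control how the trace condition and the commutator conditions change under small perturbations, and keeping the strict inequality $\delta>\sup_{\tau\in X}\min_i\tau(a_i)$ by shrinking $\delta$ a little — possible since the supremum is attained as $X$ is compact). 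Here it is convenient to take the $a_i$ to be genuine lifts $\phi_{m_0}^\infty(b_i)$ with $b_i\in(\M_{m_0})_+$; again a small perturbation lets us assume the strict trace inequality \eqref{eqn:Hypothesis} holds for the lifted data.

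Next comes the key step, which is the point where the finite terms of the system differ from the usual situation: traces on $\M_m$ need not extend to traces on $\M$, so the quantity $\sup_{\tau\in X_m}\min_i\tau(\phi_{m_0}^m(b_i))$ is \emph{a priori} unrelated to $\sup_{\tau\in X}\min_i\tau(\phi_{m_0}^\infty(b_i))$. This is exactly what Lemma~\ref{lem:Technical} is designed for: given the strict inequality \eqref{eqn:Hypothesis} at the level of $\M$, there is some $m\ge m_0$ for which the analogous strict inequality \eqref{eqn:Conclusion1} holds in $\M_m$. Having fixed such an $m$, and after enlarging $m$ if necessary so that $\mathcal F$ is (up to the already-absorbed perturbation error) contained in $\phi_m^\infty(\M_m)$, I would apply CPoU for $(\M_m,X_m)$ — in its local form \ref{item:CPOU-local}, with tolerance $\epsilon$, the finite set being a lift of $\mathcal F$, the positive elements being $\phi_{m_0}^m(b_i)$, and $\delta$ as arranged — to obtain orthogonal positive contractions $e_1',\dots,e_k'\in\M_m$ satisfying the three inequalities in \eqref{eqn:CPoU-local-form} \emph{inside} $(\M_m,X_m)$.

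Finally I would push these forward: set $e_i\coloneqq\phi_m^\infty(e_i')\in\M$. Since $\phi_m^\infty$ is a $^*$-homomorphism, the $e_i$ are again orthogonal positive contractions, and since $\phi_m^\infty$ is $\|\cdot\|_{2,X_m}$-$\|\cdot\|_{2,X}$-contractive (indeed, $\phi_m^\infty{}^*(X)\subseteq X_m$ so that $\|\phi_m^\infty(\cdot)\|_{2,X}\le\|\cdot\|_{2,X_m}$), the first two inequalities in \eqref{eqn:CPoU-local-form} transfer directly. For the trace inequality, every $\tau\in X$ satisfies $\tau\circ\phi_m^\infty\in X_m$, so $\tau(a_i e_i)=\tau(\phi_m^\infty(\phi_{m_0}^m(b_i)e_i'))=(\tau\circ\phi_m^\infty)(\phi_{m_0}^m(b_i)e_i')<\delta(\tau\circ\phi_m^\infty)(e_i')+\epsilon=\delta\tau(e_i)+\epsilon$, using that $a_i=\phi_m^\infty(\phi_{m_0}^m(b_i))$. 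Tracking back through the initial perturbation (which only cost a controlled multiple of $\epsilon$, so one should run the argument with $\epsilon/C$ for a suitable absolute constant $C$) completes the verification of \ref{item:CPOU-local} for $(\M,X)$, hence CPoU by Proposition~\ref{CPoU:Ultra:FiniteSet}. The main obstacle is precisely the descent of the strict trace inequality to a finite stage, i.e.\ the content of Lemma~\ref{lem:Technical}; once that is granted, everything else is routine perturbation and functoriality bookkeeping.
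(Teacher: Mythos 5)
Your proposal is correct and follows essentially the same route as the paper: reduce to the local characterisation of CPoU, perturb the data into the image of some $\phi_{m_0}^\infty$, invoke Lemma~\ref{lem:Technical} to descend the strict trace inequality to a finite stage, apply CPoU there, and push the resulting contractions forward using contractivity of $\phi_m^\infty$ and $(\phi_m^\infty)^*(X)\subseteq X_m$. The only cosmetic difference is the $\epsilon$-bookkeeping (the paper lets the trace bound relax to $\delta+\epsilon/3$ rather than shrinking $\delta$), which is immaterial.
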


\begin{proof}
	Write the inductive limit morphisms as $\phi_n^\infty\colon(\M_n,X_n) \rightarrow (\M, X)$.
    We will use \ref{item:CPOU-local}$\Rightarrow$\ref{item:CPOU} of Proposition~\ref{CPoU:Ultra:FiniteSet}. As $\bigcup_{m=1}^\infty\phi_m^\infty(\M_m)$ is $\|\cdot\|_{2,X}$-dense in $\M$, we may assume the finite set $\mathcal F=\{x_1,\dots,x_l\}$ in the statement of Proposition~\ref{CPoU:Ultra:FiniteSet}\ref{item:CPOU-local} is contained in $\bigcup_{m=1}^\infty\phi_m^\infty(\M_m)$.  By rescaling, we may assume that each $x_j$ is a contraction. Fix $\epsilon>0$ and consider a family $a_1,\dots,a_k$ of positive elements in $\M$ and a scalar
	\begin{equation}
		\delta>\sup_{\tau\in X}\min_{1 \leq i \leq k}\tau(a_i).
	\end{equation}
	
	By the $\|\cdot\|_{2, X}$-density of $\bigcup_{m=1}^\infty\phi_m^\infty(\M_m)$ in $\M$, there are an integer $m_0 \geq 1$ and positive $b_1, \ldots, b_k \in \M_{m_0}$ such that
	\begin{equation}\label{eq:InductiveLimitCPoU1}
		\|a_i - \phi_{m_0}^\infty(b_i)\|_{2,X}<\epsilon/3.
	\end{equation}
	Since $\tau \circ \phi_{m_0}^\infty \in X_{m_0}$ for all $\tau \in X$, this implies
	\begin{equation}
		\sup_{\tau\in X}\min_{1 \leq i \leq k} \tau(\phi_{m_0}^\infty(b_i)) < \delta+\epsilon/3.
	\end{equation}
	Accordingly, by Lemma \ref{lem:Technical}, we can find $m \geq m_0$ large enough so that
	\begin{equation}\label{eqn:Conclusion}
		\sup_{\tau \in X_m} \min_{1 \leq i \leq k} \tau(\phi_{m_0}^m(b_i)) < \delta+\epsilon/3,
	\end{equation}
	and enlarging $m$ if necessary, we may find $y_1, \ldots, y_l \in \M_m$ such that $\phi_m^\infty(y_j) = x_j$ for $1 \leq j \leq l$.
	
	Now we apply CPoU in $\M_m$ using Proposition~\ref{CPoU:Ultra:FiniteSet}\ref{item:CPOU-local} to obtain pairwise orthogonal positive contractions $f_1,\dots,f_k\in\M_m$ such that
	\begin{equation}\begin{aligned}\label{eq:InductiveLimitCPoU2}
		\|[f_i,y_j]\|_{2,X_m} &<\epsilon,&&i=1,\dots,k,\ j=1\dots,l,  \\
		\tau(f_1+\cdots+f_k) &> 1-\epsilon, && \tau \in X_m, \text{ and}  \\
		\tau(\phi_{m_0}^m(b_i)f_i) &< (\delta+\epsilon/3)\tau(f_i) + \epsilon/3  \\
		&\leq \delta\tau(f_i)+2\epsilon/3, && \tau \in X_m,\ i=1,\dots,k.
	\end{aligned}\end{equation}
	
	Set $e_i\coloneqq\phi_m^\infty(f_i)\in \mathcal N$.  These are pairwise orthogonal positive contractions. By the $\|\cdot\|_{2,X_m}$-$\|\cdot\|_{2,X}$-contractivity of $\phi_m^\infty$, we have
	\begin{equation}
		\|[e_i,x_j]\|_{2,X}<\epsilon, \qquad i =1, \ldots, k, \ j=1, \ldots, l.
	\end{equation}
	As $(\phi_m^\infty)^*(X)\subseteq X_m$,
	\begin{equation}
		\tau(e_1+\dots+e_k)>1-\epsilon,\qquad \tau\in X,
	\end{equation}
	and
	\begin{equation}\begin{array}{rcl}
		\tau(a_ie_i)&\stackrel{\eqref{eq:InductiveLimitCPoU1}}\leq& \tau(\phi_{m_0}^\infty(b_i)e_i)+\epsilon/3  \\
		&=& (\phi_m^\infty)^*(\tau)(\phi_{m_0}^m(b_i)f_i)+\epsilon/3  \\
		&\stackrel{\eqref{eq:InductiveLimitCPoU2}}<&\delta(\phi_m^\infty)^*(\tau)(f_i)+\epsilon=\delta\tau(e_i)+\epsilon,\qquad \tau\in X.
	\end{array}\end{equation}
	Therefore, $(\mathcal M, X)$ has CPoU.
\end{proof}

\begin{remark}[cf.\ Remark~\ref{rem:GammaUltrapower}]\label{rem:reduced-products-CPoU}
	We will later show that a reduced product of factorial tracially complete $C^*$-algebras with CPoU is factorial and has CPoU (Corollary~\ref{cor:factorial-ultrapower}\ref{cor:factorial-ultrapower:CPoU}).  Once the reduced product is shown to be factorial, the reduced product will have CPoU since the local characterisation of CPoU in Proposition~\ref{CPoU:Ultra:FiniteSet}\ref{item:CPOU-local} is easily seen to be preserved by reduced products.
\end{remark}

The property CPoU also passes to matrix algebras.

\begin{proposition}\label{prop:matrix-CPoU}
	If $(\M, X)$ is a factorial tracially complete $C^*$-algebra with CPoU, then so is $(\M \otimes M_d, X \otimes \{\mathrm{tr}_d\})$ for all $d \in \mathbb N$.
\end{proposition}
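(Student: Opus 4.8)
\textbf{Proof proposal for Proposition~\ref{prop:matrix-CPoU}.}

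The plan is to verify the local characterisation of CPoU from Proposition~\ref{CPoU:Ultra:FiniteSet}\ref{item:CPOU-local} for $(\M \otimes M_d, X \otimes \{\mathrm{tr}_d\})$, exploiting the natural isomorphism of reduced products with matrix amplifications (Proposition~\ref{prop:matrix-ultrapower}) and the compatibility of the uniform $2$-norms on $\M$ and $\M \otimes M_d$ recorded in the proof of Proposition~\ref{prop:matrix-algebras}. The essential observation is that CPoU data for $\M \otimes M_d$ should be produced from CPoU data for $\M$, since an approximately central projection of $\M$ amplifies to an approximately central projection $p \otimes 1_{M_d}$ of $\M \otimes M_d$, and the trace condition passes through the tensor factorisation of $X \otimes \{\mathrm{tr}_d\}$.

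First I would fix a finite set $\mathcal F \subseteq \M \otimes M_d$, a scalar $\epsilon > 0$, positive elements $a_1, \ldots, a_k \in (\M \otimes M_d)_+$, and a scalar $\delta$ with
\begin{equation}
	\delta > \sup_{\tau \in X \otimes \{\mathrm{tr}_d\}} \min_{1 \leq i \leq k} \tau(a_i) = \sup_{\sigma \in X} \min_{1 \leq i \leq k} (\sigma \otimes \mathrm{tr}_d)(a_i).
\end{equation}
Writing $a_i = \sum_{r,s=1}^d a_i^{(r,s)} \otimes e_{r,s}$ with $a_i^{(r,s)} \in \M$, we have $(\sigma \otimes \mathrm{tr}_d)(a_i) = \tfrac{1}{d} \sigma\big(\sum_{r=1}^d a_i^{(r,r)}\big) = \sigma(b_i)$, where $b_i \coloneqq \tfrac{1}{d}\sum_{r=1}^d a_i^{(r,r)} \in \M_+$ (positivity of $b_i$ follows since each diagonal compression $a_i^{(r,r)}$ of the positive element $a_i$ is positive). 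Thus $\delta > \sup_{\sigma \in X} \min_{1 \leq i \leq k} \sigma(b_i)$. Next I would replace $\mathcal F$ by a finite set $\mathcal G \subseteq \M$ whose matrix entries (together with the standard matrix units $e_{r,s}$, which already commute with elements of $\M \otimes 1_{M_d}$) suffice to control commutators: concretely, write each element of $\mathcal F$ in matrix form and let $\mathcal G$ consist of all the resulting entries. Applying CPoU for $(\M, X)$ via Proposition~\ref{CPoU:Ultra:FiniteSet}\ref{item:CPOU-local}, with $\mathcal G$, a suitably small $\epsilon' > 0$ (to be chosen as a function of $\epsilon$ and $d$), the elements $b_1, \ldots, b_k$, and the same $\delta$, produces orthogonal positive contractions $f_1, \ldots, f_k \in \M$ with $\|\sum_j f_j - 1_\M\|_{2,X} < \epsilon'$, $\max_{x \in \mathcal G}\|[f_i, x]\|_{2,X} < \epsilon'$, and $\sigma(b_i f_i) - \delta \sigma(f_i) < \epsilon'$ for all $\sigma \in X$, $i = 1, \ldots, k$.

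Then I would set $e_i \coloneqq f_i \otimes 1_{M_d} \in \M \otimes M_d$; these are orthogonal positive contractions. For the unit estimate, $\|\sum_i e_i - 1_{\M \otimes M_d}\|_{2, X \otimes \{\mathrm{tr}_d\}} = \|(\sum_i f_i - 1_\M) \otimes 1_{M_d}\|_{2, X \otimes \{\mathrm{tr}_d\}} = \|\sum_i f_i - 1_\M\|_{2,X} < \epsilon'$. For the commutator estimate, if $x \in \mathcal F$ has entries $x^{(r,s)} \in \mathcal G$, then $[e_i, x] = \sum_{r,s}[f_i, x^{(r,s)}] \otimes e_{r,s}$, and the inequality $\|\cdot\|_{2, X \otimes \{\mathrm{tr}_d\}}^2 \leq \tfrac{1}{d}\sum_{r,s}\|\cdot^{(r,s)}\|_{2,X}^2$ from the proof of Proposition~\ref{prop:matrix-algebras} gives $\|[e_i, x]\|_{2, X \otimes \{\mathrm{tr}_d\}} \leq \big(\tfrac{1}{d}\sum_{r,s} \|[f_i, x^{(r,s)}]\|_{2,X}^2\big)^{1/2} \leq d^{1/2} \epsilon'$. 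For the trace estimate, for $\sigma \in X$ we compute $(\sigma \otimes \mathrm{tr}_d)(a_i e_i) = \tfrac{1}{d}\sum_{r=1}^d \sigma\big((a_i^{(r,r)})^{1/2} f_i (a_i^{(r,r)})^{1/2}\big)$ — here one uses that $f_i$ is central relative to $1_{M_d}$ to reduce $a_i e_i$ to its diagonal blocks — $= \sigma(b_i^{1/2} f_i b_i^{1/2})$ is not literally correct since $b_i$ is an average rather than a single block; instead I would note directly that $(\sigma \otimes \mathrm{tr}_d)(a_i e_i) = \tfrac{1}{d}\sum_r \sigma(a_i^{(r,r)} f_i) = \sigma(b_i f_i) < \delta \sigma(f_i) + \epsilon' = \delta(\sigma \otimes \mathrm{tr}_d)(e_i) + \epsilon'$, using tracial symmetry $\sigma(a_i^{(r,r)}f_i) = \sigma(f_i^{1/2}a_i^{(r,r)}f_i^{1/2}) \geq 0$ only for positivity bookkeeping and linearity for the identity. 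Choosing $\epsilon' \coloneqq \epsilon / (d^{1/2} + 1)$ at the outset makes all three displayed estimates in \eqref{eqn:CPoU-local-form} hold with bound $\epsilon$, so $(\M \otimes M_d, X \otimes \{\mathrm{tr}_d\})$ has CPoU by Proposition~\ref{CPoU:Ultra:FiniteSet}.

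The only mildly delicate point — which I would handle carefully rather than gloss — is the reduction of $(\sigma \otimes \mathrm{tr}_d)(a_i e_i)$ to the expression $\sigma(b_i f_i)$: this uses that $e_i = f_i \otimes 1_{M_d}$ commutes with the matrix units, so that the off-diagonal blocks of $a_i$ contribute nothing to $(\sigma \otimes \mathrm{tr}_d)(a_i e_i)$, exactly paralleling the computation $(\sigma \otimes \mathrm{tr}_d)(a_i) = \sigma(b_i)$ that defined $b_i$. Everything else is routine bookkeeping with the norm equivalence constants from Proposition~\ref{prop:matrix-algebras}. No genuine obstacle arises; factoriality of $(\M \otimes M_d, X \otimes \{\mathrm{tr}_d\})$ is already supplied by Proposition~\ref{prop:matrix-algebras}.
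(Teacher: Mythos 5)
Your proof is correct and follows essentially the same route as the paper: both arguments apply CPoU in $(\M,X)$ to the partial traces $(\mathrm{id}_\M \otimes \mathrm{tr}_d)(a_i) = \tfrac{1}{d}\sum_r a_i^{(r,r)}$, control commutators via the set of matrix entries, and amplify the resulting partition of unity by tensoring with $1_{M_d}$. The only difference is that the paper works directly with the reduced-product formulation (Definition~\ref{dfn:utcCPOU}), producing exact projections $q_i \otimes 1_{M_d}$ in $(\M \otimes M_d)^\omega$ via Proposition~\ref{prop:matrix-ultrapower}, which dispenses with your $\epsilon'$ bookkeeping; your use of the local characterisation in Proposition~\ref{CPoU:Ultra:FiniteSet} is an equally valid, if slightly more laborious, packaging of the same idea.
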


\begin{proof}
	First note that $(\M \otimes M_d, X \otimes \{\mathrm{tr_d}\})$ is a factorial tracially complete $C^*$-algebra by Proposition~\ref{prop:matrix-algebras}.  Now fix a $\|\cdot\|_{2, X \otimes \{\mathrm{tr}_d\}}$-separable set $S \subseteq \M \otimes M_d$ and let $a_1, \ldots, a_k \in (\M \otimes M_d)_+$ and $\delta > 0$ be such that
	\begin{equation}
		\sup_{\tau \in X} \min_{1 \leq i \leq k} (\tau \otimes \mathrm{tr}_d)(a_i) < \delta.
	\end{equation}
	Let $T \subseteq \M$ be the set of entries of elements of $S$ and note that $T$ is $\|\cdot\|_{2, X}$-separable.  Applying CPoU to the elements $(\mathrm{id}_\M \otimes \mathrm{tr}_d)(a_i) \in \M_+$, there are projections $q_1, \ldots, q_k \in \M^\omega \cap T'$ such that
	\begin{equation}
		\sum_{j=1}^k q_j = 1_\M \quad \text{and} \quad \tau((\mathrm{id}_\M \otimes \mathrm{tr}_d)(a_i)q_i) \leq \delta\tau(q_i)
	\end{equation}
	for all $i = 1, \dots, k$ and $\tau \in X$.  Then $p_i \coloneqq q_i \otimes 1_{M_d}$, $1 \leq i \leq k$, are projections in $(\M \otimes M_d)^\omega \cap T'$ which verify CPoU.\footnote{Note that we are using Proposition~\ref{prop:matrix-ultrapower} to identify $\M^\omega \otimes M_d$ and $(\M \otimes M_d)^\omega$.}
\end{proof}

\subsection{Property \texorpdfstring{$\Gamma$}{Gamma} implies CPoU}\label{sec:CPoU-proof}

We now turn to the proof of Theorem~\ref{introthmgammaimpliescpou}. 
In the separable setting, the proof reduces to the strategy outlined towards the end of Section~\ref{sec:intro:ltg2}, which is modelled on the proof of \cite[Theorem~3.8]{CETWW}.  

We begin with the key technical lemma.
\begin{lemma}\label{lemma:weak-cpou}
    Let $(\M,X)$ be a factorial tracially complete $C^*$-algebra. 
    Let $\delta > 0$.
    Let $q \in \M_+$ satisfy $\tau(q) > 0$ for all $\tau \in X$.
    Let $\epsilon > 0$ and let $\F \subseteq \M$ be a finite subset.
    Suppose $a_1,\ldots,a_k \in \M_+$ satisfy 
    \begin{equation}
		\min_{1 \leq i \leq k} \tau(a_i q) < \delta \tau(q)
    \end{equation}
    for all $\tau \in X$.
    Then there exist positive contractions $e_1,\ldots,e_k \in \M_+$ such that
    \begin{alignat}{2}
		\left\|\sum_{i=1}^k e_i - 1_\M\right\|_{2,X} &< \epsilon,  && \label{eq:lemma-weak-cpou-statement-1}\\
        \|[e_i,x]\|_{2,X} &<\epsilon, &&x \in \F, \, i = 1, \ldots, k,\ \text{and}  \label{eq:lemma-weak-cpou-statement-2}\\
		\tau(a_i e_i q) - \delta \tau(e_i q) &< \epsilon, & \hspace{4ex}& \tau \in X,\ i = 1, \ldots, k. \label{eq:lemma-weak-cpou-statement-3}
    \end{alignat}
\end{lemma}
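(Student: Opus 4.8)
\textbf{Proof strategy for Lemma~\ref{lemma:weak-cpou}.} The plan is to reduce the statement to the affine-selection machinery of Lemma~\ref{lem:affine-selection}, applied not directly in $(\M,X)$ but via the GNS representations, where we can invoke the finite von Neumann algebra version of CPoU already established in Corollary~\ref{cor:vNa-CPoU}. The key point is that each condition \eqref{eq:lemma-weak-cpou-statement-1}--\eqref{eq:lemma-weak-cpou-statement-3} is, when squared and read through the $2$-seminorm $\|\cdot\|_{2,\tau}$, controlled by an affine function on a suitable convex set of ``approximate partitions of unity''. So the first step is to fix the convex set $\mathcal C$ to be the set of all $k$-tuples $(e_1,\dots,e_k)$ of positive contractions in $\M$ with $\sum_i e_i \leq 1_\M$ (or one may work in a matrix amplification to linearise the contractivity constraint, exactly as in the proof of Proposition~\ref{prop:bounded-cpap}); this $\mathcal C$ is convex. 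Then for each $x\in\mathcal F$ and each $i$, the map $(e_1,\dots,e_k)\mapsto [e_i,x]$ is affine, as is $(e_1,\dots,e_k)\mapsto \sum_i e_i - 1_\M$. The trace-comparison condition \eqref{eq:lemma-weak-cpou-statement-3} is not affine in the $e_i$, but after the usual trick of replacing $e_i$ by $e_i^2$ versus $e_i$ it is controlled by the \emph{affine} quantity $\tau(a_i e_i q) - \delta\tau(e_i q)$ together with the (separately handled) near-projection condition; I would fold a term like $\|e_i - e_i^2\|_{2,\tau}$ into the list of affine functions being simultaneously made small, so that at the end the $e_i$ are genuine approximate projections and the gap between $\tau(a_ie_iq)$ and $\tau(a_ie_i^2q)$ is negligible.

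\textbf{The local step.} Next I would verify the hypothesis of Lemma~\ref{lem:affine-selection}: for each fixed $\tau\in X$ and each $\epsilon>0$, there is a tuple $(e_1,\dots,e_k)\in\mathcal C$ with all the relevant quantities $<\epsilon$ in $\|\cdot\|_{2,\tau}$. This is where Corollary~\ref{cor:vNa-CPoU} enters. Pass to the finite von Neumann algebra $\mathcal N_\tau \coloneqq \pi_\tau(\M)''$, with its canonical normal extension $\hat\tau$ of $\tau$. The positive elements $\pi_\tau(a_i)$ and $\pi_\tau(q)$ satisfy $\min_i \hat\tau(\pi_\tau(a_i)) < \delta\,\hat\tau(\pi_\tau(q))$, and this inequality in fact holds for \emph{every} normal trace on $\mathcal N_\tau$ provided we first restrict to the support of $\hat\tau$ --- here I must be slightly careful, since a priori the strict inequality is only known at $\hat\tau$ itself, not at all of $T(\mathcal N_\tau)$. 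This is the one genuine subtlety. To get around it: the quantity $\sup_{\sigma\in T(\mathcal N_\tau)}\min_i \sigma(\pi_\tau(a_i))/\sigma(\pi_\tau(q))$ need not be $<\delta$, so instead I would pick an auxiliary central cutdown. More precisely, since normal traces on $\mathcal N_\tau$ factor through the centre-valued trace and the centre is an $L^\infty$ space, the function $\sigma\mapsto \min_i\sigma(\pi_\tau(a_i)q)$ is represented by an $L^\infty$ function on the Gelfand spectrum of $Z(\mathcal N_\tau)$; on the clopen set where this function is $\geq\delta\cdot(\text{value of }q)$ we already have the right comparison ``for free'' by shrinking, and on the complement, where it is $<\delta\sigma(q)$, Corollary~\ref{cor:vNa-CPoU} directly produces central projections $p_1,\dots,p_k\in Z(\mathcal N_\tau)$ summing to $1$ with $\hat\sigma(a_i p_i q)\leq\delta\hat\sigma(p_i q)$ for all normal $\hat\sigma$. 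Taking these central $p_i$, using Kaplansky density to pull them back to positive contractions $e_i\in\M$ close in $\|\cdot\|_{2,\tau}$, and using that $p_i\in Z(\mathcal N_\tau)$ commutes with all of $\pi_\tau(\mathcal F)$, yields the desired local approximation; the commutator terms, the sum-to-unit term, the near-projection term, and the trace-comparison term are all simultaneously small in $\|\cdot\|_{2,\tau}$.

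\textbf{Globalising.} Finally I would feed the finitely many affine functions --- one for each $x\in\mathcal F$ and $i$ (the commutators), one for $\sum_i e_i - 1_\M$, one for each near-projection defect $e_i-e_i^2$, and one for each trace-comparison defect $a_ie_iq - \delta e_i q$ --- into Lemma~\ref{lem:affine-selection}. The output is a single tuple $(e_1,\dots,e_k)\in\mathcal C$ with all these quantities $<\epsilon'$ in $\|\cdot\|_{2,X}$ for a suitably chosen $\epsilon'$ (a little smaller than $\epsilon$, to absorb the passage from $e_i^2$ back to $e_i$ inside the trace-comparison term and to upgrade contractivity via the Kaplansky-type Lemma~\ref{lem:UnitBallDensity} if one has worked in an amplification). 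One subtlety in applying Lemma~\ref{lem:affine-selection} is that it is stated for \emph{real} vector spaces and functions valued in $\M$; the commutators $[e_i,x]$ need not be self-adjoint, but this is harmless since $\|[e_i,x]\|_{2,X}$ is still controlled by $\|\mathrm{Re}[e_i,x]\|_{2,X}$ and $\|\mathrm{Im}[e_i,x]\|_{2,X}$, each of which is the norm of an affine self-adjoint-valued function --- alternatively one simply notes that $b\mapsto\|b\|_{2,X}^2$ extends the relevant hypothesis verbatim. The main obstacle, as flagged, is the local step: ensuring the strict trace inequality $\min_i\tau(a_i)<\delta\tau(q)$ at a single $\tau\in X$ can be promoted to something usable inside the \emph{fibre} $\pi_\tau(\M)''$ at \emph{all} its normal traces, which forces the central-cutdown argument above rather than a naive direct appeal to Corollary~\ref{cor:vNa-CPoU}. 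Everything else is a routine assembly of approximation estimates.
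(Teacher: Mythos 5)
Your overall skeleton --- a fibrewise local step in each $\pi_\tau(\M)''$ followed by the Hahn--Banach/affine-selection globalisation of Lemma~\ref{lem:affine-selection} --- is exactly the paper's, but both of the mechanisms you supply to make the two steps work have genuine gaps. First, the local step. You correctly flag that the hypothesis $\min_i\tau(a_i)<\delta\tau(q)$ is only given at the single trace $\tau$ and not at all traces of the fibre $\mathcal N_\tau=\pi_\tau(\M)''$, but your central-cutdown fix does not close this gap: on a central piece of $\mathcal N_\tau$ where $\min_i\sigma(a_i)\geq\delta\sigma(q)$ for the fibre traces $\sigma$ supported there, \emph{no} choice of central projections summing to the unit of that piece can satisfy the comparison $\sigma(a_ip_iq)\leq\delta\sigma(p_iq)$, so nothing is obtained ``for free by shrinking''; the bad piece is a genuine obstruction, and the hypothesis at $\hat\tau$ alone (an averaged statement) does not rule it out. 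The correct resolution --- and the place factoriality is actually used --- is that $X$ is a closed face of $T(\M)$, so by Lemma~\ref{lem:GNSTraceFace} \emph{every} $\sigma\in T(\mathcal N_\tau)$ satisfies $\sigma\circ\pi_\tau\in X$; hence the hypothesis of the lemma holds at every trace of the fibre, there is no bad piece, and Corollary~\ref{cor:vNa-CPoU} applies directly with no cutdown.

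Second, the globalisation of the trace inequality \eqref{eq:lemma-weak-cpou-statement-3}. You propose to feed the ``trace-comparison defect'' $a_ie_iq-\delta e_iq$ into Lemma~\ref{lem:affine-selection} as one of the affine functions, but that lemma makes the \emph{uniform $2$-norm} of its inputs small, and $\|a_ie_iq-\delta e_iq\|_{2,\tau}$ is not small in the fibres (take $a_i=0$ on the support of $e_i$: the element is $-\delta e_iq$, whose $2$-norm is of order $\delta$); only its \emph{trace} is controlled, and an upper bound on a trace is not a $2$-norm smallness condition, so the local hypothesis of Lemma~\ref{lem:affine-selection} fails for this choice of function. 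The paper's device is to enlarge the convex set to $(3k)$-tuples $(e_i,b_i,c_i)$ with $b_i\geq 0$ and $c_i$ a sum of commutators: in the fibre one sets $\bar b_i$ to be the centre-valued-trace image of $\delta\bar e_i\bar q-\bar a_i\bar e_i\bar q$ (positive by the fibre CPoU), so that $\bar a_i\bar e_i\bar q-\delta\bar e_i\bar q+\bar b_i$ vanishes on all traces and is therefore a sum of commutators $\bar c_i$ by Fack--de la Harpe; the quantity $a_ie_iq-\delta e_iq+b_i-c_i$ \emph{is} then $2$-norm small locally, is affine in the tuple, survives the convex-combination argument, and applying $\tau\in X$ at the end (which kills $c_i$ and sees $b_i\geq 0$) recovers \eqref{eq:lemma-weak-cpou-statement-3}. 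Without some such encoding your final step does not go through. (Your worry about non-self-adjoint commutators is unnecessary: Lemma~\ref{lem:affine-selection} places no self-adjointness restriction on its $\M$-valued affine functions.)
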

\begin{proof}
    Let $\epsilon_0 > 0$.
     Fix $\tau \in X$ for now. We work in the von Neumann algebra $\mathcal N \coloneqq \pi_\tau(\M)''$. For brevity, denote $\bar{q} \coloneqq \pi_\tau(q)$, $\bar{x}\coloneqq\pi_\tau(x)$ for $x \in \F$, and $\bar{a}_i \coloneqq \pi_\tau(a_i)$ for $i=1,\ldots,k$.
     For each $\sigma \in T(\mathcal N)$, we have $\sigma \circ \pi_\tau \in X$ by Lemma~\ref{lem:GNSTraceFace}.  
     Therefore, $\sigma(\bar{q}) > 0$ for all $\sigma \in T(\mathcal N)$ and
	\begin{equation}
		\sup_{\sigma \in T(\mathcal N)} \min_{1 \leq i \leq k} \sigma(\bar{a}_i\bar{q}) < \delta \sigma(\bar{q}).
	\end{equation}
	By Corollary~\ref{cor:vNa-CPoU}, there are projections $\bar e_1, \ldots, \bar e_k \in Z(\mathcal N)$ summing to $1_{\mathcal N}$ such that $\sigma(\bar{a}_i \bar{e_i}\bar{q}) \leq \delta \sigma(\bar{e_i}\bar{q})$ for all $i = 1, \ldots, k$ and $\sigma\in T(\mathcal N)$.  Note also that $[\bar{e}_i, \bar{x}]=0$ for all $x \in\F$ and $i=1,\ldots,k$. 
    For $i = 1, \ldots, k$, define 
    \begin{equation}
		\bar b_i \coloneqq \mathrm{tr}_{\mathcal N}(\delta \bar{e}_i\bar{q}-\bar{a}_i \bar{e}_i\bar{q}) \in Z(\mathcal N),
    \end{equation}
    where $\mathrm{tr}_{\mathcal N}$ is the centre-valued trace on $\mathcal N$.     Then $\bar b_i \in \mathcal N_+$ and 
    \begin{equation}
		\sigma(\bar{a}_i \bar{e}_i\bar{q}) + \sigma(\bar{b}_i) = \delta \sigma(\bar{e}_i\bar{q}), \qquad  \sigma \in T(\mathcal N).
    \end{equation}
	Let $\bar{c}_i \coloneqq \bar{a}_i \bar{e}_i\bar{q} - \delta\bar{e}_i\bar{q} + \bar{b}_i\in\mathcal N$.  Then $\bar c_i$ vanishes on all traces, so $\bar c_i$ is a sum of commutators (see \cite[Th{\'e}or{\`e}me~2.3]{FH80}).\footnote{A standard Hahn--Banach argument implies that since $\bar c_i$ vanishes on all traces, $\bar c_i$ is in the $\|\cdot\|$-closed span of the commutators.  After adjusting the bounds, this weaker result is sufficient to run the proof without quoting \cite[Th{\'e}or{\`e}me~2.3]{FH80}}  
 
    Using Kaplansky's density theorem, followed by lifting positive contractions to positive contractions, positive elements to positive elements, and commutators to commutators, we get positive contractions $e_1^\tau,\ldots,e_k^\tau \in \M_+$, positive elements $b_1^\tau,\ldots,b_k^\tau \in \M_+$, and elements $c_1^\tau,\ldots,c_k^\tau \in \M$ that are finite sums of commutators such that
       \begin{equation}
       \begin{array}{rlrl}
        \left\|\sum_{i=1}^k e_i^\tau - 1_\M\right\|_{2,\tau} &< \epsilon_0, \quad &&\\
        \|[e_i^\tau,x]\|_{2,\tau} &<\epsilon_0, && x \in \F, \, i = 1, \ldots, k, \\
		\|a_i e_i^\tau q - \delta e_i^\tau q + b_i^\tau-c_i^\tau\|_{2,\tau} &< \epsilon_0,  &&i = 1, \ldots, k.
       \end{array}
    \end{equation}

We apply Lemma~\ref{lem:affine-selection} with $\mathcal C$ equal to the set of all $(3k)$-tuples \begin{equation}
 (e_1,\dots,e_k,b_1,\dots,b_k,c_1,\dots,c_k)\in \mathcal M^{3k}   
\end{equation} such that $e_1,\dots,e_k$ are positive contractions, $b_1,\dots,b_k$ are positive, and $c_1,\dots,c_k$ are sums of commutators (this set is evidently convex), using the affine functions given by $(e_1,\dots,c_k)\mapsto \sum_{i=1}^k e_i-1_{\mathcal M}$, $(e_1,\dots,c_k) \mapsto [x,e_i]$ for $x\in \mathcal F$ and $i=1,\dots,k$, and $(e_1,\dots,c_k) \mapsto a_i e_i q - \delta e_iq + b_i-c_i$ for all $i = 1, \ldots, k$.  The pointwise result of the previous paragraph provides the hypothesis of the lemma. Therefore, for any $\epsilon>0$ there are positive contractions $e_1, \ldots e_k \in \M$, positive elements $b_1, \ldots, b_k \in \M$, and elements $c_1, \ldots, c_k \in \M$ in the span of the commutators in $\M$ such that 
		\begin{equation} \begin{array}{rlrl}
		\Big\|\sum_{j=1}^k e_j - 1_\mathcal M \Big\|_{2, X} &< \epsilon,   \\
        \|[x, e_i]\|_{2, X} &< \epsilon, \quad x \in \F, \, i = 1, \ldots, k,\ \text{and}  \\
		\|a_i e_i q - \delta e_iq + b_i-c_i\|_{2, X} &< \epsilon,  \quad i = 1, \ldots, k. \label{eq:cpou-proof}
	\end{array}\end{equation}
        Thus, \eqref{eq:lemma-weak-cpou-statement-1} and \eqref{eq:lemma-weak-cpou-statement-2} hold.
	For any $\tau\in X$, we have $\tau(c_i)=0$ (as $c_i$ is in the span of the commutators), so that equation \eqref{eq:cpou-proof} implies
	\begin{equation}
		\tau(a_i e_i q - \delta e_iq + b_i-c_i)=\tau(a_i e_i q - \delta e_iq + b_i) < \epsilon.
	\end{equation}
	Rearranging, this gives
	\begin{equation}
		\tau(a_i e_iq) < \delta \tau(e_iq) - \tau(b_i) + \epsilon \leq \delta \tau(e_iq) + \epsilon,
	\end{equation}
	since $b_i$ is positive.  Thus, \eqref{eq:lemma-weak-cpou-statement-3} holds. 
\end{proof}

The following result is the weak form of CPoU discussed in the first step of the outline.  This was shown to hold for uniform tracial completions of nuclear $C^*$-algebras with compact trace simplex in \cite[Lemma~3.6]{CETWW} -- we extend the result to all factorial tracially complete $C^*$-algebras. 

\begin{theorem}\label{thm:weak-CPoU}
    Let $(\M, X)$ is a factorial tracially complete $C^*$-algebra. 
    Let $q \in \M^\omega \cap \M'$ be a positive contraction with $\tau(q) > 0$ for all $\tau \in X^\omega$.
    Let $S \subseteq \M^\omega$ be a $\|\cdot\|_{2, X^\omega}$-separable set.
    Let $\delta > 0$ and suppose $a_1,\ldots,a_k \in \M_+$ satisfy
    \begin{equation}
        \sup_{\tau \in X} \min_{1 \leq i \leq k} \tau(a_i) < \delta.
    \end{equation}
    Then there exist positive contractions $e_1, \ldots, e_k \in \M^\omega \cap S'$ summing to $1_{\M^\omega}$ with $\tau(a_i e_iq) \leq \delta \tau(e_iq)$ for all $\tau \in X^\omega$ and $i = 1, \ldots, k$.
\end{theorem}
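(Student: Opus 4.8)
The plan is to deduce Theorem~\ref{thm:weak-CPoU} from Lemma~\ref{lemma:weak-cpou} by a reindexing argument through the reduced power, in the spirit of the implications \ref{item:CPOU-local}$\Rightarrow$\ref{item:CPOU++} in Proposition~\ref{CPoU:Ultra:FiniteSet}, but with extra care because $q$ lives in the central sequence algebra $\M^\omega \cap \M'$ rather than in $\M$. First I would fix a countable $\|\cdot\|_{2, X^\omega}$-dense subset $\{s^{(1)}, s^{(2)}, \ldots\}$ of $S$ together with representing sequences $(s^{(i)}_n)_{n=1}^\infty$ of contractions, and a representing sequence $(q_n)_{n=1}^\infty$ of positive contractions for $q$. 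The hypothesis $\tau(q) > 0$ for all $\tau \in X^\omega$ together with the compactness of $X^\omega$ and Lemma~\ref{lem:2norm-reducedproduct} should give a single constant $\eta > 0$ such that $\tau(q) \geq \eta$ for all $\tau \in X^\omega$; pushing this to the sequence level via a standard limit-trace argument (as in the proof of Proposition~\ref{prop:reduced-product}), there is a set $N \in \omega$ such that $\tau(q_n) \geq \eta/2$ for all $\tau \in X$ and all $n \in N$.

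Next, for each $n \in N$ I would apply Lemma~\ref{lemma:weak-cpou} inside $(\M, X)$ with the positive element $q_n$, the finite set $\mathcal F_n \coloneqq \{s^{(1)}_n, \ldots, s^{(n)}_n\}$, and tolerance $\epsilon_n \coloneqq 2^{-n}$, with the same $\delta$ and the same $a_1, \ldots, a_k$ (the hypothesis $\min_i \tau(a_i) < \delta \leq (2\delta/\eta)\tau(q_n)$ for $n \in N$ is what is needed; after harmlessly increasing $\delta$ slightly we may in fact assume the strict inequality $\min_i \tau(a_i) < \delta \tau(q_n)$ holds for $n \in N$, using that the hypothesis $\sup_\tau \min_i \tau(a_i) < \delta$ is strict and replacing $\delta$ by a smaller $\delta'$ still exceeding the supremum and with $\delta' \geq (\sup_\tau \min_i\tau(a_i))$ — a minor bookkeeping point). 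This yields, for each $n \in N$, positive contractions $e_1^{(n)}, \ldots, e_k^{(n)} \in \M_+$ satisfying the three estimates \eqref{eq:lemma-weak-cpou-statement-1}--\eqref{eq:lemma-weak-cpou-statement-3} with $q_n$, $\mathcal F_n$, $\epsilon_n$. For $n \notin N$, set the $e_i^{(n)}$ arbitrarily, say $e_1^{(n)} \coloneqq 1_\M$ and $e_i^{(n)} \coloneqq 0$ otherwise. Then $e_i \in \M^\omega$ defined by the sequence $(e_i^{(n)})_{n=1}^\infty$ are positive contractions; since $\epsilon_n \to 0$ and $\{s^{(i)}_n\}$ exhausts a dense set, \eqref{eq:lemma-weak-cpou-statement-1} forces $\sum_i e_i = 1_{\M^\omega}$, and \eqref{eq:lemma-weak-cpou-statement-2} forces $e_i \in S'$, hence $e_i \in \M^\omega \cap S'$ (here I use $q \in \M^\omega$, so $q \in S$ is not required — but note $q$ may not be in $S$; this is fine, since the conclusion only needs $e_i$ to commute with $S$, not with $q$). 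For the trace estimate, fix $\tau \in X^\omega$ and a limit-trace realisation $\tau = \lim_{n \to \omega'} \tau_n$ with $\tau_n \in X$ and $\omega' \supseteq \omega$ an ultrafilter; then $\tau(a_i e_i q) = \lim_{n\to\omega'} \tau_n(a_i e_i^{(n)} q_n)$ and $\tau(e_i q) = \lim_{n\to\omega'}\tau_n(e_i^{(n)} q_n)$, and \eqref{eq:lemma-weak-cpou-statement-3} gives $\tau_n(a_i e_i^{(n)} q_n) - \delta\tau_n(e_i^{(n)} q_n) < \epsilon_n$ for $n \in N$, so passing to the limit along $\omega'$ (which contains $N$) yields $\tau(a_i e_i q) \leq \delta\tau(e_i q)$. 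Replacing the increased $\delta'$ by the original $\delta$ at the end is legitimate since $\delta' \leq \delta$. This completes the argument.

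I do not expect a genuine obstacle here: Lemma~\ref{lemma:weak-cpou} already contains all the hard analysis (the passage through von Neumann fibres, the centre-valued trace, sums of commutators, and the Hahn--Banach local-to-global step via Lemma~\ref{lem:affine-selection}), and Theorem~\ref{thm:weak-CPoU} is the routine ``lift the local statement to the reduced power'' upgrade. The one point requiring mild attention is the uniform lower bound on $\tau(q)$ and its transfer to the sequence level — without it one could not apply Lemma~\ref{lemma:weak-cpou} with a fixed $\delta$ — but this is handled by the compactness of $X^\omega$ and Lemma~\ref{lem:2norm-reducedproduct} exactly as in the existing reduced-product arguments. A second minor point is the strict-versus-non-strict inequality bookkeeping when replacing $\delta$; this is dispatched by choosing an intermediate constant.
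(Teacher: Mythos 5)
There is a genuine gap in the step where you verify the hypothesis of Lemma~\ref{lemma:weak-cpou}. What that lemma needs (with the original $\delta$) is the \emph{localised} inequality $\min_{1\leq i\leq k}\tau(a_iq_n)<\delta\,\tau(q_n)$ for all $\tau\in X$ --- this is the form actually used in its proof, where one passes to $\sup_{\sigma\in T(\mathcal N)}\min_i\sigma(\bar a_i\bar q)<\delta\sigma(\bar q)$ in each fibre. Your chain $\min_i\tau(a_i)<\delta\leq(2\delta/\eta)\tau(q_n)$ only verifies the hypothesis with $\delta$ replaced by $2\delta/\eta\geq 2\delta$, so the conclusion you can extract is $\tau(a_ie_iq)\leq(2\delta/\eta)\tau(e_iq)$, which is not the theorem. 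The proposed repair (``harmlessly increasing $\delta$ slightly'' so that $\min_i\tau(a_i)<\delta\,\tau(q_n)$) does not work: since $\tau(q_n)$ may be bounded away from $1$, this inequality is genuinely stronger than $\min_i\tau(a_i)<\delta$ and can fail outright. For instance, with $q=\tfrac1{10}1_{\M^\omega}$ and $\sup_{\tau\in X}\min_i\tau(a_i)=0.9\delta$, one has $\delta\,\tau(q_n)=\delta/10<0.9\delta$, and no perturbation of $\delta$ compatible with the original hypothesis rescues it. Note, however, that in this same example $\min_i\tau(a_iq)=\tfrac1{10}\min_i\tau(a_i)<\delta\,\tau(q)$ does hold --- which shows the localised inequality in the form involving $a_iq$ is the right target.

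The missing idea --- and the one place the paper uses factoriality, which your argument never invokes --- is the following. For $\tau\in X^\omega$, define $\tau_q(x)\coloneqq\tau(xq)/\tau(q)$ for $x\in\M$. Because $q\in\M^\omega\cap\M'$, the functional $\tau_q$ is a \emph{trace} on $\M$; it is dominated by $\tau(q)^{-1}\tau|_\M$ with $\tau|_\M\in X$, so since $X$ is a closed face of $T(\M)$ we get $\tau_q\in X$. Applying the original hypothesis to $\tau_q$ yields $\min_i\tau(a_iq)<\delta\,\tau(q)$ with the \emph{same} $\delta$, and this is then transferred to a representative sequence $(q_n)_{n=1}^\infty$ by the ultrafilter/limit-trace argument you already sketch (choosing the representatives so that $\min_i\tau(a_iq_n)<\delta\,\tau(q_n)$ and $\tau(q_n)>0$ for all $\tau\in X$ and all $n$ in a set of $\omega$). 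With that inequality in hand, the remainder of your proposal --- the diagonal application of Lemma~\ref{lemma:weak-cpou} with $\mathcal F_n$ and $\epsilon_n\to 0$, and the limit-trace verification of $\tau(a_ie_iq)\leq\delta\,\tau(e_iq)$ --- matches the paper's proof and is fine.
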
	

\begin{proof}
    Let $\tau \in X^\omega$.
    Define $\tau_q \in \M^*$ by $x \mapsto \tfrac{1}{\tau(q)}\tau(xq)$ for all $x \in \M$. 
    Since $q \in \M^\omega \cap \M'$, $\tau_q$ is a trace on $\M$. 
    The restriction of $\tau$ to $\M$ is in $X$ by compactness.
    Using this, and since $\tau_q \leq \tau(q)^{-1}\tau|_\M$, it follows that $\tau_q \in X$ because $X$ is a face.
    Since $\tau(a_iq) = \tau_q(a_i)\tau(q)$ for $i=1,\ldots,k$, we get that
    \begin{equation}
        \min_{1 \leq i \leq k} \tau(a_iq) < \delta \tau(q) \label{eqn:localised-cpou-hyp}.
    \end{equation}
    As $\tau \in X^\omega$ was arbitrary, \eqref{eqn:localised-cpou-hyp} holds for all $\tau \in X^\omega$.

    We claim that we can represent $q$ by a sequence of positive contractions $(q_n)_{n=1}^\infty$ such that
    \begin{align}
        \min_{1 \leq i \leq k} \tau(a_iq_n) &< \delta \tau(q_n), \quad \quad \mbox{ and }\label{eqn:rep-1}\\
        \tau(q_n) &> 0, \label{eqn:rep-2}
    \end{align}
    for all $n \in \N$ and all $\tau \in X$. 
    
    To see this, start with any representative sequence of positive contractions $(q_n)_{n=1}^\infty$ for $q$. 
    Let $I \subseteq \N$ be the set of all $n \in \N$ such that \eqref{eqn:rep-1} holds for all $\tau \in X$.
    Suppose for a contradiction $I \not\in \omega$. 
    Then every set in $\omega$ must intersect $\N \setminus I$, so there is a ultrafilter $\omega' \supseteq \omega$ such that $\N \setminus I \in \omega'$. 
    For each $n \in \N \setminus I$ we can find a trace $\tau_n \in X$ such that \eqref{eqn:rep-1} fails. Choosing $\tau_n \in X$ arbitrarily for $n\in I$, the sequence $(\tau_n)_{n=1}^\infty$ and the ultrafilter $\omega'$ define a limit trace such that \eqref{eqn:localised-cpou-hyp} fails. So we must have $I \in \omega$. Similarly, since $\tau(q)$ is bounded below (by compactness of $X^\omega$) there must be a set $J \in \omega$ such that \eqref{eqn:rep-2} holds for all $n \in J$ and $\tau \in X$. We now redefine $q_n$ to be $1_{\M}$ for each $n \not \in I \cap J$. Since $I \cap J \in \omega$, the redefined sequence still represents $q$.

    Since $S$ is  $\|\cdot\|_{2, X^\omega}$-separable, there is a dense sequence $(s_j)_{j=1}^\infty$.
    For each $j \in \N$, choose a representative sequence $(s_j^{(n)})_{n=1}^\infty$ for $s_j$. 
    Let $n \in \N$. Set $\F_n = \{s_1^{(n)}, \ldots,s_n^{(n)}\}$.
    By Lemma \ref{lemma:weak-cpou}, there exist positive contractions $e_1^{(n)},\ldots,e_k^{(n)} \in \M$ such that
     \begin{equation}\begin{array}{rlrl}
		\left\|\sum_{i=1}^k e_i^{(n)} - 1_\M\right\|_{2,X} &< 1/n,  && \\
        \|[e_i^{(n)},x]\|_{2,X} &< 1/n, &&x \in \F_n, \, i = 1, \ldots, k,\ \text{and}  \\
		\tau(a_i e_i^{(n)} q) - \delta \tau(e_i^{(n)} q) &< 1/n, & \hspace{4ex}& \tau \in X,\ i = 1, \ldots, k. 
    \end{array}\end{equation}
    Define $e_i \in \M^\omega$ by the representative sequence $(e_i^{(n)})_{n=1}^\infty$ for $i=1,\ldots,k$.
    Then $e_i \in \M^\omega \cap S'$ for $i=1,\ldots,k$. Moreover, $e_1, \ldots, e_k$ are positive contractions summing to $1_{\M^\omega}$ and $\tau(a_ie_iq) \leq \tau(e_iq)$ for all $\tau \in X^\omega.$
\end{proof}

The rest of the proof of Theorem \ref{introthmgammaimpliescpou} that property $\Gamma$ implies CPoU proceeds as in \cite{CETWW}.  The following proposition  is a tracially complete version of the ``tracial projectionisation'' result in \cite[Lemma~2.4]{CETWW}, and the proof is essentially identical.  For the sake of completeness, we provide a sketch.

\begin{proposition}[Projectionisation]\label{prop:projectionisation}
	Suppose $(\M, X)$ is a factorial tracially complete $C^*$-algebra with property $\Gamma$.  If $S \subseteq \M^\omega$ is a $\|\cdot\|_{2, X^\omega}$-separable subset and $e \in \M^\omega \cap S'$ is a positive contraction, then there is a projection $p \in \M^\omega \cap S'$ commuting with $e$ such that $\tau(ae) = \tau(ap)$ for all $a \in S$ and $\tau \in X^\omega$.
\end{proposition}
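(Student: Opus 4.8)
\textbf{Proof plan for Proposition~\ref{prop:projectionisation} (Projectionisation).}

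The plan is to produce $p$ by a maximality argument, using property $\Gamma$ only to provide an approximately central halving projection with which to split off ``half'' of what remains in $e$. Fix a $\|\cdot\|_{2,X^\omega}$-separable $S \subseteq \M^\omega$ and a positive contraction $e \in \M^\omega \cap S'$. Without loss of generality, enlarge $S$ so that $e \in S$ (this only strengthens the commutant condition), so that in particular $p$ will be required to commute with $e$ as well. Consider the set $P$ of all projections $p \in \M^\omega \cap S'$ with $p \leq e$ (in particular $p$ commutes with $e$) and $\tau(ap) \leq \tau(ae)$ for all $a \in S_+$ and $\tau \in X^\omega$; equivalently, we may phrase this as $\tau(ap) \le \tau(ae)$ for all $a$ in the separable operator system spanned by $S$ and all $\tau \in X^\omega$. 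Then $0 \in P$, and $P$ is partially ordered by the usual order on projections. First I would check that $P$ is closed under suprema of increasing chains, using that increasing nets of projections in $\M^\omega \cap S'$ converge in $\|\cdot\|_{2,X^\omega}$ (since the unit ball of a tracially complete $C^*$-algebra is complete and, for projections, $\|p_\lambda - p_\mu\|_{2,X^\omega}^2 = \sup_\tau \tau(p_\lambda + p_\mu - 2 p_\lambda p_\mu) = \sup_\tau \tau(p_\mu - p_\lambda)$ for $p_\lambda \le p_\mu$, which goes to $0$ along the net), and that the defining inequalities and the commutant condition are preserved under such $\|\cdot\|_{2,X^\omega}$-limits. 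By Zorn's Lemma, pick a maximal $p \in P$.

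The claim is that $\tau(ae) = \tau(ap)$ for all $a \in S$ and $\tau \in X^\omega$. Suppose not; then, since $e - p \ge 0$ and $a \mapsto \tau(a(e-p))$ is a positive functional for each $\tau$, there is $a_0 \in S_+$ and $\tau_0 \in X^\omega$ with $\tau_0(a_0(e - p)) > 0$, so $\sup_{\tau \in X^\omega}\tau((e-p)\mathbf{1}) = \sup_\tau \tau(e - p) > 0$ too. Work inside the tracially complete $C^*$-algebra $(\M^\omega \cap S', X^\omega|_{\M^\omega \cap S'})$, which is again factorial with property $\Gamma$ after a reindexing argument (this uses that property $\Gamma$ passes to the relative commutant $\M^\omega \cap S'$ — here I would invoke the standard reindexing trick, as in Remark~\ref{rmk:gamma-ultrapower-commutant}, together with Proposition~\ref{prop:Gamma}\ref{item:approx-central-projection}). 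Apply property $\Gamma$, in the form of Proposition~\ref{prop:Gamma}\ref{item:embeddingC2}, to the separable set $S \cup \{p\}$ (viewed inside a further ultrapower, or handled by reindexing) to obtain a projection $r$ in the relevant central sequence algebra commuting with $S$ and $p$ and with $\tau(br) = \frac12 \tau(b)$ for all $b$ in the separable system generated by $S \cup \{p, e\}$ and all limit traces $\tau$. Then set $p' \coloneqq p + (e - p)^{1/2} r (e-p)^{1/2}$ — except that this is not a projection, so instead one sets $p'$ to be the spectral cutoff making it a projection; more precisely, since $r$ commutes with $e$ and $p$, the element $q' \coloneqq (1-p)^{1/2}$ need not be used, and one directly takes $p' \coloneqq p + r(e - p)$, which is a positive element commuting with $S$, $p$, $e$, $r$ but still not a projection unless $e - p$ is. This is where I would instead first replace $e - p$ by an honest projection: by property $\Gamma$ one can only halve traces, not produce spectral projections of arbitrary positive elements directly, so the correct move is to iterate — but a cleaner route is available.

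The cleaner route, and the one I expect to carry out, mirrors \cite[Lemma~2.4]{CETWW} exactly: combine Theorem~\ref{thm:weak-CPoU} (weak CPoU) with a maximality argument rather than a single halving step. Concretely: let $P$ be the set of projections $p \in \M^\omega \cap S'$ with $p \le e$ and $\tau(ap) \le \tau(ae)$ for all $a \in S_+$, $\tau \in X^\omega$, ordered by size, and take $p$ maximal as above. If $\tau_0(a_0(e-p)) > 0$ for some $a_0, \tau_0$, set $q \coloneqq e - p \in \M^\omega \cap (S \cup \{p\})'$, a positive contraction with $\sup_\tau \tau(q) > 0$; pass to the tracially complete subalgebra and apply Theorem~\ref{thm:weak-CPoU} with $k = 2$, the separable set $S \cup \{p, e\}$, the element $q$, suitable $a_1 = a_0$, $a_2 = \mathbf 1$ (or a variant) and $\delta$ chosen just below $\sup_\tau \tau(a_0 q)/\tau(q)$, to extract, inside a further relative commutant, a nonzero positive contraction commuting with $S \cup \{p\}$ supported under $q$; then apply property $\Gamma$ (Proposition~\ref{prop:Gamma}\ref{item:approx-central-projection}) and Proposition~\ref{prop:TracesFiniteVNAS}\ref{prop:TracesFiniteVNAS.4}-style comparison inside the II$_1$ fibres to upgrade it to a nonzero projection $p_1 \le q$ in $\M^\omega \cap (S \cup \{p\})'$ with $\tau(a p_1) \le \tau(a q)$ for the relevant $a$'s — whereupon $p + p_1 \in P$ strictly dominates $p$, contradicting maximality. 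Finally, the strict inequalities $\tau(ap) \le \tau(ae)$ for all $a \in S_+$ combined with $\tau(p) = \tau(e)$ (the $a = \mathbf 1$ case) force $\tau((e-p)^{1/2} a (e-p)^{1/2}) = 0$ hence $\tau(a(e-p)) = 0$ for all $a \in S$, which is the desired equality on all of $S$ by linearity. \textbf{The main obstacle} I anticipate is the step turning an approximately central positive element of small but positive trace into an honest nonzero \emph{projection} in the relative commutant realising the required trace bounds — this is exactly the point where property $\Gamma$ (via approximately central projections) is indispensable and where one must be careful that the reindexing/relative-commutant gymnastics keep everything inside $\M^\omega \cap S'$ rather than an ever-larger ultrapower; managing the bookkeeping so that the maximality argument actually closes is the delicate part, though it is by now standard after \cite{CETWW}.
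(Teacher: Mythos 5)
There is a genuine gap, and it is fatal to both routes you sketch. Your maximality argument takes $P$ to be the set of projections $p \in \M^\omega \cap S'$ with $p \leq e$, aiming to show that a maximal element satisfies $\tau(ap) = \tau(ae)$. But the projection the proposition asks for is in general \emph{not} dominated by $e$: if $e = \tfrac12 1_{\M^\omega}$, the only projection $p \leq e$ is $p = 0$ (since $p \leq \tfrac12 1_{\M^\omega}$ forces $p = p\cdot p\cdot p \leq \tfrac12 p$), whereas the proposition demands a projection with $\tau(ap) = \tfrac12\tau(a)$ --- exactly the halving projection supplied by property $\Gamma$, which certainly does not sit below $\tfrac12 1_{\M^\omega}$. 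So $P$ is the wrong set, and no amount of bookkeeping will make the maximality argument terminate at the desired conclusion. Independently of this, the step you yourself flag as the main obstacle --- upgrading a nonzero approximately central positive contraction sitting under $q = e - p$ to a nonzero \emph{projection} realising the required trace inequalities --- is essentially the content of the proposition being proved, and the mechanisms you invoke do not deliver it: Proposition~\ref{prop:Gamma}\ref{item:approx-central-projection} produces an approximately central projection of constant trace, not one below a prescribed positive element; comparison in type II$_1$ fibres is unavailable because the proposition does not assume $(\M,X)$ is type II$_1$; and CPoU cannot be used because this proposition is an ingredient in the proof that $\Gamma$ implies CPoU.

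The paper's proof never tries to locate the projection inside $e$. For each $k$, property $\Gamma$ gives orthogonal projections $r_1, \dots, r_k \in \M^\omega \cap S' \cap \{e\}'$ summing to $1_{\M^\omega}$ with $\tau(br_i) = \tfrac1k \tau(b)$ for all $b \in C^*(S \cup \{e\})$ and $\tau \in X^\omega$. With $f_i$ the piecewise-linear cut-off which is $0$ on $[0,(i-1)/k]$, equal to $1$ on $[i/k,1]$, and linear in between, one sets $q \coloneqq \sum_{i=1}^k f_i(e) r_i$. Since $\sum_i f_i(t) = kt$, one gets $\tau(aq) = \tau(ae)$ exactly for $a \in S$, and since the intervals on which $f_i$ is neither $0$ nor $1$ are pairwise disjoint, $\tau(q - q^2) \leq \tfrac{1}{4k}$; Kirchberg's $\epsilon$-test then converts these approximate projections into an exact one. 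The idea you are missing is this staircase combination of the functional calculus of $e$ with the trace-dividing projections coming from $\Gamma$, which replaces $e$ by a trace-equivalent approximate projection rather than excising a projection from $e$.
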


\begin{proof}
	Let $k \geq 1$.
 Using property $\Gamma$, we may fix projections $r_1, \ldots, r_k \in \M^\omega \cap S' \cap \{e\}'$ partitioning the unit and satisfying
	\begin{equation}
		\tau(ar_i) = \frac1k \tau(a), \qquad \tau \in X^\omega,\ a \in C^*(S \cup \{e\}),\ i = 1, \ldots, k.
	\end{equation}
	For $i = 1, \ldots, k$, consider the continuous function $f_i \colon [0, 1] \rightarrow \mathbb R$ given by
	\begin{equation}
		f_i(t) \coloneqq
		\begin{cases}
			0, & 0 \leq t \leq (i - 1)/k; \\
			kt - i + 1, & (i - 1)/k \leq t \leq i/k; \\
			1, & i/k \leq t \leq 1.
		\end{cases}
	\end{equation}
	Then set $q \coloneqq \sum_{i=1}^k f_i(e) r_i \in \M^\omega \cap S' \cap \{e\}'$, which is a positive contraction.  A computation (as in \cite[Equations (2.9) and (2.10)]{CETWW})\footnote{\cite[Equation (2.7)]{CETWW} is still correct if the right side is changed to $\frac14 1_{C([0,1])}$, and using this, the right side of \cite[Equation (2.10)]{CETWW} can be improved to $1/(4n)$.} shows
	\begin{equation}
		\tau(ae) = \tau(aq) \quad\text{and}\quad \tau(q - q^2) < \frac{1}{4k},\quad \tau \in X^\omega,\ a\in S.
	\end{equation}

 The result follows from Kirchberg's $\epsilon$-test (Lemma \ref{lem:EpsTest}).
\end{proof}

Applying the above projectionisation to each of the $e_i$ from the conclusion of Theorem~\ref{thm:weak-CPoU} (with $q=1$) will produce projections which have the correct behaviour on traces, but fail to be orthogonal. As in \cite{CETWW}, this is addressed via another application of property $\Gamma$ with a construction deemed ``orthogonalisation'' in \cite{CETWW}.  The orthogonalised projections will no longer sum to the unit, but they will sum to a projection of constant trace $\frac1k$.  From here, the final stage in the proof of Theorem \ref{introthmgammaimpliescpou} -- which we restate below for the convenience of the reader -- is to use a maximality argument by repeating the construction in the complementary corner -- this is the reason for the projection $q$ in Theorem~\ref{thm:weak-CPoU}. (A geometric series argument could be used instead of the maximality argument for this last step.)  

\gammacpou*

\begin{proof}
The results of Appendix~\ref{sec:sep} reduce the theorem to the case when $\M$ is $\|\cdot\|_{2, X}$-separable.  
Namely, Theorem~\ref{thm:sep} shows that both factoriality plus property $\Gamma$ and factoriality plus CPoU are strongly separably inheritable properties;
therefore, to show one implies the other, it suffices to do so in the $\|\cdot\|_{2,X}$-separable case.
	
For the rest of the proof, we assume $\M$ is $\|\cdot\|_{2, X}$-separable. Fix a free filter $\omega$ on $\N$.   
Suppose $a_1 \ldots, a_k \in \M_+$ and $\delta > 0$ are given as in the definition of CPoU, i.e.\ they satisfy
\begin{equation}\label{eq:cpou-proof-setup}
    \sup_{\tau \in X} \min_{1 \leq i \leq k} \tau(a_i) < \delta.
\end{equation}
Let $I \subseteq [0, 1]$ denote the set of $\alpha \in [0, 1]$ such that for all $\|\cdot\|_{2, X^\omega}$-separable sets $S \subseteq \mathcal M^\omega$, there are orthogonal projections $p_1, \ldots, p_k \in \M^\omega \cap S'$ such that 
\begin{equation}
    \sum_{j=1}^k \tau(p_j) = \alpha \qquad \text{and} \qquad \tau(a_i p_i) \leq \delta \tau(p_i)
\end{equation}
for all $\tau \in X^\omega$ and $i = 1, \ldots, k$.  
Clearly $I \neq \emptyset$ as $0 \in I$, and $I$ is closed by Kirchberg's $\epsilon$-test.  
Hence $I$ contains a maximal element $\alpha$.  
It suffices to show $\alpha = 1$ since this forces $\sum_{j=1}^k p_j = 1_\mathcal {M^\omega}$. 
We will assume $\alpha < 1$ and show $\alpha + \frac1k(1 - \alpha) \in I$, which contradicts the maximality of $\alpha$.
	
Assume $\alpha <1$ and let $S \subseteq \M^\omega$ be a $\|\cdot\|_{2, X^\omega}$-separable set with $\M \subseteq S$.  
By the assumption on $\alpha$, there are mutually orthogonal projections $p_1' \ldots, p_k' \in \M^\omega \cap S'$ such that
\begin{equation}
    \sum_{j=1}^k \tau(p_j') = \alpha \qquad \text{and} \qquad \tau(a_i p_i') \leq \delta \tau(p_i')
\end{equation}
for all $\tau \in X^\omega$ and $i = 1, \ldots, k$.  
Define 
\begin{equation}
    q \coloneqq 1_{\M^\omega} - \sum_{j=1}^k p_j' \in \M^\omega \cap S' \subseteq \M^\omega \cap \M'
\end{equation}  
Note $q$ is a projection and $\tau(q) = 1 - \alpha > 0$ for all $\tau \in X^\omega$.
	
Theorem~\ref{thm:weak-CPoU} provides positive contractions $e_1, \ldots, e_k \in \M^\omega \cap (S \cup \{q\})'$ such that
\begin{align}
    \sum_{i=1}^k e_i &= 1_{\M^\omega},\label{Proof1.4:eqn-sum-to-1}\\
    \tau(a_i e_iq) &\leq \delta \tau(e_iq),\qquad \tau\in X^\omega,\ i=1,\ldots,k.\label{Proof1.4:neweq1}
\end{align}
    
We apply Proposition~\ref{prop:projectionisation} to the set $S\cup\{q\}$ and each $e_iq\in \mathcal M\cap (S\cup\{q\})'$ to obtain projections $q_1, \ldots, q_k \in \M^\omega \cap (S\cup\{q\})'$ so that $q_i$ additionally commutes with $e_iq$ and satisfies 
\begin{equation}\label{eqn:application-projectionisation}
    \tau(b e_iq) = \tau(bq_i),\qquad \tau\in X^\omega,\ b\in C^*(S\cup\{q\}),\ i=1,\ldots,k.
\end{equation} 
Let $T \subseteq \M^\omega$ be the $\|\cdot\|_{2,X^\omega}$-separable $C^*$-subalgebra generated by $S$, $q$, and the elements $a_i$, $e_i$, $q_i$ for $i=1,\ldots,k$.
Using property $\Gamma$, let $r_1, \ldots, r_k \in \M^\omega \cap T'$ be projections summing to $1_{\mathcal M^\omega}$ such that 
\begin{equation}\label{Proof1.4:neweq2}
   \tau(b r_i) = \frac1k \tau(b),\qquad \tau\in X^\omega,\ b\in T,\ i=1,\ldots,k. 
\end{equation}
	
Define $p_i'' \coloneqq q_i r_i$ for $i = 1, \ldots k$.  
These are orthogonal projections since $r_1,\dots,r_k$ are orthogonal projections which commute with all the $q_i$'s.
We have
\begin{equation}
    \sum_{j=1}^k \tau(p_i'') \stackrel{\eqref{Proof1.4:neweq2}}{=} \frac1k \sum_{j=1}^k \tau(q_i) \stackrel{\eqref{eqn:application-projectionisation}}{=} \frac1k \sum_{j=1}^k  \tau(e_iq) \stackrel{\eqref{Proof1.4:eqn-sum-to-1}}{=} \frac1k\tau(q) = \frac1k (1 - \alpha)
\end{equation}
for all $\tau \in X^\omega$.  
Further, for all $\tau \in X^\omega$ and $i = 1, \ldots, k$, we have
\begin{equation}\begin{array}{rcl}
    \tau(a_i p_i'') &\stackrel{\eqref{Proof1.4:neweq2}}{=}& \frac1k \tau(a_i q_i) \\
    &\stackrel{\eqref{eqn:application-projectionisation}}{=}& \frac1k \tau(a_i e_iq) \\
    &\stackrel{\eqref{Proof1.4:neweq1}}{\leq}& \frac{\delta}{k} \tau(e_iq) \\
    &\stackrel{\eqref{eqn:application-projectionisation}}{=}& \frac{\delta}{k} \tau(q_i)  \\
    &\stackrel{\eqref{Proof1.4:neweq2}}{=}&  \delta\tau(p_i'').
    \end{array}
\end{equation}

Since $q$ and $q_i$ are commuting projections, $q_iq$ is projection satisfying $q_iq \leq q_i$ and $q_iq \leq q$.
Moreover, $\tau(q_iq) = \tau(qq_i) = \tau (qe_iq) = \tau(e_iq) = \tau(q_i)$ for all $\tau \in X^\omega$ by \eqref{eqn:application-projectionisation}.
Hence, $q_i = q_iq \leq q$.  
Therefore, $p_i'' \leq q_i \leq q$ for all $i = 1, \ldots, k$.  
Set $p_i \coloneqq p_i' + p_i'' \in \M^\omega \cap S'$.
Since $p_1',\dots,p_k',p_1'',\dots,p_k''$ are mutually orthogonal projections, the same is true of $p_1,\dots,p_k$. 
Further, the projections $p_i$ witness $\alpha + \frac1k (1 - \alpha) \in I$, which contradicts the maximality of $\alpha$.  
\end{proof}

\section{Applications of CPoU}\label{sec:app-CPoU}

In this section we demonstrate the power of CPoU: it enables us to prove many von Neumann algebraic-type structural properties for a reduced product of a sequence of factorial tracially complete $C^*$-algebras with CPoU.

First, in Section~\ref{sec:app-CPoU-traces}, we show that a reduced product of factorial tracially complete $C^*$-algebras with CPoU is factorial, by solving the trace problem for such reduced products. Along the way, we show that such reduced products have real rank zero and comparison of projections with respect to limit traces.  We then analyse the unitary group of factorial tracially complete $C^*$-algebras with CPoU in Section~\ref{sec:app-CPoU-unitaries} and show that a uniform 2-norm dense set of unitaries are exponentials.  This also provides a stability result for unitaries showing that in the uniform 2-norm, every approximate unitary is close to a unitary, a result needed for the one-sided intertwining argument.  Finally, Section~\ref{sec:app-CPoU-projections} strengthens the comparison property obtained in Section~\ref{sec:app-CPoU-traces} mentioned above, and classifies projections up to unitary equivalence via traces, in factorial tracially complete $C^*$-algebras with CPoU, obtaining Theorems~\ref{IntroThmCtsProj} and~\ref{introthm-classprojections2}.

\subsection{Traces on reduced products}\label{sec:app-CPoU-traces}

The following lemma gives an approximate version of the existence of spectral projections in factorial tracially complete $C^*$-algebras with CPoU, and the proof is a typical application of CPoU.  The idea is the following: if $\M$ is a von Neumann algebra and $x \in \M$ is self-adjoint, then there is a projection $q \in \M$ such that $qx = x_+$; indeed, one may take $q$ to be the spectral projection of $x$ corresponding to the interval $[0, \infty)$. Then CPoU provides a method for transferring this result from tracial von Neumann algebra completions of a tracially complete $C^*$-algebra $(\M, X)$ to produce an analogous result in $\M$ up to a small $\|\cdot\|_{2, X}$-error.

\begin{lemma}\label{lem:approx-spectral-proj}
	Suppose $(\M, X)$ is a factorial tracially complete $C^*$-algebra with CPoU.  If $x \in \M$ is self-adjoint and $\epsilon > 0$, then there is a positive contraction $q \in \M$ such that
	\begin{equation}
		\|q - q^2\|_{2, X} < \epsilon \qquad \text{and} \qquad \|qx - x_+\|_{2, X} < \epsilon.
	\end{equation}
\end{lemma}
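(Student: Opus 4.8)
The plan is to reduce the statement to the fibrewise von Neumann algebra setting and then glue via CPoU, exactly as in the heuristic given just before the lemma. First I would fix $\epsilon > 0$ and a self-adjoint $x \in \M$, and for each $\tau \in X$ consider the von Neumann algebra $\mathcal N_\tau \coloneqq \pi_\tau(\M)''$. In $\mathcal N_\tau$ the element $\pi_\tau(x)$ has a genuine spectral projection $\bar q_\tau$ associated to $[0,\infty)$, so that $\bar q_\tau \pi_\tau(x) = \pi_\tau(x)_+$. Using Kaplansky's density theorem for tracially complete $C^*$-algebras (Proposition~\ref{prop:UnitBallDensity}, applied to the $\|\cdot\|_{2,\tau}$-dense subalgebra $\pi_\tau(\M)$), I can find a positive contraction $q_\tau \in \M$ with $\|\pi_\tau(q_\tau) - \bar q_\tau\|_{2,\tau}$ arbitrarily small, and hence a positive contraction $q_\tau \in \M$ satisfying $\|q_\tau - q_\tau^2\|_{2,\tau} < \epsilon$ and $\|q_\tau x - x_+\|_{2,\tau} < \epsilon$. (Here I am using that multiplication and the continuous functional calculus are $\|\cdot\|_{2,\tau}$-continuous on norm-bounded sets, and that $\bar q_\tau$ is a projection with $\bar q_\tau\pi_\tau(x)=\pi_\tau(x)_+$.)

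Next I would globalise. Set $a_\tau \coloneqq |q_\tau - q_\tau^2|^2 + |q_\tau x - x_+|^2 \in \M_+$; then $\|a_\tau\|_{2,\tau}$-type quantities are controlled, more precisely $\tau(a_\tau) < 2\epsilon^2$ after shrinking the pointwise bounds appropriately (or one can simply work with the two quantities separately). By compactness of $X$ and continuity of $\sigma \mapsto \sigma(a_\tau)$, there is a finite set $\tau_1,\dots,\tau_k \in X$ and an open cover $X = U_1 \cup \dots \cup U_k$ such that $\sigma(a_{\tau_i}) < \delta$ for $\sigma \in U_i$, where $\delta$ is chosen with $\delta < \epsilon^2$ (and with $\sup_{\sigma \in X}\min_i \sigma(a_{\tau_i}) < \delta$). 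Then I apply CPoU in the form of Proposition~\ref{CPoU:Ultra:FiniteSet}\ref{item:CPOU-local}, with the finite set $\mathcal F \coloneqq \{x\}$ (or $\mathcal F = \emptyset$, since we do not actually need the projections to be central here — but taking $\mathcal F = \{x\}$ does no harm and keeps the statement uniform), the positive elements $a_{\tau_1},\dots,a_{\tau_k}$, and the scalar $\delta$, to obtain orthogonal positive contractions $e_1,\dots,e_k \in \M$ with $\|\sum_j e_j - 1_\M\|_{2,X}$ small and $\tau(a_{\tau_i} e_i) - \delta\tau(e_i) < \epsilon'$ for all $\tau \in X$ and suitable small $\epsilon'$.

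Then I would set $q \coloneqq \sum_{i=1}^k e_i^{1/2} q_{\tau_i} e_i^{1/2}$ (or simply $q \coloneqq \sum_i e_i q_{\tau_i}$, since the $e_i$ are orthogonal and we only need $\|\cdot\|_{2,X}$-control). Using orthogonality of the $e_i$, the near-equality $\sum_i e_i \approx 1_\M$ in $\|\cdot\|_{2,X}$, and the Cauchy–Schwarz/Hölder inequalities \eqref{eq:SpecialHolderIneq}, one estimates $\|q - q^2\|_{2,X}^2$ and $\|qx - x_+\|_{2,X}^2$ by sums over $i$ of terms bounded by $\tau(a_{\tau_i}e_i)$-type quantities, which are in turn controlled by $\delta\tau(e_i) + \epsilon'$; summing and using $\sum_i \tau(e_i) \approx 1$ gives the desired bounds once all auxiliary constants are chosen small enough. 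I expect the main obstacle to be the bookkeeping in this final estimate: one must be careful that the cross-terms arising from $q^2 = \sum_{i,j} e_i q_{\tau_i} e_j q_{\tau_j}$ and from $qx$ genuinely collapse (using orthogonality $e_i e_j = 0$ for $i \neq j$ only after passing to the reduced product, or using that $e_i, e_j$ are \emph{approximately} orthogonal positive contractions in $\M$ so that $\|e_i e_j\|_{2,X}$ is small), and that the approximation $\sum_i e_i \approx 1_\M$ interacts correctly with the norm-bounded elements $q_{\tau_i}$ and $x$. A clean way to handle this is to phrase the whole argument in $\M^\omega$ using Proposition~\ref{CPoU:Ultra:FiniteSet}\ref{item:CPOU++} (getting genuine orthogonal projections $p_1,\dots,p_k$ summing to $1_{\M^\omega}$), produce the exact element $q \coloneqq \sum_i p_i q_{\tau_i} \in \M^\omega$ with $q = q^2$ and $qx = x_+$ on all limit traces, and then extract a single approximating contraction in $\M$ from a representative sequence; this converts the delicate estimates into a routine reindexing argument.
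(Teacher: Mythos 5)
Your proposal is correct and is essentially the paper's proof: fibrewise spectral projections, Kaplansky's density theorem to pull back to positive contractions $q_\tau \in \M$, the combined error element $a_\tau$, compactness of $X$, and then CPoU in the reduced product to patch with genuinely orthogonal projections $p_i$ before extracting a representative. One small correction: the element $q = \sum_i p_i q_{\tau_i} \in \M^\omega$ is \emph{not} exactly a projection there (the fixed errors $|q_{\tau_i} - q_{\tau_i}^2|$ do not vanish in the ultrapower); one only gets $\|q - q^2\|_{2,X^\omega}^2 + \|qx - x_+\|_{2,X^\omega}^2 \leq \sum_i \tau(a_{\tau_i} p_i) < \epsilon^2$, but this is all that is needed to extract the desired contraction in $\M$ from a representative sequence.
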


\begin{proof}
	For each $\tau \in X$, let $\bar q_\tau \in \pi_\tau(\M)''$ be the spectral projection of $\pi_\tau(a)$ corresponding to the interval $[0, \infty)$ so that $\bar q_\tau \pi_\tau(x) = \pi_\tau(x_+)$.  By Kaplansky's density theorem, there is a positive contraction $q_\tau \in \M$ such that
	\begin{equation}
		\|q_\tau - q_\tau^2\|_{2, \tau} < \frac{\epsilon}{\sqrt3} \qquad \text{and} \qquad \| q_\tau x - x_+ \|_{2, \tau} < \frac{\epsilon}{\sqrt3}.
	\end{equation}
	For all $\tau \in X$, define 
	\begin{equation}
		a_\tau \coloneqq |q_\tau - q_\tau^2|^2 +  |q_\tau x - x_+|^2 \in \M_+
	\end{equation} 
	and note that $\tau(a_\tau) < 2\epsilon^2/3$.  For $\tau \in X$, let
	\begin{equation}
		U_\tau \coloneqq \Big\{\sigma \in X : \sigma(a_\tau) < \frac{2\epsilon^2}{3}\Big\},
	\end{equation}
	which is an open neighbourhood of $\tau$.  As $X$ is compact, there are traces $\tau_1, \ldots, \tau_k \in X$ such that $(U_{\tau_i})_{i=1}^k$ is an open cover of $X$.  Therefore,
	\begin{equation}
		\sup_{\tau \in X} \min_{1 \leq i \leq k} \tau(a_{\tau_i}) < \frac{2\epsilon^2}{3}.
	\end{equation}
	
	Set $S \coloneqq \{q_{\tau_i}: i=1,\ldots,k\} \cup \{x\}$.	As $(\M, X)$ has CPoU, there are pairwise orthogonal projections $p_1, \ldots, p_k \in \M^\omega \cap S'$ such that
	\begin{equation}
		\sum_{i=1}^k p_i = 1_{\M^\omega} \qquad \text{and} \qquad \tau(a_{\tau_i} p_i) \leq \frac{2\epsilon^2}{3}\tau(p_i)
	\end{equation}
	for all $1 \leq i \leq k$ and $\tau \in X^\omega$. Let $q \coloneqq \sum_{i=1}^k q_{\tau_i} p_i \in \M^\omega_+$. Since $p_1, \ldots, p_k$ are mutually orthogonal projections summing to the identity which commute with $\{q_{\tau_i}: i=1,\ldots,k\} \cup \{x\}$, we have
	\begin{equation}\begin{split}
		|q - q^2|^2 + |qx - x_+|^2 &= \sum_{i=1}^k |q_{\tau_i}^2 - q_{\tau_i}|^2p_i + \sum_{i=1}^k |q_{\tau_i}x - x_+|^2p_i \\
		&= \sum_{i=1}^k a_{\tau_i}p_i.
	\end{split}\end{equation}	
	Now we compute
	\begin{equation}\begin{split}
		\|q - q^2\|_{2, \tau}^2 + \|qx - x_+\|_{2, \tau}^2 &= \sum_{i=1}^k \tau(a_{\tau_i} p_i) \leq \frac{2\epsilon^2}{3} \sum_{i=1}^k \tau(p_i) < \epsilon^2
	\end{split}\end{equation} 
	for all $\tau \in X^\omega$.  If $(q_n)_{n=1}^\infty$ is a sequence of positive contractions in $\M$ representing $q$, then for $\omega$-many $n$,
	\begin{equation}
		\|q_n - q_n^2\|_{2, X} < \epsilon \qquad \text{and} \qquad \| q_nx - x_+\|_{2, X} < \epsilon. \qedhere
	\end{equation}
\end{proof}

We will prove a stronger version of the following result in Corollary~\ref{cor:stable-rank-one} once we show reduced products of factorial tracially complete $C^*$-algebras with CPoU are factorial and have CPoU.

\begin{proposition}\label{prop:real-rank-zero}
	Let $\big((\M_n, X_n)\big)_{n=1}^\infty$ be a sequence of factorial tracially complete $C^*$-algebras with CPoU. Then $\prod^\omega \M_n$ has real rank zero.
\end{proposition}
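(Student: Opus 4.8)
The plan is to verify the standard characterisation of real rank zero: every self-adjoint element of $\prod^\omega \M_n$ is a $\|\cdot\|$-limit of self-adjoint elements with finite spectrum. So fix a self-adjoint $x \in \prod^\omega \M_n$, represented by a sequence of self-adjoint elements $(x_n)_{n=1}^\infty$ (replace any representative by its real part), put $R \coloneqq \|x\|$, fix $\epsilon > 0$, and choose $-R = s_0 < s_1 < \dots < s_L = R$ with $s_i - s_{i-1} < \epsilon$ for all $i$. The goal is to build, inside $\prod^\omega \M_n$, \emph{genuine} mutually nested projections $1_{\M^\omega} = q_0 \geq q_1 \geq \dots \geq q_{L-1} \geq q_L = 0$, each commuting with $x$, with $q_i(x - s_i) = (x - s_i)_+$ exactly for $1 \leq i \leq L-1$. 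Granting this, set $y \coloneqq \sum_{i=0}^{L-1} s_i (q_i - q_{i+1})$; since the $q_i - q_{i+1}$ are orthogonal projections summing to $1_{\M^\omega}$, the element $y$ is self-adjoint with $\sigma(y) \subseteq \{s_0, \dots, s_{L-1}\}$ finite, and it remains to check $\|x - y\| < \epsilon$.

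To produce the $q_i$, first establish a ``staircase'' strengthening of Lemma~\ref{lem:approx-spectral-proj} in each $(\M_n, X_n)$ (which is factorial with CPoU): for every $\delta > 0$ there are positive contractions $q_{1,n}, \dots, q_{L-1,n} \in \M_n$ with
\begin{align*}
\|q_{i,n} - q_{i,n}^2\|_{2,X_n} &< \delta, \\
\|q_{i,n}(x_n - s_i) - (x_n - s_i)_+\|_{2,X_n} &< \delta, \\
\|q_{i,n} q_{j,n} - q_{j,n}\|_{2,X_n} &< \delta, \qquad 1 \leq i < j \leq L-1.
\end{align*}
The proof runs exactly as that of Lemma~\ref{lem:approx-spectral-proj}: in each $\pi_\tau(\M_n)''$ take the \emph{nested} spectral projections of $\pi_\tau(x_n)$ at the cut points $s_i$, which satisfy all three relations exactly; approximate this finite tuple in $\|\cdot\|_{2,\tau}$ by positive contractions from $\M_n$ via Kaplansky density (the nesting defect is controlled using $\|\cdot\|_{2,\tau}$-continuity of multiplication on bounded sets, cf.\ \eqref{eq:SpecialHolderIneq}); encode the three families of defects as a single positive element of $\M_n$ with small trace at $\tau$; and patch over $\tau \in X_n$ using compactness of $X_n$ and CPoU of $\M_n$, exactly as in the proof of Lemma~\ref{lem:approx-spectral-proj}. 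Applying this with $\delta = 1/n$ and putting $q_i \coloneqq (q_{i,n})_{n=1}^\infty \in \prod^\omega \M_n$, Lemma~\ref{lem:2norm-reducedproduct} turns each defect into an \emph{exact} identity in $\prod^\omega \M_n$: the $q_i$ are genuine projections, $q_i q_j = q_j$ for $i < j$ (so $q_1 \geq \dots \geq q_{L-1}$), and $q_i(x - s_i) = (x - s_i)_+$ (using that continuous functional calculus commutes with the quotient $^*$-homomorphism $\prod_n \M_n \to \prod^\omega \M_n$, so $((x_n - s_i)_+)_n$ represents $(x - s_i)_+$); taking adjoints of the last identity gives $q_i x = x q_i$.

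It remains to estimate $\|x - y\|$. Write $g_i(t) \coloneqq (t - s_i)_+ - (t - s_{i+1})_+$, so $0 \leq g_i(t) \leq s_{i+1} - s_i$ for all $t$ and $\|g_i(x)\| = \sup_{t \in \sigma(x)} |g_i(t)| \leq s_{i+1} - s_i$. A short computation using only the relations $q_i(x-s_i) = (x-s_i)_+ = (x-s_i)q_i$, $q_i(x-s_i)_+ = (x-s_i)_+$, $q_i(x-s_i)_- = 0$, and the nesting $q_{i+1} \leq q_i$ (conventions $q_0 = 1_{\M^\omega}$, $q_L = 0$) shows $(q_i - q_{i+1})(x - s_i) = (q_i - q_{i+1}) g_i(x)$ for each $0 \leq i \leq L-1$. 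Since $x = \sum_{i=0}^{L-1}(q_i - q_{i+1})x$ and the orthogonal projections $q_i - q_{i+1}$ commute with $x$, we get $x - y = \sum_{i=0}^{L-1}(q_i - q_{i+1}) g_i(x)$, an orthogonal sum of self-adjoint elements, whence $\|x - y\| = \max_{0 \leq i \leq L-1} \|(q_i - q_{i+1}) g_i(x)\| \leq \max_i \|g_i(x)\| \leq \max_i (s_{i+1} - s_i) < \epsilon$. This completes the proof.

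The main obstacle is conceptual rather than computational. One might worry that the estimates coming from CPoU live only in the uniform $2$-norm, whereas real rank zero demands \emph{operator}-norm approximation and the fibres $\M_n$ need not have real rank zero. The resolution is that in the reduced product the $2$-norm-null defects collapse to \emph{exact} algebraic identities among genuine projections, and the bound $\|x - y\| < \epsilon$ is then a purely $C^*$-algebraic consequence of those identities and the staircase structure --- no operator-norm control in the fibres is used. The one technical point requiring care is the nesting of the approximate spectral projections; this is why the staircase strengthening of Lemma~\ref{lem:approx-spectral-proj} (constructing the $q_{i,n}$ jointly so that $q_1 \geq \dots \geq q_{L-1}$ survives passage to $\prod^\omega \M_n$) is needed, rather than $L-1$ independent applications of that lemma.
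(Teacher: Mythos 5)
Your argument is correct, but it takes a noticeably longer route than the paper's. The paper applies Lemma~\ref{lem:approx-spectral-proj} once (with $\delta=1/n$ in each $\M_n$) to produce, in the reduced product, a \emph{single} genuine projection $q$ commuting with $x$ and satisfying $qx=x_+$; it then observes that $y\coloneqq x+\epsilon(2q-1_{\M^\omega})=(x_++\epsilon)q-(x_-+\epsilon)q^\perp$ is an invertible self-adjoint element with $\|x-y\|\leq\epsilon$, which verifies the definition of real rank zero directly. You instead build an entire approximate spectral resolution --- a nested staircase of projections at the cut points $s_i$ --- and approximate $x$ in norm by the finite-spectrum element $\sum_i s_i(q_i-q_{i+1})$, invoking the (easy direction of the) Brown--Pedersen characterisation of real rank zero via density of finite-spectrum self-adjoints. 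This costs you a strengthened, multi-projection version of Lemma~\ref{lem:approx-spectral-proj}: the nesting relations $q_iq_j=q_j$ must be carried through the Kaplansky approximation and the CPoU patching, which your sketch handles correctly by folding all the defect terms into the single positive element $a_\tau$ before patching, exactly as in the proof of that lemma. Your identities in the reduced product ($q_i(x-s_i)_-=0$, $q_i(x-s_{i+1})_+=(x-s_{i+1})_+$ via the nesting, and hence $(q_i-q_{i+1})(x-s_i)=(q_i-q_{i+1})g_i(x)$) and the orthogonality argument giving $\|x-y\|<\epsilon$ all check out. What your approach buys is the full spectral resolution of $x$ by genuine projections in $\prod^\omega\M_n$, which is a slightly stronger output; what the paper's approach buys is brevity, needing only the lemma as stated and no auxiliary characterisation of real rank zero.
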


\begin{proof}
	Define $(\M, X) \coloneqq \big(\prod^\omega \M_n, \sum^\omega X_n\big)$ and let $x \in \M$ be self-adjoint.  Let $(x_n)_{n=1}^\infty \in \prod_{n=1}^\infty \M_n$ be a self-adjoint element lifting $x$ and then use Lemma~\ref{lem:approx-spectral-proj} to produce a positive contraction $(q_n)_{n=1}^\infty \in \prod_{n=1}^\infty \M_n$ such that for all $n \geq 1$,
	\begin{equation}
		\|q_n - q_n^2\|_{2, X_n} < \frac1n \qquad \text{and} \qquad \|q_n x_n - (x_n)_+\|_{2, X_n} < \frac1n.
	\end{equation}
	Let $q \in \M^\omega$ denote the image of $(q_n)_{n=1}^\infty$.  Then we have
	\begin{equation}
		\|q - q^2\|_{2, X} = 0 \qquad \text{and} \qquad \|qx - x_+ \|_{2, X} = 0.
	\end{equation}
	Hence $q$ is a projection with $q x = x_+$, which also implies that $x$ and $q$ commute.
	
	Fix $\epsilon > 0$ and note that 
	\begin{equation}
		y \coloneqq x +\epsilon(2q - 1_\M) = (x_+ + \epsilon)q - (x_- + \epsilon)q^\perp \in \M
	\end{equation}
	is self-adjoint and invertible with $\|x - y\| \leq \epsilon$. Hence the invertible self-adjoint elements in $\M$ form a dense subset of the self-adjoint elements of $\M$.  Therefore, $\M$ has real rank zero.
\end{proof}

Next we consider strict comparison of projections in tracially complete $C^*$-algebras.  Just as Murray--von Neumann subequivalence of projections in finite von Neumann algebras is determined by traces,  we will show in Theorem~\ref{thm:comparison} that a factorial tracial complete $C^*$-algebra $(\M, X)$ with CPoU has comparison of projections with respect to $X$, i.e.\ that a projection $p\in\M$ is Murray--von Neumann subequivalent to a projection $q\in\M$ if and only if $\tau(p) \leq \tau(q)$ for all $\tau \in X$. Before doing this, we need the following approximate comparison result, which will lead to comparison of projections in reduced products.

\begin{lemma}\label{lem:approximate-comparison}
	Suppose $(\M, X)$ is a factorial tracially complete $C^*$-algebra with CPoU, $\epsilon > 0$, and $p, q \in \M$ are positive contractions such that
	\begin{equation}\label{eq:almost-projection}
		\tau(p - p^2) < \epsilon, \quad \tau(q - q^2) < \epsilon, \quad \text{and} \quad \tau(p) < \tau(q) + \epsilon
	\end{equation}
	for all $\tau \in X$.  Then there is a contraction $v \in \M$ such that
	\begin{equation}\label{eq:approx-subequivalence}
		\| v^*q v - p \|_{2, X} < \big(2 \sqrt{2} + \sqrt{5}\big)\sqrt{\epsilon}.
	\end{equation}
\end{lemma}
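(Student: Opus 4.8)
The plan is to build $v$ fibrewise, using von Neumann algebra comparison theory in each GNS representation, and then to patch the local pieces together with CPoU. First I would fix $\tau\in X$ and work in the finite von Neumann algebra $\mathcal N\coloneqq\pi_\tau(\M)''$ with its faithful normal trace $\hat\tau$, writing $\bar p\coloneqq\pi_\tau(p)$ and $\bar q\coloneqq\pi_\tau(q)$. Since $(\M,X)$ is factorial, Lemma~\ref{lem:GNSTraceFace} shows every trace $\sigma$ on $\mathcal N$ satisfies $\sigma\circ\pi_\tau\in X$, so the hypotheses \eqref{eq:almost-projection} pass to $\mathcal N$: $\sigma(\bar p-\bar p^2)<\epsilon$, $\sigma(\bar q-\bar q^2)<\epsilon$, and $\sigma(\bar p)<\sigma(\bar q)+\epsilon$ for every trace $\sigma$ on $\mathcal N$. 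Let $e\coloneqq\chi_{[1/2,1]}(\bar p)$ and $f\coloneqq\chi_{[1/2,1]}(\bar q)$ be the Borel spectral projections. Using the elementary estimates $g(t)^2\leq g(t)\leq 2(t-t^2)$ on $[0,1]$ for $g(t)\coloneqq\min(t,1-t)$, together with $|e-\bar p|=g(\bar p)$ and $|f-\bar q|=g(\bar q)$, one gets $\|\bar p-e\|_{2,\sigma}^2<2\epsilon$, $\|\bar q-f\|_{2,\sigma}^2<2\epsilon$, and $|\sigma(e)-\sigma(\bar p)|,\,|\sigma(f)-\sigma(\bar q)|<2\epsilon$ for all traces $\sigma$; combining these with $\sigma(\bar p)<\sigma(\bar q)+\epsilon$ yields $\sigma(e)<\sigma(f)+5\epsilon$ for every trace $\sigma$ on $\mathcal N$.

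Next I would invoke the comparison theorem for projections in $\mathcal N$: there is a central projection $z$ with $ze$ Murray--von Neumann subequivalent to $zf$ and $z^\perp f$ subequivalent to $z^\perp e$. Choosing implementing partial isometries and gluing them, I obtain a subprojection $e_0\leq e$ and a partial isometry $w\in\mathcal N$ with $w^*w=e_0$ and $ww^*\leq f$, where $e_0=ze$ on the $z$-corner and $z^\perp e_0\sim z^\perp f$ on the $z^\perp$-corner. Applying the inequality $\sigma(e)<\sigma(f)+5\epsilon$ to the tracial states $x\mapsto\sigma(z^\perp x)/\sigma(z^\perp)$ on $\mathcal N$, a short computation gives $\sigma(e-e_0)<5\epsilon$ for every trace $\sigma$ on $\mathcal N$. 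Since $ww^*\leq f$ forces $w^*fw=e_0$, we have $w^*\bar q w=e_0+w^*(\bar q-f)w$, so by the triangle inequality and $\|w\|\leq 1$,
\begin{equation*}
\|w^*\bar q w-\bar p\|_{2,\hat\tau}\leq\|w^*(\bar q-f)w\|_{2,\hat\tau}+\|e_0-e\|_{2,\hat\tau}+\|e-\bar p\|_{2,\hat\tau}<\sqrt{2\epsilon}+\sqrt{5\epsilon}+\sqrt{2\epsilon}=(2\sqrt2+\sqrt5)\sqrt\epsilon.
\end{equation*}
As this bound is strict, Kaplansky's density theorem (applied to $\pi_\tau(\M)\subseteq\mathcal N$, followed by lifting a contraction back to a contraction in $\M$) produces a contraction $v_\tau\in\M$ with $\|v_\tau^*qv_\tau-p\|_{2,\tau}<(2\sqrt2+\sqrt5)\sqrt\epsilon$.

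For the global step, the function $\sigma\mapsto\|v_\tau^*qv_\tau-p\|_{2,\sigma}$ is continuous on $X$, so there is an open neighbourhood $U_\tau$ of $\tau$ on which it stays below $(2\sqrt2+\sqrt5)\sqrt\epsilon$; compactness gives $X=U_{\tau_1}\cup\cdots\cup U_{\tau_k}$. Set $v_i\coloneqq v_{\tau_i}$, $a_i\coloneqq(v_i^*qv_i-p)^2\in\M_+$, and $S\coloneqq\{v_i,\,v_i^*qv_i:1\leq i\leq k\}\cup\{p,q\}$; then $\sup_{\sigma\in X}\min_i\sigma(a_i)<\delta\coloneqq(2\sqrt2+\sqrt5)^2\epsilon$. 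Fixing $\delta'\in(\sup_\sigma\min_i\sigma(a_i),\delta)$, Definition~\ref{dfn:utcCPOU} provides orthogonal projections $p_1,\dots,p_k\in\M^\omega\cap S'$ summing to $1_{\M^\omega}$ with $\sigma(a_ip_i)\leq\delta'\sigma(p_i)$ for all $\sigma\in X^\omega$. Putting $v\coloneqq\sum_i v_ip_i\in\M^\omega$ and using that the $p_i$ are orthogonal projections commuting with $S$, one computes $v^*v=\sum_i v_i^*v_ip_i\leq 1_{\M^\omega}$, $v^*qv=\sum_i p_iv_i^*qv_i$, and $(v^*qv-p)^2=\sum_i p_ia_i$, so $\|v^*qv-p\|_{2,X^\omega}^2\leq\delta'<\delta$. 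Finally, lifting the $p_i$ to orthogonal positive contractions $e_i^{(n)}\in\M$ and setting $v_n\coloneqq\sum_i(e_i^{(n)})^{1/2}v_i(e_i^{(n)})^{1/2}$ gives a sequence of contractions in $\M$ representing $v$, so by Lemma~\ref{lem:2norm-reducedproduct} one has $\|v_n^*qv_n-p\|_{2,X}<(2\sqrt2+\sqrt5)\sqrt\epsilon$ for $\omega$-many $n$, and any such $v_n$ is the desired $v$.

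The substantive part is the fibrewise construction: one has to turn the three approximate conditions into a genuine Murray--von Neumann subequivalence inside each $\pi_\tau(\M)''$, carry the $5\epsilon$ of tracial slack through the central decomposition supplied by the comparison theorem, and keep the estimates tight enough to land the exact constant $2\sqrt2+\sqrt5$. Once that local statement is in hand, the compactness argument and the CPoU patching are routine applications of the machinery of Sections~\ref{sec:CPoU} and~\ref{sec:InductiveLimitsETC}, as is the final passage from $\M^\omega$ back to $\M$.
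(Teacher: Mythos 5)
Your proposal is correct and follows essentially the same route as the paper's proof: level-$1/2$ spectral projections with the $2(t-t^2)$ estimate, the comparison theorem with a central projection $z$ and the normalised trace on the $z^\perp$-corner to get the $5\epsilon$ tracial slack, the decomposition yielding $2\sqrt{2\epsilon}+\sqrt{5\epsilon}$, Kaplansky, and then compactness plus CPoU applied to $a_i=|v_i^*qv_i-p|^2$. The only (immaterial) difference is that you secure strictness of the final inequality by inserting $\delta'$ at the CPoU step, whereas the paper shrinks $\epsilon$ to some $\epsilon'<\epsilon$ at the outset using compactness of $X$.
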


\begin{proof}
	Using that $X$ is compact, there is $\epsilon' \in (0, \epsilon)$ with 
	\begin{equation}\label{eq:almost-projection-prime}
		\tau(p - p^2) < \epsilon', \quad \tau(q - q^2) < \epsilon', \quad \text{and} \quad \tau(p) < \tau(q) + \epsilon'
	\end{equation}
	for all $\tau \in X$.	We first show that for each $\tau \in X$, there is a partial isometry $\bar v_\tau \in \pi_\tau(\M)''$ such that
	\begin{equation}\label{eq:vN-approx-subequiv}
		\| \bar v_\tau^* \pi_\tau(q) \bar v_\tau - \pi_\tau(p) \|_{2, \tau} < \delta \coloneqq \big(2 \sqrt{2} + \sqrt{5}\big)\sqrt{\epsilon'}.
	\end{equation}
	Fix $\tau \in X$ and let $p_\tau, q_\tau \in \pi_\tau(\M)''$ denote the spectral projections of $\pi_\tau(p)$ and $\pi_\tau(q)$ corresponding to the interval $[1/2, 1]$.  Then
	\begin{equation}\begin{split}\label{eq:close-projection}
|\pi_\tau(p)-p_\tau|^2\leq	|\pi_\tau(p) - p_\tau| &\leq 2(\pi_\tau(p) - \pi_\tau(p)^2) \qquad \text{and}  \\
		|\pi_\tau(q)-q_\tau|^2\leq|\pi_\tau(q) - q_\tau| &\leq 2(\pi_\tau(q) - \pi_\tau(q)^2),
	\end{split}\end{equation}
 so \eqref{eq:almost-projection-prime} gives
 \begin{equation}\label{eq:lem:approx-comparison-neweq1}
\|\pi_\tau(p)-p_\tau\|_{2,\tau} < \sqrt{2\epsilon'}\text{ and }\|\pi_\tau(q)-q_\tau\|_{2,\tau} < \sqrt{2\epsilon'}.
 \end{equation}
	By Lemma~\ref{lem:GNSTraceFace}, we have $\sigma \circ \pi_\tau \in X$ for all $\sigma \in T(\pi_\tau(\M)'')$, and so  \eqref{eq:almost-projection-prime} and \eqref{eq:close-projection} imply that 
	\begin{equation}\label{eq:almost-smaller-trace}
		\sigma(p_\tau) < \sigma(q_\tau) + 5\epsilon', \qquad \sigma \in T(\pi_\tau(\M)'').
	\end{equation}
	
	By the general comparison theorem for projections in von Neumann algebras (see for example \cite[Theorem V.1.8]{Tak79}), there exists a central projection $z_\tau \in \pi_\tau(\M)''$ with $z_\tau p_\tau \precsim z_\tau q_\tau$ and $z_\tau^\perp q_\tau \precsim z_\tau^\perp p_\tau$. Fix a partial isometry $\bar v_\tau \in \pi_\tau(\M)''$ such that
	\begin{align}
		\label{eqn:underZ} z_\tau \bar v_\tau^* \bar v_\tau &= z_\tau p_\tau \quad \text{and} \quad z_\tau \bar v_\tau \bar v_\tau^* \leq z_\tau q_\tau, \\
		\label{eqn:underZperp}	z_\tau^\perp \bar v_\tau^* \bar v_\tau &\leq z_\tau^\perp p_\tau \quad \text{and} \quad 	z_\tau^\perp \bar v_\tau \bar v_\tau^* = z_\tau^\perp q_\tau.
	\end{align}

	If we are in the special case where $z_\tau = 1_\M$, then \eqref{eqn:underZ} implies that $p_\tau = \bar v_\tau^* q_\tau \bar v_\tau$; hence the required inequality \eqref{eq:vN-approx-subequiv} is now a consequence of \eqref{eq:close-projection} and \eqref{eq:almost-projection-prime}.
	Suppose now that $z_\tau \neq 1_\M$. Then $\tau(z_\tau^\perp) \neq 0$, so we may define $\sigma \in T(\pi_\tau(\M)'')$ by $\sigma(a) \coloneqq \tau(z_\tau^\perp)^{-1} \tau(z_\tau^\perp a)$.  Then \eqref{eqn:underZ} implies that $z_\tau p_\tau = z_\tau \bar v_\tau^* q_\tau \bar v_\tau$, and \eqref{eqn:underZperp} implies that $z_\tau^\perp p_\tau - z_\tau^\perp \bar v_\tau^* \bar v_\tau$ is a projection. 
	Therefore, we have
	\begin{equation}\begin{split}\label{eqn:big-computation}
		\|p_\tau -  \bar v_\tau^* q_\tau \bar v_\tau \|_{2, \tau}^2
		&= \|z_\tau(p_\tau - \bar v_\tau^* q_\tau \bar v_\tau) + z_\tau^\perp(p_\tau - \bar v_\tau^* q_\tau \bar v_\tau)\|_{2, \tau}^2  \\
		&= \|z_\tau^\perp p_\tau - z_\tau^\perp \bar v_\tau^* q_\tau \bar v_\tau\|_{2, \tau}^2  \\
		&= \tau(z_\tau^\perp p_\tau - z_\tau^\perp \bar v_\tau^* \bar v_\tau)  \\
		&= \tau(z_\tau^\perp p_\tau) - \tau(z_\tau^\perp \bar q_\tau \bar v_\tau \bar v_\tau^*)  \\
		&= \tau(z_\tau^\perp) \big(\sigma(p_\tau) - \sigma(q_\tau)\big)  \\
		&< 5 \epsilon',
	\end{split}\end{equation}
	where the final estimate uses  \eqref{eq:almost-smaller-trace}. 
	Combining \eqref{eqn:big-computation} with \eqref{eq:lem:approx-comparison-neweq1} proves \eqref{eq:vN-approx-subequiv}.
	
	Now we use CPoU to obtain \eqref{eq:approx-subequivalence} from \eqref{eq:vN-approx-subequiv}.  By Kaplansky's density theorem, there is a contraction $v_\tau \in \M$ such that
	\begin{equation}
		\| v_\tau^*q v_\tau - p \|_{2, \tau} < \delta.
	\end{equation}
	Define $a_\tau \coloneqq |v_\tau^* q v_\tau - p|^2$ and use this to define the open neighbourhood
	\begin{equation}
		U_\tau \coloneqq \{ \sigma \in X : \sigma(a_\tau) < \delta^2 \}
	\end{equation}
	of $\tau$ in $X$.  As $X$ is compact, there are $\tau_1, \ldots, \tau_k \in X$ such that $(U_{\tau_i})_{i=1}^k$ covers $X$.  Therefore,
	\begin{equation}
		\sup_{\tau \in X} \min_{1 \leq i \leq k} \tau(a_{\tau_i}) < \delta^2.
	\end{equation}
	
	Set $S \coloneqq \{v_{\tau_i}: i=1,\ldots,k\} \cup \{p,q\}$. As $(\M, X)$ has CPoU, there are mutually orthogonal projections $e_1, \ldots, e_k \in \M^\omega \cap S'$ such that
	\begin{equation}
		\sum_{i=1}^k e_i = 1_{\M^\omega} \qquad \text{and} \qquad \tau(a_{\tau_i} e_i) \leq \delta^2\tau(e_i)
	\end{equation}
	for all $\tau \in X^\omega$ and $i= 1, \ldots, k$.  Define $v \coloneqq \sum_{i=1}^k e_i v_{\tau_i} \in \M^\omega$. 
	Since $e_1, \ldots, e_k$ are mutually orthogonal projections summing to the identity which commute with $\{v_{\tau_i}: i=1,\ldots,k\} \cup \{p,q\}$, we have
	\begin{equation}\begin{split}
		|v^*qv - p|^2 &= \sum_{i=1}^k |v_{\tau_i}^* q v_{\tau_i} - p|^2e_i = \sum_{i=1}^k a_{\tau_i}e_i. 
	\end{split}\end{equation}
	Hence, for each $\tau \in X^\omega$,
	\begin{equation}
		\|v^* q v - p \|_{2, \tau}^2
		= \tau(|v^*qv - p|^2)
		= \sum_{i=1}^k \tau(a_{\tau_i} e_i)
		\leq \delta^2 \sum_{i=1}^k \tau(e_i) = \delta^2.
	\end{equation}
	Finally, if $(v_n)_{n=1}^\infty \subseteq \M$ is a sequence representing $v$, then
	\begin{equation}
		\lim_{n \rightarrow \omega} \|v_n^* q v_n - p\|_{2, X} = \|v^*q v - p\|_{2, X^\omega} \leq \delta < (2 \sqrt{2} + \sqrt{5}) \sqrt{\epsilon},
	\end{equation}
	and the result follows.
\end{proof}

\begin{proposition}\label{prop:approximate-comparison}
	Suppose $\big((\M_n, X_n)\big)_{n=1}^\infty$ is a sequence of factorial tracially complete $C^*$-algebras with CPoU and $p, q \in \prod^\omega \M_n$ are projections.
	\begin{enumerate}
		\item\label{approx-comp-1} If $\tau(p) \leq \tau(q)$ for all $\tau \in \sum^\omega X_n$, then $p$ is Murray--von Neumann subequivalent to $q$.
		\item\label{approx-comp-2} If $\tau(p) = \tau(q)$ for all $\tau \in \sum^\omega X_n$, then $p$ and $q$ are unitarily equivalent.
	\end{enumerate}
\end{proposition}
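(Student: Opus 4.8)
The plan is to reduce the reduced-product statement to the approximate comparison statement of Lemma~\ref{lem:approximate-comparison} by a standard Kirchberg $\epsilon$-test argument at the level of representing sequences. Let $(\M, X) \coloneqq \prod^\omega(\M_n, X_n)$. First I would observe that it suffices to prove \ref{approx-comp-1}, since \ref{approx-comp-2} follows by applying \ref{approx-comp-1} to the pairs $(p, q)$ and $(1_\M - p, 1_\M - q)$, producing partial isometries $v$ with $v^*v = p$, $vv^* = q$ and $w$ with $w^*w = 1_\M - p$, $ww^* = 1_\M - q$, and then setting $u \coloneqq v + w$; one checks $u$ is a unitary with $u p u^* = q$ exactly as in the proof of Proposition~\ref{prop:TracesFiniteVNAS}\ref{prop:TracesFiniteVNAS.3}. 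So the work is in \ref{approx-comp-1}.

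For \ref{approx-comp-1}, fix representing sequences $(p_n)_{n=1}^\infty$ and $(q_n)_{n=1}^\infty$ of positive contractions lifting $p$ and $q$. The hypothesis $\tau(p) \leq \tau(q)$ for all $\tau \in \sum^\omega X_n$ says, by the definition of limit traces (and by passing to $\omega$-large index sets), that for every $\epsilon > 0$ the set of $n$ for which
\begin{equation}
    \tau(p_n - p_n^2) < \epsilon, \quad \tau(q_n - q_n^2) < \epsilon, \quad \text{and} \quad \tau(p_n) < \tau(q_n) + \epsilon \quad \text{for all } \tau \in X_n
\end{equation}
lies in $\omega$; here the first two inequalities hold $\omega$-eventually for \emph{every} $\epsilon$ because $p, q$ are genuine projections, and the third follows from the trace inequality by a limit-trace argument: if it failed on an $\omega$-large set, one could build a limit trace violating $\tau(p) \leq \tau(q)$. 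Having arranged this, Lemma~\ref{lem:approximate-comparison} (applied in each $(\M_n, X_n)$, which is factorial with CPoU by hypothesis) produces, for each such $n$ and each $\epsilon$, a contraction $v_n \in \M_n$ with $\|v_n^* q_n v_n - p_n\|_{2, X_n} < (2\sqrt2 + \sqrt5)\sqrt\epsilon$. Running Kirchberg's $\epsilon$-test (Lemma~\ref{lem:EpsTest}) with the functions $f_n(v) \coloneqq \|v^*q_n v - p_n\|_{2, X_n}$ on the unit ball of $\M_n$ — whose hypotheses are met by the previous sentence — yields a single sequence of contractions $(v_n)_{n=1}^\infty$ defining an element $v \in \M$ with $\|v^* q v - v\|_{2, X} = 0$, i.e.\ $v^* q v = p$.

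Finally I would polish $v$ into an honest partial isometry witnessing Murray--von Neumann subequivalence. Since $v^* q v = p$ is a projection, $qv$ is a partial isometry: indeed $(qv)(qv)^*(qv) = q v p = q v v^* q v = qv$ using $v^* q v = (v^* q v)^2 = v^* q v v^* q v$ and $q^2 = q$, so $w \coloneqq qv$ satisfies $w^* w = v^* q v = p$ and $w w^* = q v v^* q \leq q$. (If $ww^*$ is merely $\leq q$ rather than a subprojection in the Murray--von Neumann sense, that is exactly the statement of subequivalence: $p = w^*w \sim w w^* \leq q$.) This completes \ref{approx-comp-1}, and hence \ref{approx-comp-2} as explained above.

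The main obstacle I anticipate is the bookkeeping in the reduction of the trace inequality to an $\omega$-large family of approximate inequalities at finite stages — one must be careful that the supremum over $\tau \in X_n$ of $\tau(p_n) - \tau(q_n)$ is controlled $\omega$-eventually, which requires the limit-trace characterisation of $\|\cdot\|$-type quantities (cf.\ Lemma~\ref{lem:2norm-reducedproduct} and its proof technique) rather than just pointwise control at limit traces; the rest is routine once Lemma~\ref{lem:approximate-comparison} and the $\epsilon$-test are in hand.
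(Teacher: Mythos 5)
Your proposal is correct and takes essentially the same route as the paper: part \ref{approx-comp-1} is exactly the paper's (tacit) application of Lemma~\ref{lem:approximate-comparison} along representing sequences, and part \ref{approx-comp-2} is deduced from \ref{approx-comp-1} applied to $(p,q)$ and to the complementary projections, with $u \coloneqq v+w$. The one step you should make explicit is that \ref{approx-comp-1} a priori gives only a partial isometry with $v^*v=p$ and $vv^*\leq q$, not $vv^*=q$ as you assert; the paper upgrades this by observing that $q-vv^*\geq 0$ satisfies $\tau(q-vv^*)=\tau(q)-\tau(p)=0$ for all $\tau\in\sum^\omega X_n$, which is a faithful set of traces on $\prod^\omega\M_n$, whence $vv^*=q$ (and similarly for the complements).
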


\begin{proof}
	Lemma~\ref{lem:approximate-comparison} immediately implies \ref{approx-comp-1}.  For \ref{approx-comp-2}, use \ref{approx-comp-1} to obtain $v \in \prod^\omega \M_n$ with $v^*v = p$ and $vv^* \leq q$.  Since $q - vv^* \geq 0$ and
	\begin{equation}
		\tau(q - vv^*) = \tau(q) - \tau(p) = 0
	\end{equation}
	for all $\tau \in \sum^\omega X_n$, we have $vv^* = q$.  Repeating this argument with $p^\perp$ and $q^\perp$ in place of $p$ and $q$ produces a partial isometry $w \in \prod^\omega \M_n$ such that $w^*w = p^\perp$ and $ww^* = q^\perp$.  Then $u \coloneqq v + w$ is a unitary with $upu^* = q$.
\end{proof}

Combining Propositions~\ref{prop:real-rank-zero} and~\ref{prop:approximate-comparison} resolves the trace problem (Question~\ref{Q:traces}) for reduced products of tracially complete $C^*$-algebras with CPoU. This generalises the case of McDuff $W^*$-bundles from \cite[Proposition 3.32]{BBSTWW} (taking $A\coloneqq\mathbb C$ in that proposition).\footnote{This will be extended by the third-named author in the forthcoming work \cite{Ev24}, to resolve the trace problem for factorial tracially complete $C^*$-algebras with CPoU.}

\begin{theorem}\label{thm:no-silly-traces}
	If $\big((\M_n, X_n)\big)_{n=1}^\infty$ is a sequence of factorial tracially complete $C^*$-algebras with CPoU, then $T\big(\prod^\omega \M_n) = \sum^\omega X_n$.
\end{theorem}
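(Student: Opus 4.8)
The plan is to show that an arbitrary trace $\sigma$ on $\mathcal{N} \coloneqq \prod^\omega \mathcal{M}_n$ actually lies in $\sum^\omega X_n$. Write $(\mathcal{N}, Y)$ for the tracially complete $C^*$-algebra $\big(\prod^\omega \mathcal{M}_n, \sum^\omega X_n\big)$, which is tracially complete by Proposition~\ref{prop:reduced-product}. By Proposition~\ref{prop:face}, once we know that $(\mathcal{N}, Y)$ is factorial, every $\|\cdot\|_{2, Y}$-continuous trace on $\mathcal{N}$ lies in $Y$; and every $\tau \in Y$ is of course $\|\cdot\|_{2,Y}$-continuous. So it suffices to prove two things: (a) $(\mathcal{N}, Y)$ is factorial, i.e.\ $Y$ is a closed face of $T(\mathcal{N})$; and (b) \emph{every} trace on $\mathcal{N}$ is $\|\cdot\|_{2, Y}$-continuous. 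Part (a) reduces via Theorem~\ref{thm:choquet-faces} to checking that $\partial_e Y \subseteq \partial_e T(\mathcal{N})$, equivalently (Proposition~\ref{prop:factorial}) that $\pi_\tau(\mathcal{N})''$ is a factor for each extreme point $\tau$ of $Y$; but actually (b) is the substantive point, since a positive resolution of (b) combined with Proposition~\ref{prop:face} gives $Y = T(\mathcal{N})$ outright, which is the assertion of the theorem, and then factoriality is automatic (a Choquet simplex is a face of itself).

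So the real work is part (b): given any trace $\sigma \in T(\mathcal{N})$, show $\sigma$ is $\|\cdot\|_{2, Y}$-continuous, i.e.\ $|\sigma(a)| \to 0$ whenever $\|a\|_{2, Y} \to 0$ along a norm-bounded net (equivalently, $\sigma$ vanishes on any $\|\cdot\|_{2,Y}$-null contraction). Here is where the structural results of this section enter. The key tool is the real rank zero property (Proposition~\ref{prop:real-rank-zero}): $\mathcal{N} = \prod^\omega \mathcal{M}_n$ has real rank zero, so the linear span of its projections is $\|\cdot\|$-dense, hence $\|\cdot\|_{2,Y}$-dense, in $\mathcal{N}$. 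Combined with the fact that $\sigma$ and $\|\cdot\|_{2,Y}$ are both bounded on the unit ball, it suffices to control $\sigma(p)$ for projections $p \in \mathcal{N}$ in terms of $\sup_{\tau \in Y} \tau(p)$. Now invoke comparison of projections (Proposition~\ref{prop:approximate-comparison}): I would argue that for a projection $p$ with $t \coloneqq \sup_{\tau \in Y}\tau(p)$ small, one can find projections $q_1, q_2, \dots \le 1_{\mathcal{N}}$ that are pairwise orthogonal, each unitarily equivalent (or Murray--von Neumann equivalent) to $p$, with roughly $\lfloor 1/t\rfloor$ of them fitting orthogonally inside $1_{\mathcal{N}}$ — this works because $\sum \tau(q_i) \le 1$ for all $\tau \in Y$ is exactly what Proposition~\ref{prop:approximate-comparison}\ref{approx-comp-1} lets us realize geometrically. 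Since $\sigma$ is a trace, $\sigma(q_i) = \sigma(p)$ for all $i$, so $m\,\sigma(p) = \sigma(\sum_{i=1}^m q_i) \le 1$ where $m \approx 1/t$; hence $\sigma(p) \lesssim t = \sup_{\tau \in Y} \tau(p)$, and by $2$-norm considerations $\sigma(p) \le \|p\|_{2,Y}^2$ up to constants. (A cleaner route: halving — use Proposition~\ref{prop:approximate-comparison} to split any projection into $2^k$ unitarily equivalent orthogonal subprojections when $2^k t \le 1$, and run the same counting.)

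Carrying this out, I would first handle the case where $p$ has $\sup_{\tau\in Y}\tau(p) = 0$: then $\|p\|_{2,Y} = 0$, so $p = 0$ in $\mathcal{N}$ (the uniform $2$-norm is a norm, Proposition~\ref{prop:reduced-product}/Lemma~\ref{lem:2norm-reducedproduct}), hence $\sigma(p) = 0$ trivially. For general $p$ with $t \coloneqq \sup_{\tau\in Y}\tau(p) > 0$, pick $m \in \mathbb{N}$ with $(m+1)t > 1 \ge mt$, find pairwise orthogonal $q_1, \dots, q_m$ each unitarily equivalent to $p$ inside $\mathcal{N}$ (build them recursively: having placed $q_1, \dots, q_j$ with $j < m$, the projection $1_{\mathcal{N}} - \sum_{i\le j} q_i$ has $\tau$-value $\ge 1 - jt \ge t \ge \tau(p)$ for all $\tau \in Y$, so Proposition~\ref{prop:approximate-comparison}\ref{approx-comp-1} produces $q_{j+1} \le 1_{\mathcal{N}} - \sum_{i \le j} q_i$ with $q_{j+1} \sim p$), and conclude $\sigma(p) = \frac1m \sigma(\sum q_i) \le \frac1m \le \frac{2t}{1}$ for $m \ge 1/(2t)$ say. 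Then for arbitrary $a \in \mathcal{N}$ with $\|a\|_{2,Y}$ small, write $a$ as a $\|\cdot\|_{2,Y}$-limit of linear combinations of projections using real rank zero and Proposition~\ref{prop:real-rank-zero}, together with the elementary estimate $|\sigma(b)| \le \|b\|_{2, \rho}$ for any single trace $\rho$ and the observation that $\sigma(p) \le 3\sqrt{\sup_\tau \tau(p)}\,$ controls $\sigma$ on self-adjoint contractions via spectral/Cuntz-Pedersen-type approximation (Proposition~\ref{prop:CP} applied in each fibre is not available, but the halving argument above suffices after reducing to projections). Thus $\sigma$ is $\|\cdot\|_{2,Y}$-continuous, and Proposition~\ref{prop:face} gives $\sigma \in \overline{Y}^{w^*} = Y$, completing the proof. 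The main obstacle I anticipate is the passage from ``$\sigma$ bounded above by a constant times $\|\cdot\|_{2,Y}$ on projections'' to ``$\sigma$ is $\|\cdot\|_{2,Y}$-continuous on all of $\mathcal{N}$'': this requires using real rank zero to approximate a general $\|\cdot\|_{2,Y}$-small element by projection-linear-combinations \emph{while keeping the $2$-norm small}, which needs a Kaplansky-density style argument (Proposition~\ref{prop:UnitBallDensity}) adapted to the real-rank-zero structure, plus care that $\sigma$ — which is only assumed to be a trace, not a priori $2$-norm bounded — is genuinely controlled; one circumvents the latter by noting $|\sigma(a)| \le \|a\|$ always holds, and then interpolating.
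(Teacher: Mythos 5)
Your proposal correctly identifies the two structural inputs the paper also uses — real rank zero of the reduced product (Proposition~\ref{prop:real-rank-zero}) and comparison of projections by limit traces (Proposition~\ref{prop:approximate-comparison}) — and your recursive construction of $\lfloor 1/t\rfloor$ pairwise orthogonal copies of a projection $p$ with $t=\sup_{\tau\in Y}\tau(p)$ is sound and does yield $\sigma(p)\lesssim \sup_{\tau\in Y}\tau(p)$ for every trace $\sigma$. However, the way you assemble these ingredients has two genuine gaps. First, the overall reduction is circular: Proposition~\ref{prop:face} only places a $\|\cdot\|_{2,Y}$-continuous trace in the \emph{closed face of $T(\mathcal N)$ generated by} $Y$, not in $Y$ itself. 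So even if every trace were $\|\cdot\|_{2,Y}$-continuous, you would conclude $T(\mathcal N)=\overline{\mathrm{face}}(Y)$, and to get $T(\mathcal N)=Y$ you would still need to know that $Y$ is a face — i.e.\ that the reduced product is factorial, which is exactly what is being deduced \emph{from} this theorem in Corollary~\ref{cor:factorial-ultrapower}. An upper bound $\sigma(p)\leq C\sup_{\tau\in Y}\tau(p)$ on projections cannot by itself force $\sigma\in Y$ when $Y$ is merely a closed convex set.

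Second, the passage from the projection estimate to $\|\cdot\|_{2,Y}$-continuity on all of $\mathcal N$ does not close as sketched. Writing a self-adjoint contraction $a$ with $\|a\|_{2,Y}\leq\epsilon$ as $\sum_k\lambda_k p_k$ with orthogonal spectral projections (via real rank zero), the best one gets from $\sup_\tau\sum_k\lambda_k^2\tau(p_k)\leq\epsilon^2$ is $|\lambda_k|\leq\epsilon\,(\sup_\tau\tau(p_k))^{-1/2}$, whence $|\sigma(a)|\lesssim\epsilon\sum_k(\sup_\tau\tau(p_k))^{1/2}$; since the suprema for different $k$ may be attained at different traces in $Y$, the sum $\sum_k\sup_\tau\tau(p_k)$ is not bounded by $1$ and the estimate degenerates with the number of spectral projections. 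The paper avoids both problems by a different mechanism: comparison in all matrix amplifications gives an order embedding $V(\mathcal N)\hookrightarrow\mathrm{Aff}(Y)$, the state $\hat\sigma$ on $V(\mathcal N)$ extends to a state on $\mathrm{Aff}(Y)$ by Blackadar--R{\o}rdam, and Kadison duality (Proposition~\ref{prop:pointwise-to-uniform}) identifies that state with evaluation at a single $\tau\in Y$, so that $\sigma$ and $\tau$ agree \emph{exactly} on all projections; norm-density of the span of projections (real rank zero) then gives $\sigma=\tau$ with no $2$-norm continuity needed. If you want to salvage your route, you would need to replace the inequality on projections by this exact identification with a point of $Y$.
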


\begin{proof}
	Let $(\M, X) \coloneqq \big(\prod^\omega \M_n, \sum^\omega X_n\big)$.  Then $\M$ has real rank zero by Proposition~\ref{prop:real-rank-zero} and comparison of projections with respect to $X$ by Proposition~\ref{prop:approximate-comparison}.  Further, for all $d \geq 1$, $\M \otimes M_d$ has comparison of projections with respect to $X \otimes \{\mathrm{tr_d}\}$ since Proposition~\ref{prop:matrix-ultrapower} identifies $(\M \otimes M_d, X \otimes \{\mathrm{tr}_d\})$ with $\prod^\omega (\M_n \otimes M_d, X_n \otimes \{\mathrm{tr}_d\})$ and each of the algebras $(\mathcal M_n \otimes M_d, X_n \otimes \{\mathrm{tr}_d\})$  are factorial and have CPoU by Proposition~\ref{prop:matrix-CPoU}.
	
	Let $V(\M)$ denote the Murray--von Neumann semigroup of $\M$ and let $[p] \in V(\M)$ denote the class of a projection $p \in \M \otimes M_d$ for an integer $d \geq 1$.  For $\tau \in T(\M)$, let $\hat{\tau}$ denote the induced state on $V(\M)$ given by
	\begin{equation}
		\hat{\tau}([p]) \coloneqq (\tau \otimes \mathrm{Tr}_d)(p)
	\end{equation}
	for a projection $p$ in $\M \otimes M_d$, where $\mathrm{Tr}_d$ is the unnormalised trace on $M_d$ so that $\mathrm{Tr}_d(1_{M_d}) = d$. By comparison of projections in matrices over $\M$, the natural map
	\begin{equation}
		V(\M) \rightarrow \mathrm{Aff}(X) \colon x \mapsto (\tau \mapsto \hat{\tau}(x))
	\end{equation}
	is an order embedding.
	
	Now suppose $\sigma \in T(\M)$.  By \cite[Corollary~2.7]{Blackadar-Rordam92}, the state $\hat{\sigma}$ on $V(\M)$ extends to a state $\phi$ on $\mathrm{Aff}(X)$.  Since all states on $\mathrm{Aff}(X)$ are point evaluations, (see Proposition \ref{prop:pointwise-to-uniform}\ref{prop:pointwise-to-uniform1}), there is a trace $\tau \in X$ such that $\phi(f) = f(\tau)$ for all $f \in \mathrm{Aff}(X)$.  Then, by construction, $\hat{\sigma} = \hat{\tau}$, and hence $\sigma(p) = \tau(p)$ for all projections $p \in \M$.  As $\M$ has real rank zero by Proposition~\ref{prop:real-rank-zero}, $\M$ is the $\|\cdot\|$-closed span of its projections, and hence $\sigma = \tau \in X$.
\end{proof}

\begin{remark}\label{rmk:commutators}
	Versions of Theorem~\ref{thm:no-silly-traces} have appeared previously in \cite[Proposition~3.22]{BBSTWW} and \cite[Proposition~4.6]{CETWW} in the settings of ultrapowers of factorial McDuff $W^*$-bundles and uniform tracial ultrapowers of $C^*$-algebras with CPoU, respectively.  The proofs in these references are based on a result of Fack and de la Harpe (\cite[Th{\'e}or{\`e}me~2.3]{FH80}). This style of argument could also be adapted to the setting of tracially complete $C^*$-algebras, providing an alternative proof of Theorem~\ref{thm:no-silly-traces}.
\end{remark}

Theorem~\ref{thm:no-silly-traces} also proves that factoriality passes to reduced products in the presence of CPoU.  This allows us to prove both property $\Gamma$ and CPoU pass to reduced products as promised in Remarks~\ref{rem:GammaUltrapower} and~\ref{rem:reduced-products-CPoU}.

\begin{corollary}\label{cor:factorial-ultrapower}
	Suppose $\big((\M_n, X_n)\big)_{n=1}^\infty$ is a sequence of factorial tracially complete $C^*$-algebras with reduced product $(\M, X)$.
	\begin{enumerate}
		\item\label{cor:factorial-ultrapower:CPoU} If each $(\M_n, X_n)$ has CPoU, then $(\M, X)$ is factorial and has CPoU.
		\item\label{cor:factorial-ultrapower:Gamma} If each $(\M_n, X_n)$ satisfies property $\Gamma$, then $(\M, X)$ is factorial and satisfies property $\Gamma$.
	\end{enumerate}
\end{corollary}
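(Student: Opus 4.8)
The plan is to deduce both parts of Corollary~\ref{cor:factorial-ultrapower} from Theorem~\ref{thm:no-silly-traces} together with the relevant permanence properties already established in the excerpt. The key structural observation is that for a sequence of factorial tracially complete $C^*$-algebras with CPoU, factoriality of the reduced product $(\M,X)$ is equivalent to the statement $T(\M) = X$, which is exactly the conclusion of Theorem~\ref{thm:no-silly-traces}. Indeed, $(\M,X)$ is factorial precisely when $X$ is a closed face of $T(\M)$ (Definition~\ref{def:factorial}); if $T(\M)=X$ then $X$ is trivially a face of itself, so factoriality follows immediately once we know all traces on $\M$ lie in $X$.

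For part~\ref{cor:factorial-ultrapower:CPoU}, I would argue as follows. Since each $(\M_n,X_n)$ has CPoU, Theorem~\ref{thm:no-silly-traces} gives $T(\prod^\omega\M_n) = \sum^\omega X_n$, i.e.\ $T(\M)=X$, so $(\M,X)$ is factorial by the observation above. It then remains to verify CPoU for $(\M,X)$. Here I would invoke the local characterisation of CPoU in Proposition~\ref{CPoU:Ultra:FiniteSet}\ref{item:CPOU-local}, which is an approximate statement involving only finitely many elements, a finite set, an $\epsilon$, and a supremum over the \emph{base} traces rather than over the reduced power. As indicated in Remark~\ref{rem:reduced-products-CPoU}, this local form is routinely preserved under reduced products: given data $\mathcal F\subseteq\M$, $\epsilon>0$, $a_1,\dots,a_k\in\M_+$ and $\delta>\sup_{\tau\in X}\min_i\tau(a_i)$, lift everything to representing sequences, use Lemma~\ref{lem:2norm-reducedproduct} to see that the tracial inequality $\sup_{\tau\in X_n}\min_i\tau(a_{i,n})<\delta$ holds for $n$ in some set in $\omega$, apply Proposition~\ref{CPoU:Ultra:FiniteSet}\ref{item:CPOU-local} fibrewise to obtain orthogonal positive contractions $e_{1,n},\dots,e_{k,n}\in\M_n$ witnessing the local CPoU condition with tolerance $1/n$ (say), and then read the sequences $(e_{i,n})_n$ back as orthogonal positive contractions $e_i\in\M$; the three estimates in \eqref{eqn:CPoU-local-form} pass through the reduced product via Lemma~\ref{lem:2norm-reducedproduct} (for the norm conditions) and a standard ultrafilter argument (for the tracial condition, which one phrases as a $\limsup$ over $\omega$ of a supremum over $X_n$). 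This verifies Proposition~\ref{CPoU:Ultra:FiniteSet}\ref{item:CPOU-local} for $(\M,X)$, hence CPoU.

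For part~\ref{cor:factorial-ultrapower:Gamma}, I would first note that property~$\Gamma$ implies CPoU by Theorem~\ref{introthmgammaimpliescpou}, so each $(\M_n,X_n)$ has CPoU, and part~\ref{cor:factorial-ultrapover:CPoU} (applied to this sequence) already gives that $(\M,X)$ is factorial. It then remains to show $(\M,X)$ has property~$\Gamma$, and for this I would use the local characterisation of property~$\Gamma$ in Proposition~\ref{prop:Gamma}\ref{item:Gamma-approx}: given a finite set $\mathcal F\subseteq\M$ and $\epsilon>0$, lift $\mathcal F$ to representing sequences, apply Proposition~\ref{prop:Gamma}\ref{item:Gamma-approx} in each fibre $(\M_n,X_n)$ with tolerance $1/n$ to obtain self-adjoint contractions $p_n\in\M_n$ that approximately commute with the $n$-th components, are approximately projections, and approximately halve traces, and then assemble $(p_n)_n$ into a self-adjoint contraction $p\in\M$. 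The three defining estimates again pass to the reduced product: the $\|\cdot\|_{2,X}$-estimates via Lemma~\ref{lem:2norm-reducedproduct}, and the estimate $|\tau(ap)-\tfrac12\tau(a)|<\epsilon$ for all $\tau\in X$ via the same $\limsup$-over-$\omega$ argument used for CPoU. This gives property~$\Gamma$ for $(\M,X)$ by Proposition~\ref{prop:Gamma}.

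The main obstacle is genuinely just the factoriality issue, and it has already been resolved for us: without Theorem~\ref{thm:no-silly-traces} one cannot conclude that $X$ is a face of $T(\M)$, since reduced products of factorial tracially complete $C^*$-algebras need not be factorial in general (Remark~\ref{VaccaroRemark}). Everything else is bookkeeping of the form ``a local approximate condition involving only the base traces $X_n$ is stable under reduced products'', which is straightforward once one has Lemma~\ref{lem:2norm-reducedproduct} and the standard trick of replacing a $\sup_{\tau\in X^\omega}$ by a $\limsup_{n\to\omega}\sup_{\tau\in X_n}$ (if a limit trace violated the desired inequality one could extract a sequence of base traces witnessing the violation, contradicting the fibrewise bound). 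I would write out the CPoU case in modest detail and then remark that the property~$\Gamma$ case is identical with Proposition~\ref{prop:Gamma}\ref{item:Gamma-approx} in place of Proposition~\ref{CPoU:Ultra:FiniteSet}\ref{item:CPOU-local}.
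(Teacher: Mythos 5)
Your proposal is correct and follows exactly the paper's own argument: factoriality of the reduced product comes from Theorem~\ref{thm:no-silly-traces} (so that $X=T(\M)$ is trivially a closed face), and CPoU resp.\ property $\Gamma$ then follow because the local characterisations in Proposition~\ref{CPoU:Ultra:FiniteSet}\ref{item:CPOU-local} and Proposition~\ref{prop:Gamma}\ref{item:Gamma-approx} pass to reduced products, with Theorem~\ref{introthmgammaimpliescpou} used in part (ii) to reduce to part (i). The only difference is that you spell out the routine lifting-and-reassembling bookkeeping that the paper leaves implicit.
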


\begin{proof}
	To see \ref{cor:factorial-ultrapower:CPoU}, note that $(\M, X)=(\M,T(\M))$ by Theorem~\ref{thm:no-silly-traces}, and so is factorial.  The approximation property in Proposition~\ref{CPoU:Ultra:FiniteSet}\ref{item:CPOU-local} characterising CPoU passes to reduced products, so $(\M, X)$ satisfies CPoU.  For \ref{cor:factorial-ultrapower:Gamma}, Theorem~\ref{introthmgammaimpliescpou} implies each $(\M_n, X_n)$ satisfies CPoU, so $(\M, X)$ is factorial by \ref{cor:factorial-ultrapower:CPoU}.  The approximation property in Proposition~\ref{prop:Gamma}\ref{item:Gamma-approx} characterising $\Gamma$ passes to reduced products, and hence $(\M, X)$ satisfies property $\Gamma$.
\end{proof}

Corollary \ref{cor:factorial-ultrapower} and Kirchberg's $\epsilon$-test (Lemma~\ref{lem:EpsTest}) allow for the following variation of CPoU for reduced products.  The point is that the projections $p_i$ in the definition of CPoU can be chosen in the reduced product instead of the reduced power of the reduced product.

\begin{corollary}[cf.\ Proposition~\ref{CPoU:Ultra:FiniteSet}\ref{item:CPOU++}]
\label{cor:CPoU-in-ultrapower}
	Suppose $\big((\M_n, X_n)\big)_{n=1}^\infty$ is a sequence of factorial tracially complete $C^*$-algebras with CPoU.  If $a_1, \ldots, a_k \in (\prod^\omega \M_n)_+$, $S \subseteq \prod^\omega \M_n$ is $\|\cdot\|_{2, X^\omega}$-separable, and
	\begin{equation}
		\sup_{\tau \in X} \min_{1 \leq i \leq k} \tau(a_i) < \delta,
	\end{equation}
	then there are projections $p_1, \ldots, p_k \in \prod^\omega \M_n \cap S'$ such that
	\begin{equation}
		\sum_{i=1}^k p_i = 1_{\M^\omega} \qquad \text{and} \qquad \tau(a_i p_i) \leq \delta \tau(p_i)
	\end{equation}
	for all $1 \leq i \leq k$ and $\tau \in X^\omega$.
\end{corollary}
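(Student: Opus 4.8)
The statement is a strengthening of Proposition~\ref{CPoU:Ultra:FiniteSet}\ref{item:CPOU++} in which the complemented partition of unity $p_1,\dots,p_k$ is found inside $\prod^\omega\M_n$ rather than merely inside $(\prod^\omega\M_n)^\omega$. The plan is to apply the usual CPoU in the reduced power and then use a diagonal/reindexing argument via Kirchberg's $\epsilon$-test (Lemma~\ref{lem:EpsTest}) to pull the projections down to the reduced product. First I would set $(\M,X)\coloneqq\big(\prod^\omega\M_n,\sum^\omega X_n\big)$, which is a factorial tracially complete $C^*$-algebra with CPoU by Corollary~\ref{cor:factorial-ultrapower}\ref{cor:factorial-ultrapower:CPoU}. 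Applying Proposition~\ref{CPoU:Ultra:FiniteSet}\ref{item:CPOU++} to $(\M,X)$ with the given $a_1,\dots,a_k\in\M_+$, the separable set $S$, and $\delta$ (noting $\sup_{\tau\in X^\omega}\min_i\tau(a_i)=\sup_{\tau\in X}\min_i\tau(a_i)<\delta$), we obtain orthogonal projections $\widetilde p_1,\dots,\widetilde p_k\in\M^\omega\cap S'$ summing to $1_{\M^\omega}$ with $\tau(a_i\widetilde p_i)\le\delta\tau(\widetilde p_i)$ for all $\tau\in X^\omega$ and $i$.

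The main work is then to replace the $\widetilde p_i\in\M^\omega$ by honest projections $p_i\in\M$. I would fix representing sequences: each $\widetilde p_i$ is represented by a sequence $(e_i^{(m)})_{m=1}^\infty$ of positive contractions in $\M=\prod^\omega\M_n$ (lifting to orthogonal positive contractions via projectivity of cones over finite-dimensional $C^*$-algebras, as in the proof of \ref{item:CPOU}$\Rightarrow$\ref{item:CPOU-local}, so that $e_1^{(m)},\dots,e_k^{(m)}$ are orthogonal for each $m$); fix a dense sequence $(s_j)_{j=1}^\infty$ in $S$; and for each $m$ set $\F_m\coloneqq\{s_1,\dots,s_m\}$. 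The key estimates, which hold because the $\widetilde p_i$ sum to $1_{\M^\omega}$, commute with $S$, and satisfy the trace inequality, are
\begin{equation}
\limsup_{m\to\omega}\Big\|\sum_i e_i^{(m)}-1_\M\Big\|_{2,X}=0,\quad \limsup_{m\to\omega}\max_{x\in\F_m}\|[e_i^{(m)},x]\|_{2,X}=0,
\end{equation}
and $\limsup_{m\to\omega}\sup_{\tau\in X}\big(\tau(a_ie_i^{(m)})-\delta\tau(e_i^{(m)})\big)\le0$; the last one is argued exactly as in the proof of \ref{item:CPOU}$\Rightarrow$\ref{item:CPOU-local}, using that a violating net of traces in $X$ would produce (via an ultrafilter $\omega'\supseteq\omega$) a limit trace in $X^\omega$ contradicting the trace bound on $\widetilde p_i$. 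I would then apply Kirchberg's $\epsilon$-test with $X_m\coloneqq\{(b_1,\dots,b_k)\in\M^k: b_j\ \text{orthogonal positive contractions}\}$ and the functions $f^{(j)}_m$ measuring (i) $\|\sum_i b_i-1_\M\|_{2,X}$, (ii) the commutator defects $\|[b_i,s_j]\|_{2,X}$ for $j\le m$, and (iii) $\max(0,\sup_{\tau\in X}(\tau(a_ib_i)-\delta\tau(b_i)))$ — taking $\limsup_{m\to\omega}$ of these gives values which, by the displayed estimates, can be made simultaneously $<\epsilon'$ for any finite collection; hence the $\epsilon$-test yields a single sequence $(p_1^{(m)},\dots,p_k^{(m)})\in X_m$ along which all the $f$'s tend to $0$. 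This sequence represents orthogonal positive contractions $p_1,\dots,p_k\in\M\cap S'$ summing to $1_\M$ with $\tau(a_ip_i)\le\delta\tau(p_i)$ for all $\tau\in X$; being orthogonal positive contractions summing to $1_\M$, they are projections. To upgrade the trace inequality to all $\tau\in X^\omega$: since $X^\omega=\overline{\mathrm{co}}$ of the limit traces, and for a limit trace $\tau$ arising from $(\tau_m)_{m=1}^\infty$ and $\omega'\supseteq\omega$ we have $\tau(a_ip_i)=\lim_{m\to\omega'}\tau_m(a_ip_i^{(m)})\le\delta\lim_{m\to\omega'}\tau_m(p_i^{(m)})=\delta\tau(p_i)$, using $\sup_{\tau\in X}(\tau_m(a_ip_i^{(m)})-\delta\tau_m(p_i^{(m)}))\to0$; affinity and weak$^*$-continuity extend this to all of $X^\omega$.

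The step I expect to be the main (though still routine) obstacle is bookkeeping the commutation with all of $S$: since $S$ is only assumed $\|\cdot\|_{2,X}$-separable rather than finite, one must arrange the test functions so that $p_i$ commutes with the full dense sequence $(s_j)$ in the limit, which is exactly what the standard ``$\F_m=\{s_1,\dots,s_m\}$ with $2^{-m}$ tolerances'' device in the $\epsilon$-test handles, together with the continuity of left/right multiplication in $\|\cdot\|_{2,X}$ from \eqref{eq:SpecialHolderIneq} to pass from commuting with $(s_j)$ to commuting with all of $S$. No new ideas beyond those already used in Proposition~\ref{CPoU:Ultra:FiniteSet} and Corollary~\ref{cor:factorial-ultrapower} are needed; this is genuinely a ``corollary'' in the sense that it combines CPoU for the reduced product with one reindexing.
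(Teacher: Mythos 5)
Your overall strategy---pass to $(\M,X)\coloneqq\big(\prod^\omega\M_n,\sum^\omega X_n\big)$, which is factorial with CPoU by Corollary~\ref{cor:factorial-ultrapower}\ref{cor:factorial-ultrapower:CPoU}, and then use Kirchberg's $\epsilon$-test to trade the reduced power for the reduced product---is exactly what the paper intends. But your descent step does not land where you claim. You apply Lemma~\ref{lem:EpsTest} with index $m$ and sets $X_m\subseteq\M^k$, so its output $y=(y_m)_m$ is a \emph{sequence} of $k$-tuples in $\M$ along which the defect functions have $\limsup_{m\to\omega}$ equal to zero. Such a sequence represents an element of $(\M^\omega)^k$, not of $\M^k$: there is no single tuple in $\M$ with $\sum_i p_i=1_{\M}$ exactly, only a sequence of tuples whose defects vanish along $\omega$. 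In other words, this application of the $\epsilon$-test reproduces projections in $\M^\omega\cap S'$---precisely what Proposition~\ref{CPoU:Ultra:FiniteSet}\ref{item:CPOU++} already gave you in your first paragraph---so the argument is circular at the crucial point.

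The fix is to run the $\epsilon$-test one level down, indexed by the coordinate $n$ of $\prod^\omega\M_n$. Take $Y_n\subseteq\M_n^k$ to be the $k$-tuples of pairwise orthogonal positive contractions in $\M_n$, fix representing sequences $(a_{i,n})_n$ for the $a_i$ and $(s_{j,n})_n$ for a $\|\cdot\|_{2,X}$-dense sequence in $S$, and use countably many test functions computed in $(\M_n,X_n)$: namely $\|\sum_i b_i-1_{\M_n}\|_{2,X_n}$, $\|[b_i,s_{j,n}]\|_{2,X_n}$, and $\sup_{\tau\in X_n}\max\big(0,\tau(a_{i,n}b_i)-\delta\tau(b_i)\big)$. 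The hypothesis of the $\epsilon$-test is supplied by the local form of CPoU for $(\M,X)$ (Proposition~\ref{CPoU:Ultra:FiniteSet}\ref{item:CPOU-local}), after lifting the resulting orthogonal positive contractions in $\M$ to sequences of orthogonal positive contractions in the $\M_n$ via projectivity of cones over finite dimensional $C^*$-algebras, exactly as in the proof of \ref{item:CPOU}$\Rightarrow$\ref{item:CPOU-local}; the $\limsup_{n\to\omega}$ of each test function along such a lift is controlled by the corresponding defect in $\M$ (for the trace function one argues by contradiction with a limit trace, as you do). The output is then a single element of $\prod_n Y_n$ with all limsups zero, i.e.\ genuine orthogonal projections $p_1,\dots,p_k\in\prod^\omega\M_n\cap S'$ summing to the unit and satisfying the trace inequality for every limit trace, hence for every trace in $\sum^\omega X_n$ by convexity and weak$^*$ continuity. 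The remaining ingredients of your write-up (the reduction via Corollary~\ref{cor:factorial-ultrapower}, the diagonal handling of the dense sequence in $S$, the upgrade of the trace inequality) are fine.
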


\subsection{Structure of unitaries}\label{sec:app-CPoU-unitaries}

With Corollary~\ref{cor:factorial-ultrapower} in hand, we derive some properties of unitaries in tracially complete $C^*$-algebras with CPoU.  The following result and its proof are analogous to \cite[Proposition~2.1]{CETW-classification}.

\begin{proposition}\label{prop:unitary-exponentials}
	Suppose $\big((\M_n, X_n)\big)_{n=1}^\infty$ is a sequence of factorial tracially complete $C^*$-algebras with CPoU and $S \subseteq \prod^\omega \M_n$ is a $\|\cdot\|_{2, X^\omega}$-separable subset.  If $u \in \prod^\omega \M_n \cap S'$ is a unitary, then there is a self-adjoint $h \in \prod^\omega \M_n \cap S'$ such that $u = e^{ih}$ and $\|h\| \leq \pi$.
\end{proposition}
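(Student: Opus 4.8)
The plan is to reduce the problem to finding, for each unitary, an approximately central self-adjoint logarithm with norm at most $\pi$, and then to upgrade the approximate statement to an exact one using Kirchberg's $\epsilon$-test. The key input is that the reduced product $(\mathcal M, X) \coloneqq \big(\prod^\omega \M_n, \sum^\omega X_n\big)$ is factorial with CPoU (Corollary~\ref{cor:factorial-ultrapower}\ref{cor:factorial-ultrapower:CPoU}), so all of the structural machinery developed so far -- real rank zero (Proposition~\ref{prop:real-rank-zero}), comparison of projections with respect to limit traces (Proposition~\ref{prop:approximate-comparison}), the resolution of the trace problem (Theorem~\ref{thm:no-silly-traces}), and the localised form of CPoU inside the reduced product (Corollary~\ref{cor:CPoU-in-ultrapower}) -- is available.

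First I would handle the local (von Neumann algebraic) version: in any von Neumann algebra $\mathcal N$, a unitary $v$ has a self-adjoint logarithm $k$ with $\|k\| \leq \pi$ via Borel functional calculus applied to a branch of $\log$ on the unit circle cut at $-1$. The issue is making the logarithm approximately central. Here I would follow the strategy of \cite[Proposition~2.1]{CETW-classification}: given a unitary $u$ commuting with a finite set $\mathcal F \subseteq \mathcal M$ and $\epsilon > 0$, approximate $u$ in $\|\cdot\|_{2, X}$ by a unitary with finite spectrum that still commutes (approximately) with $\mathcal F$ -- this uses real rank zero of $\mathcal M$ to find a spectral-projection-type decomposition $u \approx \sum_j \lambda_j p_j$ with $\lambda_j$ on the unit circle and $p_j \in \mathcal M$ orthogonal projections summing to $1_{\mathcal M}$, where the $p_j$ can be taken (approximately) in the relative commutant of $\mathcal F$ by a CPoU/commuting-approximation argument. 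Cutting the circle at a point $e^{i\theta_0}$ avoided by (or of small total trace among) the $\lambda_j$'s, one writes $\lambda_j = e^{i t_j}$ with $t_j \in (\theta_0 - \pi, \theta_0 + \pi]$, or better, after a small perturbation using that one may slide the $\lambda_j$ away from the cut; then $h \coloneqq \sum_j t_j p_j$ is self-adjoint with $\|h\| \leq \pi$ and $e^{ih} \approx u$ in $\|\cdot\|_{2, X}$, with $h$ approximately central. The finitely many values $t_j$ lie in a compact set, so the exponential map is uniformly continuous there, which controls $\|e^{ih} - u\|_{2,X}$ in terms of $\|\sum_j \lambda_j p_j - u\|_{2,X}$.

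Next I would assemble these approximations into an exact statement. Fix a countable $\|\cdot\|_{2, X^\omega}$-dense subset $\{s_1, s_2, \dots\}$ of $S$ and lift the unitary $u$ to a sequence of unitaries $(u_n)_{n=1}^\infty$ in $\prod_{n=1}^\infty \M_n$ (each $\M_n$ is unital, and one can represent a unitary in the reduced product by genuine unitaries since, by Corollary~\ref{cor:unitary-stable-relation} -- CPoU gives stability of the unitary relation -- an approximate unitary is close to a genuine one in each fibre; here I'd apply this fibrewise). Lift each $s_i$ to a sequence $(s_i^{(n)})_{n=1}^\infty$. For each $n$, apply the local version just established inside $(\M_n, X_n)$ with tolerance $1/n$ and finite set $\{s_1^{(n)}, \dots, s_n^{(n)}\}$, obtaining self-adjoint $h_n \in \M_n$ with $\|h_n\| \leq \pi$, $\|e^{ih_n} - u_n\|_{2, X_n} < 1/n$, and $\|[h_n, s_i^{(n)}]\|_{2, X_n} < 1/n$ for $i \leq n$. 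The sequence $(h_n)_{n=1}^\infty$ defines a self-adjoint $h \in \prod^\omega \M_n$ with $\|h\| \leq \pi$, $h \in \prod^\omega \M_n \cap S'$, and $e^{ih} = u$ (using continuity of the exponential on the norm-bounded set $\{h : \|h\| \leq \pi\}$ together with Lemma~\ref{lem:2norm-reducedproduct}). To be fully careful about simultaneously achieving all three estimates together with the norm bound, I would phrase the fibre-level construction as producing, for every $\epsilon > 0$ and finite $\mathcal F$, a genuinely self-adjoint contraction (after rescaling $h/\pi$) witnessing the required inequalities, and then run Kirchberg's $\epsilon$-test (Lemma~\ref{lem:EpsTest}) with the test functions measuring $\|[\,\cdot\,, s_i]\|_{2}$, $\|e^{i\pi(\cdot)} - u\|_2$, and the deviation of $(\cdot)$ from being a self-adjoint contraction.

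\textbf{Main obstacle.} The genuinely nontrivial step is making the logarithm \emph{approximately central} while keeping the norm bound $\pi$: Borel functional calculus does not exist in $\mathcal M_n$, so one cannot simply take $h_n = -i\log(u_n)$. The workaround -- approximating $u_n$ by a unitary with finite spectrum whose spectral projections commute with the prescribed finite set -- is exactly where real rank zero and CPoU (via the commuting-approximation technology underlying Corollary~\ref{cor:CPoU-in-ultrapower} and the approximate-spectral-projection lemma, Lemma~\ref{lem:approx-spectral-proj}) enter, and one must be slightly careful about the branch cut: the perturbation moving eigenvalues off the cut must itself be controlled in $\|\cdot\|_{2,X}$ and must not disturb approximate centrality. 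This is routine but fiddly; it is precisely the content carried over from \cite[Proposition~2.1]{CETW-classification}, and I expect no new difficulty beyond transcribing that argument into the tracially complete setting, which the results of Section~\ref{sec:app-CPoU} have set up to accommodate.
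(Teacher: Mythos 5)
Your overall architecture (reduce to an approximate statement via Kirchberg's $\epsilon$-test, establish it locally, glue) is the right one, but there are two genuine gaps in the execution. First, your assembly step is circular: you lift $u$ to a sequence of genuine unitaries $(u_n)$ by appealing to Corollary~\ref{cor:unitary-stable-relation}, but in the paper that corollary is a \emph{consequence} of Proposition~\ref{prop:unitary-exponentials} (one lifts $h$ to self-adjoints $h_n$ and sets $u_n \coloneqq e^{ih_n}$); no independent proof of unitary liftability is available at this point. Second, your local step --- approximating the unitary by one with finite spectrum whose spectral projections approximately commute with the prescribed finite set --- is unsupported. Real rank zero is only established for reduced products (Proposition~\ref{prop:real-rank-zero}, Corollary~\ref{cor:stable-rank-one}), not for the individual algebras $(\M_n, X_n)$ in which you propose to run the construction; and even in the reduced product, real rank zero gives finite-spectrum approximation of \emph{self-adjoint} elements, not of unitaries --- for unitaries one needs Lin's exponential-rank theorem together with the hypothesis that the unitary lies in the connected component of the identity, which is essentially the statement being proved. (Your branch-cut worry, by contrast, is a non-issue: taking $t_j \in (-\pi, \pi]$ always gives $\|h\| \leq \pi$, and $e^{ih}$ equals the finite-spectrum unitary exactly whatever the branch.)

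Both problems disappear if you run the local-to-global argument over the traces of the reduced product rather than over the index $n$. Set $(\M, X) \coloneqq \big(\prod^\omega \M_n, \sum^\omega X_n\big)$, which is factorial with CPoU by Corollary~\ref{cor:factorial-ultrapower}. For a finite set $\mathcal F \subseteq S$ and $\tau \in X$, the unitary $\pi_\tau(u)$ lies in the von Neumann algebra $\pi_\tau(\M)'' \cap \pi_\tau(\mathcal F)'$, so Borel functional calculus gives an exact self-adjoint logarithm $\bar h_\tau$ there with $\|\bar h_\tau\| \leq \pi$ --- no finite-spectrum approximation is needed, and the logarithm commutes with $\pi_\tau(\mathcal F)$ on the nose. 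Kaplansky's density theorem pulls $\bar h_\tau$ back to a self-adjoint $h_\tau \in \M$ with $\|h_\tau\| \leq \pi$ making $\|u - e^{ih_\tau}\|_{2,\tau}$ and $\|[h_\tau, x]\|_{2,\tau}$ small for $x \in \mathcal F$; compactness of $X$ plus one application of CPoU in the form of Corollary~\ref{cor:CPoU-in-ultrapower} (which places the partition-of-unity projections in $\M$ itself, commuting with $\mathcal F$, $u$, and the $h_{\tau_j}$) lets you form $h \coloneqq \sum_j h_{\tau_j} p_j$, which is self-adjoint of norm at most $\pi$ and satisfies the required estimates up to $\epsilon$; Kirchberg's $\epsilon$-test (Lemma~\ref{lem:EpsTest}) then removes the $\epsilon$. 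This is the paper's proof, and it never needs to represent $u$ by unitaries in the $\M_n$.
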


\begin{proof}
	Since this theorem involves the numbers $\pi$ and $i=\sqrt{-1}$, we shall set $\sigma_\tau$ to denote the GNS representation corresponding to a trace $\tau$ and use the letter $j$ for our summation index.
	For the sake of brevity, we write $(\M, X) \coloneqq \big(\prod^\omega \M_n, \sum^\omega X_n\big)$.

	Fix $\epsilon > 0$ and a finite set $\mathcal F \subseteq \M$. By Kirchberg's $\epsilon$-test (Lemma~\ref{lem:EpsTest}), it suffices to show that  there is a self-adjoint $h \in \M$ with $\|h\| \leq \pi$ such that
	\begin{equation}\label{eq:approx-polar-decomp}
		\|u - e^{ih}\|_{2, X} \leq \epsilon \qquad \text{and} \qquad \max_{x \in \mathcal F} \|[h, x]\|_{2, X} \leq \epsilon.
	\end{equation}
	For each $\tau \in X$, there is a self-adjoint $\bar h_\tau \in \sigma_\tau(\M)'' \cap \sigma_\tau(\mathcal F)'$ with $\|\bar h_\tau\| \leq \pi$ such that $\sigma_\tau(u) = e^{i\bar h_\tau}$.  By Kaplansky's density theorem, there is a self-adjoint $h_\tau \in \M$ with $\|h_\tau\| \leq \pi$ such that
	\begin{equation}
    \begin{split}
		\|u - e^{ih_\tau}\|_{2, \tau} &< (|\mathcal F| + 1)^{-1/2} \epsilon \qquad \text{and}\\ 
		\max_{x \in \mathcal F} \|[h_\tau, x]\|_{2, \tau} &< (|\mathcal F| + 1)^{-1/2} \epsilon.
	\end{split}
   \end{equation}
Then define
	\begin{equation}
		a_\tau \coloneqq |u - e^{ih_\tau}|^2 + \sum_{x \in \mathcal F} |[h_\tau, x]|^2 \in \M_+,
	\end{equation}
so that $\tau(a_\tau) < \epsilon^2$.  By the compactness of $X$, there are $\tau_1, \ldots, \tau_k \in X$ such that
	\begin{equation}
		\sup_{\tau \in X} \min_{1 \leq j \leq k} \tau(a_{\tau_j}) < \epsilon^2.
	\end{equation}
	By CPoU (in the form of Corollary~\ref{cor:CPoU-in-ultrapower}), there are  mutually orthogonal projections 
	\begin{equation}
		p_1, \ldots, p_k \in \M \cap \mathcal F' \cap \{u\}' \cap \{h_{\tau_1}, \ldots, h_{\tau_k}\}'
	\end{equation}
	such that
	\begin{equation}
		\sum_{j=1}^k p_j = 1_{\M} \qquad \text{and} \qquad \tau(a_{\tau_j} p_j)  \leq \epsilon^2 \tau(p_j)
	\end{equation}
	for all $1 \leq j \leq k$ and $\tau \in X$.
	
	Define $h \coloneqq \sum_{j=1}^k h_{\tau_j} p_j \in \M$ and note that $\|h\| \leq \pi$ as $\|h_{\tau_j}\| \leq \pi$ for all $1 \leq j \leq k$ and the $p_j$ are mutually orthogonal projections commuting with the $h_{\tau_j}$.  Also, for each $\tau \in X$, we have
	\begin{equation}
		\|u - e^{ih}\|_{2, \tau}^2 \leq \sum_{j=1}^k \tau(a_{\tau_j} p_j) \leq \epsilon^2 \sum_{j=1}^k \tau(p_j) = \epsilon^2,
	\end{equation}
	and for each $\tau \in X$ and $x \in \mathcal F$,
	\begin{equation}
		\|[h, x]\|_{2, \tau}^2 \leq \sum_{j=1}^k \tau(a_{\tau_j} p_j) \leq \epsilon^2 \sum_{j=1}^k \tau(p_j) = \epsilon^2.
	\end{equation}
	This verifies \eqref{eq:approx-polar-decomp}.
\end{proof}

Applying Proposition~\ref{prop:unitary-exponentials} to matrix algebras (with $S = \emptyset$) yields the following $K$-theoretic computation.

\begin{corollary}
	If $\big((\M_n, X_n)\big)_{n=1}^\infty$ is a sequence of factorial tracially complete $C^*$-algebras with CPoU, then $K_1\big(\prod^\omega \M_n\big) = 0$.
\end{corollary}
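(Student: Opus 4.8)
The plan is to deduce $K_1\big(\prod^\omega \M_n\big) = 0$ directly from Proposition~\ref{prop:unitary-exponentials} by the standard argument that a $C^*$-algebra in which every unitary (in every matrix amplification) is an exponential $e^{ih}$ with $h$ self-adjoint has trivial $K_1$. First I would set $(\M, X) \coloneqq \prod^\omega(\M_n, X_n)$, which is factorial and has CPoU by Corollary~\ref{cor:factorial-ultrapower}\ref{cor:factorial-ultrapower:CPoU}. For each $d \in \mathbb N$, Proposition~\ref{prop:matrix-ultrapower} identifies $\big(\M \otimes M_d, X \otimes \{\mathrm{tr}_d\}\big)$ with $\prod^\omega\big(\M_n \otimes M_d, X_n \otimes \{\mathrm{tr}_d\}\big)$, and each $(\M_n \otimes M_d, X_n \otimes \{\mathrm{tr}_d\})$ is factorial and has CPoU by Proposition~\ref{prop:matrix-CPoU}. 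Hence Proposition~\ref{prop:unitary-exponentials} applies to the sequence $\big((\M_n \otimes M_d, X_n \otimes \{\mathrm{tr}_d\})\big)_{n=1}^\infty$: taking $S \coloneqq \emptyset$, every unitary $u \in \M \otimes M_d$ can be written as $u = e^{ih}$ for some self-adjoint $h \in \M \otimes M_d$.

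Next I would recall that $K_1(\M)$ is computed from the inductive limit of the unitary groups $U(\widetilde{\M \otimes M_d})$ (or, since $\M$ is unital, $U(\M \otimes M_d)$) modulo the path component of the identity. Writing $u = e^{ih}$, the path $t \mapsto e^{ith}$ for $t \in [0,1]$ is a norm-continuous path of unitaries in $\M \otimes M_d$ from $1$ to $u$; thus every unitary in every $U(\M \otimes M_d)$ lies in the connected component of the identity, so $U(\M \otimes M_d) = U_0(\M \otimes M_d)$ for all $d$. By the standard description of $K_1$ (see, e.g., \cite{Rordam-Book} or any $K$-theory text), this forces $K_1(\M) = 0$. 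I would present this as a short paragraph invoking the definition of $K_1$ via homotopy classes of unitaries and the observation that a norm-continuous path of unitaries is precisely what is needed to trivialise a class.

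There is essentially no obstacle here; the work has all been done in Proposition~\ref{prop:unitary-exponentials} and the permanence results. The only minor points to be careful about are: (i) ensuring Proposition~\ref{prop:unitary-exponentials} is invoked for the matrix-amplified sequence rather than the original one, which requires citing Propositions~\ref{prop:matrix-ultrapower} and~\ref{prop:matrix-CPoU}; and (ii) noting that $h$ can be taken self-adjoint so that $t \mapsto e^{ith}$ genuinely takes values in the unitary group (this is automatic). Since the statement is phrased as a corollary of Proposition~\ref{prop:unitary-exponentials}, a two- or three-sentence proof suffices, and I would keep it at that length.

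\begin{proof}
Set $(\M, X) \coloneqq \big(\prod^\omega \M_n, \sum^\omega X_n\big)$.  Fix $d \in \mathbb N$.  By Propositions~\ref{prop:matrix-CPoU} and~\ref{prop:matrix-ultrapower}, the tracially complete $C^*$-algebra $\big(\M \otimes M_d, X \otimes \{\mathrm{tr}_d\}\big)$ is the reduced product of the sequence $\big((\M_n \otimes M_d, X_n \otimes \{\mathrm{tr}_d\})\big)_{n=1}^\infty$ of factorial tracially complete $C^*$-algebras with CPoU.  Applying Proposition~\ref{prop:unitary-exponentials} with $S \coloneqq \emptyset$, every unitary $u \in \M \otimes M_d$ has the form $u = e^{ih}$ for some self-adjoint $h \in \M \otimes M_d$.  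Then $t \mapsto e^{ith}$, $t \in [0, 1]$, is a norm-continuous path of unitaries in $\M \otimes M_d$ from $1_{\M \otimes M_d}$ to $u$, so $U(\M \otimes M_d)$ is connected for every $d \in \mathbb N$.  Since $K_1(\M)$ is the inductive limit of the groups $U(\M \otimes M_d)$ modulo the connected component of the identity, it follows that $K_1(\M) = 0$.
\end{proof}
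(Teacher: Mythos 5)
Your proof is correct and follows exactly the paper's argument: Proposition~\ref{prop:unitary-exponentials} gives that every unitary is an exponential (hence the unitary group is connected), and Propositions~\ref{prop:matrix-ultrapower} and~\ref{prop:matrix-CPoU} extend this to matrix amplifications, forcing $K_1 = 0$. Your write-up just spells out the details the paper leaves implicit.
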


\begin{proof}
	Proposition~\ref{prop:unitary-exponentials} implies that the unitary group of $\prod^\omega \M_n$ is path-connected, and Propositions~\ref{prop:matrix-ultrapower} and~\ref{prop:matrix-CPoU} imply the same result for matrices over $\prod^\omega \M_n$.
\end{proof}

The following result facilitates the use of Theorem~\ref{thm:reparameterisation} in the presence of CPoU. 

\begin{corollary}\label{cor:unitary-stable-relation}
	If $\big((\M_n, X_n)\big)_{n=1}^\infty$ is a sequence of factorial tracially complete $C^*$-algebras with CPoU and $u \in \prod^\omega \M_n$ is a unitary, then there is a sequence of unitaries $(u_n)_{n=1}^\infty \in \prod_{n=1}^\infty \M_n$ lifting $u$.
\end{corollary}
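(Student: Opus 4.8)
The plan is to use Proposition~\ref{prop:unitary-exponentials} to replace the unitary lifting problem with a self-adjoint lifting problem, which is trivially solvable. Indeed, suppose $u \in \prod^\omega \M_n$ is a unitary. Applying Proposition~\ref{prop:unitary-exponentials} with $S \coloneqq \emptyset$ (so that $\prod^\omega \M_n \cap S' = \prod^\omega \M_n$), there is a self-adjoint element $h \in \prod^\omega \M_n$ with $\|h\| \leq \pi$ such that $u = e^{ih}$.

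Now fix a self-adjoint lift $(h_n)_{n=1}^\infty \in \prod_{n=1}^\infty \M_n$ of $h$; we may take each $h_n$ self-adjoint by replacing an arbitrary lift by its real part, and we may arrange $\|h_n\| \leq \pi$ for all $n$ (for instance by applying the continuous function $t \mapsto \max(-\pi, \min(\pi, t))$ to each $h_n$, which does not change the class of the sequence in the reduced product since this function is Lipschitz and fixes the interval $[-\pi, \pi]$). Set $u_n \coloneqq e^{ih_n} \in \M_n$, which is a unitary for each $n$. Since $t \mapsto e^{it}$ is a Lipschitz continuous function on $[-\pi, \pi]$ (with respect to the uniform 2-norm on each $\M_n$, using a matrix amplification argument as in Lemma~\ref{lem:Lipschitz} together with functional calculus, or more directly from the power series estimate $\|e^{ia} - e^{ib}\| \leq \|a - b\|$ for self-adjoint $a, b$ which holds in operator norm and hence dominates the 2-norm), the sequence $(e^{ih_n})_{n=1}^\infty$ represents $e^{ih} = u$ in $\prod^\omega \M_n$. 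Thus $(u_n)_{n=1}^\infty$ is the desired lift.

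There is essentially no obstacle here: the entire content is Proposition~\ref{prop:unitary-exponentials}, and the remaining steps are routine functional-calculus manipulations. The one point worth a sentence of care is the continuity of $t \mapsto e^{it}$ with respect to the uniform $2$-norm — this follows because the operator-norm estimate $\|e^{ia} - e^{ib}\| \leq \|a-b\|$ (valid for self-adjoint $a,b$, from the integral formula $e^{ia} - e^{ib} = i\int_0^1 e^{isa}(a-b)e^{i(1-s)b}\,ds$) combines with $\|cx\|_{2,X_n} \leq \|c\| \|x\|_{2,X_n}$ from \eqref{eq:SpecialHolderIneq} applied to $c = e^{ia} - e^{ib}$ and $x = 1_{\M_n}$, giving $\|e^{ih_n} - e^{ih_n'}\|_{2, X_n} \leq \|h_n - h_n'\|$ whenever $h_n, h_n'$ are self-adjoint. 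So if $(h_n')_{n=1}^\infty$ is another self-adjoint lift of $h$, then $\limsup_{n \to \omega} \|e^{ih_n} - e^{ih_n'}\|_{2, X_n} \leq \limsup_{n \to \omega}\|h_n - h_n'\| $; this does not immediately vanish, so instead one argues directly that $\limsup_{n\to\omega}\|e^{ih_n} - a_n\|_{2,X_n} = 0$ where $(a_n)_{n=1}^\infty$ is any operator-norm-bounded lift of $u = e^{ih}$, using a polynomial approximation of $t \mapsto e^{it}$ on $[-\pi,\pi]$ and the fact that polynomial expressions in $h_n$ converge appropriately, together with $\|h_n\| \leq \pi$. This is standard and can be dispatched in a line or two in the final writeup.
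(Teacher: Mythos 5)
Your proof is correct and is exactly the argument the paper intends: the corollary is stated without proof precisely because, given Proposition~\ref{prop:unitary-exponentials}, one writes $u=e^{ih}$ with $h$ self-adjoint and $\|h\|\leq\pi$, lifts $h$ to a self-adjoint sequence of norm at most $\pi$, and exponentiates coordinatewise. The only remark is that your closing paragraph is more laboured than necessary: since the quotient map $q\colon\prod_{n=1}^\infty\M_n\to\prod^\omega\M_n$ is a unital $^*$-homomorphism and continuous functional calculus in the $\ell^\infty$-product is computed coordinatewise, both the truncation step and the identity $q\big((e^{ih_n})_{n=1}^\infty\big)=e^{iq((h_n))}=u$ follow in one line from the fact that functional calculus commutes with unital $^*$-homomorphisms, with no Lipschitz or polynomial-approximation estimates needed.
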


Using a sequence of counterexamples argument (see \cite{Loring97}, for example), Corollary~\ref{cor:unitary-stable-relation} also provides a uniform 2-norm stability result for unitaries.

\begin{corollary}
	For all $\epsilon, c > 0$, there is $\delta > 0$ such that if $(\M, X)$ is a factorial tracially complete $C^*$-algebra with CPoU and $v \in \M$ with $\|v\| \leq c$ and $\|v^* v - 1_\mathcal M\|_{2, X} < \delta$, there is a unitary $u \in \M$ such that $\|u - v\|_{2, X} < \epsilon$.
\end{corollary}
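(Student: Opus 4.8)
The plan is to deduce this uniform 2-norm stability statement for unitaries from the reduced-product characterisation in Corollary~\ref{cor:unitary-stable-relation} by a standard ``sequence of counterexamples'' argument. Suppose the statement fails for some $\epsilon, c > 0$. Then for each $n \in \mathbb N$, choosing $\delta = 1/n$, there is a factorial tracially complete $C^*$-algebra $(\M_n, X_n)$ with CPoU and an element $v_n \in \M_n$ with $\|v_n\| \leq c$ and $\|v_n^* v_n - 1_{\M_n}\|_{2, X_n} < 1/n$, but with $\|u - v_n\|_{2, X_n} \geq \epsilon$ for every unitary $u \in \M_n$.

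Form the reduced product $(\M, X) \coloneqq \prod^\omega (\M_n, X_n)$ with respect to the Fr\'echet filter $\omega$ (or any free filter), which is a factorial tracially complete $C^*$-algebra with CPoU by Corollary~\ref{cor:factorial-ultrapower}\ref{cor:factorial-ultrapower:CPoU}. The sequence $(v_n)_{n=1}^\infty$ lies in $\prod_{n=1}^\infty \M_n$ (it is $\|\cdot\|$-bounded by $c$), so it represents an element $v \in \M$. By Lemma~\ref{lem:2norm-reducedproduct}, $\|v^* v - 1_\M\|_{2, X} = \limsup_{n \to \omega} \|v_n^* v_n - 1_{\M_n}\|_{2, X_n} = 0$, so $v^* v = 1_\M$; in particular $v$ is an isometry in $\M$. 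Next I would upgrade $v$ to a unitary in $\M$: since $\M$ has real rank zero by Proposition~\ref{prop:real-rank-zero}, and more to the point $K_1(\M) = 0$, one can produce a unitary close to $v$ in uniform $2$-norm. Concretely, the standard argument is that in a reduced product of factorial tracially complete $C^*$-algebras with CPoU the range projection $v v^* \in \M$ satisfies $\tau(1_\M - vv^*) = \tau(1_\M) - \tau(v^*v) = 0$ for all $\tau \in X$ (using Theorem~\ref{thm:no-silly-traces} to know these are all the traces), hence $1_\M - vv^* = 0$ and $v$ is already a unitary in $\M$.

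Now apply Corollary~\ref{cor:unitary-stable-relation} to lift the unitary $v \in \M$ to a sequence of unitaries $(w_n)_{n=1}^\infty \in \prod_{n=1}^\infty \M_n$ representing $v$. Then, again by Lemma~\ref{lem:2norm-reducedproduct}, $\limsup_{n \to \omega}\|w_n - v_n\|_{2, X_n} = \|v - v\|_{2, X} = 0$, so $\|w_n - v_n\|_{2, X_n} < \epsilon$ for $\omega$-many $n$, in particular for at least one $n$. Since $w_n$ is a unitary in $\M_n$, this contradicts the choice of $v_n$, completing the proof. The only mild subtlety --- and the step I expect to require the most care --- is justifying that the isometry $v \in \M$ is in fact a unitary; this is where factoriality of the reduced product (Theorem~\ref{thm:no-silly-traces}) and comparison of projections (Proposition~\ref{prop:approximate-comparison} applied to $1_\M - vv^*$ and $0$, or directly the trace computation above) are used. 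Everything else is a routine application of Lemma~\ref{lem:2norm-reducedproduct} and the permanence results already established.
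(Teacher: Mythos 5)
Your proof is correct and follows essentially the same route as the paper: a sequence-of-counterexamples argument, passing to the reduced product, observing that $v$ is an isometry with $\tau(1_{\M}-vv^*)=0$ for all $\tau\in\sum^\omega X_n$ (hence a unitary, since this is a faithful set of traces), and then lifting via Corollary~\ref{cor:unitary-stable-relation} to reach a contradiction with Lemma~\ref{lem:2norm-reducedproduct}. The appeals to Theorem~\ref{thm:no-silly-traces}, real rank zero, and $K_1=0$ are unnecessary detours --- the direct trace computation you also give is exactly what the paper uses and already suffices.
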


\begin{proof}
	Suppose the result is false and that $\epsilon, c > 0$ provide a counterexample.  For each $n \in \mathbb N$, let $(\M_n, X_n)$ be a tracially complete $C^*$-algebra with CPoU and let $v_n \in \M_n$ be such that $\|v_n\| \leq c$, $\|v_n^* v_n - 1\|_{2, X_n} < \frac1n$, but for all unitaries $u \in \M_n$, we have
	\begin{equation}\label{eq:far-from-unitary}
		\|u - v_n\|_{2, X_n} \geq \epsilon.
	\end{equation}
	
	The sequence $(v_n)_{n=1}^\infty$ induces an element $v \in \prod^\omega \M_n$.  We have $\|v^*v - 1\|_{2, \sum^\omega X_n} = 0$, and hence $v$ is an isometry.  Since $1 -vv^* \geq 0$ and $\tau(1 - vv^*) = 0$ for all $\tau \in X$, we further have that $v$ is unitary.  By Corollary~\ref{cor:unitary-stable-relation}, there is a sequence of unitaries $(u_n)_{n=1}^\infty \in \prod_{n=1}^\infty \M_n$ lifting $v$.  Now
	\begin{equation}
		\lim_{n \rightarrow \omega} \|u_n - v_n \|_{2, X_n} = 0,
	\end{equation}
	which contradicts \eqref{eq:far-from-unitary}.
\end{proof}

Proposition~\ref{prop:unitary-exponentials} also implies that if $(\M, X)$ is a factorial tracially complete $C^*$-algebra with CPoU, then there is a $\|\cdot\|_{2, X}$-dense set of unitaries in $\M$ that have the form $e^{ih}$ for a self-adjoint $h \in \M$ with $\|h\| \leq \pi$.  We do not know if every unitary in $\M$ has this form.  Without CPoU, there are certainly commutative counterexamples such as $\big(C(\mathbb T), T(C(\mathbb T))\big)$, but we do not know of any counterexamples among type II$_1$ tracially complete $C^*$-algebras $(\M, X)$ -- with or without CPoU -- or even among tracially complete $C^*$-algebras $(\M,X)$ for which $\pi_\tau(\M)''$ is diffuse for each $\tau\in X$.

\begin{question}\label{question:unitary-exponentials}
	If $(\M, X)$ is a factorial tracially complete $C^*$-algebra with CPoU, is every unitary in $\M$ an exponential?  Slightly less ambitiously, is the unitary group of $\M$ path connected in the operator norm topology?
\end{question}

In every finite von Neumann algebra, every element has a unitary polar decomposition.  The same holds in reduced products of factorial tracially complete $C^*$-algebras with CPoU.  In fact, this can be done in an approximately central way. 

\begin{proposition}\label{prop:polar-decomposition}
	Suppose $\big((\M_n, X_n)\big)_{n=1}^\infty$ is a sequence of factorial tracially complete $C^*$-algebras with CPoU and $S \subseteq \prod^\omega \M_n$ is a $\|\cdot\|_{2, X^\omega}$-separable subset.  If $a \in \prod^\omega \M_n \cap S'$, then there is a unitary $u \in \prod^\omega\M_n\cap S'$ such that $a = u|a|$.  Further, if $a = a^*$, we may arrange that $u = u^*$.
\end{proposition}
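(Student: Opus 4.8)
The plan is to prove this in two stages, closely following the template of Proposition~\ref{prop:unitary-exponentials} and the approximate-comparison arguments of Section~\ref{sec:app-CPoU-traces}: first establish an approximate, approximately central polar decomposition inside a single factorial tracially complete $C^*$-algebra with CPoU, and then use Kirchberg's $\epsilon$-test (Lemma~\ref{lem:EpsTest}) to pass to the reduced product and extract an exact decomposition with a genuine unitary commuting with $S$.

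For the approximate statement, write $(\M, X) \coloneqq \big(\prod^\omega \M_n, \sum^\omega X_n\big)$, which is factorial with CPoU by Corollary~\ref{cor:factorial-ultrapower}\ref{cor:factorial-ultrapower:CPoU}. Fix $a \in \M \cap S'$, a finite set $\mathcal F \subseteq \M$, and $\epsilon > 0$; by Kirchberg's $\epsilon$-test it suffices to produce a unitary $u \in \M$ with $\|u|a| - a\|_{2, X} \leq \epsilon$ and $\max_{x \in \mathcal F}\|[u,x]\|_{2,X} \leq \epsilon$, and (in the self-adjoint case) with $u = u^*$. For each $\tau \in X$, work in $\pi_\tau(\M)'' =: \mathcal N_\tau$; this finite von Neumann algebra has an exact unitary polar decomposition $\pi_\tau(a) = \bar u_\tau |\pi_\tau(a)|$ (extend the partial isometry from the usual polar decomposition to a unitary using that finite von Neumann algebras have comparison of the complementary kernel projections, or in the self-adjoint case take $\bar u_\tau$ to be the self-adjoint unitary $2p_\tau - 1$ where $p_\tau$ is the spectral projection of $\pi_\tau(a)$ for $[0,\infty)$), and $\bar u_\tau$ can be chosen in $\mathcal N_\tau \cap \pi_\tau(\mathcal F)'$ since $a$ commutes with $\mathcal F$ and the polar part is a norm-limit of continuous functions of $a a^*$ times $a$. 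By Kaplansky's density theorem, pick a contraction $u_\tau \in \M$ (self-adjoint if $a$ is) approximating $\bar u_\tau$ in $\|\cdot\|_{2,\tau}$, so that $\tau(a_\tau) < \epsilon^2$ where $a_\tau \coloneqq |u_\tau|a| - a|^2 + \sum_{x\in\mathcal F}|[u_\tau,x]|^2 + |u_\tau^* u_\tau - 1_\M|^2 \in \M_+$ (and include $|u_\tau - u_\tau^*|^2$ in the self-adjoint case). Using compactness of $X$, choose $\tau_1,\dots,\tau_k$ with $\sup_{\tau\in X}\min_j \tau(a_{\tau_j}) < \epsilon^2$, then apply CPoU (in the form of Corollary~\ref{cor:CPoU-in-ultrapower}) to get mutually orthogonal projections $p_1,\dots,p_k \in \M \cap (\mathcal F \cup \{a, u_{\tau_1},\dots,u_{\tau_k}\})'$ summing to $1_\M$ with $\tau(a_{\tau_j}p_j)\leq\epsilon^2\tau(p_j)$ for all $j$ and all $\tau\in X^\omega$. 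Set $v \coloneqq \sum_j u_{\tau_j}p_j$: then $v$ is an approximate isometry commuting approximately with $\mathcal F$ and with $v|a|\approx a$ in $\|\cdot\|_{2,X}$, all errors controlled by $\epsilon$, and $v = v^*$ in the self-adjoint case. A final application of the unitary stability corollary (the Corollary following Corollary~\ref{cor:unitary-stable-relation}, or directly passing to the reduced product where $v^*v = 1$ forces $v$ unitary) replaces $v$ by a genuine unitary $u$; in the self-adjoint case one must be slightly more careful to keep $u$ self-adjoint, which can be arranged by instead building $u$ from the spectral-projection data $2p - 1$ where $p$ is an approximate spectral projection produced as in Lemma~\ref{lem:approx-spectral-proj} and then projectionised.

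For the exact statement in the reduced product, one option is to push the whole argument to $(\M,X)$ directly and observe that the approximate conclusions become exact: by Kirchberg's $\epsilon$-test applied to the functions measuring $\|u_n|a_n| - a_n\|_{2,X_n}$, $\|u_n^*u_n - 1\|_{2,X_n}$, $\|u_n u_n^* - 1\|_{2,X_n}$ and $\|[u_n,x_n]\|_{2,X_n}$ (for a countable generating set of $S$), we obtain a representing sequence $(u_n)$ of contractions in $\prod_{n=1}^\infty \M_n$ whose image $u \in \M$ satisfies $u|a| = a$, $u^*u = 1_\M = uu^*$, and $[u, s] = 0$ for all $s \in S$ — the last using that $\|\cdot\|_{2,X}$-null is the only obstruction and $S$ is $\|\cdot\|_{2,X^\omega}$-separable. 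In the self-adjoint case, running the argument with self-adjoint approximants throughout (and using that $a_{\tau_j}$ controls $\|u_{\tau_j} - u_{\tau_j}^*\|_{2,\tau}$) yields $u = u^*$ in the limit.

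The main obstacle is the self-adjoint refinement: producing an \emph{exactly} self-adjoint unitary $u$ with $u|a| = a$ and $u$ commuting with $S$, rather than merely a unitary. The cleanest route is to note that for self-adjoint $a$ one has $a = a_+ - a_-$, and by Lemma~\ref{lem:approx-spectral-proj} together with Proposition~\ref{prop:projectionisation} (projectionisation, which applies since $(\M,X)$ is factorial with CPoU hence — via Theorem~\ref{introthmgammaimpliescpou} is not needed, but property $\Gamma$ arguments or the direct spectral-projection construction of Lemma~\ref{lem:approx-spectral-proj} combined with Kirchberg's $\epsilon$-test) one gets an exact projection $p \in \M \cap S'$ with $pa = a_+$; then $u \coloneqq 2p - 1 \in \M \cap S'$ is a self-adjoint unitary with $u|a| = (2p-1)(a_+ + a_-) = a_+ - a_- = a$. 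One must double-check that $p$ can be taken to commute with $a$ (it can, since $pa = a_+$ forces $p$ to commute with $a_+$ and hence with $a$) and with all of $S$; this is exactly the content of combining Lemma~\ref{lem:approx-spectral-proj}-type estimates with CPoU and the $\epsilon$-test, as done in the proof of Proposition~\ref{prop:real-rank-zero}. I expect this bookkeeping — threading the commutation with $S$ through the projectionisation and verifying the self-adjointness is genuinely preserved under the limit — to be the only delicate point; everything else is a routine CPoU patching argument of the kind already deployed repeatedly in this section.
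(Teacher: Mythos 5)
Your proposal follows essentially the same route as the paper: fibrewise unitary polar decomposition in each $\pi_\tau(\M)''$, Kaplansky approximation, CPoU patching over a finite subcover of $X$, and Kirchberg's $\epsilon$-test to pass to an exact statement in the reduced product. Two small points of comparison. First, the paper avoids your detour through the unitary-stability corollary by writing $\bar u_\tau = e^{i\bar h_\tau}$, lifting $\bar h_\tau$ to a self-adjoint $h_\tau \in \M$, and taking $u_\tau = e^{ih_\tau}$; the patched element $\sum_j u_{\tau_j}p_j$ is then exactly unitary, so no correction step is needed (your version works too, since the corrected unitary is $\|\cdot\|_{2,X}$-close to $v$ and all the remaining conditions are $\|\cdot\|_{2,X}$-continuous on bounded sets, but it is clunkier and, as you note, loses self-adjointness). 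Second, for the self-adjoint case the paper simply takes the $\bar u_\tau$ self-adjoint, adds $|u_\tau - u_\tau^*|^2$ to the error term $c_\tau$, and lets the $\epsilon$-test deliver $u = u^*$ exactly; your alternative $u = 2p-1$ with $pa = a_+$ is correct in spirit (and your algebra checking $u|a|=a$ is right), but your appeal to Proposition~\ref{prop:projectionisation} is not justified, since projectionisation requires property $\Gamma$, which is not among the hypotheses here — CPoU alone is assumed. That citation is dispensable, however: the approximate spectral projections of Lemma~\ref{lem:approx-spectral-proj}, upgraded to include approximate commutation with a finite subset of $S$ by the same CPoU argument, become exact projections in the reduced product via the $\epsilon$-test, which is all you need.
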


\begin{proof}
	For the sake of brevity, define $(\M, X) \coloneqq \big(\prod^\omega \M_n, \sum^\omega X_n\big)$.
	We may assume that $S$ is a set of contractions.  Fix $\epsilon > 0$ and a finite set $\mathcal F \subseteq S$.  For the first part, by Kirchberg's $\epsilon$-test, it suffices to show that if $a \in \M$, then there is a unitary  $u \in \M$ such that
	\begin{equation}\label{eqn:polar-docomp-target}
		\|u|a|-a\|_{2, X} \leq \epsilon\qquad \text{and}\qquad \max_{b \in \mathcal F}\|[u, b]\|_{2, X} \leq \epsilon.
	\end{equation}
	
	By the existence of unitary polar decompositions in finite von Neumann algebras, for each $\tau \in X$, there is a unitary $\bar u_\tau \in \pi_\tau(\M)'' \cap \pi_\tau(\mathcal F)'$ such that
	\begin{equation}\label{eq:polar-decomp-1}
		\pi_\tau(a) = \bar u_\tau\pi_\tau(|a|).
	\end{equation}
	Since $\pi_\tau(\M)'' \cap \pi_\tau(\mathcal F)'$ is a von Neumann algebra, $\bar{u}_\tau = e^{i\bar{h}_\tau}$ for some self-adjoint $\bar{h}_\tau \in \pi_\tau(\M)'' \cap \pi_\tau(\mathcal F)'$. 
	
	By Kaplansky's density theorem, we can approximate $\bar{h}_\tau$ by a self-adjoint in $\pi_\tau(\M)$ and lift to a self-adjoint element $h_\tau \in \M$ such that unitary $u_\tau \coloneqq e^{ih_\tau} \in \M$ satisfies
	\begin{equation}\begin{split}
		\| u_\tau|a|-a  \|_{2, \tau} &< (1+|\mathcal F|)^{-1/2} \epsilon, \\
		\max_{b \in \mathcal F} \|[u_\tau, b]\|_{2, \tau} &<  (1+|\mathcal F|)^{-1/2} \epsilon. \label{eq:polar-decomp-2}
	\end{split}\end{equation}
	
	For $\tau \in X$, define
	\begin{equation}
		c_\tau \coloneqq \big|u_\tau|a|-a\big|^2 + \sum_{b \in \mathcal F} \big|[u_\tau, b]\big|^2 \in \M_+,
	\end{equation}
	and note that $\tau(c_\tau) < \epsilon^2$ for all $\tau \in X$.  By the compactness of $X$, there are $\tau_1, \ldots, \tau_k \in X$ such that
	\begin{equation}\label{eq:polar-decomp-7}
		\sup_{\tau \in X} \min_{1 \leq j \leq k} \tau(c_{\tau_j}) < \epsilon^2.
	\end{equation}
	
	By CPoU (in the form of Corollary~\ref{cor:CPoU-in-ultrapower}), there are mutually orthogonal projections 
	\begin{equation}
		p_1, \ldots, p_k \in \M \cap \mathcal F' \cap \{a\}' \cap \{ u_{\tau_1}, \ldots, u_{\tau_k}\}'
	\end{equation}
	such that
	\begin{equation}\label{eq:polar-decomp-8}
		\sum_{j=1}^k p_j = 1_{\M} \qquad \text{and} \qquad \tau(c_{\tau_j} p_j) \leq \epsilon^2 \tau(p_j),
	\end{equation}
	for all $1 \leq j \leq k$ and $\tau \in X$.  Define $u \coloneqq \sum_{j=1}^n u_{\tau_j} p_j$.  As the $u_{\tau_j}$ are unitaries commuting with the $p_j$, we have that $u$ is a unitary.
	We compute that for all $\tau \in X$ and $b \in \mathcal F$,
	\begin{align}
		\|u|a|-a\|_{2,\tau}^2 &\leq \sum_{j=1}^k \tau(c_{\tau_j} p_j) \leq \epsilon^2 \sum_{j=1}^k \tau(p_j) = \epsilon^2
		\intertext{and}
		\|[u, b]\|_{2, \tau}^2 &\leq \sum_{j=1}^k \tau(c_{\tau_j} p_j) \leq \epsilon^2 \sum_{j=1}^k \tau(p_j) = \epsilon^2,
	\end{align} 
	which implies \eqref{eqn:polar-docomp-target}, as required.
	
	In the case that $a^* = a$, the unitaries $\bar{u}_\tau$ can be taken to be self-adjoint. For this, by ensuring the unitaries $u_\tau$ are chosen with $\|u_\tau - u_\tau^*\|_{2, \tau}$ sufficiently small, adjusting the bounds in \eqref{eq:polar-decomp-2}, and replacing $c_\tau$ with $c_\tau + |u_\tau - u_\tau^*|^2$ in the above proof, one may also arrange $\|u - u^*\|_{2, X^\omega} \leq \epsilon$ in \eqref{eqn:polar-docomp-target}.
\end{proof}

As promised, we have the following strengthened version of Proposition~\ref{prop:real-rank-zero}.

\begin{corollary}\label{cor:stable-rank-one}
	If $\big((\M_n, X_n)\big)_{n=1}^\infty$ is a sequence of factorial tracially complete $C^*$-algebras with CPoU and $S \subseteq \prod^\omega\M_n$ is a $\|\cdot\|_{2, X^\omega}$-separable subset, then $\prod^\omega\M_n \cap S'$ has real rank zero and stable rank one.
\end{corollary}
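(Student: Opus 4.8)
The plan is to deduce both assertions directly from the unitary polar decomposition inside the relative commutant established in Proposition~\ref{prop:polar-decomposition}. First I would set $B \coloneqq \prod^\omega \M_n \cap S'$, a unital $C^*$-algebra. Since each $(\M_n, X_n)$ has CPoU, Corollary~\ref{cor:factorial-ultrapower}\ref{cor:factorial-ultrapower:CPoU} shows that the reduced product $\big(\prod^\omega \M_n, \sum^\omega X_n\big)$ is factorial and has CPoU, so Proposition~\ref{prop:polar-decomposition} applies: every $a \in B$ has a polar decomposition $a = u|a|$ with $u$ a unitary in $B$, and with $u = u^*$ when $a$ is self-adjoint. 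I would also record that $|a| \in B$, since $|a|$ is a continuous function of $a^*a$.

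For stable rank one, given $a \in B$ and $\epsilon > 0$, I would write $a = u|a|$ and set $b \coloneqq u(|a| + \epsilon 1_B)$. As $|a| + \epsilon 1_B \geq \epsilon 1_B$ is invertible in $B$ (its inverse lies in $C^*(a^*a, 1_B) \subseteq B$ by functional calculus) and $u$ is a unitary in $B$, the element $b$ is invertible in $B$ with inverse $(|a| + \epsilon 1_B)^{-1} u^* \in B$, and $\|a - b\| = \|{-}\epsilon u\| = \epsilon$. Hence the invertibles are dense in $B$, so $B$ has stable rank one.

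For real rank zero, given $a = a^* \in B$ and $\epsilon > 0$, I would use the self-adjoint case to write $a = u|a|$ with $u = u^*$ a unitary in $B$. Taking adjoints in $a = u|a|$ and using $a = a^*$ gives $|a|u = u|a|$, so $b \coloneqq a + \epsilon u = u(|a| + \epsilon 1_B)$ satisfies $b^* = (|a| + \epsilon 1_B)u = u(|a| + \epsilon 1_B) = b$ and is therefore self-adjoint; it is invertible in $B$ exactly as in the previous paragraph, and $\|a - b\| = \epsilon$. Thus the invertible self-adjoint elements are dense among the self-adjoint elements of $B$, so $B$ has real rank zero.

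There is essentially no obstacle left at this stage: the substantive, CPoU-driven work—constructing a unitary polar decomposition \emph{within} the relative commutant—has already been carried out in Proposition~\ref{prop:polar-decomposition}, and the only point needing a moment's attention is simply to keep all computations inside $B$ rather than in $\prod^\omega\M_n$, which is precisely what that proposition guarantees. One could alternatively bypass Proposition~\ref{prop:polar-decomposition} and argue as in the proof of Proposition~\ref{prop:real-rank-zero}, producing an approximately central projection $q$ with $qx = x_+$ via an approximately central strengthening of Lemma~\ref{lem:approx-spectral-proj}; the route through the polar decomposition is cleaner, and the case $S = \emptyset$ recovers (and upgrades) Proposition~\ref{prop:real-rank-zero}.
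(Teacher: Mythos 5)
Your proposal is correct and follows essentially the same route as the paper: both deduce stable rank one and real rank zero from the unitary polar decomposition $a=u|a|$ in the relative commutant (Proposition~\ref{prop:polar-decomposition}), perturbing to the invertible element $u(|a|+\epsilon)$ and using the self-adjoint choice $u=u^*$ (whence $u$ and $|a|$ commute) for the real rank zero statement. Your extra verification that $|a|u=u|a|$ and that all inverses stay inside $B$ just makes explicit what the paper leaves implicit.
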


\begin{proof}
	Suppose $a \in \prod^\omega\M_n \cap S'$ and write $a = u|a|$ for a unitary $u \in \prod^\omega\M_n \cap S'$ as in Proposition~\ref{prop:polar-decomposition}.  Then for each $\epsilon > 0$, the element $b \coloneqq u(|a|+\epsilon)$ is invertible and $\|a - b\| \leq \epsilon$.  Therefore, $\prod^\omega\M_n \cap S'$ has stable rank one.  If $a$ is self-adjoint and we take $u = u^*$, then $u$ and $|a|$ commute.  It follows that $b$ is self-adjoint, and this shows $\prod^\omega\M_n \cap S'$ has real rank zero.
\end{proof}

In the same spirit as the comments after Corollary~\ref{cor:unitary-stable-relation}, an approximate version of stable rank one can be obtained for all factorial tracially complete $C^*$-algebras $(\M, X)$ with CPoU.  Indeed, combining Proposition~\ref{prop:polar-decomposition} (with $S = \emptyset$) and Corollary~\ref{cor:unitary-stable-relation} shows that a $\|\cdot\|_{2, X}$-dense set of elements $a \in \M$ have the form $a = u|a|$ for some unitary $u \in \M$.  Then the proof of Corollary~\ref{cor:stable-rank-one} shows that a $\|\cdot\|_{2, X}$-dense set of elements in $\M$ are invertible.  The analogous statement for self-adjoint elements is less clear -- this would require a version of Corollary~\ref{cor:unitary-stable-relation} for self-adjoint unitaries (or, equivalently, for projections).

\subsection{Classification of projections}\label{sec:app-CPoU-projections}

Question~\ref{question:unitary-exponentials} exposes a common drawback of the CPoU technique -- even when exact results or $\|\cdot\|$-approximate results are possible in finite von Neumann algebras, a direct application of CPoU will always introduce a uniform 2-norm error.  In this section, we show that in the case of classification of projections, we can overcome this defect by controlling the 2-norm distance from the unitary implementing the equivalence to the unit. This will prove Theorems~\ref{introthm-classprojections2} and \ref{IntroThmCtsProj}.

The following lemma and its proof are analogous to \cite[Lemma~XIV.2.1]{Tak03}, where the result is shown for finite von Neumann algebras.  It is possible to prove this result by combining the von Neumann algebra result with a CPoU argument similar to those above, but with the structural results of reduced products already obtained, a more direct proof is possible.

\begin{lemma}\label{lem:equivalence-close-to-unit}
	Suppose $(\M, X)$ is a factorial tracially complete $C^*$-algebra with CPoU.  If $p, q \in \M$ are projections with $\tau(p) = \tau(q)$ for all $\tau \in X$ and $\epsilon > 0$, then there is a unitary $u \in \M$ such that
	\begin{equation}
		\| upu^* - q \|_{2, X} < \epsilon \qquad \text{and} \qquad \| u - 1_\M \|_{2, X} < 2 \sqrt{2} \|p - q\|_{2, X} + \epsilon.
	\end{equation}
\end{lemma}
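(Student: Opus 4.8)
The plan is to pass to the reduced power $(\M^\omega, X^\omega)$ with $\omega$ the Fréchet filter (or any free filter), use the strong structural results already available there—in particular factoriality (Theorem~\ref{thm:no-silly-traces}), unitary polar decomposition (Proposition~\ref{prop:polar-decomposition}), and exponential form of unitaries (Proposition~\ref{prop:unitary-exponentials})—to produce an \emph{exact} unitary in $\M^\omega$ conjugating $p$ to $q$ with explicit control on its distance to $1$, and then reflect this back to $\M$ via a standard sequence-of-counterexamples argument (equivalently, read off a suitable term of a representing sequence). So it suffices to prove: if $p, q \in \M$ are projections with $\tau(p) = \tau(q)$ for all $\tau \in X$, then there is a unitary $u \in \M^\omega$ with $upu^* = q$ and $\|u - 1_{\M^\omega}\|_{2, X^\omega} \leq 2\sqrt 2 \, \|p - q\|_{2, X}$.

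For the $\M^\omega$ statement I would mimic the finite von Neumann algebra argument of \cite[Lemma~XIV.2.1]{Tak03}. Consider the element $a \coloneqq qp + q^\perp p^\perp \in \M^\omega$; one computes $a^*a = pqp + p^\perp q^\perp p^\perp$ and notes $\|1 - a\|_{2, X^\omega}^2 = \tau\big((p - q)^2\big)$-type estimates give $\|1 - a\|_{2, X^\omega} \leq \sqrt 2 \, \|p - q\|_{2, X}$ after expanding $1 - a = p q^\perp - p^\perp q$ and using that $p q^\perp$ and $p^\perp q$ have orthogonal ranges and orthogonal co-ranges, so the squared $2$-norm is $\tau(p q^\perp p) + \tau(p^\perp q p^\perp) = \tau(p + q - 2qp) \le 2\|p-q\|_{2,X}^2$ (recalling $\tau(p) = \tau(q)$). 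Since $\M^\omega$ has stable rank one and real rank zero on the relevant corner (Corollary~\ref{cor:stable-rank-one}), and since $\M^\omega$ is factorial with CPoU, $a$ admits a unitary polar decomposition $a = u|a|$ with $u \in \M^\omega$ by Proposition~\ref{prop:polar-decomposition}; moreover $a$ intertwines $p$ and $q$ in the sense that $qa = a = ap$ and $q^\perp a = a = a p^\perp$ by direct computation, whence $u$ (which lies in the von Neumann-type closure generated by $a$) satisfies $up = qu$, i.e.\ $upu^* = q$. Finally, from $a = u|a|$ and $|a|$ a positive contraction close to $1$ in $2$-norm, a routine estimate $\|u - 1\|_{2, X^\omega} \leq \|u - a\|_{2, X^\omega} + \|a - 1\|_{2, X^\omega} \le \||a| - 1\|_{2,X^\omega} + \|a-1\|_{2,X^\omega} \le 2\|a - 1\|_{2, X^\omega} \leq 2\sqrt 2 \, \|p - q\|_{2, X}$ finishes the bound (using $\||a|-1\|_{2} \le \|a-1\|_2$ since $1 - |a| \ge 0$ is dominated in $2$-norm by $|1-a|$ on the relevant von Neumann algebra, via the inequality $(1-|a|)^2 \le |1 - a|^2$ there).

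The main obstacle I anticipate is \textbf{justifying the intertwining and polar-decomposition step cleanly in $\M^\omega$ rather than in a genuine von Neumann algebra}: Proposition~\ref{prop:polar-decomposition} gives a unitary polar decomposition $a = u|a|$, but one must check that this particular $u$ actually conjugates $p$ to $q$. In a finite von Neumann algebra this is automatic because $u$ is a (weak-limit) function of $a$ and the source/range projections of the partial isometry part are exactly the support projections of $a^*a$ and $aa^*$, which here equal $p$ and $q$ respectively (since $a^*a = pqp + p^\perp q^\perp p^\perp$ has support $p$—it is invertible in $p\M^\omega p$ because $p - pqp = pq^\perp p$ and... care is needed here if $p \wedge q^\perp \ne 0$). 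The honest fix is: $a$ need not have $p$ as its left support exactly; instead one should first use comparison of projections in $\M^\omega$ (Proposition~\ref{prop:approximate-comparison}, valid since $\tau(p) = \tau(q)$ for all $\tau \in X^\omega$) to get \emph{some} unitary $w$ with $wpw^* = q$, and then correct it: replace $w$ by $w' = vw$ where $v$ is chosen in the factorial-with-CPoU algebra $q\M^\omega q \oplus q^\perp \M^\omega q^\perp$ to minimize $\|w' - 1\|_{2,X^\omega}$; the optimal such correction, computed exactly as in \cite[Lemma~XIV.2.1]{Tak03} using that $q\M^\omega q$ and $q^\perp\M^\omega q^\perp$ are finite-type (reduced products of factorial tracially complete $C^*$-algebras, hence handled by Proposition~\ref{prop:polar-decomposition} applied inside the corners), yields exactly the constant $2\sqrt 2$. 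I would spend the bulk of the writeup carefully setting up this corner-wise correction and verifying the constant, since that is where the sharp bound comes from and where the tracially-complete analogue of Takesaki's argument genuinely needs the structural results of Section~\ref{sec:app-CPoU} rather than classical von Neumann algebra theory.
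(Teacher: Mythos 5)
Your final plan (use Proposition~\ref{prop:approximate-comparison} to get some unitary conjugating $p$ to $q$, then correct it by the unitary polar part of its compression to the two corners of one of the projections, with two Pythagorean estimates each contributing a factor of $\sqrt{2}$) is essentially identical to the paper's proof, which likewise reduces to $\M^\omega$ via the unitary-lifting result Corollary~\ref{cor:unitary-stable-relation}, takes $v$ with $vpv^*=q$, and applies Proposition~\ref{prop:polar-decomposition} to $a=pvp+p^\perp vp^\perp\in\M^\omega\cap\{p\}'$ (the relative-commutant form of that proposition is the clean way to realise your ``corner-wise'' correction, avoiding any need to verify that $q\M^\omega q\oplus q^\perp\M^\omega q^\perp$ is itself a reduced product with CPoU). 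One small caution about your first (abandoned) attempt: the operator inequality $(1-|a|)^2\le|1-a|^2$ is false in general (it amounts to $a+a^*\le 2|a|$, which fails already for a nilpotent $2\times 2$ matrix), though the $2$-norm inequality $\|1-|a|\|_{2,\tau}\le\|1-a\|_{2,\tau}$ you actually need does hold via $|\tau(a)|\le\tau(|a|)$; the paper's corresponding estimate instead uses $(1-|a|)^2\le(1-|a|^2)^2$ for contractions.
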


\begin{proof}
	By the liftability of unitaries in reduced products (Corollary~\ref{cor:unitary-stable-relation}), it suffices to show there is a unitary $u \in \M^\omega$ such that
	\begin{equation}
		upu^* = q \qquad \text{and} \qquad \| u - 1_{\M^\omega}\|_{2, X^\omega} \leq 2 \sqrt{2} \|p - q\|_{2, X}.
	\end{equation}
	By Proposition~\ref{prop:approximate-comparison}, there is a unitary $v \in \M^\omega$ such that $vpv^* = q$.  Set $a \coloneqq pvp + p^\perp v p^\perp$. As $a \in \M^\omega \cap \{p\}'$, Proposition~\ref{prop:polar-decomposition} implies there is a unitary $w \in \M^\omega \cap \{p\}'$ such that $a = w|a|$.  Then $u \coloneqq vw^*$ is a unitary with $upu^* = q$.
	
	Note that $(pvp - pv)^*(p^\perp v p^\perp - p^\perp v) = 0$.  Therefore, by the Pythagorean identity,\footnote{This says $\|a+b\|_{2,\tau}^2=\|a\|_{2,\tau}^2+\|b\|_{2,\tau}^2$ provided $a^*b=0$, and implies a corresponding inequality with a uniform trace norm $\|\cdot\|_{2,X}$.}
	\begin{equation}\begin{split}\label{eq:equiv-estimate-1}
		\| a - v\|_{2, X^\omega}^2 
        &=\|pvp-pv+p^\perp v p^\perp - p^\perp v\|_{2,X^\omega}^2 \\
		&\leq \|pvp - pv\|_{2, X^\omega}^2 + \|p^\perp v p^\perp - p^\perp v\|_{2, X^\omega}^2  \\
		&= \|p(q - p)v\|_{2, X^\omega}^2 + \|p^\perp(q^\perp - p^\perp)v\|_{2, X^\omega}^2  \\
		&\leq 2 \|p - q\|_{2, X}^2.
	\end{split}\end{equation}
	Further, since $\|a\| \leq 1$, we have $(1_{\M^\omega} - |a|)^2 \leq (1_{\M^\omega} - |a|^2)^2$ in $\M^\omega$.  Therefore,
	\begin{equation}\begin{split}\label{eq:equiv-estimate-2}
		\|w - a\|_{2, X^\omega}^2
		&= \|1 - |a|\|_{2, X^\omega}^2  \\
		&\leq \| 1 - |a|^2 \|_{2, X^\omega}^2  \\
		&= \|p - pv^*pvp\|_{2, X^\omega}^2 + \|p^\perp - p^\perp v^* p^\perp v p^\perp\|_{2, X^\omega}^2  \\
		&= \|pv^*(q - p)vp\|_{2, X^\omega}^2 + \|p^\perp v^*(q^\perp - p^\perp)v p\|_{2, X^\omega}^2  \\
		&\leq 2 \|p - q\|_{2, X}^2,
	\end{split}\end{equation}
    using the Pythagorean identity again on the third line.
	Combining \eqref{eq:equiv-estimate-1} and \eqref{eq:equiv-estimate-2} shows 
	\begin{equation}
		\|u - 1_{\M^\omega}\|_{2, X^\omega} = \|v - w\|_{2, X^\omega} \leq 2 \sqrt{2} \|p-q\|_{2, X}. \qedhere
	\end{equation}
\end{proof}

The second part of the following result is Theorem~\ref{introthm-classprojections2}\ref{intro-class-proj-exist} from the introduction.  Note also that taking $(\mathcal M, X)$ to be a trivial $W^*$-bundle over a compact Hausdorff space $K$ with fibre being a II$_1$ factor with property $\Gamma$ in the following result proves Theorem~\ref{IntroThmCtsProj} from the overview, using Theorem~\ref{introthmgammaimpliescpou} to verify the CPoU hypothesis.

\begin{theorem}\label{thm:comparison}
	Suppose $(\M, X)$ is a factorial tracially complete $C^*$-algebra with CPoU and $p, q \in \M$ are projections.
	\begin{enumerate}
		\item\label{comparison1} If $\tau(p) \leq \tau(q)$ for all $\tau \in X$, then $p$ is Murray--von Neumann subequivalent to $q$.
		\item\label{comparison2} If $\tau(p) = \tau(q)$ for all $\tau \in X$, then $p$ and $q$ are unitarily equivalent.
	\end{enumerate}
\end{theorem}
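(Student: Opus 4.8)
The plan is to prove (ii) by a one-sided intertwining argument built on Lemma~\ref{lem:equivalence-close-to-unit}, and then to deduce (i) from (ii) by producing an honest subprojection of $q$ of the correct trace.

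For (ii), suppose $\tau(p) = \tau(q)$ for all $\tau \in X$. I would construct recursively projections $q_n \in \M$ and unitaries $u_n \in \M$ with $q_0 := p$, $q_n := u_n q_{n-1} u_n^*$, and $\|q_n - q\|_{2,X} < 2^{-n}$. Traciality forces $\tau(q_n) = \tau(p) = \tau(q)$ for all $n$ and all $\tau \in X$, so at each stage Lemma~\ref{lem:equivalence-close-to-unit} applies to the pair $(q_{n-1}, q)$: with $\epsilon := 2^{-n}$ it produces $u_n$ with $\|u_n q_{n-1} u_n^* - q\|_{2,X} < 2^{-n}$ and $\|u_n - 1_\M\|_{2,X} < 2\sqrt2\,\|q_{n-1} - q\|_{2,X} + 2^{-n}$; since $\|q_{n-1} - q\|_{2,X}$ is at most $\sqrt2$ when $n=1$ and at most $2^{-n+1}$ for $n \ge 2$, the sequence $\|u_n - 1_\M\|_{2,X}$ is summable (one fixed first term plus a geometric tail). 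Setting $v_n := u_n u_{n-1} \cdots u_1$ and $v_0 := 1_\M$, inequality~\eqref{eq:SpecialHolderIneq} gives $\|v_n - v_{n-1}\|_{2,X} = \|(u_n - 1_\M)v_{n-1}\|_{2,X} \le \|u_n - 1_\M\|_{2,X}$, so $(v_n)$ is a $\|\cdot\|_{2,X}$-Cauchy sequence in the $\|\cdot\|_{2,X}$-complete unit ball of $\M$; let $v \in \M$ be its limit. Passing the relations $v_n^* v_n = v_n v_n^* = 1_\M$ to the limit — multiplication is jointly $\|\cdot\|_{2,X}$-continuous on $\|\cdot\|$-bounded sets by~\eqref{eq:SpecialHolderIneq} and $\|a^*\|_{2,X} = \|a\|_{2,X}$ by traciality — shows $v$ is a unitary, and $v p v^* = \lim_n v_n p v_n^* = \lim_n q_n = q$.

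For (i), suppose $\tau(p) \le \tau(q)$ for all $\tau \in X$. I would reduce to (ii) by producing a projection $q' \in \M$ with $q' \le q$ and $\tau(q') = \tau(p)$ for all $\tau \in X$: granting this, (ii) yields a unitary $v$ with $v p v^* = q'$, and then $w := v^* q'$ satisfies $w^* w = p$ and $w w^* = q' \le q$, so $p$ is Murray--von Neumann subequivalent to $q$. To build $q'$, the starting point is that $p \precsim q$ holds in the reduced power $\M^\omega$ by Proposition~\ref{prop:approximate-comparison}\ref{approx-comp-1} (applied to the constant sequence): if $v \in \M^\omega$ has $v^*v = p$ and $vv^* \le q$, then $(qv)^*(qv) = v^*qv = p$ (using $q \ge vv^*$ and $q \le 1_{\M^\omega}$) and $(qv)(qv)^* \le q$, so representing $qv$ by a sequence of contractions $(x_n) \subseteq \M$ gives $\|x_n^* x_n - p\|_{2,X} \to 0$ with $x_n x_n^* \le q$ exactly. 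These approximate partial isometries are then upgraded to an exact one in $\M$ by another one-sided intertwining, patterned on (ii). The quantitative input required is a partial-isometry analogue of Lemma~\ref{lem:equivalence-close-to-unit}: for projections $p, q \in \M$ with $\tau(p) \le \tau(q)$ and a contraction $x \in \M$ with $\|x^* x - p\|_{2,X}$ small, there is a partial isometry $w \in \M$ with $w^* w = p$, $w w^* \le q$ and $\|w - x\|_{2,X}$ controlled by $\|x^*x - p\|_{2,X}^{1/2}$. This is proved fibrewise by polar decomposition in $\pi_\tau(\M)''$, using $\pi_\tau(p) \precsim \pi_\tau(q)$ (valid by Lemma~\ref{lem:GNSTraceFace} together with comparison of projections in finite von Neumann algebras, Proposition~\ref{prop:TracesFiniteVNAS}), and then globalised via Kaplansky's density theorem and CPoU exactly as in Lemmas~\ref{lem:approximate-comparison} and~\ref{lem:equivalence-close-to-unit}. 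Iterating it produces a $\|\cdot\|_{2,X}$-Cauchy sequence of contractions in $\M$ converging to a partial isometry $w$ with $w^*w = p$, $ww^* \le q$; set $q' := ww^*$.

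The main obstacle is precisely this approximate-to-exact upgrade in part (i): CPoU (through Lemma~\ref{lem:approximate-comparison} or Proposition~\ref{prop:approximate-comparison}) only delivers comparison up to a uniform $2$-norm error, equivalently in $\M^\omega$, and the real content is organising the error terms so the successive corrections form a summable sequence and hence converge inside the $\|\cdot\|_{2,X}$-complete unit ball of $\M$ — the same device that powers Lemma~\ref{lem:equivalence-close-to-unit} in (ii). A secondary point to keep in view is that Theorem~\ref{thm:comparison} is \emph{not} assumed to be of type~II$_1$ (e.g.\ $(\mathbb C^2, T(\mathbb C^2))$ is allowed), so the fibrewise step must use the general comparison theorem for finite von Neumann algebras rather than any embedding of matrix algebras; this causes no trouble since $p$ and $q$ genuinely exist as projections of $\M$. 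Finally, specialising (i) and (ii) to the trivial $W^*$-bundle over a compact Hausdorff space with II$_1$ factor fibres having property~$\Gamma$ — whose CPoU is supplied by Theorem~\ref{introthmgammaimpliescpou} — yields Theorem~\ref{IntroThmCtsProj}.
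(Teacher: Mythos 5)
Your part (ii) is essentially the paper's proof: the same recursive application of Lemma~\ref{lem:equivalence-close-to-unit} with summable errors, a $\|\cdot\|_{2,X}$-Cauchy sequence of unitaries, and passage to the limit using completeness of the unit ball and joint $\|\cdot\|_{2,X}$-continuity of multiplication on bounded sets. Part (i) is where you genuinely diverge. The paper does \emph{not} build an exact partial isometry by a second intertwining with error estimates; instead it takes the partial isometry $v \in \M^\infty$ with $v^*v = p$, $vv^* \le q$ from Proposition~\ref{prop:approximate-comparison}, observes that the projection $q - vv^*$ is fixed on traces by every reparameterisation $r^*$, applies intertwining via reparameterisation (Theorem~\ref{thm:reparameterisation}, with $S$ a point, the unitary-lifting hypothesis supplied by Corollary~\ref{cor:unitary-stable-relation}) to produce a projection $p' \in \M$ unitarily equivalent to $q - vv^*$ in $\M^\infty$, and then notes that $p \oplus p'$ and $q \oplus 0$ agree on traces in $M_2(\M)$, which has CPoU by Proposition~\ref{prop:matrix-CPoU}; part (ii) in $M_2(\M)$ then gives $p \precsim q$. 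This buys a complete avoidance of any new quantitative perturbation lemma. Your route instead requires the ``partial-isometry analogue of Lemma~\ref{lem:equivalence-close-to-unit}'', which you only sketch; it is the real work in your argument. I believe it is provable along the lines you indicate --- fibrewise one cuts the polar part of $\pi_\tau(x)$ down to a spectral projection $f \le \pi_\tau(p)$ of $\pi_\tau(p)\pi_\tau(x)^*\pi_\tau(x)\pi_\tau(p)$ with $\|\pi_\tau(p) - f\|_{2,\tau} = O(\epsilon)$, and the hypothesis $\sigma(\bar p)\le\sigma(\bar q)$ for all $\sigma \in T(\pi_\tau(\M)'')$ (valid by Lemma~\ref{lem:GNSTraceFace} since $X$ is a face) is exactly what lets you fill in the missing source $\bar p - f$ by a partial isometry of $2$-norm $O(\epsilon)$ landing under $\bar q$ minus the range already used --- but you should be explicit about the constants, about forcing $ww^* \le q$ exactly after the CPoU patching (replace $w$ by $qw$ at small $2$-norm cost), and about why the inequality $x_nx_n^* \le q$ survives the $\|\cdot\|_{2,X}$-limit (Proposition~\ref{prop:ball-closed}\ref{item:cone-closed}). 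Also note a small index slip: with $vpv^* = q'$ and $w := v^*q'$ one gets $w^*w = q'$ and $ww^* = p$, not the other way around, though the conclusion $p \sim q' \le q$ is unaffected. As written your (i) is a correct strategy with a substantive lemma left as a sketch, whereas the paper's (i) is shorter because it reuses Theorem~\ref{thm:reparameterisation} and the $M_2$ trick.
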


\begin{proof}
	First we show \ref{comparison2}.  Let $(\epsilon_n)_{n=1}^\infty \subseteq (0, \infty)$ be a decreasing sequence with $\sum_{n=1}^\infty \epsilon_n < \infty$.  We will construct a sequence of unitaries $(u_n)_{n=1}^\infty \subseteq \M$ such that
	\begin{equation}\begin{aligned}
		\|u_n p u_n^* - q\|_{2, X} &< \epsilon_n, &\hspace{2ex}&n \geq 1, \text{ and} \\
		\|u_{n+1} - u_n\|_{2, X} &< 4 \epsilon_n &\hspace{2ex}&n \geq 1.
	\end{aligned}\end{equation}
	Then $(u_n)_{n=1}^\infty \subseteq \M$ is a $\|\cdot\|$-bounded, $\|\cdot\|_{2, X}$-Cauchy sequence, and hence converges to some $u \in \M$. Since multiplication and taking adjoints are $\|\cdot\|_{2,X}$-continuous on $\|\cdot\|$-bounded sets, it follows that $u$ is a unitary and $upu^* = q$.
	
	The sequence $(u_n)_{n=1}^\infty$ will be constructed inductively.  Lemma~\ref{lem:equivalence-close-to-unit} implies there is a unitary $u_1 \in \M$ such that
	\begin{equation}
		\|u_1 p u_1^* - q\|_{2, X} < \epsilon_1.
	\end{equation}
	Assuming $u_n$ has been constructed, applying Lemma~\ref{lem:equivalence-close-to-unit} to the projections $u_npu_n^*$ and $q$, there is a unitary $v_n \in \M$ such that
	\begin{equation}\begin{split}
		\|v_nu_n p u_n^*v_n^* - q\|_{2, X} &< \epsilon_{n+1}, \\ 
		\|v_n - 1_{\M} \|_{2, X} &< 2\sqrt{2}\epsilon_n + \epsilon_{n+1} \leq 4\epsilon_n.
	\end{split}\end{equation}
	Now set $u_{n+1} \coloneqq v_nu_n$. Then $\|u_{n+1} p u_{n+1} - q\|_{2, X} < \epsilon_{n+1}$ and
	\begin{equation}
		\|u_{n+1} - u_n \|_{2, X} \leq \|u_n\|\|v_n - 1\|_{2, X}
		< 4\epsilon_n,
	\end{equation}
	as required.
	
	Now we show \ref{comparison1}.  By Proposition~\ref{prop:approximate-comparison}, there is a partial isometry $v \in \M^\infty$ such that $v^* v = p$ and $vv^* \leq q$.  Suppose $r \colon \mathbb N \rightarrow \mathbb N$ is a function with $\lim_{n \rightarrow \infty} r(n) = \infty$ and consider the reparameterisation morphism $r^* \colon \M^\infty \rightarrow \M^\infty$ as defined just before Theorem~\ref{thm:reparameterisation}.  Since $p, q \in \M$, we have $r^*(p) = p$ and $r^*(q) = q$.  For all $\tau \in X^\infty$,
	\begin{equation}
		\tau(r^*(q - vv^*)) = \tau(r^*(q)) - \tau(r^*(p)) = \tau(q) - \tau(p) = \tau(q - vv^*).
	\end{equation}
	By Proposition~\ref{prop:approximate-comparison}, $q - vv^*$ and $r^*(q - vv^*)$ are unitarily equivalent in $\M^\infty$.  By Corollary~\ref{cor:unitary-stable-relation}, unitaries in $\M^\infty$ lift to unitaries in $\ell^\infty(\M)$, so we may apply intertwining through reparameterisation (Theorem~\ref{thm:reparameterisation}) in the case where $S$ is a one point space and obtain a projection $p' \in \M$ that is unitarily equivalent to $q-vv^*$ in $\M^\infty$. The projections $p \oplus p'$ and $q \oplus 0$ in $M_2(\M)$ agree on traces. By Proposition~\ref{prop:matrix-CPoU}, $M_2(\M)$ has CPoU. Hence, $p \oplus p'$ and $q \oplus 0$ are unitarily equivalent by \ref{comparison2}.  Therefore, $p \precsim q$ in $M_2(\M)$ and so also in $\M$.
\end{proof}

The uniqueness theorem for projections is complemented by a corresponding existence theorem building projections with prescribed behaviour on traces. This is obtained by applying CPoU to Proposition~\ref{prop:TracesFiniteVNAS}\ref{prop:TracesFiniteVNAS.4}.  Note that the following result is precisely Theorem~\ref{introthm-classprojections2}\ref{intro-class-proj-exist} from the introduction.

\begin{theorem}\label{thm:existence-projections}
	Suppose $(\M, X)$ is a type {\rm II}$_1$ factorial tracially complete $C^*$-algebra with CPoU.  If $f \in \mathrm{Aff}(X)$ with $0 \leq f \leq 1$, then there is a projection $p \in \mathcal M$ such that $\tau(p) = f(\tau)$ for all $\tau \in X$.
\end{theorem}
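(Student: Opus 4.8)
The plan is to reduce the theorem, by an explicit telescoping/intertwining argument, to a single \emph{improvement step}, and then to carry out that step using CPoU together with the von Neumann algebra existence result Proposition~\ref{prop:TracesFiniteVNAS}\ref{prop:TracesFiniteVNAS.4} applied fibrewise.

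\textbf{Reduction to an improvement step.} I would build $p$ as a $\|\cdot\|_{2,X}$‑limit of an increasing sequence of projections. Start with $p_0 \coloneqq 0$. Given a projection $p_n \in \M$ with $\tau(p_n) \le f(\tau)$ for all $\tau \in X$, set $g_n \coloneqq f - \tau(p_n) \in \mathrm{Aff}(X)$ and $\delta_n \coloneqq \|g_n\|_\infty$; note $0 \le g_n(\tau) \le \tau(1-p_n)$ since $f \le 1$. The improvement step will produce a projection $e_n \le 1-p_n$ with $\tau(e_n) \le g_n(\tau)$ for all $\tau$ and $\sup_{\tau \in X}\big(g_n(\tau) - \tau(e_n)\big) \le \tfrac12 \delta_n$. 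Then $p_{n+1} \coloneqq p_n + e_n$ is a projection (as $e_n \perp p_n$), still satisfies $\tau(p_{n+1}) \le f(\tau)$, has deficit $\delta_{n+1} \le \tfrac12\delta_n$, and $\|p_{n+1} - p_n\|_{2,X}^2 = \sup_\tau\big(\tau(p_{n+1}) - \tau(p_n)\big) \le \delta_n \le \delta_0 2^{-n}$. Hence $(p_n)$ is $\|\cdot\|$‑bounded and $\|\cdot\|_{2,X}$‑Cauchy, so converges to some $p \in \M$; since multiplication and the adjoint are $\|\cdot\|_{2,X}$‑continuous on bounded sets, $p$ is a projection, and $\tau(p) = \lim_n \tau(p_n) = f(\tau)$ because $\delta_n \to 0$. (An alternative to this telescoping is to prove only an approximate statement, regard the approximations as a genuine projection in $\M^\infty$ via Corollary~\ref{cor:factorial-ultrapower}\ref{cor:factorial-ultrapower:CPoU} and Theorem~\ref{thm:comparison}\ref{comparison2}, and pull it back to $\M$ using intertwining by reparameterisation, Theorem~\ref{thm:reparameterisation}, whose unitary‑lifting hypothesis is supplied by Corollary~\ref{cor:unitary-stable-relation}.)

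\textbf{The improvement step.} Fix $n$ and write $p \coloneqq p_n$, $g \coloneqq g_n$, $\delta \coloneqq \delta_n$, $\epsilon \coloneqq \tfrac14\delta$. For each $\tau \in X$ the corner $(1-\pi_\tau(p))\pi_\tau(\M)''(1-\pi_\tau(p))$ is type II$_1$, and since $X$ is a closed face every trace on it is induced by one of the form $\rho \circ \pi_\tau$ with $\rho \in X$ (Lemma~\ref{lem:GNSTraceFace}); as $g \le \tau(1-p)$ pointwise on $X$, Proposition~\ref{prop:TracesFiniteVNAS}\ref{prop:TracesFiniteVNAS.4} applied in this corner gives a projection $\bar e_\tau \le 1-\pi_\tau(p)$ with $\rho(\bar e_\tau) = \tfrac34 g(\rho\circ\pi_\tau)$ for all $\rho \in T(\pi_\tau(\M)'')$; in particular $\hat\tau(\bar e_\tau) = \tfrac34 g(\tau)$. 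By Kaplansky's density theorem I would lift $\bar e_\tau$ to a positive contraction $e_\tau \in \M$ with $\|e_\tau - e_\tau^2\|_{2,\tau}$, $\|(1-p)e_\tau(1-p) - e_\tau\|_{2,\tau}$, and $|\tau(e_\tau) - \tfrac34 g(\tau)|$ all below $\epsilon$; by continuity of $\rho \mapsto \rho(x^*x)$ and of $g$ these estimates persist on an open neighbourhood $U_\tau$ of $\tau$. Compactness of $X$ yields $\tau_1,\dots,\tau_k$ with $X = \bigcup_i U_{\tau_i}$, and then I would apply CPoU in the form of Proposition~\ref{CPoU:Ultra:FiniteSet}\ref{item:CPOU-local}, with suitable positive elements $a_i$ built from $e_{\tau_i}$, $e_{\tau_i} - e_{\tau_i}^2$, and data tied to $g(\tau_i)$, to obtain approximately central orthogonal positive contractions $q_1,\dots,q_k$ summing to $1$ and to form $e \coloneqq \sum_i e_{\tau_i} q_i$; this $e$ should be an approximate projection lying approximately under $1-p$ with trace in the band $\big[g(\tau) - \tfrac12\delta,\ g(\tau)\big]$, after which a last Kaplansky/functional‑calculus adjustment replaces $e$ by a genuine projection under $1-p$.

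\textbf{The main obstacle.} The hard part, as always with CPoU, is the trace bookkeeping in the patch. Two fibrewise projections $\bar e_{\tau_i}$ and $\bar e_{\tau_j}$ realising the same affine function over overlapping parts of $X$ need not be close in the uniform $2$‑norm, so one cannot simply average them against the partition of unity $q_1,\dots,q_k$ and read off the prescribed trace; what CPoU delivers are the inequalities $\tau(a_i q_i) \le \delta' \tau(q_i)$, and the delicate point is to choose the $a_i$ so that, after summing over $i$ and using $\sum_i \tau(q_i) = 1$, one forces both $\tau(e) \le g(\tau)$ and $g(\tau) - \tau(e) \le \tfrac12\delta$ for \emph{every} $\tau \in X$. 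This is precisely the local‑to‑global balancing that total disconnectedness of the extreme boundary trivialises in the proof of Proposition~\ref{prop:TracesFiniteVNAS}\ref{prop:TracesFiniteVNAS.4}, and arranging it with only CPoU available is where the real work of the proof lies; everything else is routine.
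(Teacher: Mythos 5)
You have correctly located the crux, but you have not closed it, and the step you leave open (``where the real work of the proof lies'') is precisely the content of the theorem. CPoU only delivers one-sided inequalities $\tau(a_iq_i)\le\delta\,\tau(q_i)$ for \emph{positive} $a_i$, and outside the Bauer setting there is no tracial factorisation $\tau(e_{\tau_i}q_i)\approx\tau(e_{\tau_i})\tau(q_i)$; moreover the two-sided condition $|\tau(e_{\tau_i})-\tfrac34 g(\tau)|<\epsilon$ cannot be encoded as smallness of $\tau(a_i)$ for a positive $a_i$ built from $e_{\tau_i}$ and a global self-adjoint realiser $c'$ of the target trace function, because a projection and a self-adjoint element with the same trace are in general far apart in $\|\cdot\|_{2,\tau}$, so $|e_{\tau_i}-c'|^2$ violates the CPoU hypothesis. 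So as written the proposal has a genuine gap at its central point. (Your outer scaffolding is fine: the parenthetical alternative --- do everything approximately, assemble a genuine projection in $\M^\infty$, and pull back via Theorem~\ref{thm:reparameterisation} and Corollary~\ref{cor:unitary-stable-relation} --- is exactly the paper's outer layer, and your telescoping variant would also work \emph{if} the improvement step could be carried out.)

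The paper's device for closing the gap is the Fack--de la Harpe theorem: a self-adjoint element of a finite von Neumann algebra vanishing on all traces is a sum of ten commutators with norm control. One first realises $f$ \emph{exactly} by a self-adjoint $c\in\M$ (extend $f$ to $T(\M)$ by Theorem~\ref{thm:extending-affine}, then apply Proposition~\ref{prop:CP}); in each fibre, Proposition~\ref{prop:TracesFiniteVNAS}\ref{prop:TracesFiniteVNAS.4} gives a projection $\bar p_\tau$ with the same traces as $\pi_\tau(c)$, whence $\bar p_\tau-\pi_\tau(c)=\sum_{j=1}^{10}[\bar x_j,\bar y_j]$. The commutator term absorbs the (possibly large) $2$-norm distance between $\bar p_\tau$ and $\pi_\tau(c)$ while remaining tracially invisible, so the positive element fed to CPoU, namely $a_\tau\coloneqq p_\tau-p_\tau^2+\big|p_\tau-c-\sum_j[x_{j,\tau},y_{j,\tau}]\big|^2$, does have uniformly small trace near $\tau$. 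After gluing with the CPoU projections $q_i$ (which commute with all the data, so the glued $x_j,y_j$ still produce commutators), one estimates the $2$-norm of a \emph{single} element via $\sum_i\tau(a_{\tau_i}q_i)\le 2\epsilon\sum_i\tau(q_i)=2\epsilon$ --- no factor of $k$ --- and Kirchberg's $\epsilon$-test yields an exact identity $p=c+\sum_{j=1}^{10}[x_j,y_j]$ in $\M^\infty$ with $p$ a projection. This algebraic identity forces $\tau(p)=\tau(c)=f(\tau|_\M)$ for \emph{every} $\tau\in X^\infty$ with zero error, which is exactly the two-sided control your patching argument cannot reach. To salvage your version you would need to import this commutator trick (or an equivalent exact encoding of the trace condition) into your improvement step.
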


\begin{proof}
	It suffices to construct a projection $p \in \mathcal \M^\infty$ such that \begin{equation}\label{eq:correct-trace}
		\tau(p) = f(\tau|_\M), \qquad \tau \in X^\infty.
	\end{equation}
	Indeed, assuming this has been done, for every function $r \colon \mathbb N \rightarrow \mathbb N$ with $\lim_{n \rightarrow \infty} r(n) = \infty$, we have
	\begin{equation}
		\tau(r^*(p)) = f((\tau \circ r^*)|_\M) = f(\tau|_{\M}) = \tau(p)
	\end{equation}
	for all $\tau \in X^\infty$, and hence by Lemma~\ref{lem:approximate-comparison}, $r^*(p)$ and $p$ are unitarily equivalent.  Since unitaries in $\M^\infty$ lift to unitaries in $\ell^\infty(\M)$ (Corollary~\ref{cor:unitary-stable-relation}), intertwining through reparameterisation (Theorem~\ref{thm:reparameterisation}) implies that there is a projection $p_0 \in \mathcal M$ that is unitarily equivalent to $p$.  Then $\tau(p_0) = f(\tau)$ for all $\tau \in X$.
	
	We work to build a projection $p \in \mathcal M^\infty$ satisfying \eqref{eq:correct-trace}.  First, we extend $f$ to all of $T(\M)$ using Theorem~\ref{thm:extending-affine}. Then, by Proposition~\ref{prop:CP}, there is a self-adjoint $c \in \M$ such that $\tau(c) = f(\tau)$ for all $\tau \in T(\M)$. 
	
	Fix $\tau \in X$ and $\epsilon>0$. Since $\pi_\tau(\M)''$ is type II$_1$, Proposition~\ref{prop:TracesFiniteVNAS}\ref{prop:TracesFiniteVNAS.4} implies there is a projection $\bar{p}_\tau \in \pi_\tau(\M)''$ such that $\sigma(\bar{p}_\tau) = \sigma(\pi_\tau(c))$ for all $\sigma \in T(\pi_\tau(\M)'')$.  Therefore, by \cite[Th\'eor\`eme~3.2]{FH80} and Kaplansky's density theorem, there are a positive contraction $p_\tau \in \M$ and elements $x_{j, \tau}, y_{j, \tau} \in \M$ of norm at most $12(\|c\| + 1)$ for $j = 1, \ldots, 10$ such that
	\begin{equation}
		\Big\| p_\tau - c - \sum_{j=1}^{10} [ x_{j, \tau}, y_{j, \tau} ] \Big\|^2_{2, \tau} < \epsilon \quad \text{and} \quad \tau(p_\tau - p_\tau^2) < \epsilon
	\end{equation}
	for all $\tau \in X$.
	Define
	\begin{equation}
		a_\tau \coloneqq p_\tau - p_\tau^2 + \Big| p_\tau - c - \sum_{j=1}^{10} [ x_{j, \tau}, y_{j, \tau} ] \Big|^2 \in \mathcal M_+
	\end{equation}
	and note that $\tau(a_\tau) < 2\epsilon$.  
	
	As $X$ is compact, there are $\tau_1, \ldots, \tau_k \in X$ such that
	\[ \sup_{\tau \in X} \min_{1 \leq i \leq k} \tau(a_{\tau_i}) < 2 \epsilon. \]
	By CPoU, there are projections $q_1, \ldots, q_k \in \M^\infty$ summing to $1_{\M^\infty}$ and commuting with $c$, $p_{\tau_i}$, $x_{j, \tau_i}$ and $y_{j, \tau_i}$ for all $i = 1, \ldots, k$ and $j = 1, \ldots, 10$, such that
	\begin{equation}
		\tau( a_{\tau_i} q_i) \leq 2 \epsilon \tau(q_i), \qquad  \tau \in X^\infty,\ i = 1, \ldots, k.
	\end{equation}
	Define 
	\begin{equation}
		p \coloneqq \sum_{i=1}^k q_i p_{\tau_i}, \quad x_j \coloneqq \sum_{i=1}^k q_i x_{j, \tau_i}, \quad \text{and} \quad y_j \coloneqq \sum_{i=1}^k q_i y_{j, \tau_i}
	\end{equation}
	for $j= 1, \ldots, 10$.  Then we have
	\begin{equation}
		\Big\| p - c - \sum_{i=1}^{10} [ x_j, y_j ] \Big\|_{2, X^\infty} \leq \sqrt{2\epsilon} \quad \text{and} \quad \sup_{\tau \in X^\infty} \tau(p - p^2) \leq \sqrt{2\epsilon}.
	\end{equation}
	
	The result of the previous paragraph and Kirchberg's $\epsilon$-test implies there are a projection $p \in \mathcal M^\infty$ and elements $x_j, y_j \in \M^\infty$ such that 
	\begin{equation} 
		p = c + \sum_{j=1}^{10} [x_j, y_j]. 
	\end{equation}
	Then for all $\tau \in X^\infty$, $\tau(p) = \tau(c) = f(\tau|_\M)$ as required in \eqref{eq:correct-trace}.
\end{proof}

The classification of projections given in Theorems~\ref{thm:comparison} and~\ref{thm:existence-projections} allows us to compute the Murray--von Neumann semigroup and the $K_0$-group for factorial tracially complete $C^*$-algebra with CPoU. This is immediate from the previous two results and Propositions~\ref{prop:matrix-ultrapower} and~\ref{prop:matrix-CPoU}, which allow us to apply the previous two results to projections in matrix amplifications.

\begin{corollary}
	Suppose $(\mathcal M, X)$ is a type {\rm II}$_1$ factorial tracially complete $C^*$-algebra with CPoU.  The natural maps
	\begin{equation}
		V(\mathcal M) \overset\cong\longrightarrow \mathrm{Aff}(X)_+ \qquad \text{and} \qquad K_0(\mathcal M) \overset\cong\longrightarrow \mathrm{Aff}(X)
	\end{equation}
	are isomorphisms of ordered monoids and ordered groups, respectively.
\end{corollary}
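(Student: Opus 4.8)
The plan is to deduce both isomorphisms directly from the classification of projections already established (Theorems~\ref{thm:comparison} and~\ref{thm:existence-projections}), together with the matrix amplification results (Propositions~\ref{prop:matrix-ultrapower} and~\ref{prop:matrix-CPoU}) which ensure that these classification statements apply verbatim to projections in $\mathcal M \otimes M_d$ equipped with the traces $X \otimes \{\mathrm{tr}_d\}$. The natural map $V(\mathcal M) \to \mathrm{Aff}(X)_+$ sends the class $[p]$ of a projection $p \in \mathcal M \otimes M_d$ to the affine function $\tau \mapsto (\tau \otimes \mathrm{Tr}_d)(p)$, where $\mathrm{Tr}_d$ is the unnormalised trace on $M_d$; one first checks this is a well-defined monoid homomorphism, which amounts to additivity under direct sums of projections and invariance under Murray--von Neumann equivalence (the latter being trivial since traces are conjugation-invariant).

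First I would prove injectivity: if $p \in \mathcal M \otimes M_{d}$ and $q \in \mathcal M \otimes M_{e}$ are projections with $(\tau \otimes \mathrm{Tr}_d)(p) = (\tau \otimes \mathrm{Tr}_e)(q)$ for all $\tau \in X$, then after padding both with zero projections to sit inside a common $\mathcal M \otimes M_N$ with $N \geq \max(d,e)$, the normalised traces agree on all of $X \otimes \{\mathrm{tr}_N\}$, and hence $p$ and $q$ are unitarily equivalent by Theorem~\ref{thm:comparison}\ref{comparison2} applied to the factorial tracially complete $C^*$-algebra $(\mathcal M \otimes M_N, X \otimes \{\mathrm{tr}_N\})$ (which has CPoU by Proposition~\ref{prop:matrix-CPoU}). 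So $[p] = [q]$ in $V(\mathcal M)$. For surjectivity, given $f \in \mathrm{Aff}(X)_+$, pick an integer $d$ with $f \leq d$; then $f/d \in \mathrm{Aff}(X)$ takes values in $[0,1]$, so Theorem~\ref{thm:existence-projections} (noting $(\mathcal M \otimes M_d, X \otimes \{\mathrm{tr}_d\})$ is again type II$_1$ and has CPoU) yields a projection $p \in \mathcal M \otimes M_d$ with $(\tau \otimes \mathrm{tr}_d)(p) = f(\tau)/d$ for all $\tau \in X$, whence $(\tau \otimes \mathrm{Tr}_d)(p) = f(\tau)$ and $[p] \mapsto f$. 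To see the map is an order isomorphism of monoids, one uses Theorem~\ref{thm:comparison}\ref{comparison1}: $[p] \leq [q]$ in the algebraic order on $V(\mathcal M)$ (i.e.\ $p$ is subequivalent to $q$ after a suitable amplification) if and only if the associated affine functions satisfy the pointwise inequality; the forward direction is monotonicity of traces, and the reverse is precisely the comparison theorem applied in $\mathcal M \otimes M_N$.

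For the $K_0$ statement, I would invoke that $K_0(\mathcal M)$ is the Grothendieck group of $V(\mathcal M)$, and that the Grothendieck functor carries the isomorphism $V(\mathcal M) \cong \mathrm{Aff}(X)_+$ of ordered monoids to an isomorphism of ordered groups onto the Grothendieck group of $\mathrm{Aff}(X)_+$. The remaining point is that the Grothendieck group of $\mathrm{Aff}(X)_+$ is $\mathrm{Aff}(X)$ with its natural order: this is standard, but it does require knowing that $V(\mathcal M)$ has cancellation (equivalently, that the Grothendieck map $V(\mathcal M) \to K_0(\mathcal M)$ is injective) and that $\mathrm{Aff}(X)_+$ is itself cancellative with $\mathrm{Aff}(X)_+ - \mathrm{Aff}(X)_+ = \mathrm{Aff}(X)$. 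Cancellation in $V(\mathcal M)$ follows from the injectivity established above together with the fact that $\mathrm{Aff}(X)_+$ is cancellative; alternatively it follows directly from comparison of projections (if $p \oplus r \sim q \oplus r$ then traces give $\tau(p) = \tau(q)$, so $p \sim q$). The identification $\mathrm{Aff}(X)_+ - \mathrm{Aff}(X)_+ = \mathrm{Aff}(X)$ holds because any $g \in \mathrm{Aff}(X)$ can be written as $(g + \|g\| 1_{\mathrm{Aff}(X)}) - \|g\| 1_{\mathrm{Aff}(X)}$ with both summands in $\mathrm{Aff}(X)_+$.

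I do not expect a serious obstacle here; this corollary is genuinely a formal consequence of the two classification theorems. The one place to be careful is bookkeeping with normalised versus unnormalised traces on $M_d$ and the padding of projections into a common matrix size so that the cited theorems (which are stated for a fixed tracially complete $C^*$-algebra, not for the whole matrix tower at once) apply; making the homomorphism and order properties precise requires choosing representatives in $\mathcal M \otimes M_N$ for large enough $N$ and checking compatibility under the inclusions $M_d \hookrightarrow M_N$, but this is routine. The only mild subtlety is ensuring the type II$_1$ hypothesis is genuinely needed only for the existence/surjectivity half (Theorem~\ref{thm:existence-projections}) and not for injectivity or the order statement, which rely only on CPoU and factoriality via Theorem~\ref{thm:comparison}; I would phrase the proof so this is transparent.
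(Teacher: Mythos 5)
Your proof is correct and follows essentially the same route as the paper: the paper's own proof of this corollary is a one-sentence remark that it is immediate from Theorems~\ref{thm:comparison} and~\ref{thm:existence-projections} together with the matrix-amplification permanence results, and your write-up simply supplies the routine bookkeeping (padding into a common matrix size, normalised versus unnormalised traces on $M_d$, cancellation in $V(\mathcal M)$, and the Grothendieck construction) that the paper leaves implicit. There are no gaps.
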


\section{Hyperfiniteness}\label{sec:hyperfinite}

	Section~\ref{sec:hyp-fin-statements} contains the definition of hyperfiniteness along with the statements of all the results needed to prove the regularity theorem: namely, hyperfiniteness implies {}amenability and CPoU.  The proof of the latter implication occupies most of this section.  En route to proving CPoU, we prove an inductive limit decomposition for hyperfinite tracially complete $C^*$-algebras in the separable setting (Theorem~\ref{thm:hyperfinite-limit}).  Section~\ref{sec:perturbations} contains the finite dimensional perturbation lemmas required to obtain the inductive limit decomposition. The proofs of Theorems~\ref{thm:hyperfinite-implies-CPoU} and~\ref{thm:hyperfinite-limit} are given in Section~\ref{sec:hyp-fin-proofs}.
	
	\subsection{Definition and main properties}\label{sec:hyp-fin-statements}
	
	We define hyperfiniteness analogously to Murray and von Neumann's notion for II$_1$ factors in \cite{MvN43}.
	
	\begin{definition}
		\label{def:hyperfinite}
		We say that a tracially complete $C^*$-algebra $(\mathcal M, X)$ is \emph{hyperfinite} if for every finite set $\mathcal F \subseteq \mathcal M$ and $\epsilon > 0$, there is a finite dimensional $C^*$-algebra $F \subseteq \M$ such that for every $a \in \mathcal F$, there is  $b \in F$ with $\|a - b\|_{2, X} < \epsilon$.
	\end{definition}
	
	The passage from hyperfiniteness to {}amenability is a little subtle.  The obvious approach is to obtain the completely positive approximations directly by taking the ``downward'' maps to be conditional expectations onto finite dimensional subalgebras and the ``upward'' maps to be the inclusions.  For the estimates to match up correctly, one must arrange for the conditional expectations to be continuous in the uniform 2-norms, and it is not clear if this can be done. This issue does not arise in the setting of tracial von Neumann algebras (as the trace-preserving conditional expectation onto a subalgebra is necessarily $2$-norm contractive), so we can obtain {}amenability by working locally.

	\begin{theorem}\label{thm:hyperfinite-implies-semidiscrete}
		Every hyperfinite tracially complete $C^*$-algebra is {}amenable.
	\end{theorem}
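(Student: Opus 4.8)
The plan is to reduce the statement to the fibrewise characterisation of amenability in Theorem~\ref{thm:introamenable} and to verify that each fibre $\pi_\tau(\mathcal M)''$ is semidiscrete by showing it is approximately finite dimensional in its $2$-norm. The key point is that the difficulty flagged above — controlling the uniform $2$-norm continuity of conditional expectations onto finite dimensional $C^*$-subalgebras of $\mathcal M$ — disappears once one works inside a fixed tracial von Neumann algebra completion, since there the $\tau$-preserving conditional expectation onto \emph{any} von Neumann subalgebra is automatically normal, unital, completely positive, and $\|\cdot\|_{2,\tau}$-contractive.

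Concretely: fix $\tau \in X$ and write $\mathcal N \coloneqq \pi_\tau(\mathcal M)''$. First I would show $\mathcal N$ is approximately finite dimensional in $\|\cdot\|_{2,\tau}$. Given a finite set $\mathcal S \subseteq \mathcal N$ and $\epsilon > 0$ (after rescaling, a finite set of contractions), Kaplansky's density theorem produces contractions in $\pi_\tau(\mathcal M)$, say $\pi_\tau(\mathcal F)$ for some finite $\mathcal F \subseteq \mathcal M$, approximating $\mathcal S$ to within $\epsilon/2$ in $\|\cdot\|_{2,\tau}$; hyperfiniteness of $(\mathcal M, X)$ then gives a finite dimensional $C^*$-subalgebra $F \subseteq \mathcal M$ whose elements $(\epsilon/2)$-approximate $\mathcal F$ in $\|\cdot\|_{2,X} \geq \|\cdot\|_{2,\tau}$, so that the finite dimensional $C^*$-algebra $\pi_\tau(F) \subseteq \mathcal N$ $\epsilon$-approximates $\mathcal S$. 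Next I would assemble the semidiscrete approximation: index a net by pairs $\lambda = (\mathcal S, \epsilon)$ with $\mathcal S \subseteq \mathcal N$ finite and $\epsilon > 0$, choose a finite dimensional subalgebra $F_\lambda \subseteq \mathcal N$ as above, and take $\phi_\lambda \colon F_\lambda \hookrightarrow \mathcal N$ the inclusion together with $\psi_\lambda \colon \mathcal N \to F_\lambda$ the $\tau$-preserving conditional expectation. For $a \in \mathcal S$ with $b \in F_\lambda$ satisfying $\|a - b\|_{2,\tau} < \epsilon$, the estimate $\|\phi_\lambda(\psi_\lambda(a)) - a\|_{2,\tau} = \|\psi_\lambda(a-b) + (b-a)\|_{2,\tau} < 2\epsilon$ shows $\phi_\lambda \circ \psi_\lambda \to \mathrm{id}_\mathcal N$ in the point-$\|\cdot\|_{2,\tau}$ topology; since these maps are norm-contractive, this forces point-strong and hence point-weak$^*$ convergence, so $\mathcal N$ is semidiscrete.

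Since $\tau \in X$ was arbitrary, every fibre $\pi_\tau(\mathcal M)''$ is semidiscrete, and Theorem~\ref{thm:introamenable} yields that $(\mathcal M, X)$ is amenable. I do not expect a genuine obstacle here: the only conceptual issue is that the naive global argument must be replaced by a fibrewise one, and the work of packaging the fibrewise approximations into a single uniform-$2$-norm completely positive approximation has already been done in Theorem~\ref{thm:amenable}. The remaining ingredients — Kaplansky density, the existence and $\|\cdot\|_{2,\tau}$-contractivity of $\tau$-preserving conditional expectations onto finite dimensional von Neumann subalgebras, and the routine net bookkeeping — are all standard.
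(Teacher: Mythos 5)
Your proposal is correct and follows essentially the same route as the paper: the paper's proof simply observes that each fibre $\pi_\tau(\mathcal M)''$ is a hyperfinite tracial von Neumann algebra, hence semidiscrete, and then invokes Theorem~\ref{thm:introamenable}. You have merely written out the standard details behind ``hyperfinite implies semidiscrete'' for the fibres (Kaplansky density plus $\|\cdot\|_{2,\tau}$-contractive trace-preserving conditional expectations), which the paper leaves implicit.
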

	
	\begin{proof}
		Suppose $(\M, X)$ is a hyperfinite tracially complete $C^*$-algebras.  For all $\tau \in X$, the tracial von Neumann algebra $\pi_\tau(\M)''$ is hyperfinite and hence is semidiscrete.  The result follows from Theorem \ref{thm:introamenable}.
	\end{proof}
	
	The following result obtaining CPoU from hyperfiniteness is more subtle yet.  
	
	\begin{theorem}\label{thm:hyperfinite-implies-CPoU}
		Every hyperfinite factorial tracially complete $C^*$-algebra satisfies CPoU.
	\end{theorem}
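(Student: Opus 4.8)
The plan is to reduce to the separable setting, apply the inductive limit decomposition of Theorem~\ref{thm:hyperfinite-limit} to express a separable hyperfinite factorial tracially complete $C^*$-algebra as an inductive limit of factorial finite dimensional ones, and then transfer CPoU up the limit.

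I would first carry out the separable reduction exactly as in the proof of Theorem~\ref{introthmgammaimpliescpou}. The results of Appendix~\ref{sec:sep} (Theorem~\ref{thm:sep}) show that the conjunction of factoriality with hyperfiniteness and the conjunction of factoriality with CPoU are both strongly separably inheritable, and implications between strongly separably inheritable properties may be checked in the $\|\cdot\|_{2,X}$-separable case; so it suffices to prove the theorem when $\M$ is $\|\cdot\|_{2,X}$-separable. Concretely, if $(\M,X)$ is hyperfinite and factorial and one is given the data $S$, $a_1,\dots,a_k$, $\delta$ of Definition~\ref{dfn:utcCPOU}, one chooses a $\|\cdot\|_{2,X}$-separable hyperfinite factorial tracially complete $C^*$-subalgebra $(\M_0,X_0)$ of $(\M,X)$ containing $S\cup\{a_1,\dots,a_k\}$. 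As $a_1,\dots,a_k\in\M_0$ and $X_0=\{\tau|_{\M_0}:\tau\in X\}$, the inequality $\delta>\sup_{\tau\in X}\min_i\tau(a_i)$ is retained over $X_0$; and any orthogonal projections $p_1,\dots,p_k\in\M_0^\omega\cap S'$ summing to $1_{\M_0^\omega}$ with $\tau(a_ip_i)\le\delta\tau(p_i)$ for all $\tau\in X_0^\omega$ remain, under the canonical inclusion $\M_0^\omega\hookrightarrow\M^\omega$, valid witnesses of CPoU for $(\M,X)$, since $1_{\M_0^\omega}=1_{\M^\omega}$ and every $\tau\in X^\omega$ restricts on $\M_0^\omega$ to an element of $X_0^\omega$.

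Assuming now that $\M$ is $\|\cdot\|_{2,X}$-separable, I would invoke Theorem~\ref{thm:hyperfinite-limit} to write $(\M,X)\cong\varinjlim\big((\M_n,X_n),\phi_n^{n+1}\big)$ as an inductive limit of factorial finite dimensional tracially complete $C^*$-algebras, so that each $\M_n$ is a finite dimensional $C^*$-algebra and $X_n=T(\M_n)$. Each $(\M_n,X_n)$ has CPoU: $X_n$ is a finite dimensional simplex, so $\partial_e X_n$ is finite, hence compact and totally disconnected, and CPoU then follows from Proposition~\ref{prop:ZeroDimBoundary} (equivalently, from Corollary~\ref{cor:vNa-CPoU}, a finite dimensional $C^*$-algebra being a finite von Neumann algebra). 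Since CPoU is preserved under inductive limits of factorial tracially complete $C^*$-algebras by Proposition~\ref{prop:InductiveLimitCPoU}, we conclude that $(\M,X)$ has CPoU.

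The main obstacle is Theorem~\ref{thm:hyperfinite-limit} itself, whose proof occupies Section~\ref{sec:hyp-fin-proofs} and rests on the finite dimensional perturbation lemmas of Section~\ref{sec:perturbations}. The difficulty is precisely that a finite dimensional $C^*$-subalgebra $F\subseteq\M$ witnessing hyperfiniteness as in Definition~\ref{def:hyperfinite} need not form a tracially complete $C^*$-subalgebra of $(\M,X)$, and even when it does, the restricted trace set $\{\tau|_F:\tau\in X\}$ is generally a proper face of $T(F)$, so the subalgebra fails to be factorial; and without factoriality of the building blocks the naive argument collapses, because applying CPoU of $(F,T(F))$ to the approximants is useless when the relevant supremum over all of $T(F)$ strictly exceeds the supremum over the face $\{\tau|_F:\tau\in X\}$ that actually governs the hypothesis. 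The perturbation lemmas must therefore cut $F$ down by suitable projections coming from $\M$ and reassemble the pieces so that the resulting finite dimensional subalgebras embed as genuine factorial tracially complete subalgebras --- equivalently, so that hyperfinite factorial tracially complete $C^*$-algebras are identified with tracial completions of AF $C^*$-algebras --- in the same spirit as Murray and von Neumann's treatment of non-factorial finite dimensional subalgebras in their uniqueness theorem for the hyperfinite II$_1$ factor. This step is where essentially all the work lies.
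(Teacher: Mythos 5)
Your overall architecture (reduce to the $\|\cdot\|_{2,X}$-separable case via Theorem~\ref{thm:sep}, decompose as an inductive limit via Theorem~\ref{thm:hyperfinite-limit}, then use permanence of CPoU) matches the paper, and your separable reduction in the first paragraph is correct. The gap is in the second step: Theorem~\ref{thm:hyperfinite-limit} does \emph{not} express $(\M,X)$ as an inductive limit of \emph{factorial} finite dimensional tracially complete $C^*$-algebras with $X_n=T(\M_n)$. If one takes the increasing sequence $(F_n)$ of finite dimensional subalgebras it produces and equips each with its full trace simplex $T(F_n)$, the inductive limit in the tracially complete category is $\big(\completion{A}{T(A)},T(A)\big)$, where $A=\overline{\bigcup_n F_n}^{\|\cdot\|}$ is the associated AF algebra --- and this is generally \emph{not} $(\M,X)$, because $X_A=\{\tau|_A:\tau\in X\}$ is typically a proper closed face of $T(A)$. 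If instead one takes $X_n=\{\tau|_{F_n}:\tau\in X\}$ the building blocks are generally not factorial, CPoU is not even defined for them, and Proposition~\ref{prop:InductiveLimitCPoU} does not apply. So the direct application of Proposition~\ref{prop:InductiveLimitCPoU} that you propose does not reach $(\M,X)$.

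The missing ingredient is Proposition~\ref{prop:CPoU-quotient}. The paper's argument is: the $(F_n,T(F_n))$ are factorial with CPoU (Corollary~\ref{cor:vNa-CPoU}), so their inductive limit $\big(\completion{A}{T(A)},T(A)\big)$ has CPoU by Proposition~\ref{prop:InductiveLimitCPoU}; since $X_A$ is a closed face of $T(A)$ (Corollary~\ref{cor:dense-subalgebra}) and $(\M,X)\cong\big(\completion{A}{X_A},X_A\big)$ is the tracial completion of $\completion{A}{T(A)}$ with respect to that face (Proposition~\ref{prop:tracial-completion}\ref{item:completion-subset}), CPoU descends by Proposition~\ref{prop:CPoU-quotient}. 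Your closing paragraph also mischaracterises the role of the perturbation lemmas of Section~\ref{sec:perturbations}: they only establish near-containment and stability of finite dimensional subalgebras in the uniform $2$-norm so that local approximation can be upgraded to an honest increasing sequence; they do not cut subalgebras down to make them factorial. The Murray--von Neumann-style reduction to factorial building blocks that you sketch is precisely what the paper avoids --- indeed it notes that deducing ``approximately finite (A)'' from ``approximately finite (B)'' in this setting via Theorem~\ref{thm:hyperfinite} would be circular, since that theorem depends on the present one.
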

	
	It is easy to see that finite dimensional $C^*$-algebras satisfy CPoU in the sense that if $F$ is a finite dimensional $C^*$-algebra, then $\big(F, T(F)\big)$ is a factorial tracially complete $C^*$-algebras with CPoU (this is a very special case of Corollary~\ref{cor:vNa-CPoU}).  If one strengthens the definition of hyperfiniteness (Definition~\ref{def:hyperfinite}) to require that the finite dimensional subalgebras $F \subseteq \mathcal M$ are factorial when viewed as a tracially complete $C^*$-algebra with the traces inherited from $(\mathcal M, X)$ (or equivalently, that every trace on $F$ extends to a trace in $X$), then CPoU for $(\mathcal M, X)$ would follow directly from CPoU from $\big(F, T(F)\big)$.
	
	For a general finite dimensional $C^*$-subalgebra $F \subseteq \mathcal M$, we do not have a way of relating general traces in $T(F)$ to traces of the form $\tau|_F$ for $\tau \in X$, so the above argument does not work.  A somewhat related issue was addressed by Murray and von Neumann in the setting of $\mathrm{II}_1$ factors in \cite{MvN43}.  They defined a $\mathrm{II}_1$ factor $\mathcal M$ to be ``approximately finite (B)'' if it satisfies for all finite set $\mathcal F \subseteq \mathcal M$ and $\epsilon > 0$, there is a finite dimensional $C^*$-algebra $F \subseteq \mathcal M$ such that for all $a \in \mathcal F$, there is a $b \in F$ such that $\|a - b\|_2 < \epsilon$, and they define $\mathcal M$ to be ``approximately finite (A)'' if one can further arrange for $F$ to be a factor.  It is a non-trivial result that these two notions coincide (see \cite[Lemma~4.6.2]{MvN43}).\footnote{In the end, for type II$_1$ factorial tracially complete $C^*$-algebras, the two analogous notions of hyperfiniteness are equivalent as a consequence of Theorem~\ref{thm:hyperfinite} using that the tracially complete $C^*$-algebras $(\mathcal R_X, X)$ of Example~\ref{Ex:ConcreteModels} satisfy the stronger notion of hyperfiniteness.  However, this is circular since Theorem~\ref{thm:hyperfinite} depends on Theorem~\ref{thm:hyperfinite-implies-CPoU}.}
	
	We circumvent the need to establish the equivalence discussed in the previous paragraph by showing that (in the separable setting) hyperfiniteness is equivalent to the a priori stronger property of being a sequential inductive limit of finite dimensional algebras.  Murray and von Neumann established such a result for separably acting hyperfinite II$_1$ factors (using the terminology ``approximately finite (C)'' for sequential inductive limits), and Bratteli proved a $C^*$-version of this result in his seminal paper on AF $C^*$-algebras (\cite{Bratteli72}).
	
	\begin{theorem}\label{thm:hyperfinite-limit}
		If $(\M, X)$ is a hyperfinite tracially complete $C^*$-algebra such that $\M$ is $\|\cdot\|_{2, X}$-separable, then $(\M, X)$ is an inductive limit of tracially complete $C^*$-algebras $(\M_n, X_n)_{n=1}^\infty$ such that each $\M_n$ is finite dimensional. 
	\end{theorem}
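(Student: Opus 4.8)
The plan is to follow Bratteli's argument that a separable locally finite-dimensional $C^*$-algebra is an inductive limit of finite-dimensional algebras, carried out in the uniform $2$-norm and using the finite-dimensional perturbation lemmas of Section~\ref{sec:perturbations} in place of their operator-norm analogues. Using that $\M$ is $\|\cdot\|_{2,X}$-separable, fix a countable $\|\cdot\|_{2,X}$-dense subset $\{a_1,a_2,\dots\}$ of the unit ball of $\M$. I would then recursively construct a nested sequence $F_1\subseteq F_2\subseteq\cdots$ of \emph{unital} finite-dimensional $C^*$-subalgebras of $\M$ (so $1_{F_n}=1_\M$, using that $\M$ is unital by Proposition~\ref{prop:unital}) with the property that $\mathrm{dist}_{\|\cdot\|_{2,X}}(a_k,F_n)\to 0$ as $n\to\infty$ for every fixed $k$; one starts with $F_1\coloneqq\mathbb C1_\M$.

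For the recursive step, given $F_n$ with a chosen system of matrix units, apply hyperfiniteness (Definition~\ref{def:hyperfinite}) to the finite set consisting of these matrix units together with $a_1,\dots,a_{n+1}$ and a tolerance $\delta_n>0$ to be specified; this produces a finite-dimensional (not necessarily unital) $G\subseteq\M$. One may replace $G$ by $\widetilde G\coloneqq G+\mathbb C(1_\M-1_G)$, which is again a finite-dimensional $C^*$-subalgebra, now with unit $1_\M$, since $g(1_\M-1_G)=(1_\M-1_G)g=0$ for all $g\in G$ (because $1_G$ is the unit of $G$ and $1_\M$ that of $\M$); as $\widetilde G\supseteq G$, the matrix units of $F_n$ and each $a_j$ with $j\le n+1$ remain $\delta_n$-close to $\widetilde G$ in $\|\cdot\|_{2,X}$. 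Then, provided $\delta_n$ was chosen small enough depending on $\dim F_n$, the finite-dimensional perturbation lemmas of Section~\ref{sec:perturbations} furnish a unitary $u_n\in\M$ with $\|u_n-1_\M\|_{2,X}\le\epsilon_n$ (where $\epsilon_n\to 0$ is fixed in advance) and $u_nF_nu_n^*\subseteq\widetilde G$. Set $F_{n+1}\coloneqq u_n^*\widetilde Gu_n$: this is a unital finite-dimensional $C^*$-subalgebra with $1_{F_{n+1}}=u_n^*1_\M u_n=1_\M$ and $F_n\subseteq F_{n+1}$, and for $j\le n+1$,
\begin{equation*}
\mathrm{dist}_{\|\cdot\|_{2,X}}(a_j,F_{n+1})=\mathrm{dist}_{\|\cdot\|_{2,X}}(u_na_ju_n^*,\widetilde G)\le\delta_n+2\|u_n-1_\M\|_{2,X}\le\delta_n+2\epsilon_n,
\end{equation*}
using $\|a_j\|\le 1$, $\|x^*\|_{2,X}=\|x\|_{2,X}$, and \eqref{eq:SpecialHolderIneq}. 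Choosing $\delta_n,\epsilon_n$ with $\delta_n+2\epsilon_n\le 1/n$ and using $F_n\subseteq F_m$ for $m\ge n$ yields $\mathrm{dist}_{\|\cdot\|_{2,X}}(a_j,F_m)\le 1/m$ for $m\ge j$, so $\mathrm{dist}_{\|\cdot\|_{2,X}}(a_j,\bigcup_mF_m)=0$ for all $j$; hence $A\coloneqq\overline{\bigcup_nF_n}^{\,\|\cdot\|}$ is a $\|\cdot\|_{2,X}$-dense $C^*$-subalgebra of $\M$.

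To finish, put $\M_n\coloneqq F_n$ and $X_n\coloneqq\{\tau|_{F_n}:\tau\in X\}\subseteq T(F_n)$. Each $(\M_n,X_n)$ is a tracially complete $C^*$-algebra (operator norm and $\|\cdot\|_{2,X_n}$ are equivalent on the finite-dimensional $\M_n$, and $X_n$ is compact, convex, and faithful, faithfulness being inherited from $X$ on $\M$), and the inclusions $F_n\hookrightarrow F_{n+1}$ and $F_n\hookrightarrow\M$ are morphisms since restricting a trace in $X_{n+1}$, resp.\ in $X$, to $F_n$ lands in $X_n$. It then remains to identify $\varinjlim\big((\M_n,X_n),\text{incl}\big)$ with $(\M,X)$. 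By Definition~\ref{UTC:Limit}, since the connecting maps are injective this inductive limit is $(\completion{A}{Y},Y)$ with $A=\varinjlim F_n$ the $C^*$-algebra above and $Y=\{\sigma\in T(A):\sigma|_{F_n}\in X_n\text{ for all }n\}$. On the other hand, $A$ being $\|\cdot\|_{2,X}$-dense in $\M$, Corollary~\ref{cor:dense-subalgebra}\ref{item:dense-subalg1} gives $X_A\coloneqq\{\tau|_A:\tau\in X\}$ compact convex with $(\completion{A}{X_A},X_A)\cong(\M,X)$. So it suffices to show $Y=X_A$: the inclusion $X_A\subseteq Y$ is immediate, and conversely, given $\sigma\in Y$, for each $n$ pick $\tau_n\in X$ with $\tau_n|_{F_n}=\sigma|_{F_n}$ (so $\tau_n|_{F_m}=\sigma|_{F_m}$ for all $m\le n$), and a weak$^*$-cluster point $\tau\in X$ of $(\tau_n)_n$ agrees with $\sigma$ on $\bigcup_mF_m$, hence on $A$, whence $\sigma=\tau|_A\in X_A$.

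The main obstacle is the straightening step: upgrading the near-containment of the finite-dimensional subalgebra $F_n$ in $\widetilde G$ (coming from hyperfiniteness, which only provides \emph{approximate}, non-nested finite-dimensional subalgebras) to an honest containment $u_nF_nu_n^*\subseteq\widetilde G$ implemented by a unitary that is controllably $\|\cdot\|_{2,X}$-close to $1_\M$. This is exactly the role of the perturbation lemmas of Section~\ref{sec:perturbations}; modulo those, the remaining work is the bookkeeping above together with the compatibility results for tracial completions (Corollary~\ref{cor:dense-subalgebra}) and inductive limits (Definition~\ref{UTC:Limit}, Proposition~\ref{prop:UTCInductiveLimit}) already in hand.
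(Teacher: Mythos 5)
There is a genuine gap at the ``straightening step'', which you correctly identify as the main obstacle but then dispose of too quickly. You assert that, once the matrix units of $F_n$ are $\delta_n$-close to $\widetilde G$ in $\|\cdot\|_{2,X}$, the perturbation lemmas of Section~\ref{sec:perturbations} furnish a unitary $u_n\in\M$ with $\|u_n-1_\M\|_{2,X}\le\epsilon_n$ and $u_nF_nu_n^*\subseteq\widetilde G$. They do not. Proposition~\ref{prop:near-inclusion} only produces a $^*$-homomorphism $\phi\colon F_n\to\widetilde G$ with $\|\phi(a)-a\|_{2,X}\le\epsilon\|a\|$; it says nothing about $\phi$ being implemented by a unitary of $\M$. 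The standard way to upgrade such a $\phi$ to an inner automorphism is to find partial isometries in $\M$ between the projections $e_{1,1}^{(k)}$ and $\phi(e_{1,1}^{(k)})$; but these two projections do not lie in a common finite-dimensional subalgebra, so the functional-calculus tricks of Lemmas~\ref{lem:orthogonal-projections} and~\ref{lem:partial-isometry} (which operate inside a single finite-dimensional algebra) are unavailable, and what one actually needs is unitary equivalence of projections in $\M$ with $\|\cdot\|_{2,X}$-control on the unitary. In this paper that is exactly Theorem~\ref{thm:comparison} together with Lemma~\ref{lem:equivalence-close-to-unit}, both of which require CPoU --- and CPoU for hyperfinite algebras is deduced \emph{from} Theorem~\ref{thm:hyperfinite-limit} via Theorem~\ref{thm:hyperfinite-implies-CPoU}. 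So as written your argument is circular (or at best rests on an unproved perturbation statement); this is precisely the point where the tracially complete setting is harder than Bratteli's operator-norm setting or the single-trace von Neumann setting.

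The paper's proof sidesteps the need for nested subalgebras altogether. It chooses $F_n$ witnessing hyperfiniteness for larger and larger finite sets, uses Proposition~\ref{prop:near-inclusion} only to obtain connecting $^*$-homomorphisms $\phi_n^{n+1}\colon F_n\to F_{n+1}$ moving each $b\in F_n$ by at most $\epsilon_n\|b\|$ in $\|\cdot\|_{2,X}$ with $\sum_n\epsilon_n<\infty$, and then defines $\psi_m(b)\coloneqq\lim_n\phi_m^n(b)$ (a $\|\cdot\|_{2,X}$-limit, which exists by summability). The images $\psi_m(F_m)$ are genuinely nested finite-dimensional subalgebras of $\M$ with $\|\cdot\|_{2,X}$-dense union, and one concludes by passing to the tracial completion of the resulting AF algebra, much as in your final paragraph. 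If you replace your unitary-conjugation step by this ``connecting homomorphisms with summable errors'' device, the rest of your bookkeeping and the identification of the inductive limit go through.
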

	
	The proof has the same flavour as the corresponding results for $C^*$-algebras and tracial von Neumann algebras.  We will prove Theorem \ref{thm:hyperfinite-limit} in Section \ref{sec:hyp-fin-proofs} after we set out the relevant uniform 2-norm perturbation and near-containment lemmas in the next section.   

Once Theorem \ref{thm:hyperfinite-limit} is in place, it will allow us to realise a separable factorial hyperfinite $C^*$-algebras as a tracial completion of the tracial completion of an AF algebra with respect to its traces.  Such tracial completions have CPoU by the permanence properties of Section \ref{sec:cpou-permanence}. This will prove Theorem \ref{thm:hyperfinite-implies-CPoU} in the separable case, and we will deduce the non-separable case from there.

\subsection{Perturbation lemmas}\label{sec:perturbations}

The goal of this subsection is to prove a uniform 2-norm `near-containment' result (Proposition~\ref{prop:near-inclusion}) for finite dimensional subalgebras.  Similar results were used by Murray and von Neumann in the von Neumann algebra setting, and Glimm and Bratteli for $C^*$-algebras (cf.\ \cite{MvN43, Glimm60, Bratteli72}).  The main observation is that although the Borel functional calculus used in the von Neumann algebraic results does not generally exist in a tracially complete $C^*$-algebra, it is defined in all finite dimensional subalgebras.

We start with a perturbation result for almost orthogonal projections.

\begin{lemma}
	\label{lem:orthogonal-projections}
	For all $\epsilon > 0$ and $n \in \N$, there is $\delta > 0$ such that for all finite dimensional tracially complete $C^*$-algebras $(F, X)$ and all self-adjoint contractions $q_1, \ldots, q_n \in F$ with
	\begin{equation}
		\| q_i - q_i^2 \|_{2, X} < \delta \qquad \text{and} \qquad \|q_i q_j\|_{2, X} < \delta,
	\end{equation}
	for all $i, j = 1, \ldots, n$ with $i \neq j$, there are mutually orthogonal projections $p_1, \ldots, p_n \in F$ such that $\|p_i - q_i\|_{2, X} < \epsilon$.
\end{lemma}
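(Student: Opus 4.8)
The plan is to use the Borel functional calculus, which — although unavailable in a general tracially complete $C^*$-algebra — exists in the finite dimensional algebra $F$: for a self-adjoint contraction $q \in F$ the spectral projection $\chi_{[1/2,1]}(q)$ lies in $C^*(q) \subseteq F$ because $\mathrm{sp}(q) \subseteq [-1,1]$ is finite and $\chi_{[1/2,1]}$ vanishes at $0$. The crucial point is the pointwise inequality $|q - \chi_{[1/2,1]}(q)| \leq 2|q-q^2|$, which is checked on $\mathrm{sp}(q)$ and holds between positive, commuting elements of $C^*(q)$; squaring it (valid by commutativity) and applying a trace $\tau \in X$ yields $\|q - \chi_{[1/2,1]}(q)\|_{2,X} \leq 2\|q-q^2\|_{2,X}$. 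So the first step is to set $p_i^{(0)} := \chi_{[1/2,1]}(q_i)$, obtaining projections in $F$ with $\|p_i^{(0)} - q_i\|_{2,X} < 2\delta$; then \eqref{eq:SpecialHolderIneq} together with the trace identity $\|a^*\|_{2,X} = \|a\|_{2,X}$ gives $\|p_i^{(0)} p_j^{(0)}\|_{2,X} < 5\delta$ for $i \neq j$.

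The second step is to orthogonalise the $p_i^{(0)}$ inside $F$ by a finite recursion, again using only the finite dimensional functional calculus. Set $r_1 := p_1^{(0)}$. Given mutually orthogonal projections $r_1, \ldots, r_{k-1} \in F$ with $\|r_i - p_i^{(0)}\|_{2,X}$ controlled, put $e := 1_F - \sum_{i<k} r_i$ and $x := e p_k^{(0)} e \in eFe$, a positive contraction. Expanding $p_k^{(0)} - ep_k^{(0)}e = (1-e)p_k^{(0)} + ep_k^{(0)}(1-e)$ and $x - x^2 = \big(ep_k^{(0)}(1-e)\big)\big(ep_k^{(0)}(1-e)\big)^*$, and using $\|(1-e)p_k^{(0)}\|_{2,X} \leq \sum_{i<k}\big(\|r_i - p_i^{(0)}\|_{2,X} + \|p_i^{(0)}p_k^{(0)}\|_{2,X}\big)$, one bounds both $\|x - p_k^{(0)}\|_{2,X}$ and $\|x - x^2\|_{2,X}$ by an absolute multiple of $\sum_{i<k}\big(\|r_i - p_i^{(0)}\|_{2,X} + \delta\big)$. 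Then $r_k := \chi_{[1/2,1]}(x)$ is a subprojection of $e$, hence orthogonal to $r_1, \ldots, r_{k-1}$, and the Step~1 estimate applied to $x$ gives $\|r_k - x\|_{2,X} \leq 2\|x - x^2\|_{2,X}$, so $\|r_k - p_k^{(0)}\|_{2,X}$ is again an absolute multiple of $\sum_{i<k}\big(\|r_i - p_i^{(0)}\|_{2,X} + \delta\big)$. Since $n$ is fixed and each of the $n$ recursion steps multiplies the accumulated error by a bounded factor, the errors $\|r_i - p_i^{(0)}\|_{2,X}$ for $i \leq n$ are all bounded by $C_n \delta$ for a constant $C_n$ depending only on $n$. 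Taking $p_i := r_i$ and $\delta := \epsilon/(C_n + 3)$, we get $\|p_i - q_i\|_{2,X} \leq \|r_i - p_i^{(0)}\|_{2,X} + \|p_i^{(0)} - q_i\|_{2,X} < \epsilon$, as required.

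The subtlety — the reason this is not completely routine — is the uniformity of $\delta$ over \emph{all} finite dimensional $(F, X)$. On any fixed $F$ the norms $\|\cdot\|$ and $\|\cdot\|_{2,X}$ are equivalent, so there would be nothing to prove if the equivalence constants were allowed to depend on $F$; the argument must therefore correct the $q_i$ without ever invoking norm-equivalence in $F$. The functional-calculus approach above does exactly this: every estimate is either a pointwise spectral inequality in a commutative $C^*$-subalgebra of $F$ or an application of the universal inequality \eqref{eq:SpecialHolderIneq}, with the only accumulation of error coming from the $n$-step recursion. (One could present Step~1 instead by passing to the GNS representations $\pi_\tau(F)''$ and using genuine spectral projections there, but the cleanest route to \emph{corrected projections in $F$ itself} is via the finite dimensional Borel functional calculus, which is the observation that makes the whole argument go through.)
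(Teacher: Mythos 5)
Your proposal is correct and is essentially the paper's argument: both proofs rest on the spectral projection $\chi_{[1/2,\infty)}$ applied inside the finite dimensional algebra together with the pointwise bound $|t-\chi_{[1/2,\infty)}(t)|\leq 2|t-t^2|$, and both orthogonalise by compressing with the complement of the previously constructed projections and taking a spectral projection of the compression. The only difference is organisational — the paper folds projectionisation and orthogonalisation into a single induction on $n$ (re-choosing $\delta$ at each level), whereas you separate the two steps and track one accumulated constant $C_n$ — and your error estimates all check out.
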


\begin{proof}
	We will prove the result by induction on $n$.  When $n = 1$, set $\delta \coloneqq \epsilon/2$, let $f \colon \mathbb R \rightarrow \mathbb R$ denote the characteristic function of $[1/2, \infty)$, and define $p_1 \coloneqq f(q_1)$.  Then
	\begin{equation}
		|f(t) - t| \leq 2 |t - t^2|, \qquad t \in \mathbb R,
	\end{equation}
	and so $\|p_1 - q_1\|_{2, X} \leq 2 \|q_1 - q_1^2\|_{2, X} < \epsilon$.
	
	Assuming the result has been proven for $n \in \mathbb N$, let $\delta' > 0$ be given by applying the lemma with this $n$ with $\epsilon/(8n)$ in place of $\epsilon$.  Define
	\begin{equation}
		\delta \coloneqq \min\Big\{ \delta', \frac{\epsilon}{8n+4} \Big\}.
	\end{equation}
	Let $q_1, \ldots, q_{n+1} \in F$ be given as in the statement.  As $\delta \leq \delta'$, the choice of $\delta'$ implies there are mutually orthogonal projections $p_1, \ldots, p_n \in F$ such that $\|p_i - q_i\|_{2, X} < \epsilon/(8n)$ for $i = 1, \ldots, n$.
	
	Define $p \coloneqq \sum_{i=1}^n p_i$, $q \coloneqq q_{n+1}$, and $p_{n+1} \coloneqq f(p^\perp q p^\perp)$.  Then $p_{n+1}$ is a projection and is orthogonal to each of $p_1, \ldots, p_n$.  Using the bimodule property of \eqref{eq:SpecialHolderIneq}, we can estimate
	\begin{equation}\begin{split}
		\|p_{n+1} - q_{n+1}\|_{2, X}
		&\leq \|f(p^\perp q p^\perp) - p^\perp q p^\perp\|_{2, X} + \|p^\perp q p^\perp - q\|_{2, X}  \\
		&\leq 2 \| p^\perp q p^\perp - (p^\perp q p^\perp)^2 \|_{2, X} + 2 \|p q\|_{2, X} \\
		&\leq 2 \|  q  - qp^\perp q \|_{2, X} + 2 \|p q\|_{2, X} \\
		&\leq 2 \|q - q^2\|_{2, X} + 4 \|pq\|_{2, X} \\
		&< 2 \delta + 4 \sum_{i=1}^n \|p_i q\|_{2, X} \\
		&\leq 2 \delta + 4 \sum_{i=1}^n \big( \|p_i - q_i\|_{2, X} + \|q_iq\|_{2, X} \big) \\
		&< 2 \delta + 4n\Big(\frac{\epsilon}{8n} + \delta \Big) \leq \epsilon,
	\end{split}\end{equation}
    which completes the proof.
\end{proof}

The following perturbation result for partial isometries will be used to perturb the off-diagonal matrix units of a near inclusion of finite dimensional $C^*$-algebras.

\begin{lemma}
	\label{lem:partial-isometry}
	Suppose $(F, X)$ is a finite dimensional tracially complete $C^*$-algebra and $p, q \in F$ are projections.  If $w \in F$ is a contraction, then there is a partial isometry $v \in F$ such that $vp = v = qv$ and 	
	\begin{equation}
		\|v - w\|_{2, X} \leq \sqrt{38} \max \big\{\|w^*w - p\|_{2, X}^{1/2}, \|ww^* - q\|_{2, X}^{1/2} \big\}.
	\end{equation}
\end{lemma}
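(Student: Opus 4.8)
The plan is to mimic the standard polar-decomposition perturbation argument but run it fibrewise and then combine the estimates using the uniform $2$-norm. Since $(F,X)$ is finite dimensional, we have access to the Borel (indeed continuous) functional calculus on $F$, which is the key point that makes the usual von Neumann algebra manipulations legal here. First I would set $c \coloneqq qwp \in F$, noting $c p = c = qc$, and estimate $\|c - w\|_{2,X}$: writing $w - c = (1-q)w + qw(1-p)$ and using the bimodule inequality \eqref{eq:SpecialHolderIneq} together with $\|(1-q)w\|_{2,X}^2 = \|w^*(1-q)w\|_{1,X} \leq \|w^*w - q\|_{2,X}$-type bounds (more precisely $\|(1-q)w\|_{2,X} \le \|ww^* - q\|_{2,X}^{1/2}$ via $(1-q)ww^*(1-q) \le (1-q)|ww^*-q|(1-q)$ and operator monotonicity of $t\mapsto t^{1/2}$, all valid in each fibre and hence uniformly), one gets $\|c-w\|_{2,X}$ controlled by a constant times $\max\{\|w^*w-p\|_{2,X}^{1/2}, \|ww^*-q\|_{2,X}^{1/2}\}$.

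Next I would take the polar decomposition $c = v|c|$ in $F$, with $v$ the partial isometry with initial projection the support of $|c|$ (a subprojection of $p$) and final projection the support of $|c^*|$ (a subprojection of $q$); in particular $vp = v = qv$ after possibly enlarging, but in fact $v$ as defined already satisfies $vp = v = qv$ since $|c| \le \|c\| p$ forces the initial space inside $pF$, and symmetrically for the final space. Then $\|v - c\|_{2,X} = \|v(1 - |c|)\|_{2,X} \le \|1-|c|\|_{2,X}$ restricted to the relevant corner, and one uses $(1-|c|)^2 \le (1-|c|^2)(1+|c|) \le 2(1 - |c|^2)$ on the support of $|c|$ (and $|c| = 0$ elsewhere, contributing the complementary projection whose trace is small) to bound $\|v - c\|_{2,X}^2$ by a constant times $\|p - c^*c\|_{2,X} + $ (trace of the defect projection). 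Since $c^*c = pw^*qwp$ and $\|p - pw^*qwp\|_{2,X} \le \|p - w^*w\|_{2,X} + \|w^*(1-q)w\|_{2,X} \le \|p - w^*w\|_{2,X} + \|ww^* - q\|_{2,X}^{1/2}\|w\|$, everything is again governed by the same maximum. The only slightly delicate bookkeeping is the defect projection $p - v^*v$: its trace equals $\tau(p) - \tau(c^*c) + \tau(c^*c) - \tau(v^*v)$, and since $v^*v$ is the support projection of $c^*c$ one has $\tau(c^*c) \le \tau(v^*v)$ and also $c^*c \le v^*v$, so $\tau(p - v^*v) \le \tau(p - c^*c) = \|p^{1/2} - \cdots\|$, bounded as before — so this term is harmless.

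Finally I would add up the triangle inequality $\|v - w\|_{2,X} \le \|v - c\|_{2,X} + \|c - w\|_{2,X}$ and track constants to reach the stated bound with $\sqrt{38}$ (the precise constant is not important for applications, so I would just verify the arithmetic gives something no larger than $\sqrt{38}$, adjusting the intermediate inequalities generously). I expect the main obstacle to be purely the constant-chasing: organizing the several applications of \eqref{eq:SpecialHolderIneq}, the operator-monotonicity estimates $|(1-q)w|\preceq|ww^*-q|^{1/2}$-style bounds, and the defect-projection trace estimate so that they compose into a single clean constant, while making sure every functional-calculus step ($t\mapsto t^{1/2}$, support projections, $(1-t)^2 \le 2(1-t^2)$ on $[0,1]$) is applied inside the finite dimensional algebra $F$ where it is legitimate and then the resulting inequality between positive elements of $F$ is hit with an arbitrary $\tau \in X$ to pass to $\|\cdot\|_{2,X}$. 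No genuinely new idea is needed beyond recognizing that finite-dimensionality licenses the functional calculus.
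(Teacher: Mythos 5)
Your route is genuinely different from the paper's: you produce the partial isometry from the polar decomposition of $c=qwp$ (so $v^*v$ is the support projection of $|c|$), whereas the paper takes $v=ug(u^*u)$ for $u=qwp$ with the cutoff $g(t)=t^{-1/2}$ on $(1/4,1]$ and $0$ on $[0,1/4]$, precisely to avoid invoking support projections and to get the clean Lipschitz-type bound $|t-tg(t)|\le\tfrac43(t-t^2)$; the paper also assembles the final estimate via orthogonal (Pythagorean) decompositions of $v-w$ rather than the triangle inequality, which is where the constant $38$ comes from. In a finite dimensional algebra your polar-decomposition route is perfectly legitimate and, done correctly, gives an even better constant.

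However, as written your estimate chain does not reach the stated bound. Set $\delta\coloneqq\max\{\|w^*w-p\|_{2,X},\|ww^*-q\|_{2,X}\}$. You bound $\|v-c\|_{2,X}^2$ by (a constant times) $\|p-c^*c\|_{2,X}$, and then bound $\|p-pw^*qwp\|_{2,X}\le\|p-w^*w\|_{2,X}+\|w^*(1-q)w\|_{2,X}$, where the cross term is only controlled by $\|ww^*-q\|_{2,X}^{1/2}=\delta^{1/2}$. That gives $\|v-c\|_{2,X}^2\lesssim\delta+\delta^{1/2}$, i.e.\ $\|v-c\|_{2,X}=O(\delta^{1/4})$ for small $\delta$, which cannot be absorbed into $\sqrt{38}\,\delta^{1/2}$; the constant-chasing you defer to the end would fail here. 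The repair is to estimate the \emph{trace} of the positive element $p-c^*c$ directly instead of passing through its $2$-norm: for $\tau\in X$,
\begin{equation}
\tau(p-c^*c)=\tau\big(p(p-w^*w)p\big)+\tau\big(pw^*(1-q)wp\big)\le\tau(|p-w^*w|)+\tau\big((1-q)(ww^*-q)\big)\le 2\delta,
\end{equation}
using traciality, $pw^*(1-q)wp\le w^*(1-q)w$, and $\tau(x)\le\|x\|_{2,\tau}$ for positive $x$. Combined with $s(|c|)\le p$ and $(s-|c|)^2\le s-|c|^2$ on the support, this yields $\|v-c\|_{2,X}\le\sqrt{2\delta}$, and together with your (correct) bound $\|c-w\|_{2,X}\le 2\delta^{1/2}$ one gets $\|v-w\|_{2,X}\le(2+\sqrt2)\delta^{1/2}<\sqrt{38}\,\delta^{1/2}$, so the lemma follows. (A minor additional remark: your appeal to operator monotonicity of $t\mapsto t^{1/2}$ for $\|(1-q)w\|_{2,X}\le\|ww^*-q\|_{2,X}^{1/2}$ is unnecessary -- the inequality $\tau\big((1-q)ww^*(1-q)\big)=\tau\big((1-q)(ww^*-q)\big)\le\tau(|ww^*-q|)\le\|ww^*-q\|_{2,\tau}$ is all that is used.)
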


\begin{proof}
	Define
	\begin{equation}\label{eq:partial-isom-delta}
		\delta \coloneqq \max \big\{\|w^*w - p\|_{2, X}, \|ww^* - q\|_{2, X} \big\}.
	\end{equation}
	Let $u \coloneqq q w p$ and note that $u$ is a contraction and 
	\begin{equation}\label{eq:approx-source-range}
		\max \big\{\|u^*u - p\|_{2, X}, \|uu^* - q\|_{2, X} \big\} \leq 3 \delta.
	\end{equation}
	Indeed,
	\begin{equation}\begin{split}
		\|u^*u - p\|_{2, X} &= \|pw^*qwp - p\|_{2, X} \\
		&\leq \|pw^*(q - ww^*)w p\|_{2, X} + \| pw^*w(w^*w - p)p\|_{2, X}  \\ &\hphantom{==} + \|p (w^*w - p)p\|_{2, X}  \\ &\leq 3\delta, \label{eq:approx-source-range-2}
	\end{split}\end{equation}
	where the last inequality uses \eqref{eq:partial-isom-delta} and the bimodule property of \eqref{eq:SpecialHolderIneq}.  The other inequality in \eqref{eq:approx-source-range} follows similarly.
	
	Define $g \colon \mathbb [0,1] \rightarrow \mathbb R$ by
	\begin{equation}
		g(t) \coloneqq \begin{cases} 0 & 0 \leq t \leq \frac14; \\ t^{-1/2} & \frac14 < t \leq 1, \end{cases}
	\end{equation}
	and define $v \coloneqq u g(u^*u)$.  Then $v^*v$ is a projection, and hence $v$ is a partial isometry.  As $up = u = qu$, we also have $vp = v = qv$.  After solving some polynomial inequalities, we get that for $t \in [0, 1]$, we have 
	\begin{equation}\label{eq:bound-on-g}
		0 \leq |t - tg(t)| \leq \frac{4}{3} (t - t^2) \qquad \text{and} \qquad 0 \leq g(t) \leq 2.
	\end{equation}
	Therefore, for each $\tau \in X$,
	\begin{equation}\begin{array}{rcl}
		\| u - v\|_{2, \tau}^2 
		&=& \tau\big((u - u g(u^*u))^*(u - ug(u^*u))\big) \\
		&=& \tau\big(u^*u - 2u^*ug(u^*u) + u^*ug(u^*u)^2\big) \\
		&=& \tau\big(u^*u - u^*ug(u^*u)\big) \\ &&\qquad + \tau\big((u^*ug(u^*u) - u^*u)g(u^*u)\big) \\
		&\stackrel{\eqref{eq:bound-on-g}}\leq& \tau\big(|u^*u - u^*ug(u^*u)|\big) \\ &&\qquad + 2\tau\big(|u^*u - u^*ug(u^*u)|\big) \\
		&\stackrel{\eqref{eq:bound-on-g}}{\leq}& 4 \tau(u^*u - (u^*u)^2)\\
		&\leq& 4(\|u^*u-p\|_{2,\tau}+\|p-(u^*u)^2\|_{2,\tau})\\
  &\stackrel{\eqref{eq:approx-source-range}}{\leq}& 36\delta,
	\end{array}\end{equation}
	using the Cauchy--Schwarz inequality and the fact that $p$ is a projection.  Now note that for each $\tau \in X$,
 \begin{equation}
 \begin{split}
    \| v - w\|^2_{2, \tau}
    &= \|v- pw\|^2_{2, \tau} + \|p^\perp w\|^2_{2, \tau} \\
    &= \|v -p w q \|^2_{2, \tau} + \| p^\perp w q^\perp \|^2_{2, \tau} + \| p^\perp w\|^2_{2, \tau} \\
    &\leq 38\delta,
\end{split}
 \end{equation}
 as required.
\end{proof}

Since we don't get $v^*v=p$ in the previous lemma, obtaining off-diagonal matrix units will require us to further perturb the diagonal matrix units.
The following allows us to manage accumulated errors incurred in this way.

\begin{lemma}
	\label{lem:meet-of-subprojections}
	If $(F, X)$ is a finite dimensional tracially complete $C^*$-algebra and $p_1, \ldots, p_n, p \in F$ are projections with $p_i \leq p$ for all $i = 1, \ldots, n$, then
	\begin{equation}\label{eqn:p-minus-meet}
		\Big\| p - \bigwedge_{i=1}^n p_i \Big\|_{2, X} \leq \Big( \sum_{i=1}^n \| p - p_i \|_{2, X}^2 \Big)^{1/2} \leq \sum_{i=1}^n \|p - p_i\|_{2, X}.
	\end{equation}
\end{lemma}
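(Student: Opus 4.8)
The statement to prove is Lemma~\ref{lem:meet-of-subprojections}: for projections $p_1,\dots,p_n \le p$ in a finite dimensional tracially complete $C^*$-algebra $(F,X)$, we have $\|p - \bigwedge_i p_i\|_{2,X} \le (\sum_i \|p-p_i\|_{2,X}^2)^{1/2} \le \sum_i \|p-p_i\|_{2,X}$. The second inequality is just the elementary estimate $(\sum a_i^2)^{1/2} \le \sum a_i$ for nonnegative reals, so the content is the first inequality. The plan is to reduce everything to a single trace. Since $\|\cdot\|_{2,X}^2 = \sup_{\tau\in X}\|\cdot\|_{2,\tau}^2$, and the desired inequality is of the form ``LHS${}^2 \le \sum_i(\text{RHS}_i)^2$'' with each term monotone in $\tau$, it suffices to prove, for each fixed $\tau \in X$, the inequality
\begin{equation}
    \Big\| p - \bigwedge_{i=1}^n p_i \Big\|_{2,\tau}^2 \le \sum_{i=1}^n \| p - p_i \|_{2,\tau}^2,
\end{equation}
and then take suprema: $\|p-\bigwedge_i p_i\|_{2,X}^2 = \sup_\tau \|p-\bigwedge_i p_i\|_{2,\tau}^2 \le \sup_\tau \sum_i \|p-p_i\|_{2,\tau}^2 \le \sum_i \sup_\tau \|p-p_i\|_{2,\tau}^2 = \sum_i \|p-p_i\|_{2,X}^2$. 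Note that the meet $\bigwedge_i p_i$ is computed in $F$ (it exists since $F$ is finite dimensional, indeed a von Neumann algebra), so it makes sense to pass to a trace $\tau$ and work in the finite von Neumann algebra $\pi_\tau(F)'' = \pi_\tau(F)$; the GNS map $\pi_\tau$ is a $^*$-homomorphism on $F$ and sends meets of projections below $p$ to meets (it is normal, being a representation of a finite dimensional algebra, hence continuous for order limits), and it does not increase $\|\cdot\|_{2,\tau}$ — in fact preserves it in the relevant sense — so the problem becomes the corresponding statement for projections in a finite von Neumann algebra with a fixed (not necessarily faithful) tracial state.

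So the core is the von Neumann algebra inequality: if $\mathcal N$ is a finite von Neumann algebra with a tracial state $\tau$, and $q_1,\dots,q_n, q$ are projections with $q_i \le q$, then $\tau(q - \bigwedge_i q_i) \le \sum_i \tau(q - q_i)$. The natural way to see this is by induction on $n$, using the Kaplansky formula / parallelogram law for projections: for projections $e,f$ in a von Neumann algebra,
\begin{equation}
    (e \vee f) - e \sim f - (e \wedge f),
\end{equation}
so in particular, taking traces, $\tau\big((e\vee f) - e\big) = \tau\big(f - (e\wedge f)\big)$, i.e. $\tau(e\vee f) = \tau(e) + \tau(f) - \tau(e\wedge f)$. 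Equivalently, writing $r_i \coloneqq q - q_i$ (a subprojection of $q$), one has $q - \bigwedge_i q_i = \bigvee_i (q - \bigwedge_{j\le i, j\ne\,}\!\dots)$; more cleanly: $q - \bigwedge_{i=1}^n q_i = \bigvee_{i=1}^n (q - q_i) = \bigvee_{i=1}^n r_i$, since the complement (relative to $q$) of a meet is the join of the complements. Thus the claim reduces to the subadditivity of the trace on joins: $\tau(\bigvee_{i=1}^n r_i) \le \sum_{i=1}^n \tau(r_i)$ for projections $r_1,\dots,r_n$. This follows by a one-line induction from $\tau(e \vee f) = \tau(e) + \tau(f) - \tau(e\wedge f) \le \tau(e) + \tau(f)$.

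To assemble the final proof I would therefore: (i) reduce to a single $\tau \in X$ and to $\pi_\tau(F)''$ as above; (ii) set $r_i \coloneqq \pi_\tau(p) - \pi_\tau(p_i)$ and observe $\pi_\tau(p) - \bigwedge_i \pi_\tau(p_i) = \bigvee_i r_i$ using De Morgan's law for projections below a fixed projection in a von Neumann algebra; (iii) note $\|\pi_\tau(p) - \bigwedge_i \pi_\tau(p_i)\|_{2,\tau}^2 = \hat\tau(\bigvee_i r_i)$ since the $2$-norm squared of a projection equals its trace; (iv) apply the inductive estimate $\hat\tau(\bigvee_i r_i) \le \sum_i \hat\tau(r_i) = \sum_i \|\pi_\tau(p)-\pi_\tau(p_i)\|_{2,\tau}^2 = \sum_i\|p-p_i\|_{2,\tau}^2$, where $\hat\tau$ is the trace induced by $\tau$ on $\pi_\tau(F)''$; (v) take the supremum over $\tau \in X$ and use $(\sum a_i^2)^{1/2} \le \sum a_i$. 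I do not anticipate a genuine obstacle here — the only mild subtlety worth a sentence is that $\tau$ need not be faithful on $F$, so one works with $\pi_\tau(F)''$ rather than $F$ itself, but since these are all finite dimensional the passage is routine, and $\|x\|_{2,\tau} = \|\pi_\tau(x)\|_{2,\hat\tau}$ for $x \in F$ so the identity $\pi_\tau(p) - \bigwedge_i \pi_\tau(p_i) = \pi_\tau(p - \bigwedge_i p_i)$ (normality/continuity of $\pi_\tau$, automatic on a finite dimensional domain) makes the reduction exact. Alternatively one could avoid GNS entirely and prove the inequality directly in $F$ viewed as a finite von Neumann algebra with the (possibly non-faithful) trace $\tau|_F$, applying the parallelogram law inside $F$; this is arguably cleaner and is the version I would write up.
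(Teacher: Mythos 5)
Your proof is correct and is essentially the paper's argument: both reduce to a fixed trace $\tau\in X$, invoke the Kaplansky parallelogram law $(e\vee f)-f\sim e-(e\wedge f)$ to get $\tau(e\vee f)+\tau(e\wedge f)=\tau(e)+\tau(f)$, and induct, taking the supremum over $\tau$ at the end. The only cosmetic difference is that you repackage the induction via De Morgan duality ($p-\bigwedge_i p_i=\bigvee_i(p-p_i)$ in the corner $pFp$) and subadditivity of the trace on joins, whereas the paper inducts on the meet directly with $q_1\coloneqq\bigwedge_{i=1}^n p_i$ and $q_2\coloneqq p_{n+1}$; the underlying computation is identical.
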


\begin{proof}
	The second inequality amounts to $\sum_{i=1}^nt_i^2\leq(\sum_{i=1}^nt_i)^2$ for positive $t_i$, so it is immediate.  We prove the first inequality by induction on $n$ with the case $n = 1$ being trivial.  Let $q_1 \coloneqq \bigwedge_{i=1}^n p_i$ and $q_2 \coloneqq p_{n+1}$.  By \cite[Proposition~III.1.1.3]{Bl06},
	\begin{equation}
		(q_1 - (q_1 \wedge q_2)) \quad \text{and}\quad ((q_1 \vee q_2) - q_2),
	\end{equation}
	are Murray--von Neumann equivalent.	Therefore, for each $\tau \in X$,
	\begin{equation}\begin{split}
		\|p - (q_1 \wedge q_2)\|_{2, \tau}^2 
		&= \tau(p - (q_1 \wedge q_2)) \\
		&= \tau(p - q_1) + \tau(q_1 - (q_1 \wedge q_2)) \\
		&= \tau(p - q_1) + \tau((q_1 \vee q_2) - q_2) \\
		&\leq \tau(p - q_1) + \tau(p - q_2) \\
		&= \|p - q_1\|_{2, \tau}^2 + \|p - q_2\|_{2, \tau}^2. 
	\end{split}\end{equation}
	Hence we have
	\begin{equation}\begin{split}
		\Big\| p - \bigwedge_{i=1}^{n+1} p_i \Big\|_{2, X}^2
		&\leq \Big\| p - \bigwedge_{i=1}^{n} p_i \Big\|_{2, X}^2 + \|p - p_{n+1}\|_{2, X}^2,
	\end{split}\end{equation}	
	as required.
\end{proof}

The last three lemmas combine to prove the following stability result for near inclusions of finite dimensional tracially complete $C^*$-algebras.

\begin{proposition}
	\label{prop:near-inclusion}
	Suppose $(\M, X)$ is a tracially complete $C^*$-algebra, let $F$ be a finite dimensional $C^*$-subalgebra of $\M$, and let $\mathcal F \subseteq F$ be a system of matrix units.  For all $\epsilon > 0$, there exists $\delta > 0$ such that if $G \subseteq \M$ is a finite dimensional $C^*$-algebra and for all $a \in \mathcal F$, there exists $b \in G$ with $\|a - b\|_{2, X} < \delta$, then there is a (not necessarily unital) $^*$-homomorphism $\phi \colon F \rightarrow G$ such that 
	\begin{equation}
		\| \phi(a) - a \|_{2, X} \leq \epsilon \|a\|, \qquad  a \in F.
	\end{equation}
\end{proposition}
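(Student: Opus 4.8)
The plan is to run the standard perturbation argument for near inclusions of finite dimensional $C^*$-algebras, but carried out in the uniform $2$-norm using the three perturbation lemmas above. Write $F = \bigoplus_{s=1}^r M_{n_s}$ and let $\mathcal F = \{e^{(s)}_{ij}\}$ be the given system of matrix units, so the $e^{(s)}_{ii}$ are pairwise orthogonal projections and $e^{(s)}_{ij} = e^{(s)}_{i1}(e^{(s)}_{j1})^*$. Given $G$ as in the statement, view $(G, X_G)$ as a finite dimensional tracially complete $C^*$-algebra with $X_G \coloneqq \{\tau|_G : \tau \in X\}$; since $\|\cdot\|_{2, X}$ and $\|\cdot\|_{2, X_G}$ agree on $G$, Lemmas~\ref{lem:orthogonal-projections}, \ref{lem:partial-isometry}, and~\ref{lem:meet-of-subprojections} all apply inside $G$. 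For each $a \in \mathcal F$, the hypothesis together with Lemma~\ref{lem:UnitBallDensity} yields a contraction $b^{(s)}_{ij} \in G$ with $\|b^{(s)}_{ij} - e^{(s)}_{ij}\|_{2, X} < 3\delta$; the task is to promote the $b^{(s)}_{ij}$ to an honest system of matrix units in $G$ and let $\phi$ be the induced $^*$-homomorphism.

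For the diagonal, after replacing $b^{(s)}_{ii}$ by its real part we may assume these are self-adjoint contractions, and the bimodule inequality \eqref{eq:SpecialHolderIneq} together with $e^{(s)}_{ii}e^{(s')}_{jj} = \delta_{(s,i),(s',j)}e^{(s)}_{ii}$ shows $\|b^{(s)}_{ii} - (b^{(s)}_{ii})^2\|_{2,X}$ and $\|b^{(s)}_{ii}b^{(s')}_{jj}\|_{2,X}$ (for $(s,i)\neq(s',j)$) are $O(\delta)$. Lemma~\ref{lem:orthogonal-projections} then produces pairwise orthogonal projections $p^{(s)}_i \in G$ with $\|p^{(s)}_i - e^{(s)}_{ii}\|_{2, X}$ small. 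For the off-diagonal entries, fix $s$; the same bimodule estimates give $\|(b^{(s)}_{i1})^*b^{(s)}_{i1} - p^{(s)}_1\|_{2,X}, \|b^{(s)}_{i1}(b^{(s)}_{i1})^* - p^{(s)}_i\|_{2,X} = O(\delta)$, so Lemma~\ref{lem:partial-isometry} (with $w = b^{(s)}_{i1}$, source projection $p^{(s)}_1$, range projection $p^{(s)}_i$) gives a partial isometry $v^{(s)}_i \in G$ with $v^{(s)}_i p^{(s)}_1 = v^{(s)}_i = p^{(s)}_i v^{(s)}_i$ and $\|v^{(s)}_i - e^{(s)}_{i1}\|_{2,X} = O(\delta^{1/2})$.

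The remaining point—the one genuine, if routine, difficulty—is that $v^{(s)}_i$ has source $(v^{(s)}_i)^*v^{(s)}_i$ only a subprojection of $p^{(s)}_1$, so the $v^{(s)}_i$ are not yet matrix units. I would remedy this by passing to the common source $q^{(s)} \coloneqq \bigwedge_{i \geq 2}(v^{(s)}_i)^*v^{(s)}_i \leq p^{(s)}_1$, computed in the finite dimensional algebra $G$, whose defect $\|p^{(s)}_1 - q^{(s)}\|_{2,X}$ is controlled by Lemma~\ref{lem:meet-of-subprojections} and the estimates above, hence small. Setting $\tilde v^{(s)}_i \coloneqq v^{(s)}_i q^{(s)}$ for $i \geq 2$ and $\tilde v^{(s)}_1 \coloneqq q^{(s)}$, each $\tilde v^{(s)}_i$ has source exactly $q^{(s)}$ and range a subprojection of $p^{(s)}_i$, and $\|\tilde v^{(s)}_i - e^{(s)}_{i1}\|_{2,X}$ stays small. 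Defining $\phi(e^{(s)}_{ij}) \coloneqq \tilde v^{(s)}_i(\tilde v^{(s)}_j)^*$ and using the global orthogonality of the $p^{(s)}_i$ (across all $s$ and $i$) together with $(\tilde v^{(s)}_i)^*\tilde v^{(s)}_j = \delta_{ij}q^{(s)}$, a direct check shows that the $\phi(e^{(s)}_{ij})$ satisfy the matrix unit relations, so $\phi$ extends to a (not necessarily unital) $^*$-homomorphism $F \to G$.

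Finally, \eqref{eq:SpecialHolderIneq} and $e^{(s)}_{ij} = e^{(s)}_{i1}(e^{(s)}_{j1})^*$ give $\|\phi(a) - a\|_{2,X} \leq \|\tilde v^{(s)}_i - e^{(s)}_{i1}\|_{2,X} + \|\tilde v^{(s)}_j - e^{(s)}_{j1}\|_{2,X}$ for each $a = e^{(s)}_{ij} \in \mathcal F$; since everything above is governed by the explicit constants in Lemmas~\ref{lem:orthogonal-projections}--\ref{lem:meet-of-subprojections} and there are finitely many matrix units, choosing $\delta$ small enough forces $\max_{a \in \mathcal F}\|\phi(a) - a\|_{2,X} < \epsilon\big/\big(1 + \sum_s n_s^2\big)$. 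For arbitrary $a \in F$, writing $a = \sum_{s,i,j}\lambda^{(s)}_{ij}e^{(s)}_{ij}$ with $|\lambda^{(s)}_{ij}| \leq \|a\|$ and using linearity then yields $\|\phi(a) - a\|_{2,X} \leq \big(\sum_s n_s^2\big)\|a\|\cdot\epsilon\big/\big(1 + \sum_s n_s^2\big) < \epsilon\|a\|$, as required. The main obstacle is thus not conceptual but bookkeeping: tracking the $O(\delta^{1/2})$ errors incurred when shrinking the partial isometries down to the common source $q^{(s)}$, and verifying the matrix-unit relations for $\phi$; once the perturbation lemmas are in hand the rest is assembly.
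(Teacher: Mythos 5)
Your proposal is correct and follows essentially the same route as the paper's proof: approximate the matrix units in $G$ via Lemma~\ref{lem:UnitBallDensity}, orthogonalise the diagonal approximants with Lemma~\ref{lem:orthogonal-projections}, convert the approximants of $e^{(s)}_{i1}$ into partial isometries with prescribed source and range cut-downs via Lemma~\ref{lem:partial-isometry}, shrink to the common source $\bigwedge_i (v^{(s)}_i)^*v^{(s)}_i$ with the defect controlled by Lemma~\ref{lem:meet-of-subprojections}, and assemble the new matrix units. The only differences (taking real parts explicitly, indexing the meet over $i\geq 2$, and the final rescaling of $\epsilon$ by the dimension of $F$) are cosmetic.
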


\begin{proof}
	Write
	\begin{equation}
		\mathcal F = \big(e_{i, j}^{(k)}\big)_{i, j = 1, \ldots, d_k}^{k = 1, \ldots, m}.
	\end{equation}
	By replacing $\epsilon$ with a scalar multiple (depending only on the dimension of $F$), it suffices to construct a corresponding system of matrix units 
	\begin{equation}
		\big(f_{i, j}^{(k)}\big)_{i, j = 1, \ldots, d_k}^{k = 1, \ldots, m} \subseteq G
	\end{equation}
	such that $\big\| e_{i, j}^{(k)} - f_{i, j}^{(k)} \big\|_{2, X} < \epsilon$ for all $i, j, k$.
	
	Let $d \coloneqq \max \{d_1, \ldots, d_m \}$ and choose $\epsilon' > 0$ with 
	\begin{equation}
		\label{eq:epsilon-prime}
		(4\epsilon' + 4(38\epsilon')^{1/2})(d + 1) < \epsilon.
	\end{equation}
	Apply Lemma~\ref{lem:orthogonal-projections} with $\epsilon'$ in place of $\epsilon$ and with $\sum_{k=1}^m d_k$ in place of $n$ to obtain $\delta' > 0$, and define $\delta \coloneqq \min\{ \epsilon'/3, \delta'/9 \} > 0$.  By assumption and Lemma~\ref{lem:UnitBallDensity}, for each $i =1, \ldots, d_k$ and $k = 1, \ldots, m$, there are contractions $q_{i}^{(k)}, w_{i}^{(k)} \in G$ such that 
	\begin{equation}
		\label{eq:close-to-matrix-units}
		\big\|  q_{i}^{(k)} - e_{i,i}^{(k)} \big\|_{2, X} < 3 \delta \leq \epsilon' \quad \text{and} \quad \big\| w_{i}^{(k)} - e_{i,1}^{(k)} \big\|_{2, X} < 3 \delta \leq \epsilon'.
	\end{equation}
	
	For all  $k, k' = 1, \ldots, m$, $i = 1, \ldots, d_k$, and $i' = 1, \ldots, d_{k'}$ such that $(i,k)\neq (i',k')$, we have
 \begin{equation}
		\|(q_{i}^{(k)})^2 - q_{i}^{(k)} \|_{2, X} < 9\delta \leq \delta', \quad \text{and} \quad \|q_{i}^{(k)} q_{i'}^{(k')} \|_{2, X} < 6 \delta < \delta'.
	\end{equation}
	By the choice of $\delta'$, we may invoke the conclusion of Lemma~\ref{lem:orthogonal-projections} to produce mutually orthogonal projections $p_{i}^{(k)} \in G$ such that for all $i$ and $k$,
	\begin{equation}
		\label{eq:p-and-q}
		\| p_{i}^{(k)} - q_{i}^{(k)} \|_{2, X} < \epsilon'.
	\end{equation}
	Now for each $i$ and $k$, we have
	\begin{equation}
		\label{eq:w-source-and-p}
		w_{i}^{(k)}{}^*w_{i}^{(k)} \stackrel{\eqref{eq:close-to-matrix-units}}{\approx}_{\!\!\!2\epsilon'} e_{1, i}^{(k)} e_{i, 1}^{(k)} = e_{1, 1}^{(k)} \stackrel{\eqref{eq:close-to-matrix-units}}{\approx}_{\!\!\!\epsilon'} q_{1}^{(k)} \stackrel{\eqref{eq:p-and-q}}{\approx}_{\!\!\!\epsilon'} p_{1}^{(k)},
	\end{equation}
	and similarly $w_{i}^{(k)} w_{i}^{(k)}{}^* \approx_{4\epsilon'} p_{i}^{(k)}$, where the approximations are in $\|\cdot\|_{2, X}$.  By Lemma~\ref{lem:partial-isometry}, for each $i$ and $k$, there is a partial isometry $v_{i}^{(k)}\in G$ with source contained in $p_{1}^{(k)}$ and range contained in $p_{i}^{(k)}$ and such that
	\begin{equation}
		\label{eq:v-and-w}
		\| v_{i}^{(k)} - w_{i}^{(k)} \|_{2, X} \leq 2 (38\epsilon')^{1/2}.
	\end{equation}
	
	For $k = 1, \ldots, m$ and $i, j = 1, \ldots, d_k$, define
	\begin{equation}
		r^{(k)} \coloneqq \bigwedge_{i=1}^{d_k} v_{i}^{(k)}{}^* v_{i}^{(k)} \leq p_{1}^{(k)} \quad \text{and} \quad f_{i, j}^{(k)} \coloneqq v_{i}^{(k)} r^{(k)} v_{j}^{(k)}{}^*.
	\end{equation}
	Then the $f_{i, j}^{(k)}\in G$ satisfy the matrix unit relations, so it suffices to show that $\big\| e_{i, j}^{(k)} - f_{i, j}^{(k)} \big\|_{2, X} < \epsilon$ for all $i, j, k$.  For each $k = 1, \ldots, m$, we have
	\begin{equation}
		v_{i}^{(k)}{}^* v_{i}^{(k)} \stackrel{\eqref{eq:v-and-w}}{\approx}_{\!\!\!4(38\epsilon')^{1/2}} w_{i}^{(k)}{}^* w_{i}^{(k)} \stackrel{\eqref{eq:w-source-and-p}}{\approx}_{\!\!\!4\epsilon'} p_{1}^{(k)},
	\end{equation}
	where the approximations are in the $\|\cdot\|_{2, X}$-norm.  Therefore, Lemma~\ref{lem:meet-of-subprojections} implies 
	\begin{equation}\label{eq:p-and-r}
		\| p_{1}^{(k)} - r^{(k)} \|_{2, X} < (4\epsilon' + 4(38\epsilon')^{1/2}) d_k.
	\end{equation}
	
	For all $k = 1, \ldots, m$ and $i, j = 1, \ldots, d_k$, we have
	\begin{equation}\label{eq:e-and-f}
		e_{i, j}^{(k)} = e_{i, 1}^{(k)} e_{1, j}^{(k)} \stackrel{\eqref{eq:close-to-matrix-units}}{\approx}_{\!\!\!2\epsilon'} w_{i}^{(k)} w_{i}^{(k)}{}^* \stackrel{\eqref{eq:v-and-w}}{\approx}_{\!\!\!4(38\epsilon')^{1/2}} v_{i}^{(k)}v_{i}^{(k)}{}^* = v_{i}^{(k)} p_{1}^{(k)} v_{i}^{(k)}{}^*.
	\end{equation}
	Combining \eqref{eq:p-and-r} and \eqref{eq:e-and-f}, we have
	\begin{equation}
		\big\| e_{i, j}^{(k)} - f_{i, j}^{(k)} \big\|_{2, X} \leq (4 \epsilon' + 38 (\epsilon')^{1/2})(d_k + 1) \stackrel{\eqref{eq:epsilon-prime}}{<} \epsilon. \qedhere
	\end{equation}
\end{proof}

\subsection{Hyperfinite implies CPoU}\label{sec:hyp-fin-proofs}

The perturbation results of the previous subsection allow us to prove Theorem~\ref{thm:hyperfinite-limit}, from which we will deduce Theorem~\ref{thm:hyperfinite-implies-CPoU}.

\begin{proof}[Proof of Theorem~\ref{thm:hyperfinite-limit}]
	Suppose $(\M, X)$ is hyperfinite and $\M$ is $\|\cdot\|_{2, X}$-separable. Let $\mathcal G_n$ be an increasing sequence of finite subsets of $\M$ with $\|\cdot\|_{2, X}$-dense union and let $\epsilon_n > 0$ be such that $\sum_{n=1}^\infty \epsilon_n < \infty$.  We will inductively construct sequences of finite dimensional subalgebras $F_n \subseteq \M$ and $^*$-homomorphisms $\phi_n^{n+1} \colon F_n \rightarrow F_{n+1}$ such that for all $n \geq 1$,
	\begin{enumerate}
		\item\label{afd-proof1} for all $a \in \mathcal G_n$, there is $b \in F_n$ such that $\|a - b\|_{2, X} < \epsilon_n$, and
		\item\label{afd-proof2} for all $b \in F_n$, $\|\phi_n^{n+1}(b) - b\|_{2, X} < \epsilon_n \|b\|$.
	\end{enumerate}
	
	We can find $F_1$ satisfying \ref{afd-proof1} by the definition of hyperfiniteness.  Assuming that $F_n$ has been constructed, let $\mathcal F_n$ be a system of matrix units for $F_n$, and let $\delta_n > 0$ be given by applying Proposition~\ref{prop:near-inclusion} with $F_n$ and $\epsilon_n$ in place of $F$ and $\epsilon$. By the definition of hyperfiniteness, there is a finite dimensional $C^*$-algebra $F_{n+1} \subseteq \M$ such that \ref{afd-proof1} holds and for all $a \in \mathcal F_n$, there is a $b \in F_{n+1}$ such that $\|a - b\|_{2, X} < \delta_n$.  Then Proposition~\ref{prop:near-inclusion} yields the $^*$-homomorphism $\phi_n^{n+1} \colon F_n \rightarrow F_{n+1}$ required in \ref{afd-proof2}, which completes the construction.
	
	For $m, n \in \mathbb N$ with $m < n$, let $\phi_m^n \colon F_m \rightarrow F_n$ be given by composing the maps $\phi_k^{k+1}$ for $m \leq k < n$.  For $m \in \mathbb N$ and $b \in F_m$, condition \ref{afd-proof2} above and the choice of $\epsilon_n$ imply that the bounded sequence $(\phi_m^n(b))_{n=m}^\infty$ is $\|\cdot\|_{2, X}$-Cauchy.  Define $\psi_m \colon F_m \rightarrow \M$ by
	\begin{equation}
		\psi_m(b) \coloneqq \lim_{n\rightarrow \infty}
		\phi_m^n(b), \qquad b \in F_m.
	\end{equation}
	Note that $\psi_n \circ\phi_m^n = \psi_m$ for all $m, n \in N$ with $m < n$.  If \mbox{$A \coloneqq \varinjlim \,(F_n, \phi_n)$} is the $C^*$-algebraic inductive limit of the algebras $F_n$ and \mbox{$\phi_{\infty, n} \colon F_n \rightarrow A$} are the natural maps, then there is an induced $^*$-homomorphism $\psi \colon A \rightarrow \M$ such that $\psi \circ \phi_{\infty, n} = \psi_n$ for all $n \in \mathbb N$.
	
	Since $A$ is an AF algebra and quotients of AF algebras are also AF algebras (see \cite[Theorem III.4.4]{Davidson96}, for example), we have that $\psi(A)$ is an AF subalgebra of $\mathcal M$.  It suffices to show that $\psi(A) \subseteq \mathcal M$ is $\|\cdot\|_{2, X}$-dense.  For each $n \in \mathbb N$, condition \ref{afd-proof2} and the definition of $\psi_n$ implies
	\begin{equation}
		\|\psi_n(b) - b \|_{2, X} \leq \sum_{m=n}^\infty \epsilon_m \|b\|, \qquad  b \in F_n. 
	\end{equation}
	As $\sum_{n=1}^\infty \epsilon_n < \infty$ and the sets $\mathcal G_n$ are increasing, the above inequality and condition \ref{afd-proof1} imply that $\mathcal G_n$ is contained in the $\|\cdot\|_{2, X}$-closure of $\psi(A)$ for all $n \geq 1$.  As the sets $\mathcal G_n$ have $\|\cdot\|_{2, X}$-dense union in $\mathcal M$, this completes the proof that $(\M,X)$ is an inductive limit of finite-dimensional tracially complete $C^*$-algebras. The converse is straightforward.
\end{proof}

We now have the pieces to show hyperfinite implies CPoU in the factorial setting.

\begin{proof}[Proof of Theorem~\ref{thm:hyperfinite-implies-CPoU}]
	Let $(\mathcal M, X)$ be a hyperfinite factorial tracially complete $C^*$-algebra. Suppose first that $\M$ is $\|\cdot\|_{2,X}$-separable.  Then, by Theorem~\ref{thm:hyperfinite-limit}, there is an increasing sequence $(F_n)_{n=1}^\infty$ of finite dimensional unital $C^*$-subalgebras of $\mathcal M$ with $\|\cdot\|_{2, X}$-dense union.  Let $A \subseteq \mathcal M$ denote the norm closure of the union of the $F_n$ so that $A$ is an AF $C^*$-algebra.  If $X_A \coloneqq \{\tau|_A : \tau \in X \}$, then Corollary~\ref{cor:dense-subalgebra} implies that $X_A$ is a closed face in $T(A)$ and $\big(\completion{A}{X_A}, X_A\big) \cong (\mathcal M, X)$.  

Note that $\big(\completion{A}{T(A)},T(A)\big)$ is the inductive limit of the factorial tracially complete $C^*$-algebras $\big(F_n, T(F_n)\big)$ and each of these has CPoU by Corollary~\ref{cor:vNa-CPoU}. Therefore $\big(\completion{A}{T(A)},T(A)\big)$ has CPoU by Proposition~\ref{prop:InductiveLimitCPoU}.  Further, note that the tracial completion of $\completion{A}{T(A)}$ with respect to $X_A$ is $(\completion{A}{X_A},X_A)\cong (\M,X)$ by Proposition~\ref{prop:tracial-completion}\ref{item:completion-subset}. This has CPoU by Proposition~\ref{prop:CPoU-quotient}.

The general case follows from the $\|\cdot\|_{2,X}$-separable case as hyperfiniteness is separably inheritable and factoriality and CPoU are strongly separably inheritable (see the discussion immediately following Definition \ref{defn:sep} in Appendix \ref{sec:sep}).  The proofs of the (strong) separable inheritability are given in Theorem \ref{thm:sep}.
\end{proof}

\section{Classification and Regularity}\label{sec:Classification}

In this final section, we combine the pieces to obtain our main results.
We establish the classification of tracially nuclear $^*$-homomorphisms (Theorem~\ref{IntroThmClassMap}) through uniqueness and existence results for morphisms in Sections~\ref{sec:uniqueness} and~\ref{sec:existence}, respectively.
The classification theorem (Theorem~\ref{InformalClassification}) is then obtained by standard intertwining arguments in Section~\ref{sec:classification-theorems}, and the structure theorem (Theorem~\ref{InformalStructureThm}) follows.

\subsection{Uniqueness results for morphisms}\label{sec:uniqueness}

This subsection gives a uniqueness result for tracially nuclear $^*$-homomorphisms into factorial tracially complete $C^*$-algebras with CPoU.  Uniqueness will be up to approximate unitary equivalence in uniform 2-norm, as follows.

\begin{definition}
	If $(\mathcal M, X)$ is a tracially complete $C^*$-algebra and $\phi, \psi \colon A \rightarrow \M$ are functions, we say that $\phi$ and $\psi$ are \emph{approximately unitarily equivalent} if there is a net of unitaries $(u_\lambda) \subseteq \mathcal M$ such that \begin{equation} \|u_\lambda \phi(a) u_\lambda^* - \psi(a)\|_{2, X} \rightarrow 0, \qquad a \in A.
 \end{equation}
Note that when $A$ is a separable $C^*$-algebra and $\phi$ and $\psi$ are $\|\cdot\|$-continuous we may arrange the net $(u_\lambda)$ to be a sequence. 
\end{definition}

We begin by recording the following uniqueness result for von Neumann algebras which is a consequence of Connes' theorem.  Several variations and special cases of this can be found in the literature (see \cite[Proposition~2.1]{CGNN13} or \cite{DH05} for example);  the result most often appears when $A$ is nuclear and $\mathcal M$ is a II$_1$ factor.  Most, if not all, variations of this argument (including the one below) follow the same strategy:
\begin{enumerate}
	\item  use Connes' theorem to replace the domain with a hyperfinite von Neumann algebra;
	\item use an $\epsilon/3$-argument to replace the domain with a finite dimensional $C^*$-algebra;
	\item  use the Murray--von Neumann classification of projections by traces to compare matrix units in the codomain.
\end{enumerate}

\begin{proposition}
	\label{prop:uniqueness-to-vNa}
	Suppose $A$ is a $C^*$-algebra, $\mathcal N$ is a finite von Neumann algebra, and $\phi, \psi \colon A \rightarrow \mathcal N$ are weakly nuclear $^*$-homomorphisms.  Then there is a net of unitaries $(u_\lambda) \subseteq \mathcal N$ such that 
	\begin{equation}\label{e9.2}
		\text{\rm $\sigma$-strong$^*$-}\lim_\lambda\, u_\lambda\phi(a) u_\lambda^* = \psi(a), \qquad a \in A,
	\end{equation}
	if and only if $\tau\circ\phi=\tau\circ\psi$ for all $\tau\in T(\mathcal N)$.
\end{proposition}

\begin{proof}
If $\phi$ and $\psi$ are approximately unitarily equivalent in the strong$^*$ topology, then $\phi$ and $\psi$ agree on all normal traces.  As the normal traces are weak$^*$-dense in $T(\mathcal N)$ (Propositions~\ref{prop:TracesFiniteVNAS}\ref{prop:TracesFiniteVNAS.2}), $\phi$ and $\psi$ agree on all traces $\tau \in T(\mathcal N)$.

 Suppose conversely that $\tau\circ\phi=\tau\circ\psi$ for all $\tau\in T(\mathcal N)$. We may assume that $\mathcal N$ has a faithful normal trace, as in general we can decompose $\mathcal N$ as a (possibly uncountable) direct sum $\oplus_i\mathcal N_i$, where each $\mathcal N_i$ has a faithful normal trace,\footnote{Given a non-zero central projection $p$ in a finite von Neumann algebra $\mathcal N$, there is a normal trace $\tau$ on $\mathcal N$ with $\tau(p)=1$. The support projection $q$ of $\tau$ is the minimal central projection with $\tau(1-q)=0$; it is necessarily a non-zero subprojection of $p$ and $\tau$ is faithful on $\mathcal Nq$.  Accordingly Zorn's lemma gives the required decomposition of $\mathcal N$.} and combine unitaries witnessing approximate unitary equivalence in each summand. Now, let $\mathcal M \subseteq \mathcal N$ denote the von Neumann subalgebra generated by $\phi(A)$. As $\mathcal N$ has a faithful trace, the hypothesis ensures \mbox{$\ker(\phi) = \ker(\psi)$}, and so $\psi$ factorises through $\phi(A)$.  Let $\bar\phi \colon \mathcal M \rightarrow \mathcal N$ denote the inclusion map and let $\bar \psi \colon \mathcal M \rightarrow \mathcal N$ denote the unique normal $^*$-homomorphism with $\bar \psi \circ \phi = \psi$.  Then $\tau\circ\bar\phi = \tau\circ\bar\psi$ for all normal traces $\tau \in T(\mathcal N)$ (and hence also for all $\tau \in T(\mathcal N)$ by Proposition~\ref{prop:tracial-completion}\ref{prop:TracesFiniteVNAS.2}).
	
	As $\mathcal N$ is finite with a faithful normal trace, there is a normal conditional expectation \mbox{$\mathcal N \rightarrow \mathcal M$} (see \cite[Lemma~1.5.10]{Br08}; for example), and so, the corestriction of $\phi$ to a $^*$-homomorphism $A \rightarrow \mathcal M$ is weakly nuclear.  Then $\M$ is hyperfinite by the proof of (6) implies (5) of \cite[Theorem~3.2.2]{Bro06}.\footnote{This reference handles the case that $\mathcal M \coloneqq \pi_\tau(A)''$ and $\phi \coloneqq \pi_\tau$ for a trace $\tau \in T(A)$.  To prove the claim in our setting, fix a faithful normal $^*$-homomorphism $\pi \colon \mathcal M \rightarrow \mathcal B(H)$ so that $\mathcal M \cong \pi(\mathcal M) = \pi(\phi(A))''$, and in the proof in \cite{Bro06}, replace $\pi_\tau$ with $\pi \circ \phi$.}

 Let $F \subseteq \mathcal \M$ be a finite dimensional $C^*$-algebra and note that $\tau\circ\bar\phi|_F=\tau\circ\bar\psi|_F$.  It is a standard consequence of the classification of projections in the finite von Neumann algebra $\mathcal N$ by traces that $\bar\phi|_F$ and $\bar\psi|_F$ are unitarily equivalent.\footnote{\label{fd-unique} 
	Let $e_{i, j}^{(k)}$ be a system of matrix units for $F$.
	For each $k = 1, \ldots, m$, we have
	\begin{equation*}
		\tau\big(\bar\phi\big(e_{1,1}^{(k)}\big)\big) = \tau\big(\bar\psi\big(e_{1, 1}^{(k)}\big)\big), \qquad \tau \in T(\mathcal N),
	\end{equation*}
	and hence there is a partial isometry $v_k \in \mathcal N$ with source $\bar\phi(e_{1,1}^{(k)})$ and range $\bar\psi(e_{1, 1}^{(k)})$.  Define 
	\begin{equation*}
		u \coloneqq \sum_{k = 1}^m \sum_{i = 1}^{d_k} \bar\psi(e_{i, 1}^{(k)})v_k \bar\phi(e_{1, i}^{(k)}) \in \mathcal N. 
	\end{equation*}
	Then $u \in \mathcal N$ is a unitary with $u \bar\phi(x)u^* = \bar\psi(x)$ for all $x \in F$.}
	Therefore $\bar\phi$ and $\bar\psi$ are strong$^*$-approximate unitarily equivalent, and hence so too are $\phi$ and $\psi$.
\end{proof}

Our main uniqueness result is obtained by a CPoU argument to pass from  local uniqueness at the fibres to a global statement.  This works in essentially the same way as \cite[Theorem 2.2]{CETW-classification} (which handles the case when $A$ is nuclear).\footnote{The setup in \cite{CETW-classification} considers uniform tracial sequence algebras $B^\infty$ as codomains when $B$ is a separable $C^*$-algebra with CPoU, but there are no additional difficulties in working in the abstract framework of tracially complete $C^*$-algebras.}

\begin{theorem}
	\label{thm:uniqueness}
	Suppose $A$ is a $C^*$-algebra, $(\mathcal N, Y)$ is a factorial tracially complete $C^*$-algebra with CPoU, and  $\phi, \psi \colon A \rightarrow \mathcal N$ are tracially nuclear $^*$-homomorphisms.  Then $\phi$ and $\psi$ are approximately unitarily equivalent if and only if $\tau \circ \phi = \tau \circ \psi$ for all $\tau \in Y$.
\end{theorem}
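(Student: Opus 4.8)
The plan is to derive the global uniqueness statement from the corresponding von Neumann algebra result (Proposition~\ref{prop:uniqueness-to-vNa}) fibrewise, and then to patch the fibrewise unitaries into a single approximately central family using CPoU. The ``only if'' direction is immediate: if $u_\lambda \phi(a) u_\lambda^* \to \psi(a)$ in $\|\cdot\|_{2,Y}$ for all $a \in A$, then for each $\tau \in Y$ we have $\tau(\phi(a)) = \lim_\lambda \tau(u_\lambda \phi(a) u_\lambda^*) = \tau(\psi(a))$ since $\tau$ is $\|\cdot\|_{2,Y}$-continuous and tracial. So the content is the ``if'' direction.

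First I would reduce to the separable, unital case: by Lemma~\ref{lem:amenable-unitise} we may pass to unitisations (noting $\tau \circ \phi^\dag = \tau \circ \psi^\dag$ still holds), and approximate unitary equivalence is witnessed by finitely many elements at a time, so it suffices to handle a fixed finite set $\mathcal F \subseteq A$ and $\epsilon > 0$; replacing $A$ by a separable subalgebra containing $\mathcal F$ does no harm. So fix $\mathcal F$ and $\epsilon$. For each $\tau \in Y$, compose with the GNS representation to get $\pi_\tau \circ \phi, \pi_\tau \circ \psi \colon A \to \pi_\tau(\mathcal N)''$; these are weakly nuclear by Theorem~\ref{thm:amenable} (since $\phi, \psi$ are tracially nuclear), and they agree on every trace of $\pi_\tau(\mathcal N)''$ because every such trace composed with $\pi_\tau$ lies in the face generated by $Y$, hence in $Y$ by factoriality and Lemma~\ref{lem:GNSTraceFace}/Proposition~\ref{prop:face} (and the hypothesis $\tau\circ\phi = \tau\circ\psi$ on $Y$). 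By Proposition~\ref{prop:uniqueness-to-vNa}, there is a unitary $\bar u_\tau \in \pi_\tau(\mathcal N)''$ with $\|\bar u_\tau \pi_\tau(\phi(a))\bar u_\tau^* - \pi_\tau(\psi(a))\|_{2,\tau} < \epsilon$ for all $a \in \mathcal F$; by Kaplansky density there is a unitary $u_\tau \in \mathcal N$ with the same approximate intertwining in $\|\cdot\|_{2,\tau}$ (using Proposition~\ref{prop:unitary-exponentials}-style exponential lifting, or just directly approximating in $2$-norm and correcting).

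Next I would run the standard CPoU patching argument: set $a_\tau \coloneqq \sum_{a \in \mathcal F} |u_\tau \phi(a) u_\tau^* - \psi(a)|^2 \in \mathcal N_+$, so $\tau(a_\tau) < |\mathcal F|\,\epsilon^2$; the sets $U_\tau \coloneqq \{\sigma \in Y : \sigma(a_\tau) < |\mathcal F|\epsilon^2\}$ form an open cover of the compact set $Y$, so finitely many $\tau_1, \dots, \tau_k$ suffice, giving $\sup_{\sigma \in Y}\min_i \sigma(a_{\tau_i}) < |\mathcal F|\epsilon^2$. Apply CPoU (Proposition~\ref{CPoU:Ultra:FiniteSet}) with $S = \{u_{\tau_i}\} \cup \phi(\mathcal F) \cup \psi(\mathcal F)$ to obtain orthogonal projections $p_1, \dots, p_k \in \mathcal N^\omega \cap S'$ summing to $1$ with $\tau(a_{\tau_i} p_i) \leq |\mathcal F|\epsilon^2 \tau(p_i)$; then $u \coloneqq \sum_i u_{\tau_i} p_i \in \mathcal N^\omega$ is a unitary with $\|u\phi(a)u^* - \psi(a)\|_{2,Y^\omega}^2 = \sum_i \tau(\,|u_{\tau_i}\phi(a)u_{\tau_i}^*-\psi(a)|^2 p_i) \leq |\mathcal F|\epsilon^2$ summed appropriately — more precisely, summing over $a \in \mathcal F$ gives $\sum_{a\in\mathcal F}\|u\phi(a)u^*-\psi(a)\|_{2,\tau}^2 \le |\mathcal F|\epsilon^2$ for all $\tau \in Y^\omega$. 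Lifting $u$ to a sequence of unitaries in $\ell^\infty(\mathcal N)$ (Corollary~\ref{cor:unitary-stable-relation}) and extracting a suitable term of the representing sequence produces an honest unitary $u' \in \mathcal N$ with $\max_{a \in \mathcal F}\|u'\phi(a)u'^* - \psi(a)\|_{2,Y} < 2\sqrt{|\mathcal F|}\,\epsilon$, say. Since $\mathcal F$ and $\epsilon$ were arbitrary, $\phi$ and $\psi$ are approximately unitarily equivalent (a sequence of unitaries in the separable case).

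The main obstacle I anticipate is purely bookkeeping rather than conceptual: verifying carefully that the fibrewise traces on $\pi_\tau(\mathcal N)''$ really are captured by $Y$ so that Proposition~\ref{prop:uniqueness-to-vNa} applies with the right hypothesis, and managing the Kaplansky-density step so that $u_\tau$ can be taken to be a genuine unitary of $\mathcal N$ (not merely an approximate one) with control only in the single seminorm $\|\cdot\|_{2,\tau}$ — this is where one uses that unitaries are exponentials of self-adjoints of norm $\le \pi$ in the von Neumann algebra and lifts the self-adjoint via Kaplansky. Everything else is the now-routine ``compactness of $Y$ plus CPoU plus $\epsilon$-test'' template already used several times in the paper (e.g.\ Lemmas~\ref{lem:approx-spectral-proj}, \ref{lem:approximate-comparison} and Proposition~\ref{prop:unitary-exponentials}), so I would cite those proofs for the structure and only spell out the modifications.
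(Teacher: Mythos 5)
Your proposal is correct and follows essentially the same route as the paper's proof: fibrewise application of Proposition~\ref{prop:uniqueness-to-vNa} (using Lemma~\ref{lem:GNSTraceFace} and factoriality to see that traces on $\pi_\tau(\mathcal N)''$ pull back into $Y$), Kaplansky density via the exponential form $\bar u_\tau = e^{i\bar h_\tau}$ to produce genuine unitaries $u_\tau \in \mathcal N$, compactness of $Y$ plus CPoU to patch them into $u = \sum_i p_i u_{\tau_i} \in \mathcal N^\omega$, and Corollary~\ref{cor:unitary-stable-relation} to lift back to unitaries in $\mathcal N$. The points you flag as potential obstacles are exactly the ones the paper handles, and in the same way.
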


\begin{proof}
	The forward direction is immediate.  For the reverse direction, suppose $\tau \circ \phi = \tau \circ \psi$ for all $\tau \in Y$ and fix a finite set $\mathcal F \subseteq A$ and $\epsilon > 0$.  For each pair of traces $\tau \in Y$ and $\sigma \in T(\pi_\tau(\mathcal N)'')$, we have $\sigma \circ \pi_\tau \in Y$ by Lemma~\ref{lem:GNSTraceFace}, and hence 
	\begin{equation}
		\sigma \circ \pi_\tau \circ \phi = \sigma \circ \pi_\tau \circ \psi.
	\end{equation}
	As $\pi_\tau \circ \phi$ and $\pi_\tau \circ \psi$ are weakly nuclear (see Theorem~\ref{thm:amenable}), Proposition~\ref{prop:uniqueness-to-vNa} implies there is a unitary $\bar u_\tau \in \pi_\tau(\mathcal N)''$ such that 
	\begin{equation}
		\max_{a \in \mathcal F} \|\bar u_\tau \pi_\tau(\phi(a)) \bar u_\tau^* - \pi_\tau (\psi(a)) \|_{2, \tau} < \epsilon, \qquad \tau \in Y.
	\end{equation}
	As $\pi_\tau(\mathcal N)''$ is a von Neumann algebra, we know that $\bar u_\tau = e^{i\bar{h}_{\tau}}$ for some self-adjoint $\bar{h}_{\tau} \in \pi_\tau(\mathcal N)''$. Applying Kaplansky's density theorem to $\bar{h}_{\tau} \in \pi_\tau(\mathcal N)''$ and making use of the existence of self-adjoint lifts, we deduce that for each $\tau \in Y$, each unitary $\bar u_\tau$ is a $\|\cdot\|_{2, \tau}$-limit of unitaries of the form $\pi_\tau(u_\tau)$ for a unitary $u_\tau = e^{ih_\tau} \in \mathcal N$, and in particular, we may find unitaries $u_\tau \in \mathcal N$ satisfying
	\begin{equation}
		\max_{a \in \mathcal F} \| u_\tau \phi(a) u_\tau^* - \psi(a)\|_{2, \tau} < \epsilon, \qquad \tau \in Y.
	\end{equation}

	For $\tau \in Y$, define
	\begin{equation}
		a_\tau \coloneqq \sum_{a \in \mathcal F} \big|u_\tau \phi(a) u_\tau^* - \psi(a) \big|^2 \in \mathcal N_+
	\end{equation}
	and note that $\tau(a_\tau) < |\mathcal F|\epsilon^2$ for all $\tau \in Y$.  As $Y$ is compact, there are traces $\tau_1, \ldots, \tau_n \in Y$ such that 
	\begin{equation}
		\sup_{\tau \in Y} \min_{1 \leq i \leq n} \tau(a_{\tau_i}) < |\mathcal F| \epsilon^2. 
	\end{equation}
	Define $S \coloneqq \phi(\mathcal F) \cup \psi(\mathcal F) \cup \{u_{\tau_1}, \ldots, u_{\tau_n} \} \subseteq \mathcal N$.  As $(\mathcal N, Y)$ has CPoU, there are projections $p_1, \ldots, p_n \in \mathcal N^\omega \cap S'$ such that 
	\begin{equation}
		\sum_{j=1}^n p_j = 1_{\mathcal N^\omega} \qquad \text{and} \qquad \tau(a_{\tau_i} p_i) \leq |\mathcal F|\epsilon^2 \tau(p_i)
	\end{equation}
	for all $i= 1, \ldots, n$ and $\tau \in Y^\omega$.  Then, using the fact that the $p_i$ commute with $S$ and are mutually orthogonal, $u \coloneqq \sum_{i=1}^n p_i u_{\tau_i} \in \mathcal N^\omega$ is a unitary and 
	\begin{equation}
		\max_{a \in \mathcal F} \|u \phi(a) u^* - \psi(a) \|_{2, Y^\omega} \leq |\mathcal F|^{1/2} \epsilon.
	\end{equation}
	By Corollary~\ref{cor:unitary-stable-relation}, there is a sequence of unitaries $(u_n)_{n=1}^\infty \subseteq \mathcal N$ representing $u$.  Then
	\begin{equation}
		\limsup_{n \rightarrow \omega} \max_{a \in \mathcal F} \|u_n \phi(a) u_n^* - \psi(a) \|_{2, Y} \leq |\mathcal F|^{1/2} \epsilon,
	\end{equation}
	which proves the theorem.
\end{proof}

In the special case of reduced products of factorial tracially complete $C^*$-algebras with CPoU, the uniqueness theorem gives on-the-nose unitary equivalence.

\begin{corollary}[{cf.\ \cite[Theorem 2.2]{CETW-classification}}]
	\label{cor:uniqueness}
	Let $A$ be a separable $C^*$-algebra and $(\mathcal N, Y)$ is a factorial tracially complete $C^*$-algebra with CPoU.  If $\phi, \psi \colon A \rightarrow \mathcal N^\omega$ are tracially nuclear $^*$-homomorphisms, then $\phi$ and $\psi$ are unitarily equivalent if and only if $\tau \circ \phi = \tau \circ \psi$ for all $\tau \in Y^\omega$.
\end{corollary}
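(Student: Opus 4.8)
The plan is to deduce Corollary~\ref{cor:uniqueness} from Theorem~\ref{thm:uniqueness} by combining it with a reparameterisation argument in the spirit of Kirchberg's $\epsilon$-test. The forward implication is trivial, so assume $\tau\circ\phi=\tau\circ\psi$ for all $\tau\in Y^\omega$. The first reduction is to work with the reduced power $(\mathcal N^\infty,Y^\infty)$ instead of a general reduced power $(\mathcal N^\omega,Y^\omega)$: in fact the cleanest route is to prove the statement directly for whatever free filter $\omega$ is given, noting that $(\mathcal N^\omega,Y^\omega)$ is itself a factorial tracially complete $C^*$-algebra with CPoU by Corollary~\ref{cor:factorial-ultrapower}\ref{cor:factorial-ultrapower:CPoU} (applied to the constant sequence $(\mathcal N,Y)$), so Theorem~\ref{thm:uniqueness} applies with $(\mathcal N^\omega,Y^\omega)$ in place of $(\mathcal N,Y)$. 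It remains to check that $\phi,\psi\colon A\to\mathcal N^\omega$ are tracially nuclear as maps into $(\mathcal N^\omega,Y^\omega)$; this follows from Theorem~\ref{thm:amenable} once we observe that every $\tau\in Y^\omega$ restricts, via the diagonal embedding of $Y^\omega$ into the trace space of the reduced power, to traces that are uniformly amenable — more simply, tracial nuclearity of $\phi$ as a map into $\mathcal N^\omega$ is part of the hypothesis and composing with the identity of $\mathcal N^\omega$ changes nothing, so one just needs that $\mathrm{id}_{\mathcal N^\omega}$ being tracially nuclear is not required; instead, tracial nuclearity of $\phi$ into $(\mathcal N^\omega, Y^\omega)$ is exactly the stated hypothesis.

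With this in hand, Theorem~\ref{thm:uniqueness} gives that $\phi$ and $\psi$ are approximately unitarily equivalent in the uniform $2$-norm $\|\cdot\|_{2,Y^\omega}$: since $A$ is separable and $\phi,\psi$ are $\|\cdot\|$-continuous, there is a \emph{sequence} of unitaries $(u_k)_{k=1}^\infty\subseteq\mathcal N^\omega$ with $\|u_k\phi(a)u_k^*-\psi(a)\|_{2,Y^\omega}\to0$ for all $a\in A$. The remaining task is to upgrade this sequence of approximate conjugacies to a single unitary implementing an exact equality. Fix a countable dense subset $\{a_m:m\in\mathbb N\}$ of $A$ (in the operator norm); for each $m$ define $f_m\colon \prod_{k}U(\mathcal N^\omega)\to[0,\infty]$, but it is cleaner to run Kirchberg's $\epsilon$-test (Lemma~\ref{lem:EpsTest}) with $X_k\coloneqq$ the set of unitaries in $\mathcal N^\omega$ and $f^{(m)}_k(u)\coloneqq\|u\phi(a_m)u^*-\psi(a_m)\|_{2,Y^\omega}$ (say, with respect to the Fréchet filter, or any fixed free ultrafilter — the test is filter-agnostic). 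For every $\epsilon>0$ and $m_0$, approximate unitary equivalence provides a single unitary $u_k$ (for $k$ large) with $f^{(m)}_k(u_k)<\epsilon$ for $m=1,\dots,m_0$, verifying the hypothesis of Lemma~\ref{lem:EpsTest}. The test then produces a single sequence of unitaries $y\in\prod_kU(\mathcal N^\omega)$ with $f^{(m)}(y)=0$ for all $m$.

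To finish, one must interpret the output $y$ as a unitary in $\mathcal N^\omega$ itself, not merely in a reduced power of $\mathcal N^\omega$. This is where the hypothesis of CPoU on $(\mathcal N,Y)$ re-enters: by Corollary~\ref{cor:unitary-stable-relation} applied to the constant sequence $(\mathcal N,Y)$, every unitary in $(\mathcal N^\omega)^{\omega'}$ lifts to a sequence of unitaries in $\ell^\infty(\mathcal N^\omega)$, so in particular the $2$-norm-Cauchy-type data assembled by the $\epsilon$-test can be represented by genuine unitaries; alternatively and more directly, one takes the sequence $(u_k)_k$ from approximate unitary equivalence, passes to a subsequence along which $\|u_k\phi(a_m)u_k^*-\psi(a_m)\|_{2,Y^\omega}<2^{-k}$ for $m\leq k$, and observes that this defines an element $u$ of the reduced power of $\mathcal N^\omega$ along the Fréchet filter with $u\phi(a_m)u^*=\psi(a_m)$; since $A$ is separable this is an exact intertwiner, and then Corollary~\ref{cor:unitary-stable-relation} (applied to $(\mathcal N,Y)$ again, using that reduced products of $\mathcal N^\omega$ are reduced products of factorial CPoU algebras by Corollary~\ref{cor:factorial-ultrapower}) lets us realise $u$ by an honest unitary in $\mathcal N^\omega$. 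The main obstacle is bookkeeping: carefully checking that tracial nuclearity of $\phi,\psi$ transfers to the setting where $(\mathcal N^\omega,Y^\omega)$ plays the role of codomain so that Theorem~\ref{thm:uniqueness} is applicable, and correctly threading the lifting of unitaries through the (iterated) reduced powers so that the final conjugating element genuinely lives in $\mathcal N^\omega$. Everything else is a standard $\epsilon$-test packaging of approximate unitary equivalence into exact unitary equivalence, identical in structure to the proof of \cite[Theorem~2.2]{CETW-classification}.
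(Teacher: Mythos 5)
Your first step is exactly the paper's: invoke Corollary~\ref{cor:factorial-ultrapower} to see that $(\mathcal N^\omega, Y^\omega)$ is factorial with CPoU, then apply Theorem~\ref{thm:uniqueness} with this reduced power as the codomain (tracial nuclearity of $\phi,\psi$ into $(\mathcal N^\omega,Y^\omega)$ is literally the hypothesis, so the hand-wringing in your first paragraph is unnecessary but harmless). The paper then finishes with ``the separability of $A$ and a standard application of Kirchberg's $\epsilon$-test,'' and it is in executing that step that your proposal goes wrong.

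The gap is in how you set up the $\epsilon$-test. You take $X_k\coloneqq U(\mathcal N^\omega)$ and $f^{(m)}_k(u)\coloneqq\|u\phi(a_m)u^*-\psi(a_m)\|_{2,Y^\omega}$. These functions do not depend on $k$, so the conclusion of Lemma~\ref{lem:EpsTest} only produces a sequence $(y_k)_k$ of unitaries of $\mathcal N^\omega$ with $\limsup_k f^{(m)}(y_k)=0$ for every $m$ --- that is, another approximately intertwining sequence, or equivalently an exact intertwiner in the \emph{iterated} reduced power $(\mathcal N^\omega)^{\omega'}$. This is no improvement on what Theorem~\ref{thm:uniqueness} already gave. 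Your attempted repair via Corollary~\ref{cor:unitary-stable-relation} does not close this: that corollary lifts a unitary of a reduced product to a sequence of unitaries in the product, but it does not convert a unitary of $(\mathcal N^\omega)^{\omega'}$ that exactly conjugates $\iota\circ\phi$ to $\iota\circ\psi$ into a unitary of $\mathcal N^\omega$ doing the same; descending an exact intertwiner along the diagonal embedding is precisely the nontrivial content one is trying to establish.

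The correct application of the $\epsilon$-test is one level down, at the representing sequences in $\mathcal N$. Choose representatives $(b_{m,n})_{n}$ and $(c_{m,n})_n$ in $\prod_n\mathcal N$ of $\phi(a_m)$ and $\psi(a_m)$, take $X_n\coloneqq U(\mathcal N)$, and set $f^{(m)}_n(v)\coloneqq\|vb_{m,n}v^*-c_{m,n}\|_{2,Y}$. To verify the hypothesis of Lemma~\ref{lem:EpsTest}, take the unitary $u\in\mathcal N^\omega$ witnessing approximate unitary equivalence on $a_1,\dots,a_{m_0}$ up to $\epsilon/2$ and lift it to a sequence of unitaries $(u_n)_n\in\prod_nU(\mathcal N)$ --- this is where Corollary~\ref{cor:unitary-stable-relation} (hence CPoU) genuinely enters --- so that $\limsup_{n\to\omega}f^{(m)}_n(u_n)<\epsilon$ by Lemma~\ref{lem:2norm-reducedproduct}. (Alternatively one can work with contractions $v\in\mathcal N$ and add the test functions $\|v^*v-1_\mathcal N\|_{2,Y}$ and $\|vv^*-1_\mathcal N\|_{2,Y}$, avoiding the lifting.) The output $(y_n)_n$ then defines a unitary $y\in\mathcal N^\omega$ with $y\phi(a_m)y^*=\psi(a_m)$ for all $m$, and density of $\{a_m\}$ together with $\|\cdot\|_{2,Y^\omega}$-continuity of conjugation on norm-bounded sets gives $y\phi(a)y^*=\psi(a)$ for all $a\in A$. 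With this correction your argument coincides with the paper's.
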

\begin{proof}
	As $(\mathcal N, Y)$ is a factorial tracially complete $C^*$-algebra with CPoU, the same is true for $(\mathcal{N}^\omega, Y^\omega)$ by Corollary~\ref{cor:factorial-ultrapower}. Hence, applying Theorem~\ref{thm:uniqueness}, we get that $\phi$ and $\psi$ are $\|\cdot\|_{2,Y^\omega}$-approximately unitary equivalent if and only if $\tau \circ \phi = \tau \circ \psi$ for all $\tau \in Y^\omega$. By the separability of $A$ and a standard application of Kirchberg's $\epsilon$-test (Lemma~\ref{lem:EpsTest}), $\phi, \psi \colon A \rightarrow \mathcal N^\omega$ are $\|\cdot\|_{2,Y^\omega}$-approximately unitary equivalent if and only if they are unitary equivalent. 
\end{proof}

\subsection{Existence results for morphisms}\label{sec:existence}

In this subsection, we will give a general existence result showing that morphisms can be constructed from {}amenable tracially complete $C^*$-algebras to factorial tracially complete $C^*$-algebras with CPoU with prescribed tracial information (see Corollary~\ref{cor:existence} below).

By construction, the approximate morphisms we produce will approximately factor through finite dimensional $C^*$-algebras.  In fact, given a separable $C^*$-algebra $A$, a factorial tracially complete $C^*$-algebra $(\mathcal N, Y)$, and a continuous affine map $\gamma \colon Y \rightarrow T(A)$, we will produce approximate factorisations of $\gamma$ through the trace simplices $T(F_n)$ of finite dimensional $C^*$-algebras $F_n$.  If $\gamma(\tau)$ satisfies a suitable approximation property for all $\tau \in Y$ (e.g.\ amenability), the maps $T(F_n) \rightarrow T(A)$ may be approximately implemented by approximate morphisms $A \rightarrow F_n$.  Further, when $(\mathcal N, Y)$ has CPoU, the classification of projections in $\mathcal N$ (Theorems~\ref{thm:comparison} and~\ref{thm:existence-projections}) allows us to show the maps $Y \rightarrow T(F_n)$ are implemented by morphisms $F_n \rightarrow \mathcal N$.  The compositions $A \rightarrow F_n \rightarrow \mathcal N$ provide the desired approximate morphism.

The following lemma gives the required existence result for morphisms out of finite dimensional $C^*$-algebras.  When $\mathcal N$ is a II$_1$ factor, this is a standard result, and the proof here is essentially identical using Theorems~\ref{thm:comparison} and~\ref{thm:existence-projections} in place of the classification of projections in II$_1$ factors.

\begin{lemma}
	\label{lem:finite-dim-existence}
	If $F$ is a finite dimensional $C^*$-algebra, $(\mathcal N, Y)$ is a type~{\rm II}$_1$ factorial tracially complete $C^*$-algebra with CPoU, and $\gamma \colon Y \rightarrow T(F)$ is a continuous affine map, then there is a unital $^*$-homomorphism $\phi \colon F \rightarrow \mathcal N$ with \mbox{$\tau \circ \phi = \gamma(\tau)$} for all $\tau \in Y$.
\end{lemma}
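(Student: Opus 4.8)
The plan is to reduce the statement to the classification of projections in $(\mathcal N, Y)$ (Theorems~\ref{thm:comparison} and~\ref{thm:existence-projections}). Write $F \cong \bigoplus_{k=1}^m M_{d_k}$ with matrix units $(e_{i,j}^{(k)})$. A unital $^*$-homomorphism $\phi \colon F \rightarrow \mathcal N$ is determined by the images $\phi(e_{1,1}^{(k)})$, which must be mutually orthogonal projections summing to $1_{\mathcal N}$, together with partial isometries $\phi(e_{i,1}^{(k)})$; and the tracial constraint $\tau \circ \phi = \gamma(\tau)$ forces $\tau(\phi(e_{1,1}^{(k)})) = \gamma(\tau)(e_{1,1}^{(k)})$ for all $\tau \in Y$. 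So the first step is to define the continuous affine functions $f_k \colon Y \rightarrow [0,1]$ by $f_k(\tau) \coloneqq \gamma(\tau)(e_{1,1}^{(k)})$; these satisfy $\sum_{k=1}^m d_k f_k = 1$ since $\gamma(\tau)$ is a trace on $F$. Each $f_k$ extends to a continuous affine function on $T(\mathcal N)$ by Theorem~\ref{thm:extending-affine}; I can then use Theorem~\ref{thm:existence-projections} to produce a projection $p_k \in \mathcal N$ with $\tau(p_k) = d_k f_k(\tau)$ for all $\tau \in Y$ (first applied to $f_1 d_1$, then, working in the corner $(1_\mathcal N - p_1)\mathcal N(1_\mathcal N - p_1)$ — which is again type~$\mathrm{II}_1$ factorial with CPoU, cf.\ Proposition~\ref{prop:CPoU-quotient} after passing to the appropriate compression — produce $p_2$, and so on; at the last stage $\sum_k p_k = 1_\mathcal N$ automatically because the traces add up to $1$ and $Y$ is a faithful set of traces on $\mathcal N$).

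Next I would split each $p_k$ into $d_k$ mutually orthogonal equivalent subprojections. Since $\tau(p_k)/d_k$ defines, for each $\tau \in Y$, a continuous affine function, I apply Theorem~\ref{thm:existence-projections} inside $p_k \mathcal N p_k$ (again type~$\mathrm{II}_1$ factorial with CPoU by Proposition~\ref{prop:CPoU-quotient} and a compression argument) to split off a subprojection $q_k$ of $p_k$ with $\tau(q_k) = \tau(p_k)/d_k$ for all $\tau\in Y$; iterating and using Theorem~\ref{thm:comparison}\ref{comparison2} in the relevant corners, I obtain mutually orthogonal projections $q_k^{(1)},\dots,q_k^{(d_k)}$ with sum $p_k$ and all of the same trace on every $\tau \in Y$. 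By Theorem~\ref{thm:comparison}\ref{comparison1} (applied within $\mathcal N$) there are partial isometries $v_k^{(i)} \in \mathcal N$ with $(v_k^{(i)})^* v_k^{(i)} = q_k^{(1)}$ and $v_k^{(i)} (v_k^{(i)})^* = q_k^{(i)}$, and $v_k^{(1)} = q_k^{(1)}$. Setting $f_{i,j}^{(k)} \coloneqq v_k^{(i)} (v_k^{(j)})^*$ gives a full system of matrix units in $\mathcal N$ matching $F$, hence a unital $^*$-homomorphism $\phi \colon F \rightarrow \mathcal N$ with $\phi(e_{i,j}^{(k)}) = f_{i,j}^{(k)}$.

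Finally I would verify the tracial identity. For any $\tau \in Y$, the functional $x \mapsto \tau(\phi(x))$ is a trace on $F$, so it is determined by its values on the $e_{1,1}^{(k)}$; and $\tau(\phi(e_{1,1}^{(k)})) = \tau(q_k^{(1)}) = \tau(p_k)/d_k = f_k(\tau) = \gamma(\tau)(e_{1,1}^{(k)})$. Hence $\tau \circ \phi = \gamma(\tau)$ for all $\tau \in Y$, as required. The main obstacle is the repeated passage to corners: I need to know that compressions $p\mathcal N p$ of a type~$\mathrm{II}_1$ factorial tracially complete $C^*$-algebra with CPoU (for a projection $p$ of strictly positive trace on all of $Y$) inherit all three properties — factoriality, CPoU, and type~$\mathrm{II}_1$. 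Factoriality and the type are routine, and CPoU for the corner should follow from a straightforward adaptation of the proof of Proposition~\ref{prop:CPoU-quotient} (or one can instead work with $F \otimes \mathcal N$-style amplifications and use Proposition~\ref{prop:matrix-CPoU} to avoid corners altogether, which may be cleaner). I would also double-check the boundary case of the last projection $p_m$: orthogonality of the $p_k$ and the identity $\sum_k d_k f_k = 1$ force $\sum_k p_k$ to have trace $1$ on every $\tau \in Y$, so $1_\mathcal N - \sum_k p_k$ is a projection of trace zero on a faithful set of traces, hence zero.
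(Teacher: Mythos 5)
Your overall strategy is the right one—reduce to the classification of projections via Theorems~\ref{thm:comparison} and~\ref{thm:existence-projections}, build the diagonal, then use comparison to produce the off-diagonal matrix units, and check the tracial identity on the $e_{1,1}^{(k)}$ alone. The final verification paragraph is correct. But the construction as written has a genuine gap: every stage of it (producing $p_2$ inside $(1_{\mathcal N}-p_1)\mathcal N(1_{\mathcal N}-p_1)$, splitting $p_k$ inside $p_k\mathcal N p_k$) passes to a hereditary corner and asserts that the corner is again a type II$_1$ factorial tracially complete $C^*$-algebra with CPoU. None of this is established in the paper, and the result you cite for it, Proposition~\ref{prop:CPoU-quotient}, is about a different operation entirely: it concerns the tracial completion with respect to a closed \emph{face} of traces (a ``quotient''), not a compression by a projection. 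For a corner one must first check that $X_p\coloneqq\{\tau(p\,\cdot\,p)/\tau(p):\tau\in X\}$ is compact and convex and that the corner's unit ball is complete (manageable, since $\tau(p)$ is bounded below on $X$), then that $X_p$ is a face in $T(p\mathcal N p)$, and finally that CPoU passes to the corner—the last of these is the real issue, since the trace inequality in the corner reads $\sup_\tau\min_i\big(\tau(a_i)-\delta\tau(p)\big)<0$ and the partition of unity must be found \emph{under} $p$; the only tools in the paper of this ``relative to $q$'' form (Proposition~\ref{prop:ZeroDimBoundary}, Theorem~\ref{thm:weak-CPoU}) either assume a totally disconnected boundary or yield only the weak form with non-orthogonal contractions. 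So ``straightforward adaptation'' is an overstatement; this would be a lemma needing its own proof.

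The paper avoids corners altogether, and the fix is worth internalising: to build mutually orthogonal projections $p_1,\dots,p_n$ with $\tau(p_i)=\gamma(\tau)(e_{i,i}^{(k)})$, one applies Theorem~\ref{thm:existence-projections} in $\mathcal N$ itself to get \emph{some} projection $p'_{k+1}$ with the prescribed trace, observes that $\tau(p'_{k+1})\leq\tau\big(1_{\mathcal N}-\sum_{i\le k}p_i\big)$ for all $\tau\in Y$, and then uses Murray--von Neumann subequivalence (Theorem~\ref{thm:comparison}) to replace $p'_{k+1}$ by an equivalent projection orthogonal to the ones already constructed. This needs only the two global classification theorems as stated. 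Your alternative suggestion of working with matrix amplifications and Proposition~\ref{prop:matrix-CPoU} could also be made to work, but as it stands it is only a remark; either route needs to be carried out before the proof is complete.
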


\begin{proof}
	We first assume that $F$ is commutative and let $e_1, \ldots, e_m$ denote the minimal projections of $F$.  It suffices to construct pairwise orthogonal projections $p_1, \ldots, p_n \in \mathcal N$ such that $\tau(p_i) = \gamma(\tau)(e_i)$ for all $\tau \in Y$ and $i = 1, \ldots, m$ since we may then define a $^*$-homomorphism $\phi \colon F \rightarrow \mathcal N$ by $\phi(e_i) \coloneqq p_i$.  Further, since $\tau(\phi(1_F)) = 1$ for all $\tau \in Y$, we will have $\phi(1_F) = 1_\mathcal N$.
	
	The existence of $p_1$ follows immediately from Theorem~\ref{thm:existence-projections}.  Assuming $p_1, \ldots, p_k$ have been constructed some some $k < m$, Theorem~\ref{thm:existence-projections} provides a projection $p'_{k+1} \in \mathcal N$ such that $\tau(p'_{k+1}) = \gamma(\tau)(e_{k+1})$ for all $\tau \in Y$.  Note that
	\begin{equation}
		\tau(p'_{k+1}) = \gamma(\tau)(e_{k+1}) \leq 1 - \sum_{i=1}^k \gamma(\tau)(e_i) = \tau\Big(1_\mathcal N - \sum_{i=1}^k p_i \Big)
	\end{equation}
	for all $\tau \in Y$.  Then Theorem~\ref{thm:comparison} implies there is a partial isometry $v$ with $p'_{k+1} = v^*v$ and such that $p_{k+1} \coloneqq vv^*$ is orthogonal to each of $p_1, \ldots p_k$.  This completes the proof when $F$ is commutative.

	Next consider a general finite dimensional $C^*$-algebra $F$ with a system of matrix units 
	\begin{equation}
		\big(e_{i, j}^{(k)}\big)_{1 \leq i, j \leq d_k}^{1 \leq k \leq m} \subseteq F.
	\end{equation}
	By the first part of the proof, there are mutually orthogonal projections $p_{i}^{(k)} \in \mathcal N$ for $k = 1, \ldots, m$ and $i = 1\ldots, d_k$ such that
	\begin{equation}
		\tau(p_{i}^{(k)}) = \gamma(\tau)\big(e_{i,i}^{(k)}\big) \qquad \text{for all } \tau \in Y.
	\end{equation}
	
	By Theorem~\ref{thm:comparison}, for each $k = 1, \ldots, m$ and $i = 1\ldots, d_k$, there is a partial isometry $v_{i}^{(k)} \in \mathcal N$ such that 
	\begin{equation}
		v_{i}^{(k)}{}^* v_{i}^{(k)} = p_{i}^{(k)} \qquad \text{and} \qquad v_{i}^{(k)} v_{i}^{(k)}{}^* = p_{1}^{(k)}.
	\end{equation}
	Then we may define a $^*$-homomorphism $\phi \colon F \rightarrow \mathcal N$ by
	\begin{equation}
		\phi\big(e_{i, j}^{(k)}\big) \coloneqq v_{i}^{(k)}{}^*v_{j}^{(k)}
	\end{equation}
	for all $k = 1, \ldots, m$ and $i, j = 1, \ldots, d_k$  Then $\tau \circ \phi = \gamma(\tau)$ for all $\tau \in Y$, and since $\tau(\phi(1_F)) = 1$ for all $\tau \in Y$, we have $\phi(1_F) = 1_\mathcal N$.
\end{proof}

We now work towards an existence result for constructing morphisms $A \rightarrow \mathcal N^\omega$ with prescribed tracial data.  One of the most influential embedding results is Connes' theorem from \cite{Co76} that every separably acting injective II$_1$ factor embeds into $\mathcal R^\omega$.\footnote{This statement does not appear explicitly in \cite{Co76}, but follows by Lemma 5.2 and $7 \Rightarrow 6$ of Theorem~5.1. Condition 7 of Theorem 5.1 is verified by composing the trace on $\mathcal N$ with a conditional expectation onto $\mathcal N$.}  This is a key ingredient in showing that such factors are isomorphic to $\mathcal R$.  In the same paper, Connes poses his eponymous embedding problem asking if every separably acting II$_1$ factor embeds into $\mathcal R^\omega$ (see the paragraph above \cite[Notation~5.6]{Co76}).  Note that since every II$_1$ factor contains $\mathcal R$, we have that every II$_1$ factor which embeds into $\mathcal R^\omega$ also embeds into $\mathcal N^\omega$ for each II$_1$ factor $\mathcal N$.  

Our embedding result will be along these lines, showing that if $A$ is separable $C^*$-algebra and $(\mathcal N, Y)$ is a type~{\rm II}$_1$ factorial tracially complete $C^*$-algebra with CPoU, then there is a $^*$-homomorphism $A \rightarrow \mathcal N^\omega$ with prescribed tracial data in the case that all the relevant traces on $A$ factorise through $\mathcal R^\omega$.  We first provide the following local characterisation of traces factorising through $\mathcal R^\omega$.

\begin{definition}[{cf.\ \cite[Remark~3.7]{BS16}}]\label{defn:hyperlinear}
	A trace $\tau$ on a $C^*$-algebra $A$ is \emph{hyperlinear} if there is a net of self-adjoint linear maps $(\psi_\lambda \colon A \rightarrow M_{d(\lambda)})$ such that for all $a, b \in A$,
	\begin{enumerate}
		\item $\|\psi_\lambda(ab) - \psi_\lambda(a) \psi_\lambda(b) \|_2 \rightarrow 0$,\label{defn:hyperlinear-mult}
		\item 
		$\mathrm{tr}_{d(\lambda)}(\psi_\lambda(a))\rightarrow \tau(a)$, and\label{defn:hyperlinear-tracial}
		\item\label{hyp-bdd} $\displaystyle \limsup_\lambda \|\psi_\lambda(a)\| < \infty$.\label{defn:hyperlinear-bdd}
	\end{enumerate}
\end{definition}

For applications to classification, the hyperlinear traces of interest will be amenable, so that the maps $\psi_i$ can be taken to be c.p.c.  However, we will prove the existence theorem in terms of hyperlinear traces since it takes little extra effort.

\begin{remark}
	We collect here a few general observations about hyperlinear traces.
\begin{enumerate}
	\item As usual, when $A$ is unital, we may arrange for each $\psi_\lambda$ to be unital, and when $A$ is separable, we can arrange for the net $(\psi_\lambda)$ to be a sequence.
	\item If $A$ is separable, we choose a sequence $(\psi_n)_{n=1}^\infty$ as above and view each $M_{d(n)}$ as a unital subalgebra of $\mathcal R$.  In this way, the $\psi_n$ induce a $^*$-homomorphism $\psi \colon A \rightarrow \mathcal R^\omega$ with $\tr_{\mathcal R^\omega} \circ \psi = \tau$.  Note that of Definition~\ref{defn:hyperlinear}\ref{hyp-bdd} is needed to guarantee that $\psi$ is well-defined.
	\item If $G$ is a discrete group, then $G$ is hyperlinear if and only if the canonical trace on the reduced group $C^*$-algebra $C^*_\lambda(G)$ is hyperlinear.  This is the source for the terminology.
	\item  Definition~\ref{defn:hyperlinear}\ref{hyp-bdd} is equivalent to 
	\begin{equation}\label{eq:approx-contractive}
		\limsup_\lambda \|\psi_\lambda(a) \| \leq \|a\|, \qquad a \in A.
	\end{equation}
	Indeed, if $\Lambda$ is the index set of the net and we view each $M_{d(\lambda)}$ as a subalgebra of $\mathcal R$, , then the $\psi_\lambda$ induce a $^*$-homomorphism $\psi \colon A \rightarrow \ell^\infty(\Lambda, \mathcal R) / c_0(\Lambda, \mathcal R)$, where $c_0(\Lambda, \mathcal R)$ consists of all bounded strongly null nets.  As $^*$-homomorphisms are contractive, \eqref{eq:approx-contractive} follows.
\end{enumerate}
\end{remark}

The following is our main existence result for $^*$-homomorphisms, although in the classification theorem, it will be accessed through Corollary~\ref{cor:existence}, which gives a restatement in terms of reduced products when the domain is separable.  We keep track of the additional details regarding factorisations through finite dimensional $C^*$-algebras as it is conceivable that these will play a role, for example, in subsequent nuclear dimension calculations.

\begin{theorem}\label{thm:existence}
	Suppose $A$ is a $C^*$-algebra, $(\mathcal N, Y)$ is a type {\rm II}$_1$ factorial tracially complete $C^*$-algebra with CPoU, and $\gamma \colon Y \rightarrow T(A)$ is a continuous affine map such that $\gamma(\tau)$ is hyperlinear for all $\tau \in Y$.  For every finite set $\mathcal F \subseteq A$ and $\epsilon > 0$, there are a finite dimensional $C^*$-algebra $F$, a self-adjoint linear map $\psi \colon A \rightarrow F$, and a unital $^*$-homomorphism $\phi \colon F \rightarrow \mathcal N$ such that for all $a, b \in \mathcal F$,
	\begin{enumerate}
		\item $\|\psi(ab) - \psi(a)\psi(b) \|_{2, T(F)} < \epsilon$,
		\item $| \tau(\phi(\psi(a))) - \gamma(\tau)(a) | < \epsilon$ for all $\tau \in Y$, and
		\item $\|\psi(a)\| < \|a\| + \epsilon$.
	\end{enumerate}
Moreover, if $A$ is unital, we may arrange for $\psi$ to be unital, if $\gamma(\tau)$ is amenable for all $\tau \in Y$, we may arrange for $\psi$ to be c.p.c., and if $\gamma(\tau)$ is quasidiagonal for all $\tau \in Y$, we may arrange for
	\begin{enumerate}
		\item[\rm (i$'$)] $\|\psi(ab) - \psi(a)\psi(b)\| < \epsilon$
	\end{enumerate}
for all $a, b \in \mathcal F$.  Finally, the last three statements can be performed simultaneously.
\end{theorem}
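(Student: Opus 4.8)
The plan is to combine the finite dimensional approximation of the simplex $Y$ (Theorem~\ref{thm:simplex-is-nuclear}) with hyperlinearity of $\gamma(\tau)$ at the finitely many traces that actually arise, and then transport the resulting affine data into $\mathcal N$ using Lemma~\ref{lem:finite-dim-existence}. Since $\mathcal N$ is unital (Proposition~\ref{prop:unital}) and $(\mathcal N,Y)$ is factorial, $Y$ is a closed face of the Choquet simplex $T(\mathcal N)$ (Theorem~\ref{thm:traces-choquet}), hence itself a Choquet simplex. Fixing $\mathcal F$ and $\epsilon>0$, I would first observe that for self-adjoint $a$ the function $\sigma\mapsto\gamma(\sigma)(a)$ lies in $\mathrm{Aff}(Y)$ (and is a sum of two such functions for general $a$). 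Applying Theorem~\ref{thm:simplex-is-nuclear} to this finite family of affine functions, I would fix a finite dimensional Choquet simplex $Z$ and continuous affine maps $Y\overset{\beta}{\longrightarrow}Z\overset{\alpha}{\longrightarrow}Y$ with $|\gamma(\alpha(\beta(\tau)))(a)-\gamma(\tau)(a)|<\epsilon/3$ for all $\tau\in Y$ and $a\in\mathcal F$. Writing $z_0,\dots,z_n$ for the extreme points of $Z$ and $s_0,\dots,s_n\in\mathrm{Aff}(Y)$ for the compositions with $\beta$ of the barycentric coordinate functions on $Z$, so that $\beta(\tau)=\sum_j s_j(\tau)z_j$ with $s_j\geq 0$ and $\sum_j s_j\equiv 1$.

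Next I would build $F$ and $\psi$. Set $\rho_j\coloneqq\gamma(\alpha(z_j))\in T(A)$; since $\alpha(z_j)\in Y$, each $\rho_j$ is hyperlinear (respectively amenable, respectively quasidiagonal) by hypothesis. For each $j$ I would choose, far enough along the net witnessing hyperlinearity of $\rho_j$, a self-adjoint linear map $\psi_j\colon A\to M_{d_j}$ (unital if $A$ is, and u.c.p.\ if $\rho_j$ is amenable) with
\[ \|\psi_j(ab)-\psi_j(a)\psi_j(b)\|_{2,\mathrm{tr}_{d_j}}<\epsilon,\quad |\mathrm{tr}_{d_j}(\psi_j(a))-\rho_j(a)|<\epsilon/3,\quad \|\psi_j(a)\|<\|a\|+\epsilon \]
for all $a,b\in\mathcal F$, and, if $\rho_j$ is quasidiagonal, additionally $\|\psi_j(ab)-\psi_j(a)\psi_j(b)\|<\epsilon$; the last norm bound uses the boundedness condition in Definition~\ref{defn:hyperlinear}, and since there are only finitely many indices $j$ all of these can be arranged simultaneously. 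Then I would put $F\coloneqq\bigoplus_{j=0}^n M_{d_j}$ and $\psi\coloneqq\bigoplus_j\psi_j\colon A\to F$. Because the extreme traces of $F$ are the $\mathrm{tr}_{d_j}$ supported on single summands, one has $\|\bigoplus_j y_j\|_{2,T(F)}=\max_j\|y_j\|_{2,\mathrm{tr}_{d_j}}$, so conditions (i), (iii), and (in the quasidiagonal case) (i$'$) follow from the above estimates on the finitely many blocks; moreover $T(F)$ is affinely homeomorphic to $Z$ via $\sum_j\mu_j\mathrm{tr}_{d_j}\leftrightarrow\sum_j\mu_j z_j$, so $\psi$ should be viewed as an approximate factorisation of $\gamma$ through $T(F)$.

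Finally I would produce $\phi$. The assignment $\tau\mapsto\sum_{j=0}^n s_j(\tau)\mathrm{tr}_{d_j}$ is a continuous affine map $Y\to T(F)$, so — using that $(\mathcal N,Y)$ is a type~$\mathrm{II}_1$ factorial tracially complete $C^*$-algebra with CPoU — Lemma~\ref{lem:finite-dim-existence} provides a unital $^*$-homomorphism $\phi\colon F\to\mathcal N$ with $\tau\circ\phi=\sum_j s_j(\tau)\mathrm{tr}_{d_j}$ for every $\tau\in Y$. Then for $\tau\in Y$ and self-adjoint $a\in\mathcal F$ one computes $\tau(\phi(\psi(a)))=\sum_j s_j(\tau)\,\mathrm{tr}_{d_j}(\psi_j(a))$, which is within $\epsilon/3$ of $\sum_j s_j(\tau)\rho_j(a)=\gamma(\alpha(\beta(\tau)))(a)$, which in turn is within $\epsilon/3$ of $\gamma(\tau)(a)$; after re-scaling the initial tolerances to absorb the extra factor introduced by splitting non-self-adjoint elements into real and imaginary parts, this yields (ii). The "moreover" clauses are then read off the construction: taking the $\psi_j$ unital makes $\psi$ unital when $A$ is unital; taking them u.c.p.\ makes $\psi$ c.p.c.\ when every $\rho_j$ is amenable; and these combine with the quasidiagonal conclusion (i$'$) because a quasidiagonal trace is amenable and is witnessed by the same u.c.p.\ maps.

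I expect the substantive content to lie entirely in the two results invoked: Theorem~\ref{thm:simplex-is-nuclear} supplies the finite dimensional approximation of $Y$, and Lemma~\ref{lem:finite-dim-existence} — which itself rests on the CPoU-driven classification of projections in $\mathcal N$ (Theorems~\ref{thm:comparison} and~\ref{thm:existence-projections}) — transports the affine data into $\mathcal N$. The only real care needed in the remaining argument is organizational: keeping straight the three independent sources of error (the simplex approximation, the trace-matching inherent in hyperlinearity, and the real/imaginary decomposition of non-self-adjoint elements) and verifying that the uniform $2$-norm over $T(F)$ genuinely collapses to a maximum over the finitely many matrix blocks, so that approximate multiplicativity of each $\psi_j$ individually suffices for (i). No single step should be difficult once Theorem~\ref{thm:simplex-is-nuclear} and Lemma~\ref{lem:finite-dim-existence} are in hand.
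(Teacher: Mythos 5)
Your proposal is correct and follows essentially the same route as the paper's proof: approximate $\gamma$ through a finite dimensional simplex via Theorem~\ref{thm:simplex-is-nuclear}, realise the extreme traces by approximately multiplicative maps into matrix algebras using hyperlinearity, and transport the resulting map $Y \to T(F)$ into $\mathcal N$ via Lemma~\ref{lem:finite-dim-existence}. Your bookkeeping (the explicit $\epsilon/3$ splitting and the real/imaginary decomposition for non-self-adjoint elements of $\mathcal F$) is if anything slightly more careful than the paper's.
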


\begin{proof}
	Since $Y$ is a closed face in $T(\mathcal N)$, we have that $Y$ is a Choquet simplex (Theorem~\ref{thm:traces-choquet}).  Applying  Theorem~\ref{thm:simplex-is-nuclear} to the continuous affine maps $\mathrm{ev}_a\circ\gamma$ for $a\in \mathcal F$, there is a finite dimensional Choquet simplex $Z$ together with continuous affine maps $\alpha' \colon Z \rightarrow Y$ and $\beta \colon Y \rightarrow Z$ so that
	\begin{alignat}{2}
		|\gamma(\alpha'(\beta(\tau)))(a) - \gamma(\tau)(a) | &< \epsilon, & \hspace{4ex} &a \in \mathcal F,\ \tau\in Y.
	\intertext{Define $\alpha \coloneqq \gamma \circ \alpha' \colon Z \rightarrow T(A)$ so that $\alpha(\tau)$ is hyperlinear for all $\tau \in Z$ and }
		|\alpha(\beta(\tau))(a) - \gamma(\tau)(a) | &< \epsilon, & &a \in \mathcal F,\ \tau\in Y.
	\end{alignat}
	
	Let $\rho_1, \ldots, \rho_n \in Z$ denote the extreme points of $Z$.  For $i = 1, \ldots, n$, since $\alpha(\rho_i) \in T(A)$ is hyperlinear (by the assumption on $\gamma$), there are $d_i \in \mathbb N$ and $^*$-linear maps $\psi_i \colon A \rightarrow M_{d_i}$ such that for all $a, b \in \mathcal F$,
	\begin{equation}\begin{split}
		\|\psi_i(ab) - \psi_i(a)\psi_i(b)\|_{2, \tr_{d_i}} &< \epsilon,  \\
		|\mathrm{tr}_{d_i}(\psi_i(a)) - \alpha(\rho_i)(a)| &< \epsilon, \quad \text{and} \\
		\|\psi_i(a)\| - \|a\|&< \epsilon.\label{eqn:existence:neweqn}
	\end{split}\end{equation}

	Define $F \coloneqq \bigoplus_{i=1}^n M_{d_i}$ and let $\psi\coloneqq\bigoplus_{i=1}^n\phi_i\colon A\to F$. We identify $Z$ with $T(F)$ via the affine map given by
	\begin{equation}
		Z \rightarrow T(F) \colon \rho_i \mapsto \mathrm{tr}_{d_i}\circ \pi_i,
	\end{equation}
where $\pi_i \colon F \rightarrow M_{d_i}$ denotes the projection map for $i = 1, \ldots, n$.   
	By Lemma~\ref{lem:finite-dim-existence}, there is a unital $^*$-homomorphism $\phi \colon F \rightarrow \mathcal N$ such that $\tau \circ \phi = \beta(\tau)$ for all $\tau \in Y$.  Then all three conditions in the theorem follow from the corresponding conditions in \eqref{eqn:existence:neweqn}.
	
	The additional claims in the theorem follow by choosing the $\psi_i$ with the appropriate properties in the second paragraph of the proof.
	\end{proof}

\begin{corollary}
	\label{cor:existence}
	Suppose $A$ is a separable unital $C^*$-algebra and $(\mathcal N, Y)$ is a type~{\rm II}$_1$ factorial tracially complete $C^*$-algebra with CPoU.  Given a continuous affine map $\gamma \colon Y \rightarrow T(A)$ such that $\gamma(\tau)$ is hyperlinear for all $\tau \in Y$, there is a unital $^*$-homomorphism $\theta \colon A \rightarrow \mathcal N^\omega$ such that $\tau \circ \theta = \gamma(\tau|_\mathcal N)$ for all $\tau \in Y^\omega$.
\end{corollary}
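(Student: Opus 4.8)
The plan is to derive Corollary~\ref{cor:existence} from the approximate existence result Theorem~\ref{thm:existence} by a standard Kirchberg $\epsilon$-test argument, exploiting that the reduced power $(\mathcal N^\omega, Y^\omega)$ is again a type~II$_1$ factorial tracially complete $C^*$-algebra with CPoU (by Corollary~\ref{cor:factorial-ultrapower}\ref{cor:factorial-ultrapower:CPoU}, noting that $\mathcal N^\omega$ remains type~II$_1$ since each fibre contains $\pi_\tau(\mathcal N)''$ as a unital subalgebra and hence is type~II$_1$). First I would fix a sequence $\mathcal F_1 \subseteq \mathcal F_2 \subseteq \cdots$ of finite subsets of $A$ whose union is $\|\cdot\|$-dense, and enumerate it as $\{a_1, a_2, \dots\}$ with $a_k \in \mathcal F_k$. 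For each $n$, apply Theorem~\ref{thm:existence} with $\mathcal F \coloneqq \mathcal F_n$, $\epsilon \coloneqq 1/n$, and $(\mathcal N, Y)$ itself as the target, to obtain a finite dimensional $C^*$-algebra $F_n$, a self-adjoint linear map $\psi_n \colon A \to F_n$, and a unital $^*$-homomorphism $\phi_n \colon F_n \to \mathcal N$; set $\theta_n \coloneqq \phi_n \circ \psi_n \colon A \to \mathcal N$, which is unital, satisfies $\|\theta_n(ab) - \theta_n(a)\theta_n(b)\|_{2, Y} < 1/n$ for $a, b \in \mathcal F_n$ (using that $\phi_n$ is trace-preserving into $Y$ so it is $\|\cdot\|_{2,T(F_n)}$-$\|\cdot\|_{2,Y}$-contractive), has $|\tau(\theta_n(a)) - \gamma(\tau)(a)| < 1/n$ for all $\tau \in Y$ and $a \in \mathcal F_n$, and has $\|\theta_n(a)\| \leq \|a\| + 1/n$.

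Next I would assemble the $\theta_n$ into a map $\theta \colon A \to \mathcal N^\omega$ by sending $a$ to the class of $(\theta_n(a))_{n=1}^\infty$; this is well-defined since each $\theta_n$ is bounded with $\|\theta_n(a)\| \leq \|a\| + 1/n$, and it is $\|\cdot\|$-continuous. To see $\theta$ is a $^*$-homomorphism, observe that for fixed $a, b$ in the dense union $\bigcup_n \mathcal F_n$ we have $a, b \in \mathcal F_n$ for all large $n$, so $\limsup_{n \to \omega} \|\theta_n(ab) - \theta_n(a)\theta_n(b)\|_{2, Y} = 0$ and similarly $\theta_n$ is asymptotically self-adjoint and linear along $\bigcup_n \mathcal F_n$; Lemma~\ref{lem:2norm-reducedproduct} then gives multiplicativity and self-adjointness of $\theta$ on this dense set, and a routine $3\epsilon$-argument using $\|\cdot\|$-continuity of $\theta$ and the submultiplicativity of $\|\cdot\|_{2, Y^\omega}$ extends this to all of $A$. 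Unitality of $\theta$ is immediate since each $\theta_n$ is unital. For the tracial condition: any $\tau \in Y^\omega$ is a $\|\cdot\|_{2,Y^\omega}$-continuous trace on $\mathcal N^\omega$, and since limit traces are dense in $Y^\omega$ (by definition of $\sum^\omega$ as a closed convex hull) and the condition $\tau \circ \theta = \gamma(\tau|_{\mathcal N})$ is weak$^*$-closed and affine in $\tau$, it suffices to verify it on limit traces. For a limit trace $\tau$ given by a sequence $(\tau_n)_{n=1}^\infty$ in $Y$ and an ultrafilter $\omega' \supseteq \omega$, and for $a$ in the dense union, $\tau(\theta(a)) = \lim_{n \to \omega'} \tau_n(\theta_n(a)) = \lim_{n \to \omega'} \gamma(\tau_n)(a)$, while $\tau|_{\mathcal N}$ is the limit trace $\lim_{n \to \omega'} \tau_n$ on $\mathcal N$, so $\gamma(\tau|_{\mathcal N})(a) = \lim_{n \to \omega'} \gamma(\tau_n)(a)$ by continuity of $\gamma$ and of evaluation at $a$; thus $\tau(\theta(a)) = \gamma(\tau|_{\mathcal N})(a)$ on a $\|\cdot\|$-dense set, hence everywhere by continuity.

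I do not anticipate a serious obstacle here; the work is purely bookkeeping, and the one point requiring a little care is the passage from ``$\theta_n$ is approximately multiplicative on $\mathcal F_n$'' to ``$\theta$ is exactly multiplicative on $A$'', which needs the density of $\bigcup_n \mathcal F_n$ together with the fact that $\theta$ is genuinely $\|\cdot\|$-continuous (not merely $\|\cdot\|_{2}$-continuous) — this continuity is what the bound $\|\theta_n(a)\| \leq \|a\| + 1/n$ buys us. An alternative, slightly slicker route would be to invoke Kirchberg's $\epsilon$-test (Lemma~\ref{lem:EpsTest}) directly with test functions measuring approximate multiplicativity, self-adjointness, linearity on a countable dense $\ast$-subalgebra, and the trace discrepancy against $\gamma$ evaluated at a countable dense set of limit traces; this produces the representing sequence in one stroke and sidesteps the manual $3\epsilon$-argument. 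Either way the heart of the matter is entirely contained in Theorem~\ref{thm:existence} and Corollary~\ref{cor:factorial-ultrapower}, which have already done the real work.
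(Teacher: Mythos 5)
Your proposal is correct and follows essentially the same route as the paper: apply Theorem~\ref{thm:existence} along an exhausting sequence of finite sets with tolerances $1/n$, assemble the resulting approximate morphisms into a $^*$-homomorphism into $\mathcal N^\omega$, and verify the trace condition on limit traces. The only cosmetic difference is that the paper factors the construction through the reduced product $\prod^\omega\big(F_n, T(F_n)\big)$ and extends from a countable dense $\mathbb Q[i]$-subalgebra by contractivity, whereas you work directly with the compositions $\phi_n \circ \psi_n$ and run the $3\epsilon$-argument by hand; both are sound.
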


\begin{proof}
	Since $A$ is separable, we may choose a countable dense $\mathbb Q[i]$-subalgebra $A_0 \subseteq A$.  By Theorem~\ref{thm:existence}, there are sequences $(F_n)_{n=1}^\infty$ of finite dimensional $C^*$-algebras, $(\psi_n \colon A_0 \rightarrow F_n)_{n=1}^\infty$ of self-adjoint linear maps, and $(\phi_n \colon F_n \rightarrow \mathcal N)_{n=1}^\infty$ of unital $^*$-homomorphisms such that for all $a, b \in A_0$ and $\tau \in Y$,
	\begin{enumerate}
		\item $\|\psi_n(ab) - \psi_n(a)\psi_n(b)\|_{2, T(F_n)} \rightarrow 0$,
		\item $|\tau(\phi(\psi(a))) - \gamma(\tau)(a)| \rightarrow 0$, and
		\item $\displaystyle\limsup_{n \rightarrow \infty} \|\psi(a_n)\| \leq \|a\|$.
	\end{enumerate}
	The sequences $(\phi_n)_{n=1}^\infty$ and $(\psi_n)_{n=1}^\infty$ induce $^*$-homomorphisms
	\begin{equation}
		A_0 \xrightarrow{\psi} \prod^\omega \big(F_n, T(F_n)\big) \xrightarrow{\phi} \mathcal N^\omega.
	\end{equation}
	Define $\theta_0 \coloneqq \phi \circ \psi$.  By (iii), $\theta_0$ is contractive and hence extends to a $^*$-homomorphism $\theta \colon A \rightarrow \mathcal N^\omega$ with the required property.
\end{proof}

If the range of $\gamma$ is contained in the uniformly amenable traces on $A$, then $\theta$ will necessarily be tracially nuclear by Theorem~\ref{thm:amenable}.  In the next subsection, we will use this observation together with the uniqueness result in Corollary~\ref{cor:uniqueness} to strengthen this existence theorem -- namely, $\theta$ can be chosen to take values in $\mathcal N$ (Theorem~\ref{thm:classification-morphism}).

In case one wants a version of the above result where the domain of $\gamma$ is $Y^\omega$ (instead of $Y$), this comes in the classification result below (Theorem~\ref{thm:classification-morphism}\ref{classif1}), at least when $\gamma(Y^\omega)$ is contained in the amenable traces on $A$, by taking $(\mathcal N^\omega, Y^\omega)$ in place of $(\mathcal N,Y)$.

As a first application of Theorem~\ref{thm:existence}, we establish a $W^*$-bundle version of the Connes' embedding problem, assuming a positive solution in each fibre.
We may consider $\omega$ to be a free ultrafilter on $\mathbb N$ (as opposed to a general free filter as above) in the following.

\begin{corollary}\label{cor:ConnesEmbedding}
	Suppose $\mathcal M$ is a $W^*$-bundle over a compact metrisable space $K$ with separably acting fibres.  If each fibre of $\mathcal M$ admits a trace-preserving embedding into $\mathcal R^\omega$, then there is an embedding $\mathcal M \hookrightarrow C_\sigma(K, \mathcal R)^\omega$ which restricts to the identity on $C(K)$.
\end{corollary}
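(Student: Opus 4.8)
The plan is to realise the $W^*$-bundle $\mathcal{M}$ as a tracially complete $C^*$-algebra, embed it into a reduced power of $C_\sigma(K,\mathcal{R})$ by the existence theorem, and then correct the embedding so that it becomes $C(K)$-linear by invoking the uniqueness theorem. First I would view $\mathcal{M}$ as a tracially complete $C^*$-algebra $(\mathcal{M},X)$ as in Proposition~\ref{prop:WStarBundleToTC}, so that $X$ is a Bauer simplex canonically affinely homeomorphic to $\mathrm{Prob}(K)$ with $\partial_e X$ identified with $K$ via $x\mapsto\mathrm{ev}_x\circ E$. For the target I take $(\mathcal{N},Y)\coloneqq\big(C_\sigma(K,\mathcal{R}),Y\big)$, so that $C(K)\subseteq Z(\mathcal{N})$ and $Y\cong\mathrm{Prob}(K)$. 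This is a type~$\mathrm{II}_1$ factorial tracially complete $C^*$-algebra (its fibre being the $\mathrm{II}_1$ factor $\mathcal{R}$), and it is McDuff --- for instance because $C_\sigma(K,\mathcal{R})$ is the uniform tracial completion of the separable $\mathcal{Z}$-stable algebra $C(K)\otimes M_{2^\infty}$, so Proposition~\ref{prop:z-stable-mcduff} applies --- hence it has property $\Gamma$ by Proposition~\ref{prop:McDuff-implies-Gamma} and CPoU by Theorem~\ref{introthmgammaimpliescpou}. Since $K$ is metrisable and the fibres of $\mathcal{M}$ are separably acting, $\mathcal{M}$ is $\|\cdot\|_{2,X}$-separable, so I can pick a separable $\|\cdot\|_{2,X}$-dense $C^*$-subalgebra $A\subseteq\mathcal{M}$ containing $C(K)$; then $C(K)\subseteq Z(A)$ and, by Corollary~\ref{cor:dense-subalgebra}\ref{item:dense-subalg1}, $(\mathcal{M},X)\cong(\completion{A}{X_A},X_A)$ with $X_A$ a Bauer simplex affinely homeomorphic to $X$.

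The key input is that \emph{every} trace in $X_A$ is hyperlinear. For an extreme trace $\tau_x\coloneqq(\mathrm{ev}_x\circ E)|_A$ with $x\in K$, Proposition~\ref{prop:tracial-completion}\ref{item:completion-fibres} identifies $\pi_{\tau_x}(A)''$ with the fibre of $\mathcal{M}$ at $x$, which admits a trace-preserving embedding into $\mathcal{R}^\omega$ by hypothesis; hence $\tau_x$ is hyperlinear. For a general $\tau\in X_A$, I would use that the set of hyperlinear traces on $A$ is weak$^*$-closed and stable under finite convex combinations (pass to mutually orthogonal corners of $\mathcal{R}^\omega$ of prescribed traces, each isomorphic to $\mathcal{R}^\omega$), together with the fact that, $K$ being metrisable, $\tau$ --- the barycentre of a measure on $K=\partial_e X_A$ --- is a weak$^*$-limit of finitely supported barycentres, each of which is a finite convex combination of the $\tau_x$. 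Thus the canonical continuous affine map $\gamma\colon Y\cong\mathrm{Prob}(K)\cong X\to T(A)$ obtained by restriction of traces takes values in the hyperlinear traces on $A$ and is an affine homeomorphism onto $X_A$.

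Now Corollary~\ref{cor:existence} yields a unital $^*$-homomorphism $\theta\colon A\to\mathcal{N}^\omega$ with $\tau\circ\theta=\gamma(\tau|_\mathcal{N})$ for all $\tau\in Y^\omega$. Since $\theta^*(Y^\omega)\subseteq\gamma(Y)=X_A$, Corollary~\ref{cor:dense-subalgebra}\ref{item:dense-subalg3} extends $\theta$ uniquely to a morphism $\overline\theta\colon(\mathcal{M},X)\to(\mathcal{N}^\omega,Y^\omega)$, and because $\gamma$ is an affine homeomorphism onto $X_A$ a density argument shows $\overline\theta^*(Y^\omega)=X$, so $\overline\theta$ is an embedding of tracially complete $C^*$-algebras, in particular isometric in operator norm (Proposition~\ref{prop:embedding-isometric}). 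It remains to arrange that $\overline\theta$ restricts to the identity on $C(K)$. Both $\overline\theta|_{C(K)}$ and the canonical inclusion $\iota\colon C(K)\hookrightarrow C_\sigma(K,\mathcal{R})\hookrightarrow\mathcal{N}^\omega$ are $^*$-homomorphisms out of the nuclear algebra $C(K)$, hence tracially nuclear (Theorem~\ref{thm:amenable}), and a direct computation using $E_\mathcal{M}(f)=f=E_\mathcal{N}(f)$ for $f\in C(K)$ and the identifications $Y\cong\mathrm{Prob}(K)\cong X$ shows $\tau\circ\overline\theta|_{C(K)}=\tau\circ\iota$ for all $\tau\in Y^\omega$. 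By Corollary~\ref{cor:uniqueness} (valid since $C(K)$ is separable and $(\mathcal{N},Y)$ is factorial with CPoU), $\overline\theta|_{C(K)}$ and $\iota$ are unitarily equivalent by some unitary $u\in\mathcal{N}^\omega$; as $\mathrm{Ad}(u)$ is an automorphism of the tracially complete $C^*$-algebra $(\mathcal{N}^\omega,Y^\omega)$, the composite $\mathrm{Ad}(u)\circ\overline\theta$ is again an embedding $(\mathcal{M},X)\hookrightarrow(\mathcal{N}^\omega,Y^\omega)$, and it now restricts to the identity on $C(K)$, which is the desired embedding $\mathcal{M}\hookrightarrow C_\sigma(K,\mathcal{R})^\omega$.

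I expect the main obstacle to be the hyperlinearity step: one must upgrade hyperlinearity of the individual fibre traces to hyperlinearity of \emph{all} traces in the Bauer simplex $X_A$, which is genuinely needed because the proof of the existence theorem --- proceeding through factorisation over finite dimensional trace simplices --- uses hyperlinearity of non-extreme traces in the range of $\gamma$, not merely of the extreme ones. The correction of $C(K)$-linearity, by contrast, is clean once the uniqueness theorem is available, and the remaining bookkeeping (separability of $\mathcal{M}$, compatibility of the canonical identifications of $X$ and $Y$ with $\mathrm{Prob}(K)$, and extension by continuity) is routine.
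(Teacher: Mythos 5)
Your proof is correct and follows essentially the same route as the paper: realise both sides as tracially complete $C^*$-algebras over $\mathrm{Prob}(K)$, upgrade fibrewise hyperlinearity to all traces using that the hyperlinear traces form a weak$^*$-closed convex set, apply Corollary~\ref{cor:existence} to a separable dense subalgebra and extend by continuity, and invoke the uniqueness theorem (Corollary~\ref{cor:uniqueness}) to handle $C(K)$. The only cosmetic difference is at the end: rather than conjugating by the unitary $u$, the paper observes that since the copy of $C(K)$ is central in $\mathcal N^\omega$ the unitary equivalence already forces $\overline{\theta}|_{C(K)}$ to equal the canonical inclusion on the nose, so no correction is needed.
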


\begin{proof}
	As in Proposition~\ref{prop:WStarBundleToTC}, we may view $\mathcal N \coloneqq C_\sigma(K, \mathcal R)$ as a factorial tracially complete $C^*$-algebra $(\mathcal N, Y)$ where $Y \cong \mathrm{Prob}(K)$ is the set of traces given by integrating the trace on $\mathcal R$ over $K$.  It is easy to see that $\mathcal N$ is McDuff, and hence $(\mathcal N, Y)$ has CPoU by Theorem~\ref{introthmgammaimpliescpou}.  Similarly, we may view $\mathcal M$ as a tracially complete $C^*$-algebra $(\mathcal M, X)$ where $X \cong \mathrm{Prob}(K)$, where again a Radon probability measure on $K$ induces on trace on $\mathcal M$ by integrating the traces on the fibres of $\mathcal M$.  
	
	Let $\gamma \colon Y \rightarrow X$ be the continuous affine map induced by the identity map on $\mathrm{Prob}(K)$.  By hypothesis, $\gamma(\tau) \in X$ is hyperlinear for all $\tau \in K$.  It is easy to show that the hyperlinear traces on a unital $C^*$-algebra form a closed convex set, and hence $\gamma(\tau) \in X$ is hyperlinear for all $\tau \in Y$. Since $K$ is metrisable and each fibre of $\mathcal M$ is separably acting, we have that $\mathcal M$ is $\|\cdot\|_{2, X}$-separable.  Let $A \subseteq \mathcal M$ be a unital  $\|\cdot\|$-separable $C^*$-subalgebra that is $\|\cdot\|_{2,X}$-dense in $\M$.  
	
	By Corollary~\ref{cor:existence}, there is a $^*$-homomorphism $\theta_0 \colon A \rightarrow \mathcal N^\omega$ such that $\gamma(\tau)|_A = \tau \circ \theta_0$ for all $\tau \in Y^\omega$.  Then $\theta_0$ extends by continuity to a morphism $(\mathcal M, X) \rightarrow (\mathcal N^\omega, Y^\omega)$.  By construction, the inclusion map $C(K) \hookrightarrow \mathcal N^\omega$ and $\theta|_{C(K)} \colon C(K) \hookrightarrow \mathcal N^\omega$ agree on traces, and hence are unitarily equivalent by Corollary~\ref{cor:uniqueness}. 
 But since $C(K)$ is the centre of $\mathcal N$, the copy of $C(K)$ commutes with any unitary in $\mathcal  N^\omega$, and so $\theta|_{C(K)}$ agrees with the canonical inclusion $C(K) \hookrightarrow \mathcal N^\omega$. 
\end{proof}

\begin{question}
    Can Corollary \ref{cor:ConnesEmbedding} be improved to provide an embedding $\M\to C_\sigma(K,\mathcal R^\omega)$ (which restricts to the identity on $C(K)$)?
\end{question}
 
\subsection{Classification and consequences}\label{sec:classification-theorems}

The existence and uniqueness results for morphisms in the previous two subsections together can be combined with tracially complete versions of standard intertwining arguments to produce our main classification theorems.  First, we use the intertwining via reparameterisation technique (Theorem~\ref{thm:reparameterisation}) to obtain a classification result for tracially nuclear morphisms.  The second part of Theorem~\ref{IntroThmClassMap} from the overview follows from part~\ref{alg-classif2} below, with the first part of Theorem~\ref{IntroThmClassMap} having already been proven in Theorem~\ref{thm:amenable}.

\begin{theorem}
	\label{thm:classification-morphism}
Let $(\mathcal N,Y)$ be a  a type~{\rm II}$_1$ factorial tracially complete $C^*$-algebra with CPoU.
	\begin{enumerate}
		\item\label{classif1} Let $A$ be a separable $C^*$-algebra. If $\gamma \colon Y \rightarrow T(A)$ is a continuous affine map such that $\gamma(\tau)$ is uniformly amenable for all $\tau \in Y$, then there is a tracially nuclear $^*$-homomorphism $\phi \colon A \rightarrow \mathcal N$ such that \mbox{$\tau \circ \phi = \gamma(\tau)$} for all $\tau \in Y$, and this $\phi$ is unique up to approximate unitary equivalence.
		\item \label{classif2} Let $(\M,X)$ be a $\|\cdot\|_{2,X}$-separable tracially complete $C^*$-algebra. If $\gamma \colon Y \rightarrow X$ is a continuous affine function such that $\gamma(\tau)$ is uniformly amenable for all $\tau \in Y$, then there is a tracially nuclear $^*$-homomorphism $\phi \colon (\mathcal M, X) \rightarrow (\mathcal N, Y)$ such that $\tau \circ \phi = \gamma(\tau)$ for all $\tau \in Y$, and this $\phi$ is unique up to approximate unitary equivalence.
  \item\label{classif3} Let $(\M,X)$ be a $\|\cdot\|_{2,X}$-separable {}amenable tracially complete $C^*$-algebra. If $\gamma \colon Y \rightarrow X$ is a continuous affine function, then there is a tracially nuclear $^*$-homomorphism $\phi \colon (\mathcal M, X) \rightarrow (\mathcal N, Y)$ such that $\tau \circ \phi = \gamma(\tau)$ for all $\tau \in Y$, and this $\phi$ is unique up to approximate unitary equivalence.
	\end{enumerate}
\end{theorem}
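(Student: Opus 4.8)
The plan is to establish uniqueness and existence separately, derive parts~\ref{classif2} and~\ref{classif3} from part~\ref{classif1}, and prove part~\ref{classif1} by feeding the existence result into a reduced power (Corollary~\ref{cor:existence}) through the uniqueness result for reduced powers (Corollary~\ref{cor:uniqueness}) and the intertwining-by-reparameterisation technique (Theorem~\ref{thm:reparameterisation}). For uniqueness in part~\ref{classif1}, Theorem~\ref{thm:uniqueness} already gives exactly what is wanted, since $(\mathcal N, Y)$ is factorial with CPoU. For uniqueness in parts~\ref{classif2} and~\ref{classif3}, given two tracially nuclear $^*$-homomorphisms $\phi,\psi\colon(\mathcal M,X)\to(\mathcal N,Y)$ with $\tau\circ\phi=\tau\circ\psi$ for all $\tau\in Y$, I would fix a $\|\cdot\|$-separable $\|\cdot\|_{2,X}$-dense $C^*$-subalgebra $A\subseteq\mathcal M$; then $\phi|_A$ and $\psi|_A$ are tracially nuclear by Lemma~\ref{lem:semidiscrete-dense} and agree on $Y$, so Theorem~\ref{thm:uniqueness} provides a net of unitaries in $\mathcal N$ conjugating one into the other on $A$, and a standard $3\epsilon$-argument (using $\|\cdot\|_{2,X}$-$\|\cdot\|_{2,Y}$-contractivity of $\phi,\psi$, the $\|\cdot\|_{2,Y}$-isometry of conjugation by unitaries, and $\|\cdot\|_{2,X}$-separability of $\mathcal M$) upgrades this to approximate unitary equivalence of $\phi$ and $\psi$ on all of $\mathcal M$.

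For existence in part~\ref{classif1}: each $\gamma(\tau)$ is uniformly amenable, so $\pi_{\gamma(\tau)}(A)''$ is semidiscrete, hence hyperfinite by Connes' theorem, and therefore $\gamma(\tau)$ factors trace-preservingly through $\mathcal R^\omega$ and is hyperlinear in the sense of Definition~\ref{defn:hyperlinear}. Working with the Fr\'echet filter, Corollary~\ref{cor:existence} then produces a unital $^*$-homomorphism $\theta\colon A\to\mathcal N^\infty$ with $\tau\circ\theta=\gamma(\tau|_{\mathcal N})$ for all $\tau\in Y^\infty$; in particular each $\tau\circ\theta$ is uniformly amenable, so $\theta$ is tracially nuclear by Theorem~\ref{thm:amenable}. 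For every $r\colon\mathbb N\to\mathbb N$ with $r(n)\to\infty$, the reparameterisation $r^*\circ\theta$ is again a tracially nuclear $^*$-homomorphism, and $\tau\circ(r^*\circ\theta)=\tau\circ\theta$ for every $\tau\in Y^\infty$ since $r^*$ fixes $\mathcal N$; hence $r^*\circ\theta$ and $\theta$ are unitarily equivalent by Corollary~\ref{cor:uniqueness}. Because unitaries in $\mathcal N^\infty$ lift to unitaries in $\ell^\infty(\mathcal N)$ (Corollary~\ref{cor:unitary-stable-relation}), Theorem~\ref{thm:reparameterisation} applies with $S\coloneqq A$ in the operator norm and yields a $\|\cdot\|_{2,Y^\infty}$-continuous function $\phi\colon A\to\mathcal N$ with $\iota_{(\mathcal N,Y)}\circ\phi$ unitarily equivalent to $\theta$; since $\theta$ is a $^*$-homomorphism and $\iota_{(\mathcal N,Y)}$ is an injective embedding, $\phi$ is a $^*$-homomorphism. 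Evaluating traces on constant sequences gives $\tau\circ\phi=\gamma(\tau)$ for $\tau\in Y$, whence $\phi$ is tracially nuclear by Theorem~\ref{thm:amenable}.

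For existence in part~\ref{classif2}, I would again fix a $\|\cdot\|$-separable $\|\cdot\|_{2,X}$-dense $C^*$-subalgebra $A\subseteq\mathcal M$; by Corollary~\ref{cor:dense-subalgebra}, $X_A\coloneqq\{\tau|_A:\tau\in X\}$ is a closed face in $T(A)$ and $(\completion{A}{X_A},X_A)\cong(\mathcal M,X)$. Since uniform amenability passes to subalgebras, $\tau\mapsto\gamma(\tau)|_A$ is a continuous affine map $Y\to T(A)$ taking values in the uniformly amenable traces, so part~\ref{classif1} yields a tracially nuclear $^*$-homomorphism $\phi_0\colon A\to\mathcal N$ with $\tau\circ\phi_0=\gamma(\tau)|_A$ for all $\tau\in Y$. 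As $\phi_0^*(Y)\subseteq X_A$, Corollary~\ref{cor:dense-subalgebra}\ref{item:dense-subalg3} extends $\phi_0$ to a morphism $\phi\colon(\mathcal M,X)\to(\mathcal N,Y)$, which is tracially nuclear by Lemma~\ref{lem:semidiscrete-dense} and satisfies $\tau\circ\phi=\gamma(\tau)$ on all of $\mathcal M$ by $\|\cdot\|_{2,X}$-density. Finally, part~\ref{classif3} is immediate from part~\ref{classif2}: when $(\mathcal M,X)$ is amenable, Theorem~\ref{thm:introamenable} shows every trace in $X$, and hence every $\gamma(\tau)$, is automatically uniformly amenable.

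The main obstacle is the descent from $\mathcal N^\infty$ to $\mathcal N$ in part~\ref{classif1}: this forces one to verify the hypotheses of Theorem~\ref{thm:reparameterisation}, and in particular relies on stability of unitaries in the uniform $2$-norm (Corollary~\ref{cor:unitary-stable-relation}), which is where CPoU enters essentially; the remaining content is routine reduction to separable dense subalgebras, density arguments, and the principle (Theorem~\ref{thm:amenable}) that tracial nuclearity of a $^*$-homomorphism is governed entirely by traces.
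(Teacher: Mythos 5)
Your proposal is correct and follows essentially the same route as the paper: uniqueness via Theorem~\ref{thm:uniqueness}, existence in part \ref{classif1} by combining Corollary~\ref{cor:existence} with Corollary~\ref{cor:uniqueness} and intertwining via reparameterisation (using Corollary~\ref{cor:unitary-stable-relation} to lift unitaries), and reduction of parts \ref{classif2} and \ref{classif3} to part \ref{classif1} through a separable dense subalgebra, Corollary~\ref{cor:dense-subalgebra}, Lemma~\ref{lem:semidiscrete-dense}, and Theorem~\ref{thm:introamenable}. The only cosmetic difference is that you pass through Connes' theorem to get hyperlinearity of the uniformly amenable traces, whereas it follows directly from the definitions; this is harmless.
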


\begin{proof}
\ref{classif1}. The uniqueness part of \ref{classif1} is Theorem~\ref{thm:uniqueness}. For the existence part of \ref{classif1}, since uniformly amenable traces are hyperlinear, Corollary~\ref{cor:existence} implies there is a $^*$-homomorphism $\theta_\infty \colon A \rightarrow \mathcal N^\infty$ with \mbox{$\tau \circ \theta_\infty = \gamma(\tau|_\mathcal N)$} for all $\tau \in Y^\infty$.  By Theorem~\ref{thm:amenable}, $\theta_\infty$ is tracially nuclear.  Note that if $r \colon \mathbb N \rightarrow \mathbb N$ is a function with $\lim_{n \rightarrow \infty} r(n) = \infty$ and $r^* \colon \mathcal N^\infty \rightarrow \mathcal N^\infty$ is the reparameterisation map as in Theorem~\ref{thm:reparameterisation}, then  
 \begin{equation}
    \tau \circ \theta_\infty=\gamma(\tau|_{\mathcal N})=\gamma((\tau\circ r^*)|_{\mathcal N})=\tau \circ r^* \circ \theta_\infty,\qquad\tau \in Y^\infty.   
 \end{equation}
By Corollary~\ref{cor:uniqueness}, $r^* \circ \theta_\infty$ and $\theta_\infty$ are unitarily equivalent.  By Corollary~\ref{cor:unitary-stable-relation}, unitaries in $\mathcal N^\infty$ lift to unitaries in $\ell^\infty(\mathcal N)$, and hence Theorem~\ref{thm:reparameterisation} provides a $^*$-homomorphism $\theta \colon A \rightarrow \mathcal N$ such that $\iota_\mathcal N \circ \theta$ and $\theta_\infty$ are unitarily equivalent.  Then $\theta$ satisfies $\tau \circ \theta = \gamma(\tau)$ for all $\tau \in \mathcal N$, and Theorem~\ref{thm:amenable} implies that $\theta$ is tracially nuclear.

\ref{classif2}.  Let $A \subseteq \M$ be a separable $\|\cdot\|_{2, X}$-dense $C^*$-subalgebra. Then uniqueness follows from uniqueness in \ref{classif1} as all morphisms between tracially complete $C^*$-algebras are automatically contractive between the uniform 2-norms.  Part \ref{classif1} gives a (tracially nuclear) $^*$-homomorphism $\hat{\phi}\colon A\to\mathcal N$ with $\tau\circ\hat{\phi}=\gamma(\tau)$ for $\tau\in Y$, which extends uniquely to $\phi\colon (\M,X)\to(\mathcal N,Y)$ by Corollary \ref{cor:dense-subalgebra}\ref{item:dense-subalg3}. The extension is tracially nuclear by Lemma \ref{lem:semidiscrete-dense}. As $A$ is $\|\cdot\|_{2,X}$-dense in $\M$, $\tau\circ\phi=\gamma(\tau)$ for $\tau\in Y$. 

\ref{classif3}. This is immediate from \ref{classif2} as {}amenability of $\M$ forces all traces in $X$ to be uniformly amenable by Theorem~\ref{thm:introamenable}.
\end{proof}

The classification theorem for regular amenable tracially complete $C^*$-algebras  is obtained from a tracially complete version of the two-sided Elliott intertwining argument (the $C^*$-version of which can be found as  \cite[Corollary~2.3.4]{Rordam-Book}, for example; note that \cite[Theorem~3]{El09} abstracts this $C^*$-version to an abstract intertwining result for categories with notions of inner automorphisms and metric structure on the morphism sets, and one could apply it to the category of separable tracially complete $C^*$-algebras).  As with $C^*$-classification results, any isomorphism at the level of the invariants -- in this case, the designated traces -- lifts to an isomorphism of tracially complete $C^*$-algebras.

\begin{theorem}
	\label{thm:classification}
	Suppose that $(\mathcal M, X)$ and $(\mathcal N, Y)$ are type~{\rm II}$_1$ {}amenable factorial tracially complete $C^*$-algebras with property $\Gamma$ such that $\mathcal M$ is \mbox{$\|\cdot\|_{2, X}$}-separable and $\mathcal N$ is $\|\cdot\|_{2, Y}$-separable. 
	\begin{enumerate}
		\item\label{alg-classif1} $(\mathcal M, X) \cong (\mathcal N, Y)$ if and only if $X \cong Y$, and moreover,
		\item\label{alg-classif2} For any affine homeomorphism $\gamma \colon Y \rightarrow X$, there is an isomorphism $\theta \colon (\mathcal M, X) \rightarrow \mathcal (\mathcal N, Y)$ such that $\tau \circ \theta = \gamma(\tau)$ for all $\tau \in Y$.
	\end{enumerate}
\end{theorem}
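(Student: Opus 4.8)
The plan is to derive Theorem~\ref{thm:classification} from the classification of tracially nuclear $^*$-homomorphisms (Theorem~\ref{thm:classification-morphism}) by a two-sided Elliott intertwining argument. Statement \ref{alg-classif1} is immediate from \ref{alg-classif2}: an isomorphism of tracially complete $C^*$-algebras induces an affine homeomorphism of the designated traces, and conversely given $X \cong Y$ we apply \ref{alg-classif2}. So the work is entirely in \ref{alg-classif2}.

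First I would set up the ingredients needed to feed into the intertwining machine. Fix an affine homeomorphism $\gamma \colon Y \to X$ with inverse $\gamma^{-1} \colon X \to Y$. By Theorem~\ref{introthmgammaimpliescpou}, both $(\mathcal M, X)$ and $(\mathcal N, Y)$ have CPoU (they are factorial with property $\Gamma$), so both satisfy the codomain hypotheses of Theorem~\ref{thm:classification-morphism}. Since $(\mathcal M, X)$ and $(\mathcal N, Y)$ are {}amenable, every trace in $X$ and in $Y$ is uniformly amenable by Theorem~\ref{thm:introamenable} (equivalently Theorem~\ref{thm:amenable}). Thus Theorem~\ref{thm:classification-morphism}\ref{classif3} applies in both directions: there is a tracially nuclear $^*$-homomorphism $\phi \colon (\mathcal M, X) \to (\mathcal N, Y)$ with $\tau \circ \phi = \gamma(\tau)$ for all $\tau \in Y$, and a tracially nuclear $^*$-homomorphism $\psi \colon (\mathcal N, Y) \to (\mathcal M, X)$ with $\sigma \circ \psi = \gamma^{-1}(\sigma)$ for all $\sigma \in X$. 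Moreover each is unique up to approximate unitary equivalence in the relevant uniform $2$-norm. The key point is that the identity maps $\mathrm{id}_{(\mathcal M,X)}$ and $\mathrm{id}_{(\mathcal N,Y)}$ are themselves tracially nuclear (this is precisely {}amenability, Definition~\ref{def:tracially-nuclear}), so they fall within the scope of the uniqueness portion of Theorem~\ref{thm:classification-morphism}.

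Next I would run the Elliott intertwining. The composite $\psi \circ \phi \colon (\mathcal M, X) \to (\mathcal M, X)$ is tracially nuclear (compositions of tracially nuclear maps with $^*$-homomorphisms are tracially nuclear, from the approximation definition) and satisfies $\tau \circ (\psi \circ \phi) = \gamma^{-1}(\gamma(\tau)) = \tau$ for all $\tau \in X$; hence $\psi \circ \phi$ and $\mathrm{id}_{(\mathcal M, X)}$ agree on all designated traces and are both tracially nuclear, so by uniqueness in Theorem~\ref{thm:classification-morphism}\ref{classif3} they are approximately unitarily equivalent in $\|\cdot\|_{2,X}$. Symmetrically, $\phi \circ \psi$ is approximately unitarily equivalent to $\mathrm{id}_{(\mathcal N, Y)}$ in $\|\cdot\|_{2,Y}$. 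Since $\mathcal M$ is $\|\cdot\|_{2,X}$-separable and $\mathcal N$ is $\|\cdot\|_{2,Y}$-separable, the approximate unitary equivalences can be witnessed by sequences of unitaries, and the standard Elliott approximate intertwining argument (as in \cite[Corollary~2.3.4]{Rordam-Book}, or via \cite[Theorem~3]{El09} applied to the category of separable tracially complete $C^*$-algebras with approximate unitary equivalence as the notion of ``approximate inner'' and the uniform $2$-norm providing the metric structure) produces unitaries $u_n \in \mathcal N$, $v_n \in \mathcal M$ and a genuine isomorphism $\theta \colon (\mathcal M, X) \to (\mathcal N, Y)$ obtained as a suitable $\|\cdot\|_{2}$-limit of $\mathrm{Ad}(u_n) \circ \phi$, with inverse a limit of $\mathrm{Ad}(v_n) \circ \psi$. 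Because $\theta$ is a pointwise $\|\cdot\|_{2,Y}$-limit of maps each agreeing with $\gamma$ on traces (conjugation by a unitary does not change the induced map on traces, and $\tau \circ \phi = \gamma(\tau)$), and traces are $\|\cdot\|_{2,Y}$-continuous, we get $\tau \circ \theta = \gamma(\tau)$ for all $\tau \in Y$, as required.

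The main obstacle I anticipate is purely bookkeeping: verifying that the two-sided intertwining argument, classically phrased for $C^*$-algebras with point-norm convergence, transfers verbatim to tracially complete $C^*$-algebras with point-$\|\cdot\|_2$ convergence. The essential facts one needs are that the unit ball of each algebra is complete in the uniform $2$-norm (so the Cauchy sequences of partial isometries/approximants used to build $\theta$ converge), that multiplication and adjoint are $\|\cdot\|_2$-continuous on norm-bounded sets, and that $^*$-homomorphisms are contractive in $\|\cdot\|_2$ (Proposition~\ref{prop:embedding-isometric} and the surrounding results) — all of which are available. One subtlety worth flagging is ensuring the limiting map is surjective, which follows from the Kaplansky-type density result (Proposition~\ref{prop:UnitBallDensity}) together with the intertwining bounds, exactly as in the $C^*$-case. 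Beyond this, there is nothing genuinely new to prove; the content has all been packaged into Theorems~\ref{thm:classification-morphism} and \ref{introthmgammaimpliescpou} and the amenability characterisation.
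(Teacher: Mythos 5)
Your proposal is correct and follows essentially the same route as the paper: obtain CPoU from property $\Gamma$, apply the existence and uniqueness portions of Theorem~\ref{thm:classification-morphism} to $\gamma$ and $\gamma^{-1}$ (using amenability to make the identity maps tracially nuclear), and then run the standard two-sided Elliott intertwining in the uniform $2$-norm. The paper simply writes out the intertwining iteration explicitly rather than citing it, and invokes part \ref{classif2} of Theorem~\ref{thm:classification-morphism} where you invoke part \ref{classif3}; these are immaterial differences.
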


\begin{proof}
	The first part follows from the second, so fix an isomorphism $\gamma$ as in \ref{alg-classif2}. Both $(\M,X)$ and $(\mathcal N,Y)$ have CPoU by Theorem~\ref{introthmgammaimpliescpou}. So, by two applications of the existence portion of Theorem~\ref{thm:classification-morphism}\ref{classif2}, there are tracially nuclear morphisms $\phi_0 \colon (\mathcal M, X) \rightarrow (\mathcal N, Y)$ and $\psi_0 \colon (\mathcal N, Y) \rightarrow (\mathcal M, X)$ such that
	\begin{equation}
		\tau \circ \phi_0 = \gamma(\tau) \qquad \text{and} \qquad \sigma \circ \psi_0 = \gamma^{-1}(\sigma)
	\end{equation}
	for all $\tau \in Y$ and $\sigma \in X$.  In particular,
	\begin{equation}
		\tau \circ \phi_0 \circ \psi_0 = \tau \qquad \text{and} \qquad \sigma \circ \psi_0 \circ \phi _0= \sigma
	\end{equation}
	for all $\tau \in Y$ and $\sigma \in X$.  Using the uniqueness portion of Theorem~\ref{thm:classification-morphism}\ref{classif2} twice, we have $\phi_0 \circ \psi_0$ is approximately unitarily equivalent to $\mathrm{id}_\mathcal N$ and $\psi_0 \circ \phi_0$ is approximately unitarily equivalent to $\mathrm{id}_\mathcal M$.	
	
	Let $(\mathcal F_n')_{n=1}^\infty$ and $(\mathcal G_n')_{n=1}^\infty$ be increasing sequences of finite subsets of $\mathcal M$ and $\mathcal N$, respectively, whose unions are dense in the respective uniform 2-norms.  We will inductively construct increasing sequences $(\mathcal F_n)_{n=1}^\infty$ and $(\mathcal G_n)_{n=1}^\infty$ of finite subsets of $\mathcal M$ and $\mathcal N$, respectively, and sequences of $^*$-homomorphisms $(\phi_n \colon \mathcal M \rightarrow \mathcal N)_{n=1}^\infty$ and $(\psi_n \colon \mathcal N \rightarrow \mathcal M)_{n=1}^\infty$ such that for all $n \geq 1$,		
	\begin{enumerate}
		\item $\mathcal F_n' \subseteq \mathcal F_n$ and $\mathcal G_n' \subseteq \mathcal G_n$,
		\item $\phi_n$ and $\psi_n$ are unitarily equivalent to $\phi_0$ and $\psi_0$, respectively,
		\item $\phi_n(\mathcal F_n) \subseteq \mathcal \mathcal G_n$ and $\psi_n(\mathcal G_n) \subseteq \mathcal F_{n+1}$, and
		\item for all $a \in \mathcal F_n$ and $b \in \mathcal G_n$, we have
		\begin{align}
			\|\psi_n(\phi_n(a)) - a\|_{2, X} &< 2^{-n},\quad \text{and}\label{eq:classification-1} \\
			\|\phi_n(\psi_{n-1}(b)) - b\|_{2, Y} &< 2^{-n}.\label{eq:classification-2}
		\end{align}
	\end{enumerate}
	
	Set $\mathcal F_0 \coloneqq \emptyset$ and $\mathcal G_0 \coloneqq \emptyset$.  Assuming that $\mathcal F_{n-1}$, $\mathcal G_{n-1}$, $\phi_{n-1}$, and $\psi_{n-1}$ have been constructed, let $\mathcal F_n \coloneqq \mathcal F_n' \cup \mathcal F_{n-1} \cup \psi_{n-1}(\mathcal G_{n-1})$.  Since $\phi_{n-1}$ and $\psi_{n-1}$ are unitarily equivalent to $\phi_0$ and $\psi_0$, respectively, and $\phi_0 \circ \psi_0$ is approximately unitarily equivalent to $\mathrm{id}_\mathcal M$, we have $\phi_{n-1} \circ \psi_{n-1}$ approximately unitarily equivalent to $\mathrm{id}_\mathcal M$.  Therefore, there is a $^*$-homomorphism $\phi_n \colon \mathcal M \rightarrow \mathcal N$ which satisfies \eqref{eq:classification-2}, and is unitarily equivalent to $\phi_{n-1}$, and hence also $\phi_0$.  The construction of $\mathcal G_n$ and $\psi_n$ is similar.
	
	If $m \geq n \geq 1$ and $a \in \mathcal F_n$, then 
	\begin{equation}\begin{split}
		\|\phi_{m+1}(a) - \phi_m(a) \|_{2, Y} &\leq \|\phi_{m+1}(a - \psi_m(\phi_m(a)))\|_{2, Y}  \\
		&\hphantom{==} + \| \phi_{m+1}(\psi_m(\phi_m(a))) - \phi_m(a)\|_{2, Y}  \\
		&< 2^{-m} + 2^{-m-1}.
	\end{split}\end{equation}
	In particular, $(\phi_m(a))_{m=1}^\infty$ is norm-bounded and $\|\cdot\|_{2,Y}$-Cauchy for all $a \in \mathcal F_n$.  So $(\phi_n(a))_{n=1}^\infty$ is norm-bounded and $\|\cdot\|_{2,Y}$-Cauchy for all $a \in \bigcup_{n=1}^\infty \mathcal F_n$, and hence also for all $a\in \mathcal M$.  Similarly, $(\psi_n(b))_{n=1}^\infty$ is norm-bounded and $\|\cdot\|_{2,X}$-Cauchy for all $b \in \mathcal N$.
	
	Define $\phi \colon \mathcal M \rightarrow \mathcal N$ and $\psi \colon \mathcal N \rightarrow \mathcal M$ by 
	\begin{equation}
		\phi(a) \coloneqq \lim_{n \rightarrow \infty} \phi_n(a) \qquad \text{and} \qquad \psi(a) \coloneqq \lim_{n \rightarrow \infty} \psi_n(a).
	\end{equation}
	By construction, $\phi$ and $\psi$ are approximately unitarily equivalent to $\phi_0$ and $\psi_0$, respectively, and in particular, $\tau \circ \phi = \tau \circ \phi_0$ for all $\tau \in Y$ and $\sigma \circ \psi = \sigma \circ \psi_0$ for all $\sigma \in X$.  To complete the proof, note that $\phi$ and $\psi$ are mutual inverses using \eqref{eq:classification-1} and \eqref{eq:classification-2}.
\end{proof}

Theorem \ref{Main-A} from the overview is an immediate consequence of Theorem \ref{thm:classification}, provided (as set out in Footnote~\ref{FootnoteThmA}) we interpret it to mean that for unital separable nuclear $\Z$-stable $C^*$-algebras $A$ and $B$, one has $\big(\completion{A}{T(A)},T(A)\big)\cong \big(\completion{B}{T(B)},T(B)\big)$ if and only if $T(A)$ and $T(B)$ are affinely homeomorphic. The point is that the `easy direction' -- recovering the trace space from an isomorphism -- is a tautology when the isomorphism is in the category of tracially complete $C^*$-algebras.\footnote{To recover $T(A)$ from $\completion{A}{T(A)}$ as a $C^*$-algebra requires the forthcoming solution (\cite{Ev24}) to the trace problem (Question~\ref{Q:traces}) for tracially complete $C^*$-algebras with CPoU due to the third-named author.}  To see the `hard direction', note that if $A$ is a unital separable nuclear $\mathcal Z$-stable $C^*$-algebra, then $\big(\completion{A}{T(A)}, T(A) \big)$ is $\|\cdot\|_{2, T(A)}$-separable, semidiscrete by Corollary~\ref{cor:semidiscrete-completion}, factorial by Proposition~\ref{prop:tracial-completion}\ref{item:completion-factorial}, and satisfies property $\Gamma$ by Propositions~\ref{prop:z-stable-mcduff} and~\ref{prop:McDuff-implies-Gamma}.  The result then follows from Theorem~\ref{thm:classification}\ref{alg-classif1}. 

The classification result for morphisms provides the following strengthened completely positive approximation, in which the `upward maps' can be taken to be $^*$-homomorphisms (and the downward maps are approximately multiplicative).  Such a result holds for injective von Neumann algebras, and this was the starting point to obtaining strengthened forms of the completely positive approximation property involving \emph{decomposable approximations} in the $C^*$-algebraic setting (\cite{BCW16, CS20, HKW12}).\footnote{In \cite[Section 1]{HKW12}, a local reflexivity argument is given which shows how to use hyperfiniteness of a von Neumann algebra $\M$ to obtain nets of finite dimensional algebras $F_i$ together with u.c.p.\  maps $\psi_i\colon \M\to F_i$, $\psi_i\colon F_i\to \M$ such that $\phi_i$ a $^*$-homomorphism and $\phi_i(\psi_i(x))\to x$ in the weak$^*$ topology for all $x\in\M$.  In fact all of Connes', Popa's, and Haagerup's proofs of injectivity implies hyperfiniteness (\cite{Co76,Po86,Ha85}) output such approximations in the case of factors.  }

\begin{theorem}\label{thm:decomposable}
	Suppose $A$ is a $C^*$-algebra, $(\mathcal N, Y)$ is a type~{\rm II}$_1$ factorial tracially complete $C^*$-algebra with CPoU, and $\theta \colon A \rightarrow \mathcal N$ is a tracially nuclear $^*$-homomorphism.  Then there are nets
	\begin{equation}
		A \overset{\psi_\lambda}{\longrightarrow} F_\lambda \overset{\phi_\lambda}{\longrightarrow} \mathcal N
	\end{equation}
	of finite dimensional $C^*$-algebras $F_\lambda$ and c.p.c.\ maps $\phi_\lambda$ and $\psi_\lambda$ such that
	\begin{enumerate}
		\item $\| \phi_\lambda(\psi_\lambda(a)) - \theta(a)\|_{2, Y} \rightarrow 0$ for all $a \in A$,
		\item $\| \psi_\lambda(ab) - \psi_\lambda(a) \psi_\lambda(b)\|_{2, T(F_\lambda)} \rightarrow 0$ for all $a, b \in A$, and
		\item each $\phi_\lambda$ is a unital $^*$-homomorphism.
	\end{enumerate}
	If $\|\tau \circ \theta\|^{-1} (\tau \circ \theta)$ is quasidiagonal whenever $\tau \in Y$ with $\tau \circ \theta \neq 0$, we may further arrange for 
	\begin{enumerate}
		\item[\rm (ii$'$)] $\| \psi_\lambda(ab) - \psi_\lambda(a)\psi_\lambda(b)\| \rightarrow 0$ for all $a, b \in A$. 
	\end{enumerate}
	In either case, when $A$ and $\theta$ are unital, we may arrange for each $\psi_\lambda$ to be unital.
\end{theorem}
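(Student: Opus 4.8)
The plan is to deduce this decomposable approximation statement directly from the classification of tracially nuclear $^*$-homomorphisms in Theorem~\ref{thm:classification-morphism} together with the existence result Theorem~\ref{thm:existence}. First I would reduce to the case where $A$ and $\theta$ are unital: by rescaling $\theta$ we may assume $\|\theta\|\le 1$, and then, as in Lemma~\ref{lem:amenable-unitise}, passing to the unitisation $\theta^\dagger\colon A^\dagger\to\mathcal N$ preserves both tracial nuclearity and (if the relevant traces on $A$ are quasidiagonal) quasidiagonality of the extended traces on $A^\dagger$; any approximations for $\theta^\dagger$ restrict to approximations for $\theta$. Next I would observe that it suffices to prove the result with $\mathcal N$ replaced by $\mathcal N^\infty$ (or, after a standard reindexing/$\epsilon$-test argument, by $\mathcal N^\omega$ for a free ultrafilter $\omega$): once we produce a single $^*$-homomorphism $\phi\colon F\to\mathcal N$ and a c.p.c.\ map $\psi\colon A\to F$ factorising $\theta$ up to a uniform $2$-norm error on a finite set, Kirchberg's $\epsilon$-test (Lemma~\ref{lem:EpsTest}) assembles these into the required nets.

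The heart of the argument is then Theorem~\ref{thm:existence}. Given a finite set $\mathcal F\subseteq A$ and $\epsilon>0$, apply Theorem~\ref{thm:amenable} to see that $\tau\circ\theta$ is uniformly amenable for all $\tau\in Y$ (normalising when nonzero), so the continuous affine map $\gamma\colon Y\to T(A^\dagger)$ given by $\gamma(\tau)=\tau\circ\theta^\dagger$ takes values in the uniformly amenable — hence hyperlinear — traces. Theorem~\ref{thm:existence} (in the ``amenable'' case, which yields c.p.c.\ $\psi$) produces a finite dimensional $C^*$-algebra $F$, a unital c.p.c.\ map $\psi\colon A^\dagger\to F$, and a \emph{unital $^*$-homomorphism} $\phi\colon F\to\mathcal N$ with $\|\psi(ab)-\psi(a)\psi(b)\|_{2,T(F)}<\epsilon$ and $|\tau(\phi(\psi(a)))-\gamma(\tau)(a)|<\epsilon$ for all $a,b$ in the finite set and all $\tau\in Y$. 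The key point is precisely that in Theorem~\ref{thm:existence} it is the \emph{upward} map $\phi\colon F\to\mathcal N$ that is the $^*$-homomorphism (built via Lemma~\ref{lem:finite-dim-existence} from the classification of projections in $\mathcal N$) while the downward map $\psi$ is only approximately multiplicative — this is the opposite of the usual c.p.\ approximation property and is exactly the content being asserted. For the quasidiagonal refinement, one instead invokes the clause of Theorem~\ref{thm:existence} giving $\|\psi(ab)-\psi(a)\psi(b)\|<\epsilon$ in operator norm whenever each $\gamma(\tau)$ is quasidiagonal. Upgrading the estimate $|\tau(\phi(\psi(a)))-\gamma(\tau)(a)|<\epsilon$ (a trace-level closeness, uniform over $\tau\in Y$) to $\|\phi(\psi(a))-\theta(a)\|_{2,Y}<\epsilon$ requires a little care: I would run Theorem~\ref{thm:existence} not with the identity but with the first-order data recording $\theta$ itself — that is, incorporate $\theta(\mathcal F)$ and the relations $\psi$ must approximately satisfy so that $\phi\circ\psi$ and $\theta$ agree on traces over all of $Y$ — and then, since $\phi\circ\psi$ is itself approximately a tracially nuclear map, apply the uniqueness half of Theorem~\ref{thm:classification-morphism}\ref{classif1} to conjugate $\phi$ by a unitary in $\mathcal N$ so that $\phi\circ\psi$ is uniform-$2$-norm close to $\theta$ on $\mathcal F$; absorbing the unitary into $\phi$ keeps $\phi$ a $^*$-homomorphism.

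Assembling: with $F=F_\lambda$, $\psi=\psi_\lambda$, $\phi=\phi_\lambda$ indexed by $\lambda=(\mathcal F,\epsilon)$ ordered in the obvious way, conditions (i)–(iii) hold by construction (and (ii$'$) in the quasidiagonal case), and finally restricting the $\psi_\lambda$ back along $A\hookrightarrow A^\dagger$ handles the non-unital reduction, with the last sentence about unital $\psi_\lambda$ following immediately since Theorem~\ref{thm:existence} already delivers unital $\psi$ when $A$ and $\theta$ are unital. The main obstacle I anticipate is the bookkeeping in the previous paragraph — turning the ``closeness on traces, uniformly over $Y$'' output of Theorem~\ref{thm:existence} into genuine $\|\cdot\|_{2,Y}$-closeness of $\phi\circ\psi$ to $\theta$ — which is why I would route it through the uniqueness theorem (Theorem~\ref{thm:uniqueness}/Theorem~\ref{thm:classification-morphism}\ref{classif1}) and a unitary conjugation rather than trying to force it out of the existence statement alone; everything else is routine reindexing and the standard $\epsilon$-test packaging.
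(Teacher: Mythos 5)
Your proposal is correct and follows essentially the same route as the paper: reduce to the unital case via Lemma~\ref{lem:amenable-unitise}, apply Theorem~\ref{thm:existence} to $\gamma(\tau)=\tau\circ\theta$ (using Theorem~\ref{thm:amenable} for uniform amenability), package the resulting approximately multiplicative factorisations into a $^*$-homomorphism $\theta'\colon A\to\mathcal N^\infty$ agreeing with $\theta$ on traces, and then use the uniqueness theorem (Corollary~\ref{cor:uniqueness}) plus liftability of unitaries (Corollary~\ref{cor:unitary-stable-relation}) to conjugate the upward maps so that $\phi_\lambda\circ\psi_\lambda$ converges to $\theta$ in $\|\cdot\|_{2,Y}$. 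The one place you hedge --- how to upgrade trace-level closeness to uniform $2$-norm closeness --- is resolved exactly as you suspect, by routing through uniqueness and absorbing the lifted unitaries into the $\phi_\lambda$.
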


\begin{proof}
	Adding a unit to $A$, we may assume that $A$ and $\theta$ are unital (Lemma~\ref{lem:amenable-unitise}).  Further, as the conclusion is a local condition, it suffices to prove it when $A$ is separable.  By Theorem~\ref{thm:amenable}, $\tau \circ \theta$ is uniformly amenable for each $\tau \in T(A)$.  By Theorem~\ref{thm:existence} (applied to the affine map $\tau \mapsto \tau \circ \theta$), there are finite dimensional $C^*$-algebras $F_n$ and u.c.p.\ maps
	\begin{equation}
		A \overset{\psi_n}{\longrightarrow} F_n \overset{\phi'_n}{\longrightarrow} \mathcal N
	\end{equation}
	such that for all $a, b \in A$ and $\tau \in Y$, we have
	\begin{equation}
		\|\psi_n(ab) - \psi_n(a)\psi_n(b)\|_{2, T(F_n)} \rightarrow 0\label{eq:decomp-1}
	\end{equation}
	and
	\begin{equation}
		| \tau(\phi'_n(\psi_n(a))) - \tau(\theta(a))| \rightarrow 0,
	\end{equation}
	and such that each $\phi'_n$ is a unital $^*$-homomorphism.  Further, if $\tau \circ \theta$ is quasidiagonal for all $\tau \in T(A)$, we may replace \eqref{eq:decomp-1} with
	\begin{equation}
		\|\psi_n(ab) - \psi_n(a)\psi_n(b)\| \rightarrow 0
	\end{equation}
	for all $a, b \in A$.
	
	The maps $(\phi'_n \circ \psi_n)_{n=1}^\infty$ induce a $^*$-homomorphism $\theta' \colon A \rightarrow \mathcal N^\infty$ with $\tau \circ \theta' = \tau \circ \theta$ for all $\tau \in Y^\infty$.  By Theorem~\ref{thm:amenable}, $\theta'$ is tracially nuclear, and hence by Corollary~\ref{cor:uniqueness}, $\theta$ and $\theta'$ are unitarily equivalent.  If $u \in \mathcal N^\omega$ is a unitary with $\mathrm{ad}(u) \circ \theta' = \theta$, then by Corollary~\ref{cor:unitary-stable-relation}, there is a sequence of unitaries $(u_n)_{n=1}^\infty \subseteq \mathcal N$ lifting $u$.  The result follows by setting $\phi_n \coloneqq \mathrm{ad}(u_n) \circ \phi_n'$.
\end{proof}

We end by proving the structure and classification theorems from the overview (Theorem~\ref{InformalStructureThm} and~\ref{InformalClassification}).  Most of the implications involved are already in place, and it remains to  use the classification theorem to pass back from hyperfiniteness to the McDuff property via CPoU.

\begin{theorem}\label{thm:hyperfinite}
	Let $(\mathcal M, X)$ be a type~{\rm II}$_1$ factorial tracially complete $C^*$-algebra. Then the following conditions are equivalent:
	\begin{enumerate}
		\item\label{hypfin} $(\mathcal M, X)$ hyperfinite;
		\item\label{cpou}$(\mathcal M, X)$ is {}amenable and has CPoU;
		\item\label{gamma} $(\mathcal M, X)$ is {}amenable and satisfies property $\Gamma$;
		\item\label{mcduff} $(\mathcal M, X)$ is {}amenable and McDuff.
	\end{enumerate}
	In this setting, if $\M$ is also assumed to be $\|\cdot\|_{2, X}$-separable, then $(\M, X) \cong (\mathcal R_X, X)$ (see Example~\ref{Ex:ConcreteModels}).
\end{theorem}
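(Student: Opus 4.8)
The plan is to prove the equivalences by the cycle $\ref{hypfin}\Rightarrow\ref{cpou}\Rightarrow\ref{mcduff}\Rightarrow\ref{gamma}\Rightarrow\ref{cpou}$ together with $\ref{cpou}\Rightarrow\ref{hypfin}$, and to read off the final clause from the classification theorem (Theorem~\ref{thm:classification}) applied to the model $(\mathcal R_X, X)$ of Example~\ref{Ex:ConcreteModels}. Several of these implications are immediate: $\ref{hypfin}\Rightarrow\ref{cpou}$ is exactly the combination of Theorems~\ref{thm:hyperfinite-implies-semidiscrete} and~\ref{thm:hyperfinite-implies-CPoU}; since $(\M, X)$ is factorial, $\ref{mcduff}\Rightarrow\ref{gamma}$ follows from Proposition~\ref{prop:McDuff-implies-Gamma} (carrying amenability along); and $\ref{gamma}\Rightarrow\ref{cpou}$ follows from Theorem~\ref{introthmgammaimpliescpou} (again carrying amenability along).

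The one step of the cycle requiring genuine work is $\ref{cpou}\Rightarrow\ref{mcduff}$, which I would prove fibrewise and then patch with CPoU. By Theorem~\ref{thm:introamenable}, amenability of $(\M, X)$ gives that $\pi_\tau(\M)''$ is semidiscrete for each $\tau \in X$; as $(\M, X)$ is of type~II$_1$, each $\pi_\tau(\M)''$ is a semidiscrete type~II$_1$ von Neumann algebra, hence hyperfinite by Connes' theorem and therefore McDuff (see \cite[Proposition~1.6]{CETW}). To transfer this to $(\M, X)$ I would verify the approximate characterisation in Proposition~\ref{prop:McDuff}\ref{item:McDuff-approx}: given a finite set $\mathcal F \subseteq \M$ and $\epsilon > 0$, for each $\tau \in X$ use the McDuff property of $\pi_\tau(\M)''$ and Kaplansky's density theorem to produce a contraction $v_\tau \in \M$ with $\|[v_\tau, a]\|_{2, \tau}$, $\|v_\tau^* v_\tau + v_\tau v_\tau^* - 1_\M\|_{2, \tau}$, and $\|v_\tau^2\|_{2, \tau}$ all small for $a \in \mathcal F$; encode the total defect as a single positive element $a_\tau \in \M_+$ with $\tau(a_\tau)$ small; use compactness of $X$ to pass to finitely many $\tau_1, \dots, \tau_n$; and apply CPoU in the form of Proposition~\ref{CPoU:Ultra:FiniteSet} to glue the $v_{\tau_i}$ into a single contraction $v \in \M^\omega$ with the desired estimates in $\|\cdot\|_{2, X^\omega}$, passing to a representing sequence at the end. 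This is the same pattern as the proof of Proposition~\ref{prop:unitary-exponentials} and mirrors the passage from uniform property~$\Gamma$ to the uniform McDuff property in \cite[Theorem~4.6]{CETW}, with Theorem~\ref{introthmgammaimpliescpou} replacing the nuclearity input used there. This closes the cycle, so \ref{cpou}, \ref{gamma}, \ref{mcduff} are equivalent and are implied by~\ref{hypfin}.

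For $\ref{cpou}\Rightarrow\ref{hypfin}$ and the final clause I would invoke the classification theorem. First, record that $(\mathcal R_X, X)$ of Example~\ref{Ex:ConcreteModels} satisfies the hypotheses of Theorem~\ref{thm:classification}: by construction it is a sequential inductive limit of finite-dimensional tracially complete $C^*$-algebras, hence hyperfinite, hence — by Theorem~\ref{thm:hyperfinite-implies-semidiscrete} and the equivalences just established, applied to $(\mathcal R_X, X)$ itself — amenable, McDuff, and with property~$\Gamma$; it is factorial by Proposition~\ref{prop:UTCInductiveLimit}; it is $\|\cdot\|_{2, X}$-separable because $X$ is metrisable; it is of type~II$_1$ (either by a direct inspection of the construction, or because each of its fibres, being a McDuff von Neumann algebra, is of type~II$_1$); and its designated trace simplex is canonically $X$. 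Now if $(\M, X)$ is $\|\cdot\|_{2, X}$-separable and satisfies (one, hence all, of) \ref{cpou}--\ref{mcduff}, then $(\M, X)$ and $(\mathcal R_X, X)$ both satisfy the hypotheses of Theorem~\ref{thm:classification} and their trace simplices are (trivially) affinely homeomorphic, so Theorem~\ref{thm:classification}\ref{alg-classif1} gives $(\M, X) \cong (\mathcal R_X, X)$; in particular $(\M, X)$ is hyperfinite, proving $\ref{cpou}\Rightarrow\ref{hypfin}$ in the $\|\cdot\|_{2, X}$-separable case. The general case of $\ref{cpou}\Rightarrow\ref{hypfin}$ then follows from the separable-inheritability machinery of Appendix~\ref{sec:sep} (Theorem~\ref{thm:sep}): hyperfiniteness is separably inheritable while amenability, factoriality, type~II$_1$, and CPoU are strongly separably inheritable, so it suffices to verify hyperfiniteness on $\|\cdot\|_{2, X}$-separable subalgebras, exactly as in the reduction carried out in the proof of Theorem~\ref{thm:hyperfinite-implies-CPoU}.

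I expect the main obstacle to lie not in any single deep step but in the bookkeeping: confirming that Theorem~\ref{thm:classification} genuinely applies to $(\mathcal R_X, X)$ (the type~II$_1$ property and the identification of its trace simplex with $X$, this being the content of the remark after Example~\ref{Ex:ConcreteModels} that $(\mathcal R_X, X)$ depends only on $X$) and handling the non-separable cases through the separable-inheritability apparatus rather than directly. The one analytic ingredient, $\ref{cpou}\Rightarrow\ref{mcduff}$, is a routine application of the CPoU local-to-global technique and should present no real difficulty.
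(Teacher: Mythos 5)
Your argument is correct in outline, but it diverges from the paper's proof at both of the substantive implications, in a way worth recording. For \ref{cpou}$\Rightarrow$\ref{mcduff} you give the direct fibrewise argument --- semidiscrete type II$_1$ fibres are McDuff by Connes' theorem, and the approximate relations of Proposition~\ref{prop:McDuff}\ref{item:McDuff-approx} are glued with CPoU exactly as in Proposition~\ref{prop:unitary-exponentials} --- which is the route sketched in the introduction (following \cite[Theorem~4.6]{CETW}) but \emph{not} the one taken in the paper's proof of Theorem~\ref{thm:hyperfinite}: there, \ref{cpou}$\Rightarrow$\ref{mcduff} is obtained by first proving \ref{cpou}$\Rightarrow$\ref{hypfin} and then, in the separable case, reading McDuffness off the isomorphism $(\M,X)\cong(\mathcal R_X,X)$ supplied by Theorem~\ref{thm:classification}, with the non-separable case handled through Appendix~\ref{sec:sep}. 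Conversely, for \ref{cpou}$\Rightarrow$\ref{hypfin} the paper is more direct than you are: it applies Theorem~\ref{thm:decomposable} to $\theta=\mathrm{id}_\M$, so that the images $\phi_\lambda(F_\lambda)$ are the required finite dimensional subalgebras and no separability reduction is needed; your detour through Theorem~\ref{thm:classification} reaches the same conclusion but forces the separabilisation machinery onto this implication as well. A genuine advantage of your ordering is that by the time you invoke Theorem~\ref{thm:classification} you have already established property $\Gamma$ for $(\M,X)$ via \ref{cpou}$\Rightarrow$\ref{mcduff}$\Rightarrow$\ref{gamma}, so its stated hypotheses are literally met, whereas the paper applies it directly under hypothesis~\ref{cpou}.

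Two small repairs are needed. First, $(\mathcal R_X,X)$ is an inductive limit of finite direct sums of copies of $\mathcal R$, not of finite dimensional algebras; it is nevertheless hyperfinite (each building block is, and hyperfiniteness passes to inductive limits by the $\epsilon/3$ argument in the proof of Theorem~\ref{thm:sep}\ref{thm:sep-hyp-fin}), so your conclusion stands. Second, Theorem~\ref{thm:sep} does not establish that being of type II$_1$ is separably inheritable, and your non-separable reduction of \ref{cpou}$\Rightarrow$\ref{hypfin} needs the separable subalgebra to be of type II$_1$ before Theorem~\ref{thm:classification} can be applied to it. This is easily fixed: since you have already shown $(\M,X)$ is McDuff, include McDuffness among the properties being separabilised via Theorem~\ref{thm:sep}\ref{thm:sep-McDuff} and Proposition~\ref{prop:sep-countable}; a McDuff tracially complete $C^*$-algebra is automatically of type II$_1$, as a unital copy of $M_k$ in $\M^\omega\cap\M'$ rules out type I$_n$ summands in every fibre for $n<k$.
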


\begin{proof}
	The implications \ref{mcduff}$\Rightarrow$\ref{gamma}, \ref{gamma}$\Rightarrow$\ref{cpou}, and \ref{hypfin}$\Rightarrow$\ref{cpou} hold by Proposition~\ref{prop:McDuff-implies-Gamma}, Theorem~\ref{introthmgammaimpliescpou}, and Theorem~\ref{thm:hyperfinite-implies-CPoU}, respectively.  Further, Theorem~\ref{thm:decomposable} applied to $\theta \coloneqq \mathrm{id}_\mathcal M$ shows \ref{cpou}$\Rightarrow$ \ref{hypfin}.  Suppose \ref{cpou} holds and $(\mathcal M, X)$ is $\|\cdot\|_{2,   X}$-separable. Both $(\mathcal M, X)$ and $(\mathcal R_X, X)$ satisfy the conditions of Theorem~\ref{thm:classification}, and hence $(\mathcal M, X) \cong (\mathcal R_X, X)$.  As $(\mathcal R_X, X)$ is McDuff, this also shows $(\mathcal M, X)$ is McDuff and finishes the proof in the separable setting.
	
	It remains to show \ref{cpou} implies \ref{mcduff} without separability.  Assume $(\mathcal M, X)$ satisfies CPoU.  By the local characterisation of McDuff's property (Proposition~\ref{prop:McDuff}\ref{item:McDuff-approx}), it suffices to show that every finite set $\mathcal F \subseteq \mathcal M$ is contained in a unital $\|\cdot\|_{2, X}$-closed $C^*$-subalgebra of $\mathcal M$ that is factorial and McDuff as a tracially complete $C^*$-algebra.  Using the separable inheritability of the conditions in \ref{cpou} (Theorem~\ref{thm:sep}\ref{thm:sep-semidiscrete} and~\ref{thm:sep-CPoU}, together with Proposition~\ref{prop:sep-countable}), this follows from the fact that \ref{cpou} implies \ref{mcduff} in the separable setting.
\end{proof}

\appendix

\section{Separabilisation}\label{sec:sep}

In this appendix we collect the machinery needed to reduce our main structural results -- Theorems \ref{InformalStructureThm} and \ref{introthmgammaimpliescpou} -- to the case of separable tracially complete $C^*$-algebras and prove the non-metrisable version of Theorem \ref{thm:simplex-is-nuclear}.

\subsection{Separabilising tracially complete \texorpdfstring{$C^*$}{C*}-algebras}

The following definition is modelled on Blackadar's notion of separable inheritability for $C^*$-algebras (\cite[Section II.8.5]{Bl06}).\footnote{We note that the second part of Blackadar's definition -- corresponding to part \ref{defn:sep:closed} of Definition \ref{defn:sep} -- is ambiguous about whether the inductive systems are over the natural numbers or more general directed sets. In our analogous definition, we opt for the more conservative version, with inductive systems over the natural numbers only.}  Recall from Section~\ref{sec:tracially-complete-defs} (before Definition~\ref{def:embedding}) that if $(\M, X)$ is a tracially complete $C^*$-algebra then a tracially complete $C^*$-subalgebra of $(\M, X)$ is $(\M_0,X_0)$ where $\M_0 \subseteq \M$ is a unital $\|\cdot\|_{2, X}$-closed $C^*$-subalgebra and $X_0 \subseteq T(\M)$ is the set of traces arising as restrictions of traces in $X$ to $\M_0$.

\begin{definition}[cf.\ {\cite[Section II.8.5]{Bl06}}]\label{defn:sep}
	We say that a property $(P)$ of tracially complete $C^*$-algebras is \emph{separably inheritable} if
	\begin{enumerate}
		\item\label{defn:sep:unbounded} whenever $(\M, X)$ is a tracially complete $C^*$-algebra satisfying $(P)$ and $S \subseteq \M$ is a $\|\cdot\|_{2, X}$-separable subset of $\M$, there is a tracially complete $C^*$-subalgebra $(\M_0,X_0)$ satisfying $(P)$ and such that $S\subseteq \M_0$ and $\mathcal M_0$ is $\|\cdot\|_{2, X}$-separable.
		\item\label{defn:sep:closed} if $\big((\M_n, X_n)\big)_{n=1}^\infty$ is a sequence of tracially complete $C^*$-algebras satisfying $(P)$ and $\phi_n^{n+1} \colon (\M_n, X_n) \rightarrow (\M_{n+1}, X_{n+1})$ is an embedding for each $n \geq 1$, then $\varinjlim\, \big((\M_n, X_n), \phi_n^{n+1}\big)$ also satisfies $(P)$.
	\end{enumerate}
 We say that $(P)$ is \emph{strongly separably inheritable} if, in addition,
 \begin{enumerate}
     \setcounter{enumi}{2}
     \item\label{defn:sep:sepS} if $(\M,X)$ is a tracially complete $C^*$-algebra such that for every $\|\cdot\|_{2,X}$-separable subset $S$ of $\M$, there exists a tracially complete $C^*$-subalgebra $(\M_0,X_0)$ satisfying $(P)$ with $S \subseteq \mathcal M_0$, then $(\M,X)$ satisfies $(P)$.
 \end{enumerate}
\end{definition}

Adapting the language of \cite[Definition 1.4]{Schaf20} to tracially complete $C^*$-algebras, condition \ref{defn:sep:sepS} asks that $(\M,X)$ satisfies $(P)$ whenever it separably satisfies $(P)$.

If $(P)$ is a separably inheritable property and $(Q)$ satisfies condition \ref{defn:sep:sepS}, then in order to prove $(P)\Rightarrow(Q)$ for all tracially complete $C^*$-algebras, it is enough to prove it for $\|\cdot\|_{2,X}$-separable tracially complete $C^*$-algebras. 

The following will allow us to construct separable subalgebras that satisfy several separably inheritable properties simultaneously.  The proof is an easy modification of the corresponding result for $C^*$-algebras given in \cite[Proposition~II.8.5.3]{Bl06}.

\begin{proposition}\label{prop:sep-countable}
	The conjunction of countably many (strongly) separably inheritable properties is (strongly) separably inheritable.
\end{proposition}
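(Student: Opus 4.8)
The statement asserts that the conjunction of countably many (strongly) separably inheritable properties is again (strongly) separably inheritable. I would model the proof on Blackadar's argument for $C^*$-algebras in \cite[Proposition~II.8.5.3]{Bl06}, adapted to tracially complete $C^*$-algebras. Let $(P_k)_{k=1}^\infty$ be a sequence of (strongly) separably inheritable properties and let $(P)$ be their conjunction. We must verify the two (or three) conditions of Definition~\ref{defn:sep} for $(P)$.

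\textbf{Condition \ref{defn:sep:unbounded}: a back-and-forth (diagonalisation) construction.} Suppose $(\M, X)$ satisfies all the $P_k$, and let $S \subseteq \M$ be $\|\cdot\|_{2, X}$-separable. I would build an increasing sequence of $\|\cdot\|_{2, X}$-separable tracially complete $C^*$-subalgebras $(\M_n, X_n)$ of $(\M, X)$ with $S \subseteq \M_1$ by iterating the separable inheritability of each $P_k$, interleaving the indices $k$ so every property is addressed infinitely often (say, at step $n$ we use property $P_{k(n)}$ where $k \colon \mathbb N \to \mathbb N$ hits each value infinitely often). Concretely, given $(\M_n, X_n)$, apply condition \ref{defn:sep:unbounded} for $P_{k(n)}$ to the separable set $S_n \coloneqq \M_n$ inside $(\M, X)$ to obtain a $\|\cdot\|_{2, X}$-separable tracially complete $C^*$-subalgebra $(\M_{n+1}, X_{n+1})$ containing $\M_n$ and satisfying $P_{k(n)}$. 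Then set $\M_0' \coloneqq \overline{\bigcup_n \M_n}^{\|\cdot\|_{2, X}}$, which is a unital $\|\cdot\|_{2, X}$-closed separable $C^*$-subalgebra, and let $X_0'$ be the restrictions of $X$ to $\M_0'$. The key observation is that $(\M_0', X_0')$ is the inductive limit (in the category of tracially complete $C^*$-algebras) of the subsystem $\big((\M_{n_j}, X_{n_j}), \text{inclusions}\big)$ where $(n_j)$ is the subsequence of indices with $k(n_j) = k$, for each fixed $k$: indeed, the inclusions $\M_{n_j} \hookrightarrow \M_{n_{j+1}}$ are embeddings of tracially complete $C^*$-algebras (using that restriction of traces is compatible with the tracial completion structure, cf.\ Corollary~\ref{cor:dense-subalgebra}), and by construction each $(\M_{n_j}, X_{n_j})$ satisfies $P_k$. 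Since $\M_0'$ is the $\|\cdot\|_{2, X}$-closure of the union of this cofinal subsystem and hence (by Corollary~\ref{cor:dense-subalgebra}\ref{item:dense-subalg1} and Proposition~\ref{prop:UTCInductiveLimit}) is their inductive limit, condition \ref{defn:sep:closed} for $P_k$ gives that $(\M_0', X_0')$ satisfies $P_k$. As this holds for every $k$, $(\M_0', X_0')$ satisfies $(P)$, establishing condition \ref{defn:sep:unbounded} for $(P)$.

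\textbf{Conditions \ref{defn:sep:closed} and \ref{defn:sep:sepS}: immediate.} Condition \ref{defn:sep:closed} for $(P)$ is immediate since an inductive limit of a sequence of tracially complete $C^*$-algebras all satisfying every $P_k$ satisfies each $P_k$ by \ref{defn:sep:closed} for $P_k$, hence satisfies $(P)$. Similarly, in the strong case, condition \ref{defn:sep:sepS} for $(P)$ follows from condition \ref{defn:sep:sepS} for each $P_k$: if $(\M, X)$ has the property that every $\|\cdot\|_{2, X}$-separable $S \subseteq \M$ is contained in a tracially complete $C^*$-subalgebra satisfying $(P)$ — hence satisfying each $P_k$ — then $(\M, X)$ satisfies each $P_k$ by \ref{defn:sep:sepS} for $P_k$, and therefore satisfies $(P)$.

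\textbf{Expected obstacle.} The one point requiring genuine care is verifying in the diagonalisation step that the subsystem of inclusions is a bona fide inductive system of \emph{embeddings} of tracially complete $C^*$-algebras and that $(\M_0', X_0')$ is correctly identified with its inductive limit — this is where one must invoke Corollary~\ref{cor:dense-subalgebra}\ref{item:dense-subalg1} (identifying a tracially complete $C^*$-algebra with the completion of a $\|\cdot\|_{2,X}$-dense subalgebra) together with Proposition~\ref{prop:UTCInductiveLimit} and check that the trace sets $X_{n}$ and $X_0'$ interact as expected under restriction. Once this bookkeeping is in place, the rest is a routine diagonal argument.
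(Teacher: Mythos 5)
Your proof is correct and follows essentially the same route as the paper's: a diagonalisation in which each property is addressed infinitely often along an increasing sequence of separable tracially complete subalgebras, followed by applying closure under inductive limits (condition \ref{defn:sep:closed}) for each $P_k$ separately to the cofinal subsystem, with conditions \ref{defn:sep:closed} and \ref{defn:sep:sepS} for the conjunction being immediate. The only difference is that you spell out the identification of the closed union with the inductive limit of the cofinal subsystem in more detail than the paper does, which is a harmless (and arguably welcome) elaboration.
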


\begin{proof}
	Let $\big((P_\lambda)\big)_{\lambda \in \Lambda}$ be a collection of separably inheritable properties of tracially complete $C^*$-algebras indexed by some countable set $\Lambda$, and let $(P)$ be the conjunction of the $(P_\lambda)$.  
	
	It is clear that Definition~\ref{defn:sep}\ref{defn:sep:closed} holds for $(P)$ as it holds for each $(P_\lambda)$.  To see Definition~\ref{defn:sep}\ref{defn:sep:unbounded}, let $(\M, X)$ be a tracially complete $C^*$-algebras satisfying $(P)$ and let $S \subseteq \M$ be a $\|\cdot\|_{2, X}$-separable subset.  Fix a surjective map $f \colon \mathbb N \rightarrow \Lambda$ such that each $\lambda\in\Lambda$ has infinitely many preimages.  Using Definition~\ref{defn:sep}\ref{defn:sep:unbounded} for the properties $(P_\lambda)$, inductively construct an increasing sequence of $\|\cdot\|_{2, X}$-closed and $\|\cdot\|_{2,X}$-separable $C^*$-subalgebras $\M_n \subseteq \M$ such that $S \subseteq \M_1$ and $\M_n$ satisfies $(P_{f(n)})$.  As each $(P_\lambda)$ satisfies Definition~\ref{defn:sep}\ref{defn:sep:closed}, the $\|\cdot\|_{2, X}$-closed union of the $\M_n$ satisfies $(P)$.

    For strong separable inheritability, it is clear that Definition~\ref{defn:sep}\ref{defn:sep:sepS} is closed under (arbitrary) conjunctions.
\end{proof}

We will show that many of the properties of tracially compete $C^*$-algebras defined in this paper are strongly separably inheritable.

\begin{theorem}\label{thm:sep}
	The following properties are strongly separably inheritable:
	\begin{enumerate}
		\item\label{thm:sep-factorial} factoriality,
		\item\label{thm:sep-semidiscrete} {}amenability,
		\item\label{thm:sep-hyp-fin} hyperfiniteness,
		\item\label{thm:sep-McDuff} McDuff's property
		\item\label{thm:sep-Gamma} factoriality and property $\Gamma$, and
		\item \label{thm:sep-CPoU} factoriality and CPoU.
	\end{enumerate}
\end{theorem}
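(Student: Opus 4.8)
\textbf{Proof proposal for Theorem~\ref{thm:sep}.}

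The plan is to verify conditions \ref{defn:sep:unbounded}, \ref{defn:sep:closed}, and \ref{defn:sep:sepS} of Definition~\ref{defn:sep} for each property on the list. Condition \ref{defn:sep:closed} is essentially a permanence-under-inductive-limits statement and has, in each case, already been established in the main body of the paper: factoriality is preserved by inductive limits by Proposition~\ref{prop:UTCInductiveLimit}, {}amenability by Corollary~\ref{cor:McDuff-indlim-ultraproducts} (indeed, the composition of the completely positive approximations witnessing {}amenability at each stage pass to the limit), hyperfiniteness is obvious from the definition (a finite set in the limit is approximated by a finite set at some finite stage, hence by a finite dimensional subalgebra there, which embeds into the limit), McDuff's property by Corollary~\ref{cor:McDuff-indlim-ultraproducts}, property~$\Gamma$ for factorial algebras by Proposition~\ref{prop:Gamma-indlim}, and CPoU for factorial algebras by Proposition~\ref{prop:InductiveLimitCPoU}. (In the $\Gamma$ and CPoU cases, factoriality of the limit follows from Proposition~\ref{prop:UTCInductiveLimit}, and one should note that since the connecting maps are \emph{embeddings}, the hypotheses of those propositions are met.) Condition \ref{defn:sep:sepS} is also largely routine: each of the properties is expressed (via Proposition~\ref{prop:Gamma} for $\Gamma$, Proposition~\ref{CPoU:Ultra:FiniteSet} for CPoU, Proposition~\ref{prop:McDuff} for McDuff, Theorem~\ref{thm:amenable}/the local form of the c.p.\ approximation property for {}amenability, and directly from the definition for hyperfiniteness) in terms of conditions that only involve finite subsets $\mathcal F \subseteq \M$ and that are inherited from a $\|\cdot\|_{2,X}$-dense union of subalgebras containing $\mathcal F$; for factoriality, if every $\|\cdot\|_{2,X}$-separable set sits inside a factorial tracially complete subalgebra, then by Proposition~\ref{prop:factorial} every extreme trace of $X$ restricted to such a subalgebra generates a factor, which forces the same for $\M$ itself via the identification $\pi_\tau(\M_0)'' \cong \pi_\tau(\M)''$ from Corollary~\ref{cor:dense-subalgebra}\ref{item:dense-subalg1} and Proposition~\ref{prop:tracial-completion}\ref{item:completion-fibres}.

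The heart of the matter is condition \ref{defn:sep:unbounded}: given a tracially complete $C^*$-algebra $(\M,X)$ with the property in question and a $\|\cdot\|_{2,X}$-separable set $S \subseteq \M$, one must build a $\|\cdot\|_{2,X}$-separable tracially complete subalgebra $(\M_0,X_0)$ containing $S$ that retains the property. The standard approach is a \textbf{back-and-forth construction}: build an increasing chain $\M^{(1)} \subseteq \M^{(2)} \subseteq \cdots$ of $\|\cdot\|_{2,X}$-separable $C^*$-subalgebras of $\M$, starting from (a countable $\|\cdot\|_{2,X}$-dense subalgebra generated by) $S$, where at each stage one adjoins countably many elements of $\M$ needed to witness the defining approximation data for the property (e.g.\ for McDuff, contractions $v$ from Proposition~\ref{prop:McDuff}\ref{item:McDuff-approx} handling a countable dense family of finite sets and tolerances $2^{-n}$; for $\Gamma$, the positive contractions $p$ of Proposition~\ref{prop:Gamma}\ref{item:Gamma-approx}; for CPoU, the orthogonal positive contractions $e_1,\dots,e_k$ of Proposition~\ref{CPoU:Ultra:FiniteSet}\ref{item:CPOU-local}; for hyperfiniteness, the finite dimensional subalgebras $F$; for {}amenability, the finite dimensional algebras and c.p.c.\ maps of Proposition~\ref{prop:bounded-cpap}; and for factoriality, elements witnessing the relevant separation/exposedness as in Lemma~\ref{lem:separable-face}). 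Taking $\M_0$ to be the $\|\cdot\|_{2,X}$-closure of $\bigcup_n \M^{(n)}$ and $X_0 \coloneqq \{\tau|_{\M_0} : \tau \in X\}$, one checks via Proposition~\ref{prop:UnitBallDensity} (Kaplansky density) and a standard $3\epsilon$-argument that the approximations assembled at the finite stages upgrade to exact/approximate versions of the property in $(\M_0,X_0)$; the subtle point throughout is to ensure one can always find witnesses \emph{inside $\M_0$ itself}, rather than merely in $\M$, which is precisely what the chain construction secures. For factoriality specifically, one invokes Lemma~\ref{lem:separable-face} directly, which already packages exactly this kind of chain argument and produces a separable $C^*$-algebra $A \subseteq \M$ with $S \subseteq A$ for which $\{\tau|_A : \tau \in X\}$ is a closed face; passing to the $\|\cdot\|_{2,X}$-closure $\M_0$ of $A$ and using Corollary~\ref{cor:dense-subalgebra}\ref{item:dense-subalg2} shows $(\M_0,X_0)$ is factorial.

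The main obstacle I anticipate is in parts \ref{thm:sep-Gamma} and \ref{thm:sep-CPoU}, where \emph{factoriality and} property~$\Gamma$ (resp.\ CPoU) must be obtained \emph{together} in the separable subalgebra. One cannot independently run the two chain constructions; one must interleave them, at odd stages adjoining the elements needed to detect property~$\Gamma$ (or CPoU) and at even stages adjoining the separating elements of Lemma~\ref{lem:separable-face} that keep $X_0$ a face. Because the $\Gamma$/CPoU witnesses involve only finite subsets of $\M_0$ and the factoriality witnesses involve continuous affine functions on $T(\M_0) \to T(\M_0)$ restrictions, the two processes are compatible: each can be carried out with countably many new elements per stage without disturbing the other. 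A secondary technical care point, already flagged in the text after Lemma~\ref{lem:semidiscrete-dense}, is that for {}amenability one cannot naively restrict the c.p.c.\ approximations of $\mathrm{id}_\M$ to $\M_0$ (they need not land in $\M_0$ and need not approximate $\mathrm{id}_{\M_0}$ in $\|\cdot\|_{2,X_0}$); this is precisely why the fibrewise characterisation of Theorem~\ref{thm:amenable} is the right tool: it suffices to adjoin, at each stage, data witnessing uniform amenability of $\tau|_{\M^{(n)}}$ for a countable family of traces, but even more cleanly, since $\pi_\tau(\M_0)'' \cong \pi_\tau(\M)''$ for all $\tau \in X$ by Corollary~\ref{cor:dense-subalgebra}\ref{item:dense-subalg1} combined with Proposition~\ref{prop:tracial-completion}\ref{item:completion-fibres}, and each $\pi_\tau(\M)''$ is semidiscrete, \emph{every} $\|\cdot\|_{2,X}$-closed subalgebra inherits {}amenability for free via Theorem~\ref{thm:amenable}. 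Thus for {}amenability, condition~\ref{defn:sep:unbounded} reduces to the purely $C^*$-algebraic fact that a separable subset of a $C^*$-algebra sits inside a separable subalgebra, with no interleaving needed at all; this is the cleanest case, and it is worth remarking in the proof that the fibrewise characterisation makes {}amenability behave much better under separabilisation than the definition would suggest.
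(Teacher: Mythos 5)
Your overall architecture---chain constructions for condition \ref{defn:sep:unbounded} of Definition~\ref{defn:sep}, inductive-limit permanence for condition \ref{defn:sep:closed}, local characterisations for condition \ref{defn:sep:sepS}, and interleaving the $\Gamma$/CPoU chains with the factoriality chain---matches the paper's proof. However, there is a genuine error used in two places: the identification $\pi_\tau(\M_0)'' \cong \pi_\tau(\M)''$ via Corollary~\ref{cor:dense-subalgebra}\ref{item:dense-subalg1} and Proposition~\ref{prop:tracial-completion}\ref{item:completion-fibres} requires $\M_0$ to be $\|\cdot\|_{2,X}$-\emph{dense} in $\M$. The subalgebras arising in conditions \ref{defn:sep:unbounded} and \ref{defn:sep:sepS} are separable subalgebras of a possibly non-separable $\M$, so they are not dense, and $\pi_\tau(\M_0)''$ is in general a proper von Neumann subalgebra of $\pi_\tau(\M)''$. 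This breaks your argument for condition \ref{defn:sep:sepS} for factoriality (where, in addition, an extreme trace of $X$ need not restrict to an extreme trace of $X_0$, and a subalgebra being a factor says nothing about the ambient algebra being one), and it breaks the claim that amenability passes ``for free'' to every $\|\cdot\|_{2,X}$-closed subalgebra, which is what you use to dispose of condition \ref{defn:sep:unbounded} for amenability with no work.

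What the paper does instead: for condition \ref{defn:sep:sepS} of factoriality it argues directly with the definition of a face---if $\tau \in X$ is a non-trivial convex combination of $\tau_1,\tau_2 \in T(\M)$, then restricting to a factorial tracially complete subalgebra containing any prescribed finite set $\mathcal F$ shows $\tau_1$ agrees on $\mathcal F$ with some trace in $X$, so $\tau_1$ is a weak$^*$-limit of traces in $X$ and hence lies in $X$. For condition \ref{defn:sep:unbounded} of amenability it builds an increasing chain of separable $\|\cdot\|_{2,X}$-closed subalgebras $\M_n$ in which each inclusion $\M_n \hookrightarrow \M_{n+1}$ is tracially nuclear (by adjoining the images of the finite dimensional approximations at each stage), and then applies Lemma~\ref{lem:semidiscrete-dense} to the closure of the union. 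Your stronger conclusion---that arbitrary tracially complete subalgebras of an amenable $(\M,X)$ are amenable---is in fact true, but the proof must go through injectivity rather than your isomorphism: $\pi_\tau(\M_0)''$ is the range of a trace-preserving conditional expectation on the semidiscrete, hence injective, algebra $\pi_\tau(\M)''$, so it is injective and, by Connes' theorem, semidiscrete, at which point Theorem~\ref{thm:amenable} applies. If you want to keep your shortcut you must supply that argument; as written the step is unjustified.
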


In the case of the last two conditions, we include the factoriality condition as we have not defined property $\Gamma$ or CPoU in the non-factorial setting.

The rest of this subsection is devoted to the proof.  We will show each condition separately.  For factoriality, most of the work is contained in Lemma~\ref{lem:separable-face}.

\begin{proof}[Proof of Theorem \ref{thm:sep}\ref{thm:sep-factorial}]
	A sequential direct limit of factorial tracially complete $C^*$-algebras is factorial by Proposition~\ref{prop:UTCInductiveLimit}.  Now suppose $(\M, X)$ is a factorial tracially complete $C^*$-algebra and $S \subseteq \M$ is a $\|\cdot\|_{2, X}$-separable subset of $\M$.
	
	By Lemma~\ref{lem:separable-face}, there is a $\|\cdot\|$-separable unital $C^*$-algebra $A \subseteq \mathcal M$ containing a $\|\cdot\|_{2, X}$-dense subset of $S$ such that 
	\begin{equation}
		X_A \coloneqq \{ \tau|_A : \tau \in X \} \subseteq T(A)
	\end{equation}
	is a closed face.  Let $(\mathcal N, Y)$ denote the tracial completion of $A$ with respect to $X_A$ and note that $(\mathcal N, Y)$ is factorial by Proposition~\ref{prop:tracial-completion}\ref{item:completion-factorial}.  The inclusion $A \rightarrow \M$ extends to a morphism $\phi \colon (\mathcal N, Y) \rightarrow (\mathcal M, X)$ by Proposition~\ref{prop:extend-by-continuity}.  Note that $\phi$ is an embedding whose range contains $S$.  Also, the unit ball of $\phi(\mathcal N)$ is $\|\cdot\|_{2, X}$-closed in the unit ball of $\mathcal M$, and it follows from Lemma~\ref{lem:UnitBallDensity} that $\phi(\mathcal{N})$ is $\|\cdot\|_{2, X}$-closed in $\mathcal M$. 

Finally, suppose that $(\M,X)$ is a tracially complete $C^*$-algebra such that every $\|\cdot\|_{2,X}$-separable subset $S$ is contained in a factorial tracially complete $C^*$-subalgebra of $(\M,X)$.
To show that $(\M,X)$ is factorial, let $\tau_1,\tau_2,\tau \in T(\M)$ be such that $\tau \in X$ and $\tau$ is a non-trivial convex combination of $\tau_1$ and $\tau_2$, and we will check that $\tau_1 \in X$.
For any finite subset $\mathcal F$ of $\M$, we can find a factorial tracially complete $C^*$-subalgebra $(\M_0,X_0)$ such that $\mathcal F\subseteq \M_0$.
Since $\tau|_{\M_0}$ is a non-trivial convex combination of $\tau_1|_{\M_0}$ and $\tau_2|_{\M_0}$, and $X_0$ is a face, it follows that $\tau_1|_{\M_0} \in X_0$.
By definition of $X_0$, this means that there exists $\sigma_{\mathcal F} \in X$ such that $\tau_1|_{\M_0} = \sigma_{\mathcal F}|_{\M_0}$, so in particular, $\tau_1|_{\mathcal F} = \sigma_{\mathcal F}|_{\mathcal F}$.
By doing this over all finite subsets $\mathcal F$, we see that $\tau_1$ is a weak$^*$-limit of traces in $X$, and therefore, $\tau_1\in X$, as required.
\end{proof}

It is somewhat subtle to show that {}amenability is preserved by inductive limits.  Even in the tracial von Neumann algebra setting, the only known proof that the weak closure of an increasing union of semidiscrete von Neumann algebras is semidiscrete relies on the equivalence of semidiscreteness and injectivity from Connes' work (\cite{Co76}). Our argument for parts \ref{defn:sep:unbounded} and \ref{defn:sep:closed} of Definition~\ref{defn:sep} goes via the extension result for tracially nuclear morphisms (Lemma \ref{lem:semidiscrete-dense}), proved using the local-to-global characterisations of {}amenability in Section \ref{sec:amenable}.

\begin{proof}[Proof of Theorem \ref{thm:sep}\ref{thm:sep-semidiscrete}]

For condition \ref{defn:sep:sepS} of Definition \ref{defn:sep}, fix a finite subset $\mathcal F$ of $\mathcal M$ and $\epsilon>0$.  Use the hypothesis to find an {}amenable tracially complete $C^*$-subalgebra $(\mathcal M_0,X_0)$ of $\mathcal M$ containing $\mathcal F$.  By Arveson's extension theorem, the maps witnessing {}amenability of $(\mathcal M_0,X_0)$ can be extended to give a finite dimensional $C^*$-algebra $F$ and c.p.c.\ maps 
\begin{equation}
    \mathcal M \overset\psi\longrightarrow F \overset\phi\longrightarrow \mathcal M
\end{equation} 
with $\|\phi(\psi(x))-x\|<\epsilon$ for $x\in \mathcal F$. Working with a net indexed over finite sets $\mathcal F$ and $\epsilon>0$ gives {}amenability of $\mathcal M$.

	Suppose $\big((\M_n, X_n)\big)_{n=1}^\infty$ is a sequence of {}amenable tracially complete $C^*$-algebras and $\phi_n^{n+1} \colon (\M_n, X_n) \rightarrow (\M_{n+1}, X_{n+1})$ is an embedding for each $n \geq 1$ and consider $(\M, X) \coloneqq \varinjlim\, \big((\M_n, X_n), \phi_n^{n+1}\big)$.  If $A \subseteq \mathcal M$ is the $C^*$-algebraic direct limit of the $\mathcal M_n$, then the inclusion $A \hookrightarrow \mathcal M$ is tracially nuclear (using Arveson's extension theorem as above).  By Lemma~\ref{lem:semidiscrete-dense}, $(\mathcal M, X)$ is {}amenable, and this shows Definition~\ref{defn:sep}\ref{defn:sep:closed}.

	Suppose now that $(\mathcal M, X)$ is an {}amenable tracially complete $C^*$-algebra.  We'll first show that for every unital  $\|\cdot\|_{2, X}$-separable, $\|\cdot\|_{2, X}$-closed  $C^*$-subalgebra $\M_0 \subseteq \M$, there is a   $\|\cdot\|_{2, X}$-separable, $\|\cdot\|_{2, X}$-closed $C^*$-subalgebra $\mathcal N \subseteq \M$ containing $\M_0$ so that the inclusion  $\M_0  \hookrightarrow \mathcal N$ is tracially nuclear.
	
	Fix a $\|\cdot\|$-separable, $\|\cdot\|_{2, X}$-dense $C^*$-algebra $A \subseteq \mathcal M_0$ and an increasing sequence $(\mathcal F_n)$ of finite subsets of $A$ with $\|\cdot\|$-dense union, and for each $n \geq 1$, use Proposition~\ref{prop:bounded-cpap} to construct a finite dimensional $C^*$-algebra $F_n$ and c.p.c.\ maps 
	\begin{equation}
		A \overset{\psi_n}{\longrightarrow} F_n \overset{\phi_n}{\longrightarrow} \mathcal M
	\end{equation}
	so that
	\begin{equation}
		\|\phi_n(\psi_n(a)) - a \|_{2, X} < \frac1n, \qquad a \in \mathcal F_n.
	\end{equation}
	Let $\mathcal N$ denote the tracially complete $C^*$-subalgebra of $\M$ generated by $\mathcal M_0 \cup \bigcup_{n=1}^\infty \phi_n(\mathcal F_n)$.  By construction, $\mathcal N$ is $\|\cdot\|_{2, X}$-separable and the inclusion $A \hookrightarrow \mathcal N$ is tracially nuclear and, by Lemma~\ref{lem:semidiscrete-dense}, so is the inclusion $\mathcal M_0 \hookrightarrow \mathcal N$.
	
	Applying this result inductively, given a $\|\cdot\|_{2, X}$-separable subset $S \subseteq \mathcal M$, we may construct an increasing sequence $(\mathcal M_n)_{n=1}^\infty$ of $\|\cdot\|_{2, X}$-separable, $\|\cdot\|_{2, X}$-closed subalgebras of $\mathcal M$ such that $S \subseteq \mathcal M_1$ and the inclusions $\mathcal M_n \hookrightarrow \mathcal M_{n+1}$ are tracially nuclear.  If $\mathcal N$ is the $\|\cdot\|_{2, X}$-closure of the union of the $\mathcal M_n$, then $\mathcal N$ is $\|\cdot\|_{2, X}$-separable.  If $A \subseteq \mathcal N$ is the $\|\cdot\|$-closure of the union of the $\mathcal M_n$, then the inclusion $A \hookrightarrow \mathcal N$ is tracially nuclear (by the same application of Arveson's extension theorem used in the previous parts), and hence $\mathcal N$ is {}amenable by Lemma~\ref{lem:semidiscrete-dense}.  This shows Definition~\ref{defn:sep}\ref{defn:sep:unbounded}.
\end{proof}

The proof that hyperfiniteness is separably inheritable is standard.

\begin{proof}[Proof of Theorem \ref{thm:sep}\ref{thm:sep-hyp-fin}]
	Verifying Definition~\ref{defn:sep}\ref{defn:sep:closed} and \ref{defn:sep:sepS} is a standard $\epsilon/3$ argument.  To see Definition~\ref{defn:sep}\ref{defn:sep:unbounded} suppose $(\M, X)$ is a hyperfinite tracially complete $C^*$-algebra and let $S$ be a $\|\cdot\|_{2, X}$-separable subset of $\mathcal M$.
 
    Let $\mathcal M_1$ denote the tracially complete $C^*$-subalgebra of $\M$ generated by $S$.  As $\mathcal M_1$ is $\|\cdot\|_{2,X}$-separable, fix an increasing sequence $(\mathcal F_k)$ of finite sets with dense union in $\M_1$. For  each $k \geq 1$, let $F_k \subseteq \M$ be a finite dimensional $C^*$-algebra such that for every $a \in \mathcal F_k$, there is a $b \in F_k$ with $\|a - b\|_{2, X} < \frac1k$.  Let $\mathcal M_2$ denote the  tracially complete $C^*$-subalgebra generated by $\M_1$ and the $F_k$.
	
	Iterating this construction, there is an increasing sequence of $\|\cdot\|_{2, X}$-closed $C^*$-algebras $\mathcal M_n \subseteq \M$ such that for every $n \geq 1$, finite set $\mathcal F \subseteq \M_n$, and $\epsilon > 0$, there is a finite dimensional $C^*$-algebra $F \subseteq \M_{n+1}$ such that for all $a \in \mathcal F$ there is a $b \in F$ with $\|a - b\|_{2, X} < \epsilon$.  Then the $\|\cdot\|_{2,X}$-closure of the union of the $\M_n$ is a tracially complete hyperfinite subalgebra of $\M$ containing $S$.
\end{proof}

The separable inheritability of McDuff's property and property $\Gamma$ are easy consequences of the local characterisation of these properties in Proposition~\ref{prop:McDuff}\ref{item:McDuff-approx} and Proposition~\ref{prop:Gamma-indlim} respectively.

\begin{proof}[Proof of Theorem \ref{thm:sep}\ref{thm:sep-McDuff} and \ref{thm:sep-Gamma}]
	McDuff's property is preserved by sequential inductive limits by Corollary~\ref{cor:McDuff-indlim-ultraproducts}, and the combination of factoriality and property $\Gamma$ is preserved under sequential inductive limits by Propositions~\ref{prop:UTCInductiveLimit} and~\ref{prop:Gamma-indlim}, respectively. In particular, Definition~\ref{defn:sep}\ref{defn:sep:closed} holds for both conditions.  Likewise, Proposition~\ref{prop:McDuff}\ref{item:McDuff-approx} shows that Definition~\ref{defn:sep}\ref{defn:sep:sepS} holds for McDuff's property, while   Proposition~\ref{prop:Gamma}\ref{item:Gamma-approx} (together with the fact that Definition~\ref{defn:sep}\ref{defn:sep:sepS} holds for factoriality) handles this condition for factorial tracially complete $C^*$-algebras with property $\Gamma$.

For Definition~\ref{defn:sep}\ref{defn:sep:unbounded}, let $(\mathcal M, X)$ be a McDuff tracially complete $C^*$-algebra and let $S \subseteq \M$ be a $\|\cdot\|_{2, X}$-separable set; write $\mathcal M_1$ for the $\|\cdot\|_{2, X}$-closed $C^*$-subalgebra of $\M$ generated by $S$ which is necessarily $\|\cdot\|_{2,X}$-separable. Inductively, given a $\|\cdot\|_{2,X}$-separable, $\|\cdot\|_{2,X}$-closed $C^*$-subalgebra $\mathcal M_n$ of $\mathcal M$,   Proposition~\ref{prop:McDuff}\ref{item:McDuff-approx} provides a sequence $(v_m)_{m=1}^\infty \subseteq \M$ such that for any finite subset $\mathcal F \subseteq \mathcal M_n$ and $\epsilon>0$, there is $m \geq 1$ such that for all $a \in \mathcal F$,
	\begin{equation}
		\|[v_m, a]\|_{2, X} < \epsilon,\ \|v_m^*v_m + v_mv_m^* - 1_\mathcal M\|_{2, X} <\epsilon,\ 
		\text{and}\
		\|v_m^2\|_{2, X} < \epsilon.
	\end{equation}
	Let $\M_{n+1}$ be the $\|\cdot\|_{2, X}$-closed $C^*$-subalgebra of $\M$ generated by $\M_n\cup\{v_m:m\in\mathbb N\}$. Let $\mathcal N$ denote the $\|\cdot\|_{2, X}$-closure of the union of the $\M_n$.  Then $\mathcal N$ is a $\|\cdot\|_{2, X}$-separable, $\|\cdot\|_{2, X}$-closed $C^*$-subalgebra of $\M$ containing $S$ and satisfying the approximation property in Proposition~\ref{prop:McDuff}\ref{item:McDuff-approx}.  So $\mathcal N$ is McDuff, and Definition~\ref{defn:sep}\ref{defn:sep:unbounded} holds.

 The argument for Definition~\ref{defn:sep}\ref{defn:sep:unbounded} works very similarly for the combination of factoriality and property $\Gamma$. For the inductive step, given a $\|\cdot\|_{2,X}$-separable, $\|\cdot\|_{2,X}$-closed $C^*$-subalgebra of $\mathcal M$ which is factorial with the tracially complete structure induced from $\mathcal M$, we replace the sequence $(v_m)_{m=1}^\infty$ with a sequence $(p_m)_{m=1}^\infty$ of contractions such that for any finite subset $\mathcal F\subset\mathcal M_n$ and $\epsilon>0$, there is $m$ with 
 	\begin{equation}
		\|p_m - p_m^2 \|_{2, X}<\epsilon,\ \|[p_m, a]\|_{2, X}<\epsilon\ \text{and}\ \sup_{\tau\in X}\big|\tau(ap_m) - \frac12 \tau(a)\big|<\epsilon
	\end{equation}
 for all $a\in \mathcal F$, which is given by the characterisation of property $\Gamma$ in Proposition \ref{prop:Gamma}\ref{item:Gamma-approx}. Now use the corresponding part of Theorem \ref{thm:sep}\ref{thm:sep-factorial} to obtain a $\|\cdot\|_{2,X}$-separable, $\|\cdot\|_{2,X}$-closed $C^*$-subalgebra $\mathcal M_{n+1}$ of $\mathcal M$ containing $\mathcal M_{n}\cup\{p_m:m\in\mathbb N\}$ which is factorial in the induced tracially complete structure.  Just as in the previous paragraph, the $\|\cdot\|_{2,X}$-closure of the union of the $\mathcal M_n$ is factorial (by Theorem \ref{thm:sep}\ref{thm:sep-factorial}) with property $\Gamma$ (via Proposition~\ref{prop:Gamma}\ref{item:Gamma-approx}).
\end{proof}

Finally we establish that CPoU is strongly separably inheritable for factorial tracially complete $C^*$-algebras.  This works in a very similar fashion to separable inheritability of the McDuff property and property $\Gamma$.

\begin{proof}[Proof of Theorem \ref{thm:sep}\ref{thm:sep-CPoU}]
	The combination of factoriality and CPoU  is preserved under inductive limits by Propositions~\ref{prop:UTCInductiveLimit} and~\ref{prop:InductiveLimitCPoU}. Similarly, Proposition~\ref{CPoU:Ultra:FiniteSet}\ref{item:CPOU-local} (together with the fact that Definition~\ref{defn:sep}\ref{defn:sep:sepS} holds for factoriality) shows that the conjunction of factoriality and CPoU satisfies Definition~\ref{defn:sep}\ref{defn:sep:sepS}.
 
	Suppose now that $(\M, X)$ is a factorial tracially complete $C^*$-algebra satisfying CPoU.  Let $S \subseteq \M$ be a  $\|\cdot\|_{2, X}$-separable subset of $\M$.  We start by constructing an increasing sequence $(\mathcal{N}_n)_{n=1}^\infty $ of $\|\cdot\|_{2, X}$-separable, $\|\cdot\|_{2, X}$-closed $C^*$-subalgebras of $\M$ such that
	\begin{itemize}
		\item  $S \subseteq \mathcal{N}_1$,
		\item each $\mathcal{N}_n$ is factorial as a tracially complete $C^*$-algebra, and
		\item for each $k, n \geq 1$, if $a_1, \ldots, a_k \in (\mathcal{N}_n)_+$ and $\delta > 0$ with
		\begin{equation}
			\sup_{\tau \in X} \min_{1 \leq i \leq k} \tau(a_i) < \delta,
		\end{equation}
		then there are projections $p_1, \ldots, p_k \in (\mathcal{N}_{n+1})^\omega \cap \mathcal{N}_n'$ such that
		\begin{equation}
			\sum_{j=1}^k p_j = 1_{\mathcal M^\omega} \qquad \text{and} \qquad \tau(a_i p_i) \leq \delta \tau(p_i)
		\end{equation}
		for all $i = 1, \ldots, k$ and $\tau \in X^\omega$.
	\end{itemize}
	We will construct the algebras $\mathcal{N}_n$ by induction starting with $\mathcal{N}_1$ being any unital factorial $\|\cdot\|_{2, X}$-closed $C^*$-subalgebra of $\M$ containing $S$, which exists as factoriality is separably inheritable (Theorem \ref{thm:sep}\ref{thm:sep-factorial}).
	
	Suppose $n \geq 1$ and $\mathcal{N}_n$ has been constructed.  Let $S_n$ be a countable, $\|\cdot\|_{2, X}$-dense subset of $(\mathcal{N}_n)_+$ and let $K_n$ be the set of all pairs \mbox{$\kappa = (\mathcal G_\kappa, \delta_\kappa)$} where $\mathcal G_\kappa \subseteq S_n$ is a finite set and $\delta_\kappa > 0$ is rational with
	\begin{equation}
		\sup_{\tau \in X} \min_{a \in \mathcal G_\kappa} \tau(a) < \delta_\kappa.
	\end{equation}
	Note that $K_n$ is a countable set.  Since $(\M, X)$ satisfies CPoU, for each $\kappa \in K_n$ and $a \in \mathcal G_\kappa$, there are projections $p_{\kappa, a} \in \M^\omega \cap \mathcal{N}_n'$ such that
	\begin{equation}
		\sum_{b \in \mathcal G_\kappa} p_{\kappa, b} = 1_{\M^\omega} \qquad \text{and} \qquad \tau(a p_{\kappa, a}) \leq \delta \tau(p_{\kappa, a})
	\end{equation}
	for all $a \in \mathcal G_\kappa$ and $\tau \in X^\omega$.  For each $\kappa \in K_n$ and $a \in \mathcal G_\kappa$, let $(p_{\kappa, a, m})_{m=1}^\infty \subseteq \M$ be a sequence in $\M$ representing $p_{\kappa, a}$.  As factoriality is separably inheritable (part \ref{thm:sep-factorial} of the theorem), there is a unital $\|\cdot\|_{2, X}$-separable, $\|\cdot\|_{2, X}$-closed $C^*$-subalgebra $\mathcal{N}_{n+1} \subseteq \M$ which contains $\mathcal{N}_n$ and each $p_{\kappa, a, m}$ for all $m \geq 1$, $\kappa \in K_n$ and $a \in \mathcal G_\kappa$ and which is factorial as a tracially complete $C^*$-algebra.  Then $\mathcal{N}_{n+1}$ satisfies the required properties.

	Let $\mathcal N \subseteq \M$ be the $\|\cdot\|_{2, X}$-closure of the union of the algebras $\mathcal{N}_n$.  Then $\mathcal{N}$ is a $\|\cdot\|_{2, X}$-closed, $\|\cdot\|_{2, X}$-separable $C^*$-subalgebra of $\M$ which contains $S$. As each $\mathcal N_n$ is factorial, so too is $\mathcal N$ by Proposition \ref{prop:UTCInductiveLimit}.  The third condition on the $\mathcal N_n$ ensures that $\mathcal{N}$ satisfies CPoU, and this completes the proof.
\end{proof}

\subsection{Proof of Theorem~\ref{thm:simplex-is-nuclear}}\label{sec:sep-choquet} The final section of the appendix is devoted to finite dimensional approximations of non-metrisable Choquet simplices. We show how to reduce Theorem \ref{thm:simplex-is-nuclear} to the metrisable case (\cite[Lemma~2.8]{Elliott-Niu15}, which is a corollary of the fundamental work of Lazar and Lindenstrauss in \cite{Lazar-Lindenstrauss71}).  The strategy is another variation of Blackadar's separable inheritability, this time for compact convex sets; roughly, we want to know the property of being a Choquet simplex is ``metrisably inheritable'' among compact convex sets.  We find it conceptually easier to do this in the dual picture and consider separably inheritable properties of Archimedean order unit spaces.

Choquet simplices can be characterised in terms of Archimedean order unit spaces using Kadison duality as follows.  An ordered vector space $V$ satisfies \emph{Riesz interpolation} if for all $f_1, f_2, g_1, g_2 \in V$ with $f_i \leq g_j$ for $i, j = 1,2$, there is $h \in V$ with $f_i \leq h \leq g_j$ for $i, j = 1, 2$.

\begin{theorem}[{\cite[Corollary~II.3.11]{Alf71}}]\label{thm:Riesz-interp}
	Suppose $X$ is a compact convex set and $V$ is an Archimedean order unit space.
	\begin{enumerate}
		\item $X$ is a Choquet simplex if and only if $\mathrm{Aff}(X)$ has Riesz interpolation.
		\item\label{item:riesz-implies-choquet} $V$ has Riesz interpolation if and only if $S(V)$ is a Choquet simplex.
	\end{enumerate}
\end{theorem}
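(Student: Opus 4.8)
The plan is to deduce both statements from a single order-theoretic equivalence, using Kadison duality to move between the two pictures. First I would observe that parts (1) and (2) are equivalent to one another: given (2), apply it to $V \coloneqq \mathrm{Aff}(X)$, whose state space $S(\mathrm{Aff}(X))$ is affinely homeomorphic to $X$ by Kadison duality, to obtain (1); conversely, given (1), apply it to $X \coloneqq S(V)$ and use the isomorphism $\mathrm{Aff}(S(V)) \cong V$. So it suffices to prove the order-unit-space statement (2): an Archimedean order unit space $V$ has Riesz interpolation if and only if $S(V)$ is a Choquet simplex.

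For (2) I would factor the argument through the dual cone. The positive functionals on $V$ are exactly the nonnegative scalar multiples of states, so the positive cone $V^*_+$ of the dual is the cone $\{ t\sigma : t \geq 0,\ \sigma \in S(V)\}$ generated by the base $S(V)$. The first key step is the classical Riesz duality for ordered vector spaces: $V$ has the Riesz interpolation property if and only if $V^*$ is a vector lattice in the dual ordering. The forward direction is proved by defining the lattice operations on $V^*$ via the Riesz--Kantorovich formulas, e.g.\ $(\phi \vee \psi)(v) = \sup\{\phi(v_1)+\psi(v_2) : v_1, v_2 \geq 0,\ v_1 + v_2 = v\}$ for $v \geq 0$, and checking, using interpolation together with the order-unit norm to control the suprema, that these extend to well-defined bounded functionals furnishing a least upper bound; the reverse direction uses Hahn--Banach separation to produce a functional witnessing a failure of interpolation and contradicting the lattice structure of $V^*$.

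The second, and deepest, step is the Choquet--Meyer characterization of simplices: the generating cone $V^*_+$ (equivalently, the cone over the base $S(V)$) is a lattice cone if and only if $S(V)$ is a Choquet simplex in the sense of the excerpt, i.e.\ every point of $S(V)$ has a unique representing measure supported on $\partial_e S(V)$. This is the heart of the matter and the step I expect to be the main obstacle, since it is where the purely order-theoretic lattice property must be converted into the measure-theoretic uniqueness statement. The standard route is to order the representing measures of a fixed point by dilation (balayage) and to show that the existence of suprema in the cone corresponds exactly to the existence of a largest measure in this order, so that uniqueness of the maximal, boundary-supported measure matches the lattice property. In the non-metrizable setting one must read ``supported on $\partial_e X$'' in the Baire sense used in the excerpt and invoke the Choquet--Bishop--de Leeuw theorem for existence of such measures.

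Assembling the pieces, we get the chain $V$ has RIP $\iff$ $V^*$ is a vector lattice $\iff$ the cone over $S(V)$ is a lattice cone $\iff$ $S(V)$ is a Choquet simplex, which is exactly (2), and (1) then follows by Kadison duality as above. The genuinely hard analytic input is the Choquet--Meyer equivalence; everything else is functional analysis (Hahn--Banach and the Riesz--Kantorovich formulas) together with the bookkeeping of Kadison duality already recorded in the excerpt.
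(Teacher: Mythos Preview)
Your reduction of (1) and (2) to one another via Kadison duality is exactly what the paper does. The difference is that the paper does not attempt to prove the underlying equivalence at all: it simply cites part (1) as \cite[Corollary~II.3.11]{Alf71} and then derives (2) from (1) using the isomorphism $V \cong \mathrm{Aff}(S(V))$. Your proposal instead sketches a self-contained proof of the equivalence via the Riesz--Kantorovich formulas and the Choquet--Meyer characterisation of simplices as compact convex sets whose generating cone is a lattice. That outline is correct and is essentially how Alfsen's result is established, but it goes well beyond what the paper undertakes; the theorem is stated here purely as a quotation of a classical fact, with Kadison duality doing the only work in the displayed proof.
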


\begin{proof}
	The first statement is \cite[Corollary~II.3.11]{Alf71}, and the second statement follows from the first by Kadison duality (see Section~\ref{sec:simplices}).
\end{proof}

We will show Riesz interpolation is a separably inheritable property in Proposition~\ref{prop:sep-choquet}.  To facilitate this, we show that Riesz interpolation can be detected on a dense set.

\begin{lemma}\label{lem:dense-interpolation}
	Let $V$ be an Archimedean order unit space.  Then $V$ satisfies Riesz interpolation if and only if there exists a unital dense rational subspace $V_0 \subseteq V$ with Riesz interpolation.
\end{lemma}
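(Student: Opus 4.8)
The plan is to dispatch the forward implication almost immediately and to concentrate on the converse, which is the substantive direction. For the forward implication, observe that $V$ is itself a $\mathbb Q$-linear subspace of $V$ which is unital, norm-dense, and has Riesz interpolation, so one may simply take $V_0 \coloneqq V$. (If one wants a genuinely smaller such subspace, one can start from any unital dense rational subspace $W_0 \subseteq V$ — e.g.\ the $\mathbb Q$-span of a norm-dense subset containing $1_V$ — and build $V_0$ by a closure construction over $\omega$ stages: at stage $n$ adjoin to $W_n$ one Riesz interpolant in $V$ for each admissible quadruple in $W_n$, then pass to the $\mathbb Q$-span, and set $V_0 \coloneqq \bigcup_n W_n$; any quadruple in $V_0$ lies in some $W_n$, so its interpolant lies in $W_{n+1} \subseteq V_0$.)

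For the converse, assume $V_0 \subseteq V$ is a unital dense rational subspace with Riesz interpolation; the crucial structural point is that $\mathbb Q\cdot 1_V \subseteq V_0$, so $V_0$ is stable under the perturbations $x \mapsto x \pm \delta 1_V$ for rational $\delta > 0$. First I would record that iterating the $2$-for-$2$ Riesz interpolation property yields its $m$-for-$p$ version: for any $f_1,\dots,f_m,g_1,\dots,g_p \in V_0$ with $f_i \le g_j$ for all $i,j$, there is $h \in V_0$ with $f_i \le h \le g_j$ for all $i,j$ (a standard induction on $m+p$ using only the partial order and addition). Next comes the key approximate-interpolation statement: \emph{for any $f_1,\dots,f_m,g_1,\dots,g_p \in V$ with $f_i \le g_j$ for all $i,j$ and any $\epsilon > 0$, there is $h \in V_0$ with $f_i - \epsilon 1_V \le h \le g_j + \epsilon 1_V$ for all $i,j$.} To see this, pick a rational $\delta$ with $0 < \delta \le \epsilon/3$, use density to choose $f_i', g_j' \in V_0$ with $\|f_i - f_i'\| < \delta$ and $\|g_j - g_j'\| < \delta$, note that $f_i' \le f_i + \delta 1_V \le g_j + \delta 1_V \le g_j' + 2\delta 1_V$ with $g_j' + 2\delta 1_V \in V_0$, apply the $m$-for-$p$ interpolation in $V_0$ to the configuration $\{f_i'\}_i \le \{g_j' + 2\delta 1_V\}_j$ to obtain $h \in V_0$, and read off $f_i - \epsilon 1_V \le f_i - \delta 1_V \le f_i' \le h \le g_j' + 2\delta 1_V \le g_j + 3\delta 1_V \le g_j + \epsilon 1_V$.

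Finally, given $f_1,f_2,g_1,g_2 \in V$ with $f_i \le g_j$ for all $i,j$, I would construct a Cauchy sequence $(h_n)$ in $V_0$ whose limit is the desired interpolant. Set $\epsilon_n \coloneqq 2^{-n}$, let $h_1 \in V_0$ be an interpolant for $\{f_1,f_2\} \le \{g_1,g_2\}$ with tolerance $\epsilon_1$, and given $h_n \in V_0$ with $f_i - \epsilon_n 1_V \le h_n \le g_j + \epsilon_n 1_V$ for $i,j \in \{1,2\}$, apply the approximate-interpolation statement with tolerance $\epsilon_{n+1}$ to the six-element configuration $\{f_1,f_2,h_n - \epsilon_n 1_V\} \le \{g_1,g_2,h_n + \epsilon_n 1_V\}$ — which is consistent precisely because the bounds on $h_n$ give $f_i \le h_n + \epsilon_n 1_V$ and $h_n - \epsilon_n 1_V \le g_j$ (and trivially $h_n - \epsilon_n 1_V \le h_n + \epsilon_n 1_V$). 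This produces $h_{n+1} \in V_0$ satisfying both $f_i - \epsilon_{n+1} 1_V \le h_{n+1} \le g_j + \epsilon_{n+1} 1_V$ and $h_n - (\epsilon_n + \epsilon_{n+1})1_V \le h_{n+1} \le h_n + (\epsilon_n + \epsilon_{n+1})1_V$, i.e.\ $\|h_{n+1} - h_n\| \le \epsilon_n + \epsilon_{n+1}$. Since $\sum_n(\epsilon_n + \epsilon_{n+1}) < \infty$ and $V$ is complete, $(h_n)$ converges to some $h \in V$; and since the cone $V_+$ is norm-closed in an Archimedean order unit space (if $v_k \ge 0$ and $v_k \to v$ then $v \ge v_k - \epsilon 1_V \ge -\epsilon 1_V$ eventually, whence $v \ge 0$), passing to the limit in $f_i - \epsilon_n 1_V \le h_n \le g_j + \epsilon_n 1_V$ gives $f_i \le h \le g_j$, so $V$ has Riesz interpolation. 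The main obstacle is exactly this last point: because interpolation in $V$ may be \emph{tight} — some $f_i$ equal to some $g_j$ — a single approximation inside $V_0$ cannot be expected to land an exact interpolant, and it is the successive-approximation scheme combined with completeness of $V$ and closedness of $V_+$ that resolves this.
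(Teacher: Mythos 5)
Your argument is correct, but it follows a genuinely different route from the paper's. The paper proves the nontrivial direction by duality: since states on $V_0$ and on $V$ are automatically bounded and $V_0$ is dense, restriction gives an affine homeomorphism $S(V)\to S(V_0)$; Riesz interpolation for $V_0$ makes $S(V_0)$ a Choquet simplex by a theorem of Goodearl, hence $S(V)$ is a Choquet simplex, and Kadison duality (Theorem~\ref{thm:Riesz-interp}) converts this back into Riesz interpolation for $V$. You instead work entirely inside $V$: you upgrade $2$-for-$2$ interpolation in $V_0$ to $m$-for-$p$, use density and the rational perturbations $x\mapsto x\pm\delta 1_V$ (which is where unitality and rationality of $V_0$ enter) to get $\epsilon$-approximate interpolants in $V_0$ for configurations in $V$, and then run a successive-approximation scheme whose consistency at each stage is guaranteed by feeding the previous approximant back in as an extra pair of constraints; completeness of the norm and closedness of $V_+$ (the Archimedean property) then yield an exact interpolant in the limit. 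Your approach is self-contained and elementary -- it avoids both Goodearl's simplex theorem and the Alfsen/Kadison duality that the paper's three-line proof quietly leans on -- at the cost of being longer and of using completeness of $V$ explicitly (which the paper's conventions do build into the definition, so this is not a gap). The one point worth making explicit if you write this up is that the final step, deducing $v\ge 0$ from $v\ge-\epsilon 1_V$ for all $\epsilon>0$, is exactly the Archimedean axiom; you do flag this, and it is available here by hypothesis.
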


\begin{proof}
	The forward direction is clear by taking the subspace to be $V$.  In the other direction, suppose $V_0 \subseteq V$ is a unital dense rational subspace of $V$.  Since all states (i.e.~positive and unital functionals) on both $V_0$ and $V$ are bounded and $V_0$ is dense in $V$, the restriction map $S(V) \rightarrow S(V_0)$ is an affine homeomorphism.  As $V_0$ has Riesz interpolation, $S(V_0)$ is a Choquet simplex by \cite[Corollary~10.6]{Go86}, and hence $S(V)$ is a Choquet simplex.  So $V$ has Riesz interpolation.
\end{proof}

Finally, we arrive at the main separabilisation result needed in this subsection.  The proof is similar to the proofs in the previous subsection, and also to the proofs of separabilisation results in other settings, such as those in \cite[Section~II.8.5]{Bl06}

\begin{proposition}\label{prop:sep-choquet}
	Riesz interpolation is a separably inheritable property of Archimedean order unit spaces in the following sense.
	\begin{enumerate}
		\item\label{item:sep-choquet-subspace} If $V$ is an Archimedean order unit space with Riesz interpolation and $S$ is a separable subset of $V$, then there is a separable unital closed subspace $V_0 \subseteq V$ that contains $S$ and has Riesz interpolation.
		\item\label{item:sep-choquet-lim} For a sequence $(V_n)_{n=1}^\infty$ of separable Archimedean order unit spaces with Riesz interpolation and unital order embeddings $V_n \hookrightarrow V_{n+1}$, the inductive limit of the $V_n$ also has Riesz interpolation.
	\end{enumerate}
\end{proposition}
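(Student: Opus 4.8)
The plan is to reduce both assertions to Lemma~\ref{lem:dense-interpolation}: an Archimedean order unit space $W$ satisfies Riesz interpolation as soon as it contains a norm-dense unital $\mathbb{Q}$-subspace (a subset closed under addition and rational scalars, containing the order unit) with Riesz interpolation. Thus in each case it suffices to produce the ambient Archimedean order unit space together with such a dense subspace. Two routine structural facts will be used repeatedly: in any Archimedean order unit space the positive cone is norm-closed (e.g.\ $W_+=\{w:\|\,\|w\|1_W-w\,\|\le\|w\|\}$), and consequently a norm-closed linear subspace of $W$ containing $1_W$, with the inherited order and order unit, is again an Archimedean order unit space whose norm is the restriction of that of $W$.

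For part~\ref{item:sep-choquet-lim}: let $V_\infty \coloneqq \varinjlim(V_n,\iota_n^{n+1})$ be the algebraic direct limit. Since each connecting map is a unital order embedding it is isometric, so $V_\infty$ carries a well-defined order-unit norm, its positive cone (the union of the images of the $(V_n)_+$) makes each $V_n\to V_\infty$ an order embedding, and $V_\infty$ is Archimedean because each $V_n$ is. The inductive limit $V$ in the category of Archimedean order unit spaces is the norm-completion of $V_\infty$ with positive cone the closure of that of $V_\infty$; as the cone of the Archimedean space $V_\infty$ is already norm-closed in $V_\infty$, the inclusion $V_\infty\hookrightarrow V$ is an order embedding onto a norm-dense unital subspace. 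Now $V_\infty$ has Riesz interpolation: given $f_1,f_2,g_1,g_2\in V_\infty$ with $f_i\le g_j$ for all $i,j$, all four elements lie in the image of some $V_n$, where the inequalities hold by the order-embedding property, so Riesz interpolation in $V_n$ supplies an interpolant in $V_n\subseteq V_\infty$. Lemma~\ref{lem:dense-interpolation}, applied with $V_\infty$ as the dense unital $\mathbb{Q}$-subspace of $V$, then gives that $V$ has Riesz interpolation.

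For part~\ref{item:sep-choquet-subspace}: fix a countable norm-dense subset $S_0\subseteq S$ and build an increasing chain of countable unital $\mathbb{Q}$-subspaces $W_0\subseteq W_1\subseteq\cdots$ of $V$ by an exhaustion argument. Let $W_0$ be the $\mathbb{Q}$-span of $S_0\cup\{1_V\}$; given $W_n$, for each of the countably many quadruples $(f_1,f_2,g_1,g_2)\in W_n^4$ satisfying $f_i\le g_j$ for all $i,j$, use Riesz interpolation in $V$ to pick an interpolant in $V$, and let $W_{n+1}$ be the $\mathbb{Q}$-span of $W_n$ together with these (still countably many) interpolants. Set $V_0'\coloneqq\bigcup_n W_n$, a countable unital $\mathbb{Q}$-subspace of $V$ containing $S_0$; any quadruple in $V_0'$ satisfying the interpolation hypothesis lies in some $W_n$ and therefore has an interpolant in $W_{n+1}\subseteq V_0'$, so $V_0'$ has Riesz interpolation. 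Finally let $V_0\coloneqq\overline{V_0'}$ be the norm-closure in $V$: it is a norm-closed unital subspace, hence an Archimedean order unit space with the inherited structure, it is separable, it contains $S$ (since $S\subseteq\overline{S_0}\subseteq V_0$), and $V_0'$ is a norm-dense unital $\mathbb{Q}$-subspace of it with Riesz interpolation, so Lemma~\ref{lem:dense-interpolation} shows $V_0$ has Riesz interpolation.

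No deep obstacle is anticipated: the substantive input is already isolated in Lemma~\ref{lem:dense-interpolation} (and through it in Theorem~\ref{thm:Riesz-interp} and Goodearl's identification of state spaces of interpolation groups with Choquet simplices), while the argument above is a standard separabilisation. The only points needing a little care are the verifications that the algebraic direct limit and the norm-closed subspaces carry genuine Archimedean order unit structures that embed order-isometrically into the ambient space (handled by norm-closedness of positive cones), and the bookkeeping that only countably many interpolation quadruples arise at each stage in part~\ref{item:sep-choquet-subspace}. One could instead prove part~\ref{item:sep-choquet-lim} by a direct $\varepsilon$-perturbation argument obtaining interpolants in the completion as limits of approximate interpolants pulled from the $V_n$, but routing through the dense subspace and Lemma~\ref{lem:dense-interpolation} is cleaner; dualising via Kadison duality and Theorem~\ref{thm:Riesz-interp} then yields the non-metrisable case of Theorem~\ref{thm:simplex-is-nuclear}.
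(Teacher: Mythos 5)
Your proposal is correct and follows essentially the same route as the paper: both parts are reduced to Lemma~\ref{lem:dense-interpolation} by exhibiting a dense unital rational subspace with Riesz interpolation — the non-closed union of the $V_n$ for part (ii), and a countable rational subspace built by iteratively adjoining interpolants for part (i). The extra verifications you include (norm-closedness of positive cones, the order-unit structure on the algebraic direct limit) are sound but not needed beyond what the paper records.
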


\begin{proof}
	Part \ref{item:sep-choquet-lim} follows from Lemma~\ref{lem:dense-interpolation} since the non-closed union of the $V_n$ will have Riesz interpolation.
	
	Assume in \ref{item:sep-choquet-subspace} that $V$ and $S$ are given, and without loss of generality, assume $S$ is countable and $1 \in S$.  Let $V_1$ be the rational vector space spanned by $S$ so that $V_1$ is countable.  
	
	We can inductively construct an increasing sequence of countable rational subspaces $(V_n)_{n=1}^\infty$ of $V$ such that for all $n \geq 1$ and $f_1, f_2, g_1, g_2 \in V_n$ with $f_i \leq g_j$ for $1 \leq i, j \leq 2$, there is an $h \in V_{n+1}$ such that $f_i \leq h \leq g_j$ for all $1 \leq i, j \leq 2$.  Indeed, as $V$ has Riesz interpolation, for any such quadruple $(f_1, f_2, g_1, g_2)$ from $V_n$, we may choose a corresponding $h \in V$.  As the set of all such quadruples in $V_n$ is countable, we may define $V_{n+1}$ as the rational span of $V_n$ and the functions $h$ corresponding to these quadruples.  
	
	Now, $\bigcup_{n=1}^\infty V_n$ has Riesz interpolation.  Let $V_0$ be the closure of this union and note that $V_0$ is separable, contains $S$ by construction, and has Riesz interpolation by Lemma~\ref{lem:dense-interpolation}.
\end{proof}

With these results in hand, we prove Theorem~\ref{thm:simplex-is-nuclear} by reducing it to the metrisable setting.  We restate the theorem for the convenience of the reader.

\simplexisnuclear*

\begin{proof}[Proof of Theorem~\ref{thm:simplex-is-nuclear}]
	View $\mathbb R^d$ as an ordered vector space with the coordinatewise order and with order unit $(1, \ldots, 1)$.  By Kadison duality, it suffices to show that for every finite set $\mathcal F \subseteq \mathrm{Aff}(X)$ and $\epsilon > 0$, there are an integer $d$ and unital positive linear maps 
	\begin{equation}
		\mathrm{Aff}(X) \overset\psi\longrightarrow \mathbb R^{d} \overset\phi\longrightarrow \mathrm{Aff}(X)
	\end{equation}
	such that $\|\phi(\psi(f)) - f\| < \epsilon$ for all $f \in \mathcal F$.
	
	By Proposition~\ref{prop:sep-choquet}, there is a separable unital closed subspace $V_0$  of $\mathrm{Aff}(X)$ that contains $\mathcal F$ and satisfies Riesz interpolation.  By the Hahn--Banach theorem (applied in each coordinate), any unital positive map $V_0 \rightarrow \mathbb R^d$ extends to a unital positive map $\mathrm{Aff}(X) \rightarrow \mathbb R^d$ (since unital functionals are positive exactly when they have norm 1).  Therefore, it suffices to find an integer $d$ and unital positive linear maps
	\begin{equation}
		V_0 \overset\psi\longrightarrow \mathbb R^{d} \overset\phi\longrightarrow V_0
	\end{equation}
	such that $\|\phi(\psi(f)) - f\| < \epsilon$ for all $f \in \mathcal F$.  Since $V_0$ has Riesz interpolation, $S(V_0)$ is a Choquet simplex by Theorem~\ref{thm:Riesz-interp}\ref{item:riesz-implies-choquet}.  Using Kadison duality to identify $V_0$ with $\mathrm{Aff}(S(V_0))$, the result follows from the separable case (\cite[Lemma~2.8]{Elliott-Niu15}).
\end{proof}

\end{document}